% Chris Wendl
% Contact 3-manifolds, holomorphic curves and intersection theory
% lecture notes for a 5-part minicourse at Durham University, August 26-30, 2013
% 2019 final version for the arXiv before submission to CUP
% now in the book documentclass
\documentclass[a4paper,12pt]{amsbook}
\usepackage{amsmath,amsthm,amsfonts,amssymb,mathrsfs,graphicx,psfrag,dsfont,a4wide,mathabx}
\usepackage{minitoc,color}

% it is mysteriously important to introduce the next two packages in
% this particular order:
\usepackage[hyperfootnotes=false]{hyperref}
\usepackage[alphabetic,initials]{amsrefs}

\makeindex

\setcounter{tocdepth}{1}
\renewcommand{\thesection}{\arabic{chapter}.\arabic{section}}
\renewcommand{\thefigure}{\arabic{chapter}.\arabic{figure}}

\theoremstyle{plain}
\newtheorem{thm}{Theorem}[chapter]

\newtheorem{prop}[thm]{Proposition}

\newtheorem{cor}[thm]{Corollary}

\newtheorem{lemma}[thm]{Lemma}

\newtheorem{question}[thm]{Question}

\theoremstyle{definition}
\newtheorem{example}[thm]{Example}
\newtheorem{exercise}[thm]{Exercise}
\newtheorem{defn}[thm]{Definition}
\newtheorem*{notation}{Notation}

\theoremstyle{remark}
\newtheorem{remark}[thm]{Remark}

%\pagestyle{myheadings}
%\markboth{\textsc{Chris Wendl}}{\textsc{Contact $3$-Manifolds, Holomorphic Curves and Intersection Theory}}

% for more conservative margins, remove all of this
%\setlength{\topmargin}{0in}
%\setlength{\oddsidemargin}{0.5in}
%\setlength{\evensidemargin}{0.5in}
%\setlength{\textwidth}{6in}
%\setlength{\headheight}{0in}
%\setlength{\headsep}{0in}
%\setlength{\topskip}{12pt}
%\setlength{\footskip}{1in}
%\setlength{\textheight}{8.75in}
%\setlength{\textheight}{8.5in}

% functions, functors, operators etc.

\newcommand{\Aut}{\operatorname{Aut}}

\newcommand{\Br}{\operatorname{Br}}

\newcommand{\cov}{\operatorname{cov}}
\newcommand{\Crit}{\operatorname{Crit}}
\newcommand{\crit}{^{\operatorname{crit}}}

\newcommand{\defin}{\textbf}

\newcommand{\dist}{\operatorname{dist}}
\newcommand{\dotrel}{\bullet}

\newcommand{\End}{\operatorname{End}}

\newcommand{\even}{_{\operatorname{even}}}

\newcommand{\Hess}{\operatorname{Hess}}
\newcommand{\Hom}{\operatorname{Hom}}

\newcommand{\Id}{\operatorname{Id}}

\newcommand{\im}{\operatorname{im}}
\newcommand{\ind}{\operatorname{ind}}

\newcommand{\Inj}{\operatorname{Inj}}

\newcommand{\inter}{\iota}

\newcommand{\loc}{{\operatorname{loc}}}

\newcommand{\muCZ}{\mu_{\operatorname{CZ}}}
\newcommand{\OB}{_{\operatorname{OB}}}
\newcommand{\odd}{_{\operatorname{odd}}}
\newcommand{\ord}{\operatorname{ord}}

\newcommand{\reg}{{\operatorname{reg}}}

\newcommand{\std}{_{\operatorname{std}}}

\newcommand{\virdim}{\operatorname{vir-dim}}

\newcommand{\wind}{\operatorname{wind}}

% matrix groups and Lie algebras

\newcommand{\GL}{\operatorname{GL}}

\newcommand{\U}{\operatorname{U}}

% sets of numbers

\newcommand{\CC}{{\mathbb C}}
\newcommand{\CP}{{\mathbb C}{\mathbb P}}
\newcommand{\DD}{{\mathbb D}}

\newcommand{\NN}{{\mathbb N}}

\newcommand{\RR}{{\mathbb R}}

\newcommand{\TT}{{\mathbb T}}
\newcommand{\ZZ}{{\mathbb Z}}
% mathcal letters

\newcommand{\dD}{{\mathcal D}}

\newcommand{\jJ}{{\mathcal J}}
\newcommand{\lL}{{\mathcal L}}
\newcommand{\Lie}{\lL}
\newcommand{\mM}{{\mathcal M}}

\newcommand{\nN}{{\mathcal N}}
\newcommand{\oO}{{\mathcal O}}

\newcommand{\tT}{{\mathcal T}}
\newcommand{\uU}{{\mathcal U}}

% other stuff

\newcommand{\1}{\mathds{1}}
\newcommand{\p}{\partial}
\renewcommand{\dbar}{\bar{\partial}}

% colors
\definecolor{blue}{rgb}{0,0,1}
\definecolor{red}{rgb}{1,0,0}
\definecolor{green}{rgb}{0,.7,0}

\numberwithin{equation}{chapter}

\psfrag{SigmaT2}{$\Sigma = \TT^2$}
\psfrag{Sigmacrit}{$\Sigma\crit$}
\psfrag{SigmaMGamma}{$\dot{\Sigma} = \Sigma\setminus\Gamma$}
\psfrag{z}{$z$}
\psfrag{w}{$w$}
\psfrag{zeta}{$\zeta$}
\psfrag{u}{$u$}
\psfrag{W}{$W$}
\psfrag{Womega}{$(W,\omega)$}
\psfrag{gammaz}{$\gamma_z$}
\psfrag{gammaw}{$\gamma_w$}
\psfrag{gammazeta}{$\gamma_\zeta$}
\psfrag{end+}{$[0,\infty) \times M_+$}
\psfrag{end-}{$(-\infty,0] \times M_-$}
\psfrag{End+}{$([0,\infty) \times M_+, d(e^s\alpha_+))$}
\psfrag{End-}{$((-\infty,0] \times M_-, d(e^s\alpha_-))$}
\psfrag{collar-}{$([0,\epsilon) \times M_-,d(e^s\alpha_-))$}
\psfrag{collar+}{$((-\epsilon,0] \times M_+,d(e^s\alpha_+))$}
\psfrag{uk}{$u_k$}
\psfrag{uk1}{$u_{k+1}$}
\psfrag{uk2}{$u_{k+2}$}
\psfrag{v+}{$v_+$}
\psfrag{v-}{$v_-$}
\psfrag{ueps}{$u_\epsilon$}
\psfrag{ueps'}{$u_\epsilon'$}
\psfrag{What}{$(\widehat{W},J)$}
\psfrag{RtimesM+}{$(\RR \times M_+,J_+)$}
\psfrag{RtimesM-}{$(\RR \times M_-,J_-)$}
\psfrag{v0}{$v_0$}
\psfrag{v1+}{$v_1^+$}
\psfrag{v1-}{$v_1^-$}
\psfrag{v2-}{$v_2^-$}
\psfrag{v3-}{$v_3^-$}
\psfrag{M+}{$(M_+,\xi_+)$}
\psfrag{M-}{$(M_-,\xi_-)$}
\psfrag{z1hat}{$\widehat{z}_1$}
\psfrag{z2hat}{$\widehat{z}_2$}
\psfrag{z3hat}{$\widehat{z}_3$}
\psfrag{z4hat}{$\widehat{z}_4$}
\psfrag{z1check}{$\widecheck{z}_1$}
\psfrag{z2check}{$\widecheck{z}_2$}
\psfrag{z3check}{$\widecheck{z}_3$}
\psfrag{z4check}{$\widecheck{z}_4$}
\psfrag{C1hat}{$\widehat{C}_1$}
\psfrag{C2hat}{$\widehat{C}_2$}
\psfrag{C3hat}{$\widehat{C}_3$}
\psfrag{C4hat}{$\widehat{C}_4$}
\psfrag{C1check}{$\widecheck{C}_1$}
\psfrag{C2check}{$\widecheck{C}_2$}
\psfrag{C3check}{$\widecheck{C}_3$}
\psfrag{C4check}{$\widecheck{C}_4$}
\psfrag{(S,j)}{$(S,j)$}
\psfrag{u(S)}{$u(S)$}
\psfrag{Sbar}{$\overline{S}$}
\psfrag{Sprime}{$S'$}
\psfrag{d=0}{$\delta(u)=0$}
\psfrag{d>0}{$\delta(u)>0$}
\psfrag{S3=R3infty}{$S^3 = \RR^3 \cup \{\infty\}$}
\psfrag{S1xS2}{$S^1 \times S^2$}
\psfrag{S1 times}{$S^1 \times$}
\psfrag{bind}{$B$}
\psfrag{D2}{$\DD \subset \CC$}
\psfrag{homot}{$\stackrel{h}{\sim}$}
\psfrag{RtimesM}{$(\RR \times M,J_+)$}
\psfrag{M}{$(M,\xi)$}

% for the CUP message on first page of each chapter
\newcommand\CUP[1]{%
  \begingroup
  \renewcommand\thefootnote{}\footnote{#1}%
  \addtocounter{footnote}{-1}%
  \endgroup
}

%%%%%%%%%%%%%%%%%%%%%%%%%%%%%%%%%%%%%%%%%%%%%%%%%%%%
% content begins here

\title[Contact $3$-Manifolds, Holomorphic Curves and Intersection Theory]{Contact 
$3$-Manifolds, Holomorphic Curves and Intersection Theory}
\author{Chris Wendl}
\address{Institut f\"ur Mathematik \\
Humboldt-Universit\"at zu Berlin \\
Unter den Linden 6 \\
10099 Berlin \\ 
Germany}
\email{wendl@math.hu-berlin.de}
\thanks{This material will be published by Cambridge University
Press as \textsl{Contact 3-Manifolds, Holomorphic Curves and Intersection Theory}
by Chris Wendl. This pre-publication version is
free to view and download for personal use only. 
Not for re-distribution, re-sale or use in derivative works. \copyright Chris Wendl, 2019.}

\begin{document}
\frontmatter

\begin{abstract}
This is a revision of some expository lecture notes written originally for 
a 5-hour minicourse on the intersection theory of punctured holomorphic curves and its applications
in $3$-dimensional contact topology.  The main lectures are aimed primarily
at students and require only a minimal background in holomorphic curve theory,
as the emphasis is on topological rather than analytical issues.
Some of the gaps in the analysis are then filled in by the appendices,
which include
self-contained proofs of the similarity principle and positivity of
intersections, and conclude with a ``quick reference'' for the benefit of
researchers, detailing the basic facts of Siefring's intersection theory.

Intersection theory has played a 
prominent role in the study of closed symplectic $4$-manifolds since Gromov's 
paper \cite{Gromov} on pseudoholomorphic curves, leading to 
a myriad of beautiful rigidity results that are either not accessible or 
not true in higher dimensions.  In recent years, the highly nontrivial 
extension of this theory to the case of punctured holomorphic curves,
due to Siefring \cites{Siefring:asymptotics,Siefring:intersection},
has led to similarly beautiful results about contact $3$-manifolds and their 
symplectic fillings. 
These notes begin with an overview of the closed case and an easy application 
(McDuff's characterization of symplectic ruled surfaces), and then explain the 
essentials of Siefring's intersection theory and how to use it in the real world. 
As a sample application, we discuss the classification of symplectic fillings 
of planar contact manifolds via Lefschetz fibrations \cite{Wendl:fillable}.
\end{abstract}

\maketitle

\dominitoc
\tableofcontents

\chapter*{Preface}

The main portion of this book is a lightly revised set of expository lecture notes written 
originally for a 5-hour minicourse
on the intersection theory of punctured holomorphic curves and its applications
in $3$-dimensional contact topology, which I gave
as part of the LMS Short Course \textsl{``Topology in Low Dimensions''} at
Durham University, August 26--30, 2013.  These lectures were aimed primarily
at students, and they required only a minimal background in holomorphic curve theory
since the emphasis was on topological rather than analytical issues.
The original appendices were relatively brief, their purpose being to
provide a quick survey of analytical
background material on holomorphic curves that I needed to refer to in the
lectures without assuming that students already knew it.  In revising the
manuscript for publication, 
I have taken the opportunity to add Lecture~\ref{sec:0} as a motivational
introduction to the topic of the notes, plus
two things that I felt were lacking from the
existing literature, as a result of which the appendices have become considerably more
substantial.  One (Appendix~\ref{app:positivity}) is a complete 
proof of local positivity of intersections, including just enough background
material on elliptic regularity for a student familiar with distributions and Sobolev spaces 
to consider it ``self-contained''; this notably includes a weak version of
the Micallef-White theorem, which some readers may hopefully find easier to
comprehend than the deeper result in \cite{MicallefWhite} that inspired it.
The other (Appendix~\ref{app:reference}) is a quick survey of
Siefring's intersection theory of punctured holomorphic curves, putting the 
essential facts and formulas in as compact a form as possible for the 
benefit of researchers who need a ready reference.  Most of what is in
Appendix~\ref{app:reference} also appears in Lectures~\ref{sec:3} and
~\ref{sec:4}, but the latter are written in a more pedagogical style that develops the
structure of the theory based on a few core ideas---that is presumably helpful if your
goal is to understand why the main results are true, but less so if you just
need to look up a specific formula, and Appendix~\ref{app:reference} is there
to help in the latter case.

Intersection theory has played a 
prominent role in the study of closed symplectic $4$-manifolds since Gromov's 
paper \cite{Gromov} on pseudoholomorphic curves, leading to 
a myriad of beautiful rigidity results that are either not accessible or 
not true in higher dimensions.  In the last 15 years, the highly nontrivial 
extension of this theory to the case of punctured holomorphic curves,
due to Siefring \cites{Siefring:asymptotics,Siefring:intersection},
has led to similarly beautiful results about contact $3$-manifolds and their 
symplectic fillings. 
These notes begin with an overview of the closed case and an easy application 
(McDuff's characterization of symplectic ruled surfaces), and then explain the 
essentials of Siefring's intersection theory and how to use it in the real world. 
As a sample application, Lecture~\ref{sec:5} concludes by discussing 
the classification of symplectic fillings 
of planar contact manifolds via Lefschetz fibrations \cite{Wendl:fillable}.

\subsection*{How to use these notes}

I expect a variety of audiences to find these notes useful for a variety of reasons.
Since they were written
with an audience of students in mind, I did not want to assume too much
previous knowledge of symplectic/contact geometry or holomorphic curves,
and most of the text reflects that.  On the other hand, I also expect a
certain number of readers to be experienced researchers who already know
the essentials of holomorphic curve theory---including the adjunction formula
in the closed case---but would specifically like to learn about
the intersection theory for \emph{punctured} curves.  For readers in this category,
I recommend starting with Appendix~\ref{app:reference} for an overview of the
basic facts, and then turning back to Lectures~\ref{sec:3} and~\ref{sec:4}
for details whenever necessary.  If on the other hand you are a student
and still getting to know the field of symplectic and contact topology,
you'd probably rather start from the beginning.

Or if you really want to challenge yourself, feel free to read the whole
thing backwards.

\subsection*{Acknowledgments}

I would like to thank Richard Siefring and Michael Hutchings for many
conversations over the years that have improved my understanding of
the subjects discussed in this book.
Thanks are also due to Andrew Lobb, Durham University and the London Mathematical
Society for bringing about the summer school that gave rise to the original notes.  
They were written mostly while I worked at University College London,
with partial support from a Royal Society University Research Fellowship
and a Leverhulme Research Project Grant.

% needed to make minitoc get the right chapter:
\adjustmtc

\mainmatter

\setcounter{chapter}{-1}
\chapter{Motivation}
\label{sec:0}

In\CUP{This material will be published by Cambridge University
Press as \textsl{Contact 3-Manifolds, Holomorphic Curves and Intersection Theory}
by Chris Wendl. This pre-publication version is
free to view and download for personal use only. 
Not for re-distribution, re-sale or use in derivative works. \copyright Chris Wendl, 2019.}
order to illustrate briefly what these lectures are about, I'd like to
give an informal sketch of two closely related theorems from the early days of
symplectic topology.  The first is a beautiful application of the
theory of closed pseudoholomorphic curves as introduced by Gromov
in \cite{Gromov}, and its proof requires only a few basic facts from this
theory, plus some knowledge of the standard homological intersection
product from algebraic topology.  The second theorem admits a closely analogous
proof, but we will see that the intersection-theoretic
portion of the argument is difficult to make precise, because it is no longer
homological---it requires some generalization of the intersection product
in which ``cycles'' need not be closed.  One of the main
objectives of the subsequent lectures will be to make this idea precise
and demonstrate what else it can be used for.

The statements of these theorems assume familiarity with the notions of
minimal symplectic $4$-manifolds, symplectomorphisms,
symplectic submanifolds, the standard
symplectic structure on~$\RR^4$, the
sign of a transverse intersection, and the homological intersection product---some 
background on all of these topics is covered in Lectures~\ref{sec:1} and~\ref{sec:2}.

\begin{thm}
\label{thm:S2xS2}
Suppose $(M,\omega)$ is a closed, connected, minimal symplectic $4$-manifold
containing a pair of symplectic submanifolds $S_1,S_2 \subset M$ with the
following properties:
\begin{itemize}
\item Both are homeomorphic to~$S^2$;
\item Both have vanishing homological self-intersection number:
$$
[S_1] \cdot [S_1] = [S_2] \cdot [S_2] = 0.
$$
\item The set $S_1 \cap S_2 \subset M$ consists of a single transverse
and positive intersection.
\end{itemize}
Then there exists a symplectomorphism identifying $(M,\omega)$ with 
$(S^2 \times S^2,\omega_0)$ such that $S_1$ and $S_2$ are identified with
$S^2 \times \{\text{const}\}$ and $\{\text{const}\} \times S^2$
respectively, and $\omega_0$ is a product of two area forms on~$S^2$.
\end{thm}

This result says in effect that if we are given a certain type of
``local'' information about submanifolds of a closed symplectic $4$-manifold,
then this is enough to recover its global structure.  From an alternative
perspective, it says that the vast majority of closed symplectic $4$-manifolds
do not contain certain types of symplectic submanifolds.  The second result
says something similar, but now the symplectic manifold is noncompact and
the ``local'' information we are given is its structure outside of some
compact subset---the theorem is typically summarized by saying that there do
not exist any exotic symplectic $4$-manifolds that look ``standard at infinity''.

\begin{thm}
\label{thm:stdAtInfty}
Suppose $(M,\omega)$ is an open, connected, minimal symplectic $4$-manifold
with a compact subset $K \subset M$ such that $(M \setminus K,\omega)$ is
symplectomorphic to the complement of a compact subset in the standard
symplectic~$\RR^4$.  Then $(M,\omega)$ is globally symplectomorphic to the
standard symplectic~$\RR^4$.
\end{thm}

\begin{remark}
Both of these theorems appeared in less general forms in Gromov's paper 
\cite{Gromov}; see \S$2.4.A_1'$ and \S$0.3.C$ respectively.
The statements given above are attributed to both Gromov and McDuff, as they
rely on the slightly more sophisticated intersection theory of closed
holomorphic curves that was developed by McDuff within a few years after Gromov's
paper---see in particular \cite{McDuff:rationalRuled}.
Theorem~\ref{thm:stdAtInfty} can also be rephrased as the statement that
$S^3$ with its standard contact structure admits a unique minimal symplectic
filling, and we will discuss this version of the result in
Lecture~\ref{sec:5} (see in particular Corollary~\ref{cor:uniqueFillings}).
\end{remark}

Let's sketch a proof of Theorem~\ref{thm:S2xS2}.  The starting point is the
observation that since $S_1$ and $S_2$ are both \emph{symplectic} submanifolds
and their intersection is transverse and positive,
one can choose a compatible almost complex structure $J : TM \to TM$ 
on $(M,\omega)$ that preserves the tangent spaces of $S_1$ and $S_2$
(see \S\ref{sec:fibrations} for more on almost complex structures).
This makes $S_1$ and $S_2$ into images of embedded \emph{$J$-holomorphic spheres},
i.e.~smooth maps $u : S^2 \to M$ that satisfy the \emph{nonlinear Cauchy-Riemann
equation}
$$
Tu \circ i = J \circ Tu,
$$
where $i : TS^2 \to TS^2$ is the almost complex structure on $S^2$ resulting
from its standard identification with the extended complex plane $\CC \cup \{\infty\}$.
The advantage of replacing symplectic submanifolds
by $J$-holomorphic spheres is a matter of rigidity: the condition of being
a symplectic submanifold is open and thus quite flexible, i.e.~the space of
all symplectic submanifolds is unmanageably large, whereas $J$-holomorphic
spheres are solutions to an elliptic PDE, and thus tend to come in
finite-dimensional moduli spaces, which are sometimes (if we're lucky!) even
compact.  For this reason, we now consider for each $k=1,2$ the 
\emph{moduli spaces}
$$
\mM_k(J) := \left\{ u : S^2 \to M\ \big|\ Tu \circ i = J \circ Tu \text{ and }
[u] := u_*[S^2] = [S_k] \in H_2(M) \right\} \Big/ \Aut(S^2,i),
$$
where $\Aut(S^2,i)$ is the group of holomorphic automorphisms 
$\varphi : S^2 \to S^2$ of the extended
complex plane (i.e.~the M\"obius transformations), acting on the space of
$J$-holomorphic maps $u : S^2 \to M$ by $\varphi \cdot u := u \circ \varphi$.
We assign to this space the natural topology arising from $C^\infty$-convergence
of maps.  Both $\mM_1(J)$ and $\mM_2(J)$ are clearly nonempty, since they
contain equivalence classes of parametrizations of the submanifolds
$S_1$ and $S_2$ respectively.
One can now apply general results from the theory of $J$-holomorphic curves
to prove that for generic choices of the almost complex structure~$J$,
$\mM_1(J)$ and $\mM_2(J)$ are both compact smooth $2$-dimensional 
manifolds.  A quick survey of the analytical results behind this is
given in Appendix~\ref{app:closed}, and we will sketch the proof in a
somewhat more general setting in Lectures~\ref{sec:1}
and~\ref{sec:2} (see Lemmas~\ref{lemma:M0} and~\ref{lemma:M1}),
though we do not plan to get too deeply into such
analytical details in this book.  

What we will discuss in more detail
is the intersection-theoretic properties of the $J$-holomorphic spheres
in $\mM_1(J)$ and~$\mM_2(J)$.  We observe first that the hypotheses of
Theorem~\ref{thm:S2xS2} clearly imply
$$
[S_1] \cdot [S_2] = 1,
$$
as this intersection number can be computed as a signed count of transverse
intersections between $S_1$ and $S_2$, for which there is only one
intersection to count, and it is positive.
In Lecture~\ref{sec:2} and Appendix~\ref{app:positivity}, we will discuss a
standard result known as \emph{positivity of intersections}, which implies
that whenever $u : \Sigma \to M$ and $v : \Sigma' \to M$ are two 
closed $J$-holomorphic curves with non-identical images in an almost complex
$4$-manifold~$M$, their intersections are all isolated and count positively
toward the homological intersection number $[u] \cdot [v] \in \ZZ$; moreover,
the contribution of each isolated intersection is exactly $+1$ if and only if
that intersection is transverse.  This is very strong information, from which
one can deduce the following:
\begin{enumerate}
\item For each $k=1,2$ and every pair of distinct elements $u,v \in \mM_k(J)$,
the images of $u : S^2 \to M$ and $v : S^2 \to M$ are disjoint.
(This follows from the condition $[S_k] \cdot [S_k] = 0$.)
\item For every $u \in \mM_1(J)$ and $v \in \mM_2(J)$, the maps $u : S^2\to M$
and $v : S^2 \to M$ have exactly one intersection point, which is transverse
and positive.
\end{enumerate}
A related result discussed in \S\ref{sec:adjunction}, called the 
\emph{adjunction formula}, makes it possible characterize in homological
terms which $J$-holomorphic curves in an almost complex $4$-manifold
are embedded, and in this case it implies:
\begin{enumerate}
\setcounter{enumi}{2}
\item Every element of $\mM_1(J)$ or $\mM_2(J)$ is embedded.
\end{enumerate}
Finally, we will see in \S\ref{sec:foliations} that whenever
$u \in \mM_k(J)$ is an embedded $J$-holomorphic sphere in one of these
moduli spaces, the $2$-parameter family of nearby $J$-holomorphic spheres
in $\mM_k(J)$ forms a smooth foliation of the neighborhood of $u(S^2)$ in~$M$.
Combining this with the compactness of $\mM_k(J)$, it follows that the set
of points in $M$ that are contained in the images of any of the spheres
in $\mM_k(J)$ is both open and closed, thus it is everything: the holomorphic
spheres of $\mM_k(J)$ foliate~$M$.  The result is the
``coordinate grid'' depicted in 
Figure~\ref{fig:S2xS2}: starting from the two symplectically embedded spheres
$S_1,S_2 \subset M$, we obtain two smooth families of embedded $J$-holomorphic spheres
that each foliate~$M$, such that each sphere in $\mM_1(J)$ has a unique
transverse intersection with each sphere in $\mM_2(J)$.  It follows that
there is a diffeomorphism
\begin{equation}
\label{eqn:diffeoS2S2}
M \stackrel{\cong}{\longrightarrow} \mM_1(J) \times \mM_2(J),
\end{equation}
assigning to each point $p \in M$ the unique pair of holomorphic spheres
$(u,v) \in \mM_1(J) \times \mM_2(J)$ such that both have $p$ in their images.
Moreover, for each individual element of $\mM_1(J)$ parametrized by a map
$u : S^2 \to M$, there is a diffeomorphism
$$
S^2 \stackrel{\cong}{\longrightarrow} \mM_2(J)
$$
sending each $z \in S^2$ to the unique holomorphic sphere
$v \in \mM_2(J)$ that has $u(z)$ in its image; this proves that $\mM_2(J)$
has the topology of~$S^2$, and in the same manner one shows $\mM_1(J) \cong S^2$.
In summary, \eqref{eqn:diffeoS2S2} can now be interpreted as a diffeomorphism
from $M$ to $S^2 \times S^2$.  There is still a bit of work to be done in
identifying the symplectic structure $\omega$ with a product of two area
forms, but the techniques needed for this are not hard---they involve
geometric tools such as the Moser stability theorem for deformations of
symplectic forms (see e.g.~\cite{McDuffSalamon:ST3}), but no serious analysis
is required.

\begin{figure}
\psfrag{S1}{$S_1$}
\psfrag{S2}{$S_2$}
\includegraphics{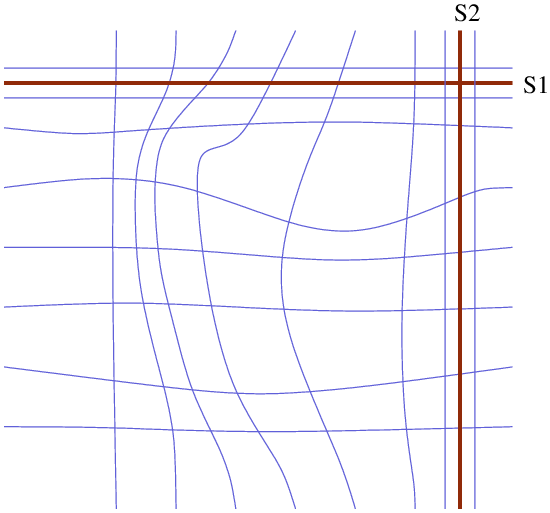}
\caption{\label{fig:S2xS2} The two symplectic submanifolds $S_1,S_2 \subset M$
generate two transverse foliations by holomorphic spheres
in the proof of Theorem~\ref{thm:S2xS2}.  The two families can be
regarded as a ``coordinate grid'' that identifies $M$ with $S^2 \times S^2$.}
\end{figure}

The original proof of Theorem~\ref{thm:stdAtInfty} used a clever ``capping''
trick to derive it from Theorem~\ref{thm:S2xS2}.  For this motivational
discussion, I would like to sketch a different proof that is conceptually
simpler, but trickier in the technical details.  

\begin{figure}
\psfrag{uw}{$f_w$}
\psfrag{vw}{$g_w$}
\psfrag{dBall}{$\p \DD_R^4$}
\psfrag{gamma1}{$\gamma_1$}
\psfrag{gamma2}{$\gamma_2$}
\includegraphics[scale=0.6]{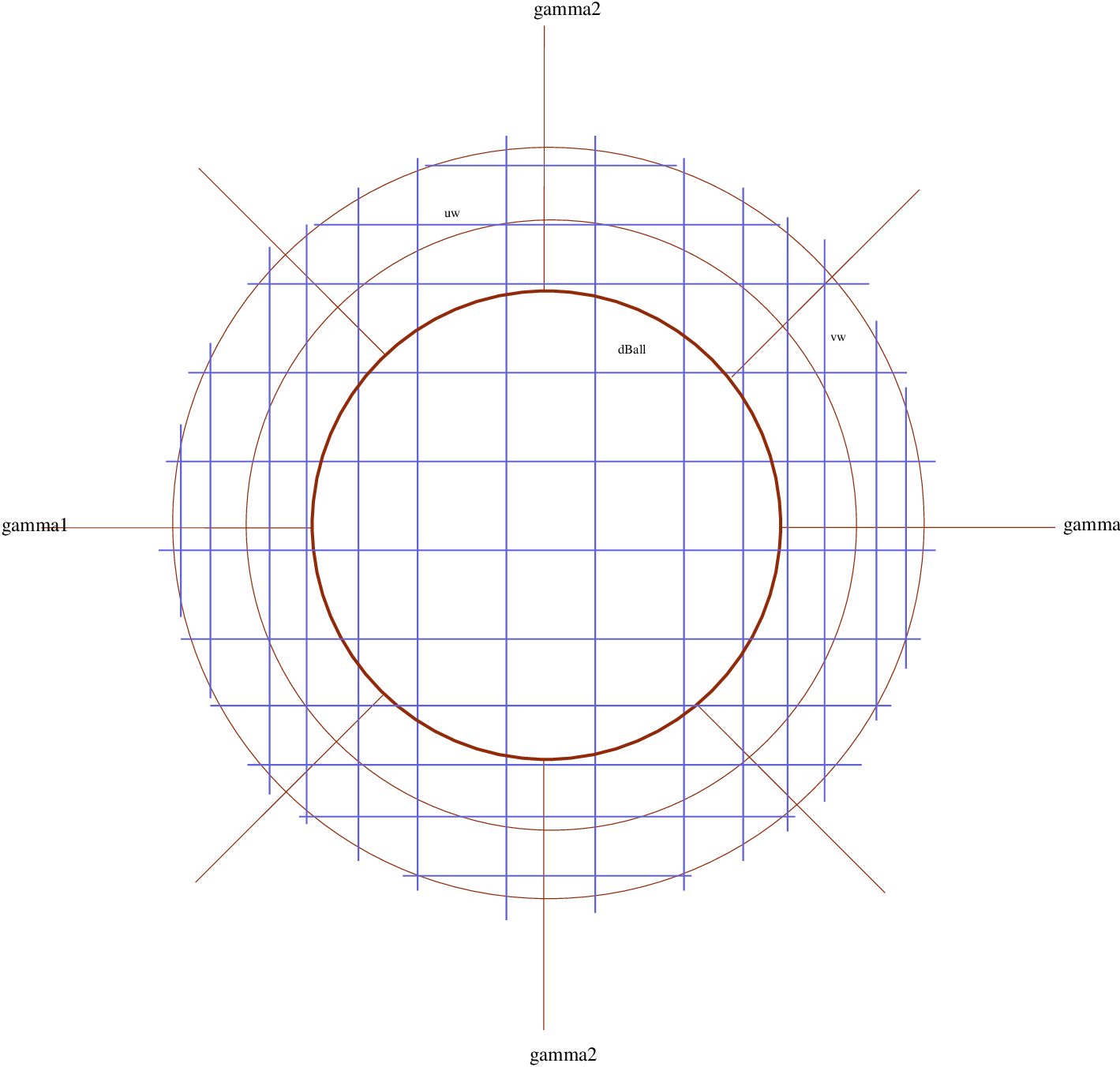}
\caption{\label{fig:stdAtInfty0} The two families of properly
  embedded holomorphic planes $f_w$ and $g_w$ form a coordinate
  grid for~$\CC^2$ and are each asymptotic on the cylindrical end
$\CC^2 \setminus \DD^4_R \cong (R,\infty) \times S^3$ to
one of two specific loops $\gamma_1,\gamma_2 \subset S^3$.}
\end{figure}

By the hypotheses of Theorem~\ref{thm:stdAtInfty}, we can decompose the open
symplectic manifold $(M,\omega)$ into two regions: one is the compact
(but otherwise completely unknown) subset $K \subset M$, and the other is
a region that we can identify with $(\RR^4 \setminus K',\omega\std)$ 
for some compact set $K' \subset \RR^4$, where $\omega\std$ denotes the
standard symplectic form on~$\RR^4$.  We would like to argue as in
Theorem~\ref{thm:S2xS2}, that is, find a nice pair of ``seed curves'' to
generate two well-behaved moduli spaces of $J$-holomorphic curves that can
then be used to form a coordinate grid identifying $M$ with~$\RR^4$.
One easy way to find such seed curves is by observing that $\RR^4$ has a
natural identification with $\CC^2$ such that the natural multiplication
by $i$ on $\CC^2$ defines a compatible almost complex structure
on $(\RR^4,\omega\std)$.  This is useful for the following reason:
$\CC^2$ contains two obvious families of holomorphic planes
\begin{equation*}
\begin{split}
f_w : \CC \to \CC^2 : z \mapsto (z,w),\qquad \text{ for $w \in \CC$},\\
g_w : \CC \to \CC^2 : z \mapsto (w,z),\qquad \text{ for $w \in \CC$},
\end{split}
\end{equation*}
all of which are properly embedded maps, with two distinct types of
asymptotic behavior.  To describe the latter, choose a large constant
$R > 0$, let $\DD_R^4 \subset \CC^2$ denote the disk of radius $R$
and identify $\CC^2 \setminus \DD_R^4$ with $(R,\infty) \times S^3$
by viewing $S^3$ as the unit sphere in $\CC^2$ and applying the diffeomorphism
$$
(R,\infty) \times S^3 \stackrel{\cong}{\longrightarrow} \CC^2 \setminus \DD_R^4 :
(r,x) \mapsto rx.
$$
Then each $f_w$ or $g_w$ maps a neighborhood of infinity into an arbitrarily small
neighborhood of the cylinder $(R,\infty) \times \gamma_1$ or
$(R,\infty) \times \gamma_2$ respectively, where we define
$$
\gamma_1 := S^1 \times \{0\} \subset S^3 \subset \CC^2, \qquad \gamma_2 := \{0\} \times S^1 \subset S^3 \subset \CC^2.
$$
A schematic picture of this asymptotic behavior and the resulting transverse
pair of holomorphic foliations of $\CC^2$ is shown in Figure~\ref{fig:stdAtInfty0}.
Informally, we will say that the planes $f_w$ are asymptotic to $\gamma_1$
and the planes $g_w$ are asymptotic to~$\gamma_2$; more precise definitions of
this terminology will appear in \S\ref{sec:punctured} when we discuss
asymptotically cylindrical maps.

\begin{figure}
\psfrag{uw}{$f_w$}
\psfrag{vw}{$g_w$}
\psfrag{K}{$K$}
\psfrag{dBall}{$\p \DD_R^4$}
\psfrag{gamma1}{$\gamma_1$}
\psfrag{gamma2}{$\gamma_2$}
\includegraphics[scale=0.6]{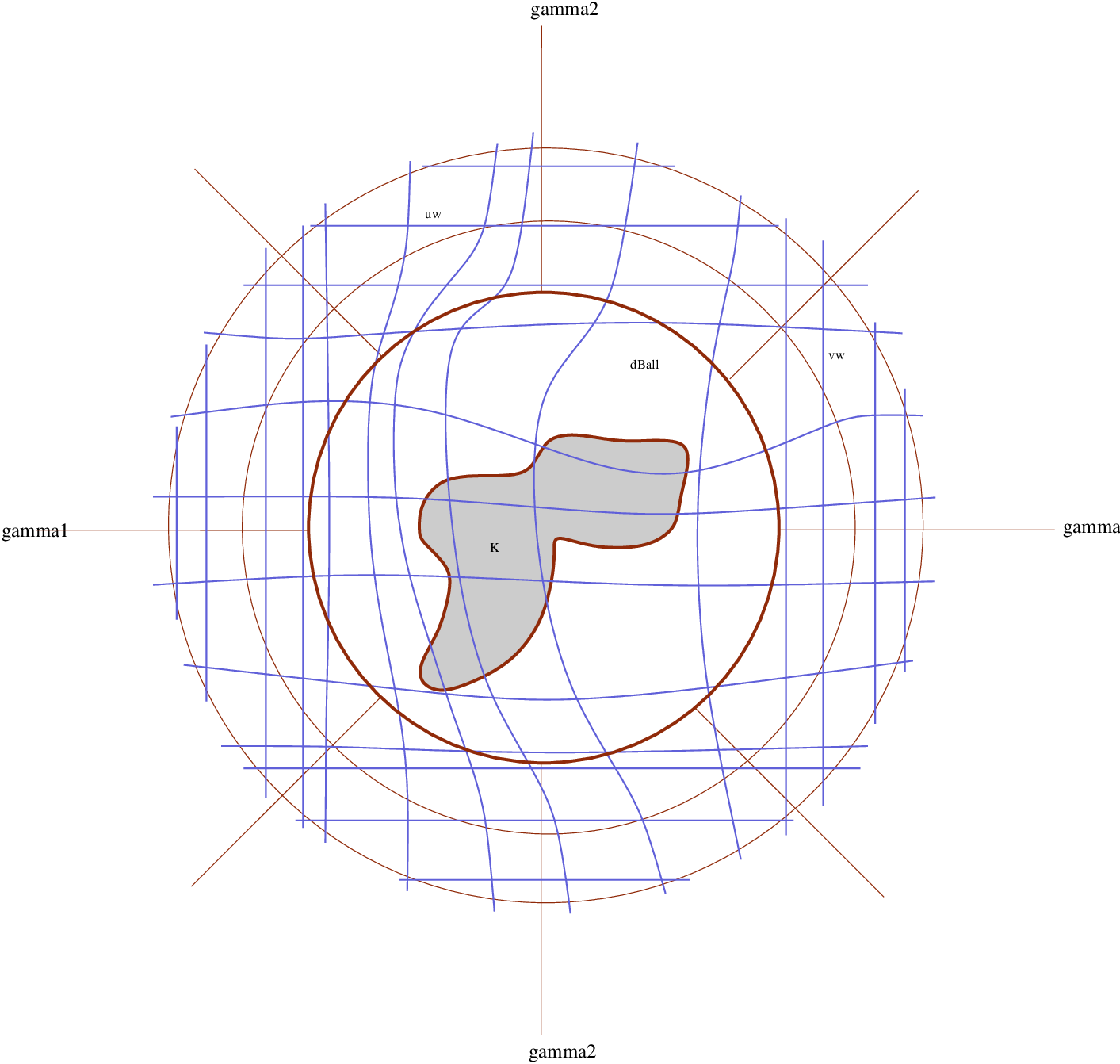}
\caption{\label{fig:stdAtInfty} The moduli spaces $\mM_1(J;\gamma_1)$
  and $\mM_2(J;\gamma_2)$ of proper $J$-holomorphic planes asymptotic to
  the loops $\gamma_1,\gamma_2 \subset S^3$ form two transverse
  foliations of~$M$ in Theorem~\ref{thm:stdAtInfty}, building a coordinate
grid that proves $M \cong \CC \times \CC = \RR^4$.}
\end{figure}

Now since $K' \subset \CC^2 = \RR^4$ is compact, $\DD^4_R$
will contain $K'$ for any $R > 0$ sufficiently large, so that we can
also regard $(M,\omega)$ as containing a copy of the region identified
above with $(R,\infty) \times S^3$.  Let us
fix such a radius and choose a compatible almost complex structure $J$
on $(M,\omega)$ that matches the standard multiplication by~$i$ on
$\CC^2 \setminus \DD^4_R \cong (R,\infty) \times S^3$.
The curves $f_w$ and $g_w$ can then be regarded as 
$J$-holomorphic planes in $M$ for every $w \in \CC$ with $|w| > R$,
and just as in Theorem~\ref{thm:S2xS2}, these two families define
elements in a pair of connected moduli spaces $\mM_1(J;\gamma_1)$ and $\mM_2(J;\gamma_2)$ of
$J$-holomorphic planes in~$M$, where we can use the loops $\gamma_1$ and
$\gamma_2$ to prescribe the asymptotic behavior of the curves in the moduli spaces.
There exists a well-developed theory of moduli spaces of
$J$-holomorphic curves with this type of asymptotic behavior, a survey of
which is given in Appendix~\ref{app:punctured}.  In the present context,
it can be applied to prove that $\mM_1(J;\gamma_1)$ and $\mM_2(J;\gamma_2)$ are both
smooth $2$-dimensional manifolds, and they are also compact except for the obvious
way in which they are not: a sequence $u_j \in \mM_k(J;\gamma_k)$ for $k \in\{1,2\}$ will
fail to have a convergent subsequence if and only if for large $j$ it is
of the form $u_j = f_{w_j} \in \mM_1(J;\gamma_1)$ or $u_j = g_{w_j} \in \mM_2(J;\gamma_2)$ for a
sequence $w_j \in \CC$ with $|w_j| \to \infty$.  This gives each of
$\mM_1(J;\gamma_1)$ and $\mM_2(J;\gamma_2)$ the topology of a compact surface with one boundary
component attached to a \emph{cylindrical end} of the form $\CC \setminus \DD_R \cong
(R,\infty) \times S^1$.

If we want to apply these two moduli spaces the same way they were used in
Theorem~\ref{thm:S2xS2}, then we need to establish the following:

\begin{lemma}
\label{lemma:motivation}
The moduli spaces $\mM_1(J;\gamma_1)$ and $\mM_2(J;\gamma_2)$ described above
have the following properties:
\begin{enumerate}
\item For each $k=1,2$ and every pair of distinct elements $u,v \in \mM_k(J;\gamma_k)$,
the images of $u : \CC \to M$ and $v : \CC \to M$ are disjoint.
\item For every $u \in \mM_1(J;\gamma_1)$ and $v \in \mM_2(J;\gamma_2)$, the maps $u : \CC\to M$
and $v : \CC \to M$ have exactly one intersection point, which is transverse
and positive.
\item Every element of $\mM_1(J;\gamma_1)$ or $\mM_2(J;\gamma_2)$ is embedded.
\end{enumerate}
\end{lemma}

Indeed, one can then argue exactly as in the proof of Theorem~\ref{thm:S2xS2}
that the two moduli spaces $\mM_1(J;\gamma_1)$ and $\mM_2(J;\gamma_2)$ form two transverse smooth foliations
of $M$ by planes, producing a coordinate grid (see Figure~\ref{fig:stdAtInfty})
that identifies $M$ with
$\CC \times \CC \cong \RR^4$.  The question I would now like to focus on is this:
why is Lemma~\ref{lemma:motivation} true?

The answer does not come from homological intersection theory, as the curves
in $\mM_1(J;\gamma_1)$ and $\mM_2(J;\gamma_2)$ are noncompact and do not represent homology
classes.  One can however use differential topological arguments to verify
the second claim in the lemma: the fact that each $f_w$ intersects each $g_{w'}$ exactly
once transversely implies via a homotopy argument that the same will be
true for any pair $u \in \mM_1(J;\gamma_1)$ and $v \in \mM_2(J;\gamma_2)$.  Indeed, 
$\mM_1(J;\gamma_1)$ and $\mM_2(J;\gamma_2)$ are each connected spaces of properly
embedded planes that are asymptotic to disjoint loops in~$S^3$, thus they
map neighborhoods of infinity to completely disjoint
regions near infinity in~$M$.  This ensures that there exist homotopies of properly
embedded maps
$$
u_\tau : \CC \to M, \qquad v_\tau : \CC \to M, \qquad \tau \in [0,1]
$$
with $u_0=u$, $u_1=f_w$, $v_0=v$ and $v_1 = g_{w'}$ such that the intersections
of $u_\tau$ with $v_\tau$ for every $\tau \in [0,1]$ are confined to 
compact subsets of both domains.  Standard arguments as in \cite{Milnor:differentiable}
then imply that $u$ and $v$ must have the same algebraic intersection count
as $f_w$ and~$g_{w'}$, which is $1$, so in light of positivity of
intersections, $u$ and $v$ can only have one intersection point and it must
be transverse.

This type of argument does not suffice to prove the other two claims
in Lemma~\ref{lemma:motivation}.  For example, suppose we would like to prove
that two distinct curves $u,v \in \mM_1(J;\gamma_1)$ must always be disjoint.
It is easy to believe this in light of the curves that we can explicitly see,
i.e.~$f_w$ and $f_{w'}$ both belong to $\mM_1(J;\gamma_1)$ for any
$w,w' \in \CC$ sufficiently large, and they are clearly disjoint if
$w \ne w'$.  To extend this to the curves that we cannot explicitly see because
they do not live entirely in the region $(R,\infty) \times S^3 \subset M$,
we would ideally like to argue via homotopy invariance, namely that if
$u_\tau$ and $v_\tau$ are two continuous families of curves in
$\mM_1(J;\gamma_1)$ with $u_0$ and $v_0$ disjoint, then $u_1$ and
$v_1$ must also be disjoint.  But here we have a problem that did not arise in the
previous paragraph: the curves $u_\tau$ and $v_\tau$ in this homotopy are
always asymptotic to \emph{the same} loop $\gamma_1 \subset S^3$,
so their images in $M$ always become arbitrarily close to each other in
the cylindrical end $(R,\infty) \times S^3$.  In this situation, there is
no way to make sure that intersections are confined to compact subsets,
and we can imagine in fact that under a homotopy, some intersections might
just escape to infinity and disappear (see Figure~\ref{fig:escape0})!

\begin{figure}
\psfrag{udotv>0}{$u \cdot v > 0$}
\psfrag{udotv=0}{$u \cdot v = 0$}
\includegraphics{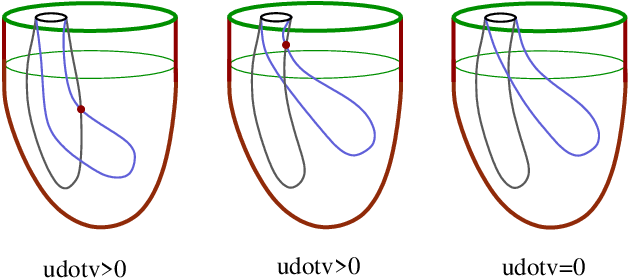}
\caption{\label{fig:escape0} The algebraic intersection count $u \cdot v \in \ZZ$ between
  two proper maps of noncompact domains can change under homotopies if the two
maps have matching asymptotic behavior.}
\end{figure}

It is a remarkable fact that in the situation under consideration, this
nightmare scenario cannot happen, and Lemma~\ref{lemma:motivation}
is indeed true.  To understand why, we will have
to explore the asymptotic behavior of noncompact $J$-holomorphic
curves much more deeply.  Still more interesting perhaps is that in
more general situations, the nightmare scenario of Figure~\ref{fig:escape0}
really can happen, but it can also be \emph{controlled}: one can
define an \emph{asymptotic contribution} that measures the possibility
for ``hidden'' intersections to emerge from infinity under small perturbations.
It turns out that just like the contribution of an isolated intersection
between two $J$-holomorphic curves, this asymptotic contribution is always
nonnegative, and adding it to the algebraic count of actual intersections
produces a meaningful homotopy-invariant intersection product.
Once this product and the corresponding generalization
of the adjunction formula have been understood, proving results like Lemma~\ref{lemma:motivation}
becomes quite easy.

The first hints of a systematic intersection theory for noncompact 
holomorphic curves appeared in Hutchings's work on embedded contact homology
\cite{Hutchings:index}, and the theory was developed in earnest a few years
later in the Ph.D.~thesis of Richard Siefring \cite{Siefring:thesis} and
his two papers \cites{Siefring:asymptotics,Siefring:intersection}.
Our primary objectives in these notes will be to explain where this theory
comes from, demonstrate how to use it, and give some examples of what it can be used for.
We'll start in Lectures~\ref{sec:1} and~\ref{sec:2} by reviewing the
intersection theory for closed holomorphic curves and discussing one
of its most famous applications, McDuff's theorem \cite{McDuff:rationalRuled}
on symplectic ruled surfaces (which is a variation on Theorem~\ref{thm:S2xS2}).
The asymptotic analysis
required for Siefring's theory is then surveyed in
Lecture~\ref{sec:3} (mostly without the proofs since these are analytically
somewhat intense), and Lecture~\ref{sec:4} uses these asymptotic
results to define the precise generalizations of the homological
intersection product and the adjunction formula that are needed for
results such as Lemma~\ref{lemma:motivation}.  
In Lecture~\ref{sec:5}, we will demonstrate how to use the theory
via a generalization of Theorem~\ref{thm:stdAtInfty}, framed in
the language of contact $3$-manifolds and their symplectic fillings.

\chapter{Closed holomorphic curves in symplectic $4$-manifolds}
\label{sec:1}

\minitoc

\vspace{12pt}

In\CUP{This material will be published by Cambridge University
Press as \textsl{Contact 3-Manifolds, Holomorphic Curves and Intersection Theory}
by Chris Wendl. This pre-publication version is
free to view and download for personal use only. 
Not for re-distribution, re-sale or use in derivative works. \copyright Chris Wendl, 2019.} 
these lectures we would like to explain some results about
symplectic $4$-manifolds with contact boundary, and some of the technical tools 
involved in proving them, notably the intersection theory of punctured
pseudoholomorphic curves.  These tools are relatively recent, but have
historical precedents that go back to the late 1980's, when the field of
symplectic topology was relatively new and many deep results about
closed symplectic $4$-manifolds were proved.  We begin with a discussion of
some of those results.

\section{Some examples of symplectic $4$-manifolds}
\label{sec:fibrations}

Suppose $M$ is a smooth manifold of even dimension $2n \ge 2$.  A
\defin{symplectic form}\index{symplectic form}\index{symplectic structure!on a manifold}\index{symplectic manifold}
on $M$ is a closed $2$-form $\omega$ that
is \defin{nondegenerate}, meaning that
$\omega(X,\cdot) \ne 0$ for every nonzero vector~$X \in TM$,
or equivalently, 
$$
\omega^n := \omega \wedge \ldots \wedge \ne 0
$$
everywhere on~$M$.
This means that $\omega^n$ is a volume form, thus it induces a natural
orientation on~$M$.   We will always assume that any symplectic manifold
$(M,\omega)$ carries the natural orientation induced by its symplectic
structure, thus we can write
$$
\omega^n > 0.
$$
We say that a submanifold $\Sigma \subset M$
is a \defin{symplectic submanifold},\index{symplectic submanifold} or is \defin{symplectically embedded},
if $\omega|_{T\Sigma}$ is also nondegenerate.

\begin{exercise}
Show that every finite-dimensional 
manifold admitting a nondegenerate $2$-form has even dimension.
\end{exercise}

There are many interesting questions one can study on a symplectic
manifold $(M,\omega)$, e.g.~one can investigate the \emph{Hamiltonian
dynamics} for a function $H : M \to \RR$, or one can study 
\emph{symplectic embedding obstructions} of one symplectic manifold into
another (see e.g.~\cites{HoferZehnder,McDuffSalamon:ST3} for more on each
of these topics).  In this lecture, we will consider the most basic
question of symplectic topology: given two closed symplectic manifolds
$(M,\omega)$ and $(M',\omega')$ of the same dimension, what properties
can permit us to conclude that they are \defin{symplectomorphic},\index{symplectomorphism}
i.e.~that there exists a diffeomorphism
$$
\varphi : M \stackrel{\cong}{\longrightarrow} M' \quad \text{ with } \quad 
\varphi^*\omega' = \omega?
$$
We shall deal with two fundamental examples of symplectic
manifolds in dimension~$4$, of which the second is a generalization of the first.

\begin{example}
\label{ex:fibration}
Suppose $\Sigma$ is a closed, connected and oriented surface, and
$\pi : M \to \Sigma$ is a smooth fibre bundle whose fibres are
also closed, connected and oriented surfaces.  The following
result of Thurston says that under a mild (and obviously necessary)
homological assumption, such fibrations always carry a
canonical deformation class of symplectic forms.

\begin{thm}[Thurston \cite{Thurston:symplectic}]
\label{thm:Thurston}
Given a fibration $\pi : M \to \Sigma$ as described above,
suppose the homology class of the fibre is not torsion in~$H_2(M)$.  
Then $M$ admits a symplectic form $\omega$ such that
all fibres are symplectic submanifolds of $(M,\omega)$.  Moreover,
the space of symplectic forms on $M$ having this property is connected.
\end{thm}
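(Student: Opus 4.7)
The plan is to build $\omega$ as the sum of a closed fibrewise-symplectic $2$-form $\eta$ on $M$ and a large multiple of the pullback of an area form on $\Sigma$, and then to deduce connectedness from a similar two-parameter interpolation.

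First I would construct $\eta$. Since $[F] \in H_2(M;\RR)$ is non-torsion, de Rham duality provides a cohomology class $e \in H^2(M;\RR)$ with $\langle e, [F]\rangle > 0$; because all fibres are homologous, every closed representative of $e$ integrates positively over every fibre. Fix such a representative $\tau$, a trivializing cover $\{U_\alpha\}$ of $\Sigma$ by disks with trivializations $\phi_\alpha$, and an area form $\omega_F$ on $F$ of total area $\langle e, [F]\rangle$. The pullback $\eta_\alpha := \phi_\alpha^* \pr_F^* \omega_F$ is closed and fibrewise-positive on $\pi^{-1}(U_\alpha)$, and $\tau - \eta_\alpha$ is closed with vanishing fibre integral on the contractible-base piece $\pi^{-1}(U_\alpha) \simeq U_\alpha \times F$, hence exact: choose $\mu_\alpha$ with $\tau - \eta_\alpha = d\mu_\alpha$. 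With a partition of unity $\{\rho_\alpha\}$ on $\Sigma$ subordinate to $\{U_\alpha\}$, set $\lambda := -\sum_\alpha (\rho_\alpha \circ \pi)\mu_\alpha$; a short calculation shows
\begin{equation*}
\eta := \tau + d\lambda = \sum_\alpha (\rho_\alpha \circ \pi)\,\eta_\alpha \;-\; \sum_\alpha \pi^* d\rho_\alpha \wedge \mu_\alpha,
\end{equation*}
and when restricted to any fibre the second sum vanishes (since $\pi^* d\rho_\alpha$ annihilates vertical vectors), leaving a positive combination of fibrewise area forms. So $\eta$ is closed, cohomologous to $\tau$, and fibrewise-symplectic. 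This partition-of-unity gluing is the main technical step, essentially Thurston's trick.

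With $\eta$ in hand, choose any area form $\omega_\Sigma$ on $\Sigma$ and set $\omega_K := \eta + K\pi^*\omega_\Sigma$ for $K > 0$. Closedness is immediate, and since $\pi^*\omega_\Sigma|_{TF} = 0$ one has $\omega_K|_{TF} = \eta|_{TF}$, so every fibre is symplectic. For nondegeneracy, at any point of $M$ pick a basis $(v_1, v_2, h_1, h_2)$ with $v_1, v_2$ spanning the vertical subspace, normalized so that $\eta(v_1, v_2) > 0$ and $\omega_\Sigma(\pi_* h_1, \pi_* h_2) > 0$. Because $\pi^*\omega_\Sigma$ annihilates every bivector involving a vertical vector, a direct expansion gives
\begin{equation*}
(\omega_K \wedge \omega_K)(v_1, v_2, h_1, h_2) = 2K\,\eta(v_1, v_2)\,\omega_\Sigma(\pi_* h_1, \pi_* h_2) + (\eta \wedge \eta)(v_1, v_2, h_1, h_2),
\end{equation*}
which is positive for $K$ large; by compactness of $M$ a single choice of $K$ works uniformly.

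For connectedness, given two forms $\omega_0, \omega_1$ satisfying the conclusions, I would consider the two-parameter family $\omega_{t, s} := (1 - t)\omega_0 + t\omega_1 + s\pi^*\omega_\Sigma$ on $[0,1] \times [0,\infty)$. Each member is closed and fibrewise-positive, since convex combinations of fibrewise-positive forms remain fibrewise-positive. The same linear-algebra argument as in the previous paragraph shows $\omega_{t,s}$ is nondegenerate whenever $s$ is large enough, and compactness of $[0,1] \times M$ furnishes a single $S > 0$ that works for all $t \in [0,1]$. The concatenation of the three paths $s \mapsto \omega_0 + s\pi^*\omega_\Sigma$ on $[0, S]$, then $t \mapsto \omega_{t, S}$ on $[0, 1]$, then $s \mapsto \omega_1 + (S - s)\pi^*\omega_\Sigma$ on $[0, S]$, lives inside the space of fibrewise-symplectic symplectic forms and connects $\omega_0$ to $\omega_1$. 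The hard part of the whole argument is undoubtedly the construction of $\eta$ in the first step; once this is in place, the rest is bilinear algebra plus compactness.
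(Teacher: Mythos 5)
The paper does not actually prove this theorem---it is quoted as a black box and attributed to Thurston---so I am comparing your argument with the standard proof.  Your existence half is exactly the standard Thurston trick (local fibrewise area forms $\eta_\alpha$, correction of a global closed representative $\tau$ by $d\lambda$ using a partition of unity pulled back from the base, then $\eta + K\pi^*\omega_\Sigma$ with $K$ large), and it is correct.  The only point you should make explicit is that the trivializations $\phi_\alpha$ must be chosen so that the fibre identifications are orientation preserving; this is what guarantees that all the $\eta_\alpha$ restrict \emph{positively} to the fibres, so that the pointwise convex combination $\sum_\alpha (\rho_\alpha\circ\pi)\,\eta_\alpha|_{TF}$ is again nondegenerate.

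The connectedness half has a genuine gap, tied to an orientation proviso that the statement needs in any case.  ``All fibres are symplectic submanifolds'' only says $\omega|_{TF}$ is nondegenerate; it fixes neither the sign of $\omega$ on the fibres nor the orientation that $\omega$ induces on $M$.  Your middle leg silently assumes both $\omega_0$ and $\omega_1$ are fibrewise \emph{positive} (otherwise the convex combination can degenerate along the fibres), and, more seriously, your outer legs $s \mapsto \omega_i + s\pi^*\omega_\Sigma$ need not consist of symplectic forms at all: one has $(\omega_i + s\pi^*\omega_\Sigma)^2 = \omega_i^2 + 2s\,\omega_i\wedge\pi^*\omega_\Sigma$, and while fibrewise positivity makes the second term a positive volume form, nothing in the hypotheses controls the sign of $\omega_i^2$ relative to it.  Concretely, on $S^2\times S^2 \to S^2$ (projection to the first factor) with area form $\sigma$, the form $\omega_1 = \pr_1^*(-\sigma) + \pr_2^*\sigma$ makes every fibre symplectic (and positively so), yet $\omega_1 + s\,\pr_1^*\sigma$ is degenerate at $s=1$; worse, $\omega_1$ orients $S^2\times S^2$ oppositely to $\pr_1^*\sigma + \pr_2^*\sigma$, so these two forms cannot be joined by \emph{any} path of symplectic forms, and the space in the literal statement is disconnected.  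The theorem must therefore be read (as is standard, and consistent with the paper's orientation conventions) as concerning forms whose restriction to each fibre is positive, equivalently which induce the natural orientations.  Under that reading your argument closes up: the sign of $\omega|_{TF}$ is locally constant in the base point and hence global, $\omega_i^2 > 0$ and $\omega_i\wedge\pi^*\omega_\Sigma > 0$ with respect to the natural orientation, so the outer legs stay symplectic and fibrewise positive, and the middle leg works for a single large $S$ by compactness of $[0,1]\times M$, as you say.  The repair is easy, but as written the assertion that the concatenated path ``lives inside the space of fibrewise-symplectic symplectic forms'' is unjustified, and it is false without the orientation hypotheses.
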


A symplectic manifold $(M,\omega)$ with a fibration whose fibres are
symplectic is called a \defin{symplectic fibration}.\index{symplectic fibration}\index{fibration!symplectic}
As a special case, if the fibres of $\pi : M \to \Sigma$ are spheres and
$\Sigma$ is a closed oriented surface, then a symplectic fibration $(M,\omega)$
over $\Sigma$ is called a \defin{symplectic ruled surface}.\index{symplectic ruled surface}\index{ruled surface}
This term is inspired by complex algebraic geometry; in particular, the word
``surface'' refers to the fact that such manifolds can also
be shown to admit complex structures, which makes them $2$-dimensional
complex manifolds, i.e.~\emph{complex surfaces}.
\end{example}

\begin{exercise}
Show that the homological condition in Theorem~\ref{thm:Thurston} is
always satisfied if the fibres are spheres.
\textsl{Hint: $A \in H_2(M)$ is a torsion class if and only if the
homological intersection number $A \cdot B \in \ZZ$ vanishes for all
$B \in H_2(M)$.  Consider the vertical subbundle $VM \subset TM \to M$,
defined as the set of all vectors in $TM$ that are tangent to fibres
of $\pi : M \to \Sigma$.
How many times (algebraically) does the zero-set of a generic section of $VM \to M$
intersect a generic fibre of $\pi : M \to \Sigma$?}
\end{exercise}

The above class of examples is a special case of the following more
general class.

\begin{example}
\label{ex:Lefschetz}
Suppose $M$ and $\Sigma$ are closed, connected, oriented, smooth manifolds of
dimensions $4$ and $2$ respectively.  A \defin{Lefschetz fibration}\index{Lefschetz fibration}\index{fibration!Lefschetz}
of $M$ over $\Sigma$ is a smooth map 
$$
\pi : M \to \Sigma
$$
with finitely many critical points $M\crit := \Crit(\pi) \subset M$ and 
critical values $\Sigma\crit := \pi(M\crit) \subset \Sigma$ such that
near each point $p \in M\crit$, there exists a complex coordinate
chart $(z_1,z_2)$ compatible with the orientation of $M$, 
and a corresponding complex coordinate $z$ on a neighborhood of $\pi(p) \in\Sigma\crit$
compatible with the orientation of $\Sigma$, in which
$\pi$ locally takes the form
\begin{equation}
\label{eqn:Morse}
\pi(z_1,z_2) = z_1^2 + z_2^2.
\end{equation}
\end{example}

\begin{remark}
\label{remark:complexStructure}
Any $2n$-dimensional manifold $M$ admits a set of complex coordinates
$(z_1,\ldots,z_n)$ near any point $p \in M$, but it is not always possible
to cover $M$ with such coordinate charts so that the transition maps are
holomorphic; this is possible if and only if $M$ also admits a
\defin{complex structure}.\index{complex structure!on a manifold}\index{complex manifold}\index{almost complex structure!integrable}
In the definition above, we have not assumed
that $M$ admits a complex structure, as the coordinates $(z_1,z_2)$ are only
required to exist locally near the finite set $M\crit$.  Note however
that any choice of complex coordinates on some domain determines an 
orientation on that domain: this follows
from the fact that under the natural identification $\RR^{2n} = \CC^n$, any
complex linear isomorphism $\CC^n \to \CC^n$, when viewed as an element of
$\GL(2n,\RR)$, has positive determinant.  In the above definition, we
are assuming that the given orientations of $M$ and $\Sigma$ always match
the orientations determined by the complex local coordinates.
\end{remark}

A Lefschetz fibration restricts to a smooth fibre bundle over the set
$\Sigma \setminus \Sigma\crit$, and the fibres of this bundle are called
the \defin{regular fibres}\index{regular fiber of a Lefschetz fibration}\index{Lefschetz fibration!regular fiber of}
of $M$; they are in general closed oriented
surfaces, and we may always assume without loss of generality that they
are connected (see Exercise~\ref{EX:connected} below).  The finitely many
\defin{singular fibres}\index{singular fiber of a Lefschetz fibration}\index{Lefschetz fibration!singular fiber of}
$\pi^{-1}(z)$ for $z \in \Sigma\crit$ are 
immersed surfaces with finitely many double points that look like the
transverse intersection of $\CC \times \{0\}$ and $\{0\} \times \CC$
in~$\CC^2$; one can see this by rewriting 
\eqref{eqn:Morse} in the coordinates $\zeta_1 := z_1 + i z_2$ and
$\zeta_2 := z_1 - i z_2$, so that the local model becomes
$\pi(\zeta_1,\zeta_2) = \zeta_1 \zeta_2$.
Each singular fibre is uniquely decomposable 
into a transversely intersecting union of
subsets that are immersed images of \emph{connected} surfaces: we
call these subsets the \defin{irreducible components},\index{Lefschetz fibration!irreducible components of a singular fiber}\index{irreducible components of a Lefschetz singular fiber}
see Figure~\ref{fig:Lefschetz}.

\begin{figure}
\includegraphics{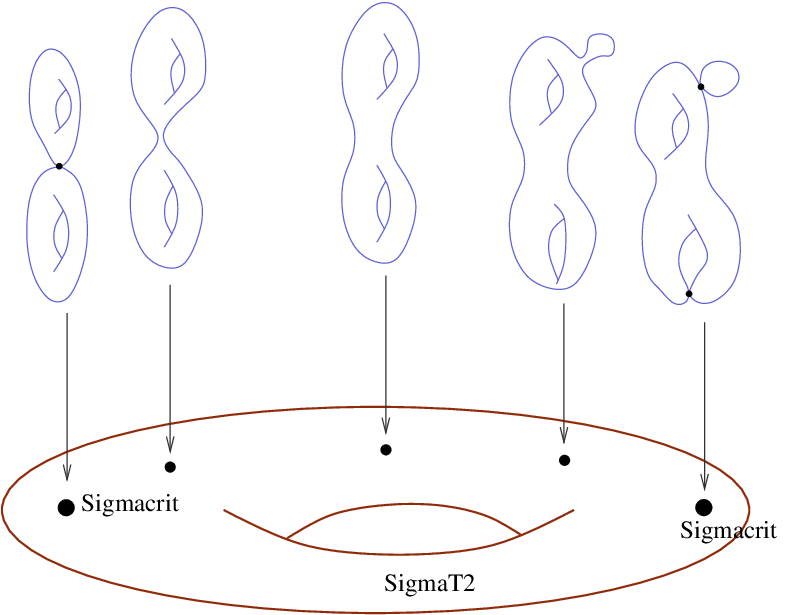}
\caption{\label{fig:Lefschetz} A Lefschetz fibration over 
$\TT^2$ with regular fibres of genus~$2$ and two singular fibres,
each of which has two irreducible components.}
\end{figure}

Thurston's theorem about symplectic structures on fibrations
was generalized to Lefschetz fibrations by Gompf.  To state the most
useful version of this result, we need to generalize the notion of a
``symplectic submanifold'' in a way that will also make sense for
singular fibres, which are not embedded submanifolds.  Since Lefschetz
critical points are defined in terms of complex local coordinates, one way
to do this is by elucidating the relationship between complex and
symplectic structures.

\begin{defn}
\label{defn:tame}
Suppose $E \to B$ is a smooth real vector bundle of even rank.  
A \defin{complex structure}\index{complex structure!on a vector bundle}
on $E \to B$ is a smooth linear bundle map 
$J : E \to E$ such that 
$J^2 = -\1$.  A \defin{symplectic structure}\index{symplectic structure!on a vector bundle}
on $E \to B$ is a smooth
antisymmetric bilinear bundle map
$\omega : E \oplus E \to \RR$ which is nondegenerate, meaning
$\omega(v,\cdot) \ne 0$ for all nonzero $v \in E$.  
We say that $\omega$ \defin{tames}\index{almost complex structure!tamed by a symplectic form}\index{tame almost complex structure}
$J$ if for all
$v \in E$ with $v \ne 0$, we have
$$
\omega(v,Jv) > 0.
$$
We say additionally that $J$ is \defin{compatible}\index{almost complex structure!compatible with a symplectic form}\index{compatible almost complex structure}
with $\omega$ if the pairing
$$
g_J(v,w) := \omega(v,Jw)
$$
is both nondegenerate and symmetric, i.e.~it defines a bundle metric.
\end{defn}
One can show that a complex or symplectic structure on a vector bundle
implies the existence of local trivializations for which all transition
maps are complex linear maps $\CC^n \to \CC^n$ or symplectic linear maps
$\RR^{2n} \to \RR^{2n}$ respectively; see \cite{McDuffSalamon:ST3} for details.
An \defin{almost complex structure}\index{almost complex structure} on a manifold $M$ is simply a complex
structure on its tangent bundle~$TM \to M$.  Here the word ``almost'' is inserted
in order to distinguish this relatively weak notion from the much more rigid notion
mentioned in Remark~\ref{remark:complexStructure}: a complex manifold carries
a natural almost complex structure (defined via multiplication by~$i$ in any
holomorphic coordinate chart), but not every almost complex structure arises
in this way from local charts, and there are many manifolds that admit
\emph{almost} complex structures but not complex structures.
One way to paraphrase Definition~\ref{defn:tame} is to say that $\omega$
tames $J$ if and only if every complex $1$-dimensional subspace of a
fibre in~$E$ is also a \emph{symplectic subspace}; similarly, if
$(M,\omega)$ is a symplectic manifold, then $\omega$ tames an almost complex
structure $J$ on $M$ if and only if every \emph{complex curve} in the 
\emph{almost complex manifold} $(M,J)$ is also a symplectic submanifold.

With this understood, suppose $\pi : M \to \Sigma$ is a Lefschetz fibration
as defined above.  We will say that a symplectic form $\omega$ on $M$ is
\defin{supported by $\pi$}\index{symplectic form!supported by a Lefschetz fibration}
if the following conditions hold:
\begin{enumerate}
\item Every fibre of $\pi|_{M\setminus M\crit} : M \setminus M\crit \to \Sigma$ is a 
symplectic submanifold;
\item On a neighborhood of $M\crit$, $\omega$ tames some almost 
complex structure $J$ that preserves the tangent spaces of the fibres.
\end{enumerate}

Gompf's generalization of Thurston's theorem can now be stated as follows.

\begin{thm}[Gompf \cite{GompfStipsicz}]
\label{thm:Gompf}
Suppose $M$ and $\Sigma$ are closed, connected and oriented manifolds of 
dimensions~$4$ and~$2$ respectively, and $\pi : M \to \Sigma$ is a
Lefschetz fibration for which the fibre represents a non-torsion class
in $H_2(M)$.  Then the space of symplectic forms on $M$ that are supported
by $\pi$ is nonempty and connected.
\end{thm}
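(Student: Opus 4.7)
The plan is to follow the strategy of Thurston for honest fibrations and adapt it to Lefschetz fibrations by exploiting the fact that the definition of ``supported'' already specifies a preferred symplectic model near critical points. The two things to construct are (i) a closed $2$-form $\eta$ on $M$ that is positive on every fibre (including on each branch of a singular fibre, and on the local complex model near critical points), and then (ii) the symplectic form $\omega := \eta + K\,\pi^*\omega_\Sigma$ for $K>0$ large, where $\omega_\Sigma$ is any area form on the oriented surface $\Sigma$. Nonemptiness then follows automatically from (i) and (ii), and connectedness will come from a convex interpolation argument with a small perturbation near $M\crit$.

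For the construction of $\eta$, the non-torsion hypothesis on the fibre class $[F]\in H_2(M)$ yields, by the hint-style duality observation, a de Rham cohomology class $\mathfrak{e}\in H^2(M;\RR)$ with $\langle\mathfrak{e},[F]\rangle>0$. I would cover $\Sigma$ by finitely many open disks $\{V_j\}$, each either lying entirely in $\Sigma\setminus\Sigma\crit$ or containing exactly one critical value, and construct on each $\pi^{-1}(V_j)$ a closed $2$-form $\eta_j$ that is positive on fibres. Over a regular $V_j$, this is Thurston's construction applied to the trivial bundle $V_j\times F$. Over a singular $V_j$, near the critical points I use the standard K\"ahler form $\tfrac{i}{2}(dz_1\wedge d\bar z_1 + dz_2\wedge d\bar z_2)$ in the Lefschetz coordinates of \eqref{eqn:Morse}; this is positive on each of the two transverse complex lines comprising $\{z_1^2+z_2^2=0\}$, and it tames the standard complex structure $J_0$, which preserves all local fibres of $z_1^2+z_2^2$. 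I would then extend this over $\pi^{-1}(V_j)\setminus M\crit$ using Thurston's construction on a punctured disk neighborhood and smoothly cut off. The $\eta_j$ are glued via a partition of unity $\{\rho_j\}$ pulled back from $\Sigma$; after rescaling each $\eta_j$ so that its fibrewise integral equals $\langle\mathfrak{e},[F]\rangle$, standard bookkeeping shows that $\eta:=\sum_j(\rho_j\circ\pi)\eta_j$ can be corrected by an exact form to produce a global closed representative of $\mathfrak{e}$ that is still positive on every fibre.

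For step (ii), $\omega:=\eta+K\pi^*\omega_\Sigma$ is closed, and a direct check shows it is nondegenerate for $K$ large. On tangent spaces to regular fibres, $\pi^*\omega_\Sigma$ vanishes while $\eta$ is positive; on any transverse complement the $\pi^*\omega_\Sigma$-term dominates as $K\to\infty$. Near each critical point one verifies that $\omega$ tames $J_0$: the local K\"ahler summand of $\eta$ tames $J_0$, and $\pi^*\omega_\Sigma$ tames any almost complex structure for which $\pi$ is pseudoholomorphic, such as $J_0$. A compactness argument on $M$ minus small neighborhoods of $M\crit$ yields a uniform $K$.

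For connectedness, given two supported forms $\omega_0,\omega_1$, the convex combination $\omega_t=(1-t)\omega_0+t\omega_1$ is closed and remains positive on every regular fibre. The only obstacle is the taming condition at critical points. Each $\omega_i$ tames a fibre-preserving almost complex structure $J_i$ defined near $M\crit$; the space of such $J$'s on a neighborhood of $M\crit$ is contractible (fibrewise, it reduces to the contractible spaces of compatible structures on a complex line plus a transverse complex line), so $J_0$ and $J_1$ are connected by a path $J_t$. After a small perturbation of $\omega_t$ supported in a neighborhood of $M\crit$—adding a suitably small positive multiple of the local K\"ahler form adapted to $J_t$—one obtains a path of supported symplectic forms from $\omega_0$ to $\omega_1$. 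The main obstacle is the global construction of $\eta$: matching the prescribed local K\"ahler models at the critical points with Thurston's construction elsewhere, all within a single cohomology class and with fibrewise positivity maintained across overlaps, is the technical heart of the argument, and was Gompf's real contribution beyond Thurston's fibration case.
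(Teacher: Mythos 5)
The paper never proves this theorem---it is quoted as a black box and attributed to Gompf \cite{GompfStipsicz}---so there is no in-text argument to compare with; judged on its own terms, your existence argument follows the standard Thurston--Gompf route and is correct in outline: a class $\mathfrak{e}$ pairing positively with the fibre, local closed fibrewise-positive forms with the standard K\"ahler model near $M\crit$, gluing via a partition of unity pulled back from the base, and inflation by $K\pi^*\omega_\Sigma$. One caveat on the bookkeeping: as written, $\sum_j(\rho_j\circ\pi)\eta_j$ is not closed, and ``correcting by an exact form'' is not quite the right description; the standard trick is to fix a global closed representative $\eta_0$ of $\mathfrak{e}$, choose $1$-forms $\alpha_j$ on $\pi^{-1}(V_j)$ so that $\eta_0+d\alpha_j$ is positive on fibres there, and set $\eta=\eta_0+\sum_j d\bigl((\rho_j\circ\pi)\alpha_j\bigr)$, so that on each fibre one sees a convex combination of area forms (the terms $d(\rho_j\circ\pi)\wedge\alpha_j$ die on vertical vectors).

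The genuine gap is in your connectedness argument. You assert that the straight line $\omega_t=(1-t)\omega_0+t\omega_1$ is fine except for the taming condition at $M\crit$; but the space of fibrewise-positive symplectic forms is not convex, and $\omega_t$ can fail to be \emph{nondegenerate} away from the critical points---closedness and positivity on fibres control nothing in the horizontal directions. This is precisely why the Thurston-type connectedness statement is proved by inflation rather than convexity: one first slides each $\omega_i$ along the segment $\omega_i+sK\pi^*\omega_\Sigma$, $s\in[0,1]$, which stays symplectic and supported because $(\omega_i+sK\pi^*\omega_\Sigma)^2=\omega_i^2+2sK\,\omega_i\wedge\pi^*\omega_\Sigma$ and the cross term is nonnegative by positivity of $\omega_i$ on the (correctly oriented) fibres, and then interpolates $(1-t)\omega_0+t\omega_1+K\pi^*\omega_\Sigma$ with $K$ chosen large for the whole compact family $t\in[0,1]$. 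Your proposed repair near $M\crit$ also does not work as stated: if $\omega_t$ is negative on some $J_t$-complex line, adding a \emph{small} multiple of a $J_t$-K\"ahler form cannot restore taming, and a cut-off K\"ahler form is not closed, so a large multiple cannot simply be added either. Handling the critical points correctly---for instance arranging, after a preliminary supported deformation, that the two endpoint forms tame a common fibre-preserving $J$ near $M\crit$, or building the interpolating local models compatibly with the inflation, together with the orientation check that $\pi^*\omega_\Sigma$ is nonnegative on $J$-complex lines near $M\crit$---is exactly where the real work lies, and is why this part of the theorem requires Gompf's argument rather than a convexity remark.
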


A Lefschetz fibration $\pi : M \to \Sigma$ on a symplectic manifold $(M,\omega)$
with $\omega$ supported in the above sense is called a \defin{symplectic
Lefschetz fibration}.\index{symplectic Lefschetz fibration}\index{Lefschetz fibration!symplectic}

\begin{exercise}
\label{EX:connected}
Assuming $M$ and $\Sigma$ are closed and connected,
show that if $\pi : M \to \Sigma$ is a Lefschetz
fibration with disconnected fibers, then one can write
$\pi = \varphi \circ \pi'$ where $\varphi : \Sigma' \to \Sigma$ is a
finite covering map of degree at least~$2$ and $\pi' : M \to \Sigma'$
is a Lefschetz fibration with connected fibers.
\end{exercise}

There is a natural way to replace any smooth fibre bundle 
as in Example~\ref{ex:fibration} with a Lefschetz fibration that has
singular fibres, namely by \emph{blowing up} finitely many points.
Topologically, this can be described as follows: given $p \in M$,
choose local complex coordinates $(z_1,z_2)$ on some neighborhood
$\nN(p) \subset M$ of $p$ that are
compatible with the orientation and identify $p$ with $0 \in \CC^2$.
Let $E \to \CP^1$ denote the tautological complex line bundle,
i.e.~the bundle whose fibre over $[z_1 : z_2] \in \CP^2$ is
the complex line spanned by $(z_1,z_2) \in \CC^2$.  There is a
canonical identification of $E \setminus \CP^1$ with
$\CC^2 \setminus \{0\}$, where
$\CP^1 \subset E$ here denotes the zero-section.
Thus for some neighborhood $\nN(\CP^1) \subset E$ of $\CP^1$,
the above coordinates allow us
to identify $\nN(p) \setminus \{p\}$ with $\nN(\CP^1) \setminus \CP^1$,
and we define the (smooth, oriented) \defin{blowup}\index{blowup!of a complex manifold}\index{complex blowup}
$\widehat{M}$
of $M$ by removing $\nN(p)$ and replacing it with $\nN(\CP^1)$.
There is a natural projection
$$
\Phi : \widehat{M} \to M,
$$
such that $S := \Phi^{-1}(p)$ is a smoothly embedded $2$-sphere 
$S \cong \CP^1 \subset \widehat{M}$ (called an \defin{exceptional sphere}),\index{exceptional sphere}
whose homological self-intersection number\index{self-intersection number!homological}
satisfies
\begin{equation}
\label{eqn:exceptional}
[S] \cdot [S] = -1.
\end{equation}
The restriction of $\Phi$ to $\widehat{M} \setminus S$ is a
diffeomorphism onto $M \setminus \{p\}$.

\begin{exercise}
\label{EX:LefschetzRegular}
Show that if $\pi : M \to \Sigma$ is a Lefschetz fibration and
$p \in M \setminus M\crit$, then there exist complex local coordinates
$(z_1,z_2)$  for a neighborhood of $p$ in~$M$ and $z$ for a 
neighborhood of $\pi(p)$ in~$\Sigma$, both compatible with the
orientations, such that $\pi$ takes the form $\pi(z_1,z_2) = z_1$
near~$p$.
\end{exercise}
\begin{exercise}
\label{EX:blowupLefschetz}
Suppose $\pi : M \to \Sigma$ is a Lefschetz fibration, and $\widehat{M}$
is obtained by blowing up $M$ at a point $p \in M \setminus M\crit$,
using a complex coordinate chart as in Exercise~\ref{EX:LefschetzRegular}.
Then if $\Phi : \widehat{M} \to M$ denotes the induced projection map, 
show that $\pi \circ \Phi : \widehat{M} \to \Sigma$ is a Lefschetz fibration,
having one more critical point than $\pi : M \to \Sigma$ and containing
the exceptional sphere $\Phi^{-1}(p)$ as an irreducible component of a
singular fibre.
\end{exercise}

\begin{exercise}
Prove that the sphere $S \subset \widehat{M}$ created by blowing up $M$ at a point
satisfies \eqref{eqn:exceptional}.
\textsl{Hint: You only need to know the first Chern number of the tautological line bundle.}
\end{exercise}
\begin{exercise}
Prove that if $\widehat{M}$ is constructed by blowing up $M$ at a point, then
$\widehat{M}$ is diffeomorphic to the connected sum $M \# \overline{\CP^2}$,
where the line over $\CP^2$ indicates that it carries the opposite of its
canonical orientation (determined by the complex structure of $\CP^2$).
\textsl{Hint: Present $\CP^2$ as the union of $\CC^2$ with a ``sphere at infinity''
$\CP^1 \subset \CP^2$.  What does a tubular neighborhood of $\CP^1$ 
in $\CP^2$ look like, and what changes if you reverse the orientation?}
\end{exercise}

It is easy to prove from the above description of the blowup that if $M$ 
is a complex manifold, $\widehat{M}$ inherits a canonical complex
structure.  What is somewhat less obvious, but nonetheless true and
hopefully not so surprising by this point, is that
if $(M,\omega)$ is symplectic, then $\widehat{M}$ also inherits a
symplectic form $\widehat{\omega}$ that is canonical up to smooth
deformation through symplectic forms (see \cite{McDuffSalamon:ST3} or
\cite{Wendl:rationalRuled}*{\S 3.2}).\index{blowup!of a symplectic manifold}\index{symplectic blowup}
In this case, the resulting exceptional sphere is a
symplectic submanifold of $(\widehat{M},\widehat{\omega})$.  Conversely,
if $(M,\omega)$ is any symplectic $4$-manifold containing a
symplectically embedded exceptional sphere $S \subset M$, then one
can reverse the above operation and show that $(M,\omega)$ is
the symplectic blowup of another symplectic manifold $(M_0,\omega_0)$,
with the resulting projection $\Phi : M \to M_0$ collapsing $S$
to a point.  We say that a symplectic $4$-manifold is \defin{minimal}\index{minimal symplectic manifold}\index{symplectic manifold!minimal}
if it contains no symplectically embedded exceptional spheres, which
means it is not the blowup of any other symplectic manifold.
McDuff \cite{McDuff:rationalRuled} proved:

\begin{thm}[McDuff \cite{McDuff:rationalRuled}]
If $(M,\omega)$ is a closed symplectic $4$-manifold with
a maximal collection of pairwise disjoint exceptional spheres 
$E_1,\ldots,E_N \subset (M,\omega)$, then the symplectic manifold 
obtained from $(M,\omega)$ by ``blowing down'' 
along $E_1,\ldots,E_N$ is minimal.
\end{thm}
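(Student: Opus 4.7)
The plan is to argue by contradiction: suppose the symplectic manifold $(M_0,\omega_0)$ obtained by blowing down $E_1,\ldots,E_N$ contains a symplectically embedded exceptional sphere $S_0 \subset M_0$. The goal is to extract from $S_0$ a symplectically embedded exceptional sphere $S \subset (M,\omega)$ disjoint from $E_1,\ldots,E_N$, which contradicts the assumed maximality of the original collection. Let $\Psi : M \to M_0$ denote the blowdown projection, sending each $E_i$ to a point $p_i \in M_0$. From the symplectic blowdown construction alluded to in the preceding discussion, the restriction of $\Psi$ to the complement of any prescribed neighborhood of $E_1 \cup \cdots \cup E_N$ is a symplectomorphism onto its image in $M_0 \setminus \{p_1,\ldots,p_N\}$.

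The first step is to perturb $S_0$ slightly through symplectic embeddings so that the resulting sphere $\widetilde S_0$ avoids the finite set $\{p_1,\ldots,p_N\}$. A $2$-dimensional surface in a $4$-manifold generically misses any prescribed finite set of points by a standard transversality argument, and being a symplectically embedded surface is an open condition in the $C^1$-topology on smooth maps $S^2 \to M_0$, so a sufficiently small generic perturbation preserves both conditions simultaneously. The perturbed sphere $\widetilde S_0$ is homologous to $S_0$, hence still satisfies $[\widetilde S_0]\cdot[\widetilde S_0] = -1$ and $\langle c_1(M_0,\omega_0),[\widetilde S_0]\rangle = 1$.

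The second step is to lift $\widetilde S_0$ back up to $M$. Since $\widetilde S_0$ is compact and disjoint from $\{p_1,\ldots,p_N\}$, we may shrink the blowdown neighborhoods enough that $\widetilde S_0$ lies entirely in the region where $\Psi$ is a symplectomorphism; set $S := \Psi^{-1}(\widetilde S_0)$. Then $S$ is a symplectically embedded $2$-sphere in $(M,\omega)$, diffeomorphic to $\widetilde S_0$ via $\Psi$ and disjoint from $E_1,\ldots,E_N$ by construction. Under the natural splitting $H_2(M;\ZZ) \cong H_2(M_0;\ZZ) \oplus \bigoplus_{i=1}^N \ZZ[E_i]$ provided by the blowup, the class $[S]$ is identified with the lift of $[\widetilde S_0] \in H_2(M_0;\ZZ)$, so in particular $[S]\cdot[S] = -1$. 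Thus $S$ is an exceptional sphere disjoint from all the $E_i$, delivering the desired contradiction.

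The main obstacle is conceptual rather than technical: one must know that the symplectic blowup and blowdown operations are genuinely mutually inverse and intertwine the symplectic structures away from arbitrarily small neighborhoods of the exceptional locus. With that input accepted from the preceding discussion (which in turn rests on the local model for the symplectic blowup), the remainder of the argument reduces to transversality and the openness of the symplectic condition, together with a routine computation in the blowup decomposition of $H_2$.
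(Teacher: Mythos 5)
A remark on context first: the notes do not actually prove this theorem---it is quoted as a black box from McDuff's paper---so there is no in-text argument to compare yours against; what follows is an assessment of your proposal on its own terms, and it has a genuine gap, located exactly where all the difficulty of the theorem lives. The blowdown map $\Psi : M \to M_0$ is a symplectomorphism only on the complement of the \emph{specific} neighborhoods of $E_1,\ldots,E_N$ used in the construction, whose images in $(M_0,\omega_0)$ are embedded symplectic balls $B_1,\ldots,B_N$ of definite capacity, bounded below by the $\omega$-areas of the $E_i$. Your sentence ``the restriction of $\Psi$ to the complement of \emph{any prescribed} neighborhood of $E_1\cup\cdots\cup E_N$ is a symplectomorphism'' is not correct, and the step ``we may shrink the blowdown neighborhoods enough that $\widetilde S_0$ lies entirely in the region where $\Psi$ is a symplectomorphism'' is where the argument fails: shrinking those neighborhoods changes the glued-in balls and hence the symplectic manifold $(M,\omega)$ itself (one obtains a blowup of smaller weight, which is generally not symplectomorphic to the given one). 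Perturbing $\widetilde S_0$ off the finite set $\{p_1,\ldots,p_N\}$ is easy but not enough; you would need it to avoid the balls $B_i$, and this cannot be arranged in general---for instance the $\omega_0$-area of $S_0$ may be smaller than the capacities of the $B_i$, and in any case no symplectic isotopy pushing it off the balls need exist. On $B_i\setminus\{p_i\}$ the inverse of $\Psi$ is still defined smoothly, so you do obtain a smooth sphere $S \subset M$ disjoint from the $E_i$ in the expected homology class, but $\Psi^*\omega_0 \ne \omega$ on those regions, so $S$ has no reason to be $\omega$-symplectic where it crosses them. In short, the soft transversality-plus-openness argument only ever produces a smooth exceptional sphere disjoint from the $E_i$, not a symplectic one.

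The entire content of McDuff's theorem is concentrated in the step you pass over: an exceptional class of $(M_0,\omega_0)$ must be shown to force a \emph{symplectically embedded} sphere in $(M,\omega)$ in a class $A$ with $A\cdot[E_i]=0$ for all~$i$. The actual proof is pseudoholomorphic: for generic $\omega$-compatible $J$, every exceptional class has an embedded $J$-holomorphic representative (this is the other McDuff result quoted in the remark at the end of \S\ref{sec:ruled}); representing both the class $A$ and the classes $[E_i]$ by such curves and invoking positivity of intersections (Corollary~\ref{cor:positivityGlobal}), the vanishing intersection numbers $A\cdot[E_i]=0$ force the representative of $A$ to be disjoint from the $E_i$, contradicting maximality. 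Your homological bookkeeping and the openness of the symplectic condition are fine, but they constitute only the easy part of the argument; to repair the proposal you would have to replace the lifting step by this holomorphic-curve input (or an equivalent), not by shrinking neighborhoods.
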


One can also show that if $\omega$ is supported by
a Lefschetz fibration $\pi : M \to \Sigma$, then
the symplectic form $\widehat{\omega}$ on the blowup $\widehat{M}$ can be
arranged to be supported by the Lefschetz fibration on $\widehat{M}$ arising
from Exercise~\ref{EX:blowupLefschetz};
see e.g.~\cite{Wendl:rationalRuled}*{Theorem~3.44}.

Symplectic fibrations are a rather special class of symplectic $4$-manifolds,
but the following deep theorem of Donaldson indicates that Lefschetz fibrations
are surprisingly general examples.  The theorem is actually true in all
dimensions; we will not make use of it in any concrete way in these notes, but
it is important to have as a piece of background knowledge.

\begin{thm}[Donaldson \cite{Donaldson:Lefschetz}]
Any closed symplectic manifold can be blown up finitely many times to a symplectic
manifold which admits a symplectic Lefschetz fibration over~$S^2$.
\end{thm}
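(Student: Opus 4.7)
The plan is to adapt Donaldson's method of \emph{approximately holomorphic sections}: construct a Lefschetz pencil on $M$ using two carefully chosen sections of a high tensor power of a suitable complex line bundle, and then blow up the base locus of the pencil to obtain a Lefschetz fibration over $S^2$. As a preliminary reduction, the space of symplectic forms on $M$ is $C^0$-open, so $\omega$ may be perturbed so that $[\omega] \in H^2(M;\QQ)$ and then rescaled so that $[\omega/2\pi] \in H^2(M;\ZZ)$; this is harmless since deformation-equivalent symplectic forms produce diffeomorphic blowups and the symplectic class can be readjusted afterwards. With $\omega$ integral, choose a Hermitian line bundle $L \to M$ with $c_1(L) = [\omega/2\pi]$ and a unitary connection $\nabla$ whose curvature is $-i\omega$; together with an $\omega$-compatible almost complex structure $J$ and the induced Hermitian metric, this determines a Cauchy--Riemann operator $\overline{\partial}$ on every tensor power $L^{\otimes k}$.

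The central technical step is to construct, for each sufficiently large integer $k$, a pair of sections $s_0^k, s_1^k \in \Gamma(L^{\otimes k})$ which are simultaneously \emph{approximately holomorphic}, meaning $|\overline{\partial} s_j^k| \leq C k^{-1/2} |\partial s_j^k|$ pointwise in the metric rescaled by $\sqrt{k}$, and \emph{uniformly transverse}, meaning that both the common zero locus $\{s_0^k = s_1^k = 0\}$ and the critical set of the ratio map $s_0^k/s_1^k$ are non-degenerate with bounds independent of $k$. The local building blocks are Gaussian-type sections modeled on $e^{-|z|^2/4}$ in Darboux balls of the rescaled metric, which are patched together with a partition of unity and then perturbed by means of Donaldson's quantitative transversality theorem.

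Given such a pair, define the pencil
\begin{equation*}
\pi_k : M \setminus Z_k \to \CP^1, \qquad \pi_k(p) := [s_0^k(p) : s_1^k(p)],
\end{equation*}
where $Z_k := \{s_0^k = 0\} \cap \{s_1^k = 0\}$ is the base locus. For $k$ large, the combination of approximate holomorphicity and uniform transversality yields: (i) the critical points of $\pi_k$ form a finite set near each of which approximately holomorphic coordinates $(z_1, z_2)$ exist with $\pi_k(z_1, z_2) = z_1^2 + z_2^2$; (ii) $Z_k$ is a finite set of points near each of which approximately holomorphic coordinates exist with $\pi_k(z_1, z_2) = [z_1 : z_2]$; and (iii) every regular fibre is approximately $J$-holomorphic, and hence symplectic by the taming condition. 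Applying the symplectic blowup construction described before Theorem~\ref{thm:Gompf} at each point of $Z_k$ then produces a symplectic manifold $\widehat M$ on which $\pi_k$ extends to a bona fide Lefschetz fibration $\widehat\pi_k : \widehat M \to \CP^1 \cong S^2$, with the exceptional spheres appearing as sections. A direct check using the approximate holomorphicity shows that the blowup symplectic form supports $\widehat\pi_k$ in the sense of Theorem~\ref{thm:Gompf}.

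The hard step is unquestionably the construction of sections satisfying approximate holomorphicity and uniform transversality simultaneously. Ordinary Sard--Smale genericity fails here because the bundles, metrics, and relevant function spaces all depend on $k$, so any perturbation used to achieve transversality must be controlled by $k$-independent estimates. This is exactly the content of Donaldson's quantitative transversality lemma, whose proof rests on a careful covering of $M$ by $O(k^n)$ Darboux balls of radius $\sim 1/\sqrt{k}$, on uniform $C^1$-estimates for approximately holomorphic sections, and on a finite-dimensional polynomial-approximation argument. It is this ingredient that forces the passage to high tensor powers and constitutes the principal new input of \cite{Donaldson:Lefschetz} beyond classical Kodaira-style embedding theorems.
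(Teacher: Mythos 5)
There is nothing to compare against inside the paper: Donaldson's theorem is quoted there purely as background, with a citation but no proof, so the only meaningful benchmark is Donaldson's original argument---and your sketch is a faithful outline of exactly that route (approximately holomorphic sections of $L^{\otimes k}$, quantitative transversality, the pencil $[s_0^k : s_1^k]$, blow-up of the base locus), with the genuinely hard ingredient correctly identified and appropriately deferred to the cited work rather than claimed as routine. One small caveat: your description of the base locus $Z_k$ as a finite set of points and the local models in coordinates $(z_1,z_2)$ are specific to dimension four, which is consistent with the setting of these notes, but the all-dimensions version alluded to in the surrounding text requires blowing up along a codimension-$4$ symplectic submanifold (the base locus) rather than at finitely many points, with the Lefschetz critical model $z_1^2 + \cdots + z_n^2$.
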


\section{McDuff's characterization of symplectic ruled surfaces}
\label{sec:ruled}

If $(M,\omega)$ is a symplectic $4$-manifold with a supporting
Lefschetz fibration $\pi : M \to \Sigma$, then it admits a
$2$-dimensional symplectic submanifold $S \subset (M,\omega)$
satisfying
$$
[S] \cdot [S] = 0;
$$
indeed, $S$ can be chosen to be any regular fibre of the Lefschetz
fibration.  The following remarkable result says that if $S$ has
genus~$0$, then the converse also holds.

\begin{thm}[McDuff \cite{McDuff:rationalRuled}]
\label{thm:McDuff}
Suppose $(M,\omega)$ is a closed and connected
symplectic $4$-manifold, and
$S \subset M$ is a symplectically embedded $2$-sphere
satisfying $[S] \cdot [S] = 0$.  Then $S$ is a fibre of
a symplectic Lefschetz fibration $\pi : M \to \Sigma$
over some closed oriented surface~$\Sigma$, and
$\pi$ is a smooth symplectic fibration (i.e.~without
Lefschetz critical points) whenever
$(M\setminus S,\omega)$ is minimal.  In particular,
$(M,\omega)$ can be obtained by blowing up a symplectic ruled\index{symplectic ruled surface}
surface finitely many times.
\end{thm}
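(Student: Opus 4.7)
The plan is to construct, for a suitable almost complex structure $J$, a $2$-dimensional moduli space of $J$-holomorphic spheres homologous to $S$ whose Gromov compactification foliates $M$, and then to identify the leaf space with the base of the desired symplectic Lefschetz fibration.

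First, I would choose an $\omega$-tame almost complex structure $J$ on $M$ for which $S$ is $J$-holomorphic; such $J$ exist because $S$ is a symplectic submanifold. Writing $A := [S] \in H_2(M)$, the adjunction formula applied to the embedded sphere $S$ yields $c_1(A) = \chi(S) + [S] \cdot [S] = 2$, so the unparametrized moduli space $\mM$ of simple $J$-holomorphic spheres in class $A$ has expected dimension $(n-3)(2-2g) + 2 c_1(A) = 2$. Every $C \in \mM$ is embedded, since adjunction forces its singularity index to vanish, and since $c_1(A) > 0$ the automatic transversality theorem in dimension four makes $\mM$ a smooth $2$-manifold containing $S$. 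Because $[C] \cdot [C'] = [S] \cdot [S] = 0$ for all $C, C' \in \mM$, positivity of intersections forces distinct curves to be disjoint, so the union $U := \bigcup_{C \in \mM} C$ is an open subset of $M$ smoothly foliated by embedded symplectic spheres.

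The central analytic step is the study of the Gromov compactification $\overline{\mM}$, which is compact. A nodal limit of a sequence of curves in $\mM$ is a stable tree of $J$-holomorphic spheres representing a splitting $A = A_1 + \dots + A_k$. Using the tight constraint $c_1(A) = 2$ together with the facts that each nonconstant simple component of a bubble tree in a symplectic $4$-manifold has $c_1 \ge 1$ (with equality forcing it to be an embedded exceptional sphere) and that distinct components of a stable tree have nonnegative mutual intersections, I would argue that every nodal limit consists of exactly two embedded exceptional spheres $E_1, E_2$ with $E_i \cdot E_i = -1$, $c_1(E_i) = 1$ and $E_1 \cdot E_2 = 1$, meeting transversally at a single point. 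In particular, when $(M \setminus S, \omega)$ is minimal no such $E_i$ exists, so $\overline{\mM} = \mM$.

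To finish, I would argue that the curves of $\overline{\mM}$ cover all of $M$: if the open set $U$ had nonempty boundary in $M$, any boundary point would be reached by Gromov compactness from some sequence in $\mM$, and hence lie on a curve of $\overline{\mM}$, contradicting the openness of the foliation. The leaf space $\Sigma := \overline{\mM}$ is then a closed oriented surface, and the projection $\pi : M \to \Sigma$ sending each point to the unique curve through it has, near each nodal fibre $E_1 \cup E_2$, precisely the Morse local form \eqref{eqn:Morse} by a standard gluing and normal-form argument. Hence $\pi$ is a Lefschetz fibration with $S$ as a regular fibre; Theorem~\ref{thm:Gompf} then deforms $\omega$ to a supported form in the same deformation class, and blowing down one exceptional component in each singular fibre produces the asserted symplectic ruled surface. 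The main obstacle is the bubble-tree analysis together with the identification of the local model: one must rule out ghost components, multiple covers, and bubble trees with three or more components, and then verify via a gluing argument that the global foliation assembles into precisely the Morse singularity \eqref{eqn:Morse} at each node rather than into a more degenerate pattern.
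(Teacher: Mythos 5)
Your route is the one the paper itself takes (this is exactly the content of Lemmas~\ref{lemma:M0} and~\ref{lemma:M1}): a $2$-dimensional moduli space of spheres in class $[S]$, embeddedness and disjointness from adjunction and positivity of intersections, a bubbling analysis forcing nodal limits to be pairs of exceptional spheres, an open-and-closed argument, and the leaf space as the base of the Lefschetz fibration.  The genuine gap is in the bubbling step.  You assert that every nonconstant simple component of a nodal limit has $c_1 \ge 1$, but that is a statement about \emph{generic} $J$ (Corollary~\ref{cor:nonnegative}); it is false for an arbitrary tame $J$ making $S$ holomorphic, which is all you have chosen.  For non-generic $J$ there can be simple spheres of index $<0$, i.e.\ with $c_1 \le 0$, and then the inequality $k_1+\cdots+k_N \le \sum_j k_j\, c_1([w_j]) = 2$ that limits the limit configuration to two simple components collapses.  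Automatic transversality (Theorem~\ref{thm:automatic}) only rescues the embedded, index-$2$ curves in class $[S]$; it says nothing about bubble components, which live in other classes and need not be immersed or of nonnegative index.  You also cannot simply take $J$ generic everywhere, since you must keep $S$ holomorphic: the paper's device is to fix $J$ along $S$ (where $u_S$ is regular by automatic transversality, as you observed) and perturb generically on $M\setminus S$ (Theorem~\ref{thm:generic}), using unique continuation (Theorem~\ref{thm:uniqueContin}) to ensure every simple curve other than $u_S$ meets the region where $J$ is generic.  Some version of this is indispensable; as written, your key bubbling estimate fails.

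Two further points, smaller but still load-bearing.  First, your parenthetical claim that $c_1=1$ alone forces a simple sphere to be an embedded exceptional sphere is not correct: adjunction only gives $[v]\cdot[v] = 2\delta(v) - 1$, which equals $-1$ only once $\delta(v)=0$ is known.  In the paper this comes out of the global computation $0 = \bigl([v_+]+[v_-]\bigr)\cdot\bigl([v_+]+[v_-]\bigr)$ combined with $[v_+]\cdot[v_-]\ge 1$ (forced by the node) and positivity of intersections, which simultaneously yields $\delta(v_\pm)=0$, $[v_+]\cdot[v_-]=1$ and $[v_\pm]\cdot[v_\pm]=-1$; you need to run that computation rather than cite the claim as a standing fact.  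Second, excluding multiple covers in class $[S]$ (which you list as an ``obstacle'' without argument) is needed before adjunction can be applied to every curve in your moduli space and before limits can be assumed nodal-with-simple-components: the parity argument ($[S]=2[v]$ would force $c_1([v])=1$, hence $[v]\cdot[v]$ odd, contradicting $4\,[v]\cdot[v]=[S]\cdot[S]=0$) settles it in two lines and should be included.  With these repairs your argument coincides with the paper's proof.
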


This theorem is false for surfaces $S$ with positive genus
(see Remark~\ref{remark:higherGenus} for more on this).
There is also no comparably strong result
about symplectic fibrations in dimensions greater than~$4$, as the
theory of holomorphic curves is considerably stronger in low
dimensions.  Our main goal for the rest of this lecture will be to sketch
a proof of the theorem.

The proof begins with the observation,
originally due to Gromov \cite{Gromov}, that every symplectic manifold
$(M,\omega)$ admits an almost complex structure $J$ that is \emph{compatible}
with $\omega$ in the sense of Definition~\ref{defn:tame}.
Moreover, if $S \subset (M,\omega)$ is a symplectic
submanifold, one can easily choose a compatible almost complex structure~$J$
that preserves $TS$, i.e.~it makes $S$ into an embedded $J$-complex
curve.  The main idea of the proof is then to study the entire space of
$J$-complex curves homologous to~$S$ and show that these must foliate
$M$, possibly with finitely many singularities.  

Let us define the ``space of $J$-complex curves'' more precisely.
Recall that a \defin{Riemann surface}\index{Riemann surface}\index{almost complex structure!integrable}
can be regarded as an almost 
complex\footnote{Due to a theorem of
Gauss, every almost complex structure on a manifold of real dimension~$2$
is \emph{integrable}, i.e.~it arises from an atlas of coordiate charts
with holomorphic transition maps and is thus also a complex structure
(without the ``almost'').}
manifold $(\Sigma,j)$
with\footnote{Unless otherwise noted, all dimensions mentioned in these
notes will be \emph{real} dimensions, not complex.} 
$\dim \Sigma = 2$.  Given $(\Sigma,j)$ and
an almost complex manifold $(M,J)$ of real dimension~$2n$, we say that a smooth map
$u : \Sigma \to M$ is \defin{$J$-holomorphic}, or \defin{pseudoholomorphic}\index{pseudoholomorphic curve|see {holomorphic curve}}\index{J-holomorphic curve|see {holomorphic curve}}\index{holomorphic curve}
(often abbreviated simply as ``holomorphic''),
if its tangent map is complex linear at every point, i.e.
\begin{equation}
\label{eqn:CR}
Tu \circ j \equiv J \circ Tu.
\end{equation}
This is a first order elliptic PDE: in any choice of holomorphic local
coordinates $s+it$ on a domain in $\Sigma$, \eqref{eqn:CR} is equivalent
to the nonlinear Cauchy-Riemann type equation
$$
\p_s u(s,t) + J(u(s,t))\, \p_t u(s,t) = 0.
$$
Solutions are called \defin{pseudoholomorphic curves}, where the word
``curve'' refers to the fact that their domains
are complex \emph{one}-dimensional manifolds.   They have many nice
properties, which are proved by a combination of complex function theory,
nonlinear functional analysis and elliptic regularity theory---a quick overview
of the essential properties is given in Appendix~\ref{app:curves},
and some of these will be used in the following discussion.

For any integer $g \ge 0$ and $A \in H_2(M)$, we define the \defin{moduli
space} $\mM_g^A(M,J)$ of \defin{unparametrized closed $J$-holomorphic curves}\index{moduli space!of closed holomorphic curves}
of genus~$g$ homologous to~$A$ as the space of
equivalence classes $[(\Sigma,j,u)]$, where $(\Sigma,j)$ is a closed
connected Riemann surface of genus~$g$, $u : (\Sigma,j) \to (M,J)$ is a
pseudoholomorphic map representing the homology class $[u] := u_*[\Sigma] = A$,
and we write $(\Sigma,j,u) \sim (\Sigma',j',u')$ if and only if
they are related to each other by reparametrization, i.e.~there exists
a holomorphic diffeomorphism $\varphi : (\Sigma,j) \to (\Sigma',j')$
(a \defin{biholomorphic} map)\index{biholomorphic} such that $u = u' \circ \varphi$.
We will sometimes abuse notation and abbreviate an equivalence
class $[(\Sigma,j,u)] \in \mM_g^A(M,J)$ simply as the parametrization
``$u$'' when there is no danger of confusion.
The notion of $C^\infty$-convergence defines a natural topology on
$\mM_g^A(M,J)$ such that a sequence $[(\Sigma_k,j_k,u_k)] \in \mM_g^A(M,J)$
converges to $[(\Sigma,j,u)] \in \mM_g^A(M,J)$ if and only if there exist
representatives $(\Sigma,j_k',u_k') \sim (\Sigma_k,j_k,u_k)$ for which
$$
j_k' \to j \quad\text{ and }\quad u_k' \to u
$$
uniformly with all derivatives on~$\Sigma$.  In cases where we'd prefer
not to specify the homology class, we will occasionally write
$$
\mM_g(M,J) := \coprod_{A \in H_2(M)} \mM_g^A(M,J).
$$

Observe that if $u : (\Sigma,j) \to (M,J)$ is a closed $J$-holomorphic
curve and $\varphi : (\Sigma',j') \to (\Sigma,j)$ is a holomorphic
map from another closed Riemann surface $(\Sigma',j')$, then
$u \circ \varphi : (\Sigma',j') \to (M,J)$ is also a $J$-holomorphic
curve.  If $\varphi$ is nonconstant, then holomorphicity implies that
it has degree $\deg(\varphi) \ge 1$, with equality if and only if it
is biholomorphic; in the case $k:= \deg(\varphi) > 1$, we then say that
$u'$ is a \defin{$k$-fold multiple cover} of~$u$.\index{holomorphic curve!multiply covered}\index{multiply covered holomorphic curve}
Note that in this situation, $[u'] = k[u]$, so for
instance, a curve cannot be a multiple cover if it represents a
primitive homology class.  We say that a nonconstant closed
$J$-holomorphic curve is \defin{simple}\index{holomorphic curve!simple}\index{simple holomorphic curve}
if it is not a multiple
cover of any other curve.

Returning to the specific situation of McDuff's theorem, assume $J$ is
an $\omega$-compatible almost complex structure that preserves the tangent
spaces of the symplectically embedded sphere $S \subset (M,\omega)$.
Then $\left(S,J|_{TS}\right)$ is a closed Riemann surface of genus~$0$,
and its inclusion $u_S : S \hookrightarrow M$ is an embedded 
$J$-holomorphic curve, defining an element
$$
u_S \in \mM_0^{[S]}(M,J)
$$
in the moduli space of $J$-holomorphic spheres homologous so~$S$.
A straightforward application of standard machinery now gives the following 
result, a proof of which may be found at the end of Appendix~\ref{app:closed}.

\begin{lemma}
\label{lemma:M0}
After a $C^\infty$-small perturbation of $J$ outside a neighborhood
of~$S$, the open subset $\mM_0^{[S],*}(M,J) \subset \mM_0^{[S]}(M,J)$, 
consisting of \emph{simple} $J$-holomorphic spheres homologous to $[S]$, is
a smooth oriented $2$-dimensional manifold, and it is ``compact up to
bubbling'' in the following sense.  There exists a finite set of
simple curves $\mathscr{B} \subset \mM_0(M,J)$ with positive first Chern
numbers such that if $u_k \in \mM_0^{[S],*}(M,J)$
is a sequence with no convergent subsequence in $\mM_0^{[S]}(M,J)$,
then it has a subsequence that degenerates (see Figure~\ref{fig:spheres})
to a \emph{nodal curve} 
$\{v_+,v_-\} \in \overline{\mM}_0^{[S]}(M,J)$ for some
$v_+, v_- \in \mathscr{B}$.
\end{lemma}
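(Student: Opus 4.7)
The plan is to combine generic transversality for simple $J$-holomorphic curves with Gromov compactness, and then exploit the rigidity of $4$-dimensional intersection theory---positivity of intersections together with the adjunction formula---to constrain the possible bubble configurations. First I would perturb $J$ on $M\setminus \nN(S)$, keeping it fixed in a small neighborhood of $S$ so that $u_S$ remains $J$-holomorphic. Any simple curve $u \in \mM_0^{[S],*}(M,J)$ distinct from $u_S$ must, by the identity principle, intersect the region where $J$ is being perturbed in a nonempty open set; the standard transversality theorem for simple curves then makes the linearized Cauchy--Riemann operator surjective at every element of $\mM_0^{[S],*}(M,J)$. The implicit function theorem hands back a smooth manifold of expected real dimension $-\chi(S^2) + 2c_1([S]) = -2 + 2c_1([S])$, and applying the adjunction formula to the embedded sphere $u_S$ gives $c_1([S]) = \chi(S^2) + [S]\cdot[S] = 2$, so the dimension equals~$2$. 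Orientability of $\mM_0^{[S],*}(M,J)$ follows from the standard deformation of the linearized Cauchy--Riemann operator to a complex-linear operator.

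For the compactness statement, suppose $\{u_k\} \subset \mM_0^{[S],*}(M,J)$ admits no convergent subsequence. The $\omega$-energy is constant, $E(u_k) = \omega\cdot[S]$, so by Gromov compactness a subsequence converges to a stable nodal $J$-holomorphic sphere with components $v_1,\ldots,v_N$ of multiplicities $k_1,\ldots,k_N$ covering simple curves $\tilde v_1,\ldots,\tilde v_N$, subject to $\sum_i k_i [\tilde v_i] = [S]$ and $\sum_i k_i c_1([\tilde v_i]) = 2$. Expanding $0 = [S]\cdot[S]$ bilinearly and applying positivity of intersections (each pairwise term is $\geq 0$, and is $\geq 1$ whenever the curves share a node) together with the adjunction inequality $c_1([\tilde v_i]) \leq 2 + [\tilde v_i]\cdot[\tilde v_i]$, a short combinatorial argument---essentially McDuff's original computation in \cite{McDuff:rationalRuled}---forces $N = 2$, $k_1 = k_2 = 1$, and both $\tilde v_\pm$ embedded with $[\tilde v_\pm]\cdot[\tilde v_\pm] = -1$ and $c_1([\tilde v_\pm]) = 1$. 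In particular each bubble is simple and has positive Chern number, as required.

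Finiteness of $\mathscr{B}$ then follows by induction on $\omega$-energy: any curve occurring as a bubble has $\omega$-area strictly less than $\omega \cdot [S]$, so Gromov compactness applied to the space of simple spheres with such bounded area and Chern number bounded by $2$ produces only finitely many possible homology classes, and the moduli space in each class is itself compact modulo a strictly simpler bubbling. Iterating yields a finite collection $\mathscr{B}$.

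The hard part will be the bubble-tree analysis in the second paragraph: although the individual inequalities are elementary, one must rigorously rule out multiple-cover components, self-noded configurations, and trees with more than two irreducible pieces. The key numerical miracle enabling this is that $[S]\cdot[S] = 0$ forces $c_1([S]) = 2$, which combined with positivity of intersections leaves essentially no room for anything other than the pair of exceptional spheres described above---a rigidity that fails in higher genus, consistent with Remark~\ref{remark:higherGenus}.
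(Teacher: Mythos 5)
Your transversality step has a genuine gap. You perturb $J$ only on $M\setminus\nN(S)$ and assert that ``by the identity principle'' every simple curve other than $u_S$ meets the perturbed region; this is false. Curves in $\mM_0^{[S],*}(M,J)$ close to $u_S$---indeed precisely the curves that end up foliating a neighborhood of $S$---lie entirely inside $\nN(S)$, and unique continuation gives no contradiction, since such a curve merely has a different image from $u_S$. You also never establish regularity of $u_S$ itself, which cannot come from genericity because $J$ must be left untouched near $S$. The paper resolves both points differently: the perturbation is taken on $M\setminus S$, so that a simple curve avoiding the generic region has image inside the surface $S$ and is therefore (by Theorem~\ref{thm:uniqueContin}) a cover of $u_S$, making Theorem~\ref{thm:generic} applicable to everything else; and regularity of $u_S$ is obtained from automatic transversality (Theorem~\ref{thm:automatic}), since $u_S$ is immersed with $\ind(u_S)=2>2g-2$. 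Without one of these devices your smooth-manifold claim is unproved at $u_S$ and at every curve contained in $\nN(S)$, and the same defect recurs later, because bubble components contained in $\nN(S)$ are likewise not covered by your genericity assumption.

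The bubbling analysis also claims more than the cited tools deliver. From $[S]\cdot[S]=0$, $\sum_i k_i c_1([\tilde v_i])=2$, positivity of intersections and the adjunction inequality alone one cannot force $N=2$ and $k_i=1$: a three-component tree with classes $A,B,C$ having $c_1=2,0,0$, $A\cdot A=0$, $B\cdot B=C\cdot C=-2$, $A\cdot B=A\cdot C=1$, $B\cdot C=0$ satisfies every one of those constraints, and embedded $(-2)$-spheres with $c_1=0$ do occur for non-generic $J$. What excludes such configurations is not intersection theory but genericity: for generic $J$ every simple sphere is Fredholm regular, hence $\ind=-2+2c_1\ge 0$, i.e.\ $c_1([\tilde v_i])\ge 1$ (Corollary~\ref{cor:nonnegative}), and then $k_1+\cdots+k_N\le \sum_i k_i c_1([\tilde v_i])=2$ immediately yields $N=2$, $k_1=k_2=1$ and $c_1([\tilde v_\pm])=1$. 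This is the paper's argument; it is valid in all dimensions and uses neither adjunction nor positivity of intersections, which enter only afterwards (Lemma~\ref{lemma:M1}) to prove embeddedness and self-intersection $-1$---conclusions that Lemma~\ref{lemma:M0} does not assert. Note also that this index bound must be applied to the bubble components, which ties back to the first issue about where the perturbation is supported. Your finiteness argument for $\mathscr{B}$ has the analogous problem: finiteness comes from the set of simple curves with $c_1=1$ being $0$-dimensional (regular, hence discrete) and compact, and both discreteness and the exclusion of further degenerations rest on the same genericity bound rather than on an induction over $\omega$-energy.
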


\begin{figure}
\includegraphics{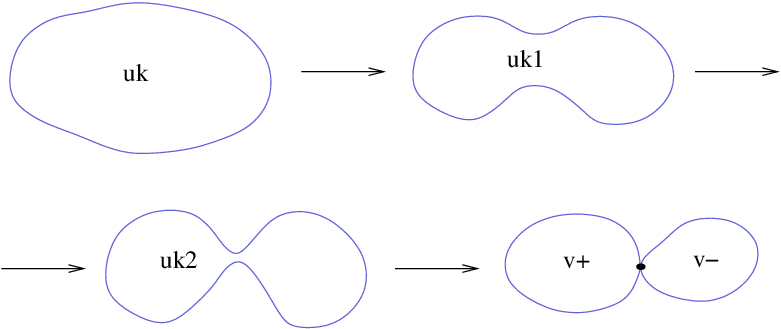}
\caption{\label{fig:spheres} A sequence of $J$-holomorphic spheres $u_k$
degenerating to a nodal curve $\{v_+,v_-\}$.}
\end{figure}

The above formulation is a bit lazy since we have not as yet given any
definition of the space $\overline{\mM}_0(M,J)$ of \emph{nodal curves}.\index{holomorphic curve!nodal|(}\index{nodal holomorphic curve|(}
More precise details of this \emph{compactification} of\index{moduli space!compactification of}\index{compactification of moduli spaces}
$\mM_0(M,J)$ may be found in Appendix~\ref{app:closed}, but for the purposes
of the present discussion, it will suffice to characterize the
degeneration of a sequence
$[(S^2,j_k,u_k)] \in \mM_0^A(M,J)$ to a
nodal curve $\{ [(S^2,j_+,v_+)],[(S^2,j_-,v_-)]\} 
\in \overline{\mM}_0^A(M,J)$ as follows.  
The nodal curve is assumed to
have the property that any choice of representatives 
$(S^2,j_\pm,v_\pm)$ comes with a distinguished intersection 
$$
v_+(z_+) = v_-(z_-),
$$
for some pair of points $z_\pm \in S^2$; this intersection is called
the \defin{node}.\index{nodes of a holomorphic curve}
Given these parametrizations,
let $C \subset S^2$ denote the equator of the sphere, separating it into the
two hemispheres
$$
S^2 = \dD_+ \cup_C \dD_-,
$$
and choose continuous surjections $\varphi_\pm : \dD_\pm \to S^2$
that map $\mathring{\dD}_\pm$ diffeomorphically to~$S^2 \setminus \{z_\pm\}$ 
and collapse $C$ to~$z_\pm$.  The map
$$
u_\infty : S^2 \to M : z \mapsto \begin{cases}
v_- \circ \varphi_-(z) & \text{ for $z \in \dD_-$,}\\
v_+ \circ \varphi_+(z) & \text{ for $z \in \dD_+$}
\end{cases}
$$
is then continuous, and smooth on $S^2 \setminus C$.  This also
defines a complex structure on $S^2 \setminus C$ by
$$
j_\infty := \begin{cases}
\varphi_-^*j_- & \text{ on $\mathring{\dD}_-$,}\\
\varphi_+^*j_+ & \text{ on $\mathring{\dD}_+$,}
\end{cases}
$$
though $j_\infty$ does not extend smoothly over~$C$.
Now the convergence
$u_k \to \{ v_+,v_- \}$ can be defined to mean that all of the
above choices can be made together with choices of representatives
$(S^2,j_k,u_k)$ such that
\begin{equation*}
\begin{split}
u_k \to u_\infty \quad &\text{ in $C^0(S^2,M)$ and $C^\infty_{\text{loc}}(S^2 \setminus C,M)$, and}\\
j_k \to j_\infty \quad &\text{ in $C^\infty_{\text{loc}}(S^2 \setminus C)$.}
\end{split}
\end{equation*}
Observe that as a result of the $C^0$-convergence, $[v_+] + [v_-] = A
\in H_2(M)$.\index{holomorphic curve!nodal|)}\index{nodal holomorphic curve|)}

Lemma~\ref{lemma:M0} relies on very general properties of
$J$-holomorphic curves that are valid in all dimensions; under a few
extra assumptions, some version
of the same result could be proved for a $2n$-dimensional
symplectic manifold $(M,\omega)$ containing a symplectically embedded
$2$-sphere $S \subset M$ with trivial normal bundle.  The following
improvement, which we will prove in Lecture~\ref{sec:2} 
(see \S\ref{sec:finishMcDuff}), is unique to dimension~$4$:

\begin{lemma}
\label{lemma:M1}
The finitely many nodal curves 
$$
\{ v_+^1,v_-^1 \}, \ldots , \{ v_+^N, v_-^N \} \in \overline{\mM}_0^{[S]}(M,J)
$$
appearing in Lemma~\ref{lemma:M0} have the following properties:
\begin{enumerate}
\item Each $v_\pm^i : S^2 \to M$ for $i=1,\ldots,N$ 
is embedded and satisfies $[v_\pm^i] \cdot [v_\pm^i] = -1$;
\item $v_+^i$ and $v_-^i$ for $i=1,\ldots,N$ intersect each other
exactly once, transversely;
\item For $i,j \in \{1,\ldots,N\}$ with $i \ne j$,
$$
v_i^+(S^2) \cap v_j^+(S^2) = v_i^+(S^2) \cap v_j^-(S^2)
= v_i^-(S^2) \cap v_j^-(S^2) = \emptyset.
$$
\end{enumerate}
Moreover, if $F \subset M$ denotes the union of all the images of these
nodal curves, then the curves
in $\mM_0^{[S]}(M,J)$ are all embedded and have pairwise
disjoint images that define a smooth foliation of some open subset
of $M \setminus F$.
\end{lemma}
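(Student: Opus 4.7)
The plan is to apply two standard tools from Lecture~2---the adjunction formula and positivity of intersections for closed $J$-holomorphic curves in dimension~$4$---in a tight numerical bookkeeping argument.

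Since $S$ is an embedded sphere with trivial normal bundle, adjunction gives $c_1([S]) = 2$, so $c_1([v_+^i]) + c_1([v_-^i]) = 2$; since each Chern number is a positive integer by Lemma~\ref{lemma:M0}, each equals~$1$. For (1) and (2), the adjunction formula for simple closed genus-zero curves reads $[v_\pm^i]\cdot[v_\pm^i] = -1 + 2\delta(v_\pm^i)$, and expanding
$$0 = [S]\cdot[S] = \bigl([v_+^i]+[v_-^i]\bigr)^2 = -2 + 2\delta(v_+^i) + 2\delta(v_-^i) + 2[v_+^i]\cdot[v_-^i]$$
yields $\delta(v_+^i)+\delta(v_-^i) + [v_+^i]\cdot[v_-^i] = 1$. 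Positivity of intersections at the node gives $[v_+^i]\cdot[v_-^i] \geq 1$, while $\delta \geq 0$, so the only solution is $\delta(v_+^i) = \delta(v_-^i) = 0$ and $[v_+^i]\cdot[v_-^i] = 1$, which is exactly (1) and (2).

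For (3), the crux is showing that distinct nodal curves share no bubble. Suppose for contradiction that $v_+^i = v_+^j$ with $v_-^i \neq v_-^j$. Expanding $0 = [S]\cdot[S] = ([v_+^i]+[v_-^i])\cdot([v_+^j]+[v_-^j])$ and substituting $[v_+^i]\cdot[v_+^j] = [v_+^i]^2 = -1$, $[v_+^i]\cdot[v_-^j] = [v_+^j]\cdot[v_-^j] = 1$, and $[v_-^i]\cdot[v_+^j] = [v_-^i]\cdot[v_+^i] = 1$ forces $[v_-^i]\cdot[v_-^j] = -1$, contradicting positivity for the distinct simple curves $v_-^i$ and $v_-^j$. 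Hence for $i \neq j$ the four bubbles are all distinct, and the four non-negative cross-terms in the expansion of $[S]\cdot[S] = 0$ must each vanish, giving (3).

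Finally, every $u \in \mM_0^{[S]}(M,J)$ is simple---a $k$-fold cover over a simple curve would either violate adjunction (for $k=2$) or integrality of $c_1$ (for $k \geq 3$)---so adjunction yields $\delta(u) = 0$, i.e.~$u$ is embedded, and positivity makes two distinct such curves disjoint since their homological intersection $[S]\cdot[S]$ vanishes. The normal bundle $N_u \to S^2$ has $c_1(N_u) = c_1([u]) - \chi(S^2) = 0$ and is thus holomorphically trivial, so the two-dimensional tangent space to the moduli space at $u$ corresponds to constant sections of $N_u$; these span $N_u|_z$ at each $z \in S^2$, making the evaluation map a local diffeomorphism onto an open subset of $M \setminus F$. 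The main obstacle is really the combinatorial juggling behind~(3)---everything else is essentially forced once adjunction and positivity are in play.
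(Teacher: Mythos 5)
Your overall strategy is the same as the paper's (adjunction plus positivity of intersections, with the local foliation statement from Lecture~1), and your treatment of (1), (2), the exclusion of multiple covers, and most of (3) is essentially the paper's computation. But there are two concrete gaps. First, to get $[v_+^i]\cdot[v_-^i]\ge 1$ from "positivity at the node" you need Corollary~\ref{cor:positivityGlobal}, which applies only when the two components have non-identical images; if $v_+^i$ and $v_-^i$ were the same curve up to parametrization, then $[S]=2[v_+^i]$ and $[v_+^i]\cdot[v_-^i]=[v_+^i]\cdot[v_+^i]=\tfrac14[S]\cdot[S]=0$, so the inequality you invoke is literally false in that scenario. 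You must first rule this case out, which the paper does by the same parity trick you use for multiple covers: $c_1([v_+^i])=1$ and $[v_+^i]\cdot[v_+^i]=0$ contradict the adjunction formula mod~$2$. Second, the final claim requires the leaves to avoid $F$: you assert the curves foliate an open subset of $M\setminus F$ but never show that a curve $u\in\mM_0^{[S]}(M,J)$ is disjoint from every nodal component $v_\pm^i$. The paper proves this by noting $c_1([u])=2\ne 1=c_1([v_\pm^i])$, so $u$ and $v_\pm^i$ never have identical images, and then $0=[S]\cdot[S]=[u]\cdot[v_+^i]+[u]\cdot[v_-^i]$ with both terms nonnegative forces both to vanish. (A minor further point: in (3) you only exclude the sharing of exactly one component; the case where two distinct nodal curves share both components should be dispatched by observing that, once (2) is known, a nodal curve is determined by its pair of components.)

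Separately, your justification of the local foliation is not correct as stated: tangent vectors to the moduli space at $u$ are sections of $N_u$ lying in the kernel of a \emph{real-linear} Cauchy--Riemann type operator with zeroth-order term, not holomorphic sections, so they are not "constant sections" of a trivialized bundle in any sense. The correct mechanism is the similarity principle: a nontrivial kernel section has only isolated positive zeroes, whose algebraic count is $c_1(N_u)=0$, hence it is nowhere vanishing, and this is what makes the evaluation map of the $2$-parameter family a local embedding. This is exactly Proposition~\ref{prop:foliation}, which the paper simply cites at this point, so your conclusion stands once you invoke it rather than the trivial-bundle argument.
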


With this lemma at our disposal, the proof of Theorem~\ref{thm:McDuff} 
concludes as follows: let
$$
X := \left\{ p \in M \setminus F\ \big|\ \text{$p$ is in the image of a curve
in $\mM_0^{[S]}(M,J)$} \right\}.
$$
Lemma~\ref{lemma:M1} guarantees that $X$ is an open subset of
$M \setminus F$, but by the compactness statement in
Lemma~\ref{lemma:M0}, $X$ is also a closed subset.  Since
$M \setminus F$ is connected, we conclude that the curves in
$\mM_0^{[S]}(M,J)$ fill \emph{all} of it.
Now, the compactified moduli space $\overline{\mM}_0^{[S]}(M,J)$
consists of $\mM_0^{[S]}(M,J)$ plus finitely many additional
elements in the
form of nodal curves; it has the topology of some compact 
oriented $2$-manifold $\Sigma$, and the above argument shows that
every point in $M$ is in the image of precisely one element of
$\overline{\mM}_0^{[S]}(M,J)$.  This defines a map
$$
\pi : M \to \overline{\mM}_0^{[S]}(M,J) \cong \Sigma,
$$
whose regular fibres are the images of the smoothly embedded curves
in $\mM_0^{[S]}(M,J)$, and the images of nodal curves give rise to
Lefschetz singular fibres, each with a unique critical point where
two embedded $J$-holomorphic spheres intersect transversely.  
Since all the fibres are images of
$J$-holomorphic curves and $J$ is $\omega$-tame, the fibres 
are also symplectic submanifolds.
Furthermore, the irreducible components of the singular fibres 
are exceptional spheres that are disjoint from $S$ (since the
latter is also a fibre), thus no singular fibres can exist if
$(M \setminus S,\omega)$ is minimal.

\begin{remark}
One can also prove the converse of the statement about minimality,
i.e.~if the Lefschetz fibration has no singular fibres then
$(M\setminus S,\omega)$ must be minimal.  This relies on another
theorem of McDuff \cite{McDuff:rationalRuled}, that for generic~$J$,
any exceptional sphere is homologous to a unique $J$-holomorphic
sphere, which is embedded.
A more comprehensive exposition of this topic and the more general
version of McDuff's theorem
for \emph{rational and ruled} symplectic $4$-manifolds is given in
\cite{Wendl:rationalRuled}; see also \cite{LalondeMcDuff:rationalRuled}.
\end{remark}

\section{Local foliations by holomorphic spheres}
\label{sec:foliations}

The distinctive power of holomorphic curve methods in dimension four
results from the numerical coincidence that $2 + 2 = 4$: in particular, any pair of
holomorphic curves $u \in \mM_g^A(M,J)$ and $v \in \mM_{g'}^{A'}(M,J)$ has a well-defined
homological intersection number $[u] \cdot [v] = A \cdot A' \in \ZZ$.
We will discuss this subject in earnest in \S\ref{sec:adjunction},
but before that, let us examine a slightly simpler phenomenon that is 
also distinctive to dimension~$4$ and important
for the proof of Lemma~\ref{lemma:M1}.  

Suppose $(M,J)$ is a $2n$-dimensional
almost complex manifold and $u \in \mM_0^A(M,J)$ is an
embedded $J$-holomorphic curve such that the normal bundle $N_u \to S^2$ to
any parametrization $u : S^2 \hookrightarrow M$ is trivial.  Since 
$du(z) : (T_z S^2,j) \to (T_{u(z)} M,J)$ is complex linear and injective for all
$z \in S^2$, the normal bundle naturally inherits a complex structure such that
$$
u^*TM \cong TS^2 \oplus N_u
$$
as complex vector bundles, so the first Chern numbers of these bundles satisfy
$$
c_1(u^*TM) = c_1(TS^2) + c_1(N_u) = \chi(S^2) + 0 = 2,
$$
where $c_1(u^*TM)$ is shorthand for evaluation of $c_1(u^*TM,J) \in H^2(S^2)$
on the fundamental class:
$$
c_1(u^*TM) := \langle c_1(u^*TM,J) , [S^2] \rangle = \langle u^* c_1(TM,J), [S^2] \rangle
= \langle c_1(TM,J) , u_*[S^2] \rangle =: c_1(A).
$$
If $\dim M = 4$, then triviality of $N_u$ implies that 
$u(S^2)$ is a symplectically embedded sphere with self-intersection\index{self-intersection number!homological}
number~$0$, and we saw in Lemma~\ref{lemma:M0} that in this case
$\dim \mM_0^A(M,J) = 2$.  More generally, plugging $\dim M = 2n$ and
$c_1(A) = 2$ into the virtual dimension formula \eqref{eqn:dimension}
in Appendix~\ref{app:closed} gives
$$
\virdim \mM_0^A(M,J) = 2(n-3) + 2 c_1(A) = 2n-2.
$$
This means more precisely that if $J$ is sufficiently generic, then 
the open subset of
$\mM_0^A(M,J)$ consisting only of simple curves is a smooth manifold of this
dimension, and since $u$ itself is embedded, this is true in particular
for some neighborhood of $u$ in $\mM_0^A(M,J)$.  Note also that embeddedness
of spheres in~$M$ is an open condition, so all other curves near~$u$ are also
embedded.  This observation and the dimension computation above
make the following question reasonable:

\begin{question}
\label{foliation}
Do the curves near $u$ in $\mM_0^A(M,J)$ foliate a neighborhood
of $u(S^2)$?
\end{question}

To answer this, let us choose a Riemannian metric on~$M$ and
assume there exists a smooth family of parametrizations for the curves
near~$u$ via sections of its normal bundle, 
i.e.~one can find a smooth map
\begin{equation}
\label{eqn:embedding}
\Psi : \DD^{2n-2} \times S^2 \to M : (\sigma,z) \mapsto u_\sigma(z) := \exp_{u(z)} h_\sigma(z)
\end{equation}
with $h_\sigma \in \Gamma(N_u)$ for each $\sigma \in \DD^{2n-2}$ and $h_0 \equiv 0$, 
such that the maps $u_\sigma$ parametrize curves in $\mM_0^A(M,J)$ and $u_0 = u$.
There is then a linear map $\RR^{2n-2} \to \Gamma(N_u) : X \mapsto \eta_X$
defined by
$$
\eta_X(z) = d\Psi(0,z) (X,0) = \left. \frac{d}{dt} u_{tX}(z)\right|_{t=0},
$$
and the image of this map can be identified with the tangent space
$T_u \mM_0^A(M,J)$.
Using the fact that each $u_\sigma : S^2 \to M$ satisfies a nonlinear
Cauchy-Riemann type equation, one can show that all the sections $\eta_X$
satisfy some \emph{linearized} Cauchy-Riemann type equation (cf.~Appendix~\ref{sec:CRoperators}):
in particular, for any choice of local holomorphic coordinates $s+it$
identifying a domain $\uU \subset \Sigma$ with some open set
$\Omega \subset \CC$, the local expression for $\eta_X$ in a complex
trivialization over~$\uU$ is a function $f : \Omega \to \CC^{n-1}$
satisfying a linear PDE of the form
\begin{equation}
\label{eqn:linearCR}
\p_s f(s,t) + i\, \p_t f(s,t) + A(s,t) f(s,t) = 0,
\end{equation}
for some smooth function $A(s,t)$ valued in the space
$\End_\RR(\CC^{n-1})$ of
real-linear maps on~$\CC^{n-1}$.  Except for the extra
$0$th-order term, this is the standard Cauchy-Riemann equation,
and we might therefore expect $f$ to have similar
properties to an analytic function $\Omega \to \CC^{n-1}$,
e.g.~its zeroes should be isolated unless $\eta_X \equiv 0$.
This intuition is made precise by the following consequence of
elliptic regularity theory, often called the 
\defin{similarity principle};\index{similarity principle}
a slight generalization of this result is
stated and proved in Appendix~\ref{sec:similarity}).

\begin{thm}[\textsl{similarity principle}]
\label{thm:similarity}
Suppose $\Omega \subset \CC$ is an open set, $N \in \NN$,\index{similarity principle|(}
$A : \Omega \to \End_{\RR}(\CC^N)$ is smooth, 
$f : \Omega \to \CC^N$ is a smooth function satisfying
the equation \eqref{eqn:linearCR}, and $z_0 \in \Omega$
is a point with $f(z_0) = 0$.  Then $f$ can be written
on some neighborhood $z_0 \in \uU \subset \Omega$ as
\begin{equation}
\label{eqn:similarity}
f(z) = \Phi(z) g(z), \qquad z \in \uU,
\end{equation}
for some continuous function $\Phi : \uU \to \End_{\CC}(\CC^N)$
with $\Phi(z_0) = \1$ and a holomorphic function
$g : \uU \to \CC^N$.  Moreover, if $A$ is complex linear at every point,
then $\Phi$ can be taken to be smooth.
\end{thm}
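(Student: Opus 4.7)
The plan is to reduce the equation to one with a complex-linear zeroth-order coefficient, and then construct $\Phi$ by solving a related Cauchy--Riemann type equation via a contraction argument based on the Cauchy transform. After translating I may assume $z_0 = 0$. The first step is to replace the real-linear coefficient $A$ by a complex-linear operator $C$ that agrees with $A$ on $f$. Write $A(z) v = A'(z) v + A''(z) \bar{v}$ with $A'(z), A''(z) \in \End_\CC(\CC^N)$, and on $\{f \ne 0\}$ define
$$
C(z) v := A'(z) v + \frac{\langle v, f(z)\rangle}{|f(z)|^2}\, A''(z) \overline{f(z)},
$$
using the standard Hermitian pairing on $\CC^N$ that is complex-linear in its first slot. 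Then $C(z)$ is complex-linear in $v$, satisfies $C(z) f(z) = A(z) f(z)$, and obeys the norm bound $\|C(z) - A'(z)\| \le \|A''(z)\|$ because the apparently divergent factor $|f(z)|^{-2}$ is cancelled by the $|f(z)|$ coming from the $\overline{f(z)}$ in the numerator. Extending $C$ by $C := A'$ on $\{f = 0\}$ thus produces a bounded measurable complex-linear coefficient, and $f$ satisfies
$$
\p_s f + i\, \p_t f + C(z) f = 0.
$$

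Second, I construct $\Phi$ on a small disk $D_r = \{|z| < r\}$. Let $T$ denote the Cauchy transform $T\phi(z) := -\frac{1}{\pi}\int_{D_r} \phi(w)/(w - z)\, dA(w)$, a bounded operator $L^p(D_r) \to W^{1,p}(D_r) \hookrightarrow C^0(\overline{D_r})$ for any fixed $p > 2$, that satisfies $\dbar T = \Id$ with $\dbar := \frac{1}{2}(\p_s + i\p_t)$. I look for a continuous $\Phi : \overline{D_r} \to \End_\CC(\CC^N)$ with $\Phi(0) = \1$ and $\dbar \Phi + \frac{1}{2} C \Phi = 0$, and rewrite this as the fixed-point problem
$$
\Phi = \1 - \tfrac{1}{2}\, T(C\Phi).
$$
Since the operator norm $\|T\|_{L^p(D_r) \to C^0(\overline{D_r})}$ tends to $0$ with $r$, the right-hand side is a contraction on $C^0(\overline{D_r}, \End_\CC\CC^N)$ once $r$ is small enough, and Banach's fixed-point theorem yields a unique continuous solution $\Phi$. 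Elliptic regularity upgrades $\Phi$ to $W^{1,p}_{\loc}$, and after further shrinking $r$ the condition $\Phi(0) = \1$ guarantees that $\Phi$ remains invertible on $\overline{D_r}$.

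Finally I set $g := \Phi^{-1} f$. Since $\Phi, \Phi^{-1} \in W^{1,p}_{\loc}$ and $f$ is smooth, $g$ lies in $W^{1,p}_{\loc}$, and a direct distributional computation gives
$$
\dbar g \;=\; \Phi^{-1} \dbar f - \Phi^{-1} (\dbar \Phi)\Phi^{-1} f \;=\; -\tfrac{1}{2} \Phi^{-1} C f + \tfrac{1}{2} \Phi^{-1} C \Phi\, g \;=\; 0,
$$
so Weyl's lemma implies $g$ is holomorphic on $D_r$, yielding the factorization $f = \Phi g$ with $\Phi(0) = \1$. The main obstacle in the above outline is the first step: the existence of a \emph{bounded} complex-linear $C$ with $Cf = Af$ is not obvious \emph{a~priori}, and relies on the fortuitous cancellation between the $\overline{f(z)}$ appearing in the numerator of the formula for $C$ and the $|f(z)|^{-2}$ in the denominator. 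Once $C$ is in hand, everything downstream is standard Cauchy-transform machinery.
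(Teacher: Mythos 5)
Your argument is correct, and it is essentially the standard proof: the paper itself does not prove Theorem~\ref{thm:similarity} but cites \cite{HoferZehnder}*{\S A.6} and \cite{Wendl:lecturesV2}*{\S 2.7}, and those proofs use exactly your two ingredients---replacing the real-linear coefficient by a bounded measurable \emph{complex}-linear one agreeing with it on $f$ (the references apply the pointwise rank-one trick $v \mapsto \langle v, f\rangle |f|^{-2} A f$ to all of $A$ rather than only to the antilinear part $A''$, a cosmetic difference), followed by local solvability of the matrix equation $\dbar\Phi + \tfrac{1}{2}C\Phi = 0$ via the Cauchy transform on a small disk and Weyl's lemma applied to $g = \Phi^{-1} f$. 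One small inaccuracy: the fixed-point equation $\Phi = \1 - \tfrac{1}{2}T(C\Phi)$ does not enforce $\Phi(0) = \1$ (it only gives $\Phi(0)$ close to $\1$), and it is the contraction estimate $\|\Phi - \1\|_{C^0} = O(r^{1-2/p})$, not the normalization, that guarantees invertibility of $\Phi$ on the whole disk; to get $\Phi(z_0) = \1$ exactly, replace $\Phi$ by $\Phi\,\Phi(0)^{-1}$ and $g$ by $\Phi(0)g$, which preserves holomorphicity since $\Phi(0) \in \End_\CC(\CC^N)$. Also, $\Phi \in W^{1,p}$ comes directly from the mapping property $T : L^p \to W^{1,p}$ rather than from a separate elliptic regularity step.
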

\begin{cor}
\label{cor:positiveOrder}
Given $f : \Omega \to \CC^N$ as in Theorem~\ref{thm:similarity},
$f$ is either identically zero or has only isolated zeroes.
In the latter case, if $N=1$, all zeroes of $f$ have positive order.\index{zeroes of a section!positivity of}
\end{cor}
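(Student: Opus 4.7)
The plan is to reduce both claims to the standard theory of zeros of holomorphic functions of one complex variable, using the factorization $f = \Phi g$ furnished by Theorem~\ref{thm:similarity}. The key observation is that although $\Phi$ is only continuous, the normalization $\Phi(z_0) = \1$ forces $\Phi$ to be invertible (as an endomorphism of $\CC^N$) on some neighborhood of $z_0$, so on that neighborhood $f(z) = 0$ if and only if $g(z) = 0$. This transports analytic properties of $g$ directly to $f$.

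For the first claim, I would consider the set $U \subset \Omega$ of points admitting a neighborhood on which $f$ vanishes identically. It is open by definition, so it suffices to show it is closed in $\Omega$. Given $z$ in the closure of $U$, apply Theorem~\ref{thm:similarity} to obtain a connected neighborhood $\uU$ of $z$ on which $f = \Phi g$ with $g : \uU \to \CC^N$ holomorphic, and shrink $\uU$ if needed so that $\Phi(w)$ is invertible for every $w \in \uU$. Pick a sequence $z_n \to z$ with $z_n \in U$; since $f(z_n) = 0$ and $\Phi(z_n)$ is invertible, we have $g(z_n) = 0$, so the identity theorem for $\CC^N$-valued holomorphic functions (applied componentwise on $\uU$) forces $g \equiv 0$, hence $f \equiv 0$, on $\uU$. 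Thus $z \in U$. On each connected component of $\Omega$, the clopen set $U$ is therefore either empty or the whole component. In the ``empty'' case the zeros of $f$ must be isolated: at any zero $z_0$, the corresponding local $g$ is a holomorphic function not vanishing identically near $z_0$ (else $z_0 \in U$), so $z_0$ is an isolated zero of $g$ by the identity theorem, and invertibility of $\Phi$ near $z_0$ then makes $z_0$ an isolated zero of $f$.

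For the second claim, assume $N = 1$ and let $z_0$ be a zero of $f$. On a neighborhood $\uU$ of $z_0$ write $f = \Phi g$ with $g : \uU \to \CC$ holomorphic and not identically zero (by the first part). Then $g$ has a finite order of vanishing $k \ge 1$ at $z_0$, so $g(z) = a_k (z - z_0)^k + O(|z - z_0|^{k+1})$ for some $a_k \ne 0$. Since $\Phi : \uU \to \End_\CC(\CC) \cong \CC$ is continuous with $\Phi(z_0) = 1$, one obtains
\[
\frac{f(z)}{(z - z_0)^k} = \Phi(z)\,\frac{g(z)}{(z - z_0)^k} \longrightarrow a_k \ne 0 \quad \text{as } z \to z_0,
\]
so $f$ vanishes to precisely order $k \ge 1$ at $z_0$, and in particular its local mapping degree around $z_0$ equals~$k > 0$.

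The only genuinely nontrivial ingredient is the similarity principle itself, which is quoted as a black box; everything else is routine bookkeeping. The subtle point to handle carefully is that $\Phi$ is merely continuous, not holomorphic, so one cannot apply the identity theorem to $f$ directly---but the normalization $\Phi(z_0) = \1$ together with continuity is exactly what is needed to pass analytic information between $f$ and the holomorphic factor $g$.
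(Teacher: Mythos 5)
Your proposal is correct and takes essentially the same route as the paper's proof: both shrink to a neighborhood where $\Phi$ is invertible (using $\Phi(z_0)=\1$ and continuity) so that the identity-theorem dichotomy and isolated zeroes transfer from the holomorphic factor $g$ to $f$, and for $N=1$ both conclude positivity by showing $f$ has the same local winding (degree) about $z_0$ as $g$. Your clopen argument and the limit $f(z)/(z-z_0)^k \to a_k$ are just slightly more explicit packagings of the same two steps.
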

\begin{proof}
Writing $f(z) = \Phi(z) g(z)$ as in \eqref{eqn:similarity}
for $z$ in a neighborhood $\uU$ of $z_0$, we can assume 
after shrinking $\uU$ that $\Phi(z)$ is close to $\1$ and thus 
invertible for all $z \in \uU$.  Then $f|_{\uU}$ is identically
zero if and only if $g|_{\uU}$ is, and otherwise, $g$ has an isolated
zero at~$z_0$ and thus so does~$f$.  If the latter holds and also $N=1$, 
then we can further conclude that the winding number of the loop
$$
\RR / \ZZ \to \CC \setminus \{0\} : 
\theta \mapsto g(z_0 + \epsilon e^{2\pi i \theta})
$$
for small $\epsilon > 0$ is positive, and since $\Phi$ is close
to the identity, the same is true for~$f$.
\end{proof}

The similarity principle implies that sections
$\eta_X \in T_u \mM_0^A(M,J)$ have at most finitely many zeroes in general,
but it implies much more than this in the case where $\dim M = 4$.
Indeed, $N_u \to S^2$ is in this case a complex \emph{line} bundle,
so for any section of this bundle with only isolated zeroes, the algebraic
count of the zeroes is given by the first Chern number $c_1(N_u) \in \ZZ$,
which vanishes since the bundle is trivial.  But by
Corollary~\ref{cor:positiveOrder}, the zeroes of any nontrivial section
$\eta_X \in T_u \mM_0^A(M,J)$ all count positively,\footnote{zeroes of a section!positivity of}
so it follows that
there cannot be any: $\eta_X$ is nowhere zero!  This is true for
all $X \ne 0$, and thus implies that $d\Psi(0,z) : T_{(0,z)} (\DD^2 \times S^2)
\to T_{u(z)} M$ is an isomorphism for all $z \in S^2$, hence the map
\eqref{eqn:embedding} is an embedding in some neighborhood of
$\{0\} \times S^2$, giving a positive answer to 
Question~\ref{foliation}:

\begin{prop}
\label{prop:foliation}
If $\dim M = 4$ and $u \in \mM_0^A(M,J)$ is an embedded $J$-holomorphic sphere
with trivial normal bundle, then the images of the 
curves in $\mM_0^A(M,J)$ near~$u$ foliate a neighborhood of the image of~$u$.
\qed
\end{prop}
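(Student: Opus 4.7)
The plan is essentially spelled out in the paragraphs preceding the statement; I will organize the ingredients into a structured argument. The strategy is to use the linearized Cauchy--Riemann equation and the similarity principle to show that the natural parametrization of the moduli space near $u$ has everywhere invertible derivative along $\{0\} \times S^2$, and then upgrade this to a foliation by a standard argument.

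\textbf{Setup.} Taking the smooth parametrization $\Psi : \DD^2 \times S^2 \to M$ from \eqref{eqn:embedding} (with $2n-2=2$, since $\dim M = 4$), so that $\Psi(\tau,\cdot)$ parametrizes $u_\tau \in \mM_0^A(M,J)$ and $\Psi(0,\cdot) = u$, differentiation in the $\tau$-direction at $\tau=0$ yields an $\RR$-linear map $\RR^2 \to \Gamma(N_u) : X \mapsto \eta_X$ whose image is $T_u \mM_0^A(M,J)$, which has real dimension~$2$ by the virtual dimension formula together with transversality of the generic~$J$.

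\textbf{Linearized equation plus similarity principle.} Because each $u_\tau$ solves the nonlinear Cauchy--Riemann equation, differentiating at $\tau = 0$ shows every $\eta_X$ satisfies a linearized Cauchy--Riemann type equation of the form \eqref{eqn:linearCR} in any holomorphic chart and complex trivialization of $N_u$. Since $\dim M = 4$, the bundle $N_u$ has complex rank~$1$, so Corollary~\ref{cor:positiveOrder} applies: any nontrivial $\eta_X$ has only isolated zeros, each of positive order. On the other hand, the signed count of zeros of a section of $N_u$ equals $c_1(N_u) = 0$ by hypothesis. Hence every nontrivial $\eta_X$ is \emph{nowhere} vanishing on $S^2$.

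\textbf{From immersion to foliation.} The evaluation $\RR^2 \to (N_u)_z : X \mapsto \eta_X(z)$ is an $\RR$-linear map between two spaces of real dimension~$2$; by the previous step it has trivial kernel, hence is an isomorphism. Therefore $d\Psi(0,z) : T_{(0,z)}(\DD^2 \times S^2) \to T_{u(z)} M$ is an isomorphism for every $z \in S^2$. Since $u$ itself is an embedding, compactness of $S^2$ together with a standard tubular-neighborhood-type argument promotes the local diffeomorphism property along $\{0\} \times S^2$ to a genuine embedding of a smaller polydisk $\DD^2_\epsilon \times S^2$ into $M$, whose image is an open neighborhood of $u(S^2)$ foliated by the images $u_\tau(S^2)$.

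\textbf{Main obstacle.} Steps two and three are essentially formal once the similarity principle is in hand. The genuine work sits in the setup: constructing the smooth family $\Psi$ and identifying $\{\eta_X\}$ with $T_u \mM_0^A(M,J)$ relies on the implicit function theorem for the nonlinear Cauchy--Riemann operator together with the surjectivity of its linearization at $u$ for generic $J$, producing a local smooth Banach manifold chart for $\mM_0^A(M,J)$ near $u$ of the expected dimension~$2$. Granted this analytic input, the rest of the argument is a clean consequence of the numerical coincidence $\rank_\RR N_u = \dim_\RR T_u \mM_0^A(M,J) = 2$ specific to dimension four.
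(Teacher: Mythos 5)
Your proposal is correct and follows essentially the same route as the paper: differentiate the family $\Psi$ to obtain sections $\eta_X$ of $N_u$ solving a linearized Cauchy--Riemann equation, use the similarity principle together with $c_1(N_u)=0$ and positivity of zeroes to conclude each nontrivial $\eta_X$ is nowhere vanishing, and then upgrade the resulting pointwise isomorphism $d\Psi(0,z)$ to an embedding of $\DD^2_\epsilon \times S^2$ foliating a neighborhood of $u(S^2)$. The analytic input you flag (smoothness of $\Psi$ via the implicit function theorem and transversality) is exactly the point the paper also sets aside as an analytical rather than topological issue.
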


No such general result is possible when
$\dim M > 4$, because there is no way to ``count'' the number of zeroes 
of a section of a higher rank complex vector bundle over~$S^2$.

\begin{exercise}
Suppose $L \to \Sigma$ is a complex line bundle over a closed Riemann
surface $(\Sigma,j)$, and $V \subset \Gamma(L)$ is a vector space of
sections that satisfy a real-linear Cauchy-Riemann type equation, so
in particular the similarity principle holds for sections $\eta \in V$.
Prove $\dim_\RR V \le 2 + 2 c_1(L)$.\index{similarity principle|)}
\end{exercise}

\chapter{Intersections, ruled surfaces, and contact boundaries}
\label{sec:2}

\minitoc

\vspace{12pt}

In\CUP{This material will be published by Cambridge University
Press as \textsl{Contact 3-Manifolds, Holomorphic Curves and Intersection Theory}
by Chris Wendl. This pre-publication version is
free to view and download for personal use only. 
Not for re-distribution, re-sale or use in derivative works. \copyright Chris Wendl, 2019.}
this lecture we explain the intersection theory for closed
holomorphic curves in dimension~$4$ and use it to complete
the overview from Lecture~\ref{sec:1} of
McDuff's theorem on ruled surfaces.
We will then begin discussing the generalization of these ideas to 
punctured holomorphic curves in symplectic cobordisms, and some
applications to the study of symplectic fillings.

\section{Positivity of intersections and the adjunction formula}
\label{sec:adjunction}

To complete the proof of Lemma~\ref{lemma:M1} from \S\ref{sec:ruled}, 
we must discuss the
intersection theory of $J$-holomorphic curves in dimension~$4$.  The notion of
``homological'' intersection numbers was mentioned already a few times in 
the previous lecture,
and it will be useful now to review precisely what this means.
Suppose $M$ is a closed oriented smooth $4$-manifold,
$\Sigma$ and $\Sigma'$ are closed oriented surfaces, and
$$
u : \Sigma \to M, \qquad v : \Sigma' \to M
$$
are $C^1$-smooth maps.
An intersection $u(z) = v(\zeta) = p$ is \defin{transverse}\index{transverse}\index{transversality!for intersections}
if
\begin{equation}
\label{eqn:directSum}
\im du(z) \oplus \im dv(\zeta) = T_p M,
\end{equation}
and \defin{positive} if and only if the natural orientation induced on this
direct sum by the orientations of $T_z\Sigma$ and $T_\zeta \Sigma'$
matches the orientation of $T_p M$.  Otherwise it is called \defin{negative},
and we define the \defin{local intersection index}\index{local intersection index}\index{intersection number!local|see {local intersection index}}
accordingly as
$\inter(u , z \,;\, v , \zeta) = \pm 1$.
If all intersections between $u$ and $v$ are transverse,
then they are all isolated and thus there are only finitely many, so we
can define the total \defin{intersection number}\index{intersection number!homological}
$$
[u] \cdot [v] := \sum_{u(z) = v(\zeta)} \inter(u,z \,;\, v,\zeta) \in \ZZ.
$$
The choice of notation reflects the fact that $[u] \cdot [v]$
turns out to depend only on the homology classes $[u], [v] \in H_2(M)$;
in fact, it defines a nondegenerate bilinear symmetric form
$$
H_2(M) \otimes H_2(M) \to \ZZ : [u] \otimes [v] \mapsto [u] \cdot [v].
$$
More details on this may be found e.g.~in \cite{Bredon}.

If $u$ and $v$ have an isolated but non-transverse intersection at 
$u(z) = v(\zeta) = p$, one can still define a local intersection
index $\inter(u,z \,;\, v,\zeta) \in \ZZ$ as follows.  By assumption, $z$ and
$\zeta$ each lie in the interiors of smoothly embedded closed disks
$\dD_z \subset \Sigma$ and $\dD_\zeta \subset \Sigma'$ respectively
such that
$$
u(\dD_z \setminus \{z\}) \cap v(\dD_\zeta \setminus \{\zeta\}) = \emptyset.
$$
Then one can find a $C^\infty$-small perturbation $u_\epsilon$ of $u$ such that
$u_\epsilon|_{\dD_z} \pitchfork v|_{\dD_\zeta}$ but $u_\epsilon(\p\dD_z)$ and
$v(\p\dD_\zeta)$ remain disjoint.  We set
$$
\inter(u,z \,;\, v,\zeta) := \sum_{u_\epsilon(z') = v(\zeta')} \inter(u_\epsilon,z' \,;\, v,\zeta') \in \ZZ,
$$
where the sum is restricted to pairs $(z',\zeta') \in \dD_z \times \dD_\zeta$.

\begin{exercise}
\label{EX:interWithBoondary}
Suppose $\Sigma$ and $\Sigma'$ are compact oriented surfaces with
boundary, $M$ is a smooth oriented $4$-manifold and 
$$
f_\tau : \Sigma \to M, \qquad g_\tau : \Sigma' \to M, \qquad \tau \in [0,1]
$$
are homotopies\footnote{We are not specifying the regularity of the homotopy
in this statement because it does not matter: one can use general perturbation
results as in \cite{Hirsch} to replace any continuous homotopy between two
$C^1$-smooth maps with a homotopy of class~$C^1$.  If desired, one can also
perturb all of the maps to make them smooth.} of maps with the property that for all $\tau \in [0,1]$,
$$
f_\tau(\p\Sigma) \cap g_\tau(\Sigma') = f_\tau(\Sigma) \cap g_\tau(\p\Sigma')
= \emptyset.
$$
Show that if $f_\tau$ and $g_\tau$ are of class $C^1$ and have only transverse intersections for
$\tau \in \{0,1\}$, then
\begin{equation}
\label{eqn:this}
\sum_{f_0(z) = g_0(\zeta)} \inter(f_0,z \,;\, g_0,\zeta) =
\sum_{f_1(z) = g_1(\zeta)} \inter(f_1,z \,;\, g_1,\zeta).
\end{equation}
Deduce from this that the above definition of the local intersection 
index for an isolated but non-transverse intersection is well defined and
independent of the choice of perturbation.  Then, show that
\eqref{eqn:this} also holds if the intersections for $\tau\in \{0,1\}$ are assumed
to be isolated but not necessarily transverse.
\textsl{Hint: If you have never read \cite{Milnor:differentiable},
you should.}
\end{exercise}

The following useful result is immediate from the above definition; it can
be paraphrased by saying that ``algebraically nontrivial intersections
cannot be perturbed away.''
\begin{prop}
\label{prop:posPert}
If $u : \Sigma \to M$ and $v : \Sigma' \to M$ have an isolated intersection 
$u(z) = v(\zeta)$ with $\inter(u,z \,;\, v,\zeta) \ne 0$, then for any 
neighborhood $z \in \uU_z \subset \Sigma$, any sufficiently
$C^0$-close perturbation $u_\epsilon$ of~$u$ satisfies
$u(\uU_z) \cap v(\Sigma') \ne \emptyset$.
\qed
\end{prop}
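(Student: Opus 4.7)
My plan is to argue by contradiction, using the homotopy invariance of intersection numbers from Exercise~\ref{EX:interWithBoondary} to show that a nonzero local intersection index cannot be destroyed by a $C^\infty$-small perturbation.

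First, I would invoke the definition of $\inter(u,z;v,\zeta)$ to choose embedded closed disks $\dD_z \subset \uU_z$ and $\dD_\zeta \subset \Sigma'$ with $z$ and $\zeta$ in their respective interiors, such that $u(\dD_z \setminus \{z\}) \cap v(\dD_\zeta \setminus \{\zeta\}) = \emptyset$. After shrinking $\dD_z$ so that $z$ is its only preimage of $v(\zeta)$, one also has $u(\partial \dD_z) \cap v(\dD_\zeta) = \emptyset$, while the original disjointness already gives $u(\dD_z) \cap v(\partial \dD_\zeta) = \emptyset$. By compactness, both conditions persist under any sufficiently $C^0$-small perturbation of $u$.

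Now suppose, for contradiction, that $u_\epsilon$ is a $C^\infty$-close perturbation of $u$ satisfying $u_\epsilon(\uU_z) \cap v(\Sigma') = \emptyset$; in particular $u_\epsilon(\dD_z) \cap v(\dD_\zeta) = \emptyset$. Compactness of $u_\epsilon(\dD_z)$ and closedness of $v(\dD_\zeta)$ in $M$ show that this interior disjointness also survives a sufficiently $C^\infty$-small further perturbation. Standard parametric transversality then furnishes $u_\epsilon'$ arbitrarily close to $u_\epsilon$ with $u_\epsilon'|_{\mathring{\dD}_z} \pitchfork v|_{\mathring{\dD}_\zeta}$, retaining both boundary disjointness conditions as well as $u_\epsilon'(\dD_z) \cap v(\dD_\zeta) = \emptyset$; in particular, $u_\epsilon'$ has no intersections with $v$ inside $\dD_z \times \dD_\zeta$ at all.

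Finally, connect $u|_{\dD_z}$ to $u_\epsilon'|_{\dD_z}$ by a smooth path of maps $f_\tau$ kept $C^0$-close enough to $u$ throughout that both boundary hypotheses of Exercise~\ref{EX:interWithBoondary} hold for every $\tau \in [0,1]$. The extension of that exercise to isolated but non-transverse intersections at $\tau=0$ then equates the signed intersection counts on $\dD_z \times \dD_\zeta$ of $(u,v)$ and $(u_\epsilon',v)$, which by our setup are $\inter(u,z;v,\zeta)$ and $0$ respectively; this contradicts the hypothesis $\inter(u,z;v,\zeta) \ne 0$. The main technical point deserving care is ensuring that the transversality-achieving perturbation is small enough to preserve both boundary conditions \emph{and} the interior disjointness simultaneously, which is a routine combination of openness of transversality, compactness, and the triangle inequality.
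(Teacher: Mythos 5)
Your proof is correct and is exactly the argument the paper has in mind: the proposition is meant to follow immediately from the definition of the local index together with the homotopy invariance of Exercise~\ref{EX:interWithBoondary}, which is precisely what you carry out. One small refinement: your claim that ``the original disjointness already gives $u(\dD_z) \cap v(\p\dD_\zeta) = \emptyset$'' overlooks the possibility that $v$ passes again through the point $p = u(z) = v(\zeta)$ somewhere in $\dD_\zeta \setminus \{\zeta\}$; you should shrink $\dD_\zeta$ as well, using that $\zeta$ is isolated in $v^{-1}(p)$ (which follows from isolation of the intersection $(z,\zeta)$ by the same argument you used for $z$), and this also guarantees that $(z,\zeta)$ is the \emph{only} intersection contributing to the count at $\tau = 0$, so that the homotopy invariance really pits $\inter(u,z;v,\zeta) \ne 0$ against~$0$.
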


Recall next that any complex structure on a real vector space induces a
preferred orientation.  In the case where $u : (\Sigma,j) \to (M,J)$ and
$v : (\Sigma',j') \to (M,J)$ are both $J$-holomorphic curves, this means
that each space in \eqref{eqn:directSum} carries a canonical orientation
and they are automatically compatible with the direct sum, hence
$\inter(u,z \,;\, v,\zeta) = +1$.  This positivity phenomenon turns out to be true
for non-transverse intersections as well:\index{intersections!positivity of|(}\index{positivity of intersections|(}

\begin{thm}[\textsl{local positivity of intersections}]
\label{thm:positivity}
Suppose $u : (\Sigma,j) \to
(M,J)$ and $v : (\Sigma',j') \to (M,J)$ are nonconstant
pseudoholomorphic maps
with $u(z) = v(\zeta) = p \in M$ for some $z \in \Sigma$,
$\zeta \in \Sigma'$.  Then there exist neighborhoods $z \in \uU_z \subset \Sigma$
and $\zeta \in \uU_\zeta \subset \Sigma'$ such that either
$u(\uU_z) = v(\uU_\zeta)$ or 
$$
u(\uU_z \setminus \{z\}) \cap v(\uU_\zeta \setminus \{\zeta\}) = \emptyset.
$$
Moreover, in the latter case, if $\dim M = 4$ then 
$\inter(u,z \,;\, v,\zeta) \ge 1$,
with equality if and only if the intersection is transverse.
\end{thm}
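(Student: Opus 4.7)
The plan is to reduce everything to the classical (integrable) complex case via a local normal-form theorem, and then invoke standard facts about holomorphic curves in $\CC^2$.

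First I would choose local coordinates near $p \in M$ identifying a neighborhood with one of $0 \in \CC^n$ so that the pushed-forward $J$ agrees with the standard $i$ at the origin, and likewise pick holomorphic disks around $z$ and $\zeta$. In these charts $u$ and $v$ become $J$-holomorphic maps $(\DD,0) \to (\CC^n,0)$ for an almost complex structure $J$ with $J(0) = i$, not necessarily integrable.

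Second, I would invoke the Micallef--White local representation theorem as a black box: after a single $C^1$-diffeomorphism $\Phi$ of the target, fixing $0$ and tangent to the identity there, both $\Phi \circ u$ and $\Phi \circ v$ become honestly holomorphic maps in suitable new source coordinates on each disk. Since $C^1$ diffeomorphisms preserve local intersection indices---they preserve transversality, tangent-space orientations, and the direct-sum decomposition \eqref{eqn:directSum}---I may henceforth assume $u$ and $v$ are honestly holomorphic.

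Third, in the holomorphic case the image of each nonconstant curve is an irreducible germ of an analytic variety at $0$. Two such germs are either equal---giving the alternative $u(\uU_z) = v(\uU_\zeta)$---or meet only in isolated points: after Puiseux-parametrizing each as a graph over its tangent line, coincidences are the zeroes of a nonzero holomorphic function, hence form a discrete set. In dimension $4$ I would then read off positivity and the transversality criterion directly from these Puiseux parametrizations: the local intersection index of two non-coincident holomorphic curves at a common point in $\CC^2$ equals the classical intersection multiplicity, which is a positive integer, with value exactly $1$ precisely when both curves are smooth (Puiseux exponent one) and their tangent lines are distinct---i.e.\ when the intersection is transverse.

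The main obstacle, which I would state rather than prove, is Micallef--White itself: converting a $J$-holomorphic curve into a genuinely holomorphic one via a $C^1$ coordinate change of the target is a deep result requiring careful asymptotic analysis of the nonlinear Cauchy--Riemann equation near each point. A more elementary alternative avoiding Micallef--White would be to apply the similarity principle (Theorem~\ref{thm:similarity}) directly to a judiciously chosen ``difference'' of $u$ and $v$---after matching the orders of their leading Taylor expansions by prior reparametrization---reducing the dimension-$4$ positivity statement to the positive-winding assertion in Corollary~\ref{cor:positiveOrder}.
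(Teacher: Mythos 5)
Your argument is essentially sound, but it takes a genuinely different route from the one in Appendix~\ref{app:positivity}. You outsource all of the analysis to the full Micallef--White theorem, pass to $C^1$ coordinates in which both curves become honestly holomorphic, and then quote the classical theory of analytic curve germs: irreducible germs either coincide or meet in a discrete set, and in $\CC^2$ the local intersection multiplicity is a positive integer, equal to $1$ exactly when both branches are smooth with transverse tangents; since orientation-preserving $C^1$ coordinate changes preserve the local intersection index, the theorem follows. The paper deliberately avoids the full Micallef--White theorem: it proves a weaker representation formula in \emph{smooth} coordinates (Theorem~\ref{thm:representation}), whose essential extra content is the \emph{relative} expansion \eqref{eqn:relativeuv} for a pair of curves, and then argues by hand, splitting into the cases of distinct and identical tangent spaces (Propositions~\ref{prop:transverseIntersection} and~\ref{prop:sameTangentSpace}), using homotopy invariance of the local index, explicit perturbations of the models $(z^{k_u},0)$ and $(0,z^{k_v})$, branched covers to equalize critical orders, and the multiplicativity of Exercise~\ref{EX:branched}. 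Your route buys brevity and conceptual transparency at the price of a deep black box and non-smooth charts; the paper's route stays self-contained (its representation theorem is much easier than Micallef--White) and yields the sharper quantitative bounds $\inter(u,z;v,\zeta) \ge k_u k_v$, with strict inequality in the tangent case, which are reused later (e.g.\ in Lemma~\ref{lemma:critInj} and thence the adjunction formula). Two cautions: first, what you actually need from the black box is the simultaneous, two-curve statement---one target chart in which \emph{both} curves become polynomial; the single-curve Micallef--White theorem alone does not provide this, and the relative normal form is precisely the delicate point (which is why the paper states \eqref{eqn:relativeuv} separately). Second, your closing alternative via the similarity principle (Theorem~\ref{thm:similarity} and Corollary~\ref{cor:positiveOrder}) is closer in spirit to the paper's actual proof, but as phrased it hides the real work: when the critical orders differ one must first pass to branched covers and compare \emph{all} reparametrized branches, not a single difference, and then recover the original index by dividing out the covering multiplicities as in Exercise~\ref{EX:branched}.
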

A proof of this theorem is given in Appendix~\ref{app:positivity}.

To understand the global consequences of Theorem~\ref{thm:positivity}, observe
that there are certain obvious situations where a pair of closed $J$-holomorphic 
curves $u : (\Sigma,j) \to (M,J)$ and $v : (\Sigma',j') \to (M,J)$ have
infinitely many intersections, e.g.~if they represent the same curve up to
parametrization, or they are multiple covers of the same simple curve.  
In such cases, $u$ and $v$ have globally identical images, and we find
neighborhoods with $u(\uU_z) = v(\uU_\zeta)$ in Theorem~\ref{thm:positivity}.
One can show that in all other cases, the set of intersections is finite,
a phenomenon known as \defin{unique continuation}.\index{unique continuation}
Theorem~\ref{thm:positivity} then implies:

\begin{cor}[\textsl{global positivity of intersections}]
\label{cor:positivityGlobal}
If $\dim M = 4$ and $u : (\Sigma,j) \to (M,J)$ and $v : (\Sigma',j') \to (M,J)$ 
are closed connected $J$-holomorhic curves with non-identical images,
then they have finitely many intersections, and
$$
[u] \cdot [v] \ge \# \left\{ (z,\zeta) \in \Sigma \times \Sigma'\ \big|\ 
u(z) = v(\zeta) \right\},
$$
with equality if and only if all the intersections are transverse.  In
particular, $[u] \cdot [v] = 0$ if and only if 
$u(\Sigma) \cap v(\Sigma') = \emptyset$.\index{intersections!positivity of|)}\index{positivity of intersections|)}
\qed
\end{cor}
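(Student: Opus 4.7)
The plan is to deduce the global statement from Theorem~\ref{thm:positivity} (local positivity) together with unique continuation and compactness of the source surfaces. The whole argument amounts to summing local contributions, once we know the intersection set is finite.

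First I would address finiteness of the intersection set
$$
I := \left\{ (z,\zeta) \in \Sigma \times \Sigma'\ \big|\ u(z) = v(\zeta) \right\}.
$$
At each point $(z,\zeta) \in I$, Theorem~\ref{thm:positivity} gives neighborhoods $\uU_z \subset \Sigma$ and $\uU_\zeta \subset \Sigma'$ such that either $u(\uU_z) = v(\uU_\zeta)$ or $(z,\zeta)$ is the unique intersection in $\uU_z \times \uU_\zeta$. Unique continuation for pseudoholomorphic curves rules out the first alternative under the hypothesis that $u$ and $v$ have non-identical images: if some $\uU_z$, $\uU_\zeta$ gave coinciding images, then comparing suitable local parametrizations via the similarity principle (Theorem~\ref{thm:similarity}) forces the global images to coincide. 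Hence every point of $I$ is isolated in $\Sigma \times \Sigma'$. Since $I$ is closed by continuity and $\Sigma \times \Sigma'$ is compact, $I$ is a compact discrete set, hence finite.

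Next I would sum local contributions. Using the definition of the homological intersection number as a sum of local indices (extended to the non-transverse case via the perturbation procedure justified in Exercise~\ref{EX:interWithBoondary}), we have
$$
[u] \cdot [v] = \sum_{(z,\zeta) \in I} \inter(u,z ; v,\zeta).
$$
The second half of Theorem~\ref{thm:positivity}, specific to $\dim M = 4$, guarantees $\inter(u,z;v,\zeta) \ge 1$ at every intersection, with equality if and only if the intersection is transverse. Summing these inequalities yields $[u] \cdot [v] \ge \#I$, with equality precisely when each local index equals~$1$, i.e.~when every intersection is transverse. The ``in particular'' assertion then follows: if $u(\Sigma) \cap v(\Sigma') = \emptyset$, the sum defining $[u] \cdot [v]$ is empty and vanishes; conversely, if $[u] \cdot [v] = 0$ then $\#I \le 0$, forcing $I = \emptyset$.

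The main obstacle is the unique continuation step, which is genuinely analytic and does not follow formally from Theorem~\ref{thm:positivity} alone; however, it is a standard application of the similarity principle already invoked in Lecture~\ref{sec:1}, applied to the difference of suitable local holomorphic parametrizations of $u$ and $v$, so it requires no machinery beyond what has already been stated. The rest of the argument is purely combinatorial, combining local positivity with the finiteness afforded by compactness.
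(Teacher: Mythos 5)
Your argument is correct and follows essentially the same route as the paper: the text deduces the corollary directly from Theorem~\ref{thm:positivity} together with unique continuation (stated as a black box, cf.~Theorem~\ref{thm:uniqueContin}), which rules out the locally-coinciding-images alternative and yields finiteness, after which one sums the local intersection indices exactly as you do. Your additional remarks on isolation plus compactness of $\Sigma \times \Sigma'$ simply make explicit what the paper leaves implicit.
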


We next consider the question of how many times a single closed
$J$-holomorphic curve $u : (\Sigma,j) \to (M,J)$ intersects itself
at two distinct points in its domain, i.e.~its count of
\defin{double points}.\index{double points!of a holomorphic curve}\index{holomorphic curve!double points of}
This question obviously has no reasonable answer if $u$ is multiply
covered, so let us assume $u$ is simple, in which case it has only
finitely many double points.  We say that a point $z \in \Sigma$
is a \defin{critical point}\index{critical points of a holomorphic curve}\index{holomorphic curve!critical points of}
of $u$ if
$$
du(z) = 0.
$$

\begin{remark}
\label{remark:critical}
This usage of the term ``critical
point'' conflicts with standard terminology since typically $\dim \Sigma < \dim M$,
hence $du(z)$ can never be surjective and $u$ therefore cannot have 
any regular points, strictly speaking.  Note however that whenever
$du(z) \ne 0$, the Cauchy-Riemann equation implies that $du(z)$ is
injective.  For this reason, we will refer to points with this
property as \defin{immersed points}\index{immersed points of a holomorphic curve}\index{holomorphic curve!immersed points of}
instead of ``regular points''.
\end{remark}

A simple $J$-holomorphic curve can have critical points,
but only finitely many, and their role in intersection theory is 
dictated by the following lemma.
For an oriented surface $\Sigma$ and a symplectic manifold $(M,\omega)$,
we say that a smooth map $u : \Sigma \to M$ is \defin{symplectically
immersed}\index{symplectically immersed} if $u^*\omega > 0$.

\begin{lemma}
\label{lemma:critInj}
If $u \in \mM_g(M,J)$ is simple, then for any parametrization
$u : \Sigma \to M$ and any $z \in \Sigma$, there is a neighborhood
$z \in \uU_z \subset \Sigma$ such that $u|_{\uU_z}$ is injective.
Moreover, if $du(z)=0$, $\dim M = 4$, and $\omega_z$ is an auxiliary
choice of symplectic form defined near $u(z)$ and taming~$J$,
then there exists a positive integer $\delta(u,z) > 0$ depending only on the germ of
$u$ near~$z$, such that
$u|_{\uU_z}$ admits a $C^1$-small perturbation 
to an $\omega_z$-symplectically immersed map 
$u_\epsilon : \uU_z \to M$ that matches $u$ outside
an arbitrarily small neighborhood of~$z$ and satisfies\footnote{Notice
that each geometric double-point $u_\epsilon(\zeta_1) = u_\epsilon(\zeta_2)$ appears
twice in the summation over pairs $(\zeta_1,\zeta_2)$, hence the factor
of $1/2$ in the definition of $\delta(u,z)$, and similarly in \eqref{eqn:delta}.}
$$
\delta(u,z) = 
\frac{1}{2} \sum_{u_\epsilon(\zeta_1) = u_\epsilon(\zeta_2),\, \zeta_1 \ne \zeta_2} 
\inter(u_\epsilon , \zeta_1 \,;\, u_\epsilon, \zeta_2),
$$
where the sum is finite and ranges over pairs $(\zeta_1,\zeta_2) \in \uU_z \times \uU_z$.
%satisfying $u_\epsilon(\zeta_1) = u_\epsilon(\zeta_2)$ but $\zeta_1 \ne \zeta_2$.
\end{lemma}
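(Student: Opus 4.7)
The plan for local injectivity at any $z \in \Sigma$ is to invoke the Micallef--White local representation theorem: after a smooth change of coordinates near $u(z) \in M$ and a holomorphic reparametrization near $z$, a simple $J$-holomorphic curve $u$ takes the form $u(\zeta) = (\zeta^k, \phi(\zeta))$ for some $k \geq 1$, where $\phi$ is smooth and $O(|\zeta|^{k+1})$. If the Taylor expansion of $\phi$ contained only terms $a\zeta^m$ with $m \equiv 0 \pmod{k}$, then $u$ would factor locally through $\zeta \mapsto \zeta^k$; by unique continuation this would make $u$ a global multiple cover, contradicting simplicity. Hence some asymmetric term is present, which rules out $u(\zeta_1) = u(\zeta_2)$ for distinct small $\zeta_1, \zeta_2$ and gives a neighborhood $\uU_z \ni z$ on which $u$ is injective.

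For the second part, assume $\dim M = 4$ and $du(z) = 0$, so that $k \geq 2$ in the normal form. I would construct $u_\epsilon$ explicitly in the local coordinates by replacing $\zeta^k$ on a small disk about $z$ with a generic polynomial $\zeta^k + \epsilon p(\zeta)$ of degree $< k$, interpolating back to $u$ via a smooth cutoff. For $\epsilon$ small, this is an immersion with finitely many self-intersections concentrated near $z$, transverse after a further generic perturbation; and since $\omega_z$ is positive on any complex line close to those tangent to $u$, $u_\epsilon^*\omega_z$ is pointwise positive. Any two such compactly supported perturbations are joined by a compactly supported homotopy, along which the signed self-intersection count is locally constant---the argument is essentially that of Exercise~\ref{EX:interWithBoondary}. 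Consequently $\delta(u,z)$ depends only on the germ of $u$ at $z$.

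The main obstacle is the strict inequality $\delta(u,z) > 0$. The strategy is to compute $\delta$ using a pseudoholomorphic perturbation rather than an arbitrary one: by transversality of the linearised Cauchy--Riemann operator at the somewhere-injective curve $u$, together with a small deformation of $J$ through $\omega_z$-tame structures, one finds a $J'$-holomorphic immersion $u'$ on $\uU_z$ that is $C^\infty$-close to $u$ and resolves the critical point at $z$. Because $u'$ is pseudoholomorphic, the two local branches meeting at each of its self-intersections intersect positively by Theorem~\ref{thm:positivity}, so every term in the sum defining $\delta$ is $+1$. An explicit calculation in the Micallef--White normal form $(\zeta^k, a\zeta^{k+\ell}) + O(|\zeta|^{k+\ell+1})$ with $k \geq 2$ and $\gcd(k,\ell) = 1$---exemplified by the cusp $\zeta \mapsto (\zeta^2, \zeta^3 - \epsilon^2 \zeta)$, which has a single transverse double point---shows that at least one such double point is produced in any resolution, so $\delta(u,z) \geq 1$. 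The delicate step in the whole argument is the construction of $u'$: it requires enough transversality coming from simplicity of $u$ (via the similarity principle) to guarantee that $J'$-holomorphic resolutions exist for $J'$ arbitrarily $C^\infty$-close to $J$, which is where the 4-dimensional structure enters most essentially.
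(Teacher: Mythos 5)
Your first paragraph (local injectivity via the normal form and comparison of the $k$ branches $\zeta\mapsto u(e^{2\pi i j/k}\zeta)$) is essentially the paper's argument (Theorem~\ref{thm:representation} and Proposition~\ref{prop:branching}), and your construction of an immersed perturbation supported near $z$ is in the same spirit as the paper's. The genuine gap is in how you get $\delta(u,z)>0$ and the independence of the count. You reduce positivity to the existence of an immersed $J'$-holomorphic curve $u'$, $C^\infty$-close to $u$ near $z$, ``by transversality of the linearised Cauchy--Riemann operator together with a small deformation of $J$.'' That existence statement is not a routine consequence of transversality or the similarity principle: generic regularity makes the local moduli space a manifold of the right dimension, but it does not tell you that the critical point of a \emph{fixed} curve can be resolved while remaining pseudoholomorphic---that is precisely McDuff's theorem \cite{McDuff:intersections}, the global perturbation result this appendix is explicitly designed to avoid. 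Moreover a pseudoholomorphic $u'$ cannot literally match $u$ outside a small neighborhood of $z$ (unique continuation), so you would still need to transfer the count from $u'$ to an admissible $u_\epsilon$, and the invariance you invoke for this---``any two compactly supported perturbations are joined by a compactly supported homotopy, along which the count is constant as in Exercise~\ref{EX:interWithBoondary}''---fails for arbitrary homotopies: a cusp/finger move passing through a non-immersed map changes the raw double-point count by $\pm 1$ (only the combination with the normal data is homotopy invariant), and any homotopy between two perturbations of the non-immersed germ $u$ will in general pass through maps with critical points near~$z$. Finally, ``at least one double point is produced in any resolution'' is only verified for the cusp $(\zeta^2,\zeta^3)$; for a general germ the relevant exponents are the orders $\ell_j$ of the differences $\hat{u}(e^{2\pi i j/k}\cdot)-\hat{u}$, not the leading exponent of $\hat{u}$, and your assumption $\gcd(k,\ell)=1$ need not hold.

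The paper closes exactly these gaps by explicit computation (Proposition~\ref{prop:critical}): in the coordinates of Theorem~\ref{thm:representation} it perturbs the \emph{second} component, $u_\epsilon(\zeta)=(\zeta^k,\hat{u}(\zeta)+\epsilon\beta(\zeta)\zeta)$, so that self-intersections become zeroes of $\hat{u}_j-\hat{u}-\epsilon(e^{2\pi i j/k}-1)\zeta$, whose algebraic count is $k+\ell_j-1$ by a winding argument; summing over $j$ gives $\delta(u,z)=\tfrac12\sum_{j=1}^{k-1}(k+\ell_j-1)\ge \tfrac12 k(k-1)>0$, a formula that manifestly depends only on the germ, with no appeal to pseudoholomorphic resolutions or positivity of intersections. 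Note also that your one-line justification of the symplectic immersion property (``$\omega_z$ is positive on complex lines close to those tangent to $u$'') is exactly the subtle point: near the critical point the tangent planes of $u_\epsilon$ are \emph{not} $C^0$-close to those of $u$, since $du$ is of the same size as the perturbation there; the paper instead compares $du_\epsilon$ with the differential of the holomorphic Taylor polynomial $P_\epsilon$ via the injectivity-modulus estimates of Lemmas~\ref{lemma:Inj} and~\ref{lemma:nearlyComplex}. A similar estimate would be needed for your first-component perturbation $\zeta^k+\epsilon p(\zeta)$, where you would additionally have to rule out, for generic $p$, critical points of the perturbed first component coinciding with zeroes of $\hat{u}'$.
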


A proof of this lemma is given in Appendix~\ref{app:positivity}.
It enables us to define for each simple curve $u \in \mM_g(M,J)$ the integer
\begin{equation}
\label{eqn:delta}
\delta(u) := \frac{1}{2} \sum_{u(z) = u(\zeta),\, z \ne \zeta} 
\inter(u , z \,;\, u, \zeta) + \sum_{du(z)=0} \delta(u,z) \in \ZZ,
\end{equation}
which we shall call the \defin{singularity index}\index{singularity index of a simple holomorphic curve}\index{holomorphic curve!singularity index of}
of~$u$.
The contribution $\delta(u,z) > 0$ for each critical point $z$ is
the \defin{local singularity index}\index{local singularity index}\index{holomorphic curve!local singularity index at a point} of $u$ at~$z$.

\begin{thm}
\label{thm:delta}
For any simple curve $u \in \mM_g(M,J)$ in an almost complex $4$-manifold
$(M,J)$, the integer $\delta(u)$ defined
in \eqref{eqn:delta} depends only on the genus $g$ and the homology
class $[u] \in H_2(M)$.  Moreover, $\delta(u) \ge 0$, with equality
if and only if $u$ is embedded.
\end{thm}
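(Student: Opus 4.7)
The plan is to establish the adjunction formula
$$
2\delta(u) = [u]\cdot[u] - c_1([u]) + 2 - 2g,
$$
which settles all three claims at once: the right-hand side depends only on $g$ and $[u]$, and both non-negativity and the characterization of equality follow directly from the definition \eqref{eqn:delta} together with Theorem~\ref{thm:positivity} and Lemma~\ref{lemma:critInj}. The strategy is to perturb $u$ to a genuinely immersed (but no longer $J$-holomorphic) map with only transverse positive double points, count those double points two ways, and compare.

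First I would produce the immersed perturbation. By Lemma~\ref{lemma:critInj}, $u$ has only finitely many critical points $z_1,\dots,z_m$, and since $u$ is simple, Corollary~\ref{cor:positivityGlobal} applied to $u$ against itself implies that $u$ has only finitely many self-intersections. Choose pairwise disjoint neighborhoods $\uU_i \ni z_i$ avoiding the double-point set, and fix an auxiliary symplectic form $\omega$ taming $J$ near $u(\Sigma)$. Applying Lemma~\ref{lemma:critInj} inside each $\uU_i$, perturb $u$ to a smooth map $u_\epsilon : \Sigma \to M$ that agrees with $u$ outside the $\uU_i$ and is $\omega$-symplectically immersed on each $\uU_i$, contributing exactly $\delta(u,z_i)$ transverse positive self-intersections out of~$z_i$. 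The existing transverse double points of $u$ persist under a sufficiently $C^\infty$-small perturbation, and Proposition~\ref{prop:posPert} combined with Exercise~\ref{EX:interWithBoondary} ensures that no spurious intersections are created elsewhere. Thus $u_\epsilon$ is an immersed oriented surface whose unordered transverse positive self-intersections number exactly $\delta(u)$.

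Next I would apply the immersed adjunction identity. For any oriented immersed surface $f : \Sigma \to M^4$ with $k$ transverse positive double points,
$$
[f]\cdot[f] = e(N_f) + 2k,
$$
where $N_f \to \Sigma$ is the pulled-back normal bundle and $e$ its Euler number; this is proved by pushing $f$ off itself along a generic section of $N_f$ and observing that each double point contributes two intersections while each zero of the section contributes one with sign. Because $u_\epsilon$ is $J$-complex at every immersed point, $N_{u_\epsilon}$ inherits a canonical complex structure and $u_\epsilon^*TM \cong T\Sigma \oplus N_{u_\epsilon}$ as complex vector bundles, giving $c_1([u]) = \chi(\Sigma) + e(N_{u_\epsilon})$. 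Substituting yields the claimed formula. For non-negativity, each term in \eqref{eqn:delta} is a positive integer by Theorem~\ref{thm:positivity} and Lemma~\ref{lemma:critInj}, so $\delta(u) \geq 0$, with equality iff $u$ has neither double points nor critical points, i.e.\ iff $u$ is embedded.

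The main obstacle is the immersed adjunction identity in the middle step. One has to build an explicit tubular neighborhood for the (not embedded) immersion $u_\epsilon$ and verify that pushing off via a generic section of $N_{u_\epsilon}$ does indeed compute $[u_\epsilon]\cdot[u_\epsilon]$ with the correct signs, and that the complex orientation on $N_{u_\epsilon}$ induced by the almost complex structure agrees with the orientation making $u_\epsilon^*TM \cong T\Sigma \oplus N_{u_\epsilon}$ an oriented splitting, so that $e(N_{u_\epsilon})$ really equals the complex first Chern number $c_1(N_{u_\epsilon},J)$. A secondary concern is checking that the perturbation in Step~1 can be chosen uniformly small enough to preserve the full double-point count, but this follows from the openness of transverse intersections and the homotopy invariance of the local intersection index recorded in Exercise~\ref{EX:interWithBoondary}.
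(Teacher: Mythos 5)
Your proposal is correct and follows essentially the same route as the paper: perturb via Lemma~\ref{lemma:critInj} to a symplectically immersed map with $\delta(u_\epsilon)=\delta(u)$, use the complex splitting $u_\epsilon^*TM \cong T\Sigma \oplus N_{u_\epsilon}$ to get $c_1([u])=\chi(\Sigma)+c_1(N_{u_\epsilon})$, and compute $[u]\cdot[u]$ by pushing off along a generic section of $N_{u_\epsilon}$, so that each zero contributes its order and each double point contributes twice, yielding the adjunction formula $[u]\cdot[u]=2\delta(u)+c_N(u)$ from which the theorem follows. One small slip: finiteness of the double-point set of a simple curve comes from unique continuation (Theorem~\ref{thm:simple}), not from Corollary~\ref{cor:positivityGlobal}, which concerns two curves with non-identical images.
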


Note that
the second statement in Theorem~\ref{thm:delta} is an immediate consequence of
Theorem~\ref{thm:positivity} and Lemma~\ref{lemma:critInj}.  
To prove the first statement, we shall relate
$\delta(u)$ to other quantities that more obviously depend only
on $[u] \in H_2(M)$ and the genus, for instance 
the homological self-intersection number\index{self-intersection number!homological}
$$
[u] \cdot [u] \in \ZZ.
$$
To compute the latter, it suffices to compute
$[u_\epsilon] \cdot [u_\epsilon]$ for any $C^1$-small immersed
perturbation $u_\epsilon : \Sigma \to M$ of~$u$.  Choose $u_\epsilon$
to be the perturbation promised by Lemma~\ref{lemma:critInj}, so for
some auxiliary symplectic structure $\omega$ taming $J$ near the images of the
critical points of~$u$, we can assume $u_\epsilon$ is symplectically
immersed near those critical points and matches $u$ everywhere else.
Notice that by Lemma~\ref{lemma:critInj} and the definition of~$\delta(u)$,
$$
\delta(u_\epsilon) = \delta(u).
$$
Denote the normal bundle of $u_\epsilon$ by
$N_{u_\epsilon} \to \Sigma$.  Since $u_\epsilon$ is symplectically immersed
in the region where it differs from~$u$, we can deform the natural complex
structure of $u_\epsilon^*TM$ on this region to one that is tamed by
$\omega$ but also admits a splitting of complex vector bundles
$u_\epsilon^*TM \cong T\Sigma \oplus N_{u_\epsilon}$.  This modification
of the complex structure does not change $c_1(u_\epsilon^*TM)$, so we then have
\begin{equation}
\label{eqn:c1normal}
c_1([u]) = c_1(u_\epsilon^*TM) = c_1(T\Sigma) + c_1(N_{u_\epsilon})
= \chi(\Sigma) + c_1(N_{u_\epsilon}).
\end{equation}
This motivates the following notion: we define the \defin{normal Chern number}\index{normal Chern number!of a closed holomorphic curve}\index{holomorphic curve!normal Chern number of}
$c_N(u) \in \ZZ$ of any closed $J$-holomorphic curve
$u : (\Sigma,j) \to (M,J)$ to be
\begin{equation}
\label{eqn:normalChern}
c_N(u) := c_1([u]) - \chi(\Sigma).
\end{equation}
It is equal to $c_1(N_u)$ whenever $u$ is immersed, but has the advantage
of obviously depending only on $[u] \in H_2(M)$ and the topology of the
domain, so we can define it without assuming that $u$ is immersed.

The self-intersection number $[u] \cdot [u] = [u_\epsilon] \cdot [u_\epsilon]$
can now be computed by counting (with signs) the isolated intersections between
$u_\epsilon$ and a generic perturbation of the form
$$
u_\epsilon' : \Sigma \to M : z \mapsto \exp_{u_\epsilon(z)} \eta(z),
$$
where $\eta$ is a generic $C^0$-small smooth section of
$N_{u_\epsilon} \to \Sigma$, and the exponential map is defined using
any choice of Riemannian metric on~$M$.  Figure~\ref{fig:adjunction} shows
how many intersections we should
expect to see.  Any zero of $\eta$ with
order $k \in \ZZ$ will
produce an intersection of $u_\epsilon$ and $u_\epsilon'$
whose local intersection index is also~$k$, and the sum of these orders over all
zeroes of $\eta$ is $c_1(N_{u_\epsilon})$.  
Moreover, any isolated double point
$u_\epsilon(z) = u_\epsilon(\zeta)$ will produce 
\emph{two} intersections of $u_\epsilon$ and $u_\epsilon'$ with the
same local index.  These two observations produce the formula
$$
[u] \cdot [u] = 2\delta(u_\epsilon) + c_1(N_{u_\epsilon}) =
2\delta(u) + c_N(u).
$$
Since neither $[u] \cdot [u]$ nor $c_N(u)$ depends on the perturbation
$u_\epsilon$, this proves the following important result, 
known as the \defin{adjunction formula},\index{adjunction formula!for closed holomorphic curves} which implies
Theorem~\ref{thm:delta} as an immediate corollary.

\begin{thm}[adjunction formula]
\label{thm:adjunction}
For any closed, connected and simple $J$-holomorphic curve $u$
in an almost complex $4$-manifold $(M,J)$,
\begin{equation}
\label{eqn:adjunction}
[u] \cdot [u] = 2\delta(u) + c_N(u),
\end{equation}
where $c_N(u) \in \ZZ$ is the normal Chern number \eqref{eqn:normalChern},
and $\delta(u)$ is a nonnegative integer that vanishes if and only if
$u$ is embedded.
\qed
\end{thm}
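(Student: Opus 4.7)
The plan is to reduce the statement to the immersed case, where both sides of the formula can be understood transparently: the right-hand side as an algebraic count of zeros of a section of the normal bundle, plus geometric double points, and the left-hand side as the self-intersection of the perturbed map. Most of the argument is already sketched in the paragraphs preceding the theorem statement; I would just organize it as follows.

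First, I would use Lemma~\ref{lemma:critInj} to produce a $C^\infty$-small perturbation $u_\epsilon : \Sigma \to M$ of $u$ which agrees with $u$ outside arbitrarily small neighborhoods of the finitely many critical points, is everywhere immersed, and is symplectically immersed (with respect to a taming form $\omega$ defined near the critical values) on the regions where the perturbation is nontrivial. The lemma guarantees $\delta(u_\epsilon) = \delta(u)$, and $C^0$-smallness gives $[u_\epsilon] = [u]$, hence $[u_\epsilon] \cdot [u_\epsilon] = [u] \cdot [u]$. The normal bundle $N_{u_\epsilon} \to \Sigma$ is then well defined, and since $u_\epsilon$ is symplectically immersed wherever it differs from $u$, the bundle $u_\epsilon^*TM$ admits a splitting $T\Sigma \oplus N_{u_\epsilon}$ of complex vector bundles after a deformation of the complex structure that does not change $c_1$. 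This yields $c_1([u]) = \chi(\Sigma) + c_1(N_{u_\epsilon})$, so that $c_N(u) = c_1(N_{u_\epsilon})$.

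Next, I would compute $[u_\epsilon] \cdot [u_\epsilon]$ by pushing $u_\epsilon$ off itself along a generic $C^0$-small smooth section $\eta \in \Gamma(N_{u_\epsilon})$, setting $u_\epsilon'(z) := \exp_{u_\epsilon(z)} \eta(z)$ for any choice of Riemannian metric. Generically the zero set of $\eta$ is a collection of isolated transverse zeros, and each such zero produces one transverse intersection between $u_\epsilon$ and $u_\epsilon'$ with the same local index as the zero; algebraically these contribute $c_1(N_{u_\epsilon}) = c_N(u)$. Away from those zeros, the only remaining intersections come from each isolated double point $u_\epsilon(z_1) = u_\epsilon(z_2)$ of $u_\epsilon$, which splits under the perturbation into \emph{two} transverse intersections of $u_\epsilon$ and $u_\epsilon'$ of the same local index as the original double point. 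Summing the two contributions and recalling the definition \eqref{eqn:delta} of $\delta$, together with Lemma~\ref{lemma:critInj}, gives
\[
[u] \cdot [u] = [u_\epsilon] \cdot [u_\epsilon] = 2\delta(u_\epsilon) + c_1(N_{u_\epsilon}) = 2\delta(u) + c_N(u).
\]

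Finally, the nonnegativity of $\delta(u)$ and the characterization of embedded curves would follow immediately: Theorem~\ref{thm:positivity} ensures that each local intersection index $\inter(u,z;u,\zeta)$ for a transverse double point is a positive integer, and Lemma~\ref{lemma:critInj} states that each local contribution $\delta(u,z)$ at a critical point is strictly positive. So $\delta(u) = 0$ forces $u$ to have no critical points and no double points, i.e.\ to be embedded. The main obstacle, if there is one, is in the supporting Lemma~\ref{lemma:critInj}: one must verify that a symplectically immersed perturbation $u_\epsilon$ of $u$ really exists near a critical point, and that its geometric double points near the critical point are transverse and positive, so that $\delta(u,z)$ is a well-defined positive integer depending only on the germ of $u$. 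Granted that lemma (proved in the appendix), the argument above is just bookkeeping.
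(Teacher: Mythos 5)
Your proposal is correct and is essentially identical to the paper's own argument: reduce to the symplectically immersed case via Lemma~\ref{lemma:critInj}, identify $c_N(u)$ with $c_1(N_{u_\epsilon})$ through the complex splitting $u_\epsilon^*TM \cong T\Sigma \oplus N_{u_\epsilon}$, and compute $[u]\cdot[u]$ by pushing off along a generic section of the normal bundle, with each double point contributing twice and the zeroes of the section contributing $c_1(N_{u_\epsilon})$. The final characterization of $\delta(u)=0$ via Theorem~\ref{thm:positivity} and Lemma~\ref{lemma:critInj} is also exactly how the paper concludes.
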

\begin{cor}
\label{cor:embedded}
If $u \in \mM_g^A(M,J)$ is embedded, then every other simple curve in
$\mM_g^A(M,J)$ is also embedded.
\qed
\end{cor}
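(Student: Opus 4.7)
The plan is to apply the adjunction formula directly, exploiting the fact that both sides of \eqref{eqn:adjunction} except for $\delta$ depend only on the homology class and the genus, not on the particular curve.

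First I would observe that for any simple curve $v \in \mM_g^A(M,J)$, the self-intersection number $[v] \cdot [v] = A \cdot A$ depends only on $A \in H_2(M)$, and similarly the normal Chern number
$$
c_N(v) = c_1(A) - \chi(\Sigma_g)
$$
depends only on $A$ and on $g$, by the definition in \eqref{eqn:normalChern}. In particular, if $u, v \in \mM_g^A(M,J)$ are two simple curves, then
$$
[u] \cdot [u] = [v] \cdot [v] \qquad \text{and} \qquad c_N(u) = c_N(v).
$$

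Next I would substitute into the adjunction formula \eqref{eqn:adjunction} applied to both $u$ and $v$. Subtracting the two identities yields
$$
0 = 2\bigl(\delta(u) - \delta(v)\bigr),
$$
so $\delta(v) = \delta(u)$. The hypothesis that $u$ is embedded gives $\delta(u) = 0$ by Theorem~\ref{thm:adjunction}, hence $\delta(v) = 0$, and applying Theorem~\ref{thm:adjunction} in the other direction forces $v$ to be embedded as well.

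There is essentially no obstacle here: the entire argument is just the observation that $\delta$, as expressed via the adjunction formula, is a homological-plus-topological invariant of a simple curve in $\mM_g^A(M,J)$, so its vanishing is a property of the pair $(g, A)$ rather than of an individual curve. All of the real work has already been absorbed into Theorem~\ref{thm:adjunction}.
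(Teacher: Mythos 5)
Your proof is correct and is essentially the paper's own argument: the corollary is stated as an immediate consequence of the adjunction formula \eqref{eqn:adjunction}, precisely because $[u]\cdot[u]$ and $c_N(u)$ depend only on $A$ and $g$, so $\delta$ is the same for all simple curves in $\mM_g^A(M,J)$ and vanishes exactly when the curve is embedded. (Implicitly you also use that the embedded curve $u$ is simple, so the adjunction formula applies to it -- worth a half-sentence, but harmless.)
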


\begin{figure}
\includegraphics{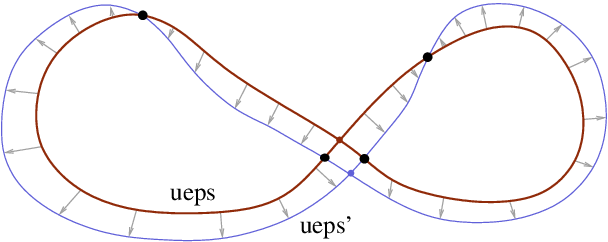}
\caption{\label{fig:adjunction} Counting the intersections of
$u_\epsilon : \Sigma \to M$ with a perturbation of the form
$u_\epsilon' = \exp_{u_\epsilon} \eta$ for some section $\eta$ of the normal bundle.}
\end{figure}

\begin{exercise}
\ 
\begin{enumerate}
\renewcommand{\labelenumi}{(\alph{enumi})}
\item
Consider the intersecting holomorphic maps $u , v : \CC \to \CC^2$ defined by
$$
u(z) = (z^3, z^5), \qquad v(z) = (z^4, z^6).
$$
Show that $u$ admits a $C^1$-small perturbation to a holomorphic
function $u_\epsilon$ such that $u_\epsilon$ and $v$ have exactly~$18$
intersections in a neighbourhood of the origin, all transverse.
\item
Try to convince yourself that the above count of 18 intersections holds
after \emph{any} generic $C^1$-small perturbation of $u$ and/or~$v$.
\item 
Show that for any neighbourhood $\uU \subset \CC$ of~$0$, the map 
$u$ admits a $C^1$-small perturbation to a holomorphic
\emph{immersion} $u_\epsilon$ such that
$$
\frac{1}{2}\# \{ (z,\zeta) \in \uU \times \uU\ |\ u_\epsilon(z) =
u_\epsilon(\zeta), \ z \ne \zeta \} = 10.
$$
\item
If you're especially ambitious, now try to convince yourself that for
\emph{any} perturbation as in part~(c) making all double points 
of $u_\epsilon$ transverse, the count of double points is the same.
\end{enumerate}
\end{exercise}

\begin{exercise}
Recall that $H_2(\CP^2)$ is generated by an embedded sphere
$\CP^1 \subset \CP^2$ with $[\CP^1] \cdot [\CP^1] = 1$.
A holomorphic curve $u : \Sigma \to \CP^2$ is said to have
\defin{degree~$d \in \NN$} if
$$
[u] = d[\CP^1].
$$
Show that all holomorphic spheres of degree~$1$ are embedded, and any other
simple holomorphic sphere in $\CP^2$ is embedded if and only if it has degree~$2$.
\end{exercise}

\section{Application to ruled surfaces}
\label{sec:finishMcDuff}

We now apply the results of the previous section to complete the proof of
Lemma~\ref{lemma:M1} from Lecture~\ref{sec:1}.  

Since $\mM_0^{[S]}(M,J)$
contains the embedded curve $u_S$ by construction,
Corollary~\ref{cor:embedded} implies that all other \emph{simple} curves
in $\mM_0^{[S]}(M,J)$ are also embedded, and we saw in \S\ref{sec:foliations}
that every embedded curve $u \in \mM_0^{[S]}(M,J)$ has a neighborhood in
$\mM_0^{[S]}(M,J)$ consisting of embeddings that foliate an open subset.
On a more global level, any two curves $u , v \in \mM_0^{[S]}(M,J)$
satisfy
$$
[u] \cdot [v] = [S] \cdot [S] = 0,
$$
thus Corollary~\ref{cor:positivityGlobal} now implies that
$u$ and $v$ are disjoint unless they are identical, hence the set of
\emph{all} simple curves in $\mM_0^{[S]}(M,J)$ foliates an open subset
of~$M$.

We must still rule out
the possibility that $\mM_0^{[S]}(M,J)$ contains a multiple cover, so arguing
by contradiction, suppose $u \in \mM_0^{[S]}(M,J)$ is a $k$-fold cover of
a simple curve $v : (\Sigma_g,j) \to (M,J)$ with genus~$g \ge 0$, for
some $k \ge 2$.  This requires the
existence of a map $\varphi : S^2 \to \Sigma_g$ of degree~$k$,
but such a map cannot
exist if $g > 0$ since $\Sigma_g$ then has a contractible universal
cover and thus $\pi_2(\Sigma_g)=0$; we conclude $g=0$.  Moreover,
the fact that the embedded sphere $S \subset M$ has trivial normal
bundle implies via the usual splitting $TM|_S = TS \oplus N_S$ that
$$
c_1([S]) = c_1(TS) + c_1(N_S) = \chi(S) = 2,
$$
so $[S] = [u] = k[v]$ implies $2 = k c_1([v])$, thus 
$k=2$ and $c_1([v]) = 1$.  Consider now the adjunction formula
\eqref{eqn:adjunction} applied to the simple curve~$v$:
$$
[v] \cdot [v] = 2\delta(v) + c_N(v) = 2\delta(v) + c_1([v]) - 2.
$$
The right hand side is an odd integer since $c_1([v]) = 1$.
However, the left hand side is $0$, as 
$0 = [S] \cdot [S] = [u] \cdot [u] = k^2 [v] \cdot [v]$,
so we have a contradiction.

Next, suppose $u_k \in \mM_0^{[S]}(M,J)$ is a sequence degenerating to a
nodal curve $\{ v_+,v_- \} \in \overline{\mM}_0^{[S]}(M,J)$, for which
Lemma~\ref{lemma:M0} guarantees that both $v_+$ and $v_-$ are
simple and satisfy $c_1([v_\pm]) > 0$.  Since $[S] = [u_k] =
[v_+] + [v_-]$ and $c_1([S]) = 2$, this implies
\begin{equation}
\label{eqn:c1vs}
c_1([v_+]) = c_1([v_-]) = 1.
\end{equation}
Since every curve $u \in \mM_0^{[S]}(M,J)$ has $c_1([u]) = c_1([S]) = 2$
and is simple, this implies that $u$ and $v_\pm$ can never have
identical images, so $[u] \cdot [v_\pm] \ge 0$ by 
positivity of intersections (Corollary~\ref{cor:positivityGlobal}).
Moreover,
$$
0 = [S] \cdot [S] = [u] \cdot \left( [v_+] + [v_-] \right) =
[u] \cdot [v_+] + [u] \cdot [v_-],
$$
where both terms at the right are nonnegative, thus both vanish
and we conclude via Corollary~\ref{cor:positivityGlobal} that
$u$ is disjoint from both $v_+$ and $v_-$.

We claim next that $v_+$ and $v_-$ cannot be the \emph{same}
curve (up to parametrization): indeed, if they are, then we
have $[S] = 2[v_+]$, and applying the adjunction formula to
$v_+$ yields the same numerical contradiction as in the case of
a multiple cover in $\mM_0^{[S]}(M,J)$.  
It follows now by Corollary~\ref{cor:positivityGlobal}
that $v_+$ and $v_-$ have finitely many intersections, all of which
count positively, and in fact
\begin{equation}
\label{eqn:intge1}
[v_+] \cdot [v_-] \ge 1
\end{equation}
since they must have at least one intersection, namely at the node.
Using $[S] = [v_+] + [v_-]$ and \eqref{eqn:c1vs}, and 
plugging in the adjunction formula 
and \eqref{eqn:c1vs} to compute $[v_\pm] \cdot [v_\pm]$, we find
\begin{equation*}
\begin{split}
0 = [S] \cdot [S] &= \left( [v_+] + [v_-] \right) \cdot 
\left( [v_+] + [v_-] \right)
= [v_+] \cdot [v_+] + [v_-] \cdot [v_-] + 2 [v_+] \cdot [v_-] \\
&= 2\delta(v_+) + c_N(v_+) + 2\delta(v_-) + c_N(v_-) + 2 [v_+] \cdot [v_-] \\
&= 2\delta(v_+) + 2\delta(v_-) + c_1([v_+]) - \chi(S^2) + c_1([v_-]) - \chi(S^2)
 + 2[v_+] \cdot [v_-] \\
&= 2 \delta(v_+) + 2\delta(v_-) + 2\left( [v_+] \cdot [v_-] - 1 \right).
\end{split}
\end{equation*}
By \eqref{eqn:intge1}, every term in this last sum is nonnegative, implying
$$
\delta(v_+) = \delta(v_-) = 0 \quad\text{ and }\quad [v_+] \cdot [v_-] = 1.
$$
Applying Corollary~\ref{cor:positivityGlobal} and Theorem~\ref{thm:adjunction},
we deduce that $v_\pm$ are each embedded and intersect each other exactly
once, transversely.  Applying the adjunction formula again to
$v_\pm$ with $c_N(v_\pm) = c_1([v_\pm]) - \chi(S^2) = -1$ then gives
$$
[v_\pm] \cdot [v_\pm] = 2\delta(v_\pm) + c_N(v_\pm) = 0 - 1 = -1,
$$
so both are $J$-holomorphic parametrizations of exceptional spheres.

Finally, we show that if $\{ v_+,v_- \}$ and $\{ v_+',v_-' \}$ are two
non-identical nodal curves arising as limits of curves
in $\mM_0^{[S]}(M,J)$, then they are disjoint.  Here ``non-identical''
can be taken to mean without loss of generality (i.e.~by reversing
the labels of $v_+$ and $v_-$ if necessary) that
$v_+$ is not equivalent to either $v_+'$ or $v_-'$ up to parametrization,
so positivity of intersections gives $[v_+] \cdot [v_\pm'] \ge 0$.
It could still happen in theory that $v_-$ is equivalent to one of
$v_+'$ or $v_-'$; say the latter, without loss of generality.
Then $[v_-] \cdot [v_-'] = -1$ by the above computation, while
$[v_+] \cdot [v_-] = [v_+] \cdot [v_-'] = 1$ and 
$[v_+'] \cdot [v_-'] = [v_+'] \cdot [v_-] = 1$, thus
\begin{equation*}
\begin{split}
0 = [S] \cdot [S] &= \left( [v_+] + [v_-] \right) \cdot \left(
[v_+'] \cdot [v_-'] \right) 
= [v_+] \cdot [v_+'] + [v_+] \cdot [v_-'] + [v_-] \cdot [v_+']
+ [v_-] \cdot [v_-'] \\
&\ge 0 + 1 + 1 - 1 = 1,
\end{split}
\end{equation*}
giving a contradiction.  The only remaining possibility is that
each of $v_\pm$ is not equivalent to each of $v_\pm'$, so their
intersections are all positive, and the expansion above implies
that they are all zero, thus both curves in $\{ v_+,v_- \}$ are
disjoint from both curves in $\{ v_+',v_-' \}$.
The proof of Lemma~\ref{lemma:M1} is now complete.

To conclude our discussion of the closed case, let us note which
properties of the intersection theory we made essential use of in
the above argument:
\begin{itemize}
\item The pairing $[u] \cdot [v]$ is \emph{homotopy invariant}.
\item The condition $[u] \cdot [v] = 0$ guarantees that two curves
$u$ and $v$ with non-identical images are \emph{disjoint}; moreover, if they
have a known intersection, then $[u] \cdot [v] = 1$ guarantees
that that intersection is \emph{transverse}.
\item There is a homotopy invariant number $\delta(u) \ge 0$ defined
for simple curves $u$, which can be computed in terms of
$[u] \cdot [u]$ and whose vanishing guarantees that $u$ is \emph{embedded}.
\end{itemize}
In order to produce a useful theory for studying contact $3$-manifolds,
we will want the intersection theory defined in the next two lectures
for \emph{punctured} holomorphic curves to have all of these same
properties.

\section{Contact manifolds, symplectic fillings and cobordisms}
\label{sec:contact}

The goal for the remainder of these lectures will be to explain a
generalization of the intersection theory described above that has
applications in $3$-dimensional contact topology.  One way to
motivate the study of contact manifolds is by considering
symplectic manifolds with boundary.

A vector field $V$ on a symplectic manifold $(M,\omega)$ is
called a \defin{Liouville vector field}\index{Liouville vector field} if it satisfies
$$
\Lie_V \omega = \omega,
$$
i.e.~its flow rescales the symplectic form exponentially.  By Cartan's
formula for the Lie derivative, this is equivalent to the condition
$$
d\lambda = \omega, \quad\text{ where $\lambda := \iota_V \omega$},
$$
and the primitive $\lambda$ is then called a \defin{Liouville form}.\index{Liouville form}
We say in this case that $\lambda$ is \defin{$\omega$-dual} to~$V$.

\begin{defn}
\label{defn:convexConcave}
Suppose $(W,\omega)$ is a symplectic manifold with boundary.
A boundary component $M \subset \p W$ is called \defin{convex}/\defin{concave}\index{convex boundary}\index{concave boundary}\index{contact-type boundary|see {convex boundary}}
if a neighborhood of $M$ admits a Liouville vector field that points
transversely outward/inward respectively at~$M$.
\end{defn}
\begin{exercise}
\label{EX:LiouvilleContact}
Suppose $M$ is an oriented hypersurface in a $2n$-dimensional symplectic 
manifold $(W,\omega)$, and $V$ is a Liouville vector field defined near~$M$, 
with $\omega$-dual Liouville form~$\lambda$.  Show that $V$ is 
positively/negatively transverse to~$M$ if and only if the restriction of
$\lambda \wedge (d\lambda)^{n-1}$ to $M$ is a positive/negative volume
form respectively.
\end{exercise}
\begin{exercise}
\label{EX:convex}
Show that in the situation of Exercise~\ref{EX:LiouvilleContact}, the
spaces of Liouville forms $\lambda$ defined near~$M \subset (W,\omega)$ such 
that $\lambda \wedge d\lambda^{n-1}|_{TM}$ is a positive or negative
volume form are convex.
\end{exercise}

Exercise~\ref{EX:LiouvilleContact} leads directly to the notion of a
contact manifold: we say that a $1$-form $\alpha$ on an oriented 
$(2n-1)$-dimensional manifold is a (positive) \defin{contact form} if\index{contact form}
\begin{equation}
\label{eqn:contact}
\alpha \wedge (d\alpha)^{n-1} > 0,
\end{equation}
and a (positive, co-oriented) \defin{contact structure}\index{contact structure}\index{contact manifold}
is any
smooth co-oriented hyperplane distribution $\xi \subset TM$ that can be defined by
$\xi = \ker\alpha$ for some contact form~$\alpha$.  Exercises~\ref{EX:LiouvilleContact}
and~\ref{EX:convex} show that whenever $M \subset \p W$ is a convex/concave 
boundary component of a symplectic manifold $(W,\omega)$, the oriented manifold 
$\pm M$ inherits a positive\footnote{We are assuming $M$ carries its canonical
orientation as a boundary component of the symplectic manifold $(W,\omega)$,
but also using the notation $-M$ to mean the same manifold with reversed
orientation---thus a positive contact structure on $-M$ is in fact a negative
contact structure on~$M$.}
contact structure, which is unique up to
deformation through families of contact structures.  Whenever $M$ is closed,
Gray's stability theorem (see e.g.~\cite{Geiges:book}) then implies that the
induced contact structure on $M$ is in fact canonical up to 
\emph{isotopy}.\footnote{Gray's stability theorem states that any
smooth $1$-parameter family of contact structures on a closed manifold arises
from an isotopy.  It is specifically true for contact \emph{structures}
and not contact \emph{forms}, and this is one good reason why we regard the
contact structure on a convex/concave boundary of a symplectic manifold as a
well-defined object, whereas the contact \emph{form} is only auxiliary data.}

\begin{exercise}
\label{EX:dalpha}
Show that up to issues of orientation, the contact condition \eqref{eqn:contact} is equivalent to the
condition that $\alpha$ is nowhere zero and $d\alpha$ restricts to a
nondegenerate $2$-form on $\xi := \ker\alpha$, i.e.~it makes
$(\xi,d\alpha) \to M$ a symplectic vector bundle.
\end{exercise}

\begin{defn}
Given two closed contact manifolds $(M_+,\xi_+)$ and $(M_-,\xi_-)$ of the
same dimension, a \defin{symplectic cobordism}\index{symplectic cobordism}
from\footnote{Certain
orientation conventions are not universally agreed upon: there is a vocal
minority of authors who would describe what we are defining here as a
``symplectic cobordism \emph{from} $(M_+,\xi_+)$ \emph{to} $(M_-,\xi_-)$.''
Whichever convention one prefers, one must be consistent about it---unlike 
topological cobordisms, the
existence of a symplectic cobordism in one direction does not imply that
one in the other direction also exists!}
$(M_-,\xi_-)$ to
$(M_+,\xi_+)$ is a compact symplectic manifold $(W,\omega)$ with
$$
\p W = -M_- \sqcup M_+,
$$
such that a neighborhood of $\p W$ admits a Liouville form $\lambda$ with
$$
\ker \left( \lambda|_{T M_\pm} \right) = \xi_\pm.
$$
If $M_- = \emptyset$, we call $(W,\omega)$ a (strong) \defin{symplectic
filling}\index{symplectic filling}
of $(M_+,\xi_+)$, and if $M_+ = \emptyset$, we say
$(W,\omega)$ is a \defin{symplectic cap}\index{symplectic cap}
for $(M_-,\xi_-)$.
\end{defn}

There are many interesting questions one can ask about contact manifolds
and the existence of symplectic fillings or cobordisms.  The strongest
results in this area are typically specific to dimension three, though
there has also been considerable recent progress in higher dimensions.
Here is a brief
sampling of known results:
\begin{enumerate}
\item Martinet \cite{Martinet} proved that every closed oriented $3$-manifold
admits a contact structure.  This result was recently extended to all dimensions
by Borman-Eliashberg-Murphy \cite{BEM}, given the obviously necessary topological
condition that an \emph{almost} contact structure exists.
\item A combination of results due to Gromov and Eliashberg
\cites{Gromov,Eliashberg:diskFilling,Eliashberg:overtwisted} implies that
any contact structure on any closed $3$-manifold $M$ is homotopic through
oriented $2$-plane fields to a contact structure $\xi$ for which $(M,\xi)$ 
admits no symplectic filling.  These are the so-called \emph{overtwisted}
contact structures.  This notion has also recently been generalized to
all dimensions in \cite{BEM}.
\item A result of Lisca \cite{Lisca:curvature}
even gives examples of closed oriented $3$-manifolds on which 
\emph{no} contact structure is symplectically fillable.  Etnyre and
Honda \cite{EtnyreHonda:noTight} later extended this to find $3$-manifolds
on which every contact structure is overtwisted.
\item In contrast to fillings, Etnyre and Honda \cite{EtnyreHonda:cobordisms}
showed that symplectic \emph{caps} do exist for any closed contact 
$3$-manifold, and in fact they come in infinitely many distinct topological
types.  The existence of caps in all higher dimensions was established only
very recently, in parallel work of Conway-Etnyre \cite{ConwayEtnyre:caps}
and Lazarev \cite{Lazarev:caps}.
\item Etnyre and Honda \cite{EtnyreHonda:cobordisms} also showed that
every closed overtwisted contact $3$-manifold admits a symplectic cobordism
to every other closed contact $3$-manifold.  The higher-dimensional
analogue of this result was recently established by
Eliash\-berg-Murphy \cite{EliashbergMurphy:cobordisms}.
\end{enumerate}

Let us state more carefully two further results along these lines that will be
discussed in Lecture~\ref{sec:5}.  We say that two
symplectic manifolds $(W,\omega)$ and $(W',\omega')$ with convex boundary
are \defin{symplectically deformation equivalent}\index{symplectic deformation equivalence}
if there is a diffeomorphism
$\varphi : W \to W'$ such that $\varphi^*\omega'$ can be deformed to
$\omega$ through a smooth $1$-parameter family of symplectic forms that are
all convex at the boundary.  The \defin{standard
contact structure}\index{standard contact structure!on $S^3$}\index{contact structure!on $S^3$}
$\xi\std$ on $S^3$ is defined by identifying $S^3$ with
the boundary of the unit ball $B^4$ with its \defin{standard symplectic form on the $4$-ball}
$$
\omega\std := \sum_{j=1}^2 d x_j \wedge dy_j
$$
and Liouville form
$$
\lambda\std := \frac{1}{2} \sum_{j=1}^2 \left( x_j\, dy_j - y_j \, d x_j \right).
$$
By this definition, $(B^4,\omega\std)$ is a symplectic filling of
$(S^3,\xi\std)$, and one can trivially produce other fillings of $(S^3,\xi\std)$
with different topological types by blowing up $(B^4,\omega\std)$ in its
interior.  This procedure however produces a fairly limited range of topological
types for manifolds $W$ with $\p W = S^3$.  Note that in terms of smooth topology,
\emph{almost anything} can have boundary $S^3$: just take any closed oriented
$4$-manifold, remove a ball and reverse the orientation.  Symplectically,
however, the situation is very different:

\begin{thm}[Gromov \cite{Gromov}]
\label{thm:fillingsS3}
Every symplectic filling of $(S^3,\xi\std)$ is symplectically deformation
equivalent to a blowup of $(B^4,\omega\std)$.
\end{thm}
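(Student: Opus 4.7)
The plan is to reduce the theorem to McDuff's characterization of ruled surfaces (Theorem~\ref{thm:McDuff}) by cutting and pasting $(W,\omega)$ into a closed symplectic $4$-manifold that carries a symplectically embedded $S^2$ of self-intersection zero. I would first construct a symplectic cap for $(S^3,\xi\std)$ out of $(\CP^2,\omega_{FS})$, where $\omega_{FS}$ denotes the Fubini--Study form: after rescaling $\omega_{FS}$ appropriately, removing a small Darboux ball $B \subset \CP^2$ disjoint from the line at infinity $L = \CP^1 \subset \CP^2$ produces $(\CP^2 \setminus B,\omega_{FS})$, a symplectic manifold with concave boundary contactomorphic to $(S^3,\xi\std)$; gluing to $(W,\omega)$ along matching collars (using Exercise~\ref{EX:convex} to align the Liouville forms up to deformation) yields a closed symplectic $4$-manifold $(X,\Omega)$. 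By construction, $X$ contains $L$ as a symplectically embedded sphere with $[L]\cdot[L] = +1$, lying entirely in the cap region.

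Next, I would choose an $\Omega$-tame almost complex structure $J$ on $X$ that agrees with the standard integrable complex structure of $\CP^2$ on a neighborhood of the cap, so in particular $L$ is $J$-holomorphic, and then perform a symplectic blowup of $X$ at a point $p \in L$. This yields $(\widetilde X,\widetilde\Omega)$ with exceptional sphere $E$ and proper transform $\widetilde L$ of $L$, where $\widetilde L$ is a symplectically embedded $J$-holomorphic sphere in class $[L]-[E]$ satisfying $[\widetilde L]\cdot[\widetilde L] = 0$ and meeting $E$ once, transversely. Applying Theorem~\ref{thm:McDuff} to $\widetilde L \subset (\widetilde X,\widetilde\Omega)$ produces a symplectic Lefschetz fibration $\pi : \widetilde X \to \Sigma$ with regular fiber homologous to $\widetilde L$. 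Since $[E]\cdot[\widetilde L] = 1$, positivity of intersections (Corollary~\ref{cor:positivityGlobal}) forces $E$ to meet every regular fiber transversely once and to be disjoint from every irreducible component of every singular fiber, each such component being an exceptional sphere of class orthogonal to $[E]$. Hence $E$ is a section, and $\pi|_E : E \to \Sigma$ is a diffeomorphism, forcing $\Sigma \cong S^2$.

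To conclude, I would analyze how $\pi$ restricts to the cap. Because $J$ is standard on the cap, unique continuation combined with positivity of intersections against the standard pencil of lines forces every $J$-holomorphic sphere in class $[\widetilde L]$ that enters the cap to be the proper transform of a complex line of $\CP^2$ through $p$; thus $\pi$ restricts on the blown-up cap to the standard Lefschetz pencil, and in particular every singular fiber of $\pi$ lies in the preimage of $W$, with each irreducible component an exceptional sphere contained in $W$. Blowing down all these exceptional components inside $W$ yields a smooth symplectic $S^2$-fibration $X_0 \to S^2$ in which the section $E$ persists with square $-1$ while $L$ persists with square $+1$, identifying $X_0$ as the Hirzebruch surface $\mathbb{F}_1 \cong \CP^2 \# \overline{\CP^2}$. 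Since the cap with its standard fibration survives unchanged in $X_0$, the complement of the cap in $X_0$ is a disk-bundle neighborhood of $E$ symplectomorphic to $(B^4,\omega\std)$; reversing the blowdowns then proves that $(W,\omega)$ is symplectically deformation equivalent to a finite blowup of $(B^4,\omega\std)$, with Gompf's theorem (Theorem~\ref{thm:Gompf}) promoting the diffeomorphism of supporting Lefschetz structures to a symplectic deformation equivalence.

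The main obstacle is the rigidity statement in the third paragraph: showing that the fibration $\pi$ produced by McDuff's theorem is actually standard on the cap, so that the singular fibers stay inside $W$ and the cap side of the blowdown is $(B^4,\omega\std)$ rather than some other symplectic filling. This requires combining unique continuation with positivity of intersections against the canonical holomorphic foliation of $\CP^2$ by complex lines, a local rigidity argument that is technically more delicate than the global intersection-theoretic bookkeeping used in the proof of Lemma~\ref{lemma:M1}.
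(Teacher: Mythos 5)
You are taking the classical capping route (Gromov--McDuff--Eliashberg): glue $\CP^2\setminus B$ to $(W,\omega)$, blow up once, and invoke Theorem~\ref{thm:McDuff}. This is genuinely different from the paper's proof, which deduces the theorem from Theorem~\ref{thm:pony} via planar open books and the punctured-curve intersection theory of Lectures~3 and~4, and in principle the closed-curve route can be made to work. But the step you flag yourself is a real gap, not a technicality. A $J$-holomorphic sphere in the fibre class $[\widetilde L]$ that meets the cap need not be \emph{contained} in the cap: it can cross the neck into $W$, where $J$ is no longer integrable, and then unique continuation tells you nothing, since such a curve need not agree with any line on any open set. What positivity of intersections actually yields is only that the nodal fibres are disjoint from those fibres lying entirely in the cap (the proper transforms of lines through $p$ missing the Darboux ball); the part of the cap swept by lines through $p$ that do hit the ball, together with the neck region, remains uncontrolled, so you cannot conclude that the singular fibres lie over $W$, nor that ``the cap with its standard fibration survives unchanged'' after blowing down. (A smaller slip in your second paragraph: $E$ cannot be disjoint from the singular fibres, since $[E]\cdot[\widetilde L]=1$ forces it to meet exactly one component of each nodal fibre once; this does not affect the conclusion $\Sigma\cong S^2$, which already follows from the degree count.)

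The endgame also needs an input you have not supplied. Even granting a smooth identification of $X_0$ with $\CP^2\#\overline{\CP^2}$ containing the blown-up cap, the theorem demands a deformation of symplectic forms \emph{on $W$} through forms convex at the boundary, and Theorem~\ref{thm:Gompf} only asserts connectedness of the space of forms supported by a fixed closed Lefschetz fibration; it neither locates the cap inside $X_0$ up to symplectomorphism nor controls convexity at $\p W$. To finish along your lines one needs the uniqueness of symplectic structures on rational surfaces together with symplectic isotopy uniqueness of the $+1$-sphere in the class of the line (Gromov, McDuff, Lalonde--McDuff), so that $W = X\setminus\nu(L)$ can be matched with the complement of a standard line neighbourhood in a blowup of $\CP^2$; alternatively, one works directly on the completion of $W$ as in Lecture~5, where Theorems~\ref{thm:pony} and~\ref{thm:supported} produce the Lefschetz fibration on $W$ itself with the correct boundary open book, so the deformation statement is built in. As written, your third paragraph asserts exactly the rigidity that has to be proved.
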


Similarly, $S^1 \times S^2$ and the lens spaces $L(k,k-1)$ for $k \in \NN$\index{standard contact structure!on $S^1 \times S^2$}\index{standard contact structure!on lens spaces}
each carry standard contact structures as convex boundaries of certain\index{contact structure!on $S^1 \times S^2$}\index{contact structure!on lens spaces}
symplectic manifolds, and their fillings are also unique in the above sense:

\begin{thm}
\label{thm:moreFillings}
The contact manifolds $(S^1 \times S^2,\xi\std)$ and $(L(k,k-1),\xi\std)$
for $k \in \NN$ each have unique symplectic fillings up to deformation
equivalence and blowup.
\end{thm}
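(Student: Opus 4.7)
The plan is to reduce both cases to the same general mechanism from \cite{Wendl:fillable} that was sketched for Theorem~\ref{thm:fillingsS3}: use the fact that each of the contact manifolds in question is supported by a \emph{planar} open book (one whose pages have genus zero), and then foliate any filling by a moduli space of punctured holomorphic spheres whose asymptotics are modelled on the binding Reeb orbits. For $(S^1\times S^2,\xi\std)$ one uses the open book with cylindrical pages and trivial monodromy, and for $(L(k,k-1),\xi\std)$ one uses its well-known planar open book whose pages are $k$-holed spheres and whose monodromy is a product of boundary-parallel Dehn twists; both are standard in the literature on planar contact structures.

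First I would take a strong filling $(W,\omega)$, deform the Liouville form near $\p W$ so it is adapted to a contact form whose Reeb orbits include the binding as nondegenerate orbits, and then pass to the symplectic completion $\widehat{W}=W\cup_{\p W}([0,\infty)\times M,d(e^s\alpha))$ equipped with a tame almost complex structure $J$ that is cylindrical on the end and for which the half-pages of the open book extend to finite-energy $J$-holomorphic planes (or multiply-punctured spheres) in $[0,\infty)\times M$. One thus gets a distinguished element in a moduli space $\mM$ of punctured $J$-holomorphic spheres in $\widehat{W}$ asymptotic to the binding orbits. The central step is to show that $\mM$ is a smooth manifold of the expected dimension, is compact up to bubbling and nodal degeneration, and that the images of its elements foliate $\widehat{W}$. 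This is where the punctured intersection theory of Lectures~3 and~4 enters: the punctured analogues of Corollary~\ref{cor:positivityGlobal} and Theorem~\ref{thm:adjunction} must be invoked to rule out pairwise intersections, to force embeddedness, and to identify any bubbles as $J$-holomorphic exceptional spheres disjoint from the leaves.

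Granted this foliation, the restriction of the projection to the leaf space presents $\widehat{W}$ (and hence $W$, after collapsing the cylindrical end) as a symplectic Lefschetz fibration over $\DD$ whose regular fibre is diffeomorphic to the page, and whose singular fibres carry the exceptional-sphere components produced by bubbling; in other words, $W$ is a blow-up of an allowable Lefschetz fibration over the disk with planar fibre. By Gompf's Theorem~\ref{thm:Gompf}, the supported symplectic form is then determined up to deformation by the underlying Lefschetz fibration data, which in turn is determined by a positive factorization of the open-book monodromy into right-handed Dehn twists in the planar mapping class group of the page. The final step is a mapping class group argument: for the trivial-monodromy page of $S^1\times S^2$ the only positive factorization of the identity is the empty one, so $W$ is a blow-up of the product fibration; for $L(k,k-1)$ the monodromy has an essentially unique positive factorization (up to Hurwitz moves and conjugation), giving again a unique filling up to deformation and blow-up.

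The main obstacle is the middle step, namely carrying out the compactness and foliation argument in the punctured setting. Unlike the closed case in \S\ref{sec:finishMcDuff}, one must handle SFT-style breakings into holomorphic buildings along the cylindrical end, control the behaviour of curves near the binding, and apply the Siefring adjunction and intersection formulas (which include asymptotic contributions) to exclude spurious intersections between distinct leaves and to confirm that the only admissible nodal limits are pairs of exceptional spheres as in Lemma~\ref{lemma:M1}. Once that foliation has been established, the translation into the Lefschetz-fibration picture and the mapping class group classification are comparatively routine.
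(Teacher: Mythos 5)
Your overall strategy is the one the paper uses: deduce Theorem~\ref{thm:moreFillings} from the classification of fillings of planar contact manifolds (Theorem~\ref{thm:pony}), then pin down the unique allowable Lefschetz fibration inducing the given open book and invoke Theorem~\ref{thm:supported} to get deformation equivalence; your middle paragraphs essentially re-sketch the paper's own proof of Theorem~\ref{thm:pony} rather than adding a new ingredient. For $(S^1\times S^2,\xi\std)$ your choice of open book (cylindrical page, trivial monodromy) and the remark that the identity admits only the empty positive factorization in the mapping class group of the annulus reproduce the paper's argument exactly.

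The genuine gap is in the $(L(k,k-1),\xi\std)$ case, and it sits precisely at the step you call routine. The open book you invoke---pages equal to $k$-holed spheres, monodromy a product of boundary-parallel Dehn twists---is not a supporting open book for $(L(k,k-1),\xi\std)$: it is the boundary open book of the Lefschetz fibration with $k$-holed-sphere fibre and $k$ boundary-parallel vanishing cycles, whose total space is the disk bundle over $S^2$ of Euler number $-k$, so it supports the standard contact structure on $L(k,1)$, which for $k\ge 3$ is not orientation-preserving diffeomorphic to $L(k,k-1)$. Moreover, for that monodromy your claim of an essentially unique positive factorization is false: for $k=4$ the lantern relation gives a second, inequivalent factorization into three twists about interior curves, reflecting the well-known fact that $(L(4,1),\xi\std)$ has two non-deformation-equivalent minimal fillings---so if your open book were the right one, the theorem as stated would fail. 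The paper instead uses the open books of Example~\ref{ex:cylinders} and Proposition~\ref{prop:examples}: annulus pages with monodromy $\delta^k$, the $k$-th power of the core Dehn twist (these arise as boundaries of the Lefschetz fibrations on the $A_{k-1}$-plumbings, whose boundary is $L(k,k-1)$). With that page the mapping class group is infinite cyclic and every homotopically essential simple closed curve is isotopic to the core, so the only allowable Lefschetz fibration filling the open book is the one with exactly $k$ singular fibres as in Figure~\ref{fig:cylinders}, and uniqueness up to deformation and blowup follows at once from Theorems~\ref{thm:pony} and~\ref{thm:supported}. Note also that identifying $(S^1\times S^2,\xi\std)$ and $(L(k,k-1),\xi\std)$ with these boundary open books is itself a step requiring proof, namely Proposition~\ref{prop:examples}; once you substitute that choice of open book, your argument becomes the paper's.
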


Theorem~\ref{thm:moreFillings} was proved for $S^1 \times S^2$ originally by
Eliashberg \cite{Eliashberg:diskFilling}, and the uniqueness for $L(k,k-1)$
up to diffeomorphism was proved by Lisca \cite{Lisca:fillingsLens}.
In the forms stated above, Theorems~\ref{thm:fillingsS3} and~\ref{thm:moreFillings}
are both easy applications of a more general result from \cite{Wendl:fillable},
that can be thought of as an analogue of McDuff's Theorem~\ref{thm:McDuff}
for symplectic fillings of certain contact $3$-manifolds.  This will be the
main subject of Lecture~\ref{sec:5}.

\section{Asymptotically cylindrical holomorphic curves}
\label{sec:punctured}

It is not usually useful to consider \emph{closed} holomorphic curves in 
symplectic
cobordisms---for example, the symplectic form on a cobordism could be
exact, in which case Stokes' theorem implies that all closed holomorphic
curves for a tame almost complex structure are constant.  A useful
alternative is to consider noncompact holomorphic curves with cylindrical
ends, and the proper setting for this is the noncompact \emph{completion}
of a symplectic cobordism.  The study of holomorphic curves in this setting
is a large subject known as \emph{symplectic field theory}
(see \cites{SFT,Wendl:SFT}), and we shall only touch upon a few aspects of it here.

Assume $(W,\omega)$ is a symplectic cobordism from $(M_-,\xi_-)$ to
$(M_+,\xi_+)$, with a neighborhood of $\p W$ admitting a Liouville form
$\lambda$ such that
$$
\xi_\pm = \ker \alpha_\pm, \quad\text{ where }
\alpha_\pm := \lambda|_{T M_\pm}.
$$
\begin{exercise}
\label{EX:collar}
Show that the flow from $M_\pm$ along the Liouville vector field
dual to~$\lambda$ identifies collar neighborhoods $\nN(M_\pm)
\subset W$ of $M_\pm$ with the models
\begin{equation*}
\begin{split}
(\nN(M_+),\lambda) &\cong ((-\epsilon,0] \times M_+, e^s\alpha_+), \\
(\nN(M_-),\lambda) &\cong ([0,\epsilon) \times M_-, e^s\alpha_-)
\end{split}
\end{equation*}
for sufficiently small $\epsilon > 0$, where $s$ denotes the real
coordinate in $(-\epsilon,0]$ or $[0,\epsilon)$.
\end{exercise}

For any contact manifold $(M,\xi = \ker\alpha)$, the exact symplectic manifold
$(\RR \times M,d(e^s\alpha))$ is called the \defin{symplectization}\index{symplectization of a contact manifold}
of $(M,\xi)$; one can show that its symplectomorphism type depends on $\xi$ but not
on the choice of contact form~$\alpha$.  A choice of $\alpha$ does however
determine a distinguished vector field that spans the characteristic line fields
of the hypersurfaces $\{s\} \times M$: we define the \defin{Reeb vector field}\index{Reeb vector field!of a contact form}
to be the unique vector field $R_\alpha$ on $M$ satisfying
$$
d\alpha(R_\alpha,\cdot) \equiv 0 \quad\text{ and }\quad \alpha(R_\alpha) \equiv 1.
$$

\begin{figure}
\includegraphics{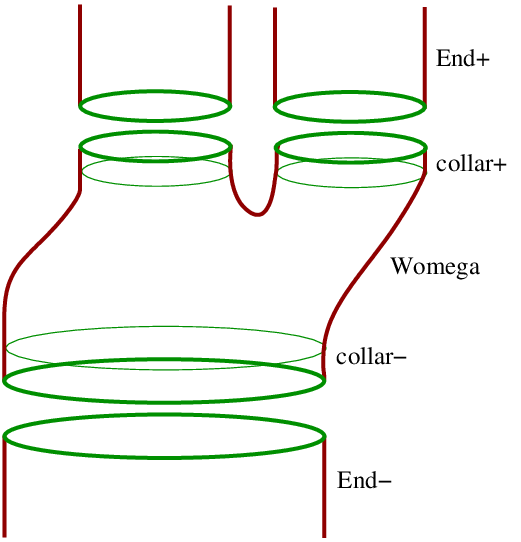}
\caption{\label{fig:completion} The completion of a symplectic cobordism
is constructed by attaching half-symplectizations to form cylindrical ends.}
\end{figure}

The \defin{symplectic completion}\index{completion of a symplectic cobordism}\index{symplectic completion|see {completion of a symplectic cobordism}}
of the cobordism $(W,\omega)$ is defined by attaching halves of symplectizations
along the collar neighborhoods from Exercise~\ref{EX:collar}, producing the
noncompact symplectic manifold (see Figure~\ref{fig:completion}).
\begin{equation}
\label{eqn:completion}
(\widehat{W},\widehat{\omega}) := \big((-\infty,0] \times M_- , d(e^s\alpha_-)\big) 
\cup_{M_-} (W,\omega) \cup_{M_+}
\big( [0,\infty) \times M_+ , d(e^s\alpha_+) \big).
\end{equation}
Informally, the symplectization of $(M,\xi)$ can also be thought of as the 
completion of a trivial symplectic cobordism from $(M,\xi)$ to itself.

Given a choice of contact form $\alpha$ for $\xi$,
$(\RR \times M, d(e^s\alpha))$
carries a special class $\jJ(\alpha)$ of compatible\index{almost complex structure!compatible with a contact form}\index{compatible almost complex structure}
almost complex structures $J$, defined by the conditions
\begin{itemize}
\item $J(\p_s) = R_\alpha$;
\item $J(\xi) = \xi$ and $J|_\xi$ is compatible with $d\alpha|_\xi$;
\item $J$ is invariant under the translation action $(s,p) \mapsto (s + c,p)$
for all $c \in \RR$.
\end{itemize}
For any $J \in \jJ(\alpha)$, a periodic orbit $x : \RR \to M$ of $R_\alpha$
with period $T > 0$ gives rise to a $J$-holomorphic cylinder
$$
u : \RR \times S^1 \to \RR \times M : (s,t) \mapsto (Ts,x(Tt)).
$$
Such curves are referred to as \defin{orbit cylinders}\index{orbit cylinder}\index{trivial cylinder|see {orbit cylinder}}
(sometimes also \defin{trivial cylinders}), and they serve as
asymptotic models for the more general class of holomorphic curves that we
now wish to consider.

Indeed, on the completion $(\widehat{W},\widehat{\omega})$ as defined above,
let $\jJ(\omega,\alpha_+,\alpha_-)$ denote the space of 
almost complex structures that are compatible with $\omega$ on~$W$ and
belong to $\jJ(\alpha_\pm)$ on
$[0,\infty) \times M_+$ and $(-\infty,0] \times M_-$ respectively.  A choice of 
$J \in \jJ(\omega,\alpha_+,\alpha_-)$ makes $(\widehat{W},J)$ into an
\defin{almost complex manifold with cylindrical ends}.\index{cylindrical ends!of an almost complex manifold}\index{almost complex manifold with cylindrical ends}
A Riemann surface with cylindrical ends can likewise be constructed
by introducing \emph{punctures} into a closed Riemann surface.\index{cylindrical ends!of a Riemann surface}\index{Riemann surface!punctured}
Namely, suppose $(\Sigma,j)$ is closed, and
$\Gamma \subset \Sigma$ is a finite set partitioned into two subsets
$\Gamma = \Gamma^+ \sqcup \Gamma^-$, which we will call the \defin{positive}\index{positive punctures of a holomorphic curve}
and \defin{negative punctures},\index{negative punctures of a holomorphic curve}
writing the resulting punctured surface as
$$
\dot{\Sigma} := \Sigma \setminus \Gamma.
$$
Near each $z \in \Gamma^\pm$, one can identify a closed neighborhood
$\dD_z \subset \Sigma$ of $z$ biholomorphically with the standard unit
disk $(\DD,i)$ such that $z$ is identified with the origin, 
and then identify $\DD \setminus \{0\}$  in turn
with a half-cylinder via the biholomorphic map
\begin{equation*}
\begin{split}
[0,\infty) \times S^1 \to \DD \setminus \{0\} : (s,t) \mapsto e^{-2\pi (s+it)},
\quad & \text{ for $z \in \Gamma^+$,}\\
(-\infty,0] \times S^1 \to \DD \setminus \{0\} : (s,t) \mapsto e^{2\pi (s+it)},
\quad & \text{ for $z \in \Gamma^-$.}
\end{split}
\end{equation*}
We will refer to this identification as a choice of \defin{cylindrical
coordinates}\index{cylindrical coordinates on a punctured Riemann surface}
near $z \in \Gamma^\pm$.  Making such a choice for all
punctures, this determines a decomposition
\begin{equation}
\label{eqn:cylindricalEnds}
\dot{\Sigma} = \big((-\infty,0] \times C_-\big) \cup_{C_-} \Sigma_0 \cup_{C_+}
\big([0,\infty) \times C_+\big)
\end{equation}
analogous to \eqref{eqn:completion}, where 
$\Sigma_0 := \Sigma \setminus \bigcup_{z \in \Gamma} \mathring{\dD}_z$ 
can be regarded
as a cobordism with $\p\Sigma_0 = -C_- \sqcup C_+$ between two disjoint unions
of circles $C_\pm$, and the complex structure on the cylindrical ends
is always the standard one, i.e.~with $i\p_s = \p_t$ in cylindrical
coordinates $(s,t)$.

We say that a smooth map $u : \dot{\Sigma} \to \widehat{W}$ is
(positively or negatively)
\defin{asymptotic} at $z \in \Gamma^\pm$ to a $T$-periodic orbit\index{asymptotic Reeb orbit}\index{Reeb orbit!asymptotic}
$x : \RR \to M_\pm$ of $R_{\alpha_\pm}$ if there exists a choice of
cylindrical coordinates as above in which $u$ near~$z$ takes the form
$$
u(s,t) = \exp_{(Ts,x(Tt))} h(s,t) \in \RR \times M_\pm \quad
\text{ for $|s|$ large},
$$
where the exponential map is defined
with respect to a translation-invariant choice of Riemannian metric
on $\RR \times M_\pm$, and $h(s,t)$ is a vector field along the
orbit cylinder that decays to~$0$ with all derivatives as $s \to \pm\infty$.
We say that $u : \dot{\Sigma} \to M$ is \defin{asymptotically
cylindrical}\index{asymptotically cylindrical map}\index{holomorphic curve!asymptotically cylindrical}
if it is positively/negatively asymptotic to some closed
Reeb orbit in $M_+$ or $M_-$ respectively at each of its positive/negative
punctures; see Figure~\ref{fig:asympCyl}.

Observe that the completion $\widehat{W}$ admits a natural compactification
as a compact topological manifold with boundary:
$$
\overline{W} := \big([-\infty,0] \times M_-\big) \cup_{M_-} W \cup_{M_+}
\big([0,\infty] \times M_+\big).
$$
In the same way, the decomposition \eqref{eqn:cylindricalEnds} allows us to
define the \defin{circle compactification}\index{circle compactification of a punctured Riemann surface}\index{Riemann surface!circle compactification of}
$\overline{\Sigma}$ of
$\dot{\Sigma}$, a compact topological $2$-manifold with boundary 
whose interior is identified with~$\dot{\Sigma}$.  It follows then from
the definition above that any asymptotically cylindrical map
$u : \dot{\Sigma} \to \widehat{W}$ extends naturally to a continuous map
$$
\bar{u} : \overline{\Sigma} \to \overline{W}
$$
which takes each component of $\p\overline{\Sigma}$ to a closed Reeb
orbit in $\{\pm\infty\} \times M_\pm$.

\begin{figure}
\includegraphics{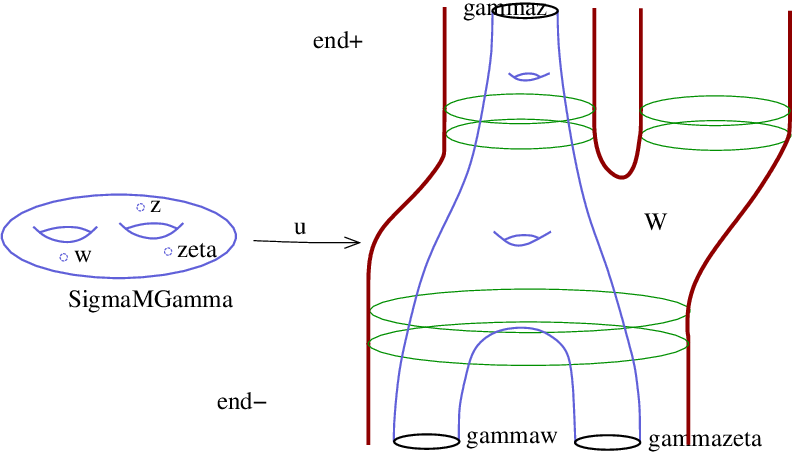}
\caption{\label{fig:asympCyl} An asymptotically cylindrical
map $u : \dot{\Sigma} \to \widehat{W}$ of a punctured surface
of genus~$2$ into a completed cobordism, with
one positive puncture $z \in \Sigma$ asymptotic to a Reeb orbit $\gamma_z$ in
$M_+$, and two positive punctures $w,\zeta \in \Sigma$ asymptotic to
Reeb orbits $\gamma_w$ and $\gamma_\zeta$ in $M_-$.}
\end{figure}

If the set of punctures is nonempty, then an asymptotically cylindrical map
$u : \dot{\Sigma} \to \widehat{W}$ does not represent a homology class 
in $H_2(\widehat{W})$, but one can
use the compactifications described above to assign it a
\emph{relative} homology class.  Concretely, let
$$
\widebar{\boldsymbol{\gamma}}^\pm \subset \{\pm\infty\} \times M_\pm \subset \p \widebar{W}
$$
denote the union of all of the images of the positive/negative asymptotic
orbits of~$u$; topologically, this is a disjoint union of embedded circles.  
The \defin{relative homology class}\index{relative homology class}\index{asymptotically cylindrical map!relative homology class of}
of $u$ is then defined as
$$
[u] := \widebar{u}_*[\widebar{\Sigma}] \in H_2(\widebar{W},\widebar{\boldsymbol{\gamma}}^+ \cup \widebar{\boldsymbol{\gamma}}^-),
$$
where $[\widebar{\Sigma}] \in H_2(\widebar{\Sigma},\p\widebar{\Sigma})$ denotes
the relative fundamental class of~$\widebar{\Sigma}$.
The long exact sequence of the pair $(\widebar{W},\widebar{\boldsymbol{\gamma}}^+ \cup \widebar{\boldsymbol{\gamma}}^-)$
implies that any two asymptotically cylindrical maps having the same asymptotic orbits
with the same multiplicities have relative homology classes that differ by a
unique \emph{absolute} homology class, that is, a class in the image of the
natural map $H_2(\widebar{W}) \to H_2(\widebar{W},\widebar{\boldsymbol{\gamma}}^+ \cup \widebar{\boldsymbol{\gamma}}^-)$;
note that the latter is injective since $H_2(\widebar{\boldsymbol{\gamma}}^+ \cup \widebar{\boldsymbol{\gamma}}^-) = 0$.
In most situations, it is convenient to apply the obvious deformation
retraction $\widebar{W} \to W$ and regard $[u]$ as an element of
$H_2(W,\widebar{\boldsymbol{\gamma}}^+ \cup \widebar{\boldsymbol{\gamma}}^-)$,
with $\widebar{\boldsymbol{\gamma}}^\pm$ now regarded as submanifolds of
$M_\pm \subset \p W$.  In the special case where $(\widehat{W},\widehat{\omega})$
is just the symplectization of a single contact manifold $(M,\xi)$, we take
this one step further and retract $\RR \times M$ to $\{0\} \times M$, so that
$[u]$ lives naturally in
$H_2(M,\widebar{\boldsymbol{\gamma}}^+ \cup \widebar{\boldsymbol{\gamma}}^-)$.
In the following we will state definitions assuming that $(\widehat{W},\widehat{\omega})$
is a completion of a nontrivial cobordism instead of a symplectization, but
one can make obvious modifications to accommodate the latter case.

Moduli spaces of punctured $J$-holomorphic
curves are now defined as follows.
Choose finite ordered sets of closed Reeb orbits
$$
\boldsymbol{\gamma^+} = (\gamma_1^+,\ldots,\gamma_{r_+}^+) \text{ in $M_+$}
\qquad\text{ and }\qquad
\boldsymbol{\gamma^-} = (\gamma_1^-,\ldots,\gamma_{r_-}^-) \text{ in $M_-$,}
$$
and a relative homology class
$A \in H_2(W,\widebar{\boldsymbol{\gamma}}^+ \cup \widebar{\boldsymbol{\gamma}}^-)$,
where $\widebar{\boldsymbol{\gamma}}^\pm$ denotes the union of the images of
the orbits $\gamma_1^\pm,\ldots,\gamma_{r_\pm}^\pm$.  We then define
$$
\mM_g^A(\widehat{W},J ; \boldsymbol{\gamma}^+, \boldsymbol{\gamma}^- )
:= \left\{ (\Sigma,j,\Gamma^+,\Gamma^-,u) \right\} \Big/ \sim,
$$
where
\begin{itemize}
\item $(\Sigma,j)$ is a closed connected Riemann surface of genus~$g$;
\item $\Gamma^\pm = (z_1^\pm,\ldots,z_{r_\pm}^\pm)$ are disjoint finite ordered
sets of pairwise distinct points in $\Sigma$, defining a punctured surface
$\dot{\Sigma} := \Sigma \setminus (\Gamma^+ \cup \Gamma^-)$;
\item The map $u : (\dot{\Sigma},j) \to (\widehat{W},J)$
is $J$-holomorphic, asymptotic to
$\gamma_i^\pm$ at $z_i^\pm \in \Gamma^\pm$ for $i=1,\ldots,r_\pm$, and
represents the relative homology class~$A$;
\item
Two such tuples are considered equivalent if they are related by a
biholomorphic map that preserves the sets of positive and negative punctures,
along with their orderings.
\end{itemize}
We shall denote unions of these
spaces over all possible choices of data by
\begin{equation*}
\begin{split}
\mM_g(\widehat{W},J ; \boldsymbol{\gamma}^+, \boldsymbol{\gamma}^-) &:=
\coprod_{A \in H_2(W,\widebar{\boldsymbol{\gamma}}^+ \cup \widebar{\boldsymbol{\gamma}}^-)}
\mM_g^A(\widehat{W},J ; \boldsymbol{\gamma}^+, \boldsymbol{\gamma}^-),\\
\mM_{g,r_+,r_-}(\widehat{W},J) &:= \coprod_{|\boldsymbol{\gamma}^\pm| = r_\pm}
\mM_g(\widehat{W},J ; \boldsymbol{\gamma}^+, \boldsymbol{\gamma}^-),\\
\mM_g(\widehat{W},J) &:= \coprod_{r_+, r_- \ge 0} \mM_{g,r_+,r_-}(\widehat{W},J).
\end{split}
\end{equation*}
A topology on $\mM_g(\widehat{W},J)$ can be defined by saying that
a sequence $[(\Sigma_k,j_k,\Gamma^+_k,\Gamma^-_k,u_k)]$ converges to
an element $[(\Sigma,j,\Gamma^+,\Gamma^-,u)]$ if there exist representatives
$(\Sigma,j_k',\Gamma^+,\Gamma^-,u_k') \sim (\Sigma_k,j_k,\Gamma^+_k,\Gamma^-_k,
u_k)$ such that
$$
j_k \to j \text{ in $C^\infty(\Sigma)$}, \qquad
u_k \to u \text{ in $C^\infty_{\text{loc}}(\dot{\Sigma},\widehat{W})$}, 
\quad\text{ and }\quad
\bar{u}_k \to \bar{u} \text{ in $C^0(\overline{\Sigma},\overline{W})$}.
$$

Our goal for the next pair of lectures will be to write down
generalizations of the homological intersection number and the adjunction
formula for curves in $\mM_g(\widehat{W},J)$.  These will be instrumental
in the proof of Theorems~\ref{thm:fillingsS3} and~\ref{thm:moreFillings}.

\chapter{Asymptotics of punctured holomorphic curves}
\label{sec:3}

\minitoc

\vspace{12pt}

If\CUP{This material will be published by Cambridge University
Press as \textsl{Contact 3-Manifolds, Holomorphic Curves and Intersection Theory}
by Chris Wendl. This pre-publication version is
free to view and download for personal use only. 
Not for re-distribution, re-sale or use in derivative works. \copyright Chris Wendl, 2019.}
$u_1 \in \mM_g(\widehat{W},J;\boldsymbol{\gamma}_1^+,\boldsymbol{\gamma}_1^-)$ and
$u_2 \in \mM_g(\widehat{W},J;\boldsymbol{\gamma}_2^+,\boldsymbol{\gamma}_2^-)$ are two
asymptotically cylindrical holomorphic curves in
a $4$-dimensional completed symplectic cobordism, it remains true as
in the closed case that intersections of $u_1$ with $u_2$ are isolated
and positive unless both curves have identical images (i.e.~they
cover the same simple curve up to parametrization).  Since the domains
are no longer compact, however, it is not obvious whether the number
of intersections is still finite.  If it is finite, then one can
define an algebraic intersection number
$$
u_1 \cdot u_2 \in \ZZ
$$
which is guaranteed to be nonnegative, and strictly positive unless the
two curves are disjoint.  Such a number is not very useful though unless
it is \emph{homotopy invariant}, i.e.~we would like to know that for
any family $u_s \in 
\mM_g(\widehat{W},J;\boldsymbol{\gamma}_1^+,\boldsymbol{\gamma}_1^-)$ that depends
continuously (with respect to the topology of the moduli space) on
a parameter $s \in [0,1]$, we have $u_0 \cdot u_2 = u_1 \cdot u_2$.
This turns out to be \emph{false} in general, as the noncompactness of
the domains can allow intersections to escape to infinity and disappear
under homotopies (see Figure~\ref{fig:escape}).  
It is a very powerful fact, first suggested by Hofer and then worked
out in detail by Siefring \cites{Siefring:thesis,Siefring:asymptotics,
Siefring:intersection}, that this phenomenon can be controlled: one can
define for any two distinct punctured holomorphic curves a count of
\emph{virtual} intersections that are ``hidden at infinity,'' such that
the sum of this number with $u_1 \cdot u_2$ is homotopy invariant.
We will define this precisely in the next lecture and explain some
applications in Lecture~\ref{sec:5}.  As a preliminary step, it is
necessary to gain a fairly precise understanding of the asymptotic
behavior of punctured holomorphic curves, so that will be the topic for
this lecture.

\begin{remark}
While all results in this and the next lecture are stated in the setting of
symplectizations of contact manifolds and (completed) symplectic cobordisms
between them, they are valid in somewhat greater generality: they continue
to hold namely whenever contact forms are replaced by stable Hamiltonian
structures, so long as one can still assume that all closed Reeb orbits are
nondegenerate (or Morse-Bott---see the footnote attached to
Theorem~\ref{thm:star}).  The main results are restated in this more general form
in Appendix~\ref{app:reference}.
\end{remark}

\section{Holomorphic half-cylinders as gradient-flow lines}
\label{sec:Morse}

Historically, the study of punctured holomorphic curves arose from
an analogy with Floer's interpretation of Morse theory as the study of
gradient-flow lines of a Morse function (see e.g.~\cites{Salamon:Floer,AudinDamian}).
In Morse theory, one considers a manifold $M$ with a smooth function
$f : M \to \RR$, which is called a \defin{Morse function}\index{Morse function}
if its
Hessian at every critical point $p \in \Crit(f)$
$$
\Hess_p := \nabla df(p) : T_p M \times T_p M \to \RR
$$
is nondegenerate; here $\nabla$ denotes the covariant derivative for
any choice of connection on~$M$, but the Hessian does not depend on
this choice since $df(p) = 0$.  Recall that the Hessian is automatically
a symmetric bilinear map, and if we choose a Riemannian metric $g$ with
Levi-Civita connection $\nabla$ and consider instead the covariant
derivative of the gradient, we can then identify $\Hess_p$ with the linear map
$$
A_p := \nabla (\nabla f)(p) : T_p M \to T_p M,
$$
which is symmetric with respect to the inner product defined by~$g$.
One way of proving the classical Morse inequalities on $M$ is by defining a
homology theory with a chain complex generated by critical points in
$\Crit(f)$, and a differential defined by counting isolated solutions
to the gradient-flow problem
$$
\mM(p_+,p_-) := \left\{ x : \RR \to M \ \Big|\ \text{$\dot{x} = \nabla f(x)$
and $\lim_{s \to \pm\infty} x(s) = p_\pm$} \right\},
$$
for $p_\pm \in \Crit(f)$.  In particular, one can show that the resulting
homology theory is isomorphic to the usual singular homology $H_*(M)$,
thus giving relations between the topology of~$M$ and the set of critical
points of~$f$, see e.g~\cites{Schwarz:Morse,AudinDamian}.

\begin{figure}
\includegraphics{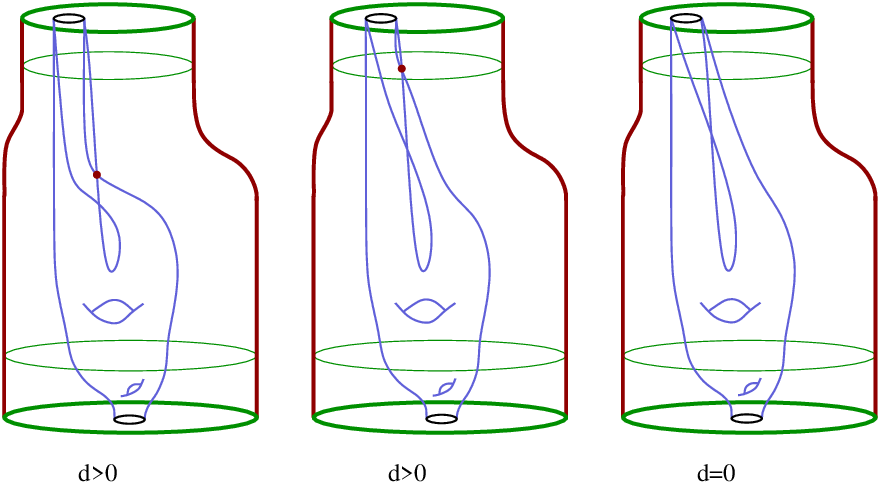}
\caption{\label{fig:escape} The condition of two asymptotically cylindrical
holomorphic curves (or two ends of the same curve) being \emph{disjoint} is
not homotopy invariant, as intersections can escape to infinity if the
two asymptotic Reeb orbits coincide.}
\end{figure}

Since the Hessian $A_p = \nabla(\nabla f)(p)$ is symmetric, 
its eigenvectors in $T_p M$ are
orthogonal and its eigenvalues are real.  Another way of expressing the
Morse condition is to say that $0 \not\in \sigma(A_p)$ 
for all $p \in \Crit(f)$, and the Morse index of~$p$ is then the algebraic
count of negative eigenvalues in $\sigma(A_p)$.
It turns out that the spectrum $\sigma(A_p)$ also controls the asymptotic 
behavior of gradient-flow lines approaching~$p$:
the following result from the theory of ordinary differential equations 
makes this statement precise.

\begin{prop}
\label{prop:MorseAsymp}
Suppose $f : M \to \RR$ is a Morse function on a Riemannian manifold
$(M,g)$, and $x \in \mM(p_+,p_-)$ is a gradient-flow line between two
critical points $p_+,p_- \in \Crit(f)$.  Let $h_\pm(s) \in T_{p_\pm} M$
denote the unique smooth functions defined for $s$ sufficiently close
to $\pm\infty$ by
$$
x(s) = \exp_{p_\pm} h_\pm(s).
$$
Then there exist unique nontrivial eigenvectors $v_\pm \in T_{p_\pm} M$ 
of $A_{p_\pm}$ with
$$
A_{p_\pm} v_\pm = \lambda_\pm v_\pm, \qquad
\lambda_+ < 0 \text{ and } \lambda_- > 0,
$$
such that $h_+(s)$ and $h_-(s)$ satisfy the exponential decay formula
$$
h_\pm(s) = e^{\lambda_\pm s} (v_\pm + r_\pm(s))
\quad\text{ for $s$ near $\pm\infty$,}
$$
where $r_\pm(s) \in T_{p_\pm} M$ are functions satisfying 
$r_\pm(s) \to 0$ as $s \to \pm\infty$.
\end{prop}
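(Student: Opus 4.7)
The plan is to localise near each critical point and reduce the statement to a standard asymptotic result for an autonomous ODE near a hyperbolic rest point. By time-reversal symmetry (swap $f\leftrightarrow -f$ and $s\leftrightarrow -s$), it suffices to treat the positive puncture case $s\to+\infty$ in detail. I would begin by choosing geodesic normal coordinates on $M$ centred at $p_+$, so that $p_+$ corresponds to the origin in $\RR^n$, the Hessian $A_{p_+}$ acts as a nondegenerate symmetric linear map on $\RR^n\cong T_{p_+}M$, and the gradient flow equation for $h_+(s)$ takes the form
\begin{equation*}
\dot h_+(s) = A_{p_+} h_+(s) + N\bigl(h_+(s)\bigr),
\end{equation*}
where $N$ is smooth with $N(0)=0$ and $DN(0)=0$, so that $|N(h)|=O(|h|^2)$.

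The remaining argument has two key steps. First, establish exponential decay: since $x(s)\to p_+$ and $p_+$ is a hyperbolic rest point of $\nabla f$, the stable manifold theorem forces $h_+$ eventually to lie on the local stable manifold, on which a Gronwall estimate using the spectral gap of $A_{p_+}$ between its negative eigenvalues and~$0$ gives $|h_+(s)|\le Ce^{-\delta s}$ for some $C,\delta>0$. Second, extract the leading eigen-direction via a Rayleigh quotient argument on the unit sphere: writing $u(s):=h_+(s)/|h_+(s)|$, a direct computation from the ODE above yields
\begin{equation*}
\dot u(s) = A_{p_+} u(s) - \langle u(s), A_{p_+} u(s)\rangle\, u(s) + \rho(s),
\end{equation*}
with $|\rho(s)| = O(|h_+(s)|)$ exponentially small and hence $L^1$ in $s$. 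The principal part on the right is, up to sign, the gradient flow on the unit sphere of the Rayleigh quotient $u\mapsto\langle u, A_{p_+} u\rangle$, whose critical set is precisely the set of unit eigenvectors of $A_{p_+}$. A standard convergence argument (see below) then shows that $u(s)$ approaches a unique limit $\hat v_+$, a unit eigenvector with some eigenvalue $\lambda_+\in\RR$. Substituting back one obtains
\begin{equation*}
\frac{d}{ds}\log|h_+(s)| = \langle u(s), A_{p_+} u(s)\rangle + O(|h_+(s)|) \longrightarrow \lambda_+,
\end{equation*}
and integration shows that $e^{-\lambda_+ s}|h_+(s)|$ converges to some $\mu>0$. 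Setting $v_+:=\mu \hat v_+$ produces the desired expansion with $r_+(s)\to 0$. The sign $\lambda_+<0$ is forced by $|h_+(s)|\to 0$, and uniqueness of $(\lambda_+, v_+)$ follows from the formulae $\lambda_+ = \lim_{s\to\infty} s^{-1}\log|h_+(s)|$ and $v_+ = \lim_{s\to\infty} e^{-\lambda_+ s}h_+(s)$.

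The hard part will be the uniqueness-of-limit step in the Rayleigh quotient argument, i.e.\ ensuring that $u(s)$ converges to a \emph{single} eigenvector rather than merely accumulating at the set of eigenvectors. When $A_{p_+}$ has a repeated eigenvalue, the Morse--Bott critical set of the Rayleigh quotient on the sphere is positive-dimensional (a unit sphere inside the eigenspace), so hyperbolicity alone does not suffice; however, the Rayleigh quotient is real-analytic, and combining Lojasiewicz's inequality with the $L^1$ decay of the perturbation $\rho$ guarantees that the orbit $u(s)$ has finite length, and hence a unique limit. The negative-puncture case $s\to-\infty$ at $p_-$ is obtained by applying the entire argument to the time-reversed flow, with stable and unstable manifolds interchanged and the resulting eigenvalue $\lambda_-$ consequently positive.
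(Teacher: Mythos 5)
Your argument is essentially correct, but it takes a genuinely different route from the one the notes have in mind. The text does not prove Proposition~\ref{prop:MorseAsymp} at all: it is quoted as a standard ODE fact, and the intended mechanism is the linear ``workhorse'' lemma of Exercise~\ref{EX:workhorse}. In that approach one writes the flow near $p_\pm$ in coordinates as a \emph{non-autonomous linear} equation $\dot h(s) = A(s)h(s)$, where $A(s) := \int_0^1 D(\nabla f)(t\,h(s))\,dt \to A_{p_\pm}$ by Hadamard's trick, and then carries out a spectral-splitting (Levinson-type) asymptotic analysis of that linear equation; this is precisely the finite-dimensional model for the asymptotic similarity principle (Theorem~\ref{thm:asympSimilarity}) that the lecture is building toward. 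You instead keep the equation nonlinear: exponential decay from the stable manifold theorem, then extraction of the asymptotic eigendirection from the induced flow of $u = h_+/|h_+|$ on the unit sphere, viewed as an $L^1$-perturbed gradient flow of the Rayleigh quotient, concluded by a \L{}ojasiewicz finite-length argument. Both routes are legitimate; the linearization route has the advantage of transferring verbatim to the elliptic PDE setting used later in the lecture, while yours is more geometric but leans on heavier dynamical input (stable manifold theorem, \L{}ojasiewicz--Simon) that has no direct analogue in the subsequent asymptotic theory.

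One step deserves more care than ``integration shows'': to conclude that $e^{-\lambda_+ s}|h_+(s)|$ converges to a \emph{positive finite} limit $\mu$, you need $\int^\infty \big|\langle u(s),A_{p_+}u(s)\rangle - \lambda_+\big|\,ds < \infty$, and since $\hat v_+$ is a critical point of the Rayleigh quotient on the sphere, the integrand is only of size $O\big(\dist(u(s),\hat v_+)^2\big)$; mere convergence of $u(s)$, or even finite length of the orbit, does not give this integrability. The fix lives inside your own framework: the Rayleigh quotient is Morse--Bott on the sphere (its critical set is the union of the unit spheres of the eigenspaces, nondegenerate in the transverse directions), so its \L{}ojasiewicz exponent is $1/2$, and with the exponentially decaying perturbation $\rho$ the \L{}ojasiewicz--Simon argument upgrades convergence of $u(s)$ to exponential convergence; this makes the error term integrable and completes the identification $h_+(s) = e^{\lambda_+ s}(v_+ + r_+(s))$ with $r_+(s)\to 0$. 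You should also remark that $h_+(s)\neq 0$ for all large $s$ (a nonconstant flow line never reaches the critical point, by uniqueness of ODE solutions), so that $u(s)$ is defined; with these points added, your proof is complete.
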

\begin{exercise}
\label{EX:workhorse}
Try to prove the following lemma in the background of 
Proposition~\ref{prop:MorseAsymp}: suppose $S$ is a real symmetric
$n$-by-$n$ matrix, $A(s)$ is a smooth matrix-valued function with $A(s) \to S$ as
$s \to \infty$ and $v(s) \in \RR^n$ is a smooth function that is defined
for large~$s$, satisfies the linear ODE $\dot{v}(s) - A(s) v(s) = 0$ and
decays to~$0$ as $s \to \infty$.  Then $v(s)$ satisfies
$$
v(s) = e^{\lambda s}(v_+ + r(s))
$$
for a unique eigenvector $v_+$ of $S$ with $S v_+ = \lambda v_+$ and 
$\lambda < 0$, and a function $r(s)$ with $r(s) \to 0$ as $s \to \infty$.
\end{exercise}

One consequence of Proposition~\ref{prop:MorseAsymp} is that the direction
of approach of a gradient-flow line to a nondegenerate critical point is
always determined by an eigenvector of the Hessian.  We will not discuss
this result any further here, but it will serve as motivation for some
similar results about asymptotics of $J$-holomorphic half-cylinders,
which can be proved using methods of elliptic regularity theory.

To see what this discussion has to do with holomorphic curves,
consider a contact manifold $(M,\xi)$ with contact form $\alpha$ and
translation-invariant almost complex structure $J \in \jJ(\alpha)$ on
the symplectization $(\RR \times M, d(e^s\alpha))$.  Denote the
positive/negative half-cylinders by
$$
Z_+ := [0,\infty) \times S^1, \qquad
Z_- := (-\infty,0] \times S^1
$$
with their standard complex structures defined by $i\p_s = \p_t$ in the
coordinates $(s,t)$.  We defined in \S\ref{sec:punctured} what it means
for a $J$-holomorphic half-cylinder $u : (Z_\pm,i) \to (\RR \times M,J)$
to be asymptotic to a closed Reeb orbit.  We claim that such half-cylinders
can be regarded in a loose sense as \emph{gradient-flow lines} of a functional
on $C^\infty(S^1,M)$ whose critical points are closed Reeb orbits.
To see this, 
let $\pi_\alpha : TM \to \xi$ denote the projection along the Reeb vector field.
Then the nonlinear Cauchy-Riemann equation $\p_s u + J(u) \, \p_t u = 0$
satisfied by a map $u = (f,v) : Z_\pm \to \RR \times M$ is 
equivalent to the three equations
\begin{equation}
\label{eqn:tripartite}
\begin{split}
\p_s f - \alpha(\p_t v) &= 0, \\
\p_t f + \alpha(\p_s v) &= 0, \\
\pi_\alpha \p_s v + J\,  \pi_\alpha \p_t v &= 0.
\end{split}
\end{equation}
Consider the \defin{contact action functional}\index{contact action functional}
$$
\Phi_\alpha : C^\infty(S^1,M) \to \RR : \gamma \mapsto \int_{S^1} \gamma^*\alpha.
$$
\begin{exercise}
Show that for any smooth $1$-parameter family of loops
$\gamma_s : S^1 \to M$ with $\gamma := \gamma_0$ and
$\eta := \p_s \gamma_s|_{s=0} \in \Gamma(\gamma^*TM)$,
$$
d\Phi_\alpha(\gamma) \eta := 
\left. \frac{d}{ds} \Phi_\alpha(\gamma_s)\right|_{s=0} =
\int_{S^1} d\alpha(\eta(t), \dot{\gamma}(t)) \, dt.
$$
Deduce that $\gamma \in C^\infty(S^1,M)$ is a critical point of
$\Phi_\alpha$ if and only if $\dot{\gamma}(t) \in \ker d\alpha$ for
all $t$, meaning $\dot{\gamma}$ is everywhere proportional
to~$R_\alpha$.
\end{exercise}

Observe that $\Phi_\alpha$ has a very large symmetry group: it is
independent of the choice of parametrization for a loop $\gamma : S^1 \to M$,
and correspondingly, $d\Phi_\alpha(\gamma) \eta$ vanishes for any
variation $\eta$ in the direction of the Reeb vector field.  Since the main
point of this discussion however is to study asymptotic approach to
Reeb orbits, we can limit our attention to loops that are $C^\infty$-close
to Reeb orbits: such loops are always immersions transverse to $\xi$,
and all nearby loops are obtained (up to parametrization) via perturbations
along~$\xi$.  We shall therefore consider $d\Phi_\alpha(\gamma)$ restricted
to sections of $\gamma^*\xi$.  Define an
$L^2$-inner product on $\Gamma(\gamma^*\xi)$ by
\begin{equation}
\label{eqn:L2}
\langle \eta_1 , \eta_2 \rangle_{L^2} := \int_{S^1} 
d\alpha(\eta_1(t) , J \eta_2(t))  \, dt;
\end{equation}
this is nondegenerate and symmetric since
$J|_\xi$ is compatible with $d\alpha|_\xi$.  Now for any
$\gamma \in C^\infty(S^1,M)$ and $\eta \in \Gamma(\gamma^*\xi)$, we have
$$
d\Phi_\alpha(\gamma) \eta = \langle -J \pi_\alpha \dot{\gamma} , 
\eta \rangle_{L^2},
$$
thus we can sensibly define $\nabla^\xi \Phi_\alpha(\gamma) := 
- J \pi_\alpha \dot{\gamma}$ and interpret the third equation in
\eqref{eqn:tripartite} as a gradient flow equation for the family of loops
$v(s) := v(s,\cdot) \in C^\infty(S^1,M)$,
\begin{equation}
\label{eqn:gradFlow}
\pi_\alpha \p_s v(s) = \nabla^\xi \Phi_\alpha(v(s)).
\end{equation}
This interpretation is mostly formal, as equations like \eqref{eqn:gradFlow}
typically do not yield well-defined flows on infinite-dimensional 
Fr\'echet manifolds
such as $C^\infty(S^1,M)$; in reality, one must study these equations as
PDEs rather than ODEs and use elliptic theory to obtain results, but the
gradient-flow interpretation provides something of a blueprint indicating
what results one should try to prove.

For example, it is now reasonable to
expect that the asymptotic behavior of solutions to \eqref{eqn:tripartite} 
might be controlled by the spectrum of some symmetric operator interpreted
as the ``Hessian'' of~$\Phi_\alpha$.  We deduce the form of this operator
as follows.  Assume $\gamma : S^1 \to M$ parametrizes a Reeb orbit with
period $T > 0$ such that $\alpha(\dot{\gamma}(t)) = T$ for all~$t$.
Suppose $\gamma_s$ is a smooth $1$-parameter family of loops with
$\gamma_0 = \gamma$ and $\p_s\gamma_s|_{s=0} =: \eta \in \Gamma(\gamma^*\xi)$.
Then choosing any symmetric connection $\nabla$ on~$M$, the Hessian
of $\Phi_\alpha$ at $\gamma$ should map $\eta$ to the covariant derivative
of $\nabla^\xi \Phi_\alpha$ in the direction~$\eta$: a computation gives
\begin{equation}
\label{eqn:Hessian}
\nabla \left(\nabla^\xi\Phi_\alpha\right)(\gamma) \eta := 
\left. \nabla_s\left( \nabla^\xi \Phi_\alpha\right)(\gamma_s)\right|_{s=0} = 
\left.\nabla_s \left( -J \pi_\alpha \dot{\gamma}_s \right)\right|_{s=0} = 
-J (\nabla_t \eta - T \nabla_\eta R_\alpha).
\end{equation}
Note that since $\nabla^\xi\Phi_\alpha(\gamma) = 0$, this expression is
independent of the choice of connection.
This motivates the following definition.

\begin{defn}
\label{defn:operator}
Given a Reeb orbit $\gamma : S^1 \to M$ parametrized so that
$\alpha(\dot{\gamma}) \equiv T > 0$ is constant, the \defin{asymptotic
operator}\index{asymptotic operator}\index{Reeb orbit!asymptotic operator of}
associated to $\gamma$ is
$$
\mathbf{A}_\gamma : \Gamma(\gamma^*\xi) \to \Gamma(\gamma^*\xi) :
\eta \mapsto -J (\nabla_t \eta - T \nabla_\eta R_\alpha).
$$
\end{defn}
\begin{exercise}
Fill in the gaps in the computation \eqref{eqn:Hessian}.
\end{exercise}

Let $H^1(\gamma^*\xi)$ denote the Sobolev space of sections\index{Sobolev spaces}
$S^1 \to \gamma^*\xi$ of class $L^2$ that have weak derivatives also
of class~$L^2$.  The operator $\mathbf{A}_\gamma$ then extends to a
continuous linear map $H^1(\gamma^*\xi) \to L^2(\gamma^*\xi)$.
By a similar argument as with the usual Hessian of a smooth function on a
finite-dimensional manifold, one can show that $\mathbf{A}_\gamma$ is
always symmetric with respect to the $L^2$-inner product \eqref{eqn:L2},
and in fact:
\begin{prop}[\cite{HWZ:props2}*{\S 3}]
\label{prop:selfAdjoint}
For every Reeb orbit $\gamma$, the asymptotic operator $\mathbf{A}_\gamma$
determines an unbounded self-adjoint operator on $L^2(\gamma^*\xi)$ with
dense domain $H^1(\gamma^*\xi)$.  Its spectrum $\sigma(\mathbf{A}_\gamma)$
consists of real eigenvalues that accumulate at $-\infty$ and $+\infty$,
and nowhere else.
\end{prop}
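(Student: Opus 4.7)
\emph{Proof proposal.} The strategy is to choose a unitary trivialization of $\gamma^*\xi \to S^1$ that turns $\mathbf{A}_\gamma$ into a standard first-order differential operator on the circle, and then to invoke classical results for such operators. Concretely, pick a trivialization $\Phi : \gamma^*\xi \to S^1 \times \RR^2$ identifying $J|_\xi$ with the standard complex structure $J_0$ on $\RR^2$ and $d\alpha|_\xi$ with the standard symplectic form. A direct computation shows that in this trivialization (and using any connection compatible with it) the operator pulls back to
\begin{equation*}
\mathbf{A}_\gamma \eta = -J_0 \partial_t \eta - S(t) \eta,
\end{equation*}
for some smooth loop $S : S^1 \to \End(\RR^2)$ of real symmetric matrices. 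Symmetry of $S(t)$ is equivalent to the formal $L^2$-symmetry of $\mathbf{A}_\gamma$, which one verifies by integration by parts in \eqref{eqn:L2}, using that the connection is torsion-free and that $d\alpha|_\xi$ is $J$-invariant.

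Next I would establish self-adjointness on the claimed domain. The operator $T_0 := -J_0 \partial_t$ on $L^2(S^1,\RR^2)$ with domain $H^1(S^1,\RR^2)$ is self-adjoint; this is most easily checked via Fourier expansion, which yields the discrete eigenvalue set $2\pi\ZZ$. The zeroth-order term $M_S : \eta \mapsto -S(t) \eta$ extends to a bounded self-adjoint operator on all of $L^2(S^1,\RR^2)$, since $S$ is uniformly bounded and pointwise symmetric. The Kato-Rellich theorem therefore implies that the sum $\mathbf{A}_\gamma = T_0 + M_S$ is self-adjoint on $H^1(S^1,\RR^2)$, and transporting back along $\Phi$ recovers the assertion that $\mathbf{A}_\gamma$ is self-adjoint on $L^2(\gamma^*\xi)$ with dense domain $H^1(\gamma^*\xi)$.

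For the spectral statement the key point is that $\mathbf{A}_\gamma$ has compact resolvent. For any $\lambda$ off the real axis, self-adjointness gives $\lambda \notin \sigma(\mathbf{A}_\gamma)$, and elliptic regularity for the first-order ODE $(\mathbf{A}_\gamma - \lambda)\eta = \psi$ shows that $(\mathbf{A}_\gamma - \lambda)^{-1} : L^2 \to H^1$ is bounded. Composed with the compact Rellich embedding $H^1(S^1,\RR^2) \hookrightarrow L^2(S^1,\RR^2)$, this exhibits the resolvent as a compact operator on $L^2$. The standard spectral theorem for self-adjoint operators with compact resolvent then yields that $\sigma(\mathbf{A}_\gamma)$ consists of a sequence of real eigenvalues of finite multiplicity with no finite accumulation point. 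That accumulation actually occurs at \emph{both} $+\infty$ and $-\infty$ follows from the min-max principle: the $k$-th positive and $k$-th negative eigenvalues of $\mathbf{A}_\gamma$ differ from those of the unperturbed $T_0$ (namely $\pm 2\pi k$) by at most $\|S\|_\infty$, and hence both diverge as $k \to \infty$.

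I expect the main technical obstacle to be establishing genuine self-adjointness on the correct Sobolev domain (as opposed to the easier formal symmetry on smooth sections), together with carefully justifying the passage from the intrinsic definition of $\mathbf{A}_\gamma$, built from a connection on $TM$, to its concrete realization $-J_0 \partial_t - S(t)$ on a trivialized bundle over $S^1$; once those steps are in place, discreteness and accumulation at $\pm\infty$ are standard consequences of the functional-analytic machinery.
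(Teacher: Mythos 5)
Your proposal is correct, and it follows essentially the same route that the paper presupposes: the text states this proposition as a black box from \cite{HWZ:props2}*{\S 3}, but the reduction you perform---a unitary trivialization turning $\mathbf{A}_\gamma$ into $-J_0\frac{d}{dt}-S(t)$ with $S(t)$ symmetric---is exactly the identification made in \eqref{eqn:modelA}, and the remaining steps (bounded symmetric perturbation of the self-adjoint operator $-J_0\frac{d}{dt}$ with domain $H^1$, compact resolvent via the elliptic estimate plus the Rellich embedding, hence discrete real spectrum) are the standard functional-analytic content of the cited reference. The only point worth tightening is the final appeal to min-max, which in its usual form assumes semiboundedness; for an operator unbounded in both directions it is cleaner to note that a bounded symmetric perturbation moves the spectrum by at most its norm in Hausdorff distance, so $\sigma(\mathbf{A}_\gamma)$ lies within $\|S\|_{C^0}$ of $\sigma(-J_0\frac{d}{dt}) = 2\pi\ZZ$ and, being discrete, must accumulate at $+\infty$ and $-\infty$ and nowhere else.
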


The natural analogue of the Morse condition for $\Phi_\alpha$ is now
the following.

\begin{defn}
\label{defn:nondegenerate}
A Reeb orbit $\gamma$ is called \defin{nondegenerate}\index{Reeb orbit!nondegenerate}\index{nondegenerate Reeb orbit}
if 
$\ker \mathbf{A}_\gamma = \{0\}$.
\end{defn}
\begin{exercise}
\label{EX:nondegenerate}
Show that for any contact form $\alpha$, the flow $\varphi_{R_\alpha}^t$ of
the Reeb vector field preserves $\alpha$ for all~$t$, so in particular,
it preserves $\xi = \ker \alpha$ and the symplectic bundle structure
$d\alpha|_{\xi}$.  Then show that a
Reeb orbit $\gamma : S^1 \to M$ of period $T > 0$ is nondegenerate
if and only if
$$
d \varphi_{R_\alpha}^T|_{\xi_{\gamma(0)}} : \xi_{\gamma(0)} \to
\xi_{\gamma(0)}
$$
does not have $1$ as an eigenvalue.  Deduce from this that nondegenerate
Reeb orbits are (up to parametrization) always isolated in~$C^\infty(S^1,M)$.
\end{exercise}

\section{Asymptotic formulas for cylidrical ends}
\label{sec:asymptotics}

We shall now state some asymptotic results analogous
to Proposition~\ref{prop:MorseAsymp}, but for holomorphic curves instead
of gradient-flow lines.  In the form presented here, these results are
due to Siefring \cites{Siefring:thesis,Siefring:asymptotics}, 
and they are generalizations
and improvements of earlier results of Hofer-Wysocki-Zehnder \cites{HWZ:props1,HWZ:props4},
Kriener \cite{Kriener} and Mora~\cite{Mora}.  The proofs are lengthy and
technical,
so we will omit them, but the results should hopefully be believable via
the analogy with Morse theory discussed above.

The basic workhorse result of this subject is
an asymptotic analogue of the similarity principle (Theorem~\ref{thm:similarity}),
in the spirit of Exercise~\ref{EX:workhorse}.  To state this,
recall that for any closed Reeb orbit $\gamma : S^1 \to M$ on a
$(2n+1$)-dimensional contact manifold $(M,\xi = \ker\alpha)$,
one can find a \emph{unitary} trivialization of the bundle
$\gamma^*\xi \to S^1$, identifying $d\alpha|_\xi$ and $J|_\xi$ with the
standard symplectic and complex structures on $\RR^{2n} = \CC^n$.
If $J_0 : \RR^{2n} \to \RR^{2n}$ denotes the standard complex structure, the
asymptotic operator $\mathbf{A}_\gamma : \Gamma(\gamma^*\xi) \to 
\Gamma(\gamma^*\xi)$ is then identified with a first-order differential
operator
\begin{equation}
\label{eqn:modelA}
\mathbf{A} := -J_0 \frac{d}{dt} - S : C^\infty(S^1,\RR^{2n}) \to C^\infty(S^1,\RR^{2n}),
\end{equation}
where $S : S^1 \to \End(\RR^{2n})$ is a smooth loop of real
$2n$-by-$2n$ matrices, and symmetry of $\mathbf{A}$ with respect
to the standard $L^2$-inner product translates into the condition that
$S(t)$ is a symmetric matrix for all~$t$.  The following statement and the
two that follow it should each be interpreted as \emph{two} closely
related statements, one with plus signs and the other with minus signs.

\begin{thm}
\label{thm:asympSimilarity}
Suppose $S : Z_\pm \to \End(\RR^{2n})$ is a smooth family of $2n$-by-$2n$
matrices satisfying\index{similarity principle!asymptotic analogue}
$$
S(s,t) \mapsto S(t) \quad \text{ uniformly in~$t$ as $s \to \pm\infty$},
$$
where $S : S^1 \to \End(\RR^{2n})$ is a smooth family of 
\emph{symmetric} matrices such that the asymptotic operator
$\mathbf{A}$ defined in \eqref{eqn:modelA} has trivial kernel.
Suppose further that $f : Z_\pm \to \RR^{2n}$ is a smooth function
that is not identically zero and satisfies
\begin{equation}
\label{eqn:CRasymptotic}
\p_s f(s,t) + J_0\, \p_t f(s,t) + S(s,t) f(s,t) = 0, \quad\text{ and }\quad
f(s,\cdot) \to 0 \text{ uniformly as $s \to \pm\infty$}.
\end{equation}
Then there exists a unique nontrivial eigenfunction 
$v_\lambda \in C^\infty(S^1,\RR^{2n})$ of $\mathbf{A}$ with
$$
\mathbf{A} v_\lambda = \lambda v_\lambda, \qquad \pm \lambda < 0,
$$
and a function $r(s,t) \in \RR^{2n}$ satisfying $r(s,\cdot) \to 0$
uniformly as $s \to \pm \infty$, such that for
sufficiently large $|s|$,
\begin{equation}
\label{eqn:asympSimilarity}
f(s,t) = e^{\lambda s} \left[ v_\lambda(t) + r(s,t) \right].
\end{equation}
\end{thm}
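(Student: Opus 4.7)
\medskip
\noindent\textbf{Proof proposal.} I will treat only the $+$ case (the $-$ case is symmetric), so $Z_+=[0,\infty)\times S^1$. The plan is to reinterpret \eqref{eqn:CRasymptotic} as a nonautonomous ODE on the Hilbert space $H:=L^2(S^1,\RR^{2n})$, so as to mimic the finite-dimensional argument sketched in Exercise~\ref{EX:workhorse}. Define a family of self-adjoint operators
\[
\mathbf{A}(s):=-J_0\frac{d}{dt}-S(s,\cdot):H^1(S^1,\RR^{2n})\subset H\longrightarrow H,
\]
so that \eqref{eqn:CRasymptotic} becomes $\partial_s f(s,\cdot)=\mathbf{A}(s)f(s,\cdot)$, with $\mathbf{A}(s)\to\mathbf{A}$ as $s\to\infty$ in the operator norm $\mathcal{L}(H^1,H)$ by hypothesis on $S(s,t)$. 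A first preparatory step is to upgrade the $C^0$-convergence $f(s,\cdot)\to 0$ to $C^k$-convergence for every $k$: this is a standard elliptic bootstrap, applying interior Schauder/$L^p$-estimates to the Cauchy--Riemann type operator $\partial_s+J_0\partial_t$ on the cylinders $[s-1,s+1]\times S^1$, using that the inhomogeneous term $S(s,\cdot)f$ goes to zero uniformly.

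Next I would establish a crude exponential decay rate. Choose $\delta>0$ smaller than the distance from $0$ to $\sigma(\mathbf{A})\setminus\{0\}$; this gap is positive because $\ker\mathbf{A}=\{0\}$ and by Proposition~\ref{prop:selfAdjoint} the spectrum is discrete near $0$. Set $\phi(s):=\tfrac12\|f(s,\cdot)\|_H^2$. Then
\[
\phi'(s)=\langle\mathbf{A}(s)f,f\rangle_H,\qquad \phi''(s)=2\|\mathbf{A}(s)f\|_H^2+\langle\dot{\mathbf{A}}(s)f,f\rangle_H,
\]
and self-adjointness of $\mathbf{A}$ together with the spectral gap gives $\|\mathbf{A}f\|_H\ge\delta\|f\|_H$ for all $f\in H^1$; since $\mathbf{A}(s)-\mathbf{A}\to 0$ and $\dot{\mathbf{A}}(s)\to 0$ in the appropriate operator norm, we obtain the differential inequality $\phi''(s)\ge (2\delta^2-\epsilon_s)\phi(s)$ with $\epsilon_s\to 0$. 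Combined with $\phi(s)\to 0$, a standard ODE comparison argument then forces $\phi(s)\le Ce^{-2(\delta-o(1))s}$, and by elliptic regularity this improves to $C^k$-exponential decay for every $k$. This is essentially the infinite-dimensional analogue of the lemma in Exercise~\ref{EX:workhorse}.

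With exponential decay in hand, the task is to extract the leading eigenfunction. Let $\lambda_0$ be the largest negative eigenvalue of $\mathbf{A}$ (which exists by Proposition~\ref{prop:selfAdjoint}), and let $E_{\lambda_0}\subset H$ be its eigenspace. Denote by $P_{\lambda_0}$ the spectral projection onto $E_{\lambda_0}$ and by $Q$ the projection onto the direct sum of all other eigenspaces. Writing $f(s)=P_{\lambda_0}f(s)+Qf(s)$ and using $\partial_s f=\mathbf{A}(s)f=\mathbf{A}f+(\mathbf{A}(s)-\mathbf{A})f$, the variation-of-constants formula yields coupled equations for $P_{\lambda_0}f$ and $Qf$; along $E_{\lambda_0}$ one has $(P_{\lambda_0}f)'=\lambda_0 P_{\lambda_0}f+R_1(s)$, and on the complement $Qf$ satisfies an equation whose autonomous part has spectrum bounded away from $\lambda_0$ on both sides. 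The standard ``peeling-off'' argument (iterating the Step~3 estimate on the complementary subspace) shows that $Qf$ decays strictly faster than $e^{\lambda_0 s}$, while $e^{-\lambda_0 s}P_{\lambda_0}f(s)$ converges in $H$ to a limit $v_{\lambda_0}\in E_{\lambda_0}$; uniqueness of $v_{\lambda_0}$ is automatic once $f$ has been shown not to vanish identically (for if $v_{\lambda_0}=0$ one repeats with the next eigenvalue). The smoothness of $v_\lambda$ and the uniformity of the error $r(s,t)\to 0$ in $t$ then follow from the elliptic bootstrap.

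The main obstacle is the peeling-off step: because $\mathbf{A}(s)$ is not autonomous, the eigenspaces of $\mathbf{A}$ are not invariant under the flow, and one must carefully control the cross-terms $(\mathbf{A}(s)-\mathbf{A})f$ using the fact that their norm decays as $s\to\infty$ while $\|f\|$ also decays exponentially; a bootstrap of the form ``$f$ decays at rate $\mu$ with $\mu<|\lambda_0|$ $\Rightarrow$ $Qf$ decays at some strictly faster rate'' is needed, and iterating such an argument to force the error $r$ to be $o(1)$ in \eqref{eqn:asympSimilarity} is the most delicate piece of elliptic analysis behind the theorem.
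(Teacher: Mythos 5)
The paper itself does not prove this theorem: it is stated as a black box, with the proofs explicitly omitted and attributed to Siefring and to Hofer--Wysocki--Zehnder, so your proposal can only be measured against the standard arguments in those references. In spirit your outline does follow them (elliptic bootstrap, a convexity/spectral-gap estimate for the $L^2$-norm to get exponential decay, then extraction of the leading spectral component), but as written it has a genuine gap at the decisive point. Your scheme ``take the largest negative eigenvalue $\lambda_0$; if the limit of $e^{-\lambda_0 s}P_{\lambda_0}f(s)$ vanishes, repeat with the next eigenvalue'' has no termination argument: since $\sigma(\mathbf{A})$ is unbounded below (Proposition~\ref{prop:selfAdjoint}), you must prove that a solution that is not identically zero cannot decay faster than $e^{\lambda s}$ for \emph{every} eigenvalue $\lambda<0$. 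This superexponential-decay exclusion (a unique-continuation-at-infinity statement, in HWZ proved via the same convexity estimate run more carefully, or via the convergence of the logarithmic derivative $\langle \mathbf{A}(s)f,f\rangle/\|f\|^2$ to an eigenvalue) is exactly what guarantees that the leading eigenvalue $\lambda$ in \eqref{eqn:asympSimilarity} exists and that $v_\lambda\neq 0$; it is also where the tacit hypothesis $f\not\equiv 0$ enters. Without it your induction over eigenvalues may never stop and the theorem is not established.

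A second, quantitative problem: to get convergence of $e^{-\lambda s}P_{\lambda}f(s)$ you differentiate and must integrate the error $e^{-\lambda s}P_{\lambda}\bigl[(\mathbf{A}(s)-\mathbf{A})f(s)\bigr]$; with only the stated hypothesis $\|\mathbf{A}(s)-\mathbf{A}\|\to 0$ and the expected size $\|f(s)\|\sim e^{\lambda s}$, this error is merely $o(1)$, not integrable, so the limit need not exist (and your ``$Qf$ decays strictly faster'' claim faces the same issue). The arguments in the literature first establish exponential decay of $f$ and, in the geometric application, exponential convergence of the coefficient matrix by bootstrapping the nonlinear problem, after which the variation-of-constants integrals converge absolutely and the remainder $r$ can be controlled; some version of this quantitative input has to appear in your Step involving the spectral splitting. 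Finally, two smaller points: your formula for $\phi''$ uses self-adjointness of $\mathbf{A}(s)$ for finite $s$ and $\dot{\mathbf{A}}(s)\to 0$, but the hypotheses only make the limit matrix $S(t)$ symmetric and say nothing about $\partial_s S$; both can be absorbed as perturbations or obtained in the intended application, but as stated they are unjustified steps rather than consequences of the hypotheses.
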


Now assume $\gamma : S^1 \to M$ is a nondegenerate $T$-periodic Reeb orbit in
$(M,\xi = \ker\alpha)$, parametrized so that $\alpha(\dot{\gamma}) \equiv T$.
Nondegeneracy implies that the asymptotic operator $\mathbf{A}_\gamma$
has trivial kernel.  Fixing $J \in \jJ(\alpha)$, 
recall that in \S\ref{sec:punctured}, we defined a $J$-holomorphic
half-cylinder $u : Z_+ \to \RR \times M$ or $u : Z_- \to \RR \times M$
to be (positively or negatively) asymptotic to $\gamma$
if, after a possible reparametrization near infinity,
\begin{equation}
\label{eqn:uh}
u(s,t) = \exp_{(Ts,\gamma(t))} h(s,t)  \quad \text{ for $|s|$ large},
\end{equation}
where the exponential map is assumed translation-invariant and
$h(s,t)$ is a vector field along the orbit cylinder with
$h(s,\cdot) \to 0$ in $C^\infty(S^1)$ as $s \to \pm\infty$.  In particular,
as $|s| \to \infty$, $u(s,t)$ becomes $C^\infty$-close to the orbit cylinder
$(s,t) \mapsto (Ts,\gamma(t))$, which is 
an immersion with normal bundle equivalent
to $\gamma^*\xi$.  After a further
reparametrization of $Z_\pm$, we can then arrange for \eqref{eqn:uh} to
hold for a unique section
$$
h(s,t) \in \xi_{\gamma(t)},
$$
which we will call the \defin{asymptotic representative}\index{asymptotic representative of a punctured holomorphic curve}\index{holomorphic curve!asymptotic representative at a puncture}
of~$u$.
Note that the uniqueness of $h$ depends on our choice of parametrization
$\gamma : S^1 \to M$ for the Reeb orbit; different choices will change
$h$ by a shift in the $t$-coordinate.

The relation \eqref{eqn:uh} is a special case of the following
general scenario.  We have an almost complex manifold $(W,J)$ with two immersed $J$-holomorphic curves
$v : (\Sigma,j) \to (W,J)$ and $u : (\Sigma',j') \to (W,J)$, together with
a (not necessarily holomorphic) ``reparametrization''
diffeomorphism $\varphi : \Sigma \to \Sigma'$ and a section $h$
of the normal bundle $N_v \to \Sigma$ to $v$ such that
$$
u \circ \varphi = \exp_v h, \quad\text{ or equivalently }\quad
u = \exp_{v \circ \psi} \eta,
$$
where we define $\psi := \varphi^{-1}$ and $\eta := h \circ \psi$, the
latter being a section of the induced bundle $\psi^*N_v \to \Sigma'$.
It turns out (see Proposition~\ref{prop:pushoff} in Appendix~\ref{sec:normalPushoffLemma})
that in this situation, one can always view
$\eta$ as a solution of a linear Cauchy-Riemann type equation,
hence its local behavior is governed
by the similarity principle---or in the asymptotic setting,
by Theorem~\ref{thm:asympSimilarity} above.
In the present context, this idea can
be used to prove:

\begin{thm}
\label{thm:asymptotics}
Suppose $u : Z_\pm \to \RR \times M$ is a $J$-holomorphic half-cylinder
positively/negatively asymptotic
to the nondegenerate Reeb orbit $\gamma : S^1 \to M$, and let
$h_u(s,t) \in \xi_{\gamma(t)}$ denote its asymptotic representative.
Then if $h_u$ is not identically zero,
there exists a unique nontrivial eigenfunction $f_\lambda$ of
$\mathbf{A}_\gamma$ with
$$
\mathbf{A}_\gamma f_\lambda = \lambda f_\lambda, \qquad \pm \lambda < 0,
$$
and a section $r(s,t) \in \xi_{\gamma(t)}$ satisfying
$r(s,\cdot) \to 0$ uniformly as $s \to \pm \infty$,
such that for sufficiently large $|s|$,
$$
h_u(s,t) = e^{\lambda s} \left[ f_\lambda(t) + r(s,t) \right].
$$
\end{thm}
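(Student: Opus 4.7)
The plan is to reduce Theorem~\ref{thm:asymptotics} directly to the asymptotic similarity principle (Theorem~\ref{thm:asympSimilarity}) by trivializing. Fix a smooth unitary trivialization $\Phi : \gamma^*\xi \to S^1 \times \RR^{2n}$ that intertwines $(d\alpha|_\xi, J|_\xi)$ with the standard symplectic and complex structures on $\RR^{2n}$, and set $f(s,t) := \Phi(t) h_u(s,t)$. In this trivialization the asymptotic operator $\mathbf{A}_\gamma$ takes the form $\mathbf{A} = -J_0\partial_t - S(t)$ of \eqref{eqn:modelA} with $S(t)$ symmetric, since Proposition~\ref{prop:selfAdjoint} gives $L^2$-self-adjointness of $\mathbf{A}_\gamma$; the nondegeneracy hypothesis on $\gamma$ is exactly $\ker \mathbf{A} = \{0\}$. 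Thus once we show that $f$ satisfies a linear Cauchy-Riemann equation of the form \eqref{eqn:CRasymptotic} with coefficient matrix converging uniformly to this symmetric $S(t)$, Theorem~\ref{thm:asympSimilarity} will produce the eigenvalue $\lambda$, the trivialized eigenfunction $v_\lambda$, and the remainder $r$; pulling back by $\Phi^{-1}$ gives $f_\lambda$ as the corresponding eigenfunction of $\mathbf{A}_\gamma$ and yields the claimed formula.

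The central step is deriving that PDE for $f$. Starting from the parametrization $u(s,t) = \exp_{(Ts,\gamma(t))} h_u(s,t)$ with $h_u \in \xi$, I would substitute into the three equations \eqref{eqn:tripartite} and perform a Taylor expansion in $h_u$ about the orbit cylinder $(s,t) \mapsto (Ts,\gamma(t))$, which is itself a $J$-holomorphic solution. Because $h_u \to 0$ as $|s| \to \infty$ and $J \in \jJ(\alpha)$ is translation-invariant, the first two components of \eqref{eqn:tripartite} decouple from the $\xi$-component to leading order, and the linearization along $\xi$ is precisely the operator defined in \eqref{eqn:Hessian} and Definition~\ref{defn:operator}. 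This produces a schematic equation $\partial_s h_u + J_0 \partial_t h_u + S(t) h_u + N(s,t,h_u) = 0$ in trivialization, where the nonlinear remainder has the form $N(s,t,h) = E(s,t,h) \cdot h$ for a smooth matrix-valued function $E$ with $E(s,t,0) = 0$. Setting $\tilde{S}(s,t) := S(t) + E(s,t,h_u(s,t))$ absorbs the nonlinearity into the linear coefficient, and uniform decay $h_u(s,\cdot) \to 0$ (guaranteed by the $C^\infty$-decay hypothesis in the definition of asymptotically cylindrical) forces $\tilde{S}(s,t) \to S(t)$ uniformly in $t$ as $s \to \pm\infty$; note that $\tilde{S}(s,t)$ need not be symmetric for finite $s$, but Theorem~\ref{thm:asympSimilarity} only requires symmetry of the limit.

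With this equation in hand, invoking Theorem~\ref{thm:asympSimilarity} and transporting back through $\Phi^{-1}$ finishes the argument; uniqueness of $(\lambda,f_\lambda)$ is inherited from the uniqueness of $(\lambda,v_\lambda)$ in that theorem. The main obstacle will be the bookkeeping in the linearization step: one must verify that the leading-order linear operator appearing in the projected equation is exactly $\mathbf{A}_\gamma$, which requires unwinding the definition of the exponential map, the projection $\pi_\alpha$, and the coupling encoded by the first two equations of \eqref{eqn:tripartite}. A secondary technical point is ensuring that the reparametrization near infinity that forces $h_u$ into $\xi$ can be chosen compatibly with the $C^\infty$-decay, and that the implicit $C^1$-bounds on $h_u$ (needed to control $E(s,t,h_u(s,t))$) follow from elliptic regularity applied to $u$; both are standard but must be invoked to justify the uniform convergence $\tilde{S}(s,t) \to S(t)$ rather than mere boundedness.
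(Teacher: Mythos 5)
There is nothing in the text to compare your argument against: the notes explicitly omit the proof of Theorem~\ref{thm:asymptotics}, presenting it (together with Theorem~\ref{thm:asympSimilarity}) as a black box due to Siefring, with precursors by Hofer--Wysocki--Zehnder, Kriener and Mora, and the remark that ``the proofs are lengthy and technical.''  Your strategy---trivialize, rewrite the nonlinear Cauchy--Riemann equation for the asymptotic representative as a linear equation of the form \eqref{eqn:CRasymptotic} with coefficients converging to those of $\mathbf{A}_\gamma$, and then quote Theorem~\ref{thm:asympSimilarity}---is indeed the natural reduction and reflects how the cited proofs are organized.  Bear in mind, though, that Theorem~\ref{thm:asympSimilarity} is itself unproven in these notes, so you are trading one black box for another; that is legitimate given the paper's presentation, but the analytic content is deferred rather than supplied.

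Two steps in your sketch are weaker than you suggest, and they are exactly where the deferred work lives.  First, the claim that the first two equations of \eqref{eqn:tripartite} ``decouple from the $\xi$-component to leading order'' does not by itself justify the reduction: if you linearize the full problem along the orbit cylinder, the $(\RR\p_s \oplus \RR R_\alpha)$-block contributes an asymptotic operator of the form $-J_0\frac{d}{dt}$ with no zeroth-order term, whose kernel contains the constants coming from $s$-translation and $t$-rotation of the orbit cylinder; so the trivial-kernel hypothesis of Theorem~\ref{thm:asympSimilarity} fails for the full $(2n+2)$-dimensional system, and the theorem genuinely concerns only the $\xi$-valued representative.  The mechanism that kills these directions is the reparametrization defining $h_u$, which you relegate to a ``secondary technical point'': it must be constructed (implicit function theorem) with decay of all derivatives, and it perturbs the complex structure on the domain, so extra first-order terms with decaying coefficients appear in the equation for $h_u$.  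Second, even in the $\xi$-directions the equation is quasilinear: the coefficient of $\p_t h_u$ is the matrix representing $J$ at the point $u(s,t)$ in your frame, not $J_0$, and a term of the form $(\hat J(s,t)-J_0)\,\p_t h_u$ is not of the form $E(s,t,h_u)\,h_u$ by a pointwise Taylor expansion in $h_u$ alone.  It can be brought into the shape \eqref{eqn:CRasymptotic} either by absorbing it into the zeroth-order matrix, writing $(\hat J - J_0)\p_t h_u = G(s,t)h_u$ with $\|G(s,t)\| \le C\,|\p_t h_u(s,t)| \to 0$ (this uses the $C^1$-decay of $h_u$, which in turn requires checking that the reparametrization preserves the $C^\infty$-decay in the definition of asymptotically cylindrical), or by conjugating $\hat J$ to $J_0$ via a gauge transformation converging to $\1$ and absorbing its derivatives.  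With those two points made explicit the reduction goes through, the operator you obtain in the limit is the one of Definition~\ref{defn:operator} expressed as in \eqref{eqn:modelA}, nondegeneracy of $\gamma$ supplies the trivial-kernel hypothesis, and uniqueness of $(\lambda,f_\lambda)$ is inherited from Theorem~\ref{thm:asympSimilarity} as you say.
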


In the situation of Theorem~\ref{thm:asymptotics}, we will say that
$u(s,t)$ approaches the Reeb orbit $\gamma$ along the
\defin{asymptotic eigenfunction}\index{asymptotic eigenfunction}\index{holomorphic curve!asymptotic eigenfunction at a puncture}
$f_\lambda$ and with 
\defin{decay rate}~$|\lambda|$.\index{holomorphic curve!decay rate at a puncture}
Observe that this theorem can be viewed as
describing the asymptotic approach of \emph{two} $J$-holomorphic half-cylinders
to each other, namely $u(s,t)$ and the orbit cylinder $(Ts,\gamma(t))$.
A similar result holds for \emph{any} two
curves approaching the same orbit, and one can then establish a lower bound 
on the resulting ``relative'' decay rate.  For our purposes,
this result can be expressed most conveniently as follows.

\begin{thm}
\label{thm:asymptoticsRelative}
Suppose $u , v : Z_\pm \to \RR \times M$ are two $J$-holomorphic half-cylinders,
both positively/negatively asymptotic to the nondegenerate Reeb orbit 
$\gamma : S^1 \to M$,
with asymptotic representatives $h_u$ and $h_v$, asymptotic
eigenfunctions $f_u$, $f_v$ and decay rates
$|\lambda_u|$, $|\lambda_v|$ respectively.
Then if $h_u - h_v$ is not identically zero, it satisfies
$$
h_u(s,t) - h_v(s,t) = e^{\lambda s} \left[ f_\lambda(t) + r(s,t) \right]
$$
for a unique nontrivial eigenfunction $f_\lambda$ of $\mathbf{A}_\gamma$ with
$$
\mathbf{A}_\gamma f_\lambda = \lambda f_\lambda, \qquad \pm \lambda < 0,
$$
and a section $r(s,t) \in \xi_{\gamma(t)}$ satisfying
$r(s,\cdot) \to 0$ uniformly as $s \to \pm\infty$.
Moreover:
\begin{itemize}
\item If $f_u = f_v$, then $|\lambda| > |\lambda_u| = |\lambda_v|$.
\item Otherwise, $|\lambda| = \min\{ |\lambda_u| , |\lambda_v| \}$.
\end{itemize}
\end{thm}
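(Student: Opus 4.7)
The plan is to reduce the statement to an application of the asymptotic similarity principle (Theorem~\ref{thm:asympSimilarity}) applied to $\eta := h_u - h_v$, and then read off the eigenvalue $\lambda$ in each case by comparing with the asymptotic formulas for $h_u$ and $h_v$ separately provided by Theorem~\ref{thm:asymptotics}. Fix a unitary trivialization of $\gamma^*\xi$ over $S^1$, so that $h_u$ and $h_v$ become smooth functions $Z_\pm \to \RR^{2n}$ decaying to zero, and the asymptotic operator $\mathbf{A}_\gamma$ takes the model form $-J_0\,\p_t - S(t)$ with $S(t)$ symmetric.

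First I would show that $\eta$ satisfies a linear Cauchy--Riemann type equation of the form appearing in \eqref{eqn:CRasymptotic}. In the chosen trivialization, each of $h_u$ and $h_v$ satisfies a nonlinear equation of the shape
\begin{equation*}
\p_s h + J_0\, \p_t h + S(t) h + Q(s,t,h) = 0,
\end{equation*}
where $Q$ comes from the exponential map and the translation-invariant almost complex structure and is smooth in $h$ with $Q(s,t,0)=0$ and $D_h Q(s,t,0) = 0$. Subtracting the two equations and writing $Q(s,t,h_u) - Q(s,t,h_v) = \widetilde{S}(s,t)\,\eta$ using the fundamental theorem of calculus with $\widetilde{S}(s,t) := \int_0^1 D_h Q(s,t,\tau h_u + (1-\tau) h_v)\, d\tau$, one obtains
\begin{equation*}
\p_s \eta + J_0\, \p_t \eta + \big(S(t) + \widetilde{S}(s,t)\big)\eta = 0,
\end{equation*}
and since $h_u, h_v \to 0$ uniformly, $\widetilde{S}(s,t) \to 0$ uniformly in~$t$ as $s \to \pm\infty$. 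Thus $\eta$ fulfills the hypotheses of Theorem~\ref{thm:asympSimilarity}, whose asymptotic operator is precisely $\mathbf{A}_\gamma$, and we conclude that either $\eta$ vanishes for sufficiently large $|s|$, or $\eta = e^{\lambda s}[f_\lambda(t) + r(s,t)]$ for a unique nontrivial eigenfunction $f_\lambda$ with $\pm\lambda < 0$, as claimed.

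Next I would identify $\lambda$ by a direct comparison with $h_u$ and $h_v$. To fix signs I consider the positive puncture; the negative case is symmetric. Theorem~\ref{thm:asymptotics} yields $h_u = e^{\lambda_u s}(f_u + r_u)$ and $h_v = e^{\lambda_v s}(f_v + r_v)$ with $r_u, r_v \to 0$. If $\lambda_u \ne \lambda_v$, say $\lambda_u < \lambda_v < 0$, then $e^{-\lambda_v s}\eta \to f_v \ne 0$, forcing $\lambda = \lambda_v$ and $f_\lambda = f_v$, which gives $|\lambda| = \min\{|\lambda_u|, |\lambda_v|\}$ (and $f_u \ne f_v$ automatically, since eigenfunctions of $\mathbf{A}_\gamma$ with distinct eigenvalues are distinct). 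If $\lambda_u = \lambda_v =: \mu$ and $f_u \ne f_v$, then $\eta = e^{\mu s}[(f_u - f_v) + (r_u - r_v)]$ with $f_u - f_v$ a nonzero eigenfunction for $\mu$, so again $\lambda = \mu$. Finally, if $f_u = f_v$ (which forces $\lambda_u = \lambda_v =: \mu$ by the uniqueness clause of Theorem~\ref{thm:asymptotics}), I would rule out $\lambda \ge \mu$ by contradiction: if $\lambda = \mu$ then $e^{-\mu s}\eta \to f_\lambda \ne 0$, whereas the explicit formulas give $e^{-\mu s}\eta = (f_u - f_v) + (r_u - r_v) \to 0$; if $\mu < \lambda < 0$, then $e^{-\lambda s}h_u$ and $e^{-\lambda s}h_v$ both tend to zero, contradicting $e^{-\lambda s}\eta \to f_\lambda \ne 0$. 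Hence $\lambda < \mu$, i.e.\ $|\lambda| > |\lambda_u| = |\lambda_v|$.

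The main obstacle is Step~1, namely establishing the linear equation for $\eta$ with an asymptotically constant symmetric coefficient that matches $\mathbf{A}_\gamma$. The nonlinear correction $Q$ encodes both the curvature of the exponential map and the $s$-dependence of the trivialization of $J$ along the orbit cylinder, so one must verify that the mean-value interpolation $\widetilde{S}$ is genuinely $C^0$-small (not merely that it preserves decay) and that the linear part recovers exactly the operator in Definition~\ref{defn:operator}. Once that is done, the remaining case analysis is a straightforward manipulation of the leading asymptotics from Theorem~\ref{thm:asymptotics} combined with the uniqueness part of Theorem~\ref{thm:asympSimilarity}.
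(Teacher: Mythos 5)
The paper itself does not prove Theorem~\ref{thm:asymptoticsRelative}: it is stated as a black box from Siefring's work, with the explicit remark that the proofs are lengthy and technical and therefore omitted. Measured against the actual proof in \cite{Siefring:asymptotics}, your overall architecture is the right one—show that the difference of the two asymptotic representatives solves a linear Cauchy--Riemann-type equation whose coefficients converge to those of $\mathbf{A}_\gamma$, then invoke a linear asymptotic theorem in the spirit of Theorem~\ref{thm:asympSimilarity}—and your Step~2 is essentially correct: the comparison of leading terms, including the contradiction argument when $f_u = f_v$, pins down $\lambda$ as claimed (modulo the harmless sign slip that $e^{-\lambda_v s}\eta \to -f_v$, so $f_\lambda = -f_v$).

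The genuine gap is the one you flag but underestimate in Step~1. The representatives $h_u$, $h_v$ are defined only after curve-dependent reparametrizations of $Z_\pm$, chosen precisely so that their values lie in $\xi_{\gamma(t)}$ rather than in the full tangent space; the reparametrized maps are no longer $J$-holomorphic for the standard complex structure on the half-cylinder, and the genuine Cauchy--Riemann equation couples the $\xi$-component to the $\RR$- and Reeb-components (cf.~\eqref{eqn:tripartite}). It is therefore not true on the nose that $h_u$ and $h_v$ both satisfy one common equation $\p_s h + J_0\,\p_t h + S(t)h + Q(s,t,h) = 0$ with a single nonlinearity $Q$ vanishing to second order: each satisfies an equation with its own correction terms, coming from its own reparametrization and from the eliminated $\RR$/Reeb components, so subtracting and applying the fundamental theorem of calculus produces, besides $\widetilde{S}\,\eta$, an inhomogeneous discrepancy that must be shown to decay strictly faster than $\eta$ itself before any similarity-principle argument applies. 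Controlling this—together with exponential decay (with derivatives) of the coefficients and remainders, which the genuine linear asymptotic theorem requires and which the simplified statement of Theorem~\ref{thm:asympSimilarity} in these notes glosses over—is exactly the lengthy technical content of \cite{Siefring:asymptotics}, and it cannot be dispatched by the two-line mean-value interpolation as written. In short: right strategy, correct endgame, but the reduction to the linear black box is the theorem's real content and is missing.
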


We say in the situation of Theorem~\ref{thm:asymptoticsRelative} that
$u$ and $v$ approach each other along the \defin{relative asymptotic 
eigenfunction}\index{relative asymptotic eigenfunction}\index{holomorphic curve!relative asymptotic eigenfunction of two punctures}\index{asymptotic eigenfunction!relative}
$f_\lambda$ with \defin{relative decay rate} $|\lambda|$.\index{relative decay rate}\index{holomorphic curve!relative decay rate of two punctures}

Observe that if $u$ and $v$ are two asymptotically cylindrical curves with
a pair of ends for which $h_u - h_v \equiv 0$ in Theorem~\ref{thm:asymptoticsRelative}, 
then standard unique continuation arguments imply that $u$ and $v$ have identical images,
i.e.~they both cover the same simple curve.  In all other cases, the asymptotic
formula provides a neighborhood of infinity on which $h_u - h_v$ must be nowhere
zero, so $u$ and $v$ have no intersections near infinity.  If $u$ and $v$
are asymptotic to different \emph{covers} of the same orbit, then one can
argue in the same way by replacing each with suitable covers
$$
\tilde{u}(s,t) := u(ks,kt), \qquad \tilde{v}(s,t) := v(\ell s, \ell t)
$$
whose asymptotic Reeb orbits match.
In this way, one can deduce the following important consequence, which was
not previously obvious:

\begin{cor}
\label{cor:finite}
If $u : (\dot{\Sigma},j) \to (\widehat{W},J)$ and $v : (\dot{\Sigma}',j') \to
(\widehat{W},J)$ are two asymptotically cylindrical $J$-holomorphic curves
with non-identical images, then they have at most finitely many intersections.
\end{cor}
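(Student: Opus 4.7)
The plan is to separate the problem into intersections occurring on a compact piece of $\widehat{W}$ and intersections occurring near infinity, and to handle the second case via the relative asymptotic formula (Theorem~\ref{thm:asymptoticsRelative}) applied to appropriate covers. First, I would pick an exhaustion $\widehat{W} = \bigcup_{n} K_n$ by compact subsets $K_n$ whose complement is contained in the cylindrical ends $[N_n,\infty) \times M_+ \cup (-\infty,-N_n] \times M_-$ for $N_n \to \infty$. On each such $K_n$, intersections between $u$ and $v$ are necessarily isolated by the standard similarity principle argument (Theorem~\ref{thm:similarity}) combined with unique continuation for $J$-holomorphic curves: if $u$ and $v$ agreed on an open set they would agree globally, contradicting the hypothesis of non-identical images. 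Thus $u$ and $v$ have only finitely many intersections in each $K_n$. So it suffices to exhibit an $N$ such that the parts of $u$ and $v$ in the complement of $K_N$ are disjoint.

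The complement consists of finitely many cylindrical ends, one for each puncture of $u$ and each puncture of $v$. For $N$ sufficiently large, the image of each such end lies in an arbitrarily small tubular neighborhood of the associated orbit cylinder $\RR \times \gamma$, by the definition of asymptotically cylindrical. So the next step is to argue, one pair of ends $(e_u, e_v)$ at a time, that for $N$ large enough the restrictions of $u$ to $e_u$ and of $v$ to $e_v$ have disjoint images. If the underlying simple Reeb orbits associated to $e_u$ and $e_v$ are geometrically distinct, then sufficiently small tubular neighborhoods of their orbit cylinders are disjoint and we are done. The interesting case is when both ends are asymptotic to (possibly different) iterates $\gamma^k$ and $\gamma^\ell$ of the same simple orbit~$\gamma$.

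In that case I would pass to a common cover: replace $u$ on $e_u$ by $\tilde u(s,t) := u(\ell s, \ell t)$ and $v$ on $e_v$ by $\tilde v(s,t) := v(k s, k t)$, so that both $\tilde u$ and $\tilde v$ are asymptotic to the same $k\ell$-fold cover of $\gamma$. Each intersection of the original ends lifts to $k\ell$ intersections of $\tilde u$ and $\tilde v$, so it is enough to show that $\tilde u$ and $\tilde v$ have no intersections near infinity. Now Theorem~\ref{thm:asymptoticsRelative} applies to the difference $h_{\tilde u} - h_{\tilde v}$ of the asymptotic representatives in $\gamma^*\xi$ (after the standard reparametrization). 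There are two alternatives: either $h_{\tilde u} - h_{\tilde v} \equiv 0$ on the end, or the theorem produces a nontrivial eigenfunction $f_\lambda$ with $h_{\tilde u} - h_{\tilde v} = e^{\lambda s}(f_\lambda + r)$ and $r \to 0$. In the second case, since eigenfunctions of a self-adjoint operator of the form~(\ref{eqn:modelA}) on $S^1$ are nowhere vanishing (they are nontrivial solutions of a first-order linear ODE), the difference is nowhere zero for $|s|$ large, so $\tilde u$ and $\tilde v$ have no intersections near infinity on that pair of ends.

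Finally, I have to rule out the first alternative, $h_{\tilde u} - h_{\tilde v} \equiv 0$ on the end. This is where the hypothesis of non-identical images enters: if the asymptotic representatives agree on a half-cylinder then $\tilde u$ and $\tilde v$ agree there, hence by unique continuation for $J$-holomorphic curves they agree on their entire connected domains, and unwinding the covers one concludes that $u$ and $v$ cover the same simple holomorphic curve, contradicting the hypothesis. The main obstacle in this argument is this final unique-continuation step: one must legitimately propagate agreement from the cylindrical end into the compact part and verify that it descends through the covering relation $\tilde u(s,t) = u(\ell s, \ell t)$, $\tilde v(s,t) = v(ks, kt)$ to force $u$ and $v$ to have equal images as unparametrized curves. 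Modulo that, summing over the finitely many pairs of ends and combining with the bound on each compact $K_n$ yields the corollary.
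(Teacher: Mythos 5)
Your overall strategy is the same as the one the paper sketches: finiteness on compact pieces via local positivity/unique continuation, then pairwise analysis of the ends, passing to the covers $\tilde u(s,t)=u(\ell s,\ell t)$, $\tilde v(s,t)=v(ks,kt)$ so that both ends are asymptotic to $\gamma^{k\ell}$, and then invoking Theorem~\ref{thm:asymptoticsRelative} with the non-identical-images hypothesis feeding into unique continuation. However, there is a genuine gap at the final step. Knowing that $h_{\tilde u}(s,t)-h_{\tilde v}(s,t)$ is nowhere zero for $|s|$ large only rules out intersections of the form $\tilde u(s,t)=\tilde v(s,t)$, i.e.\ at \emph{matching} parameters. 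An intersection of the images near infinity need not occur at matching parameters: writing both maps as $\exp$ of sections of $\xi$ along the orbit cylinder over the \emph{simple} orbit $\gamma$, the tubular neighborhood of that embedded cylinder forces equal $s$-coordinates, but the angular coordinates need only satisfy $t'=t+j/(k\ell)$ for some $j\in\{0,\ldots,k\ell-1\}$. So intersections of the ends correspond to zeroes of \emph{all} the shifted differences $h_{\tilde u}(s,t)-h_{\tilde v}(s,t+j/(k\ell))$, not just the $j=0$ one, and since $\gamma^{k\ell}$ is multiply covered whenever $k\ell\ge 2$ (which is unavoidable once you pass to the common cover), your argument as written does not exclude intersections coming from $j\ne 0$ accumulating at infinity.

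The fix stays within your toolkit: each reparametrization $\tilde v_j(s,t):=\tilde v(s,t+j/(k\ell))$ is again a $J$-holomorphic half-cylinder asymptotic to the same parametrized orbit $\gamma^{k\ell}$ (the shift changes the argument of $\gamma$ by the integer $j$, so the orbit parametrization is unchanged), so Theorem~\ref{thm:asymptoticsRelative} applies to each pair $(\tilde u,\tilde v_j)$, $j=0,\ldots,k\ell-1$. For each $j$, either $h_{\tilde u}-h_{\tilde v_j}\equiv 0$, in which case unique continuation again forces $u$ and $v$ to have identical images (contradiction), or the difference is nowhere zero beyond some $s_j$; taking the largest of the finitely many thresholds gives a neighborhood of infinity on which the two ends are genuinely disjoint. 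This is exactly the reparametrization device the paper uses in the proof of Corollary~\ref{cor:branch} and in the computation of $\inter^\tau_\infty$ via the functions $F_j$ in \S\ref{sec:star}, where intersections near infinity are identified with zeroes of the shifted differences for arbitrary~$j$. The rest of your write-up (nowhere-vanishing of eigenfunctions, the role of the non-identical-images hypothesis, and the unique continuation step you flagged, which the paper also treats as standard) is fine.
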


Similarly:

\begin{cor}
\label{cor:branch}
If $u : (\dot{\Sigma},j) \to (\widehat{W},J)$ is an asymptotically cylindrical
$J$-holomorphic curve which is simple, then it is embedded on some
neighborhood of the punctures.
\end{cor}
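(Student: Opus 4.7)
The statement has two parts: $u$ is an immersion near each puncture, and $u$ is injective there. Fix a positive puncture $z_0 \in \Gamma^+$ with cylindrical coordinates $(s,t) \in Z_+ = [0,\infty) \times S^1$, and let $\gamma : S^1 \to M_+$ be the asymptotic Reeb orbit of period~$T$, which we may write as a $k$-fold cover of an underlying simple orbit $\gamma_0$.

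For the immersion claim, I apply Theorem~\ref{thm:asymptotics}: writing $u(s,t) = \exp_{(Ts,\gamma(t))} h_u(s,t)$ with $h_u$ decaying to zero in $C^\infty$ along the end, differentiating yields $\partial_s u(s,t) \to (T,0)$ and $\partial_t u(s,t) \to (0,\dot\gamma(t))$ in $C^0$ as $s \to +\infty$; since $\dot\gamma = T \, R_\alpha \circ \gamma$ is nowhere zero, $du$ has full rank for all $s$ sufficiently large. For injectivity, I argue by contradiction: suppose there exist sequences $z_n = (s_n,t_n) \ne z_n' = (s_n',t_n')$ in $Z_+$ with $s_n, s_n' \to +\infty$ and $u(z_n) = u(z_n')$. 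Matching the $\RR$-component of $u$ in the asymptotic formula yields $s_n - s_n' \to 0$, and matching the $M$-component along a subsequence $t_n \to t_\infty$, $t_n' \to t_\infty'$ yields $\gamma(t_\infty) = \gamma(t_\infty')$; hence $t_\infty - t_\infty'$ is an integer multiple of $1/k$, say $j/k$ with $j \in \{0,\ldots,k-1\}$. When $j = 0$ the points $z_n, z_n'$ accumulate at the same point of the circle at infinity, and the uniform local embeddedness implied by the immersion step forces $z_n = z_n'$ eventually --- a contradiction.

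The essential case is $j \ne 0$. Here I apply Theorem~\ref{thm:asymptoticsRelative} to the pair consisting of $u$ and its rotated copy $w(s,t) := u(s,t+j/k)$, which is again $J$-holomorphic on $Z_+$ and positively asymptotic to $\gamma$ with the same parametrization, since $\gamma(t+j/k) = \gamma_0(kt+j) = \gamma(t)$. In the first alternative of that theorem, $h_u - h_w$ is not identically zero and satisfies the relative asymptotic formula $h_u(s,t) - h_u(s,t+j/k) = e^{\lambda s}(f_\lambda(t) + r(s,t))$ with $f_\lambda$ a nontrivial eigenfunction of $\mathbf{A}_\gamma$. In our four-dimensional setting $\xi$ has rank~$2$, so $\mathbf{A}_\gamma$ in a unitary trivialization becomes a first-order linear ODE $f'(t) = A(t) f(t)$ on $S^1$; by uniqueness of ODE solutions $f_\lambda$ is nowhere vanishing, yielding a pointwise lower bound
\[
|h_u(s,t) - h_u(s,t+j/k)| \ge c\, e^{\lambda s} \qquad \text{for $s$ large and some $c > 0$.}
\]
A quantitative comparison using $s_n - s_n' \to 0$ and $t_n' - t_n - j/k \to 0$, together with the $C^1$ boundedness of $u$ near the orbit cylinder, converts this lower bound into $u(z_n) \ne u(z_n')$ for all large~$n$, contradicting our assumption.

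The remaining alternative --- which I expect to be the \emph{main obstacle} --- is $h_u \equiv h_w$ identically on $Z_+$, i.e.\ $u(s,t) = u(s,t+j/k)$ on the entire half-cylinder. Then $u|_{Z_+}$ factors through the $m$-fold quotient cover $Z_+ \to Z_+/\langle \sigma \rangle$, where $\sigma(s,t) = (s,t+j/k)$ has order $m = k/\gcd(j,k)$, producing a local multi-cover structure at the puncture. To extract a contradiction with the global simplicity of $u$, I would consider the complex-analytic subvariety
\[
D := \bigl\{ (p,q) \in \dot\Sigma \times \dot\Sigma \ \big|\ u(p) = u(q) \bigr\},
\]
which contains the diagonal $\Delta$ and, by our assumption, an additional irreducible component $\Gamma_\sigma$ agreeing with the graph of $\sigma$ near $(z_0,z_0)$. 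The two factor projections $\Gamma_\sigma \to \dot\Sigma$ are then nonconstant holomorphic maps; following them by analytic continuation, away from finitely many branch points, extends the local symmetry to a globally-defined nontrivial holomorphic correspondence of $\dot\Sigma$ under which $u$ is invariant, forcing $u$ to factor through a proper holomorphic cover and contradicting simplicity. The technical heart of the argument lies precisely in this extension step: controlling the global geometry of $D$, handling the behavior of its projections at other punctures, and ruling out pathologies that would prevent $\Gamma_\sigma$ from closing up to a genuine multi-cover structure on all of $\dot\Sigma$.
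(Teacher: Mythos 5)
Your strategy for double points within a single end is the same as the paper's: compare $u$ with its rotations $u_j(s,t) := u(s,t+j/k)$ and use Theorem~\ref{thm:asymptoticsRelative} to conclude that $h_j - h$ is either identically zero or nowhere zero near infinity, excluding the first case via simplicity. But as written there are concrete gaps. First, the statement (and the paper's proof) also rules out intersections between two \emph{distinct} ends asymptotic to covers of the same orbit---``embedded on a neighborhood of the punctures'' fails if the images of two different ends cross near infinity---and your argument, which fixes a single puncture $z_0$, never addresses this; it is handled exactly as in Corollary~\ref{cor:finite} (pass to covers so the asymptotic orbits agree, apply Theorem~\ref{thm:asymptoticsRelative}, and exclude identical images by simplicity), and must be included. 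Second, your sequential ``quantitative comparison'' does not close: the lower bound $|h_u(s,t)-h_u(s,t+j/k)| \ge c\, e^{\lambda s}$ is exponentially small as $s \to \infty$, while you only know $s_n - s_n' \to 0$ and $t_n' - t_n - j/k \to 0$ with no rate, so nothing you have stated prevents the putative double points from hiding inside these errors. The clean fix---and the reason the paper writes $u(s,t) = \exp_{(Ts,\gamma_0(kt))} h(s,t)$ with $h$ a section of $\xi$ along the \emph{embedded} underlying orbit $\gamma_0$---is that the exponential map is injective on a uniform neighborhood of the zero section of the normal bundle of the embedded cylinder $\RR \times \gamma_0$; once $|h|$ is below that threshold, an equality $u(s,t) = u(s',t')$ forces the \emph{exact} relations $s = s'$, $t' = t + j/k$ and $h(s,t) = h(s,t+j/k)$, so double points near infinity are literally zeroes of $h_j - h$ and the dichotomy from Theorem~\ref{thm:asymptoticsRelative} finishes the argument with no limiting sequences or estimates at all.

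Third, the case $h_j \equiv h$, which you rightly identify as the crux, is left unproved: your plan to propagate the local symmetry through the set $D = \{u(p) = u(q)\}$ by analytic continuation is the right intuition, but the closing-up step you defer is precisely the nontrivial content, so the proposal does not yet establish the corollary. Note that the paper does not reprove this either: it excludes the case by the standard unique continuation package for simple curves---if $u = u \circ \sigma$ on an open set for a nontrivial local biholomorphism $\sigma$, then $u$ factors through a nontrivial cover, contradicting simplicity---which is exactly the content of \cite{McDuffSalamon:Jhol}*{Prop.~2.5.1} and is packaged in these notes as Theorem~\ref{thm:uniqueContinPunctured}. The efficient route is therefore to cite that black box, as the paper does; if you want a self-contained argument, your continuation scheme can be completed, but it needs the local structure results (isolatedness of coincidences of non-identical germs, Theorem~\ref{thm:representation}, together with care at critical points), and that is where the real work lies.
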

\begin{proof}
If $u$ has two ends asymptotic to covers of the same orbit, we deduce as in
Corollary~\ref{cor:finite} that their images are either identical or
disjoint near infinity, and the former is excluded via unique continuation
arguments if $u$ is simple.
There could still be double points near a single end asymptotic to a
multiply covered Reeb orbit, i.e.~suppose $Z_\pm \subset \dot{\Sigma}$ is
an end on which $u|_{Z_\pm}$ is asymptotic to
$$
\gamma(t) = \gamma_0(kt),
$$
where $k \ge 2$ is an integer and $\gamma_0 : S^1 \to M$ is an
\emph{embedded} Reeb orbit.  Then writing $u(s,t) = 
\exp_{(Ts,\gamma_0(kt))} h(s,t)$ on $Z_\pm$ as
in Theorem~\ref{thm:asymptotics},
the reparametrizations $u_j(s,t) := u(s,t + j/k)$ for
$j=1,\ldots,k-1$ are each also $J$-holomorphic half-cylinders
asymptotic to $\gamma$, with asymptotic representatives $h_j(s,t)
:= h(s,t + j/k)$, and Theorem~\ref{thm:asymptoticsRelative} implies
that $h_j - h$ is either identically zero or nowhere zero
near infinity for $j=1,\ldots,k-1$.
The former is again excluded via unique continuation if $u$ is simple.
\end{proof}

\section{Winding of asymptotic eigenfunctions}
\label{sec:winding}

When $\dim M = 3$, the asymptotic eigenfunctions in the above discussion
are nowhere vanishing sections of complex line bundles $\gamma^*\xi \to S^1$, so
they have well-defined winding numbers relative to any choice of
trivialization.  This defines the notion of the \emph{asymptotic winding}
of a holomorphic curve as it approaches an orbit.  It is extremely useful
to observe that these winding numbers come with \emph{a priori} bounds.

\begin{thm}[\cite{HWZ:props2}]
\label{thm:winding}
Suppose $S : S^1 \to \End(\RR^2)$ is a smooth loop of symmetric
$2$-by-$2$ matrices and $\mathbf{A} : C^\infty(S^1,\RR^2) \to 
C^\infty(S^1,\RR^2)$ denotes the model asymptotic operator
$$
\mathbf{A} = -J_0 \frac{d}{dt} - S,
$$
with spectrum $\sigma(\mathbf{A}) \subset \RR$.  Then there exists a well-defined
integer-valued function
$$
\wind : \sigma(\mathbf{A}) \to \ZZ
$$
determined by $\wind(\lambda) := \wind(v_\lambda)$, where 
$v_\lambda \in C^\infty(S^1,\RR^2)$ is any nontrivial eigenfunction with
eigenvalue~$\lambda$.  Moreover, this function is monotone increasing and attains
every value in $\ZZ$ exactly twice (counting multiplicity of eigenvalues).
\end{thm}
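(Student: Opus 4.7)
The plan is to reduce the eigenvalue problem $\mathbf{A}v=\lambda v$, which is the linear ODE $\dot v = J_0(S(t)+\lambda)v$ on $\RR^2\cong\CC$, to a scalar ODE for the angle of $v$, from which the winding function and its properties emerge as a monotone rotation-number statement. First I would observe that since $S(t)+\lambda$ is symmetric, the fundamental solution $\Phi_\lambda(t)$ lies in $\SL(2,\RR)$; ODE uniqueness then gives Step~1, that any nontrivial eigenfunction $v_\lambda$ is nowhere zero, so its winding number $\wind(v_\lambda)\in\ZZ$ is defined. For Step~2, well-definedness of $\wind(\lambda)$, if $\dim\ker(\mathbf{A}-\lambda)=2$ with basis $v,w$, then the family $v_s:=\cos(s)v+\sin(s)w$ consists of solutions of the same ODE and so, by Step~1, of nowhere-vanishing loops; the integer-valued continuous function $s\mapsto\wind(v_s)$ is therefore constant, giving $\wind(v)=\wind(w)$.

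The key step is the polar reduction. Writing $v(t)=r(t)\hat{e}(\theta(t))$ with $\hat{e}(\theta)=(\cos\theta,\sin\theta)$ and pairing the ODE with $\hat{e}(\theta)$ and $J_0\hat{e}(\theta)$ decouples the angular equation
\begin{equation*}
\dot\theta(t) = f_\lambda(t,\theta(t)) := \langle\hat{e}(\theta),(S(t)+\lambda)\hat{e}(\theta)\rangle,
\end{equation*}
which is independent of $r$, is $\pi$-periodic in $\theta$ (so it descends to a flow on $\RR\PP^1$), and satisfies $\partial f_\lambda/\partial\lambda\equiv 1>0$. The standard ODE comparison principle then implies that for each fixed initial value $\theta_0$, the time-$1$ map $\theta_0\mapsto\theta_\lambda(1;\theta_0)$ is strictly increasing and continuous in $\lambda$, with limits $\pm\infty$ as $\lambda\to\pm\infty$ (by integrating $f_\lambda$ over one period).

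Next I would identify eigenvalues of $\mathbf{A}$ with parameters where $\Phi_\lambda(1)\in\SL(2,\RR)$ has $+1$ as an eigenvalue. A fixed direction of $\Phi_\lambda(1)$ in $\RR\PP^1$ corresponds to a solution $\theta_0$ of $\theta_\lambda(1;\theta_0)\equiv\theta_0\pmod\pi$, and such a direction produces an eigenfunction of $\mathbf{A}$ precisely when the associated eigenvalue of $\Phi_\lambda(1)$ is $+1$ rather than $-1$; the integer lift $k$ with $\theta_\lambda(1;\theta_0)-\theta_0=2\pi k$ is then exactly $\wind(v_\lambda)$. Using the monotonicity established above and the fact that $\SL(2,\RR)$-paths enter and leave the parabolic stratum $\{\mathrm{tr}=2\}$ as the rotation number crosses each integer multiple of $\pi$, I would deduce that for each $k\in\ZZ$ the set $\{\lambda:\wind(\lambda)=k\}$ is a closed interval (possibly a point) whose two endpoints are eigenvalues of total multiplicity two: either a single eigenvalue with $\Phi_\lambda(1)=I$ and $2$-dimensional kernel, or two distinct eigenvalues each with simple kernel (parabolic shear). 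Monotonicity of $\wind$ on $\sigma(\mathbf{A})$ is then immediate from monotonicity of the rotation number in $\lambda$.

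The main obstacle I anticipate is the last step: a careful bookkeeping, at the parameter values where the monodromy $\Phi_\lambda(1)$ crosses the parabolic locus, to justify that exactly two eigenvalues (with multiplicity) occur per integer winding. One must rule out pathologies where the rotation number is locally constant but no eigenvalue occurs, which requires analyzing the derivative of $\Phi_\lambda(1)$ in $\lambda$ using the explicit form $\dot\Phi_\lambda(t)=J_0(S(t)+\lambda)\Phi_\lambda(t)$ together with the sign $\partial f_\lambda/\partial\lambda>0$ to show that the rotation number is strictly increasing except possibly on intervals where $\Phi_\lambda(1)$ remains parabolic with the same fixed direction, so that the endpoints of any such plateau contribute exactly two eigenvalues of winding $k$.
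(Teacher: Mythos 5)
Your route is genuinely different from the one the paper has in mind: the text takes Theorem~\ref{thm:winding} as a black box from \cite{HWZ:props2}, and the exercise following it suggests the intended proof, namely an explicit computation for $S \equiv c\,\1$ (eigenvalues $\lambda = 2\pi k - c$, each with two-dimensional eigenspace of winding $k$) followed by perturbation theory for self-adjoint operators: eigenvalues move continuously in the deformation parameter while the winding of a continuously varying, nowhere-vanishing eigenfunction is locally constant. You instead projectivize the ODE $\dot v = J_0(S(t)+\lambda)v$ by a Pr\"ufer-type substitution and argue via rotation numbers of the monodromy $\Phi_\lambda(1) \in \SL(2,\RR)$ -- classical oscillation theory for periodic planar Hamiltonian systems. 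Your Steps 1--2 and the angular reduction are correct ($\tr\big(J_0(S+\lambda)\big)=0$, $\p_\lambda f_\lambda \equiv 1$, $\pi$-periodicity in $\theta$), and this approach does buy a self-contained, quantitative picture (the spectral ``gaps'' appear as plateaus of the rotation number, with the eigenvalues at their endpoints), at the cost of the more delicate counting discussed below; the paper's suggested route is shorter but leans on the model computation and spectral perturbation theory.

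The step that is not yet a proof -- and where your sketch is slightly off -- is the ``exactly twice'' count. First, the dichotomy ``eigenvalue $+1$ rather than $-1$'' is wrong as stated: a direction $\theta_0$ with angle increment $\Delta_\lambda(\theta_0) := \theta_\lambda(1;\theta_0)-\theta_0 \in 2\pi\ZZ$ only says $\Phi_\lambda(1)v(0) = \mu v(0)$ with $\mu>0$; one has $\lambda \in \sigma(\mathbf{A})$ if and only if $\mu=1$, equivalently $\tr\Phi_\lambda(1)=2$, and the hyperbolic case $\mu>0$, $\mu\ne 1$ is exactly what produces plateaus. (Relatedly, $\{\wind = k\}$ is not an interval; the interval is $I_k := \ell_k(\RP^1)$, where $\ell_k(\theta_0)$ is the unique $\lambda$ with $\Delta_\lambda(\theta_0)=2\pi k$, which exists by strict monotonicity of $\Delta_\lambda(\theta_0)$ in $\lambda$.) The count can be completed inside your framework without the derivative-of-monodromy analysis you anticipate, using only the fixed-point classification of the projectivized $\SL(2,\RR)$-action (elliptic: no fixed direction; hyperbolic with positive eigenvalues: two fixed directions at which $\Delta_\lambda - 2\pi k$ changes sign; parabolic: one fixed direction, no sign change; $\pm\1$: all directions fixed): (i) the $I_k$ are pairwise disjoint, since coexisting increments $2\pi k \ne 2\pi k'$ would, by the intermediate value theorem, force a fixed direction with increment an odd multiple of $\pi$, i.e.\ a negative eigenvalue, contradicting $\det = 1$ unless $\Phi_\lambda(1)=\pm\1$, which makes $\Delta_\lambda$ constant; this also gives monotonicity of $\wind$; (ii) at an interior point of $I_k$, $\Delta_\lambda - 2\pi k$ changes sign, so the monodromy has two distinct eigendirections with positive eigenvalues and is not $\1$, hence $\tr>2$ and $\lambda \notin \sigma(\mathbf{A})$; (iii) at each endpoint of $I_k$, $\Delta_\lambda - 2\pi k$ has a zero without sign change, ruling out the hyperbolic case, so the monodromy is parabolic or $\1$ and $\lambda$ is an eigenvalue with winding $k$; (iv) $\Phi_\lambda(1)=\1$ forces $\Delta_\lambda \equiv 2\pi k$, and strict monotonicity in $\lambda$ then forces $I_k = \{\lambda\}$; hence a degenerate plateau carries one eigenvalue of multiplicity two, a nondegenerate plateau carries two endpoints of multiplicity one each, and by (i)--(ii) these are all eigenvalues of winding~$k$. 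With this bookkeeping in place of the ``enter and leave the parabolic stratum as the rotation number crosses $\pi\ZZ$'' heuristic (which is inaccurate: the rotation number is constant across a plateau while the path sits in the hyperbolic region), your proof goes through.
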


\begin{exercise}
Verify Theorem~\ref{thm:winding} for the special case where
$S(t)$ is a constant multiple of the identity matrix.
\textsl{(The general case can be derived from this using perturbation
  theory for self-adjoint operators; see \cite{HWZ:props2}*{Lemma~3.6}
  or \cite{Wendl:SFT}*{Chapter~3}.)}
\end{exercise}

Given a closed Reeb orbit $\gamma$ in a contact $3$-manifold
$(M,\xi = \ker\alpha)$, one can now choose a trivialization
$$
\tau : \gamma^*\xi \to S^1 \times \RR^2
$$
and define
$$
\wind^\tau : \sigma(\mathbf{A}_\gamma) \to \ZZ
$$
by $\wind^\tau(\lambda) := \wind(f)$ where $f : S^1 \to \RR^2$ is the expression
via $\tau$ of any nontrivial eigenfunction $f_\lambda \in \Gamma(\gamma^*\xi)$
with $\mathbf{A}_\gamma f_\lambda = \lambda f_\lambda$.  It follows 
immediately from Theorem~\ref{thm:winding} that $\wind^\tau$ is a monotone
surjective function attaining all values exactly twice.  Since eigenvalues
of $\mathbf{A}_\gamma$ do not accumulate except at $\pm\infty$, 
we can then define the integers:\index{Reeb orbit!extremal winding numbers of}\index{extremal winding numbers of a Reeb orbit}
\begin{equation}
\label{eqn:alphas}
\begin{split}
\alpha_+^\tau(\gamma) &:= \min\left\{ \wind^\tau(\lambda)\ |\ 
\lambda \in \sigma(\mathbf{A}_\gamma) \cap (0,\infty) \right\},\\
\alpha_-^\tau(\gamma) &:= \max\left\{ \wind^\tau(\lambda)\ |\ 
\lambda \in \sigma(\mathbf{A}_\gamma) \cap (-\infty,0) \right\},\\
p(\gamma) &:= \alpha_+^\tau(\gamma) - \alpha_-^\tau(\gamma).
\end{split}
\end{equation}
As implied by this choice of notation, $\alpha_\pm^\tau(\gamma)$ each
depend on the choice of trivialization $\tau$, but $p(\gamma)$ does not.
If $\gamma$ is nondegenerate, hence $0 \not\in\sigma(\mathbf{A}_\gamma)$,
it follows from Theorem~\ref{thm:winding} that $p(\gamma)$ is either $0$
or~$1$: we shall say accordingly that $\gamma$ is \defin{even}\index{Reeb orbit!even/odd}\index{even Reeb orbit|see {parity of a Reeb orbit}}\index{odd Reeb orbit|see {parity of a Reeb orbit}}
or \defin{odd} respectively, and call $p(\gamma)$ the \defin{parity} of~$\gamma$.\index{Reeb orbit!parity of}\index{parity of a Reeb orbit}

The winding invariants we've just defined have an important relation with
another integer associated to nondegenerate Reeb orbits, namely the
\defin{Conley-Zehnder index}\index{Conley-Zehnder index}\index{Reeb orbit!Conley-Zehnder index of}\index{nondegenerate Reeb orbit!Conley-Zehnder index of}
$$
\muCZ^\tau(\gamma) \in \ZZ,
$$
a Maslov-type index that was originally introduced in the study of
Hamiltonian systems (see \cites{ConleyZehnder:index,SalamonZehnder:Morse})
and can also be defined for nondegenerate Reeb orbits in any dimension.
It can be thought of
as a measurement of the degree of ``twisting'' (relative to~$\tau$) 
of the nearby Reeb flow around~$\gamma$.  We refer to \cite{HWZ:props2}*{\S 3}
or \cite{Wendl:SFT}*{Chapter~3}
for further details on $\muCZ^\tau$; for our purposes in the $3$-dimensional
case, the following result from \cite{HWZ:props2}*{\S 3} can just as well
be taken as a \emph{definition}:

\begin{prop}
\label{prop:CZwinding}
For any nondegenerate Reeb orbit $\gamma : S^1 \to M$ in $(M,\xi=\ker\alpha)$ 
with a trivialization $\tau$ of $\gamma^*\xi$,
$$
\muCZ^\tau(\gamma) = 2\alpha^\tau_-(\gamma) + p(\gamma) =
2\alpha^\tau_+(\gamma) - p(\gamma).
$$
\end{prop}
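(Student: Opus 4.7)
The second equality follows immediately from the defining identity $p(\gamma) = \alpha_+^\tau(\gamma) - \alpha_-^\tau(\gamma)$. For the first, I would fix the unitary trivialisation~$\tau$ so as to identify $\mathbf{A}_\gamma$ with a model operator $\mathbf{A} = -J_0 \frac{d}{dt} - S$ of the form \eqref{eqn:modelA}. Taking the standard definition of the Conley--Zehnder index (e.g.\ via the Maslov-type index of the symplectic path $\Phi : [0,1] \to \Spp(2)$ solving $\dot\Phi = J_0 S(t) \Phi$ with $\Phi(0) = \1$, whose time-$1$ value represents the linearised Poincar\'e return map on $\xi_{\gamma(0)}$ in the trivialisation), the task is then to show $\muCZ^\tau(\gamma) = 2\alpha_-^\tau(\gamma) + p(\gamma)$ for every~$S$ in the space
\[
\sS := \{S \in C^\infty(S^1, \operatorname{Sym}(\RR^2)) \mid \ker \mathbf{A} = 0 \}.
\]

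The plan is \emph{homotopy invariance plus computation on models}. First I would verify that both sides of the identity are locally constant functions of $S \in \sS$. For $\muCZ^\tau$ this is standard: it depends on $\Phi(1)$ only through the component of $\Spp(2) \setminus \{A : \det(A-\1)=0\}$ in which it lies, and this component varies locally constantly over~$\sS$. For $2\alpha_-^\tau + p$, continuity of the spectrum (Kato perturbation theory) places each eigenvalue of $\mathbf{A}$ on a continuous curve in~$\RR$, with a continuously varying family of normalised eigenfunctions; Theorem~\ref{thm:winding} then implies that the winding of any such eigenfunction is locally constant. Since no eigenvalue can cross $0$ while $S$ remains in~$\sS$, the integers $\alpha_\pm^\tau$ and $p$ are locally constant as well.

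Next I would verify the formula on constant representatives $S(t) \equiv S_0 \in \operatorname{Sym}(\RR^2)$. For $S_0 = c\1$ with $c \in \RR \setminus 2\pi\ZZ$, the eigenvalue equation integrates explicitly to give eigenvalues $\lambda_k = 2\pi k - c$ of multiplicity two, with eigenfunctions $t \mapsto e^{2\pi k J_0 t} v_0$ of winding $k$; meanwhile $\Phi(t) = e^{c J_0 t}$ is rotation by the angle $ct$, yielding $\muCZ^\tau = 2m+1$ whenever $c \in (2\pi m, 2\pi(m+1))$. Reading off $\alpha_-^\tau = m$, $\alpha_+^\tau = m+1$, $p=1$ from the spectrum, the identity $2m+1 = 2m+1$ is verified. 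The even-parity components are handled analogously by diagonal representatives $S_0 = \operatorname{diag}(a,b)$ with $a, b$ of the appropriate signs and magnitudes, whose asymptotic operators can again be diagonalised in closed form.

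The main obstacle is to show that every $S \in \sS$ can be connected within $\sS$ to one of these explicit constant representatives, i.e.\ that the pair $(\alpha_-^\tau, p)$, or equivalently $\muCZ^\tau$, enumerates the connected components of~$\sS$. One route is a direct deformation argument: straight-line homotope $S(t)$ toward its mean $\bar S := \int_{S^1} S(t)\, dt$, perturbing if necessary by a symmetric correction term to prevent an eigenvalue from crossing zero. A more conceptual route is to study the map $\sS \to \Spp(2) \setminus \{A : \det(A - \1) = 0\}$ sending $S \mapsto \Phi(1)$, invoke the known topology of its codomain (one component per integer value of CZ), and check that the fibres are connected via a gauge/reparametrisation argument, so that $\sS$ inherits the expected component decomposition. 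Either approach reduces the verification to the constant models already handled.
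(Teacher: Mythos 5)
A preliminary remark: the paper does not prove this proposition at all. It cites \cite{HWZ:props2}*{\S 3} and explicitly says that in the $3$-dimensional case the stated relation ``can just as well be taken as a definition'' of $\muCZ^\tau$. So there is no in-text proof to compare yours against; what you have written must stand on its own (or be measured against the arguments in Hofer--Wysocki--Zehnder or in standard SFT lecture notes, which do follow roughly the homotopy-plus-models scheme you describe).

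Judged on its own, your outline has the right shape---both $\muCZ^\tau$ and $2\alpha_-^\tau + p$ are locally constant on the space $\sS$ of nondegenerate loops $S$, and the identity can be checked on explicit models---but the step you yourself call ``the main obstacle'' is exactly where the entire content of the proposition sits, and neither of your proposed routes is carried out or obviously repairable as stated. The straight-line homotopy of $S(t)$ to its mean, ``perturbing if necessary by a symmetric correction term to prevent an eigenvalue from crossing zero,'' cannot be arranged in general: whether a path of symmetric loops can avoid operators with kernel is precisely the question of whether its endpoints lie in the same component of $\sS$, i.e.\ it is equivalent to the classification of components you are trying to prove, and no small perturbation removes crossings that are forced by nonzero spectral flow between the endpoints. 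What is true is that endpoints with equal Conley--Zehnder index can be so connected, but that is the Conley--Zehnder/Salamon--Zehnder classification of nondegenerate symplectic arcs in $\Spp(2)$, which you would have to import as a known input and then transport to $\sS$; your second route gestures at this, but the map $S \mapsto \Phi(1)$ is not a fibration you may use for free---note in particular that not every path $\Phi$ with $\Phi(0)=\1$ arises from a \emph{loop} $S$, since $S(0)=S(1)$ (with all derivatives) is an extra constraint, so surjectivity onto components and connectedness of fibres both need genuine arguments. A smaller but real issue: the even-parity case is not ``analogous'' in the easy sense, since for $S_0 = \operatorname{diag}(a,b)$ with $a<0<b$ the eigenfunctions are not of the rotating form $e^{2\pi k J_0 t}v_0$; one must solve $(a+\lambda)(b+\lambda)=(2\pi k)^2$ and verify the windings of the resulting solutions to read off $\alpha_\pm^\tau$ and $p=0$. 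With the classification of components supplied (e.g.\ from \cite{SalamonZehnder:Morse}) and both model computations done honestly, your scheme does close up into a correct proof; as written, it has a genuine gap at its pivotal step.
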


\begin{exercise}
\label{EX:gcd}
To any closed Reeb orbit of period $T > 0$ parametrized by a loop
$\gamma : S^1 \to M$ with $\dot{\gamma} \equiv T \cdot R_\alpha(\gamma)$,
one can associate a Reeb orbit of period $kT$ for each $k \in \NN$,
parametrized by
$$
\gamma^k : S^1 \to M : t \mapsto \gamma(kt).
$$
We say $\gamma^k$ is the \defin{$k$-fold cover} of~$\gamma$,\index{Reeb orbit!multiply covered}\index{multiply covered Reeb orbit}
and it is
\defin{multiply covered} if $k \ge 2$.  We say $\gamma$ is
\defin{simply covered}\index{Reeb orbit!simply covered}\index{simply covered Reeb orbit}
if it is not the $k$-fold cover of another Reeb orbit
for any $k \ge 2$.
\begin{enumerate}
\renewcommand{\labelenumi}{(\alph{enumi})}
\item 
Given a Reeb orbit $\gamma$, check that the $k$-fold cover of each eigenfunction
of $\mathbf{A}_{\gamma}$ is
an eigenfunction of~$\mathbf{A}_{\gamma^k}$.  Assuming $\tau$ is the pullback under
$S^1 \to S^1 : t \mapsto kt$ of a trivialization of $\gamma^*\xi \to S^1$,
deduce from Theorem~\ref{thm:winding} that a nontrivial eigenfunction 
$f$ of $\mathbf{A}_{\gamma^k}$ is a
$k$-fold cover if and only if $\wind^\tau(f)$ is divisible by~$k$.
\item
Under the same assumptions, show that for any
nontrivial eigenfunction $f$ of $\mathbf{A}_{\gamma^k}$,
$$
\cov(f) := \max \{ m \in \NN\ |\ \text{$f$ is an $m$-fold cover} \}  
= \gcd(k , \wind^\tau(f)).
$$
\item
Show that if $\gamma$ is a Reeb orbit that has even Conley-Zehnder index,
then so does every multiple cover $\gamma^k$ of $\gamma$.
\end{enumerate}
\end{exercise}

\section{Local foliations and the normal Chern number}
\label{sec:puncturedFoliations}

We now address a generalization of the question considered in
\S\ref{sec:foliations}: if $(\widehat{W},\omega)$ is a completed
symplectic cobordism of dimension~$4$, what conditions can guarantee that a
$2$-parameter family of embedded punctured holomorphic curves in
$\widehat{W}$ will form a foliation?  There are several
issues here that do not arise in the closed case: for example, if
$$
u : \dot{\Sigma} = \Sigma \setminus (\Gamma^+ \cup \Gamma^-) \to \widehat{W}
$$
is embedded, it is not guaranteed
in general that all nearby curves $u_\epsilon : \dot{\Sigma} \to \widehat{W}$ 
are also embedded, e.g.~$u$ may have multiple ends asymptotic to
the same Reeb orbit, allowing $u_\epsilon$ to have double points near 
that orbit which
escape to infinity as $u_\epsilon \to u$.  We will address this issue in
the next lecture and ignore it for now, as we must first deal with the
more basic question of how to count zeroes of sections on the normal
bundle $N_u \to \dot{\Sigma}$.  Indeed, let us consider as in
\S\ref{sec:foliations} a $1$-parameter family of 
$J$-holomorphic curves 
$u_\sigma$ near~$u_0 := u$, presented as
$\exp_u \eta_\sigma$ for sections $\eta_\sigma \in \Gamma(N_u)$.
One can then show that
\begin{equation}
\label{eqn:perturbation}
\eta := \left. \frac{\p}{\p \sigma} u_\sigma\right|_{\sigma=0} \in
\Gamma(N_u)
\end{equation}
satisfies a linear Cauchy-Riemann
type equation.  We would like to know when such sections are guaranteed
to be nowhere zero.  Write the positive and negative contact boundary
components of the cobordism $(W,\omega)$ as
$$
\p(W,\omega) = (-M_-,\xi_-) \sqcup (M_+,\xi_+).
$$
Since $u$ is always transverse to the
contact bundles $\xi_\pm$ near infinity, one can identify
$N_u$ with $u^*\xi_\pm$ on the cylindrical ends.
By the similarity principle, zeroes of $\eta$ are
isolated and positive, but the total algebraic count of them is not a
homotopy invariant since they may escape to infinity under homotopies;
in fact, there could in theory be infinitely many.  It turns out however
that on any cylindrical end $Z_\pm \subset \dot{\Sigma}$ near a puncture
$z \in \Gamma^\pm$ where $u$ is
asymptotic to an orbit $\gamma_z$, the relevant linear Cauchy-Riemann 
type equation has the same form as in Theorem~\ref{thm:asympSimilarity}, 
with $\mathbf{A}_{\gamma_z}$ as the relevant asymptotic operator.  The theorem
thus implies that $\eta$ is nowhere zero near each puncture~$z$, and it has a 
well-defined \defin{asymptotic winding}\index{asymptotic winding of a section}
relative to any choice of trivialiation
$\tau$ of $\gamma_z^*\xi_\pm$,
$$
\wind^\tau(\eta;z) \in \ZZ,
$$
defined simply as $\wind^\tau(v_\lambda)$ where 
$v_\lambda \in \Gamma(\gamma_z^*\xi_\pm)$
is the asymptotic eigenfunction appearing in \eqref{eqn:asympSimilarity}.
This implies that $\eta^{-1}(0) \subset \dot{\Sigma}$ is finite, 
so we can define the algebraic count of zeroes
\begin{equation}
\label{eqn:Z}
Z(\eta) := \sum_{z \in \eta^{-1}(0)} \ord(\eta;z) \in \ZZ,
\end{equation}
where $\ord(\eta;z)$ denotes the order of each zero, and the similarity 
principle
guarantees that $Z(\eta) \ge 0$, with equality if and only if $\eta$ is nowhere
zero.  This number is still not homotopy invariant, because zeroes can still escape
to infinity under homotopies.  However, the crucial observation is that we can 
\emph{keep track} of this phenomenon via the asymptotic winding numbers: by
Theorem~\ref{thm:winding}, $\wind^\tau(\eta;z)$ satisfies the \emph{a priori} bounds
\begin{equation}
\label{eqn:windingBounds}
\begin{split}
\wind^\tau(\eta;z) &\le \alpha^\tau_-(\gamma_z), \quad \text{ if $z \in \Gamma^+$,}\\
\wind^\tau(\eta;z) &\ge \alpha^\tau_+(\gamma_z), \quad \text{ if $z \in \Gamma^-$.}
\end{split}
\end{equation}
This motivates the definition of the \defin{asymptotic defect}\index{asymptotic defect}
of $\eta$, as the integer
\begin{equation}
\label{eqn:Zinfty}
Z_\infty(\eta) := \sum_{z \in \Gamma^+} \left[ \alpha^\tau_-(\gamma_z) - \wind^\tau(\eta;z) \right]
+ \sum_{z \in \Gamma^-} \left[ \wind^\tau(\eta;z) - \alpha^\tau_+(\gamma_z) \right],
\end{equation}
where the trivializations $\tau$ of $\gamma_z^*\xi_\pm$ can be chosen arbitrarily since each
difference $\alpha_\mp^\tau(\gamma_z) - \wind^\tau(\eta;z)$ does not depend on this choice.
By construction, any $\eta \in \Gamma(N_u)$ satisfying a Cauchy-Riemann type equation as described
above now has both $Z(\eta) \ge 0$ and $Z_\infty(\eta) \ge 0$, and their sum turns out to
give the closest thing possible to a homotopy invariant count of zeroes:

\begin{prop}
\label{prop:totalZ}
For any section $\eta \in \Gamma(N_u)$ with only finitely many zeroes,
the sum $Z(\eta) + Z_\infty(\eta)$ depends only on the bundle $N_u$ and the asymptotic operators
$\mathbf{A}_z$ for $z \in \Gamma$, not on~$\eta$.  In particular, this gives an upper bound
on the algebraic count of zeroes of any section $\eta$ appearing in \eqref{eqn:perturbation}.
\end{prop}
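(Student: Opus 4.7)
The strategy is to exhibit $Z(\eta) + Z_\infty(\eta)$ as a topological invariant of $N_u$ with boundary data determined by the asymptotic operators, by splitting off a relative first Chern number. For any choice of unitary trivializations $\tau$ of the bundles $\gamma_z^*\xi_\pm$ at each puncture $z \in \Gamma$, these induce trivializations of $N_u$ over the cylindrical ends of $\dot{\Sigma}$ (via the identification $N_u \cong u^*\xi_\pm$ valid near infinity). I would introduce the \emph{relative first Chern number} $c_1^\tau(N_u) \in \ZZ$ as the algebraic count of zeros of any generic smooth section of $N_u$ that is constant and nonzero with respect to $\tau$ on each cylindrical end; a routine check (homotoping two such reference sections through nonvanishing sections on the ends) shows this is well defined.

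The key step is then to establish the formula
\[
Z(\eta) \;=\; c_1^\tau(N_u) \;+\; \sum_{z \in \Gamma^+} \wind^\tau(\eta;z) \;-\; \sum_{z \in \Gamma^-} \wind^\tau(\eta;z).
\]
To prove it, truncate $\dot{\Sigma}$ at $|s| = R$ for $R$ sufficiently large, producing a compact surface $\Sigma_R$ with boundary. By Theorem~\ref{thm:asympSimilarity}, $\eta$ is nowhere zero on $\partial \Sigma_R$ for large $R$, with winding around each boundary circle controlled by the asymptotic eigenfunction, which by definition equals $\wind^\tau(\eta;z)$. Comparing $\eta|_{\Sigma_R}$ to a reference section computing $c_1^\tau(N_u)$ and counting the difference in zeros by a standard argument, one obtains the displayed identity, where the opposite signs at negative punctures reflect the fact that the induced boundary orientation on $\{-R\} \times S^1$ is the reverse of that on $\{+R\} \times S^1$.

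Substituting this identity into the definition \eqref{eqn:Zinfty} of $Z_\infty(\eta)$, the winding terms $\wind^\tau(\eta;z)$ cancel precisely, leaving
\[
Z(\eta) + Z_\infty(\eta) \;=\; c_1^\tau(N_u) \;+\; \sum_{z \in \Gamma^+} \alpha_-^\tau(\gamma_z) \;-\; \sum_{z \in \Gamma^-} \alpha_+^\tau(\gamma_z).
\]
The right-hand side depends only on $N_u$ and the asymptotic operators $\mathbf{A}_{\gamma_z}$ (since each $\alpha_\pm^\tau(\gamma_z)$ is determined by $\mathbf{A}_{\gamma_z}$ once $\tau$ is fixed), and is manifestly independent of $\eta$. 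Independence of the auxiliary choice of $\tau$ follows because altering $\tau$ at a puncture by an integer shift shifts $c_1^\tau(N_u)$ and the corresponding $\alpha_\pm^\tau$-term by exactly compensating amounts.

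The main obstacle I expect is the sign bookkeeping in the truncation argument: one must verify carefully that the winding of $\eta$ around the boundary circle $\{-R\} \times S^1$ computed with its induced boundary orientation equals $-\wind^\tau(\eta;z)$, whereas at a positive puncture the boundary winding equals $+\wind^\tau(\eta;z)$. This requires tracking the relationship between the cylindrical coordinate conventions of \S\ref{sec:punctured}, the orientation of $\dot{\Sigma}$, and the sign of the eigenvalue $\lambda$ appearing in Theorem~\ref{thm:asympSimilarity}. Once these conventions are fixed, the proof reduces to the classical formula relating counts of zeros of a section to the Euler number of a bundle with specified boundary trivialization.
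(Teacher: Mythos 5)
Your argument is correct and is essentially the paper's: both identify $Z(\eta)+Z_\infty(\eta)$ with the $\tau$-independent quantity $c_1^\tau(N_u)+\sum_{z\in\Gamma^+}\alpha_-^\tau(\gamma_z)-\sum_{z\in\Gamma^-}\alpha_+^\tau(\gamma_z)$, which is exactly the normal Chern number $c_N(u)$ via \eqref{eqn:cNrelation}. The only cosmetic difference is that the paper chooses the trivialization $\tau_0$ adapted to $\eta$ (so that $\wind^{\tau_0}(\eta;z)=0$ and $Z(\eta)=c_1^{\tau_0}(N_u)$ immediately), whereas you keep an arbitrary $\tau$ and let the winding terms cancel against those in $Z_\infty(\eta)$ after the truncation/degree computation.
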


This result motivates the interpretation of $Z_\infty(\eta)$ as a count of \emph{virtual} or
``hidden zeroes at infinity.''\index{zeroes of a section!hidden at infinity}\index{hidden at infinity!zeroes of a section}
We will prove Proposition~\ref{prop:totalZ} by defining another quantity that is manifestly
homotopy invariant and happens to equal $Z(\eta) + Z_\infty(\eta)$: this will be a
generalization of the \emph{normal Chern number}, which we defined for closed holomorphic
curves in \S\ref{sec:adjunction}.  

We must first define the notion of a \emph{relative}
first Chern number for complex vector bundles over punctured surfaces.
Suppose first that $E \to \dot{\Sigma}$ is a complex line bundle, and $\tau$ denotes a\index{asymptotic trivializations|(}
choice of trivializations for $E$ over the cylindrical ends of $\dot{\Sigma}$,
i.e.~over small neighborhoods of each puncture.  Such
trivializations always exist since complex vector bundles over $S^1$ are always trivial.
In fact, $E \to \dot{\Sigma}$ is globally trivializable if the set of punctures
is nonempty, because $\dot{\Sigma}$ is then retractable
to its $1$-skeleton---nonetheless, a given set of trivializations $\tau$
over the ends may or may not be globally extendable over the rest of~$\dot{\Sigma}$.
An obstruction to such extensions is given by
the \defin{relative first Chern number}\index{relative first Chern number}
of $E$ with respect to~$\tau$:
we define it as an algebraic count of zeroes,
$$
c_1^\tau(E) := Z(\eta) \in \ZZ
$$
where $Z(\eta)$ is defined as in \eqref{eqn:Z} for a section
$\eta \in \Gamma(E)$ with finitely many zeroes, and we assume that $\eta$
is \emph{constant and nonzero near infinity}
with respect to~$\tau$.  It follows by standard arguments as in \cite{Milnor:differentiable}
that $c_1^\tau(E)$ does not depend on the choice $\eta$: the point is that any two
such choices are homotopic through sections that are nonzero near infinity, so zeroes
stay within a compact subset under the homotopy.  Observe that in the special case
where $\dot{\Sigma} = \Sigma$ is a closed surface without punctures, there is no
choice of asymptotic trivialization $\tau$ to be made and the above definition
matches the usual first Chern number~$c_1(E)$.  When there are punctures, $c_1^\tau(E)$
depends on the choice~$\tau$.

For a higher rank complex vector bundle $E \to \dot{\Sigma}$ with a trivialization $\tau$
near infinity, $c_1^\tau(E)$ can be defined by assuming the following two axioms:
\begin{enumerate}
\item $c_1^{\tau_1 \oplus \tau_2}(E_1 \oplus E_2) = c_1^{\tau_1}(E_1) + c_1^{\tau_2}(E_2)$;
\item $c_1^\tau(E) = c_1^{\tau'}(E')$ whenever $E$ and $E'$ admit a complex bundle isomorphism
identifying $\tau$ with~$\tau'$.
\end{enumerate}
The following exercise shows that this is a reasonable definition.
\begin{exercise}
\label{EX:relativeChern}
Show that for any complex vector bundle $E$ over a punctured Riemann surface 
$\dot{\Sigma}$ of rank~$n$ with an
asymptotic trivialization $\tau$, there exist complex line bundles
$E_1,\ldots,E_n \to \dot{\Sigma}$ with asymptotic trivializations $\tau_1,\ldots,\tau_n$
such that
$$
(E,\tau) \cong (E_1 \oplus \ldots \oplus E_n , \tau_1 \oplus \ldots \oplus \tau_n),
$$
and if $E_1',\ldots,E_n'$ and $\tau_1',\ldots,\tau_n'$ are another $n$-tuple of line
bundles and asymptotic trivializations with this property, then
$$
c_1^{\tau_1}(E_1) + \ldots + c_1^{\tau_2}(E_2) = c_1^{\tau_1'}(E_1') + \ldots
+ c_1^{\tau_n'}(E_n').
$$
\end{exercise}

From now on, let $\tau$ denote a fixed arbitrary choice of trivializations of the\index{asymptotic trivializations|)}
bundles $\gamma^*{\xi_\pm}$ for all Reeb orbits~$\gamma$; several
things in the calculations below will depend on this choice, but the
most important expressions typically will not.  Since the normal
bundle $N_u$ matches $\xi_\pm$ near infinity, $\tau$ determines an asymptotic
trivialization of $N_u$, allowing us to define the relative first Chern
number $c_1^\tau(N_u)$.  More generally, if $u : \dot{\Sigma} \to \widehat{W}$ is
\emph{any} asymptotically cylindrical map, not necessarily immersed, then
it is still immersed and transverse to $\xi_\pm$ near infinity, so $\tau$
also determines an asymptotic trivialization of the rank~$2$ complex
vector bundle $(u^*T\widehat{W},J) \to \dot{\Sigma}$, by observing that the
first factor in the splitting
$$
T(\RR \times M_\pm) = (\RR \oplus \RR R_{\alpha_\pm}) \oplus \xi_\pm
$$
carries a canonical complex trivialization.  We shall denote the resulting
relative first Chern number for $u^*T\widehat{W}$ by $c_1^\tau(u^*T\widehat{W})$.

\begin{exercise}
Show that if $u : (\dot{\Sigma},j) \to (\widehat{W},J)$ is an asymptotically 
cylindrical and immersed $J$-holomorphic curve, with complex normal bundle
$N_u \to \dot{\Sigma}$, then
\begin{equation}
\label{eqn:c1relation}
c_1^\tau(u^*T\widehat{W}) = \chi(\dot{\Sigma}) + c_1^\tau(N_u).
\end{equation}
\end{exercise}

\begin{defn}
\label{defn:normalChern}
For any asymptotically cylindrical $J$-holomorphic curve
$u : (\dot{\Sigma},j) \to (\widehat{W},J)$ 
asymptotic to Reeb orbits $\gamma_z$ in $M_\pm$ at its punctures 
$z \in \Gamma^\pm$, we define the \defin{normal Chern number}\index{normal Chern number!of a punctured holomorphic curve}\index{holomorphic curve!normal Chern number of}
of $u$ to be the integer
$$
c_N(u) := c_1^\tau(u^*T\widehat{W}) - \chi(\dot{\Sigma}) +
\sum_{z \in \Gamma^+} \alpha^\tau_-(\gamma_z) - \sum_{z \in \Gamma^-}
\alpha^\tau_+(\gamma_z).
$$
\end{defn}
\begin{exercise}
Show that the definition of $c_N(u)$ above is independent of the
choice of trivializations~$\tau$.
\end{exercise}

The normal Chern number $c_N(u)$ clearly depends only on the homotopy class
of $u$ as an asymptotically cylindrical map, together with 
the properties of its asymptotic Reeb orbits.  When $u$ is immersed,
we can rewrite it via \eqref{eqn:c1relation} as
\begin{equation}
\label{eqn:cNrelation}
c_N(u) = c_1^\tau(N_u) +
\sum_{z \in \Gamma^+} \alpha^\tau_-(\gamma_z) - \sum_{z \in \Gamma^-}
\alpha^\tau_+(\gamma_z).
\end{equation}
Proposition~\ref{prop:totalZ} then follows immediately from:

\begin{thm}
\label{thm:totalZ}
Suppose $u : (\dot{\Sigma},j) \to (\widehat{W},J)$ is an immersed\index{normal Chern number!as a count of zeroes}
asymptotically cylindrical $J$-holomorphic curve, and
$\eta \in \Gamma(N_u)$ is a smooth section of its normal bundle
with at most finitely many zeroes.  Then
$$
Z(\eta) + Z_\infty(\eta) = c_N(u).
$$
\end{thm}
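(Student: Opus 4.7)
The plan is to reduce the statement to a classical Poincar\'e--Hopf/relative Chern number computation on a compact surface with boundary, using the asymptotic behavior of $\eta$ at the punctures (which is controlled by the similarity principle in the form of Theorem~\ref{thm:asympSimilarity}) only to locate the winding at infinity.

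First, since $\eta$ has only finitely many zeroes and is nonvanishing on a neighborhood of each puncture (by the analogue of Theorem~\ref{thm:asymptotics} applied to $\eta$), I would choose $R>0$ large enough that $\eta$ has no zeroes outside the compact subsurface
$$
\Sigma_R := \dot{\Sigma} \setminus \Bigl(\bigcup_{z \in \Gamma^+}\{(s,t) \in Z_+^z : s > R\} \cup \bigcup_{z \in \Gamma^-}\{(s,t) \in Z_-^z : s < -R\}\Bigr),
$$
so that $Z(\eta) = Z(\eta|_{\Sigma_R})$ and $\eta|_{\partial\Sigma_R}$ is nowhere zero. Then I would also pick a reference section $\eta_0 \in \Gamma(N_u)$ that is constant and nonzero near infinity with respect to the chosen trivializations $\tau$; by definition, $Z(\eta_0) = c_1^\tau(N_u)$, and $\eta_0$ can be assumed to have all its zeroes inside~$\Sigma_R$.

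Next, the key step is the standard fact that for any two sections $\eta_1, \eta_2$ of a complex line bundle $E\to\Sigma_R$ that are nonvanishing on $\partial\Sigma_R$, one has
$$
Z(\eta_1) - Z(\eta_2) = \deg\bigl(\eta_1/\eta_2 : \partial\Sigma_R \to \CC^*\bigr),
$$
where $\partial\Sigma_R$ carries its boundary orientation. Applying this with $\eta_1 = \eta$ and $\eta_2 = \eta_0$, and noting that $\eta_0$ represents a constant element of $\tau$ near infinity, the degree on the right is just the total winding of $\eta$ relative to $\tau$ along $\partial\Sigma_R$. The boundary orientation induced on $\{R\}\times S^1$ near a positive puncture points in the $+\p_t$ direction, while near a negative puncture it points in the $-\p_t$ direction, so the contribution at each puncture $z$ is $+\wind^\tau(\eta;z)$ if $z\in\Gamma^+$ and $-\wind^\tau(\eta;z)$ if $z\in\Gamma^-$. (Here one uses that the asymptotic winding $\wind^\tau(\eta;z)$ is defined via the eigenfunction $f_\lambda(t)$ parametrized by the same $S^1$-coordinate $t$.) This gives
$$
Z(\eta) = c_1^\tau(N_u) + \sum_{z \in \Gamma^+} \wind^\tau(\eta;z) - \sum_{z \in \Gamma^-} \wind^\tau(\eta;z).
$$

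Finally, adding the definition \eqref{eqn:Zinfty} of $Z_\infty(\eta)$ makes the winding terms cancel in pairs, leaving exactly
$$
Z(\eta) + Z_\infty(\eta) = c_1^\tau(N_u) + \sum_{z\in\Gamma^+}\alpha_-^\tau(\gamma_z) - \sum_{z\in\Gamma^-}\alpha_+^\tau(\gamma_z) = c_N(u),
$$
by the formula \eqref{eqn:cNrelation} for $c_N(u)$ in the immersed case. I expect the only real obstacle to be bookkeeping with the orientation conventions of $\partial\Sigma_R$ at positive versus negative punctures, and verifying that the boundary winding of $\eta$ with respect to $\tau$ agrees, with the correct sign, with the asymptotic winding number $\wind^\tau(\eta;z)$ read off from Theorem~\ref{thm:asympSimilarity}.
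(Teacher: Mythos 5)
Your argument is correct and is essentially the paper's proof: in both cases the identity reduces to the observation that $Z(\eta)$ differs from a relative first Chern number of $N_u$ exactly by the asymptotic winding numbers $\wind^\tau(\eta;z)$, which then cancel against the definition \eqref{eqn:Zinfty} of $Z_\infty(\eta)$, leaving the expression \eqref{eqn:cNrelation} for $c_N(u)$. The only difference is presentational: the paper packages your boundary-degree bookkeeping by passing to the $\eta$-adapted asymptotic trivialization $\tau_0$ with $\wind^{\tau_0}(\eta;z)=0$, so that $Z(\eta) = c_1^{\tau_0}(N_u)$ holds immediately and no orientation signs at the punctures need to be tracked, whereas you carry out the equivalent computation in a fixed $\tau$ on the truncated surface $\Sigma_R$ --- and your signs ($+\p_t$ at positive punctures, $-\p_t$ at negative ones) are correct.
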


In the situation of interest, we already know that both
$Z(\eta)$ and $Z_\infty(\eta)$ are nonnegative, so this yields:

\begin{cor}
\label{cor:totalZ}
If $u : (\dot{\Sigma},j) \to (\widehat{W},J)$ is an immersed
asymptotically cylindrical $J$-holomorphic curve and
$\eta \in \Gamma(N_u)$ is a section of its normal bundle
describing nearby $J$-holomorphic curves as in \eqref{eqn:perturbation},
then
$$
Z(\eta) \le c_N(u);
$$
in particular, if $c_N(u) = 0$ then every such section is zero free.
\end{cor}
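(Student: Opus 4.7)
The plan is to combine Theorem~\ref{thm:totalZ} (which gives the identity $Z(\eta)+Z_\infty(\eta)=c_N(u)$) with separate nonnegativity statements for $Z(\eta)$ and $Z_\infty(\eta)$ that come from the analytic behaviour of $\eta$ as a solution of a linear Cauchy--Riemann type equation. Concretely, I would first recall the key analytic input: sections $\eta\in\Gamma(N_u)$ arising from $1$-parameter families $u_\epsilon=\exp_u\eta_\epsilon$ of $J$-holomorphic curves, as described in~\eqref{eqn:perturbation}, are annihilated by a linear Cauchy--Riemann type operator of the form \eqref{eqn:linearCR} when expressed in local holomorphic coordinates on~$\dot\Sigma$ together with a complex trivialisation of $N_u$. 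On the cylindrical ends $Z_\pm\subset\dot\Sigma$, the cylindrical coordinates and the identification of $N_u$ with $u^*\xi_\pm$ near infinity put this equation into the asymptotic form \eqref{eqn:CRasymptotic}, with the associated limit operator being precisely $\mathbf{A}_{\gamma_z}$ for the asymptotic orbit $\gamma_z$ at the puncture~$z$.

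With this setup in place, the first step is to bound $Z(\eta)$ from below. Applying the similarity principle (Theorem~\ref{thm:similarity}) and its one-dimensional refinement (Corollary~\ref{cor:positiveOrder}) to the line bundle $N_u$ shows that any nontrivial $\eta$ has only isolated zeroes, each of strictly positive order. Hence $Z(\eta)\geq 0$, with equality exactly when $\eta$ is nowhere zero on~$\dot\Sigma$.

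The second step is to bound $Z_\infty(\eta)$ from below. At each puncture $z$, the asymptotic version of the similarity principle (Theorem~\ref{thm:asympSimilarity}) applies to $\eta$ viewed in cylindrical coordinates and the trivialisation~$\tau$ of $\gamma_z^*\xi_\pm$: provided $\eta$ does not vanish identically on a neighbourhood of $z$ (which is automatic from unique continuation for the linear CR equation, given that $\eta\not\equiv 0$), the end of $\eta$ is controlled by a nontrivial eigenfunction $v_\lambda$ of $\mathbf{A}_{\gamma_z}$ with $\pm\lambda<0$ at a positive/negative puncture. In particular $\eta$ is nonvanishing near infinity, so $\wind^\tau(\eta;z)$ is well defined, and by Theorem~\ref{thm:winding} combined with the definitions \eqref{eqn:alphas}, it satisfies the \emph{a priori} bounds \eqref{eqn:windingBounds}. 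Summing these bounds over punctures yields $Z_\infty(\eta)\geq 0$.

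Combining these two nonnegativity statements with Theorem~\ref{thm:totalZ} gives
\begin{equation*}
Z(\eta)\leq Z(\eta)+Z_\infty(\eta)=c_N(u),
\end{equation*}
which is the desired inequality. Moreover, if $c_N(u)=0$ then both $Z(\eta)$ and $Z_\infty(\eta)$ must vanish, and $Z(\eta)=0$ together with the similarity principle forces $\eta$ to be nowhere zero on~$\dot\Sigma$. The only subtlety I anticipate is ensuring that $\eta$ does not vanish identically on any cylindrical end before applying Theorem~\ref{thm:asympSimilarity}; this is handled by unique continuation for solutions of Cauchy--Riemann type equations, since $\eta\equiv 0$ on an open set would force $\eta\equiv 0$ everywhere, a case we may exclude as trivial.
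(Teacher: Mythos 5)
Your proposal is correct and follows essentially the same route as the paper: the text preceding the corollary establishes exactly the two nonnegativity facts you cite (positivity of zeroes via the similarity principle, and $Z_\infty(\eta)\ge 0$ via Theorem~\ref{thm:asympSimilarity} and the winding bounds \eqref{eqn:windingBounds}), and the corollary is then immediate from Theorem~\ref{thm:totalZ}. Your extra remark on excluding the case $\eta\equiv 0$ by unique continuation is a detail the paper leaves implicit, but it does not change the argument.
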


\begin{proof}[Proof of Theorem~\ref{thm:totalZ}]
Let $\tau_0$ denote the unique choice of asymptotic trivialization
of $N_u$ such that
$$
\wind^{\tau_0}(\eta ; z) = 0 \quad\text{ for all $z \in \Gamma$}.
$$
Note that if $u$ has multiple ends approaching the same orbit
$\gamma$ in $M_\pm$, this choice may require non-isomorphic trivializations
of $\gamma^*\xi_\pm$ for different ends, but this will pose no difficulty
in the following.  For this choice, we have
$$
Z(\eta) = c_1^{\tau_0}(N_u),
$$
thus using \eqref{eqn:cNrelation} and the definition \eqref{eqn:Zinfty} of
$Z_\infty(\eta)$,
\begin{equation*}
\begin{split}
Z(\eta) + Z_\infty(\eta) &= c_1^{\tau_0}(N_u) + 
\sum_{z \in \Gamma^+} \alpha^{\tau_0}_-(\gamma_z) - \sum_{z \in \Gamma^-}
\alpha^{\tau_0}_+(\gamma_z) \\
&= c_N(u).
\end{split}
\end{equation*}
\end{proof}

Corollary~\ref{cor:totalZ} tells us that in order to find
$2$-dimensional families of embedded $J$-holomorphic curves
that locally form foliations, one should restrict attention to
curves satisfying $c_N(u) = 0$.  To see what kinds of curves satisfy
this condition, recall (see Appendix~\ref{app:punctured}) that
a general $J$-holomorphic curve $u : \dot{\Sigma} \to \widehat{W}$
in a $2n$-dimensional cobordism $\widehat{W}$, with
positive/negative punctures $z \in \Gamma := \Gamma^+ \cup
\Gamma^-$ asymptotic
to nondegenerate Reeb orbits $\gamma_z$, is defined to have \defin{index}\index{index!of a punctured holomorphic curve}\index{holomorphic curve!index of}
$$
\ind(u) = (n-3) \chi(\dot{\Sigma}) + 2 c_1^\tau(u^*T\widehat{W}) +
\sum_{z \in \Gamma^+} \muCZ^\tau(\gamma_z) -
\sum_{z \in \Gamma^-} \muCZ^\tau(\gamma_z).
$$
As usual, all dependence on the trivialization $\tau$ in terms on
the right hand side cancels out in the sum.
This index is the \emph{virtual dimension} of the moduli space of all
curves homotopic to~$u$, and for generic~$J$, the open subset of
simple curves in this space is a smooth manifold of this dimension.
Let us restrict to the case $\dim \widehat{W} = 4$, so $n=2$,
and let $g$ denote the genus of~$\Sigma$, hence
\begin{equation}
\label{eqn:index4}
\ind(u) = - \chi(\dot{\Sigma}) + 2 c_1^\tau(u^*T\widehat{W}) +
\sum_{z \in \Gamma^+} \muCZ^\tau(\gamma_z) -
\sum_{z \in \Gamma^-} \muCZ^\tau(\gamma_z).
\end{equation}
and
\begin{equation}
\label{eqn:chi}
\chi(\dot{\Sigma}) = 2 - 2g - \#\Gamma.
\end{equation}
There is also a natural partition of $\Gamma$ into the
\emph{even} and \emph{odd} punctures
$$
\Gamma = \Gamma\even \cup \Gamma\odd,
$$
defined via the parity of the corresponding orbit as defined in
\S\ref{sec:winding}, or equivalently, the parity of the
Conley-Zehnder index.\footnote{Note that while the
Conley-Zehnder index $\muCZ^\tau(\gamma) \in \ZZ$ generally depends
on a choice of trivialization $\tau$ of the contact bundle
along $\gamma$, different choices of trivialization change the
index by multiples of~$2$, thus the odd/even parity is independent
of this choice.}
Now combining \eqref{eqn:index4}, \eqref{eqn:chi}, 
Definition~\ref{defn:normalChern} and the Conley-Zehnder/winding
relations of Proposition~\ref{prop:CZwinding}, we have
\begin{equation}
\label{eqn:2cN}
\begin{split}
2 c_N(u) &= 2 c_1^\tau(u^*T\widehat{W}) - 2 \chi(\dot{\Sigma}) +
\sum_{z \in \Gamma^+} 2 \alpha^\tau_-(\gamma_z) - \sum_{z \in \Gamma^-} 2 \alpha^\tau_+(\gamma_z) \\
&= 2 c_1^\tau(u^*T\widehat{W}) - \chi(\dot{\Sigma}) - (2 - 2g - \#\Gamma) +
\sum_{z \in \Gamma^+} \left[ \muCZ^\tau(\gamma_z) - p(\gamma_z) \right] \\
& \qquad - \sum_{z \in \Gamma^-} \left[ \muCZ^\tau(\gamma_z) + p(\gamma_z) \right] \\
&= \ind(u) - 2 + 2g + \#\Gamma - \#\Gamma\odd \\
&= \ind(u) - 2 + 2g + \#\Gamma\even.
\end{split}
\end{equation}

Since we are interested in $2$-dimensional families of curves,
assume $\ind(u) = 2$.  Then the right hand side of \eqref{eqn:2cN} is nonnegative,
and vanishes if and only if $g = \#\Gamma\even = 0$, i.e.~$\dot{\Sigma}$ is a 
punctured \emph{sphere} and all asymptotic orbits
have \emph{odd} Conley-Zehnder index.  This leads to the following result.
We state it for now with an extra assumption (condition~(iv) below)
in order to avoid the possibility of extra intersections emerging 
from infinity---this can be relaxed using the technology introduced in the
next lecture, but the weaker result will also suffice for our application
in Lecture~\ref{sec:5}.

\begin{thm}
\label{thm:puncturedFoliations}
Suppose $u : (\dot{\Sigma},j) \to (\widehat{W},J)$ is an embedded asymptotically
cylindrical $J$-holomorphic curve such that:
\begin{enumerate}
\item[(i)] $\ind(u) = 2$;
\item[(ii)] $\dot{\Sigma}$ has genus~$0$;
\item[(iii)] all asymptotic orbits of $u$ have odd Conley-Zehnder index;
\item[(iv)] all the punctures are asymptotic to distinct Reeb orbits, all of them
simply covered.
\end{enumerate}
Then some neighborhood of $u$ in the moduli space $\mM_0(\widehat{W},J)$ is a smooth
$2$-dimensional manifold consisting of pairwise disjoint embedded curves that
foliate a neighborhood of $u(\dot{\Sigma})$ in~$\widehat{W}$.
\qed
\end{thm}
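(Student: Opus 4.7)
The plan is to mimic the argument of Proposition~\ref{prop:foliation} in the punctured setting, with the normal Chern number $c_N(u)$ playing the role of $c_1(N_u)$ in the closed case. First, substituting $\ind(u)=2$, $g=0$, and hypothesis~(iii) (which forces $\#\Gamma\even = 0$) into identity~\eqref{eqn:2cN} immediately yields $c_N(u) = 0$. This is the punctured analogue of the ``trivial normal bundle'' hypothesis that drove the proof of Proposition~\ref{prop:foliation}.

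Next I would show that the moduli space is a smooth $2$-manifold near $u$. Since $u$ is embedded and $\dim \widehat W = 4$, the linearized Cauchy--Riemann operator splits, up to a compact perturbation, into a tangential piece and a normal Cauchy--Riemann operator $D_u^N$ acting on sections of $N_u$; smoothness of $\mM_0(\widehat W,J)$ of dimension $\ind(u) = 2$ near $u$ follows from surjectivity of $D_u^N$. By Theorem~\ref{thm:totalZ}, any $\eta \in \ker D_u^N$ satisfies $Z(\eta) + Z_\infty(\eta) = c_N(u) = 0$, so $\eta$ is nowhere zero and realizes the extremal asymptotic winding at each puncture. Combining this with the analogous zero count for the formal adjoint of $D_u^N$ (a real-linear Cauchy--Riemann type operator on the twisted line bundle $\overline{T^{0,1}\dot\Sigma}\otimes N_u$), and using that $p(\gamma_z) = 1$ at every puncture by oddness of the Conley--Zehnder indices, one extracts a dimension bound forcing $\coker D_u^N = 0$. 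This is the automatic-transversality mechanism available only in dimension~$4$.

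With smoothness in hand, parametrize nearby curves as $u_\tau = \exp_u \eta_\tau$ with $\tau$ in a $2$-disk in $T_u\mM \cong \ker D_u^N$ and $\eta_0 \equiv 0$. By Corollary~\ref{cor:totalZ} and the previous step, every nonzero $\partial_\tau \eta_\tau|_{\tau = 0}$ is a nowhere-vanishing section of $N_u$, so the map
\[
\Psi : \DD^2 \times \dot\Sigma \to \widehat W, \qquad (\tau,z) \mapsto \exp_{u(z)} \eta_\tau(z),
\]
has full-rank derivative along $\{0\} \times \dot\Sigma$ and is therefore a local diffeomorphism at each such point. Hypothesis~(iv)---that the asymptotic orbits of $u$ are pairwise distinct and simply covered---then prevents intersections between any two nearby curves from escaping to infinity along a common Reeb orbit (cf.~Figure~\ref{fig:escape}), via Theorem~\ref{thm:asymptoticsRelative} applied end-by-end. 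Assembling these local pieces shows that the nearby curves are pairwise disjoint embeddings foliating a neighborhood of $u(\dot\Sigma)$.

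I expect the main obstacle to be the automatic-transversality step in the second paragraph, because identity~\eqref{eqn:2cN} places us exactly at the boundary of Wendl's criterion: surjectivity of $D_u^N$ is not an open condition here, and must be extracted from the sharp winding estimates~\eqref{eqn:windingBounds} combined with the odd-parity hypothesis, with no slack in the dimension count. A secondary subtlety is that the normal-Chern-number argument alone does not preclude intersections emerging from infinity; hypothesis~(iv) is indispensable precisely for this, and it is what allows the local diffeomorphism property to assemble into an honest foliation along the cylindrical ends rather than only over the compact body of $\dot\Sigma$.
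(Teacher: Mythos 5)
Your argument is correct and follows essentially the same route as the paper: $c_N(u)=0$ from \eqref{eqn:2cN}, four-dimensional automatic transversality for smoothness of the moduli space, Corollary~\ref{cor:totalZ} for nowhere-vanishing normal sections giving the local foliation as in Proposition~\ref{prop:foliation}, and hypothesis~(iv) together with Theorem~\ref{thm:asymptoticsRelative} to rule out intersections escaping to infinity. One small correction to your closing remark: since $\ind(u)=2>0=c_N(u)$, the hypothesis of Theorem~\ref{thm:automaticPunctured} holds with room to spare, so the transversality step is a direct application of that result rather than the borderline case you anticipate.
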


\chapter{Intersection theory for punctured holomorphic curves}
\label{sec:4}

\minitoc

\vspace{12pt}

We\CUP{This material will be published by Cambridge University
Press as \textsl{Contact 3-Manifolds, Holomorphic Curves and Intersection Theory}
by Chris Wendl. This pre-publication version is
free to view and download for personal use only. 
Not for re-distribution, re-sale or use in derivative works. \copyright Chris Wendl, 2019.}
are now ready to explain the intersection theory introduced by Siefring
\cite{Siefring:intersection} for asymptotically cylindrical holomorphic
curves in $4$-dimensional completed symplectic cobordisms.  The theory follows a pattern
that we saw in our discussion of the normal Chern number in \S\ref{sec:puncturedFoliations}: 
the obvious geometrically
meaningful quantities such as $u \cdot v$ (counting intersections
between $u$ and $v$) and $\delta(u)$ (counting double points and
critical points of~$u$) can
be defined, and are nonnegative, but they are not homotopy invariant
since intersections may sometimes escape to infinity.  In each case, however,
one can add a nonnegative count of ``hidden intersections at infinity,''
defined in terms of asymptotic winding numbers, so that the sum is homotopy invariant.

\section{Statement of the main results}
\label{sec:statement}

Throughout this lecture, we assume $(W,\omega)$ is a \emph{four-dimensional} 
symplectic cobordism with $\p(W,\omega) = (-M_-,\xi_- = \ker\alpha_-) 
\sqcup (M_+,\xi_+ = \ker\alpha_+)$, $(\widehat{W},\omega)$ is its completion
and $J \in \jJ(\omega,\alpha_+,\alpha_-)$.
For two asymptotically
cylindrical maps $u : \dot{\Sigma} \to \widehat{W}$ and
$v : \dot{\Sigma}' \to \widehat{W}$ with at most finitely many intersections,
we define the algebraic intersection number
$$
u \cdot v := \sum_{u(z) = v(\zeta)} \inter(u,z \,;\, v,\zeta) \in \ZZ,
$$
and similarly, if $u$ has at most finitely many double points and
critical points, then it has a well-defined \emph{singularity index}
$$
\delta(u) := \frac{1}{2} \sum_{u(z) = u(\zeta),\, z \ne \zeta} 
\inter(u , z \,;\, u, \zeta) + \sum_{du(z)=0} \delta(u,z) \in \ZZ,
$$
i.e.~the sum of the local intersection indices for all double points with the
local singularity index at each critical point (cf.~Lemma~\ref{lemma:critInj}).
If $u$ and $v$ are
both asymptotically cylindrical $J$-holomorphic curves, then we saw
in Corollaries~\ref{cor:finite} and~\ref{cor:branch} that $u \cdot v$ is
well defined if $u$ and $v$ have non-identical images, and
$\delta(u)$ is also well defined if $u$ is simple.  Moreover,
the usual results on positivity of intersections
(Appendix~\ref{app:positivity}) then imply
$$
u \cdot v \ge 0,
$$
with equality if and only if $u$ and $v$ are disjoint, and
$$
\delta(u) \ge 0,
$$
with equality if and only if $u$ is embedded.  So far, all of this is the
same as in the closed case, but the crucial difference here is that neither
$u \cdot v$ nor $\delta(u)$ is invariant under homotopies, which makes
them harder to control in general.  For example, there is no reasonable
definition of ``$u \cdot u$'' since trying to count intersections of $u$
with a small perturbation of itself (as one does in the closed case)
may give a number that depends on the perturbation.
The situation is saved by the
following results from \cite{Siefring:intersection}.

\begin{thm}
\label{thm:star}
For any two asymptotically cylindrical maps $u : \dot{\Sigma} \to \widehat{W}$\index{intersection number!of asymptotically cylindrical maps ($*$-pairing)}
and $v : \dot{\Sigma}' \to \widehat{W}$ with
nondegenerate\footnote{Theorems~\ref{thm:star} 
and~\ref{thm:adjunctionPunctured} both also hold under the more
general assumption that all asymptotic orbits belong to
Morse-Bott families, as long as one imposes the restriction that
asymptotic orbits of curves are not allowed to change under
homotopies. (This assumption is vacuous in the nondegenerate case
since nondegenerate orbits are isolated.)  One can also generalize
the theory further to allow homotopies with moving asymptotic orbits,
in which case additional nonnegative counts of ``hidden'' intersections
must be introduced; see \cite{Wendl:automatic}*{\S 4.1} and \cite{SiefringWendl}.}
asymptotic orbits, there exists a pairing
$$
u * v \in \ZZ
$$
with the following properties:
\begin{enumerate}
\item $u * v$ depends only on the homotopy classes of $u$ and $v$ as
asymptotically cylindrical maps;
\item If $u : (\dot{\Sigma},j) \to (\widehat{W},J)$ and $v : (\dot{\Sigma},j') \to (\widehat{W},J)$ 
are $J$-holomorphic curves with non-identical images, then
$$
u * v = u \cdot v + \inter_\infty(u,v),
$$
where $\inter_\infty(u,v)$ is a nonnegative integer interpreted as the
count of ``hidden intersections at infinity.''\index{hidden at infinity!intersections}\index{intersections!hidden at infinity}
Moreover, there exists a perturbation 
$J_\epsilon \in \jJ(\omega,\alpha_+,\alpha_-)$ which is
$C^\infty$-close to $J$, and a pair of asymptotically cylindrical
$J_\epsilon$-holomorphic curves
$u_\epsilon : (\dot{\Sigma},j_\epsilon) \to (\widehat{W},J_\epsilon)$ and
$v_\epsilon : (\dot{\Sigma},j_\epsilon') \to (\widehat{W},J_\epsilon)$
close to $u$ and $v$ in their respective moduli spaces, such that
$$
u_\epsilon \cdot v_\epsilon = u * v.
$$
\end{enumerate}
\end{thm}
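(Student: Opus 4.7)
The plan is to define $u * v$ as a homotopy invariant of the pair of asymptotically cylindrical maps, and then to verify that when $u$ and $v$ are $J$-holomorphic with non-identical images it splits as the actual intersection count $u\cdot v$ plus an asymptotic defect $\inter_\infty(u,v)\ge 0$. For the topological definition I would exploit the circle compactification of Lecture~2: each asymptotically cylindrical map extends to a continuous map $\bar u : \overline{\Sigma} \to \overline{W}$ sending each boundary circle to an asymptotic Reeb orbit in $\{\pm\infty\}\times M_\pm$. Working in this compact setting, perturb $u$ and $v$ smoothly (not $J$-holomorphically) within their homotopy classes to representatives $u',v'$ whose ends are \emph{standardised}: on each cylindrical end one asks that $u'(s,t) = \exp_{(Ts,\gamma(t))}\eta(s,t)$ with $\eta(s,\cdot)$ converging exponentially to a nonzero eigensection of $\mathbf{A}_\gamma$ whose winding attains the extremal bound $\alpha^\tau_\mp(\gamma)$, and similarly for $v'$. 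With this normalisation the ends of $u'$ and $v'$ asymptotic to covers of the same simple orbit are forced apart on a neighbourhood of infinity, so $u' \cdot v'$ is a finite integer; a bordism argument using Exercise~\ref{EX:interWithBoondary} shows it depends neither on the choice of representative nor on the chosen eigensections, yielding the homotopy invariant $u * v$.

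Next I would define $\inter_\infty(u,v)$ directly via Theorem~\ref{thm:asymptoticsRelative}. For each pair consisting of a puncture $z$ of $u$ and a puncture $\zeta$ of $v$ of matching sign, whose asymptotic orbits $\gamma_z,\gamma_\zeta$ cover a common simply covered Reeb orbit $\gamma_0$, pass to the common cover of $\gamma_0$ compatible with both $\gamma_z$ and $\gamma_\zeta$; the lifts of $u$ and $v$ near these punctures then share an asymptotic orbit, and Theorem~\ref{thm:asymptoticsRelative} produces either identical images (excluded by the hypothesis together with unique continuation) or a nontrivial eigenfunction $f$ of the relevant asymptotic operator controlling their difference. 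By Theorem~\ref{thm:winding}, $\wind^\tau(f)$ is bounded by the extremal winding $\alpha^\tau_\mp$, and the contribution of the pair to $\inter_\infty(u,v)$ is a suitable combinatorial multiple of the nonnegative winding defect between $\wind^\tau(f)$ and this extremal bound. The crux of the identity $u*v=u\cdot v+\inter_\infty(u,v)$ is then a local counting argument modelled on Theorem~\ref{thm:totalZ}: passing from the actual $J$-holomorphic ends to the standardised ones raises each asymptotic winding precisely from its actual value to the extremal one, and the similarity principle guarantees that exactly this many new transverse intersections must emerge into the compact region, all positive by local positivity of intersections.

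The homotopy invariance of $u * v$ is then automatic from the topological definition, since a smooth homotopy of asymptotically cylindrical maps extends to a homotopy of standardised representatives in $\overline{W}$ whose intersection number is a bordism invariant. The remaining content---the ``moreover'' clause asserting that the hidden intersections can be realised as honest geometric intersections by a small almost complex perturbation---is where I expect the main analytical obstacle. My approach would be to deform $J$ to $J_\epsilon$ by a compactly supported perturbation concentrated on the cylindrical ends along the troublesome orbits, and invoke the implicit function theorem for the parametric $\dbar_{J_\epsilon}$ equation to obtain $J_\epsilon$-holomorphic curves $u_\epsilon,v_\epsilon$ in the nearby moduli spaces. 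The Fredholm transversality for the linearised Cauchy--Riemann operators on the normal bundles is available generically because the asymptotic operators have trivial kernel under the nondegeneracy hypothesis. What must be checked is that the perturbation can be chosen so that for each matched end-pair the relative asymptotic eigenfunction controlling $u_\epsilon$ and $v_\epsilon$ attains the extremal winding: by Theorem~\ref{thm:asymptoticsRelative} this drives $\inter_\infty(u_\epsilon,v_\epsilon)$ to zero, while the corresponding winding defect is converted, via the zero count of a section whose asymptotic winding has just jumped by precisely the defect, into exactly $\inter_\infty(u,v)$ new transverse intersections emerging into the compact region, all of positive sign. The delicate step is to realise all end-pairs simultaneously without accidental cancellations due to bubbling or reverse escape to infinity, which requires controlling the size of the perturbation relative to the exponential decay rates of the asymptotic eigenfunctions.
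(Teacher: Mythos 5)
There is a genuine gap at the very first step: your definition of $u * v$ via ``standardised'' representatives is not well defined, because normalising each end \emph{individually} to approach its orbit along an extremal-winding eigensection does not control the \emph{relative} asymptotics of the pair, which is what actually governs intersections near infinity. Concretely, if an end of $u'$ and an end of $v'$ approach the same orbit along the same (or proportional) extremal eigensection, their difference is of lower order and completely uncontrolled: taking $h_{u'}(s,t)=e^{\lambda s}f(t)$ and $h_{v'}(s,t)=e^{\lambda s}\bigl(f(t)+\epsilon g(s,t)\bigr)$ with $g$ an arbitrary faster-decaying field, both ends are ``standardised,'' yet the intersection count in the end region equals the zero count of $g$, which can be anything. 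For the same reason the bordism argument of Exercise~\ref{EX:interWithBoondary} does not apply: during a homotopy of standardised representatives the two maps need not stay disjoint near infinity (precisely the escape-to-infinity phenomenon of Figure~\ref{fig:escape}), so neither independence of choices nor homotopy invariance follows. You also conflate the winding of each curve's own asymptotic eigenfunction with the relative winding of the pair; by Theorem~\ref{thm:asymptoticsRelative}, when the two asymptotic eigenfunctions coincide the relative decay rate is strictly faster and the relative winding can exceed both individual extremal values, so ``standardising each end to extremal winding'' does not force the hidden intersection count to vanish, and your bookkeeping for ends asymptotic to different covers $\gamma^{k},\gamma^{m}$ (common covers, division by $km$) is left unaddressed.

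The paper's construction avoids exactly this trap: one perturbs only \emph{one} of the maps, pushing $v$ off near infinity along sections of winding zero relative to a fixed trivialization $\tau$. This makes the relative winding of $v^\tau$ about $u$ definite regardless of $u$'s asymptotic behaviour, so the relative intersection number $u \dotrel_\tau v$ is finite and manifestly homotopy invariant (it depends only on the homotopy classes and $\tau$); one then subtracts the universal, $\tau$-dependent theoretical bounds $\Omega^\tau_\pm$ of \eqref{eqn:bigOmega}, whose dependence on $\tau$ cancels, to define $u*v$. Nonnegativity of $\inter_\infty(u,v)=u*v-u\cdot v$ for holomorphic curves then follows from the winding bounds of Theorems~\ref{thm:asymptoticsRelative} and~\ref{thm:winding} applied to the \emph{relative} eigenfunctions. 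Your treatment of the final ``moreover'' clause (realising $u*v$ geometrically after perturbing $J$) is a reasonable sketch and is on the same footing as the paper, which explicitly labels that statement a folk theorem proved via Fredholm theory with exponential weights; but the core of the theorem --- a well-defined, homotopy-invariant pairing satisfying $u*v=u\cdot v+\inter_\infty(u,v)$ with $\inter_\infty\ge 0$ --- is not established by your argument as written.
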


The last statement in the above theorem, involving the perturbations
$u_\epsilon$ and $v_\epsilon$, helps us interpret $u * v$ as the
count of intersections between \emph{generic} curves homotopic
to $u$ and~$v$.  That particular detail is not proved in
\cite{Siefring:intersection}, nor anywhere else in the literature---it has
the status of a ``folk theorem,'' meaning that at least a few
experts would be able to prove it as an exercise, but have not written down the
details in any public forum.  The proof involves Fredholm theory on
exponentially weighted Sobolev spaces, as explained e.g.~in
\cites{HWZ:props3,Wendl:automatic}, and we will not prove it here either,
but have included the statement mainly for the sake of intuition.
It is not needed for any of the most important applications
of Theorem~\ref{thm:star}, such as:

\begin{cor}
\label{cor:disjoint}
If $u$ and $v$ are $J$-holomorphic curves 
satisfying $u * v = 0$, then any two $J$-holomorphic curves that have
non-identical images and are homotopic to $u$ and $v$ respectively are
disjoint.
\end{cor}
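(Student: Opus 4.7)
The plan is to deduce this as an essentially immediate consequence of Theorem~\ref{thm:star} combined with positivity of intersections. Let $u'$ and $v'$ be $J$-holomorphic curves with non-identical images that are homotopic (as asymptotically cylindrical maps) to $u$ and $v$ respectively. First I would invoke the homotopy invariance in part~(1) of Theorem~\ref{thm:star} to conclude
\[
u' * v' = u * v = 0.
\]

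Next, since $u'$ and $v'$ are $J$-holomorphic with non-identical images, I can apply part~(2) of Theorem~\ref{thm:star} to write
\[
0 = u' * v' = u' \cdot v' + \iota_\infty(u', v').
\]
Both summands on the right are nonnegative (the first by positivity of intersections, which remains valid in the punctured setting via Theorem~\ref{thm:positivity} together with the finiteness of intersections guaranteed by Corollary~\ref{cor:finite}; the second by the nonnegativity statement built into Theorem~\ref{thm:star}). It follows that $u' \cdot v' = 0$.

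Finally, I would conclude by appealing once more to positivity of intersections: since $u'$ and $v'$ have non-identical images, each geometric intersection contributes strictly positively to $u' \cdot v'$, so the vanishing of this count forces the images of $u'$ and $v'$ to be disjoint. No step here requires any substantive new work beyond Theorem~\ref{thm:star} itself; the hard content is entirely packaged into the construction of the pairing $u * v$ and the nonnegativity of $\iota_\infty$, which is precisely the main obstacle (and is deferred to the detailed development in the next section). In this sense the corollary is just the formal payoff of the theorem.
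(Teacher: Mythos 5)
Your argument is correct and is exactly the intended one: the paper treats this corollary as an immediate consequence of Theorem~\ref{thm:star}, using homotopy invariance of $*$, the decomposition $u' * v' = u' \cdot v' + \inter_\infty(u',v')$ with both terms nonnegative, and positivity of intersections to pass from $u' \cdot v' = 0$ to disjointness. Nothing in your write-up deviates from that route, so there is nothing further to add.
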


In order to write down the punctured version of the adjunction formula,
we must introduce a little bit more notation.  Suppose
$\gamma : S^1 \to M$ is a Reeb orbit in a contact $3$-manifold 
$(M,\xi=\ker\alpha)$, and $k \in \NN$.  This gives rise to the
$k$-fold covered Reeb orbit
$$
\gamma^k : S^1 \to M : t \mapsto \gamma(kt),
$$
and we define the \defin{covering multiplicity}\index{Reeb orbit!covering multiplicity of}\index{covering multiplicity!of a Reeb orbit}
$\cov(\gamma)$ of a 
general Reeb orbit $\gamma$ as the largest $k \in \NN$ such that
$\gamma = \gamma_0^k$ for some other Reeb orbit~$\gamma_0$.
Similarly, if $f \in \Gamma(\gamma^*\xi)$ is an eigenfunction of
$\mathbf{A}_\gamma$ with eigenvalue $\lambda \in \RR$, then the
$k$-fold cover
$$
f^k \in \Gamma((\gamma^k)^*\xi), \qquad f^k(t) := f(kt)
$$
is an eigenfunction of $\mathbf{A}_{\gamma_k}$ with eigenvalue
$k\lambda$, and for any Reeb orbit $\gamma$ and nontrivial eigenfunction
$f$ of $\mathbf{A}_{\gamma}$, we define $\cov(f) \in \NN$ to be the largest
integer~$k$ such that $f$ is a $k$-fold cover of an eigenfunction for
a Reeb orbit covered by~$\gamma$.  Observe that, in general,
$1 \le \cov(f) \le \cov(\gamma)$, and $\cov(f)$ always divides $\cov(\gamma)$.
Note also that any trivialization $\tau$ of $\gamma^*\xi$ naturally
determines a trivialization of $(\gamma^k)^*\xi$, which we shall denote
by~$\tau^k$.

\begin{remark}
\label{remark:covInfty}
Exercise~\ref{EX:gcd} implies that if
$\gamma : S^1 \to M$ is a simply covered (i.e.~embedded) Reeb orbit
in a contact $3$-manifold $(M,\xi=\ker\alpha)$ and $\tau$ is a trivialization
of $\gamma^*\xi$, then for any
$k \in \NN$ and a nontrivial eigenfunction $f$ of $\mathbf{A}_{\gamma^k}$
with $\wind^{\tau^k}(f) > 0$,
$\cov(f)$ depends only on $k$ and $\wind^{\tau^k}(f)$, in fact:
$$
\cov(f) = \gcd\big( k , \wind^{\tau^k}(f)\big).
$$
\end{remark}

We now associate to any Reeb orbit $\gamma$ in a contact $3$-manifold
$(M,\xi=\ker\alpha)$ the \defin{spectral covering numbers}\index{spectral covering number}\index{Reeb orbit!spectral covering number of}\index{covering multiplicity!of an asymptotic eigenfunction}
$$
\bar{\sigma}_\pm(\gamma) := \cov(f_\pm) \in \NN,
$$
where $f_\pm \in \Gamma(\gamma^*\xi)$ is any choice of eigenfunction
of $\mathbf{A}_\gamma$ with $\wind^\tau(f_\pm) = \alpha^\tau_\pm(\gamma)$.
Remark~\ref{remark:covInfty} implies that $\bar{\sigma}_\pm(\gamma)$
does not depend on this choice.  Finally, if $u : \dot{\Sigma}
\to \widehat{W}$ is an asymptotically cylindrical map with
punctures $z \in \Gamma^\pm$ asymptotic to orbits
$\gamma_z$ in $M_\pm$, we define the \defin{total spectral covering
number}\index{spectral covering number}\index{holomorphic curve!spectral covering number of}
of $u$ by
$$
\bar{\sigma}(u) := \sum_{z \in \Gamma^+} \bar{\sigma}_-(\gamma_z) +
\sum_{z \in \Gamma^-} \bar{\sigma}_+(\gamma_z).
$$
Observe that $\bar{\sigma}(u)$ really does not depend on the map~$u$,
but only on its sets of positive and negative asymptotic orbits.
It is a positive integer in general, and we have
$$
\bar{\sigma}(u) - \#\Gamma \ge 0,
$$
with equality if and only if all of the so-called ``extremal'' eigenfunctions
at the asymptotic orbits of $u$ are simply covered.  This is true in
particular whenever all asymptotic orbits of $u$ are simply covered.

The next statement is the punctured generalization of the adjunction\index{adjunction formula!for punctured holomorphic curves|(}
formula (Theorem~\ref{thm:adjunction}): it relates $u * u$ to $\delta(u)$, the\index{self-intersection number!of asymptotically cylindrical maps}
spectral covering number $\bar{\sigma}(u)$, and 
our generalization of the normal Chern number $c_N(u)$
from \S\ref{sec:puncturedFoliations} (see Definition~\ref{defn:normalChern}).

\begin{thm}
\label{thm:adjunctionPunctured}
If $u : (\dot{\Sigma},j) \to (\widehat{W},J)$ is an asymptotically cylindrical
and simple $J$-holomorphic curve with punctures $\Gamma \subset \Sigma$, 
then there exists an integer
$$
\delta_\infty(u) \ge 0,
$$
interpreted as the count of ``hidden double points at infinity,''\index{hidden at infinity!double points}\index{double points!hidden at infinity}
such that
\begin{equation}
\label{eqn:adjunctionPunctured}
u * u = 2\left[ \delta(u) + \delta_\infty(u) \right] + c_N(u) + 
\left[ \bar{\sigma}(u) - \#\Gamma \right].
\end{equation}
In particular, $\delta(u) + \delta_\infty(u)$ depends only
on the homotopy class of $u$ as an asymptotically cylindrical map.
Moreover, there exists a perturbation 
$J_\epsilon \in \jJ(\omega,\alpha_+,\alpha_-)$ which is
$C^\infty$-close to $J$, and a $J_\epsilon$-holomorphic curve
$u_\epsilon : (\dot{\Sigma},j_\epsilon) \to (\widehat{W},J_\epsilon)$
close to $u$ in the moduli space, such that
$\delta_\infty(u_\epsilon) = 0.$
\end{thm}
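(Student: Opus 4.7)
The plan is to generalize the proof of the closed adjunction formula (Theorem~\ref{thm:adjunction}) to the punctured setting, using the sharp asymptotic analysis of \S\ref{sec:asymptotics} to control intersections that can escape to infinity.

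First, I would reduce to the case where $u$ is immersed. By Lemma~\ref{lemma:critInj}, near each critical point $z$ of $u$ one can perturb $u$ on an arbitrarily small compact set to an immersion $\tilde{u}$ agreeing with $u$ outside that set and containing exactly $\delta(u,z)$ new ordered double points near~$z$. Since the perturbation is compactly supported, $\tilde{u}$ is homotopic to $u$ as an asymptotically cylindrical map, so the quantities $u * u$, $c_N(u)$, $\bar{\sigma}(u)$ and $\#\Gamma$ are unchanged, and the definition \eqref{eqn:delta} gives $\delta(\tilde{u}) = \delta(u)$. Thus I assume from here on that $u$ is immersed, so that $\delta(u)$ is simply half the ordered double-point count.

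Next, using the self-intersection analogue of Theorem~\ref{thm:star} (a folklore extension obtained by taking two distinct generic perturbations within the same moduli space), I would choose a $C^\infty$-small perturbation $J_\epsilon$ and a pair of distinct $J_\epsilon$-holomorphic curves $u_\epsilon, u'_\epsilon$ close to $u$, in general position so that $u_\epsilon \cdot u'_\epsilon = u * u$. Writing $u'_\epsilon = \exp_{u_\epsilon} \eta$ for a section $\eta$ of the normal bundle $N_{u_\epsilon}$ that satisfies a linear Cauchy--Riemann type equation, each ordered double point of $u_\epsilon$ and each zero of $\eta$ contributes exactly one transverse intersection of $u_\epsilon$ with $u'_\epsilon$, each with positive local index. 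Therefore
\[
u * u \;=\; u_\epsilon \cdot u'_\epsilon \;=\; 2\delta(u_\epsilon) + Z(\eta),
\]
and combining this with $Z(\eta) = c_N(u_\epsilon) - Z_\infty(\eta) = c_N(u) - Z_\infty(\eta)$ from Theorem~\ref{thm:totalZ} yields
\[
u * u - 2\delta(u) - c_N(u) \;=\; 2\bigl[\delta(u_\epsilon) - \delta(u)\bigr] - Z_\infty(\eta).
\]
The formula \eqref{eqn:adjunctionPunctured} is thus equivalent to identifying the right-hand side with $2\delta_\infty(u) + \bar{\sigma}(u) - \#\Gamma$ for a nonnegative integer $\delta_\infty(u)$ depending only on the homotopy class of~$u$.

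The main work lies in this last identification, which I would carry out puncture by puncture. Fix $z \in \Gamma^\pm$ with $u$ asymptotic to $\gamma_z$ of covering multiplicity $k = \cov(\gamma_z)$. By Theorem~\ref{thm:asymptoticsRelative}, the rotational reparametrizations $u_\epsilon(s, t + j/k)$ for $j = 1, \ldots, k-1$ each approach $\gamma_z$ along eigenfunctions explicitly determined by the asymptotic eigenfunction of $u_\epsilon$ at~$z$, and the relative decay rates dictate whether each shifted end intersects $u_\epsilon$ near infinity; similarly for any two distinct punctures $z, \zeta \in \Gamma$ whose asymptotic orbits cover a common embedded orbit. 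The winding calculus of Theorem~\ref{thm:winding}, Proposition~\ref{prop:CZwinding} and Exercise~\ref{EX:gcd} then converts these intersection counts into an algebraic formula that splits into two nonnegative pieces: a symmetric ``pair'' count, contributing $2\delta_\infty(u)$, and a ``self'' contribution at each puncture arising from the covering structure of the extremal eigenfunction, totalling $\bar{\sigma}(u) - \#\Gamma$. The principal obstacle is verifying that this split is canonical and independent of the choice of $J_\epsilon$, so that $\delta_\infty(u)$ indeed deserves to be called a count of ``hidden double points at infinity.'' Once this is done, homotopy invariance of $\delta(u) + \delta_\infty(u)$ follows because all other terms in \eqref{eqn:adjunctionPunctured} are manifestly invariant under homotopies, and the final statement $\delta_\infty(u_\epsilon) = 0$ is immediate: generically one can arrange that $u_\epsilon$ realizes the extremal asymptotic eigenfunction at every puncture, so Theorem~\ref{thm:asymptoticsRelative} forces any two candidate ends to diverge without further hidden coincidences.
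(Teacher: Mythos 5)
Your reduction to the immersed case is fine, but the computational core of your argument contains a genuine gap: the identity $u_\epsilon \cdot u'_\epsilon = 2\delta(u_\epsilon) + Z(\eta)$ is false in the punctured setting. Writing $u'_\epsilon = \exp_{u_\epsilon}\eta$ only accounts for intersections where the two domain points are close to each other (zeroes of $\eta$) or close to a double point of $u_\epsilon$. It misses exactly the intersections the theorem is about: those occurring near infinity between two \emph{distinct} ends asymptotic to covers of the same orbit, and between distinct angular branches of a \emph{single} end asymptotic to a multiply covered orbit, i.e.\ intersections of $u_\epsilon(s,t)$ with $u'_\epsilon(s,t+j/k)$ for $j\ne 0$. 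These are neither zeroes of $\eta$ nor near double points (a simple curve is embedded near infinity), and they are precisely the source of the terms $2\delta_\infty(u)$ and $\bar{\sigma}(u)-\#\Gamma$. Relatedly, your whole scheme rests on a ``self-intersection analogue of Theorem~\ref{thm:star}'' producing generic $J_\epsilon$-holomorphic curves with $u_\epsilon\cdot u'_\epsilon = u*u$; the paper explicitly flags that statement as an unproved folk theorem (requiring Fredholm theory with exponential weights) and deliberately does \emph{not} use it. Its proof instead computes $u\dotrel_\tau u$ directly for a $\tau$-shifted perturbation of $u$ itself, obtaining the relative adjunction formula \eqref{eqn:relAdjunction} in which the asymptotic contribution $\inter^\tau_\infty(u)$ appears explicitly; $\delta_\infty(u)$ is then \emph{defined} (Definition~\ref{defn:deltaInfty}) as the excess of $\inter^\tau_\infty(u)$ over its theoretical lower bound, so nonnegativity and $\tau$-independence come from the asymptotic winding analysis (Theorem~\ref{thm:sing}), not from a choice of generic perturbation, and the final formula follows by the bookkeeping in Exercise~\ref{EX:acalculation}.

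The second gap is in the step you defer as ``the main work'': you assert that the puncture-by-puncture winding calculus splits into a pair count $2\delta_\infty(u)$ and a self contribution totalling $\bar{\sigma}(u)-\#\Gamma$, but you give no mechanism for why the \emph{covering multiplicity of the extremal eigenfunction} enters. The crude bound at a puncture with multiplicity $k$ is $\inter^\tau_\infty(u,z)\ge \mp(k-1)\alpha^\tau_\mp(\gamma^k)$, which would yield $0$ in place of $\bar{\sigma}_\mp(\gamma^k)-1$; the correct bound requires the refinement \eqref{eqn:divisible}, proved by the averaging argument showing that when the branch shift $j$ is a multiple of $\ell = k/\cov(f)$, the relative asymptotic eigenfunction cannot have extremal winding (else it would be $m$-fold symmetric and sum to zero, a contradiction). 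Without this, the term $\bar{\sigma}(u)-\#\Gamma$ cannot be produced, and your $\delta_\infty$ is not pinned down independently of $J_\epsilon$ (an issue you name but do not resolve). Finally, the closing claim that $\delta_\infty(u_\epsilon)=0$ is ``immediate'' is not: by Theorem~\ref{thm:sing} one needs extremal winding of the \emph{relative} eigenfunctions between pairs of ends and between branches of a single end, which is not implied by each end individually realizing an extremal eigenfunction, and establishing it generically is again the exponential-weight Fredholm argument, not a consequence of Theorem~\ref{thm:asymptoticsRelative} alone.
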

\begin{cor}
\label{cor:embeddedPunctured}
If $u \in \mM_g(\widehat{W},J)$ is simple and satisfies $\delta(u) = \delta_\infty(u) = 0$, 
then every simple curve in the same connected component of $\mM_g(\widehat{W},J)$ 
is embedded.
\end{cor}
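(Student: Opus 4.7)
The plan is to read off the conclusion directly from the punctured adjunction formula \eqref{eqn:adjunctionPunctured}, using the homotopy invariance of $\delta(u) + \delta_\infty(u)$ asserted in Theorem~\ref{thm:adjunctionPunctured} together with the nonnegativity of each term.

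First I would argue that any two simple curves $u, u'$ in the same connected component of $\mM_g(\widehat{W},J)$ are homotopic as asymptotically cylindrical maps, with the same sets of positive and negative asymptotic Reeb orbits. Indeed, a continuous path in $\mM_g(\widehat{W},J)$ connecting $u$ to $u'$ gives (after choosing representatives) a continuous family of asymptotically cylindrical maps into $\widehat{W}$; because the asymptotic Reeb orbits are assumed nondegenerate, they are isolated in $C^\infty(S^1,M_\pm)$ (Exercise~\ref{EX:nondegenerate}), and therefore the asymptotic orbits of $u_s$ must remain constant along any such path. In particular the tuples $\boldsymbol{\gamma}^\pm$ of asymptotic orbits, and hence $\bar{\sigma}(u)$ and $\#\Gamma$, are the same for $u$ and $u'$. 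The normal Chern number $c_N(u)$ also depends only on the homotopy class of $u$ as an asymptotically cylindrical map (as noted immediately after Definition~\ref{defn:normalChern}), so $c_N(u) = c_N(u')$.

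Next, Theorem~\ref{thm:star}(1) gives $u * u = u' * u'$, since $*$ is a homotopy invariant. Plugging both $u$ and $u'$ into \eqref{eqn:adjunctionPunctured} and subtracting, every term on the right-hand side other than $\delta(\cdot) + \delta_\infty(\cdot)$ cancels, so
\begin{equation*}
\delta(u') + \delta_\infty(u') = \delta(u) + \delta_\infty(u) = 0.
\end{equation*}
Since Theorem~\ref{thm:adjunctionPunctured} states that both $\delta_\infty(u')$ and $\delta(u')$ are nonnegative integers, each of them must vanish individually. In particular $\delta(u') = 0$, and by the equality clause for the geometric singularity index (from positivity of intersections and Lemma~\ref{lemma:critInj}, cf.\ the discussion at the beginning of \S\ref{sec:statement}) this forces $u'$ to be embedded.

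The only step I would expect to need a little care is confirming that a path in the moduli space really does furnish a homotopy through asymptotically cylindrical maps in the precise sense required by Theorem~\ref{thm:adjunctionPunctured}; this however follows from the definition of convergence in $\mM_g(\widehat{W},J)$, since $\bar u_k \to \bar u$ in $C^0(\overline{\Sigma},\overline{W})$ together with nondegeneracy of the limit orbits guarantees that the asymptotic orbits are locally constant along the path. The rest of the argument is purely formal manipulation of the adjunction identity.
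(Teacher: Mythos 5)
Your proposal is correct and follows the same route the paper intends: Corollary~\ref{cor:embeddedPunctured} is an immediate consequence of the homotopy invariance of $\delta(\cdot)+\delta_\infty(\cdot)$ asserted in Theorem~\ref{thm:adjunctionPunctured}, together with the nonnegativity of both terms and the fact that $\delta(u')=0$ characterizes embeddedness. Your extra care about asymptotic orbits being locally constant along a path (via nondegeneracy) and the cancellation of $c_N$, $\bar{\sigma}$ and $u*u$ is exactly the bookkeeping the paper leaves implicit, so there is nothing to add.
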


\begin{remark}
It is important to notice the lack of the words ``and only if'' in Corollary~\ref{cor:embeddedPunctured}:
an embedded curve $u$ always has $\delta(u) = 0$ but may in general have
$\delta_\infty(u) > 0$, in which case it could be homotopic to a simple curve
with critical or double points.
\end{remark}

The remainder of this lecture will be concerned with the definitions
of $u * v$, $\inter_\infty(u,v)$ and $\delta_\infty(u)$, and
the proofs of Theorems~\ref{thm:star} and~\ref{thm:adjunctionPunctured}.

\begin{remark}
\label{remark:notation}
The reader should be aware of a few notational differences between these
notes and the original source \cite{Siefring:intersection}.  One relatively
harmless difference is in the appearance of the adunction formula (Equation~\eqref{eqn:adjunctionPunctured}
above vs.~\cite{Siefring:intersection}*{Equation~(2-5)}), as Siefring does
not define or mention the normal Chern number, but writes an expression that
is equivalent due to \eqref{eqn:2cN}.  A more serious difference of conventions
appears in the formulas we will use to define $u * v$ and $\delta(u)$ below,
e.g.~\eqref{eqn:bigOmega} and \eqref{eqn:bigSelfOmega} contain ``$\pm$'' and
``$\mp$'' symbols that do not appear in the equivalent formulas
in \cite{Siefring:intersection}.  The reason is that alternate
versions of these numbers need to be defined for asymptotic orbits that appear
at positive or negative ends; Siefring handles this issue with a
notational shortcut, formally viewing Reeb orbits that occur at negative ends
as orbits with \emph{negative covering multiplicity}.  In these notes,
covering multiplicities are always positive.
\end{remark}

\section{Relative intersection numbers and the $*$-pairing}
\label{sec:star}

For the remainder of this lecture, fix a choice
of trivializations of the bundles $\gamma^*\xi_\pm \to S^1$ for every
\emph{simply covered} Reeb orbit $\gamma : S^1 \to M_\pm$.  Wherever
a trivialization along a multiply covered orbit $\gamma^k$ is needed,
we will use the one induced on $(\gamma^k)^*\xi_\pm \to S^1$ by our
chosen trivialization of $\gamma^*\xi_\pm$, and denote this choice
as usual by~$\tau$.

For two asymptotically cylindrical maps $u : \dot{\Sigma} \to \widehat{W}$
and $v : \dot{\Sigma}' \to \widehat{W}$, we define the
\defin{relative intersection number}\index{relative intersection number}
$$
u \dotrel_\tau v := u \cdot v^\tau \in \ZZ,
$$
where $v^\tau : \dot{\Sigma}' \to \widehat{W}$ denotes any $C^0$-small
perturbation of $v$ such that $u$ and $v^\tau$ have at most finitely many
intersections and $v^\tau$ is ``pushed off'' near $\pm\infty$ in directions
determined by~$\tau$, i.e.~if $v$ approaches the orbit $\gamma : S^1 \to M_\pm$ 
asymptotically at a puncture $z$, then $v^\tau$ at the same puncture approaches 
a loop of the form $\exp_{\gamma(t)} \epsilon \eta(t)$, where
$\epsilon > 0$ is small and $\eta \in \Gamma(\gamma^*\xi_\pm)$ satisfies 
$\wind^\tau(\eta) = 0$.  Since $v^\tau$ asymptotically approaches loops that
may (without loss of generality) be assumed disjoint from the asymptotic orbits
of~$u$, it follows from Exercise~\ref{EX:interWithBoondary}
that this definition is independent of the choice of perturbation, and it
only depends on the homotopy classes of $u$ and $v$ (as asymptotically
cylindrical maps) plus the trivializations~$\tau$.  The dependence on $\tau$
indicates that $u \dotrel_\tau v$ is not a very meaningful number on its own,
so it will not be an object of primary study for us,
but like the relative first Chern numbers in 
\S\ref{sec:puncturedFoliations}, it will provide a useful tool for 
organizing information.

\begin{exercise}
Show that $u \dotrel_\tau v = v \dotrel_\tau u$.
\end{exercise}

Suppose $u : \dot{\Sigma} \to \widehat{W}$ and
$v : \dot{\Sigma}' \to \widehat{W}$ are asymptotically cylindrical and have
finitely many intersections, so $u \cdot v$ is well defined.  Then 
$u \dotrel_\tau v$ can be computed with the perturbation $v^\tau$ assumed to
be nontrivial only in some neighborhood of infinity where $u$ and $v$ are
disjoint, so that $u \cdot v^\tau$ counts the intersections of $u$ with $v$,
plus some additional intersections that appear in a neighborhood of
infinity when $v$ is perturbed to~$v^\tau$.  We shall denote this count
of additional intersections near infinity by 
$\inter_\infty^\tau(u,v) \in \ZZ$, so we can write
$$
u \dotrel_\tau v = u\cdot v + \inter_\infty^\tau(u,v)
$$
whenever $u \cdot v$ is well defined.  

The number $\inter_\infty^\tau(u,v)$ also depends on $\tau$ and is thus
not meaningful on its own, but it is useful to observe that it can be
computed in terms 
of relative asymptotic winding numbers---this observation will lead us
to the natural definitions of the much more meaningful quantities 
$u * v$ and~$\inter_\infty(u,v)$, which do not depend on~$\tau$.
To see this, denote the punctures of
$u$ and $v$ by $\Gamma_u = \Gamma^+_u \cup \Gamma^-_u$ and
$\Gamma_v = \Gamma^+_v \cup \Gamma^-_v$ respectively, and for any
$z \in \Gamma_u$ or $\Gamma_v$, denote the corresponding asymptotic
orbit of $u$ or $v$ by~$\gamma_z^{k_z}$, where we assume $\gamma_z$
is a \emph{simply covered} orbit and $k_z \in \NN$ is the covering
multiplicity.  A contribution to $\inter_\infty^\tau(u,v)$ may
come from any pair of punctures $(z,\zeta) \in \Gamma^\pm_u \times \Gamma^\pm_v$,
so we shall denote this contribution
by $\inter_\infty^\tau(u,z \,;\, v,\zeta)$ and write
\begin{equation}
\label{eqn:breakdown}
\inter^\tau_\infty(u,v) = \sum_{(z,\zeta) \in \Gamma^\pm_u \times \Gamma^\pm_v}
\inter^\tau_\infty(u,z \,;\, v,\zeta).
\end{equation} 
\begin{remark}
\label{remark:signs}
In \eqref{eqn:breakdown} and several other expressions in this lecture,
the summation should be understood as a sum of
two summations, one with $\pm = +$ and the other with $\pm = -$.
\end{remark}
If $\gamma_z \ne \gamma_\zeta$, then $u$ and $v^\tau$ have no intersections
in neighborhoods of these particular punctures, implying
$$
\inter^\tau_\infty(u,z \,;\, v,\zeta) = 0 
\quad\text{ if $\gamma_z \ne \gamma_\zeta$}.
$$
Now assume $\gamma := \gamma_z = \gamma_\zeta$, and
let $T > 0$ denote the period of $\gamma$.  We shall parametrize punctured 
neighborhoods of $z$ and $\zeta$ by half-cylinders $Z_\pm$ and
consider the resulting maps
$$
u(s,t), v(s,t) \in \RR \times M_\pm,
$$
defined for $|s|$ sufficiently large and
asymptotic to $\gamma^{k_z}$ and $\gamma^{k_\zeta}$ respectively.
We first consider the special case where both asymptotic orbits
have the same covering multiplicity, so let
$$
k := k_z = k_\zeta.
$$
Asymptotic approach to $\gamma^k$ means we can write
$$
u(s,t) = \exp_{(kTs, \gamma(kt))} h_u(s,t), \qquad
v(s,t) = \exp_{(kTs, \gamma(kt))} h_v(s,t),
$$
for sections $h_u$ and $h_v$ of $\xi_\pm$ along the
orbit cylinder such that both decay uniformly to~$0$
as $s \to \pm\infty$.  The assumption that $u$ and
$v$ have no intersections near infinity implies
moreover that for some $s_0 > 0$, each of the sections
$$
(s,t) \mapsto h_u(s,t + j/k) - h_v(s,t), \qquad j=0,\ldots,k-1
$$
has no zeroes in the region $|s| \ge s_0$.
The perturbation $v^\tau$ can now be defined as
$$
v^\tau(s,t) = \exp_{(kTs, \gamma(kt))} \left[ h_v(s,t) +
\epsilon \eta(s,t) \right],
$$
where $\epsilon > 0$ is small and $\eta(s,t) \in \xi_\pm$ can be 
assumed to vanish for $|s| \le s_0$ and to satisfy
$\eta(s,t) \to \eta_\infty(kt)$ as $s \to \pm\infty$, with
$\eta_\infty \in \Gamma(\gamma^*\xi_\pm)$ a nowhere vanishing section
satisfying $\wind^\tau(\eta_\infty) = 0$.
Intersections of $v^\tau$ with $u$ in the region $|s| \ge s_0$
are now in one-to-one correspondence with solutions of
the equation
$$
F_j(s,t) := h_u(s,t + j/k) - h_v(s,t) - \epsilon \eta(s,t) = 0,
$$
for arbitrary values of $j \in \{0,\ldots,k-1\}$.  Notice that
$F_j$ admits a continuous extension to $s=\pm\infty$ with
$F_j(\pm\infty,t) = -\epsilon\eta_\infty(kt)$.
Since $\wind^\tau(\eta_\infty) = 0$ and $\epsilon > 0$ is small,
the algebraic count of zeroes of $F_j$ on the region
$\{ |s| \ge s_0 \}$ is thus
$$
\pm \left[ \wind^\tau \left(F_j(\pm\infty,\cdot)\right) - 
\wind^\tau \left( F_j(\pm s_0,\cdot) \right) \right] =
\mp\wind^\tau\left( h_u(\pm s_0,\cdot + j/k) - h_v(\pm s_0, \cdot) \right),
$$
i.e.~it is the \emph{relative asymptotic winding number} of $v$ about 
the reparametrization $u(s,t+j/k)$, with respect to the trivialization~$\tau$.
Summing this over all such reparametrizations gives
\begin{equation}
\label{eqn:wind2}
\sum_{j=0}^{k-1} \mp \wind^\tau\big( h_u(s,\cdot + j/k) - h_v(s,\cdot) \big),
\end{equation}
where the parameter $s$ can be chosen to be any number sufficiently close
to~$\pm\infty$.
If $k_z \ne k_\zeta$, then the above computation is valid for the covers
$u^{k_\zeta}(s,t) := u(k_\zeta s, k_\zeta t)$ and 
$v^{k_z}(s,t) := v(k_z s, k_z t)$, both
asymptotic to $\gamma^{k_z k_\zeta}$, and \eqref{eqn:wind2}
must then be divided by $k_z k_\zeta$ to compute 
$\inter^\tau_\infty(u,z \,;\, v,\zeta)$.

\begin{remark}
\label{remark:braid}
The computation above can be interpreted in terms of braids: namely,
if $u$ and $v$ have at most finitely many intersections,
then their asymptotic behavior at a pair of punctures with matching asymptotic
orbit (up to multiplicity) determines
up to isotopy
a pair of (perhaps multiply covered) disjoint connected braids, whose
linking number with each other is (up to a sign)
$\inter^\tau_\infty(u,z\,;\,v,\zeta)$.  It appears in this form
in the work of Hutchings on embedded contact homology; see
Appendix~\ref{sec:ECH} for further discussion.
\end{remark}

The discussion thus far has been valid for any pair of asymptotically 
cylindrical maps.  If we now assume $u$ and $v$ are also $J$-holomorphic,
then Theorem~\ref{thm:asymptoticsRelative} expresses the summands
in \eqref{eqn:wind2} as winding numbers of certain relative asymptotic
eigenfunctions for $\gamma^{k_z k_\zeta}$, and these winding numbers
satisfy \emph{a priori} bounds due to Theorem~\ref{thm:winding}.
Specifically, assume $u(s,t)$ and $v(s,t)$ approach their respective
covers of $\gamma$ along asymptotic eigenfunctions $f_u$ and $f_v$
with decay rates $|\lambda_u|$ and $|\lambda_v|$ respectively,
so by \eqref{eqn:alphas} we have
$$
\mp \wind^\tau(f_u) \ge \mp \alpha^\tau_\mp(\gamma^{k_z}), \qquad
\mp \wind^\tau(f_v) \ge \mp \alpha^\tau_\mp(\gamma^{k_\zeta}).
$$
Then the covers $u^{k_\zeta}(s,t)$ and $v^{k_z}(s,t)$ approach
$\gamma^{k_z k_\zeta}$ along asymptotic eigenfunctions
$f_u^{k_\zeta}$ and $f_v^{k_z}$ with decay rates
$k_\zeta |\lambda_u|$ and $k_z |\lambda_v|$
respectively, and the winding is bounded by
$$
\mp \wind^\tau\big(f_u^{k_\zeta}\big) \ge \mp k_\zeta \alpha^\tau_\mp(\gamma^{k_z}), \qquad
\mp \wind^\tau\big(f_v^{k_z}\big) \ge \mp k_z \alpha^\tau_\mp(\gamma^{k_\zeta}).
$$
Theorem~\ref{thm:asymptoticsRelative} now implies that the relative
decay rate controlling the approach of $v(s,t)$ to any of the
reparametrizations $u(s,t + j/k)$ is at least the minimum of
$k_\zeta |\lambda_u|$ and $k_z |\lambda_v|$, thus the corresponding
winding number is similarly bounded due to Theorem~\ref{thm:winding}.
We conclude that
each of the summands in \eqref{eqn:wind2} is bounded from below by
the integer $\Omega^\tau_\pm(\gamma^{k_z},\gamma^{k_\zeta})$, where for
any $k , m \in \NN$ we define
\begin{equation}
\label{eqn:bigOmega}
\Omega^\tau_\pm(\gamma^k, \gamma^m) :=
\min\left\{ \mp k \alpha^\tau_\mp(\gamma^m), 
\mp m \alpha^\tau_\mp(\gamma^k) \right\}.
\end{equation}
Adding the summands in \eqref{eqn:wind2} for $j=0,\ldots,k_z k_\zeta - 1$
and then dividing by the combinatorial factor $k_z k_\zeta$ produces the
bound
$$
\inter^\tau_\infty(u,z \,;\, v,\zeta) \ge 
\Omega^\tau_\pm\big(\gamma_z^{k_z},\gamma_\zeta^{k_\zeta}\big) \quad
\text{ if $\gamma_z = \gamma_\zeta$}.
$$
If we extend the
definition of $\Omega^\tau_\pm$ by setting
$$
\Omega^\tau_\pm(\gamma_1^k , \gamma_2^m) := 0 \quad
\text{ whenever $\gamma_1 \ne \gamma_2$},
$$
then a universal lower bound for $\inter_\infty^\tau(u,v)$ can now be
written in terms of asymptotic winding numbers as
\begin{equation}
\label{eqn:iLowerBound}
\inter_\infty^\tau(u,v) \ge \sum_{(z,\zeta) \in \Gamma_u^\pm \times \Gamma_v^\pm}
\Omega^\tau_\pm\big(\gamma_z^{k_z}, \gamma_\zeta^{k_\zeta}\big).
\end{equation}

\begin{defn}
\label{defn:iInfty}
For any asymptotically cylindrical maps $u : \dot{\Sigma} \to \widehat{W}$\index{hidden at infinity!intersections|(}\index{intersections!hidden at infinity|(}
and $v : \dot{\Sigma}' \to \widehat{W}$ with finitely many intersections,\index{asymptotic contribution!to the $*$-pairing|(}\index{intersection number!asymptotic contribution to|(}
define
$$
\inter_\infty(u,v) := \inter^\tau_\infty(u,v) - 
\sum_{(z,\zeta) \in \Gamma_u^\pm \times \Gamma_v^\pm}
\Omega^\tau_\pm\big(\gamma_z^{k_z}, \gamma_\zeta^{k_\zeta}\big).
$$
Similarly, for \emph{any} asymptotically cylindrical maps $u$ and $v$
(not necessarily with finitely many intersections), we can define\index{intersection number!of asymptotically cylindrical maps ($*$-pairing)}
$$
u * v := u \dotrel_\tau v - 
\sum_{(z,\zeta) \in \Gamma_u^\pm \times \Gamma_v^\pm}
\Omega^\tau_\pm\big(\gamma_z^{k_z}, \gamma_\zeta^{k_\zeta}\big).
$$
When it is well defined, $\inter_\infty(u,v)$ is sometimes called the
\defin{asymptotic contribution} to $u * v$.
\end{defn}
\begin{exercise}
Check that neither of the above definitions depends on the choice
of trivializations~$\tau$.
\end{exercise}

Definitions involving $\Omega^\tau_\pm(\gamma^k,\gamma^m)$ may seem
not very enlightening at first, and they are seldom used in practice
for computations, but it's useful to keep in mind what these terms
mean: they are \emph{theoretical bounds} on the possible relative 
asymptotic winding of ends of $u$ around (all possible reparametrizations of)
ends of~$v$.  We will say that a given winding number is \defin{extremal}\index{extremal winding}\index{asymptotic eigenfunction!extremal winding of}
whenever it achieves the corresponding theoretical bound.  
We conclude, for example:

\begin{thm}[\textsl{asymptotic positivity of intersections}]
\label{thm:asymptoticPositivity}
If $u$ and $v$ are asymptotically cylindrical $J$-holomorphic
curves with non-identical images, then $\inter_\infty(u,v) \ge 0$, with
equality if and only if for all pairs of ends of $u$ and $v$ respectively\index{positivity of intersections!asymptotic}\index{asymptotic positivity of intersections}
asymptotic to covers of\index{hidden at infinity!intersections}\index{intersections!hidden at infinity|)}\index{asymptotic contribution!to the $*$-pairing|)}\index{intersection number!asymptotic contribution to|)}
the same Reeb orbit, all of the resulting relative asymptotic eigenfunctions have
extremal winding.
\qed
\end{thm}

It is also immediate from the above definition that
$u * v$ is homotopy invariant and equals $u \cdot v + \inter_\infty(u,v)$
whenever $u \cdot v$ is well defined.
This completes the proof of 
Theorem~\ref{thm:star}, except for the claim that one can always achieve
$u \cdot v = u * v$ after a perturbation of the data.  This can be proved
by observing that the subset of $\mM_{g}(\widehat{W},J) \times \mM_{g'}(\widehat{W},J)$ consisting
of pairs $(u,v)$ with $\inter_\infty(u,v) > 0$ consists precisely of those
pairs that share an asymptotic orbit at which some relative asymptotic
winding number is not extremal.  By Theorem~\ref{thm:winding}, this means
that the relative decay rate of some pair of ends approaching the same
orbit is an eigenvalue other than the one closest to~$0$.  One can
then use Fredholm theory with exponential weights 
(cf.~\cites{HWZ:props3,Wendl:automatic,Hryniewicz:fast})
to show that the moduli 
space of pairs of curves satisfying this relative decay condition has
strictly smaller Fredholm index than the usual moduli space, thus for
generic data, it is a submanifold of positive codimension.

\section{Adjunction formulas, relative and absolute}
\label{sec:relAdjunction}

In order to generalize the adjunction formula, we begin by computing
$u \dotrel_\tau u$ for an immersed simple $J$-holomorphic curve
$u : (\dot{\Sigma},j) \to (\widehat{W},J)$ with asymptotic orbits
$$
\{ \gamma_z^{k_z} \}_{z \in \Gamma^\pm},
$$
where the notation is chosen as in the previous section so that
$\gamma_z$ is always a
simply covered orbit and $k_z \in \NN$ is the corresponding covering
multiplicity.  Choose a section
$\eta$ of the normal bundle $N_u \to \dot{\Sigma}$ with finitely many
zeroes and such that, on each cylindrical neighborhood $Z_\pm \subset
\dot{\Sigma}$ of a puncture $z \in \Gamma^\pm$,
$$
\eta(s,t) \to \eta_\infty(k_z t) \quad \text{ uniformly in~$t$ as $s \to \pm\infty$,}
$$
for some nonzero $\eta_\infty \in \Gamma(\gamma_z^*\xi_\pm)$ satisfying
$\wind^\tau(\eta_\infty) = 0$.  We can also assume the zeroes of
$\eta$ are disjoint from all points $z \in \dot{\Sigma}$ at which
$u(z) = u(\zeta)$ for some $\zeta \ne z$.  Then $u \dotrel_\tau u =
u \cdot u^\tau$, where
$$
u^\tau(z) = \exp_{u(z)} \epsilon \eta(z)
$$
for some $\epsilon > 0$ small.  As we saw in \S\ref{sec:adjunction}, there
are two obvious sources of intersections between $u$ and $u^\tau$:
\begin{enumerate}
\item Each transverse double point $u(z) = u(\zeta)$ with $z \ne \zeta$
contributes two transverse positive intersections, one near~$z$ and one near~$\zeta$.
More generally, the algebraic count of intersections contributed by each
isolated double point is twice its local intersection index.
\item Each zero $\eta(z)=0$ contributes an intersection at~$z$, with local
intersection index equal to the order of the zero.  The
algebraic count of these zeroes is the relative first Chern number
$c_1^\tau(N_u) \in \ZZ$.
\end{enumerate}
Unlike in the closed case, 
there are now two additional sources of intersections.
As we saw in the previous section, if $z, \zeta \in \Gamma^\pm$ are
two distinct punctures with $\gamma_z = \gamma_\zeta$, then perturbing
$u$ to $u^\tau$ will cause $\inter^\tau_\infty(u,z \,;\, u,\zeta) \in \ZZ$ additional
intersections of $u$ and $u^\tau$ to appear near infinity along the
corresponding half-cylinders, and this number is also bounded below by
$\Omega^\tau_\pm\big(\gamma_z^{k_z},\gamma_\zeta^{k_\zeta}\big)$, defined in
\eqref{eqn:bigOmega} in terms of winding numbers of asymptotic
eigenfunctions.  Additionally, near any $z \in \Gamma^\pm$ with
$k_z > 1$, $u$ may intersect different parametrizations of $u^\tau$
near infinity.  To see this, we can again parametrize a neighborhood of
$z$ in $\dot{\Sigma}$ with the half-cylinder $Z_\pm$ and write
$$
u(s,t) = \exp_{(kTs , \gamma(kt))} h(s,t)
$$
for large $|s|$,
where $k := k_z$, $\gamma := \gamma_z$, $T > 0$ is the period of~$\gamma$
and $h(s,t) \in \xi_\pm$ decays to~$0$ as $s \to \pm\infty$.  Since
$u$ is simple, it has no double points in some neighborhood of
infinity, which means that for some $s_0 > 0$, we have
$$
h(s,t) \ne h(s, t + j/k) \quad\text{ for all $|s| \ge s_0$, $t \in S^1$,
$j \in \{1,\ldots,k-1\}$}.
$$
The perturbation $u^\tau$ on this neighborhood may be assumed to take the form
$$
u^\tau(s,t) = \exp_{(kTs , \gamma(kt))} \left[ h(s,t) + \epsilon 
\eta_\infty(kt)\right],
$$
for some $\epsilon > 0$ small, where again $\wind^\tau(\eta_\infty) = 0$.
Thus intersections of $u$ with $u^\tau$ on the region $\{ |s| \ge s_0 \}$
correspond to solutions of
$$
F_j(s,t) := h(s,t + j/k) - h(s,t) - \epsilon\eta_\infty(kt) = 0
$$
for arbitrary values of $j=0,\ldots,k-1$.  For $j=0$, this equation has
no solutions.  For $j=1,\ldots,k-1$, we observe that $F_j$ extends 
continuously to $s=\pm\infty$ with $F_j(\pm\infty,t) = -\epsilon\eta_\infty(kt)$
and obtain the count of solutions
$$
\pm\left[ \wind^\tau\left( F_j(\pm\infty,\cdot)\right) - 
\wind^\tau\left( F_j(\pm s_0,\cdot) \right) \right]
= \mp \wind^\tau \left( h(\pm s_0,\cdot + j/k) - h(\pm s_0,\cdot) \right).
$$
The count of additional intersections of $u$ with $u^\tau$ in a
neighborhood of $z$ is therefore
\begin{equation}
\label{eqn:iuz}
\inter^\tau_\infty(u , z) := \mp \sum_{j=1}^{k_z - 1} 
 \wind^\tau \left( h(s,\cdot + j/k) - h(s,\cdot) \right),
\end{equation}
where $s$ is any number sufficiently close to $\pm\infty$,
and we can then write the total count of asymptotic contributions to
$u \dotrel_\tau u$ as
$$
\inter^\tau_\infty(u) := \sum_{z,\zeta \in \Gamma^\pm,\,
z \ne \zeta} \inter^\tau_\infty(u,z \,;\, u,\zeta) +
\sum_{z \in \Gamma^\pm} \inter^\tau_\infty(u,z).
$$
This yields the computation
$$
u \dotrel_\tau u = 2\delta(u) + c^\tau_1(N_u) + \inter^\tau_\infty(u),
$$
and since $c_1^\tau(N_u) = c_1^\tau(u^*T\widehat{W}) - \chi(\dot{\Sigma})$, we
deduce from this a relation that is valid for any (not necessarily
immersed) simple and asymptotically cylindrical $J$-holomorphic curve,
called the \defin{relative adjunction formula}\index{relative adjunction formula}\index{adjunction formula!relative}\index{self-intersection number!relative}
\begin{equation}
\label{eqn:relAdjunction}
u \dotrel_\tau u = 2\delta(u) + c^\tau_1(u^*T\widehat{W}) -
\chi(\dot{\Sigma}) + \inter^\tau_\infty(u).
\end{equation}
This version of the adjunction formula first appeared in \cite{Hutchings:index}.

\begin{remark}
\label{remark:braids}
As with $\inter^\tau_\infty(u,z \,;\, v,\zeta)$ (cf.~Remark~\ref{remark:braid}), 
$\inter^\tau_\infty(u,z)$ can be given
a braid-theoretic interpretation: it is (up to a sign) the writhe of
the braid defined by identifying a
neighborhood of the framed loop $\gamma_z$ with $S^1 \times \DD$
and projecting the embedded loop $u(s,\cdot)$ to $M_\pm$ for any
$s$ close to~$\pm\infty$; see Appendix~\ref{sec:ECH}.
\end{remark}

As we did with $\inter^\tau_\infty(u,v)$ in the previous section,  it will be
useful to derive a theoretical bound on $\inter^\tau_\infty(u)$.
We already have $\inter^\tau_\infty(u,z \,;\, u,\zeta) \ge 
\Omega^\tau_\pm\big(\gamma_z^{k_z} , \gamma_\zeta^{k_\zeta}\big)$, and
must deduce a similar bound for $\inter^\tau_\infty(u,z)$.
Let $\gamma := \gamma_z$ and $k := k_z$, and write $u(s,t) =
\exp_{(Ts,\gamma(kt))} h(s,t)$ as usual, and for $j=1,\ldots,k-1$, let
$$
u_j(s,t) := u(s,t + j/k) = \exp_{(Ts,\gamma(kt))} h_j(s,t), \quad
\text{ where } h_j(s,t) := h(s, t + j/k).
$$
By theorem~\ref{thm:asymptotics},
$h(s,t)$ is controlled as $s \to \pm\infty$ by some eigenfunction
$f$ of $\mathbf{A}_{\gamma^k}$ with eigenvalue $\lambda$, and 
by \eqref{eqn:alphas},
$$
\mp \wind^\tau(f) \ge \mp \alpha^\tau_\mp(\gamma^k).
$$
The reparametrizations $h_j(s,t)$ are similarly controlled by
reparametrized eigenfunctions
$$
f_j(t) := f(t + j/k)
$$
with $\wind^\tau(f_j) = \wind^\tau(f)$ and the same eigenvalue, and
the relative 
decay rates controlling $h_j - h$ are then at least $|\lambda|$ due to
Theorem~\ref{thm:asymptoticsRelative}, implying (via
Theorem~\ref{thm:winding}) a corresponding bound on the relative
winding terms in \eqref{eqn:iuz}, thus
$$
\inter^\tau_\infty(u,z) \ge \mp (k-1) \wind^\tau(f) \ge \mp
(k-1) \alpha^\tau_\mp(\gamma^k).
$$

The bound established above is only a first attempt, as we will see in 
a moment that a stricter bound may hold in general.  
If $\wind^\tau(f)$ is not extremal,
i.e.~$\mp \wind^\tau(f) \ge \mp\alpha^\tau_\mp(\gamma^k) + 1$, the above
computation gives
\begin{equation}
\label{eqn:badBound}
\inter^\tau_\infty(u,z) \ge \mp (k-1) \alpha^\tau_\mp(\gamma^k) + k-1.
\end{equation}

Alternatively, suppose $\wind^\tau(f)$ is extremal, hence equal to 
$\alpha^\tau_\mp(\gamma^k)$,
and let $m = \cov(f)$, so $k = m \ell$ for some $\ell \in \NN$ and
$$
f(t) = g^m(t) := g(mt)
$$
for some eigenfunction $g$ of $\mathbf{A}_{\gamma^\ell}$ which is simply
covered.  It follows that for $j=1,\ldots,k-1$, 
$f_j \equiv f$ if and only if $j \in \ell \ZZ$.  When $j$ is not divisible
by~$\ell$, Theorem~\ref{thm:asymptoticsRelative} now gives a relative
decay rate equal to $|\lambda|$ and thus relative winding equal
to $\wind^\tau(f)$, so adding up these terms for the $m(\ell-1)$ values
of~$j$ not in $\ell\ZZ$ contributes
\begin{equation}
\label{eqn:notDivisible}
\mp m(\ell - 1) \alpha^\tau_\mp(\gamma^k)
\end{equation}
to $\inter^\tau_\infty(u,z)$.

For $j = 1,\ldots,m-1$, we claim that the
asymptotic winding of $h_{j\ell} - h$ is stricter than the bound
established above, i.e.~for large $|s|$,
\begin{equation}
\label{eqn:divisible}
\mp \wind^\tau\left( h_{j\ell}(s,\cdot) - h(s,\cdot)\right) \ge
\mp \alpha^\tau_\mp(\gamma^k) + 1.
\end{equation}
By Theorem~\ref{thm:asymptoticsRelative}, there is a nontrivial
eigenfunction $\varphi_j \in \Gamma((\gamma^k)^*\xi_\pm)$ of
$\mathbf{A}_{\gamma^k}$ with eigenvalue $\lambda'$ such that
$$
h_{j\ell}(s,t) - h(s,t) = e^{\lambda' s}\left[ \varphi_j(t) + r'(s,t) \right],
$$
for large $|s|$, with $r'(s,\cdot) \to 0$ uniformly as $s \to \pm\infty$.
Now if the claim is false, then $\wind^\tau(\varphi_j) = \alpha^\tau_\mp(\gamma^k)
= \wind^\tau(f)$.  Since $f$ is an $m$-fold cover, this means
$\wind^\tau(\varphi_j)$ is divisible by~$m$, and Remark~\ref{remark:covInfty}
then implies that $\varphi_j$ is also an $m$-fold cover, thus
\begin{equation}
\label{eqn:msymmetric}
\varphi_j(t + 1/m) = \varphi_j(t) \quad\text{ for all $t \in S^1$.}
\end{equation}
But observe:
\begin{equation*}
\begin{split}
0 &= \sum_{r=0}^{m-1} \left[ h\left(s,t + \frac{j+r}{m}\right) - h\left(s, t + \frac{r}{m}\right)\right]
= \sum_{r=0}^{m-1} \left[ h_{j\ell}\left(s,t + \frac{r}{m} \right) - h\left(s,t + \frac{r}{m} \right) \right] \\
&= \sum_{r=0}^{m-1} e^{\lambda' s}\left[ \varphi_j\left(t + \frac{r}{m}\right) + r'\left(s,t + \frac{r}{m}\right) \right] ,
\end{split}
\end{equation*}
implying
$$
\sum_{r=0}^{m-1} \varphi_j(t + r / m) = 0 \quad\text{ for all $t \in S^1$.}
$$
Since $\varphi_j$ is not identically zero, this contradicts \eqref{eqn:msymmetric} and thus
proves the claim.  

Adding to \eqref{eqn:notDivisible} the $m-1$ terms bounded by \eqref{eqn:divisible}, we
conclude
$$
\inter^\tau_\infty(u,z) \ge \mp m (\ell-1) \alpha^\tau_\mp(\gamma^k) + (m-1)
\left[ \mp \alpha^\tau_\mp(\gamma^k) + 1 \right] \\
= \mp (k-1) \alpha^\tau_\mp(\gamma^k)  + (m-1).
$$
This bound is weaker than \eqref{eqn:badBound}, but the latter is valid only
when $\wind^\tau(f)$ is non-extremal, thus the former is the strongest possible
bound in general.
Recall that the covering multiplicity $m = \cov(f)$ is precisely what we denoted by
$\bar{\sigma}_\mp(\gamma^k)$ in \S\ref{sec:statement}.
To summarize, we now define for any simply covered orbit $\gamma$ and $k \in \NN$,
\begin{equation}
\label{eqn:bigSelfOmega}
\Omega^\tau_\pm(\gamma^k) := \mp (k-1) \alpha^\tau_\mp(\gamma^k) + 
\left[ \bar{\sigma}_\mp(\gamma^k) - 1 \right].
\end{equation}
The above computation then implies
\begin{equation}
\label{eqn:iBound}
\inter^\tau_\infty(u,z) \ge \Omega^\tau_\pm\big(\gamma_z^{k_z}\big).
\end{equation}

\begin{defn}
\label{defn:deltaInfty}
For any asymptotically cylindrical map $u : \dot{\Sigma} \to \widehat{W}$
that is embedded outside some compact set, we define the
\defin{asymptotic contribution} to the singularity index by\index{asymptotic contribution!to the singularity index}\index{singularity index of a simple holomorphic curve!asymptotic contribution to}
$$
\delta_\infty(u) := \frac{1}{2} \left[ \inter^\tau_\infty(u) -
\sum_{z,\zeta \in \Gamma^\pm,\, z \ne \zeta} 
\Omega^\tau_\pm\big(\gamma_z^{k_z} , \gamma_\zeta^{k_\zeta} \big) -
\sum_{z \in \Gamma^\pm} \Omega^\tau_\pm\big(\gamma_z^{k_z}\big) \right].
$$
\end{defn}	
\begin{exercise}
Check that the above definition does not depend on the trivializations~$\tau$.
Then try to convince yourself that it's an integer, not a half-integer.
\end{exercise}

Like Theorem~\ref{thm:asymptoticPositivity} in the previous section, the following
is now immediate from the computation above:

\begin{thm}
\label{thm:sing}
If $u$ is an asymptotically cylindrical and simple $J$-holomorphic\index{asymptotic contribution!to the singularity index}\index{singularity index of a simple holomorphic curve!asymptotic contribution to}
curve, then $\delta_\infty(u) \ge 0$, with
equality if and only if:
\begin{enumerate}
\item For all pairs of ends asymptotic to covers of the same Reeb 
orbit, the resulting relative asymptotic eigenfunctions have
extremal winding;
\item For all ends asymptotic to multiply covered Reeb orbits, the
relative asymptotic eigenfunctions controlling the approach of distinct
branches to each other have extremal winding.
\end{enumerate}
\end{thm}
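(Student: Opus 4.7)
The plan is to deduce both assertions by feeding the two per-puncture lower bounds derived in the preceding two sections directly into Definition~\ref{defn:deltaInfty}. The pairwise bound $\iota_\infty^\tau(u,z;u,\zeta) \ge \Omega^\tau_\pm(\gamma_z^{k_z},\gamma_\zeta^{k_\zeta})$ was established in the derivation of \eqref{eqn:iLowerBound} by summing the relative winding estimates from Theorem~\ref{thm:asymptoticsRelative}, together with the \emph{a priori} spectral bounds of Theorem~\ref{thm:winding}, over cyclic reparametrizations of the two ends. The analogous self-term bound $\iota_\infty^\tau(u,z) \ge \Omega^\tau_\pm(\gamma_z^{k_z})$ is the content of \eqref{eqn:iBound}, obtained in \S\ref{sec:relAdjunction} by the more delicate case analysis that split the $k_z - 1$ reparametrized branches into those that do and do not preserve the asymptotic eigenfunction $f$ of $u$ at $z$. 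Substituting both bounds into the expression for $\delta_\infty(u)$, with the factor of $1/2$ correctly removing the double-counting among ordered pairs $(z,\zeta)$, immediately yields $\delta_\infty(u) \ge 0$.

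For the equality characterization, I would trace precisely when each contributing inequality is sharp. For a pair of distinct punctures $z \ne \zeta$ asymptotic to covers of a common orbit, equality in the pairwise bound holds iff, for every cyclic shift of the two ends, the corresponding relative decay rate guaranteed by Theorem~\ref{thm:asymptoticsRelative} attains the minimum value permitted by the spectrum of $\mathbf{A}_{\gamma_z^{k_z k_\zeta}}$. By Theorem~\ref{thm:winding} this is equivalent to the associated relative asymptotic eigenfunction having winding equal to the appropriate $\alpha^\tau_\mp$, i.e.\ extremal winding --- which is exactly condition~(1) of the theorem.

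For the self-term bound at a puncture $z$ with $k_z > 1$, equality is subtler, and I would revisit the two-stage argument leading to \eqref{eqn:bigSelfOmega}. Setting $m := \bar{\sigma}_\mp(\gamma_z^{k_z})$ and $\ell := k_z / m$, the $m(\ell-1)$ cyclic shifts of $u$ that do not preserve $f$ contribute exactly $\mp m(\ell-1)\alpha^\tau_\mp(\gamma_z^{k_z})$ precisely when $f$ itself has extremal winding, while each of the remaining $m-1$ shifts yields a relative asymptotic eigenfunction $\varphi_j$ whose winding is pushed strictly past the naive extremal value by the noncover identity \eqref{eqn:msymmetric}; sharpness here is exactly the statement that each such $\varphi_j$ attains the maximum winding compatible with that obstruction. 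Condition~(2) of the theorem is this combined extremality condition on the $\varphi_j$, with the extremality of $f$ itself encoded as the $j\in\{\ell,\ldots,(m-1)\ell\}$ branch of the same phenomenon.

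The main obstacle is purely bookkeeping: one must verify that conditions~(1) and~(2) together exhaust \emph{every} inequality used, in all sign conventions and at both positive and negative ends, and handle the edge cases (such as $m = k_z$, in which the block $j\in\ell\mathbb{Z}\setminus\{0\}$ is empty) without introducing hidden slack. No new analytical input beyond Theorems~\ref{thm:asymptoticsRelative} and~\ref{thm:winding} is required, as both were already the workhorses of the derivations of the per-puncture bounds; the proof is thus a careful assembly of ingredients already in place.
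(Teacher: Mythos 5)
Your proposal is correct and follows essentially the same route as the paper: the paper treats Theorem~\ref{thm:sing} as immediate from the bounds $\inter^\tau_\infty(u,z;u,\zeta) \ge \Omega^\tau_\pm\big(\gamma_z^{k_z},\gamma_\zeta^{k_\zeta}\big)$ and \eqref{eqn:iBound} established in \S\ref{sec:relAdjunction}, plugged into Definition~\ref{defn:deltaInfty}, with equality precisely when every winding number in those estimates is extremal. Your tracing of the sharpness conditions, including the two-stage analysis at multiply covered ends, is exactly the bookkeeping the paper leaves implicit.
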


The proof of the absolute adjunction formula in Theorem~\ref{thm:adjunctionPunctured}
now consists only of plugging in the relevant definitions and computing.

\begin{exercise}
\label{EX:acalculation}
Show that for any simply covered Reeb orbit $\gamma$ and $k \in \NN$,
$$
\Omega^\tau_\pm(\gamma^k) - \Omega^\tau_\pm(\gamma^k,\gamma^k) \mp \alpha^\tau_\mp(\gamma^k)
= \bar{\sigma}_\mp(\gamma^k) - 1.
$$
\end{exercise}

\begin{proof}[Proof of Theorem~\ref{thm:adjunctionPunctured}]
Plugging the relative adjunction formula \eqref{eqn:relAdjunction} into the
definition of $u * u$ (Definition~\ref{defn:iInfty}) gives\index{self-intersection number!of asymptotically cylindrical maps}
\begin{equation*}
\begin{split}
u * u &= u \dotrel_\tau u - \sum_{(z,\zeta) \in \Gamma^\pm \times \Gamma^\pm}
\Omega^\tau_\pm\big(\gamma_z^{k_z} , \gamma_z^{k_z} \big) \\
&= 2\delta(u) + c_1^\tau(u^*T\widehat{W}) - \chi(\dot{\Sigma}) + \inter^\tau_\infty(u)
- \sum_{(z,\zeta) \in \Gamma^\pm \times \Gamma^\pm}
\Omega^\tau_\pm\big(\gamma_z^{k_z} , \gamma_z^{k_z} \big).
\end{split}
\end{equation*}
Now replacing $c_1^\tau(u^*T\widehat{W}) - \chi(\dot{\Sigma})$ with
$c_N(u)$ plus some extra terms from Definition~\ref{defn:normalChern}, and
$\inter^\tau_\infty(u)$ with $2\delta_\infty(u)$ plus extra terms from
Definition~\ref{defn:deltaInfty}, all terms of the form
$\Omega^\tau_\pm\big(\gamma_z^{k_z},\gamma_\zeta^{k_\zeta}\big)$ with
$z \ne \zeta$ cancel and the above becomes
$$
u * u = 2\left[ \delta(u) + \delta_\infty(u)\right] + c_N(u)
 + \sum_{z \in \Gamma^\pm} \left[ \Omega^\tau_\pm\big(\gamma_z^{k_z}\big)
 - \Omega^\tau_\pm\big(\gamma_z^{k_z} , \gamma_z^{k_z}\big) \mp 
 \alpha^\tau_\mp\big(\gamma_z^{k_z}\big) \right].
$$
The result then follows from Exercise~\ref{EX:acalculation}.\index{adjunction formula!for punctured holomorphic curves|)}
\end{proof}

\begin{exercise}
\label{EX:funReeb}
Assume $\gamma : S^1 \to M$ is a nondegenerate Reeb orbit in a contact 
$3$-manifold $(M,\xi=\ker\alpha)$, and given $J \in \jJ(\alpha)$, 
let $u_\gamma : \RR \times S^1 \to \RR \times M$
denote the associated $J$-holomorphic orbit cylinder.\index{orbit cylinder}  
\begin{enumerate}
\renewcommand{\labelenumi}{(\alph{enumi})}
\item 
Show that $c_N(u_\gamma) = -p(\gamma)$,
where $p(\gamma) \in \{0,1\}$ is the parity of the Conley-Zehnder 
index of~$\gamma$.  
\item
Show that $u_\gamma * u_\gamma = - \cov(\gamma) \cdot p(\gamma)$.
\item
Deduce from part~(b) that if $u^k$ denotes a $k$-fold cover of a given
asymptotically cylindrical $J$-holomorphic curve $u$, it is \emph{not}
generally true that $u^k * v^\ell = k\ell (u * v)$.\\
\textsl{Remark: One can show however that in general,
$u^k * v^\ell \ge k\ell (u * v)$, cf.~Proposition~\ref{prop:coverRelation}.}
\item
Use the adjunction formula to show the following: if $\gamma$ is a multiple cover
of a Reeb orbit with even Conley-Zehnder index, and $J'$ is an arbitrary
almost complex structure on $\RR \times M$ that is compatible with $d(e^s\alpha)$
and belongs to $\jJ(\alpha)$ outside a compact subset,
then there is no simple
$J'$-holomorphic curve homotopic to $u_\gamma$ through asymptotically cylindrical maps.
\end{enumerate}
\end{exercise}

\chapter{Symplectic fillings of planar contact $3$-manifolds}
\label{sec:5}

\minitoc

\vspace{12pt}

In\CUP{This material will be published by Cambridge University
Press as \textsl{Contact 3-Manifolds, Holomorphic Curves and Intersection Theory}
by Chris Wendl. This pre-publication version is
free to view and download for personal use only. 
Not for re-distribution, re-sale or use in derivative works. \copyright Chris Wendl, 2019.}
this lecture, we will explain an application of the intersection theory
of punctured holomorphic curves to the problem of classifying symplectic
fillings of contact $3$-manifolds.  The main result is stated in
\S\ref{sec:pony} as Theorem~\ref{thm:pony}, and it may be seen as an
analogue of McDuff's Theorem~\ref{thm:McDuff} in a slightly different
context---indeed, the structure of the proof is very similar, but the
technical details are a bit more intricate and require the machinery
developed in Lecture~\ref{sec:4}. Before stating the
theorem and sketching its proof, we review some topological facts about
Lefschetz fibrations, open books, and symplectic fillings.

\section{Open books and Lefschetz fibrations}
\label{sec:Gompf}

As we saw in Lecture~\ref{sec:1}, symplectic forms on $4$-manifolds can be
characterized topologically (up to deformation) via Lefschetz fibrations.
The natural analogue of a Lefschetz fibration for a contact manifold
is an \defin{open book decomposition}.\index{open book decomposition}
If $M$ is a closed oriented
$3$-manifold, an open book is a pair $(B,\pi)$, where $B \subset M$ is an
oriented link and
$$
\pi : M \setminus B \to S^1
$$
is a fibration such that some neighborhood $\nN(\gamma) \subset M$ of each
connected component $\gamma \subset B$ admits an identification with
$S^1 \times \DD$ in which $\pi$ takes the form
$$
\pi|_{\nN(\gamma) \setminus \gamma} : S^1 \times (\DD \setminus \{0\}) \to S^1 :
(\theta,(r,\phi)) \mapsto \phi.
$$
Here $(r,\phi)$ denote polar coordinates on the disk $\DD$, with the angle normalized
to take values in $S^1 = \RR / \ZZ$.  We call $B$ the \defin{binding}\index{binding of an open book}\index{open book decomposition!binding of}
of the open
book, and the fibres $\pi^{-1}(\phi) \subset M$ are its \defin{pages};\index{pages of an open book}\index{open book decomposition!pages of}
these are
open surfaces whose closures are compact oriented surfaces with oriented boundary
equal to~$B$.  Figure~\ref{fig:OBDs} shows simple examples on $S^3$
and $S^1 \times S^2$.

A contact structure $\xi$ on $M$ is said to be \defin{supported}\index{open book decomposition!supporting a contact structure}\index{contact structure!supported by an open book}
by the open book
$\pi : M \setminus B \to S^1$ if one can write $\xi = \ker\alpha$ some contact
form $\alpha$ with
$$
\alpha|_{TB} > 0 \quad\text{ and } \quad d\alpha|_{\text{pages}} > 0.
$$
Equivalently, one can require that the components of $B$ are closed Reeb
orbits with respect to~$\alpha$, and everywhere else the Reeb vector field
is positively transverse to the pages.  This definition is due to Giroux, and
contact forms that satisfy these conditions 
are sometimes called \defin{Giroux forms}.\index{Giroux form}

The following contact analogue of Theorems~\ref{thm:Thurston} and~\ref{thm:Gompf}
is a translation into modern language of a classical result of Thurston and Winkelnkemper:

\begin{thm}[Thurston-Winkelnkemper \cite{ThurstonWinkelnkemper}]
\label{thm:ThurstonWinkelnkemper}
Every open book on a closed oriented $3$-manifold supports a unique contact
structure up to isotopy.\index{open book decomposition!supporting a contact structure}\index{contact structure!supported by an open book}
\end{thm}

\begin{figure}
\includegraphics{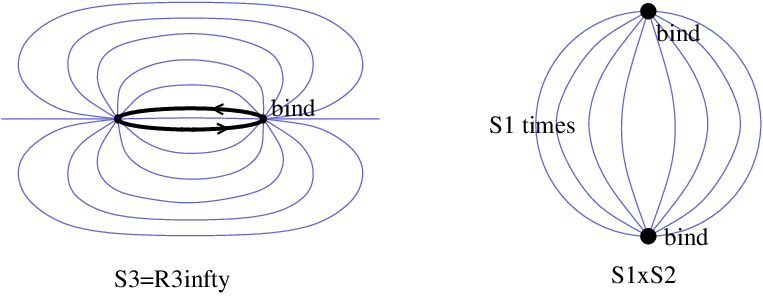}
\caption{\label{fig:OBDs} 
Simple open book decompositions on $S^3$ and $S^1 \times S^2$, with pages
diffeomorphic to the plane and the cylinder respectively.}
\end{figure}

A much deeper result known as the \emph{Giroux correspondence}
\cite{Giroux:ICM} asserts that the set of contact structures up
to isotopy on any closed $3$-manifold admits a natural bijection to the
set of open books up to a topological operation called \emph{positive 
stabilization}.  We will not need this fact in the discussion below, but it
is worth mentioning since it
has had a major impact on the modern field of contact topology; see
e.g.~\cite{Etnyre:lectures} for more on this subject.

In order to discuss symplectic fillings, we will also need to consider a
more general class of Lefschetz fibrations, in which both the base and
fibre are allowed to have boundary.  Specifically, assume $W$ is a compact
oriented $4$-manifold with boundary and corners, where the boundary consists of two
smooth faces
$$
\p W = \p_h W \cup \p_v W,
$$
the \defin{horizontal}\index{horizontal boundary of a Lefschetz fibration}\index{Lefschetz fibration!horizontal boundary of}
and \defin{vertical boundary}\index{vertical boundary of a Lefschetz fibration}\index{Lefschetz fibration!vertical boundary of}
respectively, which
intersect each other at a corner of codimension~$2$.  Given a compact
oriented surface $\Sigma$ with nonempty boundary, we define a
\defin{bordered Lefschetz fibration}\index{bordered Lefschetz fibration}\index{Lefschetz fibration!bordered}
of $W$ over $\Sigma$ to be a smooth map
$$
\Pi : W \to \Sigma
$$
with finitely many \emph{interior} critical points 
$W\crit := \Crit(\Pi) \subset \mathring{W}$ and critical values
$\Sigma\crit := \Pi(W\crit) \subset \mathring{\Sigma}$ such that:
\begin{enumerate}
\item As in Example~\ref{ex:Lefschetz}, critical points take the form 
$\Pi(z_1,z_2) = z_1^2 + z_2^2$ in complex local coordindates compatible with
the orientations;
\item The fibres have nonempty boundary;
\item $\Pi^{-1}(\p\Sigma) = \p_v W$ and 
$$
\Pi|_{\p_v W} : \p_v W \to \p\Sigma
$$
is a smooth fibration;
\item $\p_h W = \bigcup_{z \in \Sigma} \p\left( \Pi^{-1}(z) \right)$, and
$$
\Pi|_{\p_h W} : \p_h W \to \Sigma
$$
is also a smooth fibration.
\end{enumerate}

In the following, we assume the base is the closed unit disk
(see Figure~\ref{fig:LefschetzBordered}),
$$
\Sigma := \DD \subset \CC.
$$
In this case, the vertical boundary is a connected fibration of some
compact oriented surface with boundary over $\p\DD = S^1$,
$$
\pi := \Pi|_{\p_v W} : \p_v W \to S^1,
$$
and the horizontal boundary is a disjoint union of circle bundles
over~$\DD$; since bundles over $\DD$ are trivial,
the connected components of $\p_h W$ can then be identified with 
$S^1 \times \DD$ such that $\pi$ on the corner
$\p_h W \cap \p_v W = \p(\p_h W) = \coprod (S^1 \times \p\DD)$
takes the form $\pi(\theta,\phi) = \phi$.  This means that after
smoothing the corners of $\p W$, the latter inherits from
$\Pi : W \to \DD$ an open book decomposition $\pi : \p W \setminus B \to S^1$
uniquely up to isotopy, with $\p_h W$ regarded as a tubular neighborhood of
the binding $B := \coprod (S^1 \times \{0\})$.

\begin{figure}
\includegraphics{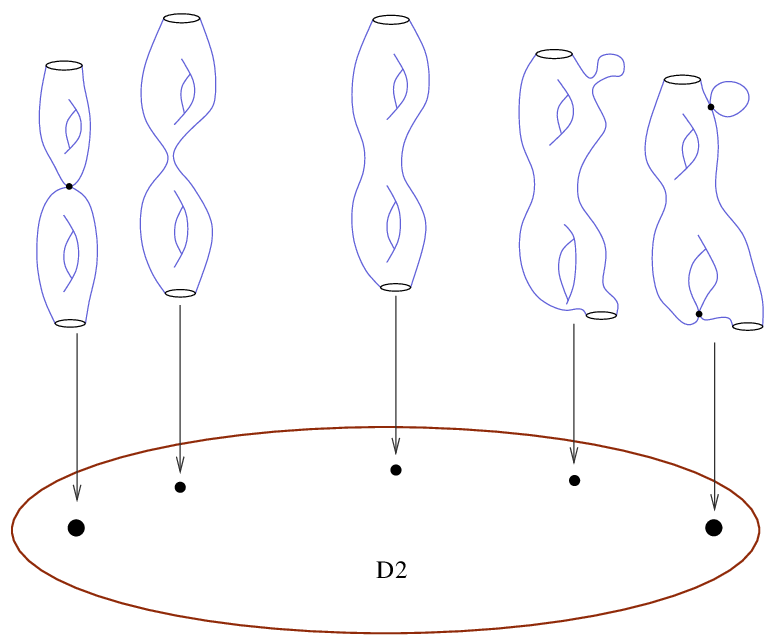}
\caption{\label{fig:LefschetzBordered}
A bordered Lefschetz fibration over the unit disk $\DD \subset \CC$,
where the regular fibres have genus~$2$ and two boundary components, 
and there are two singular fibres, each with two irreducible components.
The boundary inherits an open book with pages of genus~$2$ and two
binding components.}
\end{figure}

Recall that for any surface fibration $F \hookrightarrow M \to S^1$
that is trivial near the boundaries of the fibres,
the parallel transport (with respect to any connection) along a
full traversal of the loop $S^1$ determines (uniquely up to isotopy)
a diffeomorphism $\varphi : F \to F$ that is trivial near~$\p F$;
we call this the \defin{monodromy}\index{monodromy|(}\index{Lefschetz fibration!monodromy|(}
of the fibration.  One can thus
define the monodromy of a Lefschetz fibration along any loop 
containing no critical values---in particular, the monodromy along
$\p\DD$ is also called the \emph{monodromy of the open book}\index{open book decomposition!monodromy of|(}
induced at the boundary.  

It is a basic fact about the topology of Lefschetz fibrations that
the monodromy along a loop can always be expressed in terms of
\emph{positive Dehn twists}; see e.g.~\cite{GompfStipsicz}.
For our purposes, the relevant version of this statement is the
following.  Let $z_0 = 1 \in \p\DD$ and denote the fibre at $z_0$ by
$F := \Pi^{-1}(z_0)$. Pick a set of smooth paths
$$
\gamma_z : [0,1] \to \DD, \quad \text{ for each $z \in \DD\crit$},
$$
from $\gamma_z(0) = z_0$ to $\gamma_z(1) = z$, intersecting each
other only at~$z_0$.  Then for each $z \in \DD\crit$ and
$p \in W\crit \cap \Pi^{-1}(z)$, there is a unique isotopy class of smoothly
embedded circles
$$
S^1 \cong C_p \subset F
$$
that can be collapsed to~$p$ under parallel transport along 
$\gamma_{z}$; this is called the \defin{vanishing cycle}\index{vanishing cycle}\index{Lefschetz fibration!vanishing cycle}
of~$p$.  We then have:

\begin{prop}
\label{prop:monodromy}
If $\Pi : W \to \DD$ is a bordered Lefschetz fibration, then the
monodromy $F \to F$ of the induced open book at the boundary is a 
composition of positive Dehn twists along the vanishing cycles
$C_p \subset F$ for each critical point $p \in W\crit$.
\end{prop}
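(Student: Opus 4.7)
The plan is to reduce the global statement to a local computation at each critical point, and then reassemble using the path system $\{\gamma_z\}_{z \in \DD\crit}$. First I would choose small pairwise-disjoint closed disks $\DD_z \subset \DD$ around each critical value $z \in \DD\crit$, small enough that each $\gamma_z$ meets $\p\DD_z$ in a single point $z_z'$, and then replace the loop $\p\DD$ by a concatenation of loops $\ell_z$, one for each critical value. The loop $\ell_z$ goes from $z_0$ along $\gamma_z$ up to $z_z'$, traverses $\p\DD_z$ once counterclockwise, and returns along $\gamma_z$ to $z_0$. Because $\DD$ minus the critical values retracts onto the union of these loops, one can arrange (by deforming the fibration through an isotopy in the base away from $\DD\crit$, which does not change monodromies) that the monodromy along $\p\DD$ is the ordered composition of the monodromies along the $\ell_z$. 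By construction the monodromy along $\ell_z$ is conjugate, via parallel transport along $\gamma_z$, to the monodromy of the restricted Lefschetz fibration $\Pi|_{\Pi^{-1}(\DD_z)}$ along $\p\DD_z$.

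Next I would carry out the key local computation: show that for the standard model $\Pi(z_1,z_2) = z_1^2 + z_2^2$ over a small disk $\DD_z$ around $0$, with a chosen reference fibre $F_z := \Pi^{-1}(z_z')$, the monodromy around $\p\DD_z$ is a positive Dehn twist along the vanishing cycle. The idea is to observe that for $w \ne 0$, the fibre $\Pi^{-1}(w)$ is diffeomorphic to $T^*S^1$ and contains the distinguished Lagrangian $S^1$ given by $\{(z_1,z_2) \in \RR^2 \mid z_1^2 + z_2^2 = w\}$ (after rotating $w$ to $\RR_{>0}$ and intersecting with $\RR^2 \subset \CC^2$); as $w$ runs around $\p\DD_z$, this circle rotates and collapses in a way that realizes precisely the monodromy of a positive Dehn twist along it. This $S^1$ is, by definition, the vanishing cycle $C_p$. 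Combined with the previous paragraph, this shows that the monodromy along $\ell_z$ is a positive Dehn twist along $C_p$ inside~$F$.

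Finally I would assemble the pieces: the monodromy along $\p\DD$ is the ordered composition of the monodromies along the $\ell_z$, hence a composition of positive Dehn twists along the vanishing cycles $C_p$ for $p \in W\crit$. One must note that the ordering of the factors depends on the chosen cyclic order in which the paths $\gamma_z$ leave $z_0$, but the statement of the proposition only claims existence of such an expression.

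The main obstacle is the local Dehn-twist computation: verifying rigorously that the standard Lefschetz model $z_1^2 + z_2^2$ produces a positive (rather than negative) Dehn twist requires pinning down orientation conventions, identifying the regular fibre with a neighborhood of the zero section in $T^*S^1$, and tracking the symplectic parallel transport as $w$ winds around $0$. Once this model case is in hand, the rest of the argument is essentially formal, since everything else is just the statement that monodromy is multiplicative under concatenation of loops and invariant under homotopies in $\DD \setminus \DD\crit$.
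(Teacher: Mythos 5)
Your proposal is correct and is exactly the standard argument that the paper itself does not spell out: Proposition~\ref{prop:monodromy} is quoted there as a basic fact with a reference to Gompf--Stipsicz, and the proof given in that reference proceeds just as you describe, by homotoping $\p\DD$ in $\DD\setminus\DD\crit$ to a concatenation of loops built from the paths $\gamma_z$ and small circles around the critical values, computing the local model $\Pi(z_1,z_2)=z_1^2+z_2^2$ to see a positive Dehn twist along the vanishing cycle, and composing. The only things to keep in view beyond the local orientation computation you already flag are two minor points: a singular fibre may contain several critical points, in which case the monodromy around $\p\DD_z$ is the composition of the (disjointly supported) positive twists along all their vanishing cycles, and the parallel transport should be chosen trivial near $\p_h W$ so that all monodromies are well defined as diffeomorphisms of $F$ fixing a neighborhood of $\p F$.
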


\begin{example}
\label{ex:cylinders}
Suppose $\Pi : W \to \DD$ has regular fibre $F \cong [-1,1] \times S^1$
and exactly $k \ge 0$ singular fibres, each consisting of two disks
connected along a critical point (see Figure~\ref{fig:cylinders}, left).
The resulting open book on $\p W$ then has pages diffeomorphic to
$\RR \times S^1$ and monodromy $\delta^k$, where $\delta$ denotes the
positive Dehn twist along the separating curve $\{0\} \times S^1$, which
generates the mapping class group of $\RR \times S^1$.
If we blow up $W$ at a regular point in the interior, 
then by Exercise~\ref{EX:blowupLefschetz}
we obtain a new bordered Lefschetz fibration with one additional singular 
fibre consisting of an annulus connected to an exceptional sphere 
(Figure~\ref{fig:cylinders}, right).  This blowup operation obviously
does not change the open book on $\p W$, which is consistent
with Proposition~\ref{prop:monodromy} since the additional
Dehn twist introduced by the extra critical point is along a \emph{contractible}\index{Lefschetz fibration!monodromy|)}
vanishing cycle, and is therefore isotopic to the identity.\index{monodromy|)}\index{open book decomposition!monodromy of|)}
\end{example}

\begin{figure}
\includegraphics{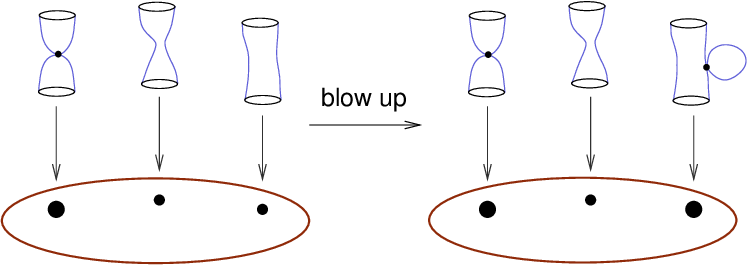}
\caption{\label{fig:cylinders} Two bordered Lefschetz fibrations with regular fibre
diffeomorphic to the annulus. The picture at the right is obtained from the one at
the left by blowing up at a regular point.}
\end{figure}

Let us denote the contact manifolds supported by the open books on $\p W$
in Example~\ref{ex:cylinders} by $(M_k,\xi_k)$.  It is not too hard to say
precisely what these contact manifolds are: topologically, we have
$M_0 \cong S^1 \times S^2$, $M_1 \cong S^3$, and $M_k$ for $k \ge 2$ is the
lens space $L(k,k-1)$.  All of these carry \emph{standard} contact structures
that can be defined as follows.  We defined $(S^3,\xi\std)$ already in
\S\ref{sec:contact} by identifying $S^3$ with the boundary of the unit ball in
$\RR^4$ with coordinates $(x_1,y_1,x_2,y_2)$ and writing $\xi\std = 
\ker\left( \lambda\std|_{T S^3} \right) \subset TS^3$, where
$$
\lambda\std := \frac{1}{2} \sum_{j=1}^2 \left( x_j\, dy_j - y_j \, d x_j \right).
$$
Under the natural identification $\RR^4 \to \CC^2 : (x_1,y_1,x_2,y_2) \mapsto
(x_1 + iy_1, x_2 + i y_2)$, this contact structure is invariant under the action
of $\U(2)$, thus the standard contact structure $\xi\std$ on any lens space
$L(p,q)$ for two coprime integers $p > q \ge 1$ can be defined via the quotient
$$
\left(L(p,q), \xi\std\right) := (S^3,\xi\std) \big/ G_{p,q},
$$
where $G_{p,q} \subset \U(2)$ denotes the subgroup
$$
G_{p,q} := \left\{ \begin{pmatrix} \zeta & 0 \\ 0 & \zeta^q \end{pmatrix} \in \U(2)\ 
\bigg| \ \zeta^p = 1 \right\}.
$$
Finally, on $S^1 \times S^2$, we use the coordinates $(\eta,\theta,\phi)$, where
$\eta \in S^1 = \RR /\ZZ$ and $(\theta,\phi) \in [0,\pi] \times (\RR / 2\pi\ZZ)$
are the natural spherical coordinates on~$S^2$, and write
$$
\xi\std = \ker\left[ f(\theta)\, d\eta + g(\theta)\, d\phi  \right]
$$
for a suitably chosen loop $(f,g) : \RR / \pi\ZZ \to \RR^2 \setminus \{0\}$ that is
based at the point $(f(0),g(0)) = (1,0)$ and winds exactly once counterclockwise around
the origin.  Any two choices of $(f,g)$ that make the above expression a smooth
contact form on $S^1 \times S^2$ and have the stated winding property produce isotopic
contact structures; see e.g.~\cite{Geiges:book}.
The following can now be verified by constructing explicit open books that support these
contact structures and then computing the monodromy.

\begin{prop}
\label{prop:examples}
There are contactomorphisms $(M_0,\xi_0) \cong (S^1 \times S^2,\xi\std)$,
$(M_1,\xi_1) \cong (S^3,\xi\std)$, and $(M_k,\xi_k) \cong (L(k,k-1),\xi\std))$
for each $k \ge 2$.
\end{prop}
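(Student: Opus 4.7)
The plan is to invoke Theorem~\ref{thm:ThurstonWinkelnkemper}: since a contact structure supported by a given open book is unique up to isotopy, it suffices, for each $k\ge 0$, to exhibit a concrete open book on the claimed target manifold whose pages are annuli, whose monodromy equals $\delta^k$, and which is supported by an explicit Giroux representative of $\xi\std$. I would then build these open books one case at a time, reusing the Hopf link construction on $S^3$ as a covering model for the lens spaces.

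For $k=0$, I would take the binding in $S^1\times S^2$ to be the two circles $S^1\times\{\theta=0\}$ and $S^1\times\{\theta=\pi\}$ (with orientations dictated by the Giroux condition), and the projection $(\eta,\theta,\phi)\mapsto\phi$; each page is parametrized by $(\eta,\theta)\in S^1\times(0,\pi)$, hence an annulus, and since the fibration is a product the monodromy is trivial. To see that the standard contact form $\alpha=f(\theta)\,d\eta+g(\theta)\,d\phi$ is Giroux it suffices to unpack the winding condition on $(f,g)$: smoothness at the poles forces $g(0)=g(\pi)=0$, so $\alpha|_{TB}=\pm f(\theta)\,d\eta$ with the correct signs at the two binding components, while $d\alpha|_{\mathrm{page}}=f'(\theta)\,d\theta\wedge d\eta$ is positive because the loop $(f,g)$ winds monotonically. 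This realises $(M_0,\xi_0)$.

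For $k\ge 1$, I would start with the Hopf link open book on $S^3\subset\CC^2$, defined by $B=\{z_1z_2=0\}$ and $\pi(z_1,z_2)=\arg(z_1z_2)$. A direct parametrization of a page by $(r_1,\theta_1)$ subject to $|z_1|^2+|z_2|^2=1$ and $\theta_1+\theta_2$ constant shows the page is an annulus, and the classical calculation (e.g.\ by tracking how the flow of $\p_{\theta_1}-\p_{\theta_2}$ rotates the two boundary circles in opposite directions) identifies the monodromy with a single positive Dehn twist $\delta$ along the core $r_1=r_2$. Computing in polar coordinates gives $\lambda\std=\tfrac12(r_1^2\,d\theta_1+r_2^2\,d\theta_2)$, from which $\alpha|_{TB}>0$ (the binding is a pair of Reeb orbits) and $d\lambda\std|_{\mathrm{page}}>0$ are straightforward; this handles $k=1$. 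For $k\ge 2$, the key observation is that the action of $G_{k,k-1}$ preserves $z_1z_2$ because $\zeta^k=1$, so the Hopf open book descends to an open book on $L(k,k-1)$ whose binding is again two circles and whose pages are the quotients of the annular $S^3$-pages by the rotation $\theta_1\mapsto\theta_1+2\pi/k$ — still annuli. Since $\lambda\std$ is $\U(2)$-invariant it descends to a Giroux form for the quotient open book, so only the monodromy computation remains.

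That monodromy computation is the one serious step of the proof. I would write the Dehn twist in the $S^3$-page as $\delta(\theta_1,r)=(\theta_1+2\pi f(r),r)$ for a bump function $f$ with $f(0)=0$, $f(1)=1$; this commutes with the rotation action and hence descends to the quotient annulus. Passing to the quotient coordinate $\bar\theta_1=k\theta_1\pmod{2\pi}$ turns the lift into $\bar\delta(\bar\theta_1,r)=(\bar\theta_1+2\pi k\,f(r),r)$, which is $\delta^k$ on the quotient annulus, matching the monodromy in Example~\ref{ex:cylinders}. The main obstacle throughout the argument is bookkeeping the covering degree in exactly this step — verifying that the quotient of the annular page by the cyclic rotation really is parametrized so that a single Dehn twist upstairs becomes $k$ Dehn twists downstairs, rather than a single one or a fractional one; once this is pinned down via an explicit lift to the universal cover as above, Thurston--Winkelnkemper closes the argument.
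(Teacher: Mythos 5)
Your overall strategy---explicit open books plus the uniqueness statement of Theorem~\ref{thm:ThurstonWinkelnkemper}, with the lens spaces obtained as quotients of the Hopf-link open book on $S^3$---is exactly the route the paper has in mind (it offers no written proof beyond suggesting this), and the one genuinely delicate step, the monodromy of the quotient, you handle correctly: since $\arg(z_1z_2)$ is $G_{k,k-1}$-invariant, in model coordinates near either binding circle the action is a pure rotation in the binding direction, so an equivariant twist/transport field upstairs descends to a legitimate monodromy representative downstairs, and your rescaling $\bar\theta_1=k\theta_1$ correctly exhibits it as $\delta^k$. (One cosmetic slip: the flow of $\p_{\theta_1}-\p_{\theta_2}$ is \emph{tangent} to the pages, so it cannot compute the $S^3$ monodromy; one should transport with a field $V$ satisfying $d(\theta_1+\theta_2)(V)=1$ that equals the angular field of the normal disks near each binding component, which indeed returns a single positive Dehn twist.)

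The genuine gap is in the $S^1\times S^2$ case. You assert that $d\alpha|_{\mathrm{page}}=f'(\theta)\,d\theta\wedge d\eta$ is positive ``because the loop $(f,g)$ winds monotonically.'' Monotone angular winding says nothing about the sign of $f'$; in fact for a loop on $\RR/\pi\ZZ$ with $f(0)=f(\pi)=1$ one has $\int_0^\pi f'\,d\theta=0$, so $f'$ must change sign and the form as described is \emph{not} a Giroux form for the $\phi$-projection open book. Worse, if the coefficient curve really closes up at $(1,0)$ after a full turn, then $f$ vanishes at least twice, the $\eta$-invariant convex sphere $\{\eta=\mathrm{const}\}$ has disconnected dividing set $\{f=0\}$, and that structure is overtwisted---so it cannot be supported by this open book at all, which (being the boundary open book of the trivial annulus fibration, a Stein filling) supports the tight structure. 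The correct verification takes a representative with $f$ strictly monotone from $+1$ to $-1$, e.g.\ $\alpha=\cos\theta\,d\eta+\sin^2\theta\,d\phi$, the contact form induced on $S^1\times S^2=\p(S^1\times B^3)$ by the Liouville field $z\p_z+\tfrac12(x\p_x+y\p_y)$ for $dz\wedge d\eta+dx\wedge dy$: then $d\alpha|_{\mathrm{page}}$ is nowhere zero with the right sign, and $\alpha|_{TB}>0$ on \emph{both} binding circles once you notice that at $\theta=\pi$ the boundary orientation of the page is $-\p_\eta$, compensating $f(\pi)=-1$; finally one identifies $\ker\alpha$ with $\xi\std$ up to isotopy (the paper's ``winds exactly once'' normalization should be read with care here---the tight structure corresponds to a half-turn of $(f,g)$ from $(1,0)$ to $(-1,0)$). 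So the monotonicity you need is of $f$ itself, together with opposite signs of $f$ at the two poles, not monotonicity of the winding.
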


We will say that a symplectic form $\omega$ on $W$ is 
\defin{supported by}\index{symplectic form!supported by a bordered Lefschetz fibration|(}
a bordered Lefschetz fibration $\Pi : W \to \DD$
if the following conditions hold:
\begin{enumerate}
\item Every fibre of $\Pi|_{W\setminus W\crit} : W \setminus W\crit 
\to \DD$ is a symplectic submanifold;
\item On a neighborhood of $W\crit$, $\omega$ tames some almost 
complex structure $J$ that preserves the tangent spaces of the fibres;
\item On a neighborhood of $\p W$, $\omega = d\lambda$ for some
$1$-form $\lambda$ such that $\lambda|_{T(\p_h W)}$ and 
$\lambda|_{T(\p_v W)}$ are
each contact forms, and the induced Reeb vector field on $\p_h W$ is
tangent to the fibres (in the positive direction).
\end{enumerate}

Observe that for the contact form $\lambda$ on the smooth faces
of $\p W$ in the above definition, $d\lambda = \omega$ is
necessarily positive on the pages of the induced open book, and
$\lambda$ is also positive on the binding in~$\p_h W$, so that
$\lambda|_{\p W}$ satisfies a variation on the conditions for
a Giroux form.
The natural analogue of Theorem~\ref{thm:Gompf} in this context
is the following:

\begin{thm}
\label{thm:supported}
On any bordered Lefschetz fibration $\Pi : W \to \DD$, the
space of supported symplectic forms is nonempty and connected,
and the corner of $\p W$ can be smoothed so that $(W,\omega)$
becomes (canonically up to symplectic deformation)
a symplectic filling of the contact structure supported by the
induced open book at the boundary.
\end{thm}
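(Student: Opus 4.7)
The plan is to adapt the Thurston--Gompf construction (Theorems~\ref{thm:Thurston} and~\ref{thm:Gompf}) to the bordered setting. The construction in the closed case proceeds by building a fibrewise symplectic form $\sigma$ using a partition of unity and then adding a sufficiently large multiple $K\, \Pi^* \omega_\DD$ of the pullback of an area form from the base to achieve global nondegeneracy. The new features here are (a) the need to build $\sigma$ so that near each critical point it tames a fibre-preserving almost complex structure, (b) the requirement that $\omega$ is exact near $\p W$ with a primitive $\lambda$ whose restriction to each smooth face is a contact form, and (c) the requirement that the Reeb vector field of $\lambda|_{\p_h W}$ is positively tangent to the fibres. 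All three conditions are local, so the overall strategy is to build the form separately on a neighborhood of $W\crit$, on a neighborhood of $\p W$, and on the complement of these, and then patch using a partition of unity keeping nondegeneracy through convexity arguments.

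First I would construct the fibrewise piece. Near each critical point, choose the standard K\"ahler form $\tfrac{i}{2}(dz_1 \wedge d\bar z_1 + dz_2 \wedge d\bar z_2)$ in the Lefschetz chart; this is fibrewise positive and tames the standard $J$. On a tubular neighborhood of $\p_h W \cong \coprod S^1 \times \DD$, take a $1$-form of the type $\lambda_h = h(r)\, d\theta + g(r)\, d\phi$ with $g(0)>0$, $h'g - hg' > 0$ and $h(r) \equiv r^2$ near $r=0$; the exterior derivative $d\lambda_h$ is fibrewise positive, the Reeb vector field of $\lambda_h|_{\p_h W}$ (taken at $r$ small) is a positive multiple of $\p_\theta$ and hence positively tangent to fibres, and its binding component is a closed Reeb orbit as required in the induced open book. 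On a neighborhood of $\p_v W = \Pi^{-1}(\p\DD)$, choose a $1$-form $\lambda_v$ such that $d\lambda_v$ is a fibrewise positive symplectic form and $\lambda_v|_{\p_v W}$ is a contact form dominating the fibration $\Pi|_{\p_v W} \to S^1$. On the remaining interior region, take an arbitrary fibrewise area form. A partition of unity then yields a global closed $2$-form $\sigma$ that is fibrewise positive everywhere, equals $d\lambda_h$ near $\p_h W$, and equals $d\lambda_v$ near $\p_v W$.

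Next, set
\begin{equation*}
\omega := \sigma + K\, \Pi^*\omega_\DD
\end{equation*}
for $K > 0$ large and $\omega_\DD$ an area form on $\DD$ that vanishes to sufficient order at $\p\DD$ (so as not to spoil exactness near $\p_v W$). The choice of $\omega_\DD$ can be arranged so that $K\, \Pi^*\omega_\DD = d(K\beta)$ for a $1$-form $\beta$ vanishing near $\p W$, which lets us set $\lambda := \lambda_h + \lambda_v + K\beta$ (after interpolating $\lambda_h$ and $\lambda_v$ near the corner), so that $\omega = d\lambda$ on a neighborhood of $\p W$. For $K$ sufficiently large, $\omega$ is nondegenerate away from $W\crit$ since the horizontal distribution becomes symplectic, and nondegenerate near $W\crit$ by a direct computation in the Lefschetz chart. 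The main technical obstacle is the corner along $\p_h W \cap \p_v W$: here both $\lambda_h$ and $\lambda_v$ must be made to agree on overlap and their sum must restrict to positive contact forms on both faces. I would arrange this by choosing $\lambda_h$ and $\lambda_v$ in a small corner neighborhood so that each pulls back to a multiple of the natural angular coordinate on~$S^1$, then use convexity of the contact condition (Exercise~\ref{EX:convex}) to interpolate.

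Connectedness follows from the same construction: given two supported forms $\omega_0, \omega_1$, the spaces of admissible fibrewise pieces $\sigma$ and of admissible boundary primitives $\lambda$ are convex (for $\lambda$ by Exercise~\ref{EX:convex}, for $\sigma$ by fibrewise linear interpolation, noting that positivity on each fibre and the taming condition near $W\crit$ are convex), and for sufficiently large common $K$ the linear interpolation $\omega_t := \sigma_t + K\Pi^*\omega_\DD$ remains symplectic and supported. Finally, for the smoothing of the corner, the $1$-form $\lambda$ constructed above is a Liouville form for $\omega$ on a neighborhood of $\p W$; its $\omega$-dual Liouville vector field $V$ is transverse to both smooth faces (pointing outward) and to every nearby hypersurface obtained by corner-rounding. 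Pushing the smoothed boundary inward slightly along $-V$ gives a smooth convex boundary whose induced contact structure, by Exercise~\ref{EX:LiouvilleContact} and Gray stability, is isotopic to the one supported by the boundary open book coming from Proposition~\ref{prop:monodromy}. This realises $(W,\omega)$ as a strong symplectic filling of that contact structure, and the uniqueness up to deformation follows from the connectedness of the space of supported forms.
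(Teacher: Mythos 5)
The paper itself does not prove Theorem~\ref{thm:supported}: it states the result and refers to \cite{LisiVanhornWendl} for complete proofs. Your overall architecture --- a Thurston--Gompf patching of a fibrewise form, inflation by a large multiple of a pullback from the base, convexity arguments for connectedness, and corner-rounding along a Liouville vector field, with the induced contact structure identified via Exercise~\ref{EX:LiouvilleContact} and Gray stability --- is indeed the route taken in that reference, so at the level of strategy your proposal is the standard one.

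There is, however, a concrete failure in your model near the horizontal boundary, and this is precisely where the defining conditions of a supported form must be checked. With the identification $\p_h W \cong S^1 \times \DD$ used in the text, $\theta$ is the fibre (and future binding) direction and $(r,\phi)$ are polar coordinates on the base factor, so your ansatz $\lambda_h = h(r)\, d\theta + g(r)\, d\phi$ with $h(r) \equiv r^2$ near $r=0$ and $g(0) > 0$ has the two roles interchanged. Concretely: (i) $\lambda_h$ is not even a smooth $1$-form at the binding circles $S^1 \times \{0\}$, since $d\phi$ is singular at $r=0$ unless its coefficient vanishes to order $r^2$; (ii) $\lambda_h$ vanishes on those circles instead of being positive there, so they are not closed Reeb orbits of the induced form; and (iii) the Reeb vector field of $h\,d\theta + g\,d\phi$ on $\p_h W$ is proportional to $g'\,\p_\theta - h'\,\p_\phi$, hence it is tangent to the fibres only where $h' = 0$ --- with $h = r^2$ it acquires a nonvanishing $\p_\phi$-component for all $r \ne 0$, so the Reeb-tangency requirement in the definition of a supported form fails on $\p_h W$ (it must hold on all of $\p_h W$, not just ``at small $r$''). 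The correct local model takes the $d\theta$-coefficient constant and positive on $\p_h W$ and the $d\phi$-coefficient like $\tfrac{1}{2} r^2$ near the binding, e.g.\ $\lambda_h = c\, d\theta + \tfrac{1}{2} r^2\, d\phi$ with $c>0$: then $\lambda_h|_{T \p_h W}$ is contact with Reeb field $c^{-1}\p_\theta$, the binding circles are Reeb orbits, and $d\lambda_h$ is positive on the pages $\{\phi = \mathrm{const}\}$ after rounding. A secondary point: requiring $\omega_\DD$ to vanish at $\p\DD$ is unnecessary ($\Pi^*\omega_\DD$ is exact in any case, since the base is a disk) and it costs you something, because you then need $d\lambda_v$ alone to be nondegenerate near $\p_v W$; in the standard construction the term $K\,\Pi^*\omega_\DD$ is kept nonzero near $\p_v W$ precisely so that, for $K \gg 0$, it both provides nondegeneracy there and makes $\lambda|_{T\p_v W}$ contact.
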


We note one additional detail about this construction:
a symplectic form $\omega$ on $W$ may sometimes be exact since
$\p W \ne \emptyset$, but the condition of $\omega$ being
positive on fibres imposes contraints that may make this
impossible.  In particular, $\omega$ can never be exact if
any singular fibre of $\Pi : W \to \DD$
contains an irreducible component that is closed---this 
would violate Stokes' theorem.  We say that a bordered
Lefschetz fibration is \defin{allowable}\index{Lefschetz fibration!allowable}\index{bordered Lefschetz fibration!allowable}
if no such components
exist, which is equivalent to saying that all the vanishing
cycles are homologically nontrivial.
For example, the Lefschetz fibration in Figure~\ref{fig:LefschetzBordered}
is not allowable, due to the presence of a closed irreducible
component in the singular fibre at the right, but one can
show that this component is an exceptional sphere, thus an
allowable Lefschetz fibration could be produced by blowing it down
(cf.~Exercise~\ref{EX:blowupLefschetz}).

It turns out that if
$\Pi : W \to \DD$ is allowable, one can always construct $\omega$ so
that it is not only exact but also arises from a \emph{Weinstein
structure}, a much more rigid notion of a symplectic filling.
We will not discuss Weinstein and Stein fillings any further 
here (see \cites{Etnyre:convexity,OzbagciStipsicz,CieliebakEliashberg}),
except to mention the following related result:

\addtocounter{thm}{-1}
\renewcommand{\thethm}{\thechapter.\arabic{thm}$'$}
\begin{thm}
\label{thm:supportedStein}
If $\Pi : W \to \DD$ is an allowable bordered Lefschetz
fibration, then $(W,\omega)$ in Theorem~\ref{thm:supported}
can be arranged to be a Weinstein filling of the\index{Stein filling}\index{Weinstein filling}\index{symplectic filling!Stein}\index{symplectic filling!Weinstein}
contact manifold $(\p W,\xi)$ supported by the open book
induced at the boundary. In particular, $(\p W,\xi)$ is
Stein fillable.
\end{thm}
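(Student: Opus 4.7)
The plan is to construct a Weinstein structure on $W$ directly from the bordered Lefschetz fibration $\Pi$ by assembling $W$ out of Weinstein handles, following the well-known strategy of Loi--Piergallini and Akbulut--Ozbagci. Recall that a Weinstein filling is a symplectic manifold $(W,\omega)$ equipped with a Liouville vector field $V$ (so $\Lie_V\omega = \omega$) that is gradient-like for a Morse function constant on $\p W$, and such structures are built inductively by attaching Weinstein $k$-handles for $k \le n$. In our four-dimensional setting, the relevant handles have index $0$, $1$, and~$2$, and $2$-handles must be attached along Legendrian knots in the current contact boundary using the framing $\mathrm{tb}-1$.

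First I would build the ``base'' piece $F \times \DD$, where $F$ is a regular fibre of~$\Pi$. Since $F$ is a compact oriented surface with nonempty boundary, it retracts onto a properly embedded $1$-complex, so one can choose a Weinstein Morse function on $F \times \DD$ adapted to this complex that produces a handle decomposition involving only $0$- and $1$-handles. The resulting Weinstein domain has boundary contact structure supported by the obvious open book with page $F$ and trivial monodromy, with the binding living inside $\p F \times \DD$.

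Next I would introduce the Lefschetz critical points one at a time. Ordering them via the vanishing paths $\gamma_z$ from Proposition~\ref{prop:monodromy}, each vanishing cycle $C_p$ embeds as a smooth simple closed curve in a page of the current boundary open book. Pages are convex surfaces in the contact boundary, and Honda's Legendrian realization principle allows any \emph{homologically essential} such curve to be isotoped through pages to a Legendrian knot whose Thurston--Bennequin framing agrees with its page framing. Attaching a Weinstein $2$-handle along this Legendrian with the required framing $\mathrm{tb}-1$ introduces precisely a Lefschetz singular fibre with vanishing cycle $C_p$, by the Eliashberg--Gompf correspondence between Weinstein $2$-handles and Lefschetz handles, while composing the boundary monodromy with a positive Dehn twist along~$C_p$.

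After all critical points have been realized, the resulting Weinstein domain carries a bordered Lefschetz fibration with the same fibre, the same ordered critical points, and the same vanishing cycles as $\Pi : W \to \DD$, hence is diffeomorphic to~$W$ through a fibration-preserving map. Pulling back the Weinstein data yields the desired Weinstein filling, whose induced contact structure on $\p W$ coincides with the one supported by the boundary open book thanks to the uniqueness half of Theorem~\ref{thm:ThurstonWinkelnkemper}. The main obstacle, and the precise role of the \emph{allowability} hypothesis, lies in the Legendrian realization step: only essential simple closed curves on a convex page can be Legendrian-realized with the correct framing, and more globally, were some vanishing cycle to bound a subsurface of $F$ then the corresponding singular fibre would acquire a closed irreducible component, a symplectic submanifold that cannot exist inside an exact symplectic manifold by Stokes' theorem. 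Allowability rules out this pathology at every step and permits the inductive construction to proceed.
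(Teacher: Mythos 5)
Your outline is essentially the classical handle-by-handle argument of Loi--Piergallini and Akbulut--Ozbagci (Weinstein $0$- and $1$-handles building a Stein structure on $F \times \DD$, then a Weinstein $2$-handle along a Legendrian realization of each vanishing cycle with framing $\mathrm{tb}-1$), and as a sketch of why allowable Lefschetz fibrations carry Stein/Weinstein fillings it is sound. It is, however, not the route taken here: these notes do not prove Theorem~\ref{thm:supportedStein} at all but defer to \cite{LisiVanhornWendl}, where the Weinstein structure is built \emph{directly on the total space of the given fibration} by a Thurston--Gompf-type construction --- a fibrewise Liouville primitive plus a large multiple of a Liouville form pulled back from the base, corrected near the critical points and near $\p_h W$ so that the Reeb field is tangent to the fibres. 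In that approach allowability is used to guarantee that every irreducible component of every fibre has nonempty boundary, so that the fibrewise forms admit global primitives (exactly the Stokes obstruction you mention), whereas in your approach it enters through Legendrian realization: only non-separating curves on a page are nonisolating and can be realized with contact framing equal to the page framing. What the direct construction buys is precisely the stronger statement made here: the Weinstein form is itself a symplectic form \emph{supported by} $\Pi$ in the sense of Theorem~\ref{thm:supported}, hence lies in the connected space of supported forms, which is what gets used later (e.g.\ in Corollary~\ref{cor:strongNotStein} via Theorem~\ref{thm:pony}). What your route buys is flexibility and a purely topological handle picture, which is the form in which the result usually appears in the literature.

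The one place where your argument, as written, falls short of the statement is this compatibility/deformation-class issue. You produce a Weinstein domain abstractly identified with $W$ by a fibration-preserving diffeomorphism, with the correct contact structure on the boundary (and even the appeal to the uniqueness half of Theorem~\ref{thm:ThurstonWinkelnkemper} requires knowing that the contact structure produced by the handle attachments is supported by the resulting open book --- standard, but a step, not a tautology). But the theorem asserts that ``$(W,\omega)$ in Theorem~\ref{thm:supported}'' can be arranged Weinstein, i.e.\ the Weinstein form should be supported by $\Pi$ (fibres symplectic, the taming condition near $W\crit$, and the boundary conditions on $\p_h W$ and $\p_v W$), or at least connected to the forms of Theorem~\ref{thm:supported} through supported forms; otherwise the connectedness statement of Theorem~\ref{thm:supported} cannot be invoked to place it in the same deformation class as an arbitrary supported filling. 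Your handle construction does not address this: you would either have to check that the handle-built Liouville data can be made compatible with the fibration, or supply a separate deformation argument, and that is exactly the extra precision that \cite{LisiVanhornWendl} is cited for.
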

\renewcommand{\thethm}{\thechapter.\arabic{thm}}

Theorems~\ref{thm:supported} and~\ref{thm:supportedStein} can be
found in a variety of forms in the literature but are usually not
stated quite so precisely as we have stated them here---complete proofs
of our versions (including also cases where $\Sigma \ne \DD$) may 
be found in \cite{LisiVanhornWendl1}.\index{symplectic form!supported by a bordered Lefschetz fibration|)}

\section{A classification theorem for symplectic fillings}
\label{sec:pony}

An open book decomposition of a $3$-manifold is called \defin{planar}\index{open book decomposition!planar}\index{planar open book}
if its pages have genus~$0$, i.e.~they are punctured spheres.  We then call
$(M,\xi)$ a \defin{planar contact manifold}\index{contact structure!planar}\index{contact manifold!planar}\index{planar contact manifold}
if $M$ admits a planar open book
supporting~$\xi$.  The planar contact manifolds play something of a special
role in $3$-dimensional contact topology, similar to the role of
rational and ruled surfaces among symplectic $4$-manifolds (see \cites{McDuff:rationalRuled,Wendl:rationalRuled}).
It is not always easy to recognize whether a
given contact structure is planar or not, but many results in either direction or
known: Etnyre \cite{Etnyre:planar} showed for instance that all \emph{overtwisted}
contact structures are planar, and by an obstruction established in the
same paper, the standard contact structures on unit cotangent bundles of
oriented surfaces with positive genus are never planar.  As we saw in
Proposition~\ref{prop:examples}, the standard contact structures on $S^3$,
$S^1 \times S^2$ and $L(k,k-1)$ for $k \ge 2$ are all planar, as are all
contact structures that arise on boundaries of bordered Lefschetz
fibrations with genus~$0$ fibres.

For an arbitrary contact $3$-manifold $(M,\xi)$, the problem of classifying
all of its symplectic fillings is often hopeless---many examples are
known for instance which admit infinite (but not necessarily exhaustive) 
lists of
pairwise non-homeomorphic or non-diffeomorphic Stein fillings \cites{Smith:torusFibrations,
OzbagciStipsicz:infinitely,AkhmedovEtnyreMarkSmith}.  On the other hand,
many of the earliest results on this question gave finite classifications,
and sometimes even \emph{uniqueness} (up to certain obvious ambiguities) of symplectic
fillings, e.g.~for $S^3$ \cites{Gromov, Eliashberg:diskFilling}, 
$S^1 \times S^2$ \cite{Eliashberg:diskFilling},
the unit cotangent bundle of $S^2$ \cite{Hind:RP3}, and lens 
spaces \cite{McDuff:rationalRuled,Hind:Lens,Lisca:fillingsLens}.  Most of
these finiteness results can now be deduced from the theorem stated below.

We will say that a symplectic filling $(W,\omega)$ of a contact
$3$-manifold $(M,\xi)$ admits a \defin{symplectic Lefschetz
fibration over $\DD$}\index{symplectic filling!Lefschetz fibrations on}\index{Lefschetz fibration!of a symplectic filling}\index{bordered Lefschetz fibration!of a symplectic filling}
if there exists a bordered Lefschetz fibration
$\Pi : E \to \DD$ with a supported symplectic form $\omega_E$ such that,
after smoothing the corners on $\p E$, $(E,\omega_E)$ is symplectomorphic
to $(W,\omega)$.  Whenever this is the case, the
Lefschetz fibration determines a supporting open book on $(M,\xi)$
uniquely up to isotopy.

\begin{thm}[\cite{Wendl:fillable}]
\label{thm:pony}
Suppose $(W,\omega)$ is a symplectic filling of a contact $3$-manifold\index{Lefschetz fibration!as filling of an open book}\index{open book decomposition!filled by a Lefschetz fibration}
$(M,\xi)$ which is supported by a planar open book $\pi : M \setminus B
\to S^1$.  Then $(W,\omega)$ admits a symplectic Lefschetz fibration
over~$\DD$, such that the induced open book at the boundary is isotopic
to $\pi : M \setminus B \to S^1$.  Moreover, the Lefschetz fibration
is allowable if and only if $(W,\omega)$ is minimal.
\end{thm}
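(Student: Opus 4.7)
The plan is to mimic McDuff's argument in Theorem~\ref{thm:McDuff} in the punctured setting, with the role of the symplectic sphere $S$ played by a single page of the open book, pushed into the cylindrical end.

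First I would complete $(W,\omega)$ to $(\widehat{W},\hat{\omega})$ as in \eqref{eqn:completion}, after choosing a Giroux contact form $\alpha$ for $\pi : M\setminus B \to S^1$ whose binding components $\gamma_1,\ldots,\gamma_N$ are nondegenerate, elliptic, simply covered Reeb orbits of small action, and whose remaining periodic orbits all have large action. One can further arrange that each $\gamma_i$ is $C^0$-close to a standard model near which the pages lift to $J$-holomorphic half-planes; the Conley-Zehnder index of $\gamma_i$ can be made equal to $1$ with respect to the page framing, so each $\gamma_i$ is odd in the sense of~\S\ref{sec:winding}. Choose $J \in \jJ(\hat{\omega},\alpha)$ coinciding with a preferred almost complex structure in $\jJ(\alpha)$ on $[0,\infty)\times M$ for which a distinguished page of $\pi$, lifted to the cylindrical end, gives a model asymptotically cylindrical embedded $J$-holomorphic sphere $u_0:\dot{\Sigma}\to\widehat W$ with $\#\Gamma^+ = N$, $\Gamma^- = \emptyset$, positively asymptotic to $\gamma_1,\ldots,\gamma_N$. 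A direct computation with \eqref{eqn:index4} and \eqref{eqn:2cN} gives $\ind(u_0)=2$ and $c_N(u_0)=0$, and all asymptotic orbits are odd, simply covered and distinct, so Theorem~\ref{thm:puncturedFoliations} applies.

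Let $\mM$ denote the connected component of $\mM_0(\widehat W,J;\gamma_1,\ldots,\gamma_N)$ containing $u_0$. Theorem~\ref{thm:puncturedFoliations} gives $\mM$ the structure of a smooth $2$-manifold locally foliating $\widehat W$ near each curve. Using the intersection pairing of Theorem~\ref{thm:star}, one computes $u_0 * u_0 = 0$ (apply Theorem~\ref{thm:adjunctionPunctured} with $\delta(u_0)=\delta_\infty(u_0)=0$, $c_N(u_0)=0$, $\bar\sigma(u_0)=\#\Gamma$) and $u_0 * u_0' = 0$ for every $u_0' \in \mM$ not equal to~$u_0$; by Corollary~\ref{cor:disjoint} and Corollary~\ref{cor:embeddedPunctured} all curves in $\mM$ are embedded with pairwise disjoint (or identical) images, and simple. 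These curves therefore define a smooth foliation of an open subset of the complement of some ``singular set''~$F$; one then shows openness and closedness exactly as in the closed case in~\S\ref{sec:finishMcDuff}, and it remains only to analyze the limits.

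For the compactness/degeneration step I would apply the SFT compactness theorem to a divergent sequence $u_k\in\mM$. Two things must be ruled out: (a) breaking into a multi-level holomorphic building with nontrivial pieces in $\RR\times M$, and (b) sphere bubbles or ghost components of the wrong form. For~(a), the asymptotic orbits $\gamma_i$ have minimal action among Reeb orbits, each Conley--Zehnder index equals $1$, and a standard action/index inequality together with the Morse-Bott ``trivial-cylinder'' part of SFT compactness rules out any nontrivial level in the symplectization, so the building reduces to a nodal curve in $\widehat W$. For~(b), any bubble $v_\pm$ must again satisfy $u_0 * v_\pm = 0$ and must share all asymptotic orbits among the $\gamma_i$; the same adjunction and $*$-pairing computations as in~\S\ref{sec:finishMcDuff} force each component to be a simple embedded asymptotically cylindrical sphere with $c_N = -1$, meeting the other component transversely in a single point, and disjoint from all other curves in $\mM$ as well as from the other nodal limits. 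This is precisely the Lefschetz model \eqref{eqn:Morse}. I expect this compactness analysis, and particularly the exclusion of multi-level buildings with nontrivial symplectization components, to be the main technical obstacle; it depends crucially on having organized the Giroux form so that the binding orbits have strictly smaller action than any other Reeb orbit and on the punctured adjunction formula of Theorem~\ref{thm:adjunctionPunctured}.

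Assembling the pieces: the compactified moduli space $\overline\mM$ is a closed oriented $2$-manifold, and the evaluation map $\overline\mM\to\widehat W$ descends to a continuous surjection $\Pi:\widehat W\to\overline\mM$ whose regular fibres are the pages of $\mM$ and whose singular fibres are nodal as above. The asymptotic-evaluation at each $\gamma_i$ identifies the boundary behaviour of $\overline\mM$ with $S^1$, giving $\overline\mM\cong\DD$, and the boundary open book induced by $\Pi$ on $\p W$ is $\pi$ itself (by construction of~$u_0$). Restricting $\Pi$ to $W\subset\widehat W$ and smoothing corners yields the desired symplectic bordered Lefschetz fibration. Finally, for the allowability statement: any closed irreducible component of a singular fibre is, by the above, an embedded symplectic sphere of self-intersection $-1$, i.e.\ an exceptional sphere in $(W,\omega)$; conversely, given any exceptional sphere in $(W,\omega)$, positivity of intersections forces it to appear as such a component. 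Hence the Lefschetz fibration is allowable exactly when $(W,\omega)$ contains no exceptional sphere, i.e.\ is minimal.
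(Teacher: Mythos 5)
Your overall architecture (complete the filling, take the moduli space generated by holomorphic lifts of pages, use $c_N=0$, the $*$-pairing and the punctured adjunction formula to get an embedded disjoint family, then analyze degenerations as in \S\ref{sec:finishMcDuff}) matches the paper. But the step you yourself identify as the main obstacle --- excluding nontrivial symplectization levels in the SFT limit --- is where your proposal has a genuine gap. You propose to rule these out by arranging the binding orbits to have minimal action, all other orbits large action, and then invoking a ``standard action/index inequality.'' First, the Giroux form and $J_+$ are not freely adjustable: they come from the holomorphic open book construction (Theorem~\ref{thm:openbook}), and you cannot simply impose additional global action constraints on all other Reeb orbits while keeping the holomorphic pages. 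Second, even granting such action bounds, Stokes-type action arguments cannot exclude upper-level curves whose \emph{negative} ends are again binding orbits or covers thereof (e.g.\ a genus-zero curve with positive ends on several binding components and negative ends on a sub-collection), since no action inequality is violated there; and index/genericity arguments are unavailable because $J_+$ is deliberately non-generic --- this non-genericity is exactly why the lifted pages exist and why the whole scheme fails for positive genus. The paper's mechanism is different and purely intersection-theoretic: one first shows $u_P * u_\gamma = 0$ for binding orbit cylinders (Lemma~\ref{lemma:orbitCylinder}, using $\muCZ^\tau=1$ and extremal winding), then $u_P * v = 0$ for \emph{every} $J_+$-holomorphic curve $v$ whose positive ends lie on the binding, via a homotopy through asymptotically cylindrical (not holomorphic) maps and homotopy invariance of $*$ (Lemma~\ref{lemma:int=0}); since the holomorphic pages foliate $\RR\times(M\setminus B)$, positivity of intersections then forces any such $v$ to be a page or an orbit cylinder (Lemma~\ref{lemma:none}), which kills all nontrivial upper levels. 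Note that this argument needs the \emph{entire} family of holomorphic pages provided by Theorem~\ref{thm:openbook}, not just the single distinguished lift $u_0$ you construct.

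Two smaller points: having only one holomorphic page also undermines your identification of the boundary of the compactified moduli space with the page family (needed both for $\p\overline{\mM}\cong S^1$ and for the induced boundary open book being isotopic to $\pi$); and knowing the boundary is a single circle does not yet give $\overline{\mM}\cong\DD$ --- one must still show the boundary circle is nullhomotopic in $\overline{\mM}$, which the paper does by taking a loop near a binding component transverse to all pages and contracting it in $\overline{W}\setminus B$ after pushing it down into $\widehat{W}$.
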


One can say slightly more: \cite{Wendl:fillable} shows in fact that the
isotopy class of the Lefschetz fibration produced on $(W,\omega)$ depends
only on the deformation class of the symplectic structure, hence the
problem of classifying fillings up to deformation reduces to the
problem of classifying Lefschetz fibrations that fill a given open book.
In some cases, this provides an immediate uniqueness result, for instance:

\begin{cor}
\label{cor:uniqueFillings}
The symplectic fillings of $(S^3,\xi\std)$, $(S^1 \times S^2,\xi\std)$ and
$(L(k,k-1),\xi\std)$ for $k \ge 2$ are unique up to symplectic deformation
equivalence and blowup.
\end{cor}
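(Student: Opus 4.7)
The plan is to deduce the corollary from Theorem~\ref{thm:pony} (plus the remark following it) together with a direct classification of allowable bordered Lefschetz fibrations over $\DD$ whose induced open book has cylindrical pages. First, I would observe that by Proposition~\ref{prop:examples} and the construction in Example~\ref{ex:cylinders}, each of $(S^3,\xi\std)$, $(S^1\times S^2,\xi\std)$, and $(L(k,k-1),\xi\std)$ is supported by an explicit planar open book, with pages diffeomorphic to the annulus $F \cong [-1,1]\times S^1$ and boundary monodromy $\delta^k$, where $\delta$ is the Dehn twist along the core circle $\{0\}\times S^1$. (The cases $k=0,1$ and $k\ge 2$ correspond to $S^1\times S^2$, $S^3$, and the lens spaces respectively.) These are planar, so Theorem~\ref{thm:pony} applies to any symplectic filling.

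Next I would reduce to the minimal case: if $(W,\omega)$ is any symplectic filling, blow down along a maximal collection of disjoint exceptional spheres (McDuff's theorem from Lecture~1) to obtain a minimal filling $(W_0,\omega_0)$ of the same contact manifold. By Theorem~\ref{thm:pony}, $(W_0,\omega_0)$ admits an \emph{allowable} symplectic Lefschetz fibration $\Pi_0:W_0\to\DD$ whose induced open book at the boundary is isotopic to the given planar open book. By the remark following Theorem~\ref{thm:pony}, the isotopy class of $\Pi_0$ depends only on the symplectic deformation class of $(W_0,\omega_0)$, so uniqueness of the filling up to deformation equivalence will follow once I show that any two allowable bordered Lefschetz fibrations over $\DD$ that induce our specific open book on the boundary are isotopic, since Theorem~\ref{thm:supported} then guarantees uniqueness of the supported symplectic structure up to deformation.

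The key classification step exploits the triviality of the mapping class group of the annulus. Each vanishing cycle $C_p\subset F$ of $\Pi_0$ is a simple closed curve in the cylinder, and allowability forces $C_p$ to be homologically essential; the only such isotopy class in $F$ is the core $\{0\}\times S^1$, whose Dehn twist is $\delta$. By Proposition~\ref{prop:monodromy}, the boundary monodromy is therefore a product of $N$ copies of $\delta$, where $N$ is the number of critical points. Since $\mathrm{MCG}(F)\cong\ZZ$ is generated by $\delta$, this product equals $\delta^N$ in $\mathrm{MCG}(F)$, and matching with $\delta^k$ forces $N=k$. Thus the combinatorial data of $\Pi_0$ (ordered vanishing cycles along a Hurwitz system) is uniquely determined up to isotopy, so the total space and Lefschetz fibration structure are determined uniquely up to isotopy.

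The main obstacle I expect is bookkeeping rather than geometric content: one must verify that two allowable Lefschetz fibrations with identical Hurwitz data are diffeomorphic through an isotopy preserving the boundary open book, which amounts to a standard argument using parallel transport and the fact that any two configurations of $k$ critical values in $\mathring{\DD}$ with the same vanishing cycles are connected by an isotopy of $\DD$ (because all vanishing cycles coincide, so Hurwitz moves act trivially on the data). Combining this with Theorem~\ref{thm:supported} yields uniqueness of the minimal filling up to symplectic deformation, and since the original $(W,\omega)$ was recovered from the minimal model by blowup, this completes the proof.
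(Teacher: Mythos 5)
Your proposal follows essentially the same route as the paper's proof: identify the supporting open books with annulus pages and monodromy $\delta^k$ via Proposition~\ref{prop:examples}, invoke Theorem~\ref{thm:pony} to present minimal fillings as allowable Lefschetz fibrations, observe that the only allowable fibration inducing this open book is the one with fibre $[-1,1]\times S^1$ and exactly $k$ singular fibres, and conclude with Theorem~\ref{thm:supported}. The extra detail you supply (the mapping class group argument forcing the vanishing cycles to be core circles and $N=k$, and the reduction to the minimal case by blowing down) is exactly what the paper leaves implicit, so the argument is correct and matches the paper's.
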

\begin{proof}
By Proposition~\ref{prop:examples}, the contact manifolds in question all
admit supporting open books with cylindrical pages and monodromy equal
to $\delta^k$ for some $k \ge 0$, where $\delta$ is the positive Dehn
twist that generates the mapping class group of $\RR \times S^1$.
The only allowable Lefschetz fibration that produces such an open book
at the boundary is the one with fibre $[-1,1] \times S^1$ and exactly
$k$ singular fibres of the form pictured in Figure~\ref{fig:cylinders}
at the left.  Theorem~\ref{thm:pony} then implies that all the minimal 
symplectic fillings in question are supported by Lefschetz fibrations
of this type, which determines their symplectic structures up to
deformation equivalence via Theorem~\ref{thm:supported}.
\end{proof}

Further uniqueness results along these lines have been obtained in papers
by Plamenevskaya and Van Horn-Morris \cite{PlamenevskayaVanHorn}, and
Kaloti and Li \cite{KalotiLi:Stein}, each by studying
the factorizations of mapping classes on planar surfaces into products of
positive Dehn twists and then applying Theorem~\ref{thm:pony}.  In a
slightly different direction, Wand \cite{Wand:planar} used
Theorem~\ref{thm:pony} to establish a new obstruction for a contact
$3$-manifold to be planar.

Theorem~\ref{thm:supportedStein} implies another quite general consequence
of the above result:

\begin{cor}
\label{cor:strongNotStein}
Every symplectic filling of a planar contact manifold is deformation
equivalent to a blowup of a Stein filling.  In particular, any contact
manifold that is both planar and symplectically fillable is also\index{Stein filling}\index{Weinstein filling}\index{symplectic filling!Stein}\index{symplectic filling!Weinstein}
Stein fillable.
\end{cor}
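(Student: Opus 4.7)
The plan is to string together three already-stated results: the blow-down-to-minimal theorem from Lecture~1, Theorem~\ref{thm:pony}, and Theorem~\ref{thm:supportedStein}. Given any symplectic filling $(W,\omega)$ of a planar contact manifold $(M,\xi)$, I would first produce a minimal filling $(W_0,\omega_0)$ of the same contact boundary by iteratively blowing down a maximal collection of pairwise disjoint symplectically embedded exceptional spheres in $W$. Such spheres can be assumed to lie in the interior of $W$ (since $\p W$ is of contact type, any exceptional sphere is homologous to an embedded one in the interior), and by McDuff's blow-down theorem cited in \S\ref{sec:fibrations} applied to the filling, finitely many blow-downs produce a minimal symplectic filling $(W_0,\omega_0)$, with $(W,\omega)$ symplectically deformation equivalent to a blowup of $(W_0,\omega_0)$.

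Next, I would apply Theorem~\ref{thm:pony} to $(W_0,\omega_0)$: since the supporting open book on $(M,\xi)$ is planar, $(W_0,\omega_0)$ admits a symplectic Lefschetz fibration $\Pi : W_0 \to \DD$ whose induced open book at the boundary is isotopic to the given planar one. Because $(W_0,\omega_0)$ is minimal, the ``moreover'' clause of Theorem~\ref{thm:pony} tells us that $\Pi$ is allowable. Then Theorem~\ref{thm:supportedStein} produces a Weinstein filling $(W_0,\omega_0')$ of $(\p W_0,\xi)$ supported by this same allowable Lefschetz fibration; by the connectedness statement in Theorem~\ref{thm:supported}, $\omega_0'$ is symplectically deformation equivalent to $\omega_0$ through forms supported by $\Pi$. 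Hence $(W_0,\omega_0)$ is deformation equivalent to a Stein filling, and $(W,\omega)$ is deformation equivalent to a blowup of it. The second sentence of the corollary is then immediate: if $(M,\xi)$ is planar and admits any symplectic filling, then the associated minimal filling is Stein, so $(M,\xi)$ is Stein fillable.

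I expect the only point requiring genuine care is the first step, namely the passage from $(W,\omega)$ to a minimal filling $(W_0,\omega_0)$ of the same contact boundary. In the closed case this is McDuff's theorem quoted in \S\ref{sec:fibrations}, but in the filling case one must verify (i) that a maximal collection of pairwise disjoint symplectic exceptional spheres in $W$ can be arranged to lie in the interior (so that blowing down does not disturb the contact boundary), and (ii) that such a maximal collection is finite. Both are standard consequences of the theory of minimality for symplectic $4$-manifolds with contact-type boundary (for instance, using that exceptional classes have positive and bounded symplectic area, or via Taubes--Seiberg--Witten input), but they are the only non-cosmetic input not already supplied by the two boxed theorems of this lecture. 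Everything else reduces to invoking Theorems~\ref{thm:pony}, \ref{thm:supported}, and~\ref{thm:supportedStein} in sequence.
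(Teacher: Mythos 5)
Your argument is correct, but it runs in the opposite order from the paper's intended deduction, and that ordering is what forces you to import outside input. The paper applies Theorem~\ref{thm:pony} to the given filling $(W,\omega)$ \emph{directly}: one gets a symplectic Lefschetz fibration $\Pi : W \to \DD$ inducing the planar open book, and when $(W,\omega)$ is not minimal the fibration fails to be allowable only because some singular fibres have closed irreducible components. The proof of Proposition~\ref{prop:foliationM} shows exactly what these components are: embedded $J$-holomorphic spheres with self-intersection $-1$, i.e.\ exceptional spheres, finitely many, pairwise disjoint, and contained in (the interiors of) finitely many singular fibres. Blowing these down inside the fibration (as in the discussion of allowability in \S\ref{sec:Gompf}, cf.\ Exercise~\ref{EX:blowupLefschetz}) produces an allowable symplectic Lefschetz fibration filling the same open book, to which Theorem~\ref{thm:supportedStein} (together with the connectedness statement of Theorem~\ref{thm:supported}) applies, and $(W,\omega)$ is by construction a blowup of the result. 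So the exceptional spheres and their finiteness come for free from the moduli space of holomorphic curves already built in the proof of Theorem~\ref{thm:pony}. Your route instead blows down first, via a filling version of McDuff's blow-down theorem, and only then invokes Theorem~\ref{thm:pony} on the minimal model; this is logically fine, but the two points you flag yourself---that a maximal collection of disjoint symplectic exceptional spheres in a filling is finite and can be pushed into the interior---are genuinely extra machinery (holomorphic-curve or Seiberg--Witten arguments for manifolds with contact-type boundary) that the paper never needs. What your approach buys is independence from the structure of the singular fibres (it would work for any result producing allowable fibrations on minimal fillings); what the paper's order buys is a completely self-contained two-line proof from its stated theorems. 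The final sentence of the corollary follows identically in both versions.
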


Ghiggini \cite{Ghiggini:strongNotStein} gave examples of 
contact $3$-manifolds that are symplectically but not Stein fillable,
hence Corollary~\ref{cor:strongNotStein} implies that Ghiggini's examples
cannot be planar.  Similarly, Wand 
\cite{Wand:factorizations} and Baker, Etnyre and Van Horn-Morris
\cite{BakerEtnyreVanhorn} have given examples of Stein fillable contact
manifolds with (necessarily non-planar) supporting open books that cannot 
arise from boundaries of Lefschetz fibrations.

\begin{remark}
Theorem~\ref{thm:pony} and Corollary~\ref{cor:strongNotStein} 
can be generalized to allow Lefschetz fibrations over 
arbitrary compact oriented surfaces with boundary \cite{LisiVanhornWendl2}.
In this form, they apply to a larger class of contact manifolds, including
many that are not planar; a prototypical example of this is the uniqueness
(proved originally in \cite{Wendl:fillable}) of strong fillings of the $3$-torus,
whose tight contact structures are never planar.  Generalizing in a
different direction, \cite{NiederkruegerWendl} shows that both results
also remain valid (but only specifically for
\emph{planar} contact manifolds) if the symplectic filling
condition on $(M,\xi)$ is weakened to the existence of a compact symplectic manifold
$(W,\omega)$ with $\p W = M$ and $\omega|_\xi > 0$.
Such objects are known as \defin{weak symplectic fillings}\index{weak symplectic filling}\index{symplectic filling!weak}
of $(M,\xi)$, and they have been extensively studied, but at present,
the planar contact manifolds are the only class for which any meaningful
classification of weak fillings is known to be feasible.\footnote{The loophole in this statement
  is that by a frequently used lemma of Eliashberg \cite{Eliashberg:contactProperties}*{Prop.~3.1},
  weak fillings for which $\omega$ is exact near the boundary can always
  be deformed to strong fillings, thus whenever
  $M$ happens to be a rational holomogy $3$-sphere, the classification of weak
  fillings is the same as that of strong fillings.  This is, however,
  an essentially topological phenomenon that has little to do with contact geometry.}
\end{remark}

\section{Sketch of the proof}

As in our proof of McDuff's result on ruled surfaces, the main idea for
Theorem~\ref{thm:pony} is to consider a moduli space of holomorphic
curves whose intersection theory is sufficiently well behaved
to view them as fibres of a Lefschetz fibration.  The first step is
thus to define the moduli space and show that it is nonempty.  This rests
on a construction known as the \emph{holomorphic open book}; the following
result was first stated in \cite{ACH}, and two proofs later appeared in
independent work of Abbas \cite{Abbas:openbook} and the author 
\cite{Wendl:openbook}.

\begin{thm}[Holomorphic open book construction]
\label{thm:openbook}
Suppose $(M,\xi)$ is supported by a planar open book $\pi : M \setminus B 
\to S^1$.  Then there exists a Giroux form $\alpha$ and an almost complex\index{open book decomposition!holomorphic|(}\index{holomorphic open book|(}
structure $J_+ \in \jJ(\alpha)$ such that:
\begin{enumerate}
\item
Each connected component $\gamma \subset B$ is a nondegenerate Reeb orbit
with $\muCZ^\tau(\gamma) = 1$, where $\tau$ is any trivialization in which
the pages approach~$\gamma$ with winding number~$0$.
\item
Each page $P \subset M$ lifts to an embedded asymptotically cylindrical 
$J_+$-holomorphic curve $u_P : \dot{\Sigma} \to \RR \times M$ with all
punctures positive, and $\ind(u_P) = 2$.
\end{enumerate}
\end{thm}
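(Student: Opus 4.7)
The plan is to prove the theorem in three stages: construct a Giroux form $\alpha$ with a controlled normal form near each binding component, define an adapted $J_+ \in \jJ(\alpha)$, and exhibit each page as an explicit $J_+$-holomorphic curve. The guiding idea is that the rigid geometry of the open book should allow us to write binding orbits and page lifts down essentially by inspection, avoiding any indirect compactness argument.

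Beginning from any Thurston-Winkelnkemper Giroux form supporting $\xi$ (Theorem~\ref{thm:ThurstonWinkelnkemper}), I would modify it inside a neighborhood of each binding component $\gamma \subset B$ using normal coordinates $(\theta, r, \phi) \in S^1 \times [0, r_0) \times S^1$ in which $\gamma = \{r=0\}$ and the pages agree locally with $\{\phi = \mathrm{const}\}$. Replace $\alpha$ on such a neighborhood by
\[
\alpha = h_1(r)\, d\theta + h_2(r)\, d\phi,
\]
where $(h_1, h_2) : [0, r_0) \to \RR^2$ is a smooth path with $(h_1,h_2)(0) = (1,0)$, $h_1'(0) = 0$, contact condition $h_1 h_2' - h_1' h_2 > 0$, and gluing back to the original contact form outside a slightly larger neighborhood. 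A direct computation then yields $R_\alpha|_\gamma = \p_\theta$ (so $\gamma$ has period~$1$), while the linearized Reeb flow on $\xi|_\gamma$, expressed in the framing $\tau$ under which the pages approach $\gamma$ with winding zero, is a rotation whose rate is governed by the second-order Taylor coefficients of $(h_1, h_2)$ at $r=0$. Nondegeneracy together with $\muCZ^\tau(\gamma) = 1$ then becomes an easily satisfied open condition on those coefficients.

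Next, I would exploit the fact that at any regular point of the characteristic foliation on a page $P$ (i.e., wherever $TP \ne \xi$), the intersection $\ell_P := TP \cap \xi$ is a real line in $\xi$. Define $J_+ \in \jJ(\alpha)$ so that $J_+|_\xi$ maps $\ell_P$ to its $d\alpha$-orthogonal complement within $\xi$, smoothly across the isolated characteristic singularities where $TP = \xi$, and agreeing with the standard complex structure in the normal-form coordinates near each binding. I then seek the lift of each page via the ansatz $u_P(z) = (a(z), v(z))$, where $v : \dot{\Sigma} \to P$ is a smooth parametrization extending continuously over the punctures to the corresponding binding components and matching the normal-form coordinates in each cylindrical end, and $a : \dot{\Sigma} \to \RR$ is a real-valued function. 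The Cauchy-Riemann equation $Tu_P \circ j = J_+ \circ Tu_P$ decomposes into a tangential condition that pins down the complex structure $j$ on $\dot{\Sigma}$ via $(\pi_\alpha dv) \circ j = J_+ \circ (\pi_\alpha dv)$, together with a scalar equation $da = -j^*(v^*\alpha)$.

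The main obstacle is solving this scalar equation: since the periods of $v^*\alpha$ around the punctures equal the (positive) periods of the binding orbits, $v^*\alpha$ is not exact on $\dot{\Sigma}$. The resolution is to allow $a$ to grow linearly at each puncture. Writing $a(s,t) = Ts + \tilde{a}(s,t)$ in cylindrical coordinates $(s,t) \in [0,\infty) \times S^1$ around a puncture with binding period $T$, the linear part absorbs exactly the non-exact component of $v^*\alpha$, and the remaining equation for $\tilde{a}$ has exact right-hand side, solvable on $\dot{\Sigma}$ with the required asymptotic decay via Hodge theory. Some additional care is needed near the singularities of the characteristic foliation on $P$, where the induced complex structure on $\dot{\Sigma}$ is not naively smooth; this can be handled either by a careful reparametrization of $v$ or by first setting up the analogous problem with a stable Hamiltonian structure and deforming back to the contact case. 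Once $u_P$ is produced, embeddedness and asymptotic cylindricity are automatic from the construction, and the Fredholm index follows by plugging $\chi(\dot{\Sigma}) = 2 - n$ (with $n = \#\pi_0(B)$), $\muCZ^\tau = 1$ at each of the $n$ positive punctures, and $c_1^\tau(u_P^*T(\RR \times M)) = c_1^\tau(u_P^*\xi) = 2 - n$ (computed as the total index of the characteristic foliation singularities on $P$, which equals $\chi(P)$ by Poincar\'e-Hopf) into~\eqref{eqn:index4}, yielding $\ind(u_P) = -(2-n) + 2(2-n) + n = 2$.
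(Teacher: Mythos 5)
There is a genuine gap, and it sits exactly at the heart of the theorem. For $J_+ \in \jJ(\alpha)$, a map $u = (a,v) : (\dot{\Sigma},j) \to (\RR \times M, J_+)$ is $J_+$-holomorphic if and only if the $\xi$-part of the equation holds \emph{and} $da = (v^*\alpha) \circ j$ (this is \eqref{eqn:tripartite}, up to sign conventions).  Given a page $P$ transverse to the Reeb flow, the projection $\pi_\alpha|_{TP} : TP \to \xi$ is an isomorphism, so the $\xi$-part determines a complex structure $j_P$ on $P$, and the one-form you must integrate is $\beta_P := (\alpha|_{TP}) \circ j_P$, which is a one-form \emph{on the page itself}, independent of any parametrization.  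Before one can speak of solving $da = \beta_P$ even locally, $\beta_P$ must be \emph{closed}: in local conformal coordinates $d\beta_P = -\left[ \p_s\big(\alpha(\p_s v)\big) + \p_t\big(\alpha(\p_t v)\big) \right] ds \wedge dt$, and this has no reason to vanish for an arbitrary Giroux form, an arbitrary adapted $J_+$, and the given pages.  Your appeal to Hodge theory and to the linear-growth ansatz $a = Ts + \tilde{a}$ only handles the \emph{periods} of a form already known to be closed; it cannot repair non-closedness, and since $\beta_P$ depends only on $(\alpha, J_+, P)$, no cleverness in the choice of parametrization or of $a$ can help either.  Your construction controls $\alpha$ only in the normal-form region near $B$ (and there only through $(h_1,h_2)$), leaving it an essentially arbitrary Thurston--Winkelnkemper form on the rest of $M$, so $d\beta_P \ne 0$ on the interiors of the pages in general and the lifts simply do not exist.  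Arranging $d\beta_P = 0$ simultaneously for all pages is precisely the content of the theorem; it is why the two published proofs cited in these notes are nontrivial: Abbas \cite{Abbas:openbook} solves a nonlinear elliptic problem whose solutions \emph{are} the holomorphic pages, while \cite{Wendl:openbook} co-constructs the data globally (first an explicit stable-Hamiltonian-type structure adapted to the open book, for which the integrability condition can be arranged by hand, then a deformation to a Giroux form along which the index-$2$ curves persist).  Note that the lecture notes themselves quote Theorem~\ref{thm:openbook} as a black box with these references, so there is no internal proof to compare against --- but the step above is the one your sketch would have to supply and does not.

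Two smaller points.  First, your definition of $J_+$ is vacuous as written: in the two-dimensional symplectic vector space $(\xi_p, d\alpha|_{\xi_p})$ every line is Lagrangian, so the ``$d\alpha$-orthogonal complement'' of $\ell_P$ is $\ell_P$ itself; presumably you intend a metric complement, but then compatibility with $d\alpha$ and the smooth extension over the points where $TP = \xi$ (where $\ell_P$ is undefined) require separate arguments.  Second, the parts of the sketch that do work --- the $(h_1,h_2)$ model near $B$ producing nondegenerate binding orbits with $\muCZ^\tau(\gamma) = 1$ in the page framing, and the bookkeeping $c_1^\tau\big(u_P^* T(\RR\times M)\big) = \chi(P)$ giving $\ind(u_P) = 2$ via \eqref{eqn:index4} --- are correct and standard, but they are the easy part of the statement.
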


Observe that pages of an open book come always in $1$-parameter families, but when we 
lift them to the symplectization $\RR \times M$, an additional parameter 
appears due to the translation-invariance.  Thus Theorem~\ref{thm:openbook} produces
a $2$-dimensional moduli space 
$$
\mM\OB^+ \subset \mM_0(\RR\times M,J_+)
$$
of $J_+$-holomorphic pages; it is diffeomorphic to $\RR \times S^1$ and admits a
free action by $\RR$-translations, so that
$$
\mM\OB^+ \big/ \RR \cong S^1.
$$
The curves in $\mM\OB^+$ have the ``correct'' index, in the sense that the actual 
and virtual dimensions match.  In \cite{Wendl:openbook}, it is shown in fact that 
for suitable (non-generic!)
choices of data, an open book with pages of \emph{any} genus $g \ge 0$ admits a
$2$-parameter family of pseudoholomorphic lifts, but they have index $2 - 2g$,
which is the correct virtual dimension only when $g=0$.  This is why
Theorem~\ref{thm:pony} fails in general for open books of positive genus
(cf.~Remark~\ref{remark:higherGenus}).\index{open book decomposition!holomorphic|)}\index{holomorphic open book|)}

Now suppose $(W,\omega)$ is a symplectic filling of $(M,\xi)$, where the latter is supported
by a planar open book.  By modifying $\omega$ near $\p W$, we can assume without loss of
generality (possibly after rescaling $\omega$) that it takes the form $d(e^s\alpha)$ in a collar
neighborhood of the boundary, where $\alpha$ is the contact form provided by
Theorem~\ref{thm:openbook}.  Let $(\widehat{W},\widehat{\omega})$ denote the resulting
symplectic completion, and choose an $\widehat{\omega}$-compatible almost complex structure
$J$ which is generic in~$W$ and matches $J_+$ (from Theorem~\ref{thm:openbook}) on 
$[0,\infty) \times M$.
Since the $J_+$-holomorphic pages in $\RR \times M$ have no negative punctures, each can
be assumed to lie in $[0,\infty) \times M$ after some $\RR$-translation, so these
give rise to a $2$-dimensional family of embedded $J$-holomorphic curves living in the
cylindrical end of~$\widehat{W}$,
which we shall refer to henceforward as the \emph{$J$-holomorphic pages in~$\widehat{W}$}.
Let
$$
\mM\OB \subset \mM_0(\widehat{W},J)
$$
denote the connected component of the moduli space $\mM_0(\widehat{W},J)$ 
that contains these $J$-holomorphic pages, and let $\overline{\mM}\OB$ denote its closure in the
compactified moduli space $\overline{\mM}_0(\widehat{W},J)$ (see Appendix~\ref{app:punctured}).
Theorem~\ref{thm:pony} can be deduced from the following:

\begin{prop}
\label{prop:foliationM}
The compactified moduli space $\overline{\mM}\OB$ is diffeomorphic to the $2$-disk, and the
elements of $\overline{\mM}\OB$ can be described as follows.
\begin{itemize}
\item The smooth curves in $\mM\OB$ are all embedded and pairwise disjoint, and they
foliate $\widehat{W}$ outside a finite union of properly embedded surfaces.
\item There is a natural identification
$$
\p\overline{\mM}\OB = \mM\OB^+ \big/ \RR,
$$
where each $\RR$-equivalence class of $J_+$-holomorphic pages $u_P \in \mM\OB^+$ in $\RR \times M$
is identified with a holomorphic building that has empty main level and a single upper level
consisting of~$u_P$.\footnote{It is standard to define the space of holomorphic buildings
$\overline{\mM}_g(\widehat{W},J)$ such that two buildings are considered equivalent if they
differ only by an $\RR$-translation of one of the symplectization levels; see \cite{SFTcompactness}.}
\item There are at most finitely many elements of
$\overline{\mM}\OB \setminus \mM\OB$ in the interior of $\overline{\mM}\OB$, and they are
pairwise disjoint nodal curves in $\widehat{W}$, each having exactly two connected components,
which are embedded and intersect each other exactly once, transversely.  Any such component that
is closed also has homological self-intersection number~$-1$.
\end{itemize}
Every point in $\widehat{W}$ lies in the image of a unique (possibly nodal) curve
in the interior of~$\overline{\mM}\OB$.
\end{prop}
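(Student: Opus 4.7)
The strategy closely parallels the proof of Theorem~\ref{thm:McDuff} from Lecture~2, with the closed intersection theory replaced throughout by its punctured analogue from Lecture~4. First I would establish that $\mM\OB$ is a smooth $2$-dimensional manifold of pairwise disjoint embeddings. By Theorem~\ref{thm:openbook} each binding orbit is nondegenerate, simply covered, and has odd Conley--Zehnder index, and each curve $u \in \mM\OB$ has genus~$0$ and $\ind(u) = 2$; substituting into \eqref{eqn:2cN} yields $c_N(u) = 0$. Corollary~\ref{cor:totalZ}---equivalently Theorem~\ref{thm:puncturedFoliations}, whose hypothesis~(iv) is satisfied since the binding orbits are distinct and simply covered---then guarantees that nearby curves in $\mM\OB$ foliate a neighborhood of $u(\dot{\Sigma})$. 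Homotopy invariance of the $*$-pairing, applied to a curve and to its $\RR$-translate pushed deep into the cylindrical end, reduces the computation of $u * v$ to the corresponding number in $\mM\OB^+$, where distinct pages are disjoint with extremal asymptotic winding, giving $u * v = 0$; Corollary~\ref{cor:disjoint} then makes any two curves in $\mM\OB$ either identical or disjoint. An index--parity argument using \eqref{eqn:adjunctionPunctured}, exactly as in \S\ref{sec:finishMcDuff}, rules out multiple covers, and then $u * u = 0$ combined with the adjunction formula and $\bar{\sigma}(u) = \#\Gamma$ forces $\delta(u) = \delta_\infty(u) = 0$, so each curve in $\mM\OB$ is embedded by Corollary~\ref{cor:embeddedPunctured}.

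Next I would analyze how a sequence $u_k \in \mM\OB$ can degenerate, using the SFT compactness theorem \cite{SFTcompactness}. The two possibilities are breaking into a multi-level holomorphic building along the cylindrical end, and forming nodes inside $\widehat{W}$. An action-and-index analysis, using that all binding orbits are simply covered of odd parity and that the $J_+$-holomorphic pages already have extremal asymptotic winding, rules out buildings with intermediate levels; the only genuine breaking is the one in which the main level is empty and a single upper level is an element of $\mM\OB^+$, considered modulo $\RR$-translation. This produces the identification $\p\overline{\mM}\OB = \mM\OB^+/\RR \cong S^1$.

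The main technical obstacle is the control of nodal interior limits, where the punctured intersection theory does its most delicate work; the argument closely mirrors the computation in \S\ref{sec:finishMcDuff}. Writing a nodal limit as $\{v_+, v_-\}$, each binding orbit appears exactly once as an asymptotic of $v_+ \sqcup v_-$, and homotopy invariance of the $*$-pairing gives
\[
0 = u_k * u_k = v_+ * v_+ + 2\, v_+ * v_- + v_- * v_-.
\]
Combining this with $v_+ * v_- \ge 1$ (forced by the presence of the node via Theorem~\ref{thm:star}) and the adjunction formula \eqref{eqn:adjunctionPunctured} applied to each $v_\pm$, every inequality must be an equality. This forces $\delta(v_\pm) = \delta_\infty(v_\pm) = 0$, so each component is embedded by Corollary~\ref{cor:embeddedPunctured}, the two components meet only at the node and transversely (by Corollary~\ref{cor:disjoint} together with Theorem~\ref{thm:star}), and any closed component is an exceptional sphere with $[v] \cdot [v] = -1$. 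The same bookkeeping shows that two distinct nodal limits have disjoint images.

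Finally, $\overline{\mM}\OB$ is compact Hausdorff, locally homeomorphic to a $2$-disk at smooth interior points (by the local foliation property) and at nodal interior points (by the standard gluing analysis for transversally intersecting nodal components), contains only finitely many nodal points, and has boundary~$S^1$; hence $\overline{\mM}\OB \cong \DD$. For the last assertion, let $X \subset \widehat{W}$ be the union of images of all curves in $\overline{\mM}\OB$: by SFT compactness $X$ is closed, and by the local foliation property together with the gluing picture near nodes it is also open, so connectedness of $\widehat{W}$ gives $X = \widehat{W}$; uniqueness of the curve through each point is then precisely the disjointness already established.
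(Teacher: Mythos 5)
Your overall skeleton matches the paper's, but two essential steps are missing or would fail as stated. First, your treatment of boundary degenerations rests on an ``action-and-index analysis'' that cannot do the required work: the almost complex structure $J_+$ coming from the holomorphic open book is deliberately \emph{non-generic}, so neither index positivity (Corollary~\ref{cor:positive}) nor any action count excludes additional $J_+$-holomorphic curves in $\RR\times M$ with all positive ends on binding orbits appearing in the top level of a limit building. The mechanism the paper uses is intersection-theoretic: one first shows $u_P * u_\gamma = 0$ for the orbit cylinders over binding orbits (Lemma~\ref{lemma:orbitCylinder}, using $\muCZ^\tau(\gamma)=1$, hence $\alpha_-^\tau(\gamma)=0$, so the pages' winding-zero approach is extremal), then by a homotopy that replaces the top of an arbitrary such curve $v$ by orbit cylinders one gets $u_P * v = 0$ (Lemma~\ref{lemma:int=0}), and positivity of intersections with the foliation by pages then shows that pages and orbit cylinders are the \emph{only} curves asymptotic to the binding at all positive ends (Lemma~\ref{lemma:none}); only this forces the top level to be a single page with no negative ends, hence empty main level. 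The same lemma is also what rules out upper levels in the interior degenerations, and you additionally presuppose rather than prove the structure of those limits (finitely many nodal curves, each with exactly two simple index-$0$ components), which in the paper requires the index/compactness argument of Lemma~\ref{lemma:M0punctured}. Relatedly, your claim that two distinct pages have extremal \emph{relative} asymptotic winding is unproven (if two pages shared a leading eigenfunction the relative winding could drop); the paper avoids ever needing it, either via the orbit-cylinder comparison or by computing $u*u=0$ directly from adjunction once $\delta(u)=\delta_\infty(u)=0$ is known from Theorem~\ref{thm:sing} and homotopy invariance.

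Second, your final step ``compact surface with finitely many nodal interior points and boundary $S^1$, hence $\overline{\mM}\OB\cong\DD$'' is not a valid inference: a compact oriented surface with one boundary circle can have arbitrary genus, and nothing in the local structure of the moduli space controls the global topology. The paper closes this gap with a separate argument: choose a loop $\gamma$ near a binding component transverse to all pages, so that $\Pi\circ\gamma$ parametrizes $\p\overline{\mM}\OB$; since $\gamma$ can be chosen contractible in $M$ and pushed into $\widehat{W}$, it is contractible in $\overline{W}\setminus B$, and composing the contraction with $\Pi$ shows $[\p\overline{\mM}\OB]=0$ in $\pi_1(\overline{\mM}\OB)$, which forces genus zero. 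Without some argument of this kind your proposal only establishes that $\overline{\mM}\OB$ is a compact surface with circle boundary, not that it is a disk.
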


To prove this, notice first that all curves in $\mM\OB$ are guaranteed to be simple,
since their asymptotic orbits are all distinct and simply covered.  By
Theorem~\ref{thm:sing}, all $u \in \mM\OB$ then satisfy $\delta_\infty(u) = 0$,
as double points can only be hidden at infinity if there exist multiply covered
asymptotic orbits or two distinct punctures asymptotic to coinciding orbits.
Since $\delta(u) + \delta_\infty(u)$ is homotopy invariant (Theorem~\ref{thm:adjunctionPunctured}),
and the $J$-holomorphic pages $u_P$ are embedded and thus satisfy $\delta(u_P) = 0$,
we conclude
$$
\delta(u) = \delta_\infty(u) = 0 \quad\text{ for all $u \in \mM\OB$},
$$
hence all curves in $\mM\OB$ are embedded.
Theorem~\ref{thm:puncturedFoliations} then implies slightly more: since the curves
in $\mM\OB$ also have index~$2$ and genus~$0$ and all their asymptotic orbits
have odd Conley-Zehnder index, we have:
\begin{lemma}
\label{lemma:puncturedFoliations}
For each $u \in \mM\OB$, there is a neighborhood $\uU \subset \mM\OB$ such that
the curves in $\uU$ are all embedded and their images foliate a neighborhood
of the image of $u$ in $\widehat{W}$.
\end{lemma}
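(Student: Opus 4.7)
The plan is to invoke Theorem~\ref{thm:puncturedFoliations} directly, since the preceding paragraphs have already established (via $\delta(u) = \delta_\infty(u) = 0$) that every curve $u \in \mM\OB$ is embedded. What remains is to verify the four numbered hypotheses of that theorem for an arbitrary $u \in \mM\OB$, and then transfer the local foliation statement from $\mM_0(\widehat{W},J)$ to~$\mM\OB$.

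First I would observe that, since $\mM\OB$ is by definition the connected component of $\mM_0(\widehat{W},J)$ containing the $J$-holomorphic pages, any $u \in \mM\OB$ shares its topological invariants as an asymptotically cylindrical map with some page~$u_P$. This immediately yields hypothesis~(ii), that $\dot{\Sigma}$ has genus zero (since the pages are planar), and hypothesis~(i), that $\ind(u) = 2$, both supplied by Theorem~\ref{thm:openbook}. Nondegeneracy of the binding orbits, combined with the fact that nondegenerate Reeb orbits are isolated in $C^\infty(S^1,M)$ (Exercise~\ref{EX:nondegenerate}), ensures that the asymptotic orbits of curves cannot jump in a continuous family within $\mM\OB$. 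Hence the positive punctures of $u$ are in bijection with the connected components of the binding~$B$, and the corresponding asymptotic orbits are distinct and simply covered, giving hypothesis~(iv). Finally, Theorem~\ref{thm:openbook} assigns Conley--Zehnder index~$1$ to each binding orbit, which is odd, verifying hypothesis~(iii).

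With all four hypotheses in place, Theorem~\ref{thm:puncturedFoliations} produces an open neighborhood $\uU' \subset \mM_0(\widehat{W},J)$ of $u$ consisting of pairwise disjoint embedded curves whose images foliate a neighborhood of $u(\dot{\Sigma})$ in~$\widehat{W}$. Since $\mM\OB$ is a connected component of $\mM_0(\widehat{W},J)$ and thus open, the intersection $\uU := \uU' \cap \mM\OB$ is the required neighborhood of~$u$ in~$\mM\OB$, and the lemma follows.

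I do not anticipate a genuine obstacle here: the heavy analytic work has been done in Lecture~3 (the normal Chern number computation culminating in Theorem~\ref{thm:puncturedFoliations}) and in the adjunction-based argument just preceding the lemma. The only delicate step is checking that the asymptotic data is in fact constant across the connected component $\mM\OB$---this is what licenses transporting the numerical invariants (index, genus, Conley--Zehnder indices at the punctures) from the pages of the open book to an arbitrary~$u \in \mM\OB$, and it ultimately rests on the nondegeneracy of the binding orbits.
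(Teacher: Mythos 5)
Your proposal is correct and is essentially the paper's own argument: the text preceding the lemma establishes embeddedness via $\delta(u)=\delta_\infty(u)=0$ and then deduces the lemma directly from Theorem~\ref{thm:puncturedFoliations}, using exactly the facts you cite (index~$2$, genus~$0$, odd Conley--Zehnder indices, distinct simply covered binding orbits). Your extra care in noting that the asymptotic data is locally constant across the connected component $\mM\OB$ (by nondegeneracy/isolation of the binding orbits) is a detail the paper leaves implicit, but it is the same route.
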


The self-intersection number $u * u$ for any curve $u \in \mM\OB$ can now be\index{self-intersection number!of asymptotically cylindrical maps}
computed easily from Siefring's adjunction
formula \eqref{eqn:adjunctionPunctured}: since all asymptotic orbits are simply 
covered, the spectral covering term $\bar{\sigma}(u) - \#\Gamma$ vanishes,
and so does $c_N(u)$ due to formula \eqref{eqn:2cN}, thus
$$
u * u = 2\left[ \delta(u) + \delta_\infty(u) \right] + c_N(u) + 
\left[ \bar{\sigma}(u) - \#\Gamma \right] = 0.
$$
This result can alternatively be deduced as an immediate corollary of
Lemma~\ref{lemma:int=0} below.
By Theorem~\ref{thm:star}, we conclude:

\begin{lemma}
\label{lemma:selfint=0}
Any two distinct curves in $\mM\OB$ are disjoint.
\end{lemma}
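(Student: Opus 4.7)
The plan is to reduce the lemma to the self-intersection identity $u * u = 0$ established immediately above, combined with homotopy invariance of the Siefring pairing. Since $\mM\OB$ is by definition a connected component of the moduli space $\mM_0(\widehat{W},J)$, any two curves $u, v \in \mM\OB$ are homotopic through asymptotically cylindrical maps, and moreover their asymptotic orbits agree (with matching covering multiplicities). By the homotopy invariance part of Theorem~\ref{thm:star}, the pairing depends only on homotopy classes, so
\begin{equation*}
u * v = u * u = 0.
\end{equation*}

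Next I would argue that $u$ and $v$ have non-identical images. Each curve in $\mM\OB$ is simple, since all of its asymptotic orbits are distinct and simply covered, so it cannot factor through any nontrivial branched cover of another curve. Two distinct elements of a moduli space of simple curves necessarily have non-identical images, as otherwise they would be related by a biholomorphism and thus define the same unparametrized element. With $u$ and $v$ having non-identical images, Corollary~\ref{cor:finite} guarantees that the algebraic count $u \cdot v$ is finite and hence well defined, and Theorem~\ref{thm:star}(2) then provides the decomposition
\begin{equation*}
0 \;=\; u * v \;=\; u \cdot v + \inter_\infty(u, v).
\end{equation*}

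To conclude, I would invoke the two positivity results in tandem: positivity of intersections (Theorem~\ref{thm:positivity}) forces $u \cdot v \geq 0$, with equality if and only if $u$ and $v$ are disjoint, while asymptotic positivity of intersections (Theorem~\ref{thm:asymptoticPositivity}) forces $\inter_\infty(u, v) \geq 0$. Both nonnegative summands must therefore vanish, and in particular $u \cdot v = 0$ means that the images of $u$ and $v$ are disjoint. Equivalently, one can package these last two steps by appealing directly to Corollary~\ref{cor:disjoint}.

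There is no serious obstacle once the machinery is in place: all of the real work has already been absorbed into the definition of the $*$-pairing and the adjunction-formula computation yielding $u * u = 0$. The only point requiring a moment of care is verifying that the decomposition $u * v = u \cdot v + \inter_\infty(u, v)$ is legitimate, i.e.\ that $u$ and $v$ do not cover the same simple curve; this is handled by observing that the curves in $\mM\OB$ are simple with pairwise distinguishable parametrizations. The substantive content of the lemma is thus genuinely just the interplay of homotopy invariance with the two nonnegativity statements of \S\ref{sec:star}.
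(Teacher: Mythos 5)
Your proposal is correct and follows exactly the paper's route: the paper deduces the lemma directly from $u*u=0$ together with Theorem~\ref{thm:star} (homotopy invariance plus the decomposition $u*v = u\cdot v + \inter_\infty(u,v)$ with both terms nonnegative), which is precisely the argument you spell out. The extra details you supply (simplicity of the curves, hence non-identical images, so that $u\cdot v$ is well defined) are the same implicit justifications the paper relies on via Corollary~\ref{cor:disjoint}.
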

Combining that with Lemma~\ref{lemma:puncturedFoliations}, it follows that the
curves in $\mM\OB$ form a smooth foliation of some open subset of~$\widehat{W}$.

\begin{lemma}
\label{lemma:orbitCylinder}
Assume $u_P \in \mM^+\OB$ is a $J_+$-holomorphic page in $\RR \times M$, and
$u_\gamma : \RR \times S^1 \to \RR \times M$ is the orbit cylinder for an embedded\index{orbit cylinder}
Reeb orbit $\gamma$ that is a component of the binding~$B$.  Then
$$
u_P * u_\gamma = 0.
$$
\end{lemma}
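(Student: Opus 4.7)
The plan is to use Theorem~\ref{thm:star}(ii), which gives the decomposition $u_P * u_\gamma = u_P \cdot u_\gamma + \inter_\infty(u_P, u_\gamma)$, and then show that each summand vanishes. Since $u_\gamma$ has a negative puncture while all punctures of $u_P$ are positive, the two curves have non-identical images, so Corollary~\ref{cor:finite} ensures that $u_P \cdot u_\gamma$ and $\inter_\infty(u_P, u_\gamma)$ are both well-defined (cf.~Definition~\ref{defn:iInfty}).

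First I would verify that $u_P \cdot u_\gamma = 0$ geometrically. The holomorphic open book construction of Theorem~\ref{thm:openbook} presents $u_P$ as a pseudoholomorphic lift of a page of the open book, so its projection to $M$ is contained in a single page $P \subset M \setminus B$. Since $\gamma \subset B$, the image $u_\gamma(\RR \times S^1) = \RR \times \gamma$ is disjoint from $u_P(\dot\Sigma)$ in $\RR \times M$, giving $u_P \cdot u_\gamma = 0$.

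The main work is then to show $\inter_\infty(u_P, u_\gamma) = 0$. By Definition~\ref{defn:iInfty}, only pairs of punctures asymptotic to the same Reeb orbit contribute to either $\inter^\tau_\infty$ or the correction term $\sum \Omega^\tau_\pm$. Here the only such pair is the positive puncture $z_P$ of $u_P$ that lies above $\gamma$, paired with the positive puncture $z_+$ of $u_\gamma$; the remaining positive punctures of $u_P$ sit above other binding components, and $u_P$ has no negative puncture to pair with the negative end of $u_\gamma$. Fix the trivialization $\tau$ of $\gamma^*\xi$ from Theorem~\ref{thm:openbook}, so that the page approaches $\gamma$ with asymptotic winding~$0$ and $\muCZ^\tau(\gamma) = 1$; then $p(\gamma) = 1$ and Proposition~\ref{prop:CZwinding} yields $\alpha^\tau_-(\gamma) = 0$, whence $\Omega^\tau_+(\gamma,\gamma) = -\alpha^\tau_-(\gamma) = 0$.

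The remaining ingredient---and the step I expect to require the most care---is the computation of the winding contribution $\inter^\tau_\infty(u_P, z_P; u_\gamma, z_+)$ via formula \eqref{eqn:wind2}. The orbit cylinder is the zero section of the normal bundle to itself, so its asymptotic representative is identically zero; therefore the relative asymptotic winding $\wind^\tau(h_{u_P}(s,\cdot) - h_{u_\gamma}(s,\cdot))$ equals $\wind^\tau(h_{u_P}(s,\cdot))$, which by the defining property of $\tau$ is~$0$. The delicacy is sign-tracking and confirming that Theorem~\ref{thm:asymptoticsRelative} applies cleanly given $h_{u_\gamma}\equiv 0$: the asymptotic eigenfunction controlling $h_{u_P} - h_{u_\gamma}$ is just the page's own eigenfunction, of winding~$0$. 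With both $\Omega^\tau_+(\gamma,\gamma)$ and $\inter^\tau_\infty(u_P, u_\gamma)$ equal to~$0$, Definition~\ref{defn:iInfty} gives $\inter_\infty(u_P, u_\gamma) = 0$, and combining with Step~1 yields $u_P * u_\gamma = 0$.
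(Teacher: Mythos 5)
Your argument is correct and follows essentially the same route as the paper: $u_P\cdot u_\gamma=0$ because the page is disjoint from the binding, and the asymptotic contribution vanishes because the page approaches $\gamma$ with winding~$0$ in the trivialization $\tau$, which coincides with the extremal value $\alpha^\tau_-(\gamma)=0$ forced by $\muCZ^\tau(\gamma)=1$ via Proposition~\ref{prop:CZwinding}. The only cosmetic difference is that you unpack Definition~\ref{defn:iInfty} and formula \eqref{eqn:wind2} directly, whereas the paper cites the extremal-winding criterion of Theorem~\ref{thm:asymptoticPositivity}; the underlying computation is identical.
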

\begin{proof}
The page $P \subset M$ is always disjoint from the binding $B \subset M$, thus
$u_P \cdot u_\gamma = 0$, so it only remains to show that $\inter_\infty(u_P,u_\gamma)=0$.
By Theorem~\ref{thm:asymptoticPositivity}, this is true if and only if the asymptotic
eigenfunction controlling the approach of the relevant end of $u_P$ to $\gamma$ has
extremal winding.  Let $\tau$ denote the trivialization of $\gamma^*\xi$ in which
pages approach~$\gamma$ with winding number~$0$, hence by construction,
the winding (relative to~$\tau$) of the eigenfunction in question is~$0$.
Combining Theorem~\ref{thm:openbook} with Proposition~\ref{prop:CZwinding}, we also have
$$
1 = \muCZ^\tau(\gamma) = 2\alpha^\tau_-(\gamma) + p(\gamma),
$$
hence the extremal winding is also $\alpha^\tau_-(\gamma) = 0$, and this implies
$\inter_\infty(u_P,u_\gamma) = 0$ as claimed.
\end{proof}

\begin{lemma}
\label{lemma:int=0}
Assume $u_P \in \mM^+\OB$ is a $J_+$-holomorphic page in $\RR \times M$ and
$v : \dot{\Sigma}' \to \RR \times M$ is any $J_+$-holomorphic curve whose positive
ends are all asymptotic to embedded Reeb orbits in the binding~$B$.  Then
$u_P * v = 0$.
\end{lemma}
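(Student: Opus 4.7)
The strategy is to use the homotopy invariance of the $*$-pairing (Theorem~\ref{thm:star}(1)) to replace both $u_P$ and $v$ with convenient representatives on which both the ordinary intersection count and the asymptotic contribution $\inter_\infty$ manifestly vanish. As a preliminary, note that each binding orbit $\gamma_i$ has $\muCZ^\tau(\gamma_i) = 1$ by Theorem~\ref{thm:openbook}, where $\tau$ denotes the trivialization in which pages approach with winding $0$; Proposition~\ref{prop:CZwinding} then yields $\alpha_-^\tau(\gamma_i) = 0$. Consequently $u_P$'s asymptotic eigenfunction at each positive end has winding exactly the extremal value $0$, and the spectral bound satisfies $\Omega_+^\tau(\gamma_i,\gamma_i) = -\alpha_-^\tau(\gamma_i) = 0$.

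I would then perform two homotopies, each preserving the $*$-pairing. The first modifies $v$ through a continuous family $v_\tau$ of asymptotically cylindrical maps (obtained by multiplying $v$'s asymptotic representative by a smooth cutoff $\rho_\tau(s)$ with $\rho_0 \equiv 1$ and $\rho_1$ vanishing for $s \geq N'$) to produce a map $v'$ that agrees with $v$ on $\{s \leq N\}$ and equals an exact orbit cylinder $\RR \times \gamma_{i_j}$ over each positive asymptotic orbit on $\{s \geq N'\}$. The second homotopy replaces $u_P$ by its $\RR$-translate $u_P^c$ for $c \in \RR$; this family is $J_+$-holomorphic by translation-invariance of $J_+$. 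Applying Theorem~\ref{thm:star}(1) twice gives $u_P * v = u_P^c * v'$, and $c$ can be taken large enough that the image of $u_P^c$ lies entirely in $\{s > N'\}$, which is possible because $s \circ u_P$ is bounded below on $\dot{\Sigma}$ (it is continuous on the compact core and tends to $+\infty$ at each positive puncture).

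In the region $\{s > N'\}$, $v'$ is a union of orbit cylinders over the binding, while the image of $u_P^c$ projects onto the page $P \subset M$, which is disjoint from the binding $B$. The images of $u_P^c$ and $v'$ are therefore disjoint, giving $u_P^c \cdot v' = 0$. For the asymptotic term, one decomposes $\inter_\infty(u_P^c, v')$ as a sum over pairs of matching positive ends at each $\gamma_i$; by formula~\eqref{eqn:wind2} with $k = 1$ and $h_{v'} \equiv 0$ on $\{s \geq N'\}$, each such contribution reduces to $-\wind^\tau(h_{u_P^c}(s,\cdot)) = 0$ by the extremality established above, and subtracting $\Omega_+^\tau(\gamma_i,\gamma_i) = 0$ at each pair leaves zero in every term. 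The main obstacle is therefore the analytic verification that $v_\tau$ is a bona fide continuous family of asymptotically cylindrical maps with unchanged asymptotic orbits so that Theorem~\ref{thm:star}(1) applies; this is a routine cutoff construction, but requires care to preserve the decay of $\rho_\tau(s) h_v(s,t)$ in all derivatives along the ends. Once this is in place, one concludes $u_P * v = u_P^c * v' = u_P^c \cdot v' + \inter_\infty(u_P^c, v') = 0$.
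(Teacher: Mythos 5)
Your proof is correct and takes essentially the same route as the paper: the same homotopy through asymptotically cylindrical maps (truncate $v$ so its portion in the positive end becomes a union of orbit cylinders over the binding, translate $u_P$ upward, and invoke homotopy invariance of $*$), after which disjointness of page and binding gives $u_P^c \cdot v' = 0$ and the winding-zero computation at the binding orbits (via $\muCZ^\tau = 1$, $\alpha_-^\tau(\gamma) = 0$) gives $\inter_\infty(u_P^c, v') = 0$. The only difference is cosmetic: the paper packages the per-end computation as Lemma~\ref{lemma:orbitCylinder} ($u_P * u_\gamma = 0$) and quotes it, whereas you re-derive the same vanishing inline from \eqref{eqn:wind2} and $\Omega^\tau_+(\gamma,\gamma) = 0$.
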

\begin{proof}
The argument depends only on the following facts:
\begin{enumerate}
\item $u_P$ has no negative ends;
\item By Lemma~\ref{lemma:orbitCylinder}, $u_P * u_\gamma = 0$ for all orbits
$\gamma$ that appear at positive ends of~$v$, where $u_\gamma$ denotes the orbit 
cylinder over~$\gamma$.
\end{enumerate}
Figure~\ref{fig:OBDhomotopy} shows a homotopy through asymptotically cylindrical maps
for two curves satisfying the above conditions ($u_P$ has positive genus in the picture,
which has no impact on the argument). After a homotopy, we may assume namely that
$u_P$ lives entirely in $[0,\infty) \times M$, while the portion of $v$ living in
$[0,\infty) \times M$ is simply a disjoint union of orbit cylinders $u_\gamma$
for which Lemma~\ref{lemma:orbitCylinder} implies $u * u_\gamma = 0$.  Using the
homotopy invariance\footnote{Note that
the homotopy in our proof of Lemma~\ref{lemma:int=0} is \emph{not} a homotopy
through $J$-holomorphic curves, but only through asymptotically cylindrical maps.}
of the $*$-pairing, we conclude that $u * v$ equals a sum of terms
of the form $u * u_\gamma$, all of which vanish.
\end{proof}

\begin{figure}
\includegraphics{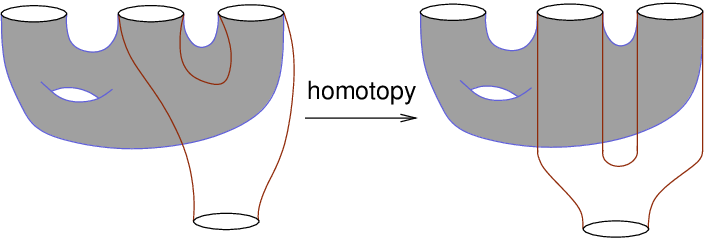}
\caption{\label{fig:OBDhomotopy} A homotopy through asymptotically cylindrical
maps for the Proof of Lemma~\ref{lemma:int=0}.}
\end{figure}

\begin{lemma}[cf.~\cite{Siefring:intersection}*{Theorem~5.21}]
\label{lemma:none}
Other than the $J_+$-holomorphic pages $u_P \in \mM^+\OB$ and
the orbit cylinders over embedded orbits in~$B$, there exist
no $J_+$-holomorphic curves in $\RR \times M$ that are asymptotic to embedded orbits
in~$B$ at all their positive punctures.
\end{lemma}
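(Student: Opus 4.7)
The strategy is to leverage the identity $u_P * v = 0$ supplied by Lemma~\ref{lemma:int=0}, together with positivity of intersections, in order to force any hypothetical $v$ into $\RR \times B$, and then to finish by a unique-continuation argument. The main delicacy lies in ruling out multiply covered $v$.

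Let $v : \dot{\Sigma}' \to \RR \times M$ be a nonconstant $J_+$-holomorphic curve whose positive punctures are all asymptotic to embedded orbits in $B$. First I would reduce to the case of $v$ simple. If $v = v_0 \circ \phi$ with $\phi$ a $k$-fold branched cover, $k \ge 2$, of a simple $J_+$-holomorphic curve $v_0$, then the embeddedness of each positive asymptotic orbit of $v$ forces $\phi$ to be unbranched at the corresponding positive puncture of $v_0$, so $v_0$ itself satisfies the hypothesis of the lemma and one may argue by induction on degree. The orbit-cylinder sub-case $v_0 = u_\gamma$ is immediately excluded, because any nontrivial cover would produce multiply covered asymptotics. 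Ruling out that $v$ is a proper cover of some page $u_P$ is the main obstacle: for planar pages with few boundary components (disk, annulus) no nontrivial connected cover unbranched at all positive punctures exists by elementary considerations of the monodromy representation of the free group $\pi_1(\dot{\Sigma})$; for pages of higher first Betti number one must combine such monodromy constraints with the adjunction formula and the inequality $u^k * v^\ell \ge k\ell(u*v)$ to obstruct the remaining configurations.

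With $v$ now assumed simple and not any page, $v$ and every $u_P$ have non-identical images, so Theorem~\ref{thm:star} together with Lemma~\ref{lemma:int=0} gives
\[
0 \;=\; u_P * v \;=\; u_P \cdot v \;+\; \inter_\infty(u_P, v),
\]
and both nonnegative summands must vanish. Hence $v$ is disjoint from every $u_P$. As $P$ ranges over all topological pages and we allow $\RR$-translations, the images of the $u_P$ together foliate $\RR \times (M \setminus B)$: each $u_P$ projects onto the topological page $P \subset M \setminus B$, and the pages of the open book foliate $M \setminus B$; local foliation within the symplectization then follows from Theorem~\ref{thm:puncturedFoliations}, and global disjointness from $u_P * u_{P'} = 0$. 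Consequently the image of $v$ is contained in $\RR \times B$.

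Since the components $\gamma_1, \ldots, \gamma_n$ of $B$ are pairwise disjoint embedded Reeb orbits and $\dot{\Sigma}'$ is connected, the image of $v$ lies inside a single cylinder $\RR \times \gamma_i$, which is precisely the image of the orbit cylinder $u_{\gamma_i}$. By the identity principle (a standard consequence of Theorem~\ref{thm:positivity}), $v$ must factor holomorphically through $u_{\gamma_i}$; simplicity of $v$ together with the embeddedness of its positive asymptotic orbit then forces $v = u_{\gamma_i}$, contradicting the standing assumption that $v$ is neither a page nor an orbit cylinder. As noted, the genuinely hard step is the reduction to the simply covered case.
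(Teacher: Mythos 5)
The heart of your argument---the second and third paragraphs---is precisely the paper's proof, just run directly rather than by contradiction: the paper observes that the lifted pages foliate $\RR\times(M\setminus B)$, so any curve $v$ whose image is neither contained in $\RR\times B$ nor equal to the image of some page must have an isolated intersection with some $u_P$, whence $u_P * v \ge u_P\cdot v > 0$, contradicting Lemma~\ref{lemma:int=0}. Note that the paper never reduces to simple curves, and it does not need to: Lemma~\ref{lemma:int=0} is proved by a homotopy of asymptotically cylindrical maps and therefore applies to arbitrary (possibly multiply covered) $v$ whose positive ends lie on binding orbits, and positivity of intersections only requires the two curves to have non-identical images. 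So the relevant dichotomy is ``image contained in $\RR\times B$ or not,'' not ``simple or multiply covered.''

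The step you flag as the genuinely hard one---eliminating multiple covers---is where your proposal breaks down, and in fact it cannot be completed as stated, because the covers you are trying to exclude actually exist. Your claim that a nontrivial cover of an orbit cylinder ``would produce multiply covered asymptotics'' is false: compose $u_\gamma$ with a degree-$2$ branched cover of the cylinder having two simple positive ends and one doubly covered negative end (e.g.\ $z\mapsto z^2/(z^2-1)$ on the thrice-punctured sphere); the hypothesis of the lemma constrains only the positive punctures, and nothing in it forbids negative ends. Similarly, your monodromy argument for covers of pages overlooks that holomorphic multiple covers may have interior branch points, so the representation is of $\pi_1$ of the page minus the branch values and peripheral triviality no longer forces the cover to be trivial: composing a disk-like page $u_P$ with $z\mapsto z+1/z$ yields a connected double cover with both positive punctures simple and asymptotic to the embedded binding orbit, which is neither a page nor an orbit cylinder. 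Such curves are perfectly consistent with $u_{P'} * v = 0$ (they are disjoint from all other pages), so no intersection-theoretic or adjunction argument can rule them out. The statement must therefore be read, as in Siefring's Theorem~5.21 and as it is actually applied in Lemma~\ref{lemma:noBuildings}, modulo covers of the listed curves; in the application these covers are excluded for a different reason, namely that each binding orbit appears exactly once, simply covered, among the positive asymptotic orbits of the limit building. If you delete your reduction step and observe instead that your intersection argument applies verbatim to any $v$ whose image is not a page image and not contained in $\RR\times B$, you recover exactly the paper's proof.
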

\begin{proof}
Any such curve $v : \dot{\Sigma}' \to \RR \times M$ must intersect one of the pages
$u_P$, as these foliate $\RR \times (M \setminus B)$, hence
$$
u_P * v \ge u_P \cdot v > 0,
$$
and this contradicts Lemma~\ref{lemma:int=0}.
\end{proof}

We are now in a position to justify the description of the compactification
$\overline{\mM}\OB$ given in Proposition~\ref{prop:foliationM}.

\begin{lemma}
\label{lemma:noBuildings}
Suppose $u_k \in \mM\OB$ is a sequence convergent to a holomorphic building
with at least one nontrivial upper level.  Then the main level of the limit is
empty, and its upper level is a $J_+$-holomorphic page in~$\mM^+\OB$.
\end{lemma}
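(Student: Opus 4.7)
The plan is to use SFT compactness to write down the limiting building, then apply Lemma~\ref{lemma:none} in each upper level to show that every component is either a $J_+$-holomorphic page or an orbit cylinder over an embedded binding orbit, and finally use a combination of puncture counting, stability, and connectivity to pin down the structure. Let the limiting building have main level $v^0$ in~$\widehat{W}$ together with non-empty symplectization levels $v^1,\ldots,v^N$ in $(\RR\times M,J_+)$, ordered so that positive punctures of level~$i$ are glued to negative punctures of level~$i+1$ along matching Reeb orbits. By hypothesis $N\ge 1$. The overall positive asymptotic orbits of the building agree with those of the approximating curves $u_k\in\mM\OB$; these are precisely the embedded binding orbits, one for each component of~$B$.

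The first step is a downward induction on the level index showing that every component of every upper level is either a page in $\mM^+\OB$ or an orbit cylinder over an embedded orbit of~$B$. For the base case $i=N$, all positive ends of level~$N$ components are the building's positive ends, hence embedded binding orbits, so Lemma~\ref{lemma:none} applies directly. For the inductive step, note that a page has no negative punctures and an orbit cylinder over an embedded binding orbit has its negative puncture at that same embedded orbit; therefore every node between levels $i$ and $i+1$ sits over an embedded binding orbit, which means that the components of level~$i$ also have all their positive ends at embedded binding orbits, and Lemma~\ref{lemma:none} again applies.

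Next comes the counting argument. Let $p_i$ and $c_i$ denote the numbers of page-components and orbit-cylinder components in level~$i$ for $i=1,\ldots,N$. Each page contributes $\#B$ positive ends and zero negative ends, and each orbit cylinder contributes one of each. Matching ends across levels gives $p_{i+1}\cdot\#B + c_{i+1}\le \cdots$ (positive ends of level $i$) $= c_{i+1}$ for $i<N$, and at the top the equality $p_N\cdot \#B + c_N = \#B$. SFT stability requires each symplectization level to contain at least one non-trivial-cylinder component, which by the previous paragraph must be a page; so $p_i\ge 1$ for every $i\ge 1$. Combined with $p_N\cdot\#B+c_N=\#B$, this forces $p_N=1$ and $c_N=0$. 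Since $c_N=0$, there are no nodes between levels $N$ and $N-1$; if $N\ge 2$, then stability would require $v^{N-1}$ to contain a page, contradicting the absence of negative ends on level~$N$. Hence $N=1$, and $v^1$ is exactly one $J_+$-holomorphic page $u_P\in\mM^+\OB$ with $c_1=0$.

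Finally, the main level is empty by connectivity. The approximating curves $u_k$ are connected, so the dual graph of the limiting building is connected, but $v^1$ has no negative asymptotic orbits, so no nodes join $v^1$ to $v^0$. Any non-empty component of $v^0$ would therefore yield a separate connected component of the building, which is impossible; thus $v^0$ is empty. The main obstacle in executing this plan cleanly is the bookkeeping step: one must be careful to rule out multiply covered components (they cannot appear at the top since the binding orbits are embedded and simply covered, and this property propagates down inductively through the orbit-cylinder nodes) and to invoke the correct form of SFT stability that forbids levels consisting only of trivial cylinders; everything else is an application of Lemma~\ref{lemma:none} together with elementary arithmetic on the asymptotic data.
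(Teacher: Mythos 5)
Your proposal is correct and rests on exactly the same key ingredient as the paper's own (deliberately terse) proof: the positive asymptotic orbits of the limit building are the embedded binding orbits, so Lemma~\ref{lemma:none} forces every non-trivial-cylinder component of the top level to be a page, and the rest is bookkeeping. Your downward induction, the count $p_N\cdot\#B + c_N = \#B$ together with SFT stability, and the final connectivity argument are precisely the details the paper's two-sentence sketch leaves implicit, so this is the same approach carried out in full rather than a different route.
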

\begin{proof}
If the lemma is false, then we obtain a holomorphic building whose top level
contains a $J_+$-holomorphic curve in $\RR \times M$ that is not
in $\mM^+\OB$ but has all its positive punctures asymptotic to orbits in
the binding~$B$ (see Figure~\ref{fig:compactnessFilling}).  
This is impossible by Lemma~\ref{lemma:none}.
\end{proof}

\begin{figure}
\includegraphics{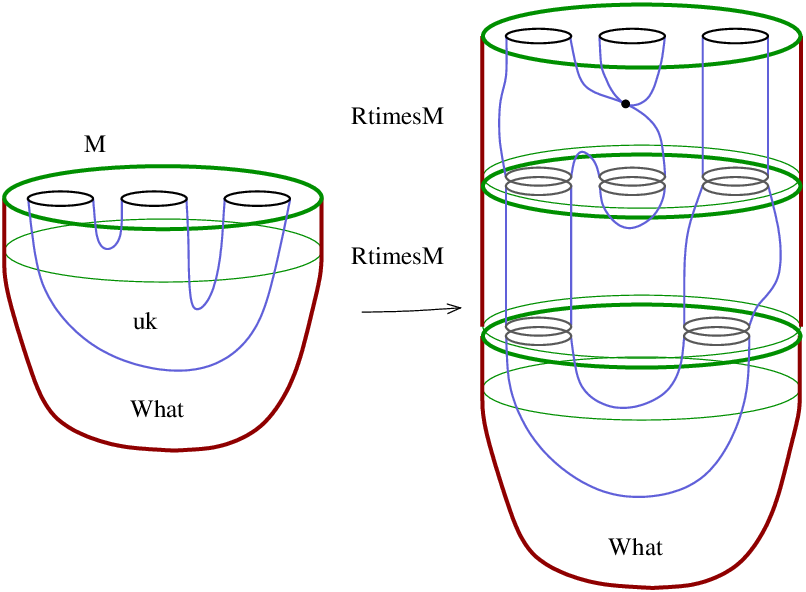}
\caption{\label{fig:compactnessFilling} A hypothetical degeneration of a sequence
$u_k \in \mM\OB$ to a holomorphic building in~$\overline{\mM}\OB$. This scenario
is ruled out by Lemma~\ref{lemma:none}, which says that the two curves in the
top level that are not orbit cylinders cannot exist.}
\end{figure}

We can now identify $\p\overline{\mM}\OB$ with $\mM^+\OB / \RR \cong S^1$ as
described in Proposition~\ref{prop:foliationM}, and the above lemma says that
all other elements of $\overline{\mM}\OB \setminus \mM\OB$ must be buildings
with no upper levels, i.e.~nodal 
$J$-holomorphic curves in~$\widehat{W}$.  The components of these nodal curves
have only positive ends (if any), all asymptotic to distinct simply covered
orbits in the binding~$B$.  These orbits all have
have Conley-Zehnder index~$1$ relative to the canonical trivialization.  Now
\eqref{eqn:indexPunctured} gives the index of such a curve
$v : \Sigma' \setminus \Gamma' \to \widehat{W}$ as
$$
\ind(v) = -\chi(\Sigma' \setminus \Gamma') + 2 c_1^\tau(v^*T\widehat{W}) +
\sum_{z \in \Gamma'} \muCZ^\tau(\gamma_z) =
-\chi(\Sigma') + 2 c_1^\tau(v^*T\widehat{W}).
$$
This matches the index formula for the closed case closely enough that one can
now repeat the compactness argument in the proof of Lemma~\ref{lemma:M0} (see
Appendix~\ref{app:closed}) more or less verbatim,\footnote{There are two main 
differences from the closed case: first, it is trivial to prove that non-closed
curves in $\mathscr{B}$ are simple, since their asymptotic orbits are distinct
and simply covered, while for closed components one must apply the same argument
as before.  Second, proving compactness (and hence finiteness) of $\mathscr{B}$
requires first ruling out holomorphic buildings with nontrivial upper levels---this
works the same way as in Lemma~\ref{lemma:noBuildings}.}
thus proving:

\begin{lemma}
\label{lemma:M0punctured}
There exists a finite set of simple curves $\mathscr{B} \subset
\mM_0(\widehat{W},J)$, with index~$0$, such that every nodal curve in
$\overline{\mM}\OB$ has exactly two components $v_+, v_- \in \mathscr{B}$.
\qed
\end{lemma}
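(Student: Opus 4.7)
My plan is to follow the proof of Lemma~\ref{lemma:M0} from Appendix~\ref{app:closed} almost verbatim, substituting the SFT compactness theorem of \cite{SFTcompactness} for the closed Gromov compactness, and invoking Lemma~\ref{lemma:noBuildings} at the outset to suppress any nontrivial upper levels in the limits. Concretely, I would define $\mathscr{B}$ to be the set of all non-constant components of the nodal $J$-holomorphic curves in $\widehat{W}$ that arise as SFT-limits of sequences in $\mM\OB$ with no subsequence converging in $\mM\OB$ and with empty upper levels.

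First, given such a sequence $u_k \in \mM\OB$, I would extract an SFT-compactness limit $\mathbf{u}_\infty$. By Lemma~\ref{lemma:noBuildings}, either the main level of $\mathbf{u}_\infty$ is empty (and $\mathbf{u}_\infty$ lies in $\p\overline{\mM}\OB$, contributing nothing to the interior nodal stratum), or all upper levels are trivial and $\mathbf{u}_\infty = \{v_1,\ldots,v_k\}$ is a nodal $J$-holomorphic curve in~$\widehat{W}$. The arithmetic genus of $\mathbf{u}_\infty$ equals~$0$, so its dual graph is a tree and each component is a sphere, while the punctures of $u_k$ are distributed among the $v_i$ and retain their original simply covered binding orbits as asymptotics. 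The Euler-characteristic identity $\chi(\dot{\Sigma}) = \sum_i \chi(\dot{\Sigma}_i) - 2(k-1)$ for a tree, together with additivity of relative $c_1^\tau$ and of the Conley--Zehnder contributions, gives the index additivity
$$
2 \;=\; \ind(u_k) \;=\; \sum_{i=1}^{k} \ind(v_i) + 2(k-1),
$$
so combined with $\ind(v_i) \ge 0$ on the simple strata (for generic~$J$), we are forced into $k \le 2$ and therefore $k = 2$, $\ind(v_+) = \ind(v_-) = 0$ for any genuine nodal limit.

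Simplicity of each $v_\pm$ comes essentially for free from the setup. For a component with at least one puncture, any $d$-fold cover with $d \ge 2$ would produce either a multiply covered or a repeated asymptotic orbit, contradicting the fact that the binding orbits are distinct and simply covered. For a closed component, $\ind(v_\pm) = 0$ together with $\ind = 2 c_1([v_\pm]) - 2$ for closed genus-$0$ curves in dimension four forces $c_1([v_\pm]) = 1$, and any $d$-fold cover would force $c_1$ to be divisible by $d$, incompatible with $c_1 = 1$ for $d \ge 2$.

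The finiteness of $\mathscr{B}$ is what I expect to be the main obstacle, but it proceeds from the same ingredients, iterated. The interior nodal locus of $\overline{\mM}\OB$ is sequentially compact: any sequence of nodal configurations $\{v_+,v_-\}$ has an SFT-subsequential limit of uniformly bounded $\hat{\omega}$-energy, and a further application of Lemma~\ref{lemma:noBuildings} plus the above index bookkeeping shows the limit is again of the same form and with the same topological invariants. Since each $v_\pm$ is simple with $\ind(v_\pm) = 0$ and therefore isolated in its moduli space for generic~$J$, this sequentially compact set is discrete, hence finite, and $\mathscr{B}$ is a finite union of pairs from this finite set of nodal curves. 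The delicate point requiring care is precisely this iterated compactness: one needs to verify that no energy is lost to further bubbling or level-splitting beyond what is already accounted for by $\mathscr{B}$ itself, and this is where Lemma~\ref{lemma:noBuildings} must be applied uniformly rather than just once.
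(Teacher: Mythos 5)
Your proposal follows essentially the same route as the paper's proof, which simply reruns the closed-case compactness argument of Lemma~\ref{lemma:M0} using the punctured index formula, rules out nontrivial upper levels exactly as in Lemma~\ref{lemma:noBuildings}, gets simplicity of punctured components for free from the distinct, simply covered binding orbits, and obtains finiteness from compactness plus zero-dimensionality of the index-$0$ stratum. The one wrinkle is that you invoke $\ind(v_i) \ge 0$ before simplicity of the components is known; as in the closed-case template, one should first pass to the underlying simple curve of each (possibly closed, multiply covered) component, where genericity gives $c_1 \ge 1$ and hence nonnegative index for the cover as well, after which your index bookkeeping and the rest of the argument go through unchanged.
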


To finish the proof, we must study the intersection-theoretic properties of
the components of nodal curves $\{ v_+,v_- \} \in \overline{\mM}\OB$.
Given such a curve as limit of a sequence $u_k \in \mM\OB$, we have
\begin{equation}
\label{eqn:sumInt}
0 = u_k * u_k = v_+ * v_+ + v_- * v_- + 2(v_+ * v_-).
\end{equation}
\begin{exercise}
\label{EX:sumInt}
Verify \eqref{eqn:sumInt}, using the definition of the $*$-pairing from
Lecture~\ref{sec:4}.
\end{exercise}

Observe that $v_+$ and $v_-$ cannot be the same curve up to parametrization,
as they are required to have distinct sets of asymptotic orbits.  This implies
that they have at least one isolated intersection, so by
Theorem~\ref{thm:star},
\begin{equation}
\label{eqn:atLeast1}
v_+ * v_- \ge v_+ \cdot v_- \ge 1.
\end{equation}
Since $\ind(v_\pm) = 0$ and all asymptotic orbits of $v_\pm$ have
odd Conley-Zehnder index, \eqref{eqn:2cN} gives
$$
c_N(v_\pm) = -1.
$$
Now applying Siefring's adjunction formula (Theorem~\ref{thm:adjunctionPunctured}), 
the spectral covering numbers
$\bar{\sigma}(v_\pm)$ are each equal to the number of punctures since all orbits are 
simply covered, so these terms vanish from the adjunction formula and we have
$$
v_\pm * v_\pm = 2\left[ \delta(v_\pm) + \delta_\infty(v_\pm)\right] +
c_N(v_\pm) =  2\left[ \delta(v_\pm) + \delta_\infty(v_\pm)\right] - 1.
$$
Combining this with \eqref{eqn:sumInt} gives
$$
0 = 2\sum_{\pm} \left[ \delta(v_\pm) + \delta_\infty(v_\pm)\right] +
2 (v_+ * v_- - 1),
$$
so in light of \eqref{eqn:atLeast1}, we have
$$
\delta(v_\pm) = \delta_\infty(v_\pm) = 0, \qquad
v_+ * v_- = v_+ \cdot v_- = 1,
\quad \text{ and } \quad
v_\pm * v_\pm = -1,
$$
implying that $v_\pm$ are both embedded and intersect each other
exactly once, transversely.  Moreover, if either component is closed,
then its homological self-intersection number is now 
$v_\pm \cdot v_\pm = v_\pm * v_\pm = -1$, hence it is a $J$-holomorphic
exceptional sphere; see Figure~\ref{fig:nodal}.  In the same manner,
one can show that the nodal curves in $\overline{\mM}\OB$ are all fully 
disjoint from each other and from the smooth curves in~$\mM\OB$.

\begin{figure}
\includegraphics{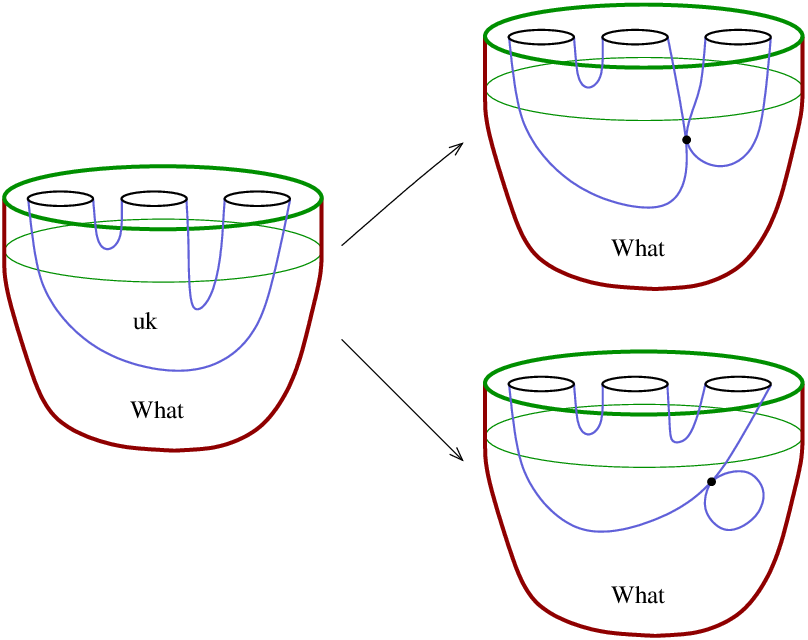}
\caption{\label{fig:nodal} Two possible degenerations of a sequence
$u_k \in \mM\OB$ to nodal curves in $\overline{\mM}\OB$.  The second
scenario includes a $J$-holomorphic exceptional sphere and is thus only 
possible if $(W,\omega)$ is not minimal.}
\end{figure}

Arguing as in the proof of Theorem~\ref{thm:McDuff}, let
$F \subset \widehat{W}$ denote the union of the images of the
curves in $\mathscr{B}$, which are finitely many properly embedded
surfaces.  Then let
$$
X := \left\{ p \in \widehat{W} \setminus F\ \Big|\ \text{$p$ 
is in the image of a curve in $\mM\OB$} \right\}.
$$
The lemmas proved above imply that $X$ is an open and closed
subset of $\widehat{W} \setminus F$, thus $X = \widehat{W}
\setminus F$, and we see that every point in $\widehat{W}$ is in
the image of a unique (possibly nodal) curve in $\overline{\mM}\OB$,
giving a surjective map
$$
\Pi : \widehat{W} \to \overline{\mM}\OB \setminus \p\overline{\mM}\OB.
$$
Since the $J_+$-holomorphic pages $[u_P] \in \mM^+\OB / \RR = 
\p\overline{\mM}\OB$ also foliate $M \setminus B$ under the projection
$\RR \times M \to M$, we can extend $\Pi$ to the natural compactification
$\overline{W} := \widehat{W} \cup (\{\infty\} \times M)$ as a surjective
map
$$
\Pi : \overline{W} \setminus B \to \overline{\mM}\OB,
$$
whose smooth fibres are the compact symplectically embedded surfaces with boundary
obtained as the images of maps $\bar{u} : \overline{\Sigma} \to \overline{W}$
for $u \in \mM\OB$, and we are treating $B$ as a submanifold of
$\{\infty\} \times M = \p\overline{W}$.  
There is still a small amount of work to be done in
identifying the above construction with something that one can regard as a
smooth symplectic Lefschetz fibration; details (in a more general setting) 
may be found in \cite{LisiVanhornWendl2}.

One detail in Proposition~\ref{prop:foliationM} has not yet been
verified: we've seen that $\mM\OB$ is an
oriented $2$-dimensional manifold, compactified by adding finitely many
interior points (the nodal curves) and the boundary
$\p\overline{\mM}\OB = \mM\OB^+ / \RR \cong S^1$, hence
$\overline{\mM}\OB$ is a compact oriented surface with one boundary
component, but we claim in fact that it is a \emph{disk}.  To see this, choose
a smooth loop $\gamma : S^1 \to M$ near a binding component that meets 
every page of the open book exactly once transversely.  Viewing $\gamma$ as
a loop in $\{\infty\} \times M = \p\overline{W}$, the loop
$$
\Pi \circ \gamma : S^1 \to \overline{\mM}\OB
$$
then parametrizes $\p\overline{\mM}\OB$.  Now, $\gamma$ is obviously not
contractible in $M \setminus B$, but we can easily assume it is contractible
in $\overline{W} \setminus B$: indeed, $\gamma$ can be chosen contractible
in~$M$, and then translating downward from $\{\infty\} \times M$ to a level
$\{s \} \times M \subset \widehat{W}$ for $s \in [0,\infty)$ gives a
contractible loop in~$\widehat{W}$.  Composing this contraction with~$\Pi$,
we conclude
$$
[\p\overline{\mM}\OB] = 0 \in \pi_1(\overline{\mM}\OB),
$$
hence $\overline{\mM}\OB \cong \DD$.

% appendices begin here

\appendix

\renewcommand{\thesection}{\Alph{chapter}.\arabic{section}}
\renewcommand{\thefigure}{\Alph{chapter}.\arabic{figure}}

\chapter{Properties of pseudoholomorphic curves}
\label{app:curves}

\minitoc
\vspace{12pt}

In\CUP{This material will be published by Cambridge University
Press as \textsl{Contact 3-Manifolds, Holomorphic Curves and Intersection Theory}
by Chris Wendl. This pre-publication version is
free to view and download for personal use only. 
Not for re-distribution, re-sale or use in derivative works. \copyright Chris Wendl, 2019.}
this appendix we will summarize (without proofs) the essential global analytical
results about pseudoholomorphic curves that are used in various places in
these lectures.  The first section covers results on closed holomorphic
curves that are needed in Lectures~1 and~2, and
\S\ref{app:punctured} then states the generalizations of these results
to punctured curves in completed symplectic cobordisms.  For more details
on each, we refer to \cite{McDuffSalamon:Jhol2} or \cite{Wendl:lectures}
for the closed case and \cite{Wendl:SFT} for the punctured case.

\section{The closed case}
\label{app:closed}

Given a closed symplectic manifold $(M,\omega)$ with a 
compatible\footnote{The vast majority of the results we will state here can 
also be generalized for almost complex structures that are \emph{tamed} by
$\omega$ but not necessarily compatible.  Such generalizations become much
less straightforward whenever asymptotic analysis is involved, thus in the
punctured case, it is best always to assume $J$ is compatible and not just
tame.} almost
complex structure $J$, we defined in \S\ref{sec:ruled} the moduli
space $\mM_g^A(M,J)$ of (equivalence classes up to parametrization of)
$J$-holomorphic curves with genus $g \ge 0$ homologous to $A \in H_2(M)$.
We shall now summarize the main analytical properties of
this space and use them to prove Lemma~\ref{lemma:M0}.

The \defin{virtual dimension}\index{moduli space!virtual dimension of}\index{virtual dimension}
of $\mM_g^A(M,J)$, also
sometimes called the \defin{index}\index{holomorphic curve!index of}\index{index!of a closed holomorphic curve}
of a curve $u \in \mM_g^A(M,J)$ and denoted
by $\ind(u) \in \ZZ$, is defined to be the integer
\begin{equation}
\label{eqn:dimension}
\virdim \mM_g^A(M,J) := (n-3)(2 - 2g) + 2 c_1(A),
\end{equation}
where $c_1(A)$ is shorthand for the evaluation of the first Chern class
$c_1(TM,J) \in H^2(M)$ on the homology class~$A$.  This definition of
$\virdim \mM_g^A(M,J)$ is justified by Theorem~\ref{thm:dimension} below.

Recall that closed $J$-holomorphic curves $u : (\Sigma,j) \to (M,J)$ are
always either \defin{simple}\index{holomorphic curve!simple|(}\index{somewhere injective|see {simple holomorphic curve}}\index{simple holomorphic curve|(}
or \defin{multiply covered},\index{holomorphic curve!multiply covered}\index{multiply covered holomorphic curve}
where the
latter means $u = v \circ \varphi$ for some closed $J$-holomorphic curve
$v : (\Sigma',j') \to (M,J)$ and holomorphic map
$\varphi : (\Sigma,j) \to (\Sigma',j')$ of degree $\deg(\varphi) > 1$.
By a slight abuse of terminology (see Remark~\ref{remark:critical}), we
refer to a point $z \in \Sigma$ in the domain of a $J$-holomorphic curve
$u : \Sigma \to M$ as a \defin{critical point}\index{critical points of a holomorphic curve}\index{holomorphic curve!critical points of}
if $du(z) = 0$; the alternative
is that $du(z) : T_z \Sigma \to T_{u(z)}M$ is injective, in which case we
call $z$ an \defin{immersed point}.\index{immersed points of a holomorphic curve}\index{holomorphic curve!immersed points of}
Combining general topological arguments with the local properties of 
$J$-holomorphic curves (e.g.~Theorem~\ref{thm:representation} in
Appendix~\ref{app:positivity}), one can show:
\begin{thm}
\label{thm:simple}
Every nonconstant, closed and connected $J$-holomorphic curve 
$u : (\Sigma,j) \to (M,J)$ has at most finitely many critical points.
Moreover, if $u$ is simple, then it also has at most finitely many
double points, hence it is embedded outside of some finite subset of~$\Sigma$.
\end{thm}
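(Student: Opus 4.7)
The plan is to prove the two claims separately, using the similarity principle (Theorem~\ref{thm:similarity}) as the main analytic input, together with the local representation theorem (referenced as Theorem~\ref{thm:representation} in Appendix~\ref{app:positivity}) for the double-point statement.

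For the first claim, I would work in local holomorphic coordinates $w = s + it$ on $\Sigma$ and smooth coordinates on $M$ near $u(z_0)$, writing $u$ locally as a map $f : \Omega \subset \CC \to \CC^n$ satisfying $\p_s f + J(f)\, \p_t f = 0$. Differentiating this nonlinear equation shows that $\xi := \p_s f$ satisfies a linear Cauchy--Riemann type equation of the form treated in Theorem~\ref{thm:similarity}, after absorbing the variable part of $J(f)$ into the zeroth-order term. By Corollary~\ref{cor:positiveOrder}, the zero set of $\xi$ in any connected component of $\Omega$ is either everything or discrete. The nonlinear equation gives $\p_t f = -J(f)\, \p_s f$ pointwise, so $\xi(z) = 0$ is equivalent to $du(z) = 0$. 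If $\xi \equiv 0$ on an open subset, then $u$ is locally constant there and hence constant on all of the connected $\Sigma$ by unique continuation (itself a standard consequence of the similarity principle applied to $u - u(z_0)$), contradicting the hypothesis that $u$ is nonconstant. The critical set is therefore discrete, and compactness of $\Sigma$ makes it finite.

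For the second claim, assume for contradiction that the simple curve $u$ admits infinitely many distinct pairs $(z_n,\zeta_n) \in \Sigma \times \Sigma$ with $z_n \ne \zeta_n$ and $u(z_n) = u(\zeta_n)$. By compactness of $\Sigma \times \Sigma$, a subsequence accumulates at some pair $(z_\infty,\zeta_\infty)$. For a simple curve, the local representation theorem produces near each point of $\Sigma$ a coordinate chart in which $u$ takes the form $u(w) = (w^k,\hat{u}(w))$ with $\hat{u}(w) = o(|w|^k)$ and such that $u$ is injective on some punctured neighborhood of the center; this immediately rules out $z_\infty = \zeta_\infty$. If instead $z_\infty \ne \zeta_\infty$, applying the local representation at both points and feeding the difference of the two local branches of $u$ through $p := u(z_\infty) = u(\zeta_\infty)$ into the similarity principle a second time forces the images of $u$ on some neighborhoods of $z_\infty$ and $\zeta_\infty$ to coincide as subsets of~$M$.

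The main obstacle is upgrading this local image coincidence to a global contradiction with simpleness. My plan is a continuation argument: the set of pairs $(z,\zeta) \in \Sigma \times \Sigma$ with $z \ne \zeta$ for which $u$ has locally identical images on neighborhoods of $z$ and of $\zeta$ is open by construction, and the similarity-principle argument just sketched shows that it is closed in the complement of the diagonal. Because $\Sigma$ is connected and the critical set of $u$ is finite by the first part of the proof, this leads to a global factorization $u = v \circ \varphi$ with $\varphi : \Sigma \to \Sigma'$ a holomorphic branched cover of degree at least $2$, contradicting simplicity. The delicate technical point is the consistent pasting of the locally-defined deck-transformation-like biholomorphisms across critical points of $u$ to produce $\varphi$ and $\Sigma'$; this is where the full strength of the local representation theorem enters, and it is the step I expect to be the main difficulty. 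Once finiteness of both critical and double points is established, $u$ is by construction an injective immersion on the complement of the resulting finite subset of~$\Sigma$.
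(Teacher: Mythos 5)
First, a point of reference: the paper does not actually prove Theorem~\ref{thm:simple}. It appears in Appendix~\ref{app:closed}, which states its results explicitly without proofs; the theorem is offered as a black box with the remark that it follows by ``combining general topological arguments with the local properties of $J$-holomorphic curves (e.g.~Theorem~\ref{thm:representation})'' and with pointers to \cite{McDuffSalamon:Jhol} and \cite{Wendl:lecturesV2}. Your outline is the standard route and is exactly the one the paper gestures at: the similarity principle applied to the derivative to isolate critical points, the local representation formula to show that non-isolated intersections force locally identical images, and a global factorization through a branched cover to contradict simpleness. So there is no in-paper proof to compare against step by step; what can be assessed is whether your plan closes, and there it has two concrete soft spots.

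The first is in the critical-point argument. Differentiating the nonlinear equation gives $\p_s \xi + J(f)\,\p_t \xi + C(s,t)\xi = 0$ for $\xi = \p_s f$, where the zeroth-order term $C(s,t)\xi = \bigl(DJ(f)[\xi]\bigr)\p_t f$ is fine, but the complex structure multiplying $\p_t\xi$ is the variable $J(f(s,t))$, not~$i$. Writing $J(f)\p_t\xi = i\,\p_t\xi + (J(f)-i)\p_t\xi$ and ``absorbing the variable part into the zeroth-order term'' does not work: $(J(f)-i)\p_t\xi$ is a first-order term in $\xi$, whereas Theorem~\ref{thm:similarity} as stated requires the standard structure. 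The fix is routine but must be said: choose a smooth frame $\Phi(s,t)$ with $\Phi^{-1} J(f)\Phi = i$ and $\Phi(z_0) = \1$, so that $\zeta := \Phi^{-1}\xi$ satisfies an equation of the required form (the derivatives of $\Phi$ contribute only zeroth-order terms in $\zeta$), or invoke a variable-$J$ Carleman-type similarity principle as in \cite{McDuffSalamon:Jhol}. The second and more substantial issue is in the double-point argument: local injectivity near a critical point is \emph{not} immediate from Theorem~\ref{thm:representation}. The normal form $u(w) = (w^k,\hat{u}(w))$ is $k$-to-one in its first coordinate, so ruling out double points accumulating on the diagonal requires comparing the branches $\hat{u}(e^{2\pi i j/k}w)$ with $\hat{u}(w)$ and excluding that some pair coincides identically; that is precisely Proposition~\ref{prop:branching} (equivalently Lemma~\ref{lemma:critInj}), and the exclusion of identical branches for a simple curve is itself part of the same global factorization argument you postpone. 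Hence both your diagonal and off-diagonal cases funnel into the single step you flag but do not carry out: pasting the locally defined identifications into a closed Riemann surface $\Sigma'$ and a holomorphic branched cover $\varphi : \Sigma \to \Sigma'$ of degree at least~$2$ with $u = v\circ\varphi$. That pasting, including the extension over the finitely many branch points and the closedness of your set of pairs with locally identical images (where the neighborhoods on which images coincide may a priori shrink along a sequence, so unique continuation must be invoked again at the limit), is the actual content of the theorem beyond the local analysis; as written, your proposal is a correct plan whose decisive step is left open---and it is exactly the step the paper delegates to \cite{McDuffSalamon:Jhol}*{Prop.~2.5.1} and \cite{Wendl:lecturesV2}.
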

The following related result is sometimes referred to as the
\defin{unique continuation}\index{unique continuation}
principle:
\begin{thm}
\label{thm:uniqueContin}
If $u$ and $v$ are two closed $J$-holomorphic curves that are both
simple, then they are either equivalent up to parametrization or have at
most finitely many intersections.
\end{thm}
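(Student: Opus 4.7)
The plan is to argue by contradiction: assuming infinitely many intersections, build up local agreement via the similarity principle, and then propagate agreement globally by an open/closed argument, finally using simplicity to promote ``same image'' to ``equivalent parametrizations.''

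\textbf{Setup and reduction to a good accumulation point.} Suppose the set $I := \{(z,\zeta) \in \Sigma \times \Sigma' \mid u(z)=v(\zeta)\}$ is infinite. By Theorem~\ref{thm:simple}, the sets $C_u \subset \Sigma$ and $C_v \subset \Sigma'$ of critical points and non-injective points of $u$ and $v$ are both finite, so all but finitely many pairs in $I$ lie in $(\Sigma\setminus C_u)\times(\Sigma'\setminus C_v)$. Compactness of $\Sigma\times\Sigma'$ then yields an accumulation point $(z_0,\zeta_0) \in (\Sigma\setminus C_u)\times(\Sigma'\setminus C_v)$ with sequences $(z_n,\zeta_n)\to(z_0,\zeta_0)$ and $u(z_n)=v(\zeta_n)$. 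In particular $u$ and $v$ are embeddings on neighborhoods of $z_0$ and $\zeta_0$ respectively, and $u(z_0) = v(\zeta_0) =: p$.

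\textbf{Local similarity principle argument.} The main step is to show $v$ maps a neighborhood of $\zeta_0$ into $u(\Sigma)$. Choose smooth complex coordinates $(w_1,\ldots,w_n)$ on a neighborhood of $p \in M$ such that $u$ is identified locally with the coordinate disk $\{w_2=\cdots=w_n=0\}$ and $J = J_0$ along this disk (possible because $du(z_0)$ is $J$-complex linear and injective); then $J$ equals $J_0$ plus an error $O(|w_2|+\cdots+|w_n|)$. Write $v$ in these coordinates as $v(\zeta) = (a(\zeta), b(\zeta))$ with $a : \uU \to \CC$, $b : \uU \to \CC^{n-1}$, where $\uU$ is a neighborhood of $\zeta_0$. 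The $J$-holomorphicity of $v$ translates, after choosing a holomorphic coordinate on $\uU$, into a system $\bar\partial b + A(\cdot) b = 0$ of exactly the form \eqref{eqn:linearCR}, with $A$ a smooth coefficient arising from the fact that $J - J_0$ vanishes to first order along $\{b=0\}$. Since $b(\zeta_n) = 0$ and $\zeta_n \to \zeta_0$, Corollary~\ref{cor:positiveOrder} (applied componentwise via the similarity principle, Theorem~\ref{thm:similarity}) forces $b \equiv 0$ on a neighborhood of $\zeta_0$. Thus $v$ locally takes values in $u(\Sigma)$, and since $u$ is an embedding near $z_0$, the map $\phi := (u|_{\nN(z_0)})^{-1}\circ v$ is a well-defined holomorphic map from a neighborhood of $\zeta_0$ into $\Sigma$ satisfying $v = u\circ\phi$.

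\textbf{Propagating the factorization globally.} Define
\[
A := \{\zeta \in \Sigma'\setminus C_v \mid \text{$v = u\circ\phi_\zeta$ on some neighborhood of $\zeta$ for a holomorphic $\phi_\zeta$ with $\phi_\zeta(\zeta)\notin C_u$}\}.
\]
By construction $A$ is open, and the previous step shows $A$ contains $\zeta_0$. For closedness in $\Sigma'\setminus C_v$, suppose $\zeta_k \in A$ with $\zeta_k \to \zeta^* \in \Sigma'\setminus C_v$. Then $v(\zeta_k) \in u(\Sigma)$ for all $k$, so $v(\zeta^*) \in u(\Sigma)$ by continuity, and the sequence $\phi_{\zeta_k}(\zeta_k) \in \Sigma$ accumulates at some $z^*$ with $u(z^*) = v(\zeta^*)$. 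If $z^* \notin C_u$, the local argument above applies verbatim at the pair $(z^*,\zeta^*)$ to show $\zeta^* \in A$; if $z^* \in C_u$ (finite set), one can first shift $\zeta^*$ slightly to an interior point of $A$ using that $v$ is a local embedding and the image of the neighborhood contains many points not mapping into $C_u$, concluding again that $\zeta^* \in A$ by continuity of $\phi$. Hence $A$ is both open and closed in the connected set $\Sigma'\setminus C_v$, so $A = \Sigma'\setminus C_v$. The local holomorphic maps $\phi_\zeta$ paste together to a holomorphic map $\phi : \Sigma'\setminus C_v \to \Sigma$, which extends holomorphically across the finite set $C_v$ by the Riemann removable singularity theorem, yielding $\phi : \Sigma' \to \Sigma$ with $v = u\circ\phi$.

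\textbf{Simplicity forces equivalence.} Since $\phi$ is a nonconstant holomorphic map between closed Riemann surfaces, it has a well-defined degree $d := \deg\phi \ge 1$. If $d \ge 2$ then $v = u\circ\phi$ exhibits $v$ as a $d$-fold multiple cover of a curve factoring through $u$, contradicting the assumption that $v$ is simple. Hence $d = 1$, $\phi$ is biholomorphic, and $u$ and $v$ are equivalent up to parametrization. The only step I expect to require real care is the closedness of $A$ when the limit $\phi_{\zeta_k}(\zeta_k)$ approaches a critical point of $u$; the fallback is to exploit that $v$ is locally an embedding near $\zeta^*$ to find arbitrarily close points of $A$ whose $\phi$-values avoid the finite set $C_u$, and then use uniqueness of the local factorization to conclude.
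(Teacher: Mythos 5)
Your overall route is the standard one (accumulation point, straightening the embedded curve, similarity principle, open--closed propagation, then degree/simplicity), and it is worth noting that the paper itself states Theorem~\ref{thm:uniqueContin} without proof, as a black box quoted from the references, so there is no in-paper argument to compare against. Your setup and local step are essentially fine: the only small imprecisions there are that ``all but finitely many pairs lie in $(\Sigma\setminus C_u)\times(\Sigma'\setminus C_v)$'' does not follow from finiteness of $C_u,C_v$ alone --- you also need that point-preimages of a nonconstant curve are isolated (hence finite), which follows from Theorem~\ref{thm:representation} --- and that Corollary~\ref{cor:positiveOrder} applies directly to $\CC^{n-1}$-valued solutions, so no ``componentwise'' reduction is needed (indeed the system for $b$ is coupled through $A$). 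Similarly, the extension of $\phi$ across $C_v$ needs the standard remark that $\phi$ maps a punctured neighborhood into a single coordinate chart around one point of the finite set $u^{-1}(v(\zeta_0))$; a bare appeal to removable singularities is not enough, since holomorphic maps of a punctured disk into a compact Riemann surface can have essential singularities.

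The genuine gap is the closedness of $A$. First, with the membership condition $\phi_\zeta(\zeta)\notin C_u$, where $C_u$ also contains the \emph{double points} of $u$, closedness can fail for a trivial reason: if $z^*$ is an immersed double point of $u$, your local argument applies (an immersion is a local embedding regardless of global injectivity) and produces a factorization with $\phi_{\zeta^*}(\zeta^*)=z^*\in C_u$, so $\zeta^*\notin A$ as you defined it. The fix is to exclude only the critical points of $u$ from the target condition. Second, and more seriously, your treatment of the case $z^*\in\Crit(u)$ is not a proof: knowing that points of $A$ accumulate at $\zeta^*$ does not place $\zeta^*$ in $A$, and there is as yet no $\phi$ defined at $\zeta^*$ whose ``continuity'' could be invoked. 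The clean repair is to reverse the roles of the two curves at $(z^*,\zeta^*)$: since $\zeta^*\notin C_v$, straighten the embedded image of $v$ near $\zeta^*$ and run your similarity-principle step for $u$ near $z^*$ (this direction requires no immersion hypothesis on $u$ at $z^*$), concluding that $u$ maps a neighborhood of $z^*$ into $v(\uU_{\zeta^*})$ and hence $u=v\circ\psi$ for a holomorphic $\psi$ with $\psi(z^*)=\zeta^*$. Then $du(z^*)=0$ together with injectivity of $dv(\zeta^*)$ forces $d\psi(z^*)=0$, so $\psi$ is locally $k$-to-$1$ with $k\ge 2$, producing double points of $u$ accumulating at $z^*$ and contradicting Theorem~\ref{thm:simple} (equivalently, local injectivity of simple curves as in Proposition~\ref{prop:branching}). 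Thus the critical case simply cannot occur, and with the corrected definition of $A$ your open--closed argument, the gluing of the local maps $\phi_\zeta$, and the final degree argument go through.
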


The \defin{automorphism group}\index{automorphism group!of a closed holomorphic curve}
of a triple $(\Sigma,j,u)$ representing
an element of $\mM_g^A(M,J)$ is defined as
$$
\Aut(\Sigma,j,u) = \left\{ \varphi : (\Sigma,j) \to (\Sigma,j)
\text{ biholomorphic} \ |\ u = u \circ \varphi \right\}.
$$
This group is always finite if $u : \Sigma \to M$ is not constant, and
Theorem~\ref{thm:simple} implies that it is trivial whenever $u$ is simple.\index{holomorphic curve!simple|)}\index{simple holomorphic curve|)}

The following result is dependent on a definition of the
term \defin{Fredholm regular},\index{Fredholm regular|(}\index{holomorphic curve!regular|see {Fredholm regular}}\index{transversality!for holomorphic curves|see {Fredholm regular}}
which is rather technical and therefore we will not give it---this is obviously
a terrible thing to do, 
but hopefully Theorems~\ref{thm:generic} and~\ref{thm:automatic} below
will make up for it.  The proofs of these results depend on the regularity theory of
elliptic PDEs; see \cite{McDuffSalamon:Jhol2} or \cite{Wendl:lectures} for details.

\begin{thm}
\label{thm:dimension}
The subset of $\mM_g^A(M,J)$ consisting of all curves that are Fredholm regular and
have trivial automorphism groups is \emph{open}, and moreover, it naturally
admits the structure of a smooth oriented finite-dimensional manifold,\index{moduli space!virtual dimension of}\index{virtual dimension}
with dimension equal to $\virdim \mM_g^A(M,J)$.
\end{thm}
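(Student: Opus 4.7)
The plan is to realize $\mM_g^A(M,J)$ locally, near a Fredholm regular curve with trivial automorphism group, as the smooth zero set of a Fredholm section of a Banach bundle, quotiented by a finite-dimensional reparametrization group. The implicit function theorem produces the manifold structure, Riemann--Roch computes the dimension, and the determinant line bundle of the linearization provides the orientation.

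First I would fix $k \ge 1$ and $p > 2$ with $kp > 2$, and introduce the Banach manifold
\[
\bB := \left\{ u \in W^{k,p}(\Sigma, M)\ \big|\ u_*[\Sigma] = A \right\}
\]
together with a finite-dimensional Teichm\"uller slice $\tT \subset \jJ(\Sigma)$ at $j$, chosen transverse to the $\Diff_0(\Sigma)$-orbit through $j$ and invariant under $\Aut(\Sigma, j)$. Over $\bB \times \tT$ I would construct the Banach bundle $\eE$ whose fibre at $(u, j')$ consists of $W^{k-1,p}$-sections of $\overline{\Hom}_\CC\big((T\Sigma, j'), (u^*TM, J)\big)$, together with the smooth section
\[
\dbar_J(u, j') := \tfrac{1}{2}\left(Tu + J(u) \circ Tu \circ j'\right).
\]
Zeros of $\dbar_J$ are parametrized $J$-holomorphic curves whose domain complex structure lies in $\tT$; the hypothesis that $\Aut(\Sigma, j, u)$ is trivial will guarantee that the remaining $\Aut(\Sigma, j)$-action on the zero set is free near $(u, j)$, so that the quotient is a smooth manifold locally diffeomorphic to a neighborhood of $[(\Sigma, j, u)]$ in $\mM_g^A(M, J)$.

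Next I would study the linearization $D$ of $\dbar_J$ at such a zero. Modulo lower-order contributions coming from variations of $j' \in \tT$, $D$ acts on $W^{k,p}$-sections of $u^*TM$ as a real-linear Cauchy--Riemann operator $\dbar_J + A$ on the complex rank-$n$ vector bundle $(u^*TM, J) \to \Sigma$, and is therefore Fredholm by the classical theory of elliptic operators on closed surfaces. Fredholm regularity is by definition the surjectivity of $D$, which is an open condition in the operator norm; triviality of $\Aut(\Sigma, j, u)$ is likewise open among simple curves. The implicit function theorem then yields, on the regular/simple locus, a smooth Banach submanifold of $\bB \times \tT$ of dimension $\ind(D)$, and elliptic bootstrapping upgrades all such weak solutions to smooth $J$-holomorphic curves. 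Passing to the quotient by the free $\Aut(\Sigma, j)$-action preserves smoothness and yields the desired open smooth submanifold of $\mM_g^A(M, J)$.

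Finally, Riemann--Roch for the complex rank-$n$ bundle $u^*TM$ over the genus-$g$ Riemann surface $(\Sigma, j)$ gives real index $2n(1 - g) + 2c_1(A)$ for the Cauchy--Riemann part; the Teichm\"uller slice contributes $\dim \tT = \dim \mathcal{T}_g$ to the source, and the quotient by $\Aut(\Sigma, j)$ subtracts $\dim \Aut(\Sigma, j)$, so
\[
\virdim = 2n(1 - g) + 2c_1(A) + \dim \mathcal{T}_g - \dim \Aut(\Sigma, j) = (n-3)(2 - 2g) + 2 c_1(A),
\]
as a case-by-case check on $g = 0, 1, \ge 2$ confirms. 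Orientations come from the determinant line bundle of the family of real-linear Cauchy--Riemann operators, which is canonically trivial because the family is homotopic through Fredholm operators to one of complex-linear operators, whose kernels and cokernels are complex vector spaces and hence canonically oriented. The main technical obstacle is the slice construction and the handling of the low-genus cases $g \in \{0, 1\}$, where $\Aut(\Sigma, j)$ is positive-dimensional and one must either replace the slice construction by a marked-point auxiliary moduli space or verify directly that the infinitesimal slice description is compatible with the quotient; the Fredholm analysis itself is routine once this bookkeeping is done.
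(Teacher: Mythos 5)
The paper does not actually prove this theorem---it is stated as a black box (the term ``Fredholm regular'' is deliberately left undefined) with the proof outsourced to \cite{McDuffSalamon:Jhol} and \cite{Wendl:lecturesV2}, and your outline is precisely the standard argument from those sources: Banach manifold of $W^{k,p}$-maps with a Teichm\"uller slice, Fredholm linearization and the implicit function theorem on the locus where the linearization is surjective and the $\Aut(\Sigma,j)$-action is free, Riemann--Roch plus the slice/automorphism correction for the dimension, and determinant lines homotoped to complex-linear operators for the orientation. Your sketch is correct in approach and dimension count, and the items you flag yourself---the genus~$0,1$ bookkeeping with positive-dimensional $\Aut(\Sigma,j)$, freeness and properness of its action near a curve with trivial stabilizer, and coherence of orientations across the Teichm\"uller directions and the quotient---are exactly the details the cited references spell out.
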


Recall that for any topological space $X$, a subset $Y \subset X$ is said to be
\defin{comeager}\index{comeager}\index{Baire set|see {comeager}}\index{second category|see {comeager}}\index{Baire category theorem}
if it contains a countable intersection of open dense
sets.\footnote{It is common among symplectic topologists to say that comeager
subsets are ``Baire sets'' or are ``of second category,'' but this seems
to be slightly inconsistent with the standard usage of these terms in other
fields.}  If $X$ is a complete metric space, then the Baire category theorem
implies that every comeager subset of~$X$ is also dense.

\begin{thm}
\label{thm:generic}
Suppose $(M,\omega)$ is a closed symplectic manifold, $\uU \subset M$ is an
open subset, and $J_0$ is an $\omega$-compatible 
almost complex structure on~$M$.
Let $\jJ(\uU,J_0)$ denote the space of all smooth 
$\omega$-compatible almost complex
structures $J$ on~$M$ such that $J \equiv J_0$ on $M\setminus \uU$, and assign
to $\jJ(\uU,J_0)$ the natural $C^\infty$-topology.  Then
there exists comeager subset $\jJ^\reg(\uU,J_0) \subset \jJ(\uU,J_0)$ such that
for all $J \in \jJ^\reg(\uU,J_0)$, every simple curve $u \in \mM_g^A(M,J)$ that
intersects~$\uU$ is Fredholm regular.
\end{thm}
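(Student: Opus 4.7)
The plan is to apply the standard universal moduli space construction combined with the Sard-Smale theorem, following the familiar pattern from McDuff-Salamon. First I would replace the Fr\'echet space $\jJ(\uU,J_0)$ with a Banach manifold $\jJ^k(\uU,J_0)$ of $C^k$-smooth almost complex structures agreeing with $J_0$ outside $\uU$ (for $k$ large), and similarly Banach-complete the space of maps $u : \Sigma \to M$ of class $W^{\ell,p}$ with $\ell p > 2$ and $\ell \leq k$. Then I would consider the universal moduli space
\[
\mM^* := \bigl\{ (u,j,J)\ \big|\ J \in \jJ^k(\uU,J_0),\ [(\Sigma,j,u)] \in \mM_g^A(M,J) \text{ is simple and meets } \uU\bigr\},
\]
realized as the zero set of the section $(u,j,J) \mapsto \dbar_J u := \tfrac{1}{2}(du + J \circ du \circ j)$ of an appropriate Banach bundle, quotiented by reparametrizations (for stable domains; the unstable cases $g \leq 1$ are handled by adding marked points).

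The central step is to show that $\mM^*$ is a smooth Banach manifold, for which one needs the linearization
\[
L(\eta, y, Y) := D_u \eta + \tfrac{1}{2} du \circ j \cdot y + \tfrac{1}{2} Y(u)\circ du \circ j
\]
to be surjective at every $(u,j,J) \in \mM^*$. Here $D_u$ is Fredholm, so it suffices to show that any $\sigma$ in the $L^2$-cokernel of $D_u$ that annihilates all the $J$-variation terms must vanish. Since $\sigma$ satisfies an elliptic equation of Cauchy-Riemann type, the similarity principle gives that the zero set of $\sigma$ is discrete unless $\sigma \equiv 0$. Now by Theorem~\ref{thm:simple}, the simple curve $u$ has only finitely many critical and double points, so the injective points of~$u$ form an open dense subset of $\Sigma$; since $u$ meets $\uU$, one can find an injective point $z_0 \in u^{-1}(\uU)$ at which $\sigma(z_0) \neq 0$ (assuming $\sigma \not\equiv 0$). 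Localizing near $u(z_0) \in \uU$, where $u$ is injective, one constructs a perturbation $Y \in T_J\jJ^k(\uU,J_0)$ supported in a small neighborhood of $u(z_0)$ such that $\langle \sigma, Y(u) \circ du \circ j \rangle_{L^2} \neq 0$, contradicting $\sigma$ lying in the cokernel. This is the main technical heart of the proof.

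Given that $\mM^*$ is a smooth Banach manifold, the projection $\pi : \mM^* \to \jJ^k(\uU,J_0)$ is a Fredholm map whose index at $(u,j,J)$ equals $\virdim \mM_g^A(M,J)$, since $\ker d\pi$ at $(u,j,J)$ is precisely $\ker D_u / \mathrm{aut}$. The Sard-Smale theorem (valid because $\jJ^k$ is separable and $\pi$ is Fredholm) then yields a comeagre set $\jJ^{k,\reg}(\uU,J_0) \subset \jJ^k(\uU,J_0)$ of regular values; for such $J$, $L$ is surjective for every simple $u$ through~$\uU$, which is equivalent to Fredholm regularity of $u$.

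The final step is Taubes' trick to return from $C^k$ to $C^\infty$. For each pair of positive integers $n,m$, let $V_{n,m} \subset \jJ(\uU,J_0)$ be the set of smooth $J$ such that every simple $J$-holomorphic curve $u$ representing $(g,A)$, intersecting a fixed compact subset $K_n \subset \uU$, and satisfying a uniform $C^1$-bound and energy bound determined by $n$, is Fredholm regular. Gromov compactness together with openness of Fredholm regularity implies each $V_{n,m}$ is open in the $C^\infty$ topology, and density follows by approximating any given smooth $J$ by elements of $\jJ^{k,\reg}(\uU,J_0)$ and using that regularity is a $C^k$-open condition on the (compact) set of candidate curves. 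Then $\jJ^\reg(\uU,J_0) = \bigcap_{n,m} V_{n,m}$ is a countable intersection of open dense sets, hence comeagre. The only serious obstacle is the surjectivity argument in the middle paragraph; the rest is bookkeeping with Banach manifold setups and a standard density-from-openness maneuver.
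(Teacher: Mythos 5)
The paper itself gives no proof of this theorem: it is stated in Appendix~A as a black box, with the argument deferred to the standard references (McDuff--Salamon and the author's lecture notes), and your sketch is precisely the proof given there---universal moduli space over a Banach manifold of $C^k$ structures, surjectivity of the combined linearization via the similarity principle and a compatible perturbation $Y$ supported near an injective point of $u$ lying over $\uU$, Sard--Smale applied to the projection, and Taubes' trick to upgrade from $C^k$-generic to $C^\infty$-generic. Your outline is correct, and you have correctly identified the cokernel/surjectivity step as the analytic heart of the argument.
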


Results such as Theorem~\ref{thm:generic} that hold for all data in some
comeager subset are often said to hold for \defin{generic}\index{generic}
data, so one
can summarize the two theorems above by saying that the moduli space of
simple $J$-holomorphic curves is a smooth manifold of the ``correct''
dimension for ``generic''~$J$.  This fact is true even for moduli spaces
with $\virdim \mM_g^A(M,J) < 0$, implying that in such spaces, no Fredholm
regular curves exist:

\begin{cor}
\label{cor:nonnegative}
For generic $\omega$-compatible almost complex structures $J$ in a closed
symplectic manifold $(M,\omega)$, every simple $J$-holomorphic curve
$u$ satisfies $\ind(u) \ge 0$.
\end{cor}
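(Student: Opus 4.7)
The plan is to combine Theorems~\ref{thm:dimension} and~\ref{thm:generic} directly: for generic $J$, any simple curve is Fredholm regular and therefore sits inside a smooth manifold whose dimension equals its index, and smooth manifolds cannot have negative dimension.

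First I would apply Theorem~\ref{thm:generic} with $\uU = M$, so that the side condition ``$J \equiv J_0$ on $M \setminus \uU$'' becomes vacuous and $\jJ(M,J_0)$ coincides with the full space $\jJ(M,\omega)$ of $\omega$-compatible almost complex structures on $M$. This yields a comeagre subset $\jJ^\reg \subset \jJ(M,\omega)$ with the property that whenever $J \in \jJ^\reg$, every simple $J$-holomorphic curve in $M$ is Fredholm regular.

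Next I would invoke the standard fact that any simple pseudoholomorphic curve $u$ has trivial automorphism group: a nontrivial biholomorphism $\varphi$ of the domain satisfying $u \circ \varphi = u$ would generate a finite group of symmetries through which $u$ factors as a branched cover of the induced map on the quotient orbifold, contradicting the assumption that $u$ is simple. Thus, for any $J \in \jJ^\reg$ and any simple curve $u \in \mM_g^A(M,J)$, the curve $u$ lies in the open subset of $\mM_g^A(M,J)$ to which Theorem~\ref{thm:dimension} applies, and a neighborhood of $u$ in the moduli space therefore inherits the structure of a smooth oriented manifold of dimension $\virdim \mM_g^A(M,J) = \ind(u)$.

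The conclusion is then immediate: a nonempty smooth manifold must have nonnegative dimension, so the existence of such a $u$ forces $\ind(u) \ge 0$. There is essentially no obstacle in this deduction, since all of the analytic substance has already been packaged into the two theorems invoked above; the only point requiring a moment's care is the automorphism-triviality observation, which is used to ensure that the hypothesis of Theorem~\ref{thm:dimension} is met at every simple curve.
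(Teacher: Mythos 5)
Your proof is correct and follows exactly the route the paper intends: the corollary is stated as an immediate consequence of Theorems~\ref{thm:dimension} and~\ref{thm:generic}, with the triviality of automorphism groups for simple curves already recorded in the text just before Theorem~\ref{thm:dimension}. Nothing further is needed.
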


Theorem~\ref{thm:generic} is a ``transversality'' result, i.e.~it 
follows from an infinite-dimensional version (the \emph{Sard-Smale
theorem}) of the standard fact
from differential topology that any two submanifolds 
intersect each other transversely after a generic perturbation.
Occasionally, one also needs transversality results for non-generic data.
Such results exist---they follow from the Riemann-Roch formula in certain
fortunate situations---but their utility is typically limited to 
dimension~$4$ and genus~$0$.  The following theorem of
Hofer-Lizan-Sikorav \cite{HoferLizanSikorav} is closely related to the
question of local foliations discussed in \S\ref{sec:foliations}.

\begin{thm}
\label{thm:automatic}
If $\dim M = 4$ and $J$ is any almost complex structure on~$M$,
then every immersed $J$-holomorphic curve $u \in \mM_g^A(M,J)$ with
$\ind(u) > 2g-2$ is Fredholm regular.\index{Fredholm regular|)}\index{transversality!automatic}\index{automatic transversality!for closed holomorphic curves}
\end{thm}

The moduli space $\mM_g^A(M,J)$ is not generally compact, but if
$M$ is closed and $J$ is compatible with a symplectic form $\omega$, then
it has a natural \emph{compactification}.  The \defin{energy}\index{energy!of a closed holomorphic curve}\index{holomorphic curve!energy of}
of a curve
$u \in \mM_g(M,J)$ can be defined as
$$
E(u) = \int_{\Sigma} u^*\omega
$$
for any parametrization $u : \Sigma \to M$; the taming condition implies
that $E(u) \ge 0$ for all $J$-holomorphic curves, with equality if and
only if the curve is constant.  Observe that $E(u)$ only depends on
$[u] \in H_2(M)$.

The moduli space of \defin{stable nodal $J$-holomorphic curves of
arithmetic genus~$g$}\index{holomorphic curve!nodal}\index{nodal holomorphic curve}\index{moduli space!compactification of}\index{compactification of moduli spaces}
homologous to $A \in H_2(M)$ is defined as
$$
\overline{\mM}_g^A(M,J) := \left\{ (S,j,u,\Delta) \right\} \Big/ \sim,
$$
where:
\begin{itemize}
\item $(S,j)$ is a (possibly disconnected) closed Riemann surface;
\item The set of \defin{nodes},\index{nodes of a holomorphic curve}
$\Delta \subset S$, is a finite unordered 
set of pairwise distinct points organized into pairs
$$
\Delta = \left\{ \{ \widehat{z}_1,\widecheck{z}_1\},\ldots,\{\widehat{z}_r,\widecheck{z}_r\} \right\}
$$
such that the singular surface
$$
\widehat{S} := S \Big/ \widehat{z}_j \sim \widecheck{z}_j \text{ for $j=1,\ldots,r$}
$$
is homeomorphic to a (possibly singular) fibre of some Lefschetz
fibration with regular fibres of genus~$g$;\index{arithmetic genus}\index{singular fiber of a Lefschetz fibration}\index{Lefschetz fibration!singular fiber of}
\item $u : (S,j) \to (M,J)$ is a pseudoholomorphic map with $[u] = A$
that descends to the quotient $\widehat{S} = S / \sim$ as a continuous
map $\widehat{S} \to M$;
\item Every connected component of $S \setminus\Delta$ on which $u$ is constant
has negative Euler characteristic;
\item We write $(S,j,u,\Delta) \sim (S',j',u',\Delta')$ if there is a 
biholomorphic map $\varphi : (S,j) \to (S',j')$ such that 
$u = u'\circ \varphi$ and $\varphi$ maps
pairs in $\Delta$ to pairs in~$\Delta'$.
\end{itemize}
The condition on the Euler characteristics of constant components is called\index{stability!of a nodal holomorphic curve}
\defin{stability}---its effect is to exclude certain ambiguities that would
otherwise cause non-uniqueness of limits for the natural topology on
$\overline{\mM}_g^A(M,J)$.
Assuming $A \ne 0$ so that elements of $\mM_g^A(M,J)$ are never constant,
there is a natural inclusion $\mM_g^A(M,J) \subset \overline{\mM}_g^A(M,J)$
defined by setting $\Delta := \emptyset$ for any $[(\Sigma,j,u)] \in
\mM_g^A(M,J)$.  We denote the union over all $A \in H_2(M)$ 
by~$\overline{\mM}_g(M,J)$.

\begin{thm}[\textsl{Gromov's compactness theorem}]
\label{thm:compactness}
For each $A \in H_2(M)$, $g \ge 0$ and each $\omega$-compatible almost complex\index{Gromov's compactness theorem|(}
structure $J$ on
a closed symplectic manifold $(M,\omega)$, $\overline{\mM}_g^A(M,J)$ admits a
natural topology as a compact metrizable space.  Moreover,
any sequence $u_k \in \mM_g(M,J)$ of curves satisfying a uniform energy bound
$E(u_k) \le C$
has a subsequence convergent to an element of~$\overline{\mM}_g(M,J)$.
\end{thm}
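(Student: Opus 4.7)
The plan is to prove Gromov's compactness theorem by combining Deligne--Mumford compactness for the domains with an analytic bubbling--off argument on the target, tied together by an energy-quantization lemma. First I would fix a Riemannian metric $g_J := \omega(\cdot,J\cdot)$ on $M$; with respect to this metric, $J$-holomorphic maps are conformal, and the energy $E(u)=\int u^*\omega$ equals $\tfrac12\int|du|^2\,d\mathrm{vol}$. Given a sequence $[(\Sigma_k,j_k,u_k)]\in\mM_g(M,J)$ with $E(u_k)\le C$, I would first appeal to Deligne--Mumford compactness (after adding sufficiently many marked points to stabilize the domains in case $2g+\#\text{marks}<3$) to extract a subsequence for which $(\Sigma_k,j_k)$ converges to a nodal Riemann surface $(S,j,\Delta)$ of arithmetic genus $g$, obtained by pinching finitely many geodesic circles in hyperbolic metrics conformal to $j_k$.

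Next I would carry out the bubbling analysis. On any compact subset $K$ of $\Sigma_k$ away from the pinching circles, I look at $\|du_k\|_{L^\infty(K)}$. In the uniform-bound case, elliptic regularity for the nonlinear Cauchy--Riemann equation (bootstrapping: a $C^0$-bound plus the PDE yields a $W^{1,p}$-bound, hence a $C^{1,\alpha}$-bound, hence all higher derivatives) produces a $C^\infty_{\mathrm{loc}}$-convergent subsequence on $S\setminus\Delta$. When the gradient blows up at points $z_k\to z_\infty$ with $R_k:=|du_k(z_k)|\to\infty$, I rescale by $z\mapsto z_k+z/R_k$ to get new $J$-holomorphic maps on disks of radii $R_k\to\infty$ with $|du|\le 2$ and $|du(0)|=1$; any $C^\infty_{\mathrm{loc}}$-limit is a nonconstant $J$-holomorphic plane $\CC\to M$ of finite energy, which by the removable singularity theorem (a consequence of the mean-value/isoperimetric inequality for holomorphic curves) extends to a $J$-holomorphic sphere --- a ``bubble.'' The crucial ingredient here is the \emph{minimum energy lemma}: there exists $\hbar=\hbar(M,\omega,J)>0$ such that any nonconstant closed $J$-holomorphic sphere satisfies $E\ge\hbar$. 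This follows from the monotonicity formula applied in a Darboux chart and implies that only finitely many bubbles can form, since the total energy is bounded by~$C$.

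Then I would apply \emph{soft rescaling} iteratively to ensure that no energy is lost in the annular regions between the main curve and each bubble, and that the bubbles connect continuously to produce a stable map $(S',j',u',\Delta')$ on a nodal curve: unstable ghost components (constant on a sphere with $<3$ special points) are collapsed to give an element of $\overline{\mM}_g^A(M,J)$, and the homology class is preserved in the limit by $C^0$-convergence. For the topological part of the statement, I would define Gromov convergence exactly as extracted by the above procedure (extending the sphere-bubbling definition in the text), verify it is induced by a metric (e.g.~by combining Sobolev-type distances on maps and the Teichm\"uller metric on stable domains), and deduce Hausdorffness from unique continuation for pseudoholomorphic curves (Theorem~\ref{thm:uniqueContin}): two putative Gromov limits of the same sequence must agree on open sets of each component and therefore everywhere.

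The main obstacle is the soft-rescaling procedure that controls the ``neck regions.'' The danger is that energy $\ge\hbar$ could escape into arbitrarily long conformal cylinders $[-L_k,L_k]\times S^1$ with $L_k\to\infty$ on which the curves $u_k$ neither converge nor produce a usable rescaled bubble. Ruling this out requires the long cylinder lemma (an annulus with small energy and large modulus has small image diameter), itself a consequence of the isoperimetric/monotonicity inequalities, together with a careful inductive rescaling that peels off the largest bubbles first. Once this is in place, everything else follows from standard elliptic theory and from the combinatorial structure of the resulting bubble trees, which match the degeneration data of the Deligne--Mumford limit of the domains.
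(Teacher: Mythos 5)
The paper does not actually prove this theorem: it appears in Appendix~\ref{app:closed}, which announces at the outset that results are summarized \emph{without proofs}, with \cite{McDuffSalamon:Jhol} and \cite{Wendl:lecturesV2} cited for the closed case. So there is no internal argument to compare yours against; what you have written is essentially the standard proof from those references, and the analytic skeleton is correctly assembled: for compatible $J$ the symplectic energy equals the Dirichlet energy, elliptic bootstrapping gives $C^\infty_{\loc}$-convergence wherever $|du_k|$ stays bounded, rescaling at blow-up points produces finite-energy planes which close up to sphere bubbles by removal of singularities, the quantization constant $\hbar>0$ (monotonicity/mean-value inequality) caps the number of bubbles by $C/\hbar$, and the long-cylinder estimate is exactly the right tool to rule out energy loss and disconnection in the neck regions, so that the bubble tree fits together with the Deligne--Mumford degeneration of the stabilized domains.

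Two points in your sketch are genuinely too quick. First, a small one: the bound $|du|\le 2$ with $|du(0)|=1$ on disks of radius $R_k\to\infty$ is not automatic from naive rescaling at a maximum of the gradient; you need Hofer's topological lemma to adjust the rescaling centers and radii. Second, and more seriously, the topological half of the theorem does not follow from unique continuation as you claim. Two Gromov limits of the same sequence define the same point of $\overline{\mM}_g^A(M,J)$ only if there is a biholomorphism of the nodal domains intertwining the maps and matching the pairs of nodes; limit configurations typically contain constant (ghost) components, about which Theorem~\ref{thm:uniqueContin} says nothing, and even for the nonconstant components, agreement of images on open sets does not produce the required reparametrization or the matching of the nodal structure. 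Uniqueness of Gromov limits is a separate inductive argument over the bubble tree, and metrizability requires an actual construction of the Gromov topology (or a metric inducing it), as in \cite{McDuffSalamon:Jhol}. Finally, note that the theorem asserts compactness of $\overline{\mM}_g^A(M,J)$ itself, so you must also treat sequences whose members are already nodal---this is done by running your argument componentwise while keeping track of the existing nodes---whereas your proposal only addresses sequences of smooth curves in $\mM_g(M,J)$.
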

\begin{remark}
The second statement in the above theorem does not impose any direct
restriction on the homology classes $[u_k] \in H_2(M)$, but it 
\emph{implies} the existence of a subsequence with constant homology.
Observe that the required energy bound is automatic
if all $u_k$ represent a fixed homology class.
\end{remark}

\begin{figure}
\includegraphics{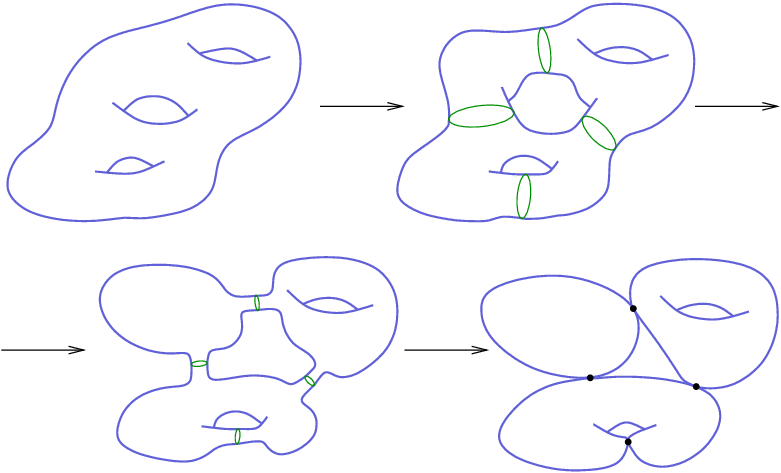}
\caption{\label{fig:compactness} A sequence of genus~$3$ holomorphic curves
degenerating to a nodal curve of arithmetic genus~$3$, with four nodes, 
two connected components of genus~$0$
and one of genus~$1$.}
\end{figure}

It will not be necessary for our purposes to give a complete definition of the
topology of $\overline{\mM}_g^A(M,J)$, but we can describe the convergence of a 
sequence of smooth curves $[(\Sigma_k,j_k,u_k)] \in \mM_g^A(M,J)$ to a
nodal curve $[(S,j,u,\Delta)] \in \overline{\mM}_g^A(M,J)$ as follows.  
(See Figure~\ref{fig:compactness} for an example.)
Let $S'$ denote the
compact topological $2$-manifold with boundary 
(Figure~\ref{fig:compactness6}, lower left)
obtained from $S$
by replacing each point $z \in \Delta \subset S$ with the circle
$$
C_z := T_z S \big/ \RR_+,
$$
where $\RR_+ := (0,\infty)$ acts on $T_z S$ by scalar multiplication.
The smooth structure of $S \setminus \Delta$ does not have an obviously
canonical extension over $S'$, but each boundary component $C_z \subset \p S'$
inherits from the conformal structure of $(S,j)$ a natural class of
preferred diffeomorphisms to $S^1$.  Now 
since the points in $\Delta$ come in pairs $\{ \widehat{z},\widecheck{z}\}$, we
can make a choice of preferred orientation-reversing diffeomorphisms
$C_{\widehat{z}} \to C_{\widecheck{z}}$ for each such pair and glue
corresponding boundary components of $S'$ to define a closed surface
(Figure~\ref{fig:compactness6}, lower right),
$$
\overline{S} := S' \big/ C_{\widehat{z}} \sim C_{\widecheck{z}}.
$$
This is naturally a closed topological $2$-manifold and it also carries a
smooth structure and a complex structure on $\overline{S} \setminus C
= S \setminus \Delta$, where
$$
C := \bigcup_{z \in \Delta} C_z \subset \overline{S}.
$$
Since $u(\widehat{z}) = u(\widecheck{z})$ for each pair
$\{ \widehat{z},\widecheck{z}\} \subset \Delta$, $u$ extends from $\overline{S}
\setminus C$ to a continuous map
$$
\bar{u} : \overline{S} \to M.
$$

\begin{figure}
\includegraphics[scale=0.9]{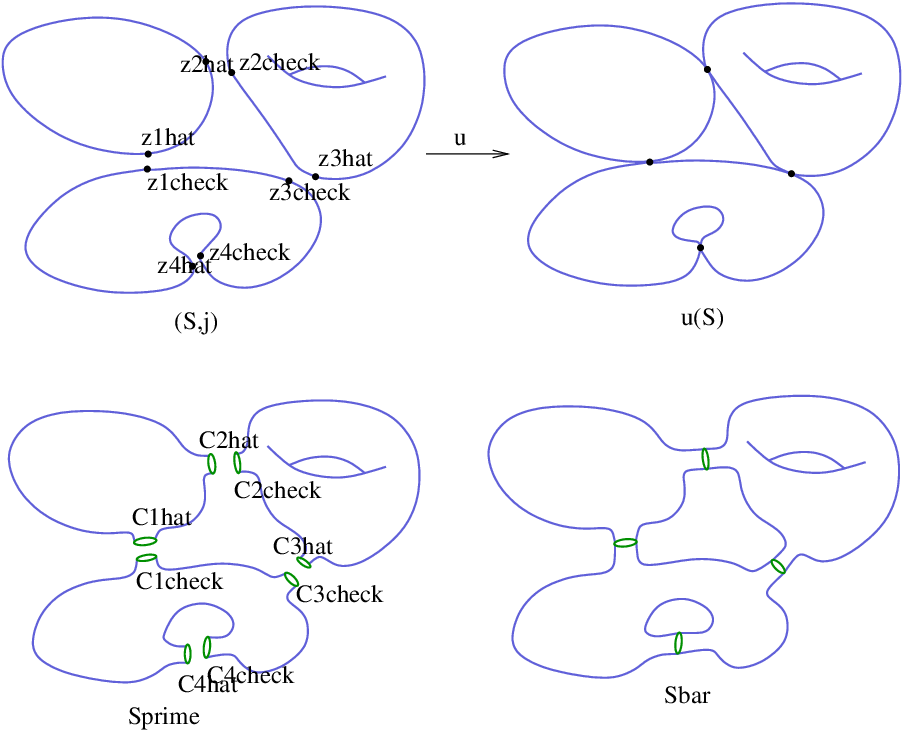}
\caption{\label{fig:compactness6} Four ways of viewing the nodal holomorphic 
of Figure~\ref{fig:compactness}.  At the upper left, we see the
disconnected Riemann surface $(S,j)$ with 
nodal pairs $\{\widehat{z}_i,\widecheck{z}_i\}$ for $i=1,2,3,4$.  To the right of 
this is a possible picture of the image of the nodal curve, with nodal pairs 
always mapped to identical points.  The bottom right shows the surface 
$S'$ with boundary, obtained from $S$ by replacing the points 
$\widehat{z}_i,\widecheck{z}_i$ with circles $\widehat{C}_i,\widecheck{C}_i$.  
Gluing these pairs of circles together gives the
closed connected surface $\overline{S}$ at the bottom right, whose genus is by
definition the arithmetic genus of the nodal curve.}
\end{figure}

The convergence $[(\Sigma_k,j_k,u_k)] \to [(S,j,u,\Delta)]$ can now be defined
to mean that for sufficiently large~$k$ there exist homeomorphisms
$$
\varphi_k : \overline{S} \to \Sigma_k
$$
whose restrictions to $\overline{S} \setminus C$ are
smooth and have smooth inverses, such that\index{Gromov's compactness theorem|)}
$$
\varphi_k^*j_k \to j \text{ in $C^\infty_{\text{loc}}(\overline{S} \setminus C)$,} \quad
u_k \circ \varphi_k \to u \text{ in $C^\infty_{\text{loc}}(\overline{S} \setminus C,M)$},
\quad\text{ and }\quad
u_k \circ \varphi_k \to \bar{u} \text{ in $C^0(\overline{S},M)$}.
$$

The analytical toolbox is now complete enough to 
fill in the following gap from \S\ref{sec:ruled}.

\begin{proof}[Proof of Lemma~\ref{lemma:M0}]
By construction, $\mM_0^{[S]}(M,J)$ contains an embedded curve
$u_S$, defined as the inclusion of~$S$.  The almost
complex structure $J$ cannot be assumed ``generic'' in the sense of
Theorem~\ref{thm:generic} since we chose it specifically to have the
property of preserving~$TS$.  We claim however that $u_S$ is nonetheless
Fredholm regular due to Theorem~\ref{thm:automatic}.  Indeed,
it has trivial normal bundle $N_S \to S^2$ since $[S] \cdot [S] = 0$, so the
natural splitting of complex vector bundles
$$
(u_S^*TM,J) = (TS^2,j) \oplus (N_S,J)
$$
implies
$$
c_1([S]) := c_1(u_S^*TM) = c_1(TS^2) + c_1(N_S) = \chi(S^2) + 0 = 2.
$$
Plugging $c_1([S]) = 2$ and $n=2$ into the index formula \eqref{eqn:dimension} 
now gives
$$
\ind(u_S) = -2 + 2 c_1([S]) = 2,
$$
hence $\virdim \mM_0^{[S]}(M,J) = 2$.  Since $u_S$ also is immersed, it
now satisfies the hypotheses of Theorem~\ref{thm:automatic}, so Fredholm
regularity follows.

To achieve smoothness near the rest of the simple curves in $\mM_0^{[S]}(M,J)$,
it suffices to choose a generic perturbation $J'$ of $J$ on the open
subset $M \setminus S$.  Indeed, for any such $J'$, assuming $J' = J$
along $S$ ensures that $u_S$ is also $J'$-holomorphic, so
unique continuation (Theorem~\ref{thm:uniqueContin}) then implies that
no other $J'$-holomorphic curve in $M$ can be contained entirely in~$S$ unless 
it is a multiple cover of~$u_S$.  In particular, $u_S$ itself is the only
such curve that is either simple or homologous to $[S]$.  It follows then by 
Theorem~\ref{thm:generic} that every other simple curve in
$\mM_0(M,J')$ is also Fredholm regular, so by Theorem~\ref{thm:dimension},
the subset $\mM_0^{[S],*}(M,J') \subset \mM_0^{[S]}(M,J')$ of simple curves
is an oriented $2$-dimensional manifold.
To simplify the notation, we relabel $J := J'$ from now on.

By Gromov's compactness theorem (Theorem~\ref{thm:compactness}), any
sequence $u_k \in \mM_0^{[S],*}(M,J)$ with no convergent subsequence in
$\mM_0^{[S]}(M,J)$ converges to a nodal curve with arithmetic genus~$0$.
The genus condition implies that its connected components are all
spheres, so we can regard the nodal curve simply as a finite set of 
$J$-holomorphic spheres $v_1,\ldots,v_N \in \mM_0(M,J)$
with $N \ge 2$, satisfying the condition
\begin{equation}
\label{eqn:homologySum}
[v_1] + \ldots + [v_N] = [S].
\end{equation}
These spheres cannot at first be assumed to be simple, but for each
$j=1,\ldots,N$, there is a simple curve $w_j \in \mM_{g_j}(M,J)$ and
an integer $k_j \in \NN$ such that $v_j$ is a $k_j$-fold cover of~$w_j$;
here we adopt the convention $w_j = v_j$ if $k_j=1$.  If $k_j > 1$,
then $v_j$ factors through a holomorphic map $S^2 \to \Sigma_{g_j}$
of degree~$k_j$, where $\Sigma_{g_j}$ is a closed connected 
surface with genus~$g_j$; but no such map exists if $g_j > 0$ since
the universal cover of $\Sigma_{g_j}$ is then contractible, implying
$\pi_2(\Sigma_{g_j}) = 0$, so we conclude that each $w_j$ has genus~$0$.
Now since all simple $J$-holomorphic curves in $M$ are Fredholm
regular, Corollary~\ref{cor:nonnegative} and the index formula
\eqref{eqn:dimension} give
$$
\ind(w_j) = -2 + 2 c_1([w_j]) \ge 0,
$$
hence $c_1([w_j]) \ge 1$.  Since $c_1([S]) = 2$, \eqref{eqn:homologySum}
now gives
\begin{equation}
\label{eqn:c1sum}
k_1 + \cdots + k_N \le k_1 c_1([w_1]) + \ldots + k_N c_1([w_N]) = 2,
\end{equation}
thus $N=2$ and $k_1 = k_2 = c_1([v_1]) = c_1([v_2]) = 1$.  We conclude
that the nodal curve has exactly two components, both simple, and
since $[v_1] + [v_2] = [S]$, they satisfy the uniform energy bound
\begin{equation}
\label{eqn:uniformBound}
E(v_j) = \langle [\omega] , [v_j] \rangle \le \langle [\omega],
[v_1] \rangle + \langle [\omega], [v_2] \rangle = \langle [\omega] , [S] \rangle
\end{equation}
for $j=1,2$.

Finally, we claim that the set of simple curves $v \in \mM_0(M,J)$ with
$c_1([v]) = 1$ is finite.  By Theorems~\ref{thm:dimension}
and~\ref{thm:generic}, this set is a $0$-dimensional manifold, i.e.~a discrete
set, so finiteness will follow if we can show that it is compact.
This follows essentially by a repeat of the argument above; note that
Gromov's compactness theorem is applicable due to the energy bound
\eqref{eqn:uniformBound}.  Now if a sequence of
such curves converges to a nodal curve with more than one component, then 
it produces an inequality like \eqref{eqn:c1sum} but with~$1$ on the
right hand side, which gives a contradiction.  The only remaining possibility
is that a sequence $v_k$ of curves with $c_1([v_k])=1$ converges to a
smooth but multiply covered curve $v$, but this is immediately excluded
since $c_1([v]) = 1$, so $[v]$ is a primitive homology class.
\end{proof}

\begin{remark}
\label{remark:higherGenus}
Let us see what goes wrong if one tries to prove an analogue of McDuff's 
theorem about ruled surfaces under the assumption of a symplectically
embedded
surface $S \subset (M,\omega)$ with $[S] \cdot [S] = 0$ and
genus $g > 0$.  One can still construct an 
embedded  $J$-holomorphic curve $u_S \in \mM_g^{[S]}(M,J)$, and since its
normal bundle $N_S \to S$ is necessarily trivial, the splitting 
$u^*TM = TS \oplus N_S$ now gives $c_1([S]) = \chi(S) = 2 - 2g$, so
\eqref{eqn:dimension} now gives
$$
\virdim \mM_g^{[S]}(M,J) = -(2 - 2g) + 2 c_1([S]) = 2 - 2g.
$$
This answer is desirable when $g=0$ because $2$ is the right number of
dimensions to foliate a $4$-manifold by holomorphic curves---but if
$g > 0$, one cannot hope to find a $2$-parameter family of holomorphic
curves homologous to~$[S]$, and in fact the curves should disappear
entirely after a generic perturbation if $g > 1$.  The failure of the proof
is thus attributable essentially to the Riemann-Roch formula, from which the
dimension formula \eqref{eqn:dimension} is derived.  It is more than a
failure of technology, however, as the theorem is false when $g > 0$.
\end{remark}

\section{Curves with punctures}
\label{app:punctured}

A general reference for the contents of this section is \cite{Wendl:SFT}.

Assume $(W,\omega)$ is a $2n$-dimensional symplectic cobordism with
$$
\p(W,\omega) = (-M_-,\xi_- = \ker\alpha_-) \sqcup
(M_+,\xi_+ = \ker\alpha_+),
$$
$(\widehat{W},\widehat{\omega})$ denotes its completion and
$J \in \jJ(\omega,\alpha_+,\alpha_-)$; see \S\ref{sec:punctured}
for the relevant definitions.  Consider an asymptotically
cylindrical $J$-holomorphic curve $u : (\dot{\Sigma} = \Sigma\setminus\Gamma,j) 
\to (\widehat{W},J)$
asymptotic to nondegenerate\footnote{Most of this discussion can also be
generalized to allow Reeb orbits in Morse-Bott nondegenerate families, though
the index formula becomes more complicated (see e.g.~\cites{Bourgeois:thesis,Wendl:automatic}).
In general, the linearized Cauchy-Riemann operator is not Fredholm (and thus the
moduli space is not well behaved) unless some nondegeneracy condition is imposed
on the ends.} Reeb orbits $\gamma_z$ in $M_\pm$ at its
positive/negative punctures $z \in \Gamma^\pm \subset \Sigma$.  The index
formula for $u$ can be expressed in terms of the Conley-Zehnder indices of 
its asymptotic orbits, but this requires a choice of normal trivialization
along each orbit.  We shall therefore fix an arbitrary choice of 
trivialization of $\gamma^*\xi_\pm$ for every Reeb orbit $\gamma$ in $M_\pm$,
and denote this choice collectively by~$\tau$.  The Conley-Zehnder index
of $\gamma$ relative to~$\tau$ will then be denoted by $\muCZ^\tau(\gamma)$,
and we write the \defin{index}\index{index!of a punctured holomorphic curve}\index{holomorphic curve!index of}
of $u$ as
\begin{equation}
\label{eqn:indexPunctured}
\ind(u) := (n-3) \chi(\dot{\Sigma}) + 2 c_1^\tau(u^*T\widehat{W}) +
\sum_{z \in \Gamma^+} \muCZ^\tau(\gamma_z) - \sum_{z \in \Gamma^-}
\muCZ^\tau(\gamma_z),
\end{equation}
where $c_1^\tau(u^*T\widehat{W})$ denotes the \emph{relative first Chern
number} of the complex vector bundle $(u^*T\widehat{W},J) \to \dot{\Sigma}$;
cf.~\S\ref{sec:puncturedFoliations}.  One can check that the sum on the
right hand side of \eqref{eqn:indexPunctured} does not depend on the
choice~$\tau$.  As with closed curves, $\ind(u)$ is also called the
\defin{virtual dimension}\index{virtual dimension}\index{moduli space!virtual dimension of}
of the connected component of
$\mM_g(\widehat{W},J)$ containing~$u$; one can show in fact that it only
depends on the Reeb orbits, the genus, and the relative homology
class of~$u$.

A curve $u : (\dot{\Sigma},j) \to (\widehat{W},J)$ in $\mM_g(\widehat{W},J)$ 
is \defin{multiply covered}\index{multiply covered holomorphic curve|(}\index{holomorphic curve!multiply covered|(}
whenever it can be written as $u = v \circ \varphi$ 
for some $v : (\dot{\Sigma}',j') \to (\widehat{W},J)$ in 
$\mM_{g'}(\widehat{W},J)$ and a holomorphic map
$$
\varphi : (\Sigma,j) \to (\Sigma',j') \quad\text{ with } \quad
\varphi(\dot{\Sigma}) = \dot{\Sigma}',
$$
having degree $\deg(\varphi) > 1$.  The \defin{automorphism
group}\index{automorphism group!of a punctured holomorphic curve}
$\Aut(\Sigma,j,\Gamma,u)$ can be defined similarly as the group
of biholomorphic maps $\varphi : (\Sigma,j) \to (\Sigma,j)$ that fix each
point in $\Gamma$ and satisfy $u = u \circ \varphi$.  If $u$ is not
multiply covered, it is called \defin{simple},\index{simple holomorphic curve|(}\index{holomorphic curve!simple|(}
and then it necessarily
has trivial automorphism group.
A straightforward combination of standard arguments for the closed
case (e.g.~\cite{McDuffSalamon:Jhol2}*{Prop.~2.5.1}) with Siefring's
relative asymptotic formula (Theorem~\ref{thm:asymptoticsRelative}) proves:

\begin{thm}
\label{thm:uniqueContinPunctured}
Theorems~\ref{thm:simple} and~\ref{thm:uniqueContin} also hold\index{multiply covered holomorphic curve|)}\index{holomorphic curve!multiply covered|)}
for asymptotically cylindrical $J$-holomorphic curves in~$\widehat{W}$.\index{holomorphic curve!simple|)}\index{simple holomorphic curve|)}
\end{thm}

A proof of the following generalization of Theorem~\ref{thm:dimension} is
sketched in \cite{Wendl:automatic}*{Theorem~0}:

\begin{thm}
\label{thm:theorem0}
The subset of $\mM_g(\widehat{W},J)$ consisting of all Fredholm regular
curves with trivial automorphism group is open and admits the structure
of a smooth finite-dimensional manifold, whose dimension near any
given curve $u \in \mM_g(\widehat{W},J)$ is $\ind(u)$.\index{Fredholm regular|(}
\end{thm}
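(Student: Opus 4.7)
The plan is to realize a neighborhood of $u$ in $\mM_g(\widehat{W},J)$ as the zero set of a Fredholm section of a Banach space bundle and apply the implicit function theorem. Concretely, fix $u = [(\Sigma,j,\Gamma,u)] \in \mM_g(\widehat{W},J)$ with trivial automorphism group. First I would construct a Banach manifold $\bB$ parametrizing nearby candidate objects: pairs $(j',u')$ where $j'$ lies in a local slice of Teichm\"uller space $\tT(\Sigma,\Gamma)$ transverse to the (infinitesimal) action of the diffeomorphism group, and $u'$ is an asymptotically cylindrical map differing from $u$ near each puncture $z \in \Gamma$ by a section of $u^*T\widehat{W}$ living in an \emph{exponentially weighted} Sobolev space $W^{k,p,\delta}$ with small weight $\delta > 0$. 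Since the asymptotic orbits $\gamma_z$ are nondegenerate, by Proposition~\ref{prop:selfAdjoint} the spectrum of $\mathbf{A}_{\gamma_z}$ is a discrete set missing~$0$, so $\delta$ can be chosen small enough to avoid the spectrum and still control exponential decay of the form given by Theorem~\ref{thm:asymptotics}. Because the decay rate of a nontrivial approach to the orbit need not lie in the weighted space, one must also enlarge $\bB$ by a finite-dimensional space of asymptotic parameters encoding translations along $\RR \times M_\pm$ and reparametrizations of the asymptotic loops; this is standard, see e.g.\ \cite{Wendl:automatic}.

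Next I would introduce the Banach bundle $\eE \to \bB$ whose fibre at $(j',u')$ consists of weighted $(0,1)$-forms along $u'$ with values in $T\widehat{W}$, and the smooth section
$$
\bar\partial(j',u') := du' + J \circ du' \circ j'.
$$
The zero set of $\bar\partial$, after dividing by the remaining finite-dimensional symmetry group given by the biholomorphisms of $(\Sigma,j)$ fixing $\Gamma$, is identified with a neighborhood of $u$ in $\mM_g(\widehat{W},J)$. The linearization at $u$ is a bounded linear operator
$$
\mathbf{D}_u : T_j \tT(\Sigma,\Gamma) \oplus V \oplus W^{k,p,\delta}(u^*T\widehat{W}) \longrightarrow W^{k-1,p,\delta}\!\left(\overline{\mathrm{Hom}}_\CC(T\dot\Sigma, u^*T\widehat{W})\right),
$$
where $V$ is the finite-dimensional space of asymptotic parameters. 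Nondegeneracy of the $\gamma_z$ combined with the choice of weight $\delta$ ensures that the underlying real-linear Cauchy-Riemann operator on the cylindrical ends is asymptotic to an invertible operator, which is exactly the hypothesis needed to make $\mathbf{D}_u$ Fredholm by the classical Lockhart-McOwen theory. Its Fredholm index is computed by the punctured Riemann-Roch formula of Schwarz, and a direct bookkeeping of relative Chern numbers and Conley-Zehnder contributions (with the asymptotic parameter space~$V$ contributing the correct number of extra dimensions) yields exactly the right-hand side of \eqref{eqn:indexPunctured}.

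With these ingredients in place, the conclusion is formal. Fredholm regularity is by definition the statement that $\mathbf{D}_u$ is surjective, so the implicit function theorem for smooth sections of Banach bundles produces a smooth submanifold $\bar\partial^{-1}(0) \subset \bB$ of dimension $\ind(\mathbf{D}_u)$ near $(j,u)$. The residual action of $\Aut(\Sigma,j,\Gamma,u)$, which is assumed trivial, acts freely and properly on a neighborhood, so the quotient inherits a smooth manifold structure of the same dimension. Both defining conditions are open: surjectivity of a Fredholm operator is preserved under small perturbations, and triviality of the automorphism group is open by a standard continuity argument using that $\Aut(\Sigma,j,\Gamma,u) \subset \Aut(\Sigma,j)$ is a finite group acting on a Hausdorff space. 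Orientability follows by the usual determinant line bundle argument adapted to the punctured setting.

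The main obstacle is the analytic setup at the punctures: one must choose the weighted Sobolev spaces, the slice of Teichm\"uller space, and the finite-dimensional space of asymptotic parameters so that $\bar\partial$ is genuinely smooth, $\mathbf{D}_u$ is genuinely Fredholm, and the resulting zero set corresponds bijectively to nearby elements of the moduli space with the $C^\infty_{\loc}$--plus--$C^0(\overline{\Sigma},\overline{W})$ topology. Once these foundations are in place, the computation of the Fredholm index via Riemann-Roch and the extraction of the smooth manifold structure are standard; the remaining statements in the theorem are essentially formal consequences of the implicit function theorem.
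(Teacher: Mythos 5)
The paper does not actually prove this result---it is stated as a black box with a citation to \cite{Wendl:automatic}*{Theorem~0}---and your outline (exponentially weighted Sobolev spaces with a small weight avoiding the asymptotic spectra, a finite-dimensional space of asymptotic shift parameters, a Teichm\"uller slice, the Fredholm property for Cauchy--Riemann operators that are asymptotically invertible, the punctured Riemann--Roch index computation, and the implicit function theorem followed by quotienting out the residual symmetries) is precisely the standard argument sketched in that reference, so you are taking essentially the same route. One small correction to your last paragraph: the group one must quotient by is $\Aut(\Sigma,j,\Gamma)$, the biholomorphisms of the domain fixing the punctures---which is positive-dimensional when the domain is non-stable, e.g.\ a sphere with at most two punctures---rather than the automorphism group of the curve; triviality of the latter only ensures the action near $u$ is free, and in the non-stable case the quotient has dimension strictly smaller than the zero set of $\dbar$, with the discrepancy already absorbed into the index formula \eqref{eqn:indexPunctured}.
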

We have intentionally omitted the word ``oriented'' from
Theorem~\ref{thm:theorem0}, as the question of orientations is somewhat
subtler here than in the closed case; see \cite{BourgeoisMohnke} or
\cite{Wendl:SFT}*{Chapter~11}.  Theorem~\ref{thm:generic} generalizes as follows:
\begin{thm}
\label{thm:genericPunctured}
Assume $\uU \subset \widehat{W}$ is an
open subset with compact closure, fix $J_0 \in \jJ(\omega,\alpha_+,\alpha_-)$, 
and define
$$
\jJ(\uU,J_0) := \left\{ J \in \jJ(\omega,\alpha_+,\alpha_-) \ \Big|\ \text{$J
\equiv J_0$ on $\widehat{W} \setminus \uU$} \right\}
$$
with its natural $C^\infty$-topology.
Then there exists a comeager subset $\jJ^\reg(\uU,J_0) \subset \jJ(\uU,J_0)$ such that
for all $J \in \jJ^\reg(\uU,J_0)$, every simple curve $u \in \mM_g(\widehat{W},J)$ that
intersects~$\uU$ is Fredholm regular.
\end{thm}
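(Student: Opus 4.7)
The plan is to follow the standard universal moduli space / Sard-Smale strategy, adapted to the asymptotically cylindrical setting. First I would fix topological data (genus $g$, sets $\Gamma^\pm$ of punctures, a relative homology class, and asymptotic Reeb orbits $\{\gamma_z\}_{z\in\Gamma}$) and set up the Banach bundle formalism: let $\bB$ be a Banach manifold of asymptotically cylindrical maps $u:\dot\Sigma\to\widehat W$ of class $W^{k,p}_\delta$ (with a small exponential weight $\delta>0$ chosen below the smallest absolute value of the eigenvalues of the asymptotic operators, so that the linearisation is Fredholm), together with a varying complex structure $j$ on $\Sigma$ modulo automorphisms; and let $\eE\to\bB\times\jJ(\uU,J_0)$ be the Banach bundle with fibre $W^{k-1,p}_\delta\big(\Hom_\CC^{0,1}(T\dot\Sigma,u^*T\widehat W)\big)$. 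Because $\jJ(\uU,J_0)$ is only Fr\'echet, I would apply Taubes' trick, replacing it by a sequence of Banach submanifolds $\jJ^\ell(\uU,J_0)$ of $C^\ell$-smooth almost complex structures and intersecting the resulting comeagre subsets.

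The heart of the argument is to show that the universal section $\bar\p_J u$ of $\eE$ is transverse to the zero section at any simple curve $u$ meeting $\uU$. The vertical linearisation has two pieces: the usual operator $D_u$ coming from varying $(u,j)$, and the derivative in the $J$-direction, which sends $Y\in T_J\jJ(\uU,J_0)$ to $\tfrac12 Y(u)\circ du\circ j$. Standard Fredholm theory gives that $D_u$ has closed range with finite-dimensional cokernel, so it suffices to show that for any $\eta$ in the $L^2$-dual annihilating the image of $D_u$, the pairing $\int_{\dot\Sigma}\langle \eta,Y(u)\circ du\circ j\rangle$ can be made nonzero by some $Y$ supported in $\uU$. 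This is done by finding a \emph{somewhere injective point} $z_0\in u^{-1}(\uU)$, i.e.\ a point with $du(z_0)$ injective and $u^{-1}(u(z_0))=\{z_0\}$, and a point where $\eta(z_0)\ne 0$; then $Y$ can be localised in a tiny ball around $u(z_0)$ where $u$ is an embedding, and the pairing is a genuine integral that one can make nonzero by the usual bump construction.

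The main obstacle, and the step I expect to occupy most of the work, is establishing the existence of such somewhere injective points inside the preimage of $\uU$. In the closed case this is the standard lemma of McDuff; in the punctured case, simplicity of $u$ together with Corollary \ref{cor:branch} (embeddedness near the punctures for simple curves) and the punctured unique continuation statement in Theorem \ref{thm:uniqueContinPunctured} yields that the set of somewhere injective points is open and dense in $\dot\Sigma$. Since $u$ meets $\uU$ by assumption, $u^{-1}(\uU)$ is open and nonempty, and it therefore contains somewhere injective points; unique continuation applied to any nontrivial element of $\ker D_u^*$ shows that $\eta$ cannot vanish identically on any open set, so one can arrange $\eta(z_0)\ne 0$ as well.

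With transversality of the universal section in hand, the universal moduli space $\mM^{\operatorname{univ}}(\uU,J_0;\ldots)\subset\bB\times\jJ^\ell(\uU,J_0)$ of simple curves intersecting $\uU$ is a smooth Banach manifold, the projection to $\jJ^\ell(\uU,J_0)$ is Fredholm, and the Sard-Smale theorem yields a comeagre subset of regular values. Taking the countable intersection over all choices of topological data $(g,\Gamma^\pm,\{\gamma_z\},$ relative homology class$)$ and then promoting the $C^\ell$ result to a $C^\infty$ statement via Taubes' trick gives the desired comeagre $\jJ^\reg(\uU,J_0)\subset\jJ(\uU,J_0)$, and regularity of a value of the projection is equivalent to Fredholm regularity of every simple curve in the fibre intersecting $\uU$.
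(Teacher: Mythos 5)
Your outline is, in essence, the intended proof: the paper states Theorem~\ref{thm:genericPunctured} as a black box in the appendix and delegates the details to its references (in particular \cite{Wendl:SFT} for the punctured case), and those sources argue exactly as you do --- a Banach manifold of asymptotically cylindrical maps, a universal Cauchy--Riemann section over $\bB \times \jJ^\ell(\uU,J_0)$, transversality via pairing a cokernel element against a perturbation of $J$ localized near an injective point (whose existence in $u^{-1}(\uU)$ follows from Theorem~\ref{thm:uniqueContinPunctured}, since the non-injective points of a simple curve form a finite set), then Sard--Smale, Taubes' trick, and a countable intersection over the discrete choices of genus, punctures, nondegenerate asymptotic orbits and relative homology classes.

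One step in your sketch needs more care, and it is the only place where the punctured setting differs substantively from the closed case beyond the Fredholm bookkeeping. Every $J \in \jJ(\uU,J_0)$ is required to be $\RR$-translation-invariant on the cylindrical ends, and since $\overline{\uU}$ is compact, invariance forces $J \equiv J_0$ on all of $[0,\infty)\times M_+$ and $(-\infty,0]\times M_-$; consequently every admissible variation $Y \in T_J\jJ(\uU,J_0)$ vanishes identically on the ends. Your bump perturbation ``in a tiny ball around $u(z_0)$'' is therefore only available when the injective point satisfies $u(z_0) \in \uU \cap \mathring{W}$, i.e.\ in the part of $\uU$ away from the cylindrical ends, so you must either choose $z_0$ there (possible whenever the curve meets $\uU\cap\mathring{W}$, since non-injective points are finite and a nontrivial cokernel element cannot vanish on an open set) or acknowledge that curves meeting $\uU$ only inside the ends are not reached by this argument --- that regime is instead covered by the symplectization result with $\RR$-invariant perturbations (Theorem~\ref{thm:Dragnev}), and it is in the former regime that the theorem is actually applied in Lecture~5. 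A second, minor point: with nondegenerate asymptotic orbits the linearisation is already Fredholm on unweighted $W^{k,p}$-spaces; if you insist on a positive exponential weight $\delta$, you should enlarge the domain by the finite-dimensional space of asymptotic shift directions, otherwise the index you regulate is not the one in \eqref{eqn:indexPunctured} and ``Fredholm regular'' would mean surjectivity of the wrong operator.
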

There is a further variation on the theme of ``generic transversality'' 
that only makes sense in the translation-invariant setting of a symplectization:
a perturbation of
a translation-invariant structure $J \in \jJ(\alpha)$ on $\RR \times M$ that is generic in the
sense of Theorem~\ref{thm:genericPunctured} cannot generally be
assumed translation-invariant, but Dragnev \cite{Dragnev} (see also the
appendix of \cite{Bourgeois:homotopy} or \cite{Wendl:SFT}*{Chapter~8}) proved:
\begin{thm}
\label{thm:Dragnev}
Suppose $(M,\xi = \ker\alpha)$ is a closed contact manifold, 
$\uU \subset M$ is an open subset and $J_0 \in \jJ(\alpha)$, and denote
$$
\jJ(\uU,J_0) := \left\{ J \in \jJ(\alpha)\ |\ \text{$J \equiv J_0$ on
$\RR \times (M \setminus \uU)$} \right\}.
$$
Then there exists a comeager subset $\jJ^\reg(\uU,J_0) \subset \jJ(\uU,J_0)$ such that
for all $J \in \jJ^\reg(\uU,J_0)$, every simple curve $u \in \mM_g(\RR\times M,J)$ that
intersects~$\RR \times \uU$ is Fredholm regular.
\end{thm}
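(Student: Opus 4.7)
The plan is to follow the standard Sard–Smale framework for generic transversality, but with a key modification to accommodate the $\RR$-invariance constraint on~$J$. First I would set up the universal moduli space
$$
\mM^{\reg}(\uU, J_0) = \left\{ (J, [(\Sigma, j, \Gamma, u)])\ \big|\ J \in \jJ^\ell(\uU, J_0),\ u \text{ simple, intersecting } \RR\times\uU \right\}
$$
where $\jJ^\ell(\uU, J_0)$ denotes a suitable Banach manifold completion of $\jJ(\uU, J_0)$ (e.g.\ a space of $C^\ell$ or exponentially decaying $J$'s), and realize this space as the zero set of a smooth section $\dbar$ of a Banach space bundle over a Banach manifold of maps. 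If one can show the linearization $D(\dbar)_{(J,u)}$ is surjective for every such $(J, u)$, then $\mM^{\reg}(\uU, J_0)$ inherits a Banach manifold structure, and the Sard–Smale theorem applied to the projection $\mM^{\reg}(\uU, J_0) \to \jJ^\ell(\uU, J_0)$ produces the comeagre set of regular values. A standard bootstrapping/intersection argument then passes from the $C^\ell$ case to the smooth case $\jJ(\uU, J_0)$.

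The central analytical task---and the main obstacle---is proving surjectivity of the universal linearization at every simple $u$ using only $\RR$-invariant variations $Y \in T_J \jJ(\uU, J_0)$. Concretely, such a $Y$ is determined by its restriction to one $\RR$-slice and must satisfy $Y J + J Y = 0$, $Y \p_s = 0$, $Y(\xi) \subset \xi$, and $d\alpha(Y\cdot,\cdot) + d\alpha(\cdot, Y\cdot) = 0$ on $\xi$. Thus the contribution of $Y$ to the linearization, which has the form $\int_{\dot\Sigma} \langle \eta, Y \circ du \circ j\rangle\, dA$ for a hypothetical cokernel element $\eta$, can detect only the $\xi$-component of the horizontal derivative $\pi_M \circ du$ and is insensitive to $\RR$-translation. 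To make this pairing nonzero (assuming $\eta \not\equiv 0$), one needs a point $z \in u^{-1}(\RR \times \uU)$ with the following three properties: $du(z)$ is injective; the $\xi$-projection $\pi_\xi \pi_M du(z)$ is nonzero; and no other point $z' \ne z$ of $\dot\Sigma$ satisfies $\pi_M(u(z')) = \pi_M(u(z))$. Call such a point an \emph{injective point modulo~$\RR$}. The hard part is proving that the set of injective points modulo~$\RR$ is nonempty (in fact dense) in $u^{-1}(\RR \times \uU)$ for any simple $u$ which is not an orbit cylinder: critical points of $u$ form a discrete set by Theorem~\ref{thm:simple}, and points where $\pi_\xi \pi_M du = 0$ form a set which, by the similarity principle applied to the equation $\pi_\alpha \p_s v + J \pi_\alpha \p_t v = 0$ of \eqref{eqn:tripartite}, is either nowhere dense or equal to all of~$\dot\Sigma$; the latter forces $u$ to factor through the Reeb flow and hence, by unique continuation (Theorem~\ref{thm:uniqueContinPunctured}) and simplicity, to be an orbit cylinder. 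The injectivity-modulo-$\RR$ condition is then obtained by mimicking the somewhere-injectivity argument of \cite{McDuffSalamon:Jhol}, applied to the projected map $\pi_M \circ u$ together with an $\RR$-equivariant version of the standard estimate on dimensions of double-point loci, using the relative asymptotic formula (Theorem~\ref{thm:asymptoticsRelative}) to control the behavior near the punctures.

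With the injective-point lemma in hand, surjectivity of the universal linearization is routine: given a nontrivial $\eta$ in the cokernel, one localizes a perturbation $Y$ near an injective point $z$ (with support small enough to avoid other preimages of $\pi_M(u(z))$ and small enough to be contained in an $\RR$-tube over $\uU$) and chooses $Y$ to make the integrand pointwise positive; this produces a nonzero pairing and contradicts $\eta$ lying in the cokernel. The case of simple orbit cylinders must be handled separately: for these, the linearized operator splits as a direct sum along $\p_s \oplus R_\alpha$ and along $\xi$, and its cokernel is controlled entirely by $\ker \mathbf{A}_\gamma$, which vanishes by the standing nondegeneracy hypothesis, so these curves are automatically Fredholm regular regardless of the choice of $J$. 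Applying Sard–Smale to the universal moduli space and intersecting over a suitable countable exhaustion of strata (by genus, number of punctures, relative homology class, and asymptotic orbits) yields the desired comeagre $\jJ^\reg(\uU, J_0) \subset \jJ(\uU, J_0)$.
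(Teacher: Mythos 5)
Your proposal cannot be compared against an in-paper argument, because the paper does not prove Theorem~\ref{thm:Dragnev}: it is quoted as a black box from Dragnev \cite{Dragnev} (see also the appendix of \cite{Bourgeois:homotopy}), and Appendix~\ref{app:curves} states all such results without proof. Measured against the proofs in those sources, your outline follows the same route: a universal moduli space over a Banach completion of $\jJ(\uU,J_0)$, Sard--Smale plus a Taubes-type argument to return to smooth~$J$, a separate (and correct) treatment of orbit cylinders via nondegeneracy of $\mathbf{A}_\gamma$, and---as the heart of the matter---a lemma producing points $z$ with $du(z) \ne 0$, $\pi_\xi \circ \pi_M \circ du(z) \ne 0$ and $(\pi_M \circ u)^{-1}\left(\pi_M(u(z))\right) = \{z\}$, which is precisely the device used in the literature to compensate for the fact that $\RR$-invariant perturbations $Y$ annihilate $\p_s$ and $R_\alpha$ and cannot be localized in the $\RR$-direction.

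The thin spot is the justification of that key lemma. ``Mimicking the somewhere-injectivity argument of \cite{McDuffSalamon:Jhol} applied to $\pi_M \circ u$'' is not routine: $\pi_M \circ u$ is not a pseudoholomorphic curve in any almost complex manifold, and the relevant coincidence condition $\pi_M(u(z)) = \pi_M(u(z'))$ describes intersections of $u$ with the entire $1$-parameter family of translates $\sigma_c \circ u$, where $\sigma_c(s,m) := (s+c,m)$; this locus is cut out by three equations on a $4$-dimensional space of pairs, so it is expected to be $1$-dimensional and cannot be shown discrete by the local intersection analysis---one must instead rule out that it is somewhere dense. The known arguments do this by analyzing sequences of such pairs $(z_k,z_k',c_k)$: if the translations converge to some $c \ne 0$, unique continuation forces $u$ and $\sigma_c \circ u$ to have identical images, and then the closed subgroup $\{c \in \RR : \sigma_c(u(\dot{\Sigma})) = u(\dot{\Sigma})\}$ is either all of $\RR$ (so $u$ covers an orbit cylinder, excluded by simplicity unless $u$ is an orbit cylinder) or nontrivial discrete, which is incompatible with the asymptotically cylindrical behavior unless $\pi_M \circ u$ has image in the asymptotic orbits; the cases $c_k \to 0$, $c_k \to \pm\infty$, or $z_k'$ escaping to a puncture are handled with the local analysis of Appendix~\ref{app:positivity} and the relative asymptotic formula (Theorem~\ref{thm:asymptoticsRelative}). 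So your appeal to Theorem~\ref{thm:asymptoticsRelative} is in the right spirit, but the global step just described is the actual content of Dragnev's theorem and is not an ``$\RR$-equivariant version of the standard dimension estimate.'' Two smaller points: in the localization you should also choose $z$ so that $\pi_M(u(z))$ lies off the finitely many asymptotic orbits, so that $(\pi_M\circ u)^{-1}$ of a small neighborhood is compact and the cutoff pairing only sees a neighborhood of $z$; and one should invoke the similarity principle for the cokernel element $\eta$ to place the chosen injective point where $\eta \ne 0$ inside $u^{-1}(\RR \times \uU)$.
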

Observe that in the symplectization, the translation-invariance of $J \in \jJ(\alpha)$
turns any curve $u \in \mM_g(\RR\times M,J)$ that isn't a cover of an orbit cylinder
into a $1$-parameter family, so Theorem~\ref{thm:Dragnev} implies a slightly
different analogue of Corollary~\ref{cor:nonnegative}:
\begin{cor}
\label{cor:positive}
For generic $J \in \jJ(\alpha)$ on the symplectization of a closed contact manifold
$(M,\xi=\ker\alpha)$, every simple $J$-holomorphic curve that is not an orbit cylinder
satisfies $\ind(u) \ge 1$.
\end{cor}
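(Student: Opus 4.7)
The plan is to combine Dragnev's transversality theorem with a dimension argument based on the free $\RR$-action on the moduli space by translations in the symplectization direction. Specifically, apply Theorem~\ref{thm:Dragnev} with $\uU = M$ and an arbitrary $J_0 \in \jJ(\alpha)$, producing a comeagre set $\jJ^{\reg}(M,J_0) \subset \jJ(\alpha)$ such that every simple $J$-holomorphic curve is Fredholm regular for $J \in \jJ^{\reg}$. Fix such a $J$, and let $u \in \mM_g(\RR\times M,J)$ be simple and not an orbit cylinder. By Theorem~\ref{thm:theorem0}, some neighborhood of $u$ in the moduli space is a smooth manifold of dimension $\ind(u)$, so it will suffice to exhibit a $1$-parameter family of pairwise non-equivalent curves through~$u$.

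For this, I would use the $\RR$-action on $(\RR\times M,J)$ by translations $\sigma_c : (s,p)\mapsto (s+c,p)$, which preserves $J \in \jJ(\alpha)$ and therefore induces a continuous $\RR$-action on $\mM_g(\RR\times M,J)$ sending $[(\Sigma,j,\Gamma,u)]$ to $[(\Sigma,j,\Gamma,\sigma_c\circ u)]$. The main step is to argue that if $u$ is simple and not an orbit cylinder, then its $\RR$-orbit in the moduli space is non-trivial. Suppose otherwise, so that for every $c\in\RR$ there exists a biholomorphism $\varphi_c : (\Sigma,j)\to(\Sigma,j)$ fixing $\Gamma$ such that $\sigma_c\circ u = u\circ\varphi_c$. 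Then the image $u(\dot\Sigma)\subset \RR\times M$ is invariant under all translations, hence of the form $\RR\times C$ for some subset $C\subset M$. Combined with the asymptotically cylindrical behavior at the punctures and the identity $J\p_s = R_\alpha$, this forces $u$ to parametrize a union of Reeb trajectories, i.e.~$u$ is a cover of an orbit cylinder; since $u$ is simple, $u$ itself is an orbit cylinder, contradicting the hypothesis.

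Consequently, the $\RR$-orbit of $u$ in $\mM_g(\RR\times M,J)$ is either diffeomorphic to $\RR$ (if the stabilizer is trivial) or to $S^1$ (if the stabilizer is a nontrivial discrete subgroup $c\ZZ\subset\RR$); in either case it is $1$-dimensional. Since this orbit sits inside a smooth manifold of dimension $\ind(u)$ near~$u$, we conclude $\ind(u) \ge 1$, as claimed. I expect the main obstacle in writing this out carefully is the verification that translation invariance of the image forces $u$ to be an orbit cylinder: one must handle the case of a nontrivial discrete stabilizer (which prevents directly differentiating the $\RR$-action at $c=0$) and use the special form of $J\in\jJ(\alpha)$ together with unique continuation (Theorem~\ref{thm:uniqueContinPunctured}) to pin down the image; equivalently, one can bypass this by showing directly that the canonical section of $u^*T(\RR\times M)$ given by $\p_s$ descends to a nontrivial element of the kernel of the linearized Cauchy--Riemann operator, which by Fredholm regularity represents a nonzero tangent vector to the moduli space at~$u$.
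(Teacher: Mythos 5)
Your proposal is correct and follows essentially the same route as the paper, which deduces the corollary in one line from Theorem~\ref{thm:Dragnev} by observing that translation-invariance of $J \in \jJ(\alpha)$ turns any simple curve that is not (a cover of) an orbit cylinder into a nontrivial $1$-parameter family, so Fredholm regularity forces $\ind(u) \ge 1$. Your write-up just fills in the details the paper leaves implicit, namely that a trivial $\RR$-orbit in the moduli space would force $u$ to be an orbit cylinder, and your alternative via the section $\p_s$ in the kernel of the linearized operator is a standard equivalent way to finish.
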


The punctured generalization of our previous ``automatic'' transversality result
(Theorem~\ref{thm:automatic}) is again valid only in dimension~$4$, and is
most easily stated in terms of the normal Chern number (see \S\ref{sec:puncturedFoliations}):

\begin{thm}[\cite{Wendl:automatic}*{Theorem~1}]
\label{thm:automaticPunctured}
If $\dim \widehat{W} = 4$ and $J \in \jJ(\omega,\alpha_+,\alpha_-)$,
then every immersed $J$-holomorphic curve $u \in \mM_g^A(\widehat{W},J)$ with
$\ind(u) > c_N(u)$ is Fredholm regular.\index{Fredholm regular|)}\index{automatic transversality!for punctured holomorphic curves}\index{transversality!automatic}
\end{thm}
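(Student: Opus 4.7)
\emph{Plan.} The strategy is the standard one for automatic transversality in dimension four: reduce Fredholm regularity to surjectivity of a real-linear Cauchy-Riemann type operator on a complex line bundle, and use the similarity principle together with the zero-counting identity of Theorem~\ref{thm:totalZ} to bound kernel and cokernel dimensions. First I would recall that Fredholm regularity of $u$ amounts to surjectivity of the linearised Cauchy-Riemann operator $\mathbf{D}_u$, set up as a Fredholm map between suitable exponentially weighted Sobolev completions adapted to the nondegenerate asymptotic orbits. Since $u$ is immersed, the splitting $u^*T\widehat{W} = T\dot\Sigma \oplus N_u$ induces a corresponding splitting of $\mathbf{D}_u$ (up to compact perturbation) into a tangential summand, which is automatically surjective, and a \emph{normal} Cauchy-Riemann operator $\mathbf{D}_u^N$ on the complex line bundle $N_u \to \dot\Sigma$. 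Thus Fredholm regularity is equivalent to surjectivity of $\mathbf{D}_u^N$, and the Riemann-Roch theorem for CR-type operators with nondegenerate asymptotic operators gives $\ind(\mathbf{D}_u^N) = \ind(u)$.

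Next I would show that elements of $\ker\mathbf{D}_u^N$ admit the zero count from Lecture~3. Because $\mathbf{D}_u^N$ is a real-linear Cauchy-Riemann type operator and the asymptotic operators $\mathbf{A}_{\gamma_z}$ all have trivial kernel, the similarity principle (Theorem~\ref{thm:similarity}) plus the asymptotic similarity principle (Theorem~\ref{thm:asympSimilarity}) imply that every nontrivial $\eta \in \ker\mathbf{D}_u^N$ has only isolated positive-order zeros with $Z(\eta) \ge 0$, and controlled asymptotic winding with $Z_\infty(\eta) \ge 0$ thanks to the a priori bounds of Theorem~\ref{thm:winding}. Theorem~\ref{thm:totalZ} then pins down the sum as $Z(\eta) + Z_\infty(\eta) = c_N(u)$.

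The heart of the argument is to transfer this nonnegativity analysis to the \emph{cokernel} of $\mathbf{D}_u^N$. The formal $L^2$-adjoint $\mathbf{D}_u^{N,*}$ is again a real-linear Cauchy-Riemann type operator on a complex line bundle over $\dot\Sigma$, acting now on the dual exponentially weighted Sobolev space; its asymptotic operators are obtained from $\mathbf{A}_{\gamma_z}$ with the opposite sign convention at the punctures, so they too have trivial kernel and the same similarity/winding analysis applies. A Riemann-Roch computation, using $\ind(\mathbf{D}_u^N) + \ind(\mathbf{D}_u^{N,*}) = 2\chi(\dot\Sigma) +$ a compensating asymptotic contribution, together with the identity $\ind(\mathbf{D}_u^N) = \ind(u)$, identifies the ``effective Chern number'' governing the adjoint zero-count as $c_N(u) - \ind(u)$. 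Any nontrivial element of $\ker\mathbf{D}_u^{N,*}$ would then force a sum of two nonnegative integers to equal $c_N(u) - \ind(u)$; under the hypothesis $\ind(u) > c_N(u)$ this quantity is strictly negative, so $\ker\mathbf{D}_u^{N,*} = 0$. Since $\coker\mathbf{D}_u^N \cong \ker\mathbf{D}_u^{N,*}$, this yields surjectivity and hence Fredholm regularity.

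The main technical obstacle, I expect, is bookkeeping the precise weighted Sobolev setup so that (a) $\mathbf{D}_u^N$ is genuinely Fredholm with index $\ind(u)$ and its kernel consists of sections whose asymptotic behaviour is governed by eigenfunctions with the extremal winding numbers $\alpha^\tau_\mp(\gamma_z)$ used in defining $c_N(u)$ and $Z_\infty$, and (b) the effective Chern number controlling the adjoint is exactly $c_N(u) - \ind(u)$. Once those identifications are in place, everything else is a direct application of the similarity principle and Theorems~\ref{thm:winding} and~\ref{thm:totalZ}.
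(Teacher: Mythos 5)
This theorem is one of the results the notes deliberately quote as a black box: it is stated in Appendix~A.2 with a citation to \cite{Wendl:automatic}*{Theorem~1}, and no proof is given anywhere in the text, so there is no ``paper proof'' to compare against line by line. That said, your sketch is essentially the argument of the cited reference, and it is sound in outline: reduce regularity of an immersed curve to surjectivity of the normal operator $\mathbf{D}_u^N$ on the line bundle $N_u$, note that in dimension four $\ind(\mathbf{D}_u^N) = \ind(u)$ (this is exactly the bookkeeping behind \eqref{eqn:2cN}), and kill the cokernel by applying the similarity principle and the winding bounds of Theorem~\ref{thm:winding} to kernel elements of the formal adjoint, whose zero count is governed by an effective normal Chern number equal to $c_N(u) - \ind(u)$; the closed-case sanity check $2g-2-c_1(N_u) = c_N(u) - \ind(u)$ confirms your identification, and the hypothesis $\ind(u) > c_N(u)$ then forces a sum of nonnegative integers to be negative. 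Two caveats you correctly flagged deserve emphasis: the equivalence ``Fredholm regular $\Leftrightarrow$ $\mathbf{D}_u^N$ surjective'' is itself a nontrivial theorem (one must account for Teichm\"uller variations and automorphisms, not just split off the tangential summand up to compact perturbation), and the adjoint's asymptotic analysis requires swapping the roles of $\alpha_+^\tau$ and $\alpha_-^\tau$ at the punctures in the analogue of Theorem~\ref{thm:totalZ}, which is where the weighted Sobolev setup really enters. Note also that the source proves a stronger statement, with the criterion $\ind(u) > c_N(u) + Z(du)$ covering non-immersed curves; your argument covers only the immersed case stated here, which is all the notes use.
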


Before stating the generalization of Gromov's compactness theorem,\index{SFT compactness theorem|(}
we must define the \defin{energy}\index{energy!of a punctured holomorphic curve}\index{holomorphic curve!energy of}\index{Hofer energy|see {energy of a punctured holomorphic curve}}
of a curve $u \in \mM_g(\widehat{W},J)$.
The obvious definition (by integrating $u^*\widehat{\omega}$) is not quite
the right one, as for instance orbit cylinders $u_\gamma(s,t) = (Ts,\gamma(t))$
in the symplectization $(\RR \times M,d(e^s\alpha))$ satisfy
$$
\int_{\RR\times S^1} u_\gamma^*d(e^s \alpha) = \infty.
$$
Instead, denote
$$
\tT := \left\{ \varphi : \RR \to (-1,1) \text{ smooth} 
\ \big|\ \text{$\varphi'(s) > 0$ for all
$s \in \RR$ and $\varphi(s) = s$ near $s=0$} \right\},
$$
and observe that for every $\varphi \in \tT$, the $2$-form on $\widehat{W}$
defined by
$$
\omega_\varphi := \begin{cases}
\omega & \text{ on $W$,}\\
d\left(e^{\varphi(s)}\alpha_+\right) & \text{ on $[0,\infty) \times M_+$}, \\
d\left(e^{\varphi(s)}\alpha_-\right) & \text{ on $(-\infty,0] \times M_-$}
\end{cases}
$$
is symplectic, and any $J \in \jJ(\omega,\alpha_+,\alpha_-)$ is
$\omega_\varphi$-compatible.  We then define
\begin{equation}
\label{eqn:HoferEnergy}
E(u) := \sup_{\varphi \in \tT} \int_{\dot{\Sigma}} u^*\omega_\varphi
\end{equation}
for any parametrization $u : \dot{\Sigma} \to \widehat{W}$ of a
curve in~$\mM_g(\widehat{W},J)$.

\begin{figure}
\includegraphics[width=5.5in]{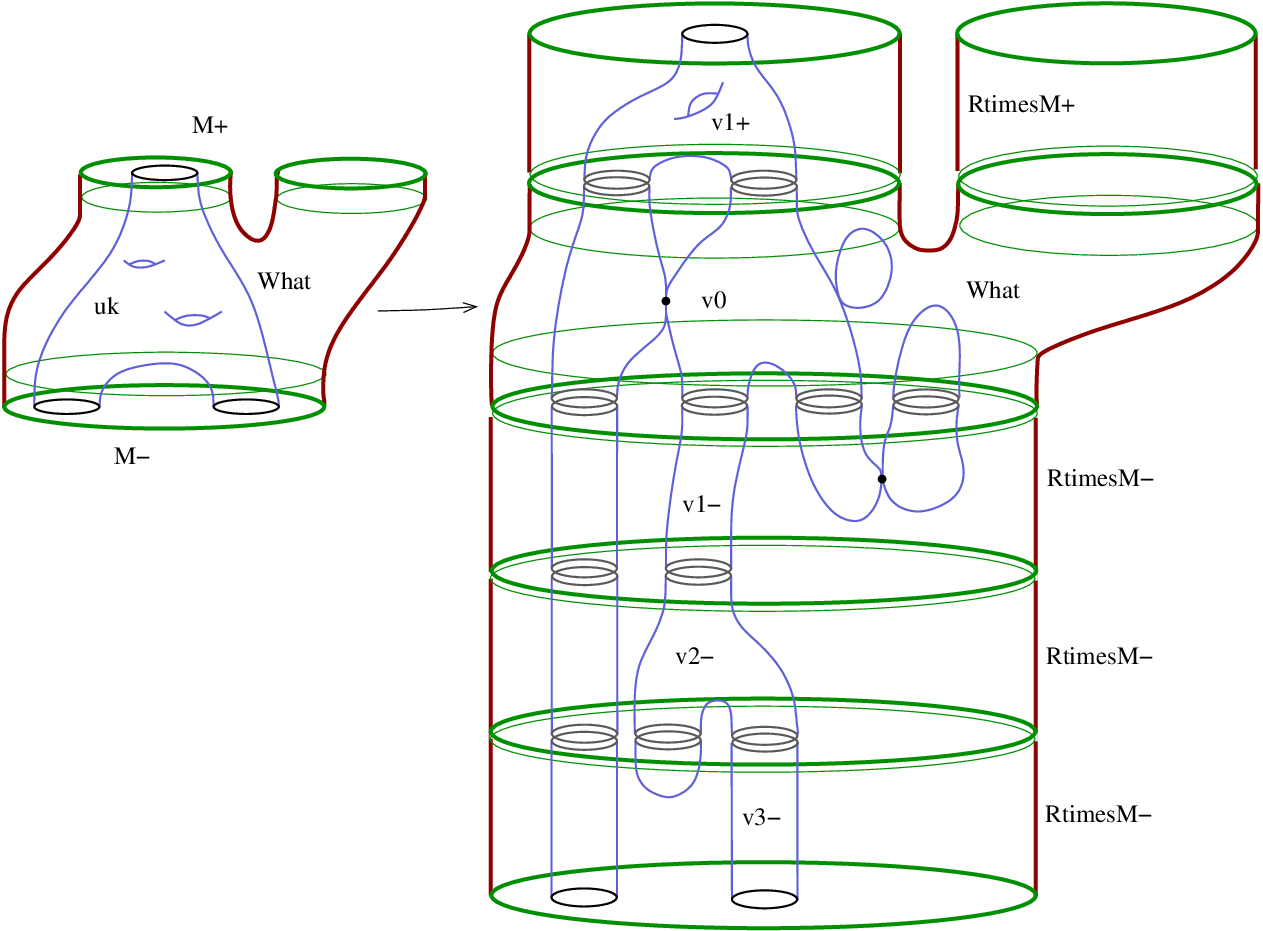}
\caption{\label{fig:SFT} Degeneration of a sequence $u_k$ of
punctured holomorphic curves 
with genus~$2$, one positive end and two negative ends in a symplectic cobordism.
The limiting holomorphic building $(v_1^+,v_0,v_1^-,v_2^-,v_3^-)$ in this example
has one upper level, a main level and three lower levels, each of which is a
(possibly disconnected) punctured nodal holomorphic curve.
The building has arithmetic genus~$2$ and the same numbers of positive and negative
ends as~$u_k$.}
\end{figure}

The natural compactification of $\mM_g(\widehat{W},J)$ is the space\index{compactification of moduli spaces}\index{moduli space!compactification of}\index{holomorphic building!stable}
$\overline{\mM}_g(\widehat{W},J)$ of \defin{stable $J$-holomorphic
buildings} 
$$
(v_{N_+}^+,\ldots,v_{1}^+,v_0,v_1^-,\ldots,v_{N_-}^-),
$$
which have $N_+ \ge 0$ \defin{upper levels}, $N_- \ge 0$ \defin{lower levels}
and exactly one \defin{main level}.\index{holomorphic building!levels}
Each of the levels is a
(possibly disconnected) asymptotically cylindrical nodal curve
that is stable in the sense defined in \S\ref{app:closed}, where
\begin{itemize}
\item $v_i^+$ for $i = 1,\ldots,N_+$ live in $\RR \times M_+$ and are
$J_+$-holomorphic, with 
$$
J_+ := J|_{[0,\infty) \times M_+} \in \jJ(\alpha_+) ;
$$
\item $v_0$ lives in $\widehat{W}$ and is $J$-holomorphic;
\item $v_i^-$ for $i = 1,\ldots,N_-$ live in $\RR \times M_-$ and are
$J_-$-holomorphic, with 
$$
J_- := J|_{(-\infty,0] \times M_-} \in \jJ(\alpha_-).
$$
\end{itemize}
The levels also connect to each other, meaning that the data of a building
includes a bijection between the positive ends of each level and the negative
ends of the level above it such that matching ends are asymptotic to the same
Reeb orbit---orbits that appear in this way are not considered asymptotic orbits
of the building itself, but are sometimes called \defin{breaking orbits}
(see Figure~\ref{fig:SFT}).\index{breaking orbit}\index{Reeb orbit!breaking}
The \defin{arithmetic genus}\index{arithmetic genus} $g \ge 0$ can be characterized
by the following condition: if $\widehat{S}$ denotes the space obtained 
from the domains of all the levels by filling in all punctures and then 
identifying any two nodal points that 
belong to the same node and any two punctures between levels that are
matched by the aforementioned bijection, then
$\widehat{S}$ is homeomorphic to a (possibly singular) fiber of some
Lefschetz fibration with closed regular fibers of genus~$g$.\index{singular fiber of a Lefschetz fibration}\index{Lefschetz fibration!singular fiber of}
Equivalence of holomorphic buildings is defined via the obvious notion of
biholomorphic equivalence (preserving nodes and matching punctures), with
the additional feature that upper and lower levels may be translated freely,
i.e.~two levels that are identical after an $\RR$-translation of an upper or
lower level are considered equivalent.
Finally, the stability 
condition is enhanced with the stipulation that none of the 
levels $v_i^\pm$ may consist exclusively of orbit cylinders without 
any nodes; this is necessary in order to make sure that the natural topology
of $\overline{\mM}_g(\widehat{W},J)$ is Hausdorff.\index{orbit cylinder}\index{stability!of a holomorphic building}

The natural inclusion
$$
\mM_g(\widehat{W},J) \hookrightarrow \overline{\mM}_g(\widehat{W},J)
$$
regards any smooth curve $u \in \mM_g(\widehat{W},J)$ as a building
that has no upper or lower levels and no nodes.

\begin{thm}
\label{thm:SFTcompactness}
For every $g \ge 0$ and every $J \in \jJ(\omega,\alpha_+,\alpha_-)$,
$\overline{\mM}_g(\widehat{W},J)$ admits a natural topology as a metrizable
space, and its connected components are compact.  Moreover,
any sequence $u_k \in \mM_g(\widehat{W},J)$ of curves satisfying a
uniform energy bound $E(u_k) \le C$ in the sense of \eqref{eqn:HoferEnergy}
has a subsequence convergent to an element of~$\overline{\mM}_g(\widehat{W},J)$.
\end{thm}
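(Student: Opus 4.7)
The plan is to follow the classical Gromov compactness strategy, upgraded to handle the noncompactness of $\widehat{W}$ and of the domains $\dot{\Sigma}$. Fix a sequence $u_k : \dot{\Sigma}_k \to \widehat{W}$ in $\mM_g(\widehat{W},J)$ with $E(u_k) \le C$. First I would normalize: since the Hofer-type energy \eqref{eqn:HoferEnergy} dominates $\int u_k^*\omega_\varphi$ for every admissible $\varphi \in \tT$, taking $\varphi$ to range through an exhaustion gives uniform bounds on the symplectic area of $u_k$ restricted to any large compact subset of $\widehat{W}$, and simultaneously forces the $\alpha_\pm$-action captured at the cylindrical ends to be bounded by~$C$. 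Since Reeb orbits are assumed nondegenerate, hence isolated, only finitely many orbits in $M_\pm$ can appear as asymptotic limits. After passing to a subsequence I may therefore fix the numbers $r_\pm$ of positive/negative punctures, the ordered asymptotic orbits $\boldsymbol{\gamma}^\pm$, and (using compactness of the moduli space of stable nodal Riemann surfaces of genus~$g$ with $r_+ + r_-$ marked points) a Deligne-Mumford limit of the punctured domains.

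Next I would carry out the bubbling-off analysis in three regions. On any compact subset of $\widehat{W}$ that avoids a fixed neighborhood of the nodal curve in the limiting Riemann surface, either the gradients $|\nabla u_k|$ stay bounded (so elliptic regularity and Arzel\`a-Ascoli give $C^\infty_{\loc}$ subsequential convergence to a $J$-holomorphic limit in the spirit of Theorem~\ref{thm:compactness}), or bubbling occurs. At each bubble point I would rescale by $1/|\nabla u_k|$ and apply removal-of-singularities using the standard monotonicity lemma (available since $\omega_\varphi$ tames $J$); the resulting bubbles are nonconstant $J$-holomorphic spheres or, if the bubble point escapes to $\pm\infty$, nonconstant $J_\pm$-holomorphic planes/finite energy curves in $\RR \times M_\pm$. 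The energy bound plus a uniform positive lower bound on the energy of any nonconstant bubble (again from monotonicity) bounds the total number of bubbles. This is essentially the same argument as in the closed case.

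The main obstacle, and the genuinely new feature, is the \emph{neck-stretching} along the cylindrical ends: a family of long cylindrical annuli $[-R_k, R_k] \times S^1 \subset \dot{\Sigma}_k$ may develop, whose conformal moduli degenerate and across which the maps $u_k$ traverse a large $s$-interval in $\widehat{W}$. Here I would invoke the so-called \emph{long cylinder lemma}: if $u_k$ restricted to an annulus has small Hofer energy and large modulus, then in appropriate $s$-coordinates it is $C^\infty$-close to a trivial cylinder over a closed Reeb orbit. Iteratively rescaling in the $s$-direction—translating each neck so that a chosen marker sits in a fixed slice $\{0\} \times M_\pm$—extracts a nontrivial curve in a new symplectization level $\RR \times M_\pm$ with $J_\pm \in \jJ(\alpha_\pm)$. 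Finiteness of the resulting tower of levels follows from the discreteness of the Reeb orbit spectrum and the energy bound. The output of this inductive procedure is precisely a holomorphic building $(v^+_{N_+},\ldots,v_0,\ldots,v^-_{N_-})$ with matching asymptotic orbits between consecutive levels.

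Finally I would define the topology on $\overline{\mM}_g(\widehat{W},J)$ by declaring a sequence to converge to a building if representatives can be chosen implementing the above three-part convergence (smooth convergence away from nodes and necks; continuous convergence across nodes in the sense of the closed case; matching-orbit convergence across necks), and check Hausdorffness and metrizability by explicitly constructing a distance function from the $C^0$-distance of compactified maps together with the Deligne-Mumford distance on domains and the action-difference at the matching orbits. Compactness of connected components then follows because a connected component fixes the relative homology class (hence the $\omega$-energy, via $\int \bar u^*\hat\omega$) and the asymptotic orbits, automatically giving $E(u_k) \le C$ and activating the extraction procedure above. The hardest and most technical ingredient is the long cylinder lemma and the inductive rescaling scheme—once that is in place, matching asymptotic orbits between levels is forced by nondegeneracy, and the rest is bookkeeping directly parallel to Gromov's theorem.
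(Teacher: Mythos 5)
There is no in-paper proof to compare against: the paper states this result as a black box in Appendix~\ref{app:punctured} and refers to \cite{SFTcompactness} for the definition of the topology on $\overline{\mM}_g(\widehat{W},J)$ and the proof.  Your outline is, in substance, the strategy of that cited reference (and of Gromov's theorem which it generalizes): energy/action bounds from \eqref{eqn:HoferEnergy}, Deligne--Mumford degeneration of the domains, bubbling with rescaling and removal of singularities, and neck-stretching via a long-cylinder lemma to extract symplectization levels.  So you have chosen the ``right'' route, but be aware that what you have written is a roadmap rather than a proof: the items you invoke by name --- the asymptotic convergence to nondegenerate Reeb orbits, the long-cylinder lemma, the inductive translation/rescaling that produces the levels, and the construction of a metric giving a Hausdorff topology --- are precisely where the analytic work of \cite{SFTcompactness} lives, and none of them is routine.

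Two specific places where your stated justifications are too thin.  First, before you can ``fix $r_\pm$ and the ordered asymptotic orbits after passing to a subsequence'' you must bound the number of punctures of the $u_k$; this does follow from the energy bound, but only via the existence of a uniform minimal period for closed Reeb orbits on the closed manifolds $M_\pm$ (each puncture costs at least that much action), and it deserves to be said.  Second, finiteness of the number of levels does \emph{not} follow from ``discreteness of the Reeb orbit spectrum and the energy bound'' alone: branched and unbranched covers of trivial cylinders carry zero $\omega$-energy, so a level can be nontrivial while contributing nothing to the energy.  The correct argument combines an energy-quantization constant $\hbar>0$ for all components that are not covers of trivial cylinders with the stability convention (levels consisting entirely of trivial cylinders are discarded) and a topological bound coming from the arithmetic genus and the number of punctures, which limits how many zero-energy but stable levels can occur.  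Similarly, Hausdorffness is not just bookkeeping: uniqueness of limits in the building topology again leans on the asymptotic analysis at the necks.  With those caveats, your proposal is a faithful sketch of the argument the paper delegates to \cite{SFTcompactness}.
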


A small modification is appropriate in the case where $(\widehat{W},J)$
is the completion of a \emph{trivial} symplectic cobordism, i.e.~an 
$\RR$-invariant symplectization $\RR \times M$.  In this case the levels are
still ordered, but there is no distinguished main level, nor a distinction
between ``upper'' and ``lower'' levels, and the notion of equivalence allows
$\RR$-translations in all levels---the latter means in particular that
$\overline{\mM}_g(\RR\times M,J)$ is not a compactification of
$\mM_g(\RR \times M,J)$, but rather of $\mM_g(\RR \times M,J) \big/ \RR$.
For full details on these matters, including a precise definition of the
notion of convergence to a holomorphic building, we refer to
\cite{SFTcompactness}.\index{SFT compactness theorem|)}

\chapter{Local Positivity of intersections}
\label{app:positivity}

\minitoc
\vspace{12pt}

In\CUP{This material will be published by Cambridge University
Press as \textsl{Contact 3-Manifolds, Holomorphic Curves and Intersection Theory}
by Chris Wendl. This pre-publication version is
free to view and download for personal use only. 
Not for re-distribution, re-sale or use in derivative works. \copyright Chris Wendl, 2019.}
this appendix we explain the local results in the background of the
standard theorems of \S\ref{sec:adjunction} on positivity of intersections
and the adjunction formula.  Readers wishing to understand the geometric
picture without worrying about the analytical details may read the statement
of Theorem~\ref{thm:representation} in \S\ref{sec:representation} and then
skip ahead to
\S\ref{sec:intSec}, which proves positivity of intersections using
the local representation formula of Theorem~\ref{thm:representation}
as a black box.  The main tool in proving the latter is the similarity
principle, which is explained (along with the necessary background on
elliptic regularity) in \S\ref{sec:similarity}.

Since all important results in this appendix are local, we will mostly
discuss functions defined on the domains
$$
\DD := \left\{ z \in \CC\ \big|\ |z| \le 1 \right\} \quad\text{ and }\quad
\DD_\rho := \left\{ z \in \CC\ \big|\ |z| \le \rho \right\}
$$
for $\rho > 0$.

\section{Regularity and the similarity principle}
\label{sec:similarity}

The similarity principle can be thought of as a linearized version of
positivity of intersections: it gives a local description of
solutions to linear Cauchy-Riemann type equations near their zeroes,
proving in particular that they qualitatively resemble complex-analytic
functions.  The proof given in this section is more or less self-contained---it 
requires some understanding of the theory of distributions and Sobolev spaces, but avoids using the
harder aspects of elliptic regularity theory such as the Calder\'on-Zygmund
inequality.  It is based in large part on arguments that were explained
to the author by Jean-Claude Sikorav.

\subsection{Linear Cauchy-Riemann type operators}
\label{sec:CRoperators}

Linear Cauchy-Riemann equations on vector bundles arise naturally from 
infinitessimal perturbations of $J$-holomorphic curves.

\begin{defn}
Suppose $(\Sigma,j)$ is a Riemann surface, $E \to \Sigma$ is a smooth
complex vector bundle, and $F \to \Sigma$ denotes the complex vector bundle
$$
F := \overline{\Hom}_\CC(T\Sigma,E)
$$
whose sections are the complex-antilinear bundle maps $T\Sigma \to E$.
A (smooth) \defin{linear Cauchy-Riemann type
operator}\index{Cauchy-Riemann type operator}
is a first-order real-linear partial differential operator
$\mathbf{D} : \Gamma(E) \to \Gamma(F)$ that satisfies the Leibniz rule
$$
\mathbf{D}(f\eta) = (\dbar f) \eta + f \mathbf{D}\eta \quad\text{ for all }\quad
\eta \in \Gamma(E), \ f \in C^\infty(\Sigma,\RR),
$$
where $\dbar f \in \Omega^{0,1}(T\Sigma)$ denotes the complex-valued
$1$-form $df + i\, df \circ j$.
\end{defn}

\begin{remark}
\label{remark:notComplex}
If $\mathbf{D} : \Gamma(E) \to \Gamma(F)$ in the above definition is also\index{Cauchy-Riemann type operator!complex linear}
complex linear, then it satisfies a complex version of the Leibniz rule, namely
$$
\mathbf{D}(f\eta) = (\dbar f) \eta + f \mathbf{D}\eta \quad\text{ for all }\quad
\eta \in \Gamma(E), \ f \in C^\infty(\Sigma,\CC).
$$
It is important however to allow the possibility that $\mathbf{D} : \Gamma(E) \to \Gamma(F)$
is only real and not complex linear, even though $E$ and $F$ both carry
complex structures.  Unless one restricts attention to complex manifolds with
\emph{integrable} complex structures, most of the linearized Cauchy-Riemann operators that
arise in the context of $J$-holomorphic curve theory are not complex linear.
\end{remark}

\begin{remark}
\label{remark:CRaffine}
It is easy to check that if $\mathbf{D} : \Gamma(E) \to \Gamma(F)$ is a
Cauchy-Riemann type operator and $A : E \to F$ is a smooth real-linear
bundle map, then $\mathbf{D} + A$ is also a Cauchy-Riemann type operator.
Moreover, for any two Cauchy-Riemann type operators $\mathbf{D}$ and
$\mathbf{D}'$ on~$E$, the map $\mathbf{D}' - \mathbf{D} : \Gamma(E) \to \Gamma(F)$
is $C^\infty$-linear and thus arises from a smooth bundle map $A : E \to F$,
meaning $\mathbf{D}' = \mathbf{D} + A$.  This proves that the space of all
Cauchy-Riemann type operators on $E$ is an affine space over $\Gamma(\Hom_\RR(E,F))$.
\end{remark}

Given an open subset $\uU \subset \Sigma$ with a holomorphic coordinate
$z = s + it : \uU \to \CC$ identifying $(\uU,j)$ with $(\DD,i)$ and a complex trivialization of
$E|_\uU$, there is a naturally induced trivialization of $F|_\uU$ such that
if $\eta \in \Gamma(E|_\uU)$ is represented by the function $f : \DD \to \CC^n$,
then the same function also represents the section
$\xi \in \Gamma(F|_\uU)$ given by $\xi(X) = d\bar{z}(X)\, \eta(p)$ for
$p \in \uU$ and $X \in T_p\Sigma$.  These choices identify
the spaces of sections of $E$ and $F$ over $\uU$ with $C^\infty(\DD,\CC^n)$
such that the map
$$
\p_s + i\p_t : C^\infty(\DD,\CC^n) \to C^\infty(\DD,\CC^n)
$$
represents a linear Cauchy-Riemann type operator on~$E|_\uU$.  It follows 
via Remark~\ref{remark:CRaffine} that
\emph{every} Cauchy-Riemann type operator $\mathbf{D}: \Gamma(E|_\uU) \to \Gamma(F|_\uU)$
is in this way identified with a map of the form
\begin{equation}
\label{eqn:localCR}
\p_s + i \p_t + A : C^\infty(\DD,\CC^n) \to C^\infty(\DD,\CC^n)
\end{equation}
for some smooth function $A : \DD \to \End_\RR(\CC^n)$.  With this local
picture understood, it will sometimes also be useful to consider Cauchy-Riemann
type operators on complex vector bundles that are not equipped with a
smooth structure, e.g.~pullbacks of smooth bundles along non-smooth
(but differentiable) maps.  In general, one says that a vector bundle
is \defin{of class $C^k$} if it is equipped with an atlas of local trivializations
whose transition maps are all of class~$C^k$.  One can then speak of sections
of class $C^m$ for any $m \le k$ but not for $m > k$; the former notion makes
sense due to the fact that for $m \le k$, the product of a $C^k$-smooth
function with a $C^m$-smooth function is also of class~$C^m$.  In the following,
we shall allow non-smooth vector bundles $E \to \Sigma$ but continue to 
assume that the base is a \emph{smooth} Riemann surface, i.e.~the almost
complex structure $j$ on $\Sigma$ is smooth, so that holomorphic local
coordinate charts on $\Sigma$ are automatically also smooth.  For this reason,
$F := \overline{\Hom}_\CC(T\Sigma,E)$ always inherits from $E$ and $\Sigma$ an
atlas of local trivializations with the same regularity as~$E$.  If
$E$ is of class~$C^k$, then the notion of a differential operator from $E$
to $F$ of order $r \in \NN$ makes sense as long as $r \le k$.

\begin{defn}
Suppose $(\Sigma,j)$ is a Riemann surface, $E \to \Sigma$ is a complex
vector bundle of class $C^k$ for some $k \in \NN \cup \{\infty\}$,
$F = \overline{\Hom}_\CC(T\Sigma,E)$, and $m \le k-1$ is a nonnegative integer.  
A \defin{linear Cauchy-Riemann type operator of class $C^m$}\index{Cauchy-Riemann type operator!of class $C^m$}
is a first-order real-linear partial differential operator $\mathbf{D}$ from $E$ to $F$
such that under arbitrary choices of local holomorphic coordinates and
trivializations as described in the previous paragraph, $\mathbf{D}$ locally
takes the form $\p_s + i\p_t + A$ for some $A \in C^m(\DD,\End_\RR(\CC^n))$.
\end{defn}
\begin{remark}
\label{remark:onlyk-1}
The condition $m \le k-1$ is required in the above definition since
the transformation of the zeroth-order term in a Cauchy-Riemann type operator
under a transition map depends in general on the first derivative of the
transition map, i.e.~if the latter is only of class $C^k$, then the
condition $A \in C^{k-1}$ is coordinate-invariant but $A \in C^k$ would not be.
The same remark applies to connections on a bundle of class~$C^k$.
\end{remark}
\begin{remark}
\label{remark:CRLp}
For functions of Sobolev class $W^{k,p}$, there is also a well-defined\index{Sobolev spaces}
continuous product pairing $C^k \times W^{k,p} \to W^{k,p}$ due to the
fact that products of continuous functions with $L^p$-functions are also in~$L^p$.
As a consequence, one can also speak of Cauchy-Riemann
type operators of class $W^{k,p}$ whenever the bundle is of class~$C^{k+1}$.
\end{remark}

\begin{example}
If $E \to \Sigma$ is endowed with a holomorphic vector bundle structure, then\index{holomorphic vector bundle}\index{Cauchy-Riemann type operator!complex linear}
it carries a canonical (complex-)linear Cauchy-Riemann type operator
$\mathbf{D} : \Gamma(E) \to \Gamma(F)$ such that the local holomorphic functions
$\eta \in \Gamma(E|_\uU)$ on open sets $\uU \subset \Sigma$ are precisely those
which satisfy $\mathbf{D}\eta = 0$.  This operator takes the form
$\p_s + i\p_t$ with respect to any choice of local holomorphic coordinates
and holomorphic trivialization, and the holomorphicity of the transition maps
guarantees that this definition does not depend on any choices.
\end{example}

\begin{example}
For any connection $\nabla$ on $E \to \Sigma$, $\mathbf{D}\eta := \nabla \eta
+ i\, \nabla \eta \circ j$ defines a linear Cauchy-Riemann type operator.
\end{example}

\begin{example}
If $u : (\Sigma,j) \to (M,J)$ is a $J$-holomorphic curve, then linearizing
the nonlinear operator $\dbar_J(u) := du + J \circ du \circ j$ along a smooth
family of maps $\{u_\sigma : \Sigma \to M\}_{\sigma \in (-\epsilon,\epsilon)}$ with $u_0 = u$ and $\eta := 
\left.\p_\sigma u_\sigma\right|_{\sigma=0} \in \Gamma(u^*TM)$ gives rise to a
linear Cauchy-Riemann type operator of the form
\begin{equation*}
\begin{split}
\Gamma(u^*TM) &\stackrel{\mathbf{D}_u}{\to} \Gamma(\overline{\Hom}_\CC(T\Sigma,u^*TM)), \\
\eta &\mapsto \nabla \eta + J(u) \circ \nabla\eta \circ j + (\nabla_\eta J) \circ Tu \circ j,
\end{split}
\end{equation*}
where $\nabla$ is an arbitrary choice of symmetric connection on~$M$.
\end{example}

\subsection{Elliptic regularity}

In the following we will consider $\CC^n$-valued functions of one
complex variable $z = s + it$, for which we denote the standard local models
of Cauchy-Riemann and anti-Cauchy-Riemann type operators by
$$
\dbar := \p_s + i\p_t, \qquad \p := \p_s - i \p_t.
$$

\begin{notation}[Sobolev spaces]
In this appendix, the Sobolev space of functions $f : \DD \to \CC^n$ 
admitting weak derivatives\index{Sobolev spaces}
of class $L^p$ up to order$k \ge 0$ is denoted by~$W^{k,p}(\DD)$,
and for the case $p=2$ we abbreviate the Hilbert spaces
$H^k(\DD) := W^{k,2}(\DD)$.
The larger vector spaces $W^{k,p}_\loc(\DD)$ and $H^k_\loc(\DD)$ consist of all 
functions on $\DD$ whose restrictions to compact subsets of the interior
of~$\DD$ are of class $W^{k,p}$ or $H^k$ respectively.  An important special
case of this is $L^1_\loc(\DD) = W^{0,1}_\loc(\DD)$, the space of all 
locally integrable functions on~$\DD$.
We write the space of smooth compactly supported functions on the interior
of $\DD$ as~$C_0^\infty(\DD)$.
\end{notation}

Since $\dbar$ and $\p$ are first-order differential operators with constant
coefficients, they define bounded linear maps
$$
\dbar, \p : W^{k,p}(\DD) \to W^{k-1,p}(\DD)
$$
for each $k \in \NN$ and $p \in [1,\infty]$.
We will need to use the ``easy'' ($p=2$) case of the following nontrivial 
fact from elliptic regularity theory.

\begin{prop}
\label{prop:regularity}
For each $p \in (1,\infty)$, the operator $\dbar : W^{1,p}(\DD) \to L^p(\DD)$
is surjective and admits a bounded right inverse $T : L^p(\DD) \to W^{1,p}(\DD)$.
\end{prop}
\begin{proof}[Sketch of the proof for $p=2$]
The locally integrable function $K : \CC \to \CC$ defined almost everywhere
by
$$
K(z) := \frac{1}{2\pi z}
$$
is a fundamental solution for the equation $\dbar u = f$, meaning it satisfies
$\dbar K = \delta$ in the sense of distributions, so in particular,
one can use convolution to associate to any $f \in C_0^\infty(\CC)$ a
function $u := K * f \in C^\infty(\CC)$ satisfying $\dbar u = f$.  Since $C_0^\infty(\DD) \subset
C_0^\infty(\CC)$ is dense in $L^p(\DD)$, the desired right inverse
$T : L^p(\DD) \to W^{1,p}(\DD)$ can
be defined as the unique bounded linear extension of $C_0^\infty(\DD) \to C^\infty(\DD) :
f \mapsto \left.(K * f)\right|_\DD$
after establishing an estimate of the form
$$
\| K * f \|_{W^{1,p}(\DD)} \le c \| f \|_{L^p} \quad\text{ for all }\quad
f \in C_0^\infty(\DD).
$$
This is equivalent to three estimates,
\begin{equation}
\label{eqn:3estimates}
\| K * f\|_{L^p(\DD)} \le c \| f\|_{L^p}, \quad
\| \p(K * f) \|_{L^p(\DD)} \le c \| f \|_{L^p}, \quad
\| \dbar(K * f) \|_{L^p(\DD)} \le c \| f \|_{L^p},
\end{equation}
each again for $f \in C_0^\infty(\DD)$.  The third of these is immediate
since $\dbar (K * f) = f$.  The first estimate is a minor variation on
the standard \emph{Young's inequality} for convolutions (see e.g.~\cite{LiebLoss}*{\S 4.2}),
and admits a similar proof based on the H\"older inequality and Fubini's
theorem---the crucial assumptions here are only that
$K$ is locally integrable and $\DD \subset \CC$ is bounded.  The hard part
in general is the second estimate, though in the case $p=2$,
a straightforward argument is possible using the Fourier transform.

The idea is to interpret both sides of the equation $\dbar K = \delta$ as 
tempered distributions on $\CC$, which then have well-defined Fourier transforms
in the sense of distributions.  Expressing these Fourier transforms as
functions of a variable $\zeta \in \CC$, the Fourier transform $\widehat{K}(\zeta)$
of $K(z)$ gets multiplied by $2\pi i \zeta$ to produce the Fourier transform of
$\dbar K(z)$, so $\dbar K = \delta$ implies
\begin{equation}
\label{eqn:FourierK}
2\pi i \zeta \widehat{K}(\zeta) = \widehat{\delta}(\zeta) = 1.
\end{equation}
If $f \in C_0^\infty(\CC)$ and we define another function on $\CC$ by
$u := K * f$, then $u$ also defines a tempered distribution, whose Fourier
transform $\widehat{u}$ then satisfies
$$
\widehat{u} = \widehat{K * f} = \widehat{K} \widehat{f},
$$
so by \eqref{eqn:FourierK} we have $2\pi i \zeta \widehat{u}(\zeta) = \widehat{f}(\zeta)$.
Denoting the Lebesgue measure on $\CC$ for functions of $\zeta \in \CC$ by
$d\mu(\zeta)$, Plancherel's theorem now implies
\begin{equation*}
\begin{split}
\| \p(K * f) \|_{L^2(\DD)}^2 &\le \| \p u \|_{L^2(\CC)}^2 = 
\int_\CC \big| \widehat{\p u}(\zeta) \big|^2 \, d\mu(\zeta) =
\int_\CC \left| 2\pi i \widebar{\zeta} \widehat{u}(\zeta) \right|^2 \, d\mu(\zeta) \\
&= \int_\CC \left| \frac{\widebar{\zeta}}{\zeta} 2\pi i \zeta \widehat{u}(\zeta) \right|^2 \, d\mu(\zeta)
= \int_\CC \big| \widehat{f}(\zeta) \big|^2 \, d\mu(\zeta) =
\big\| \widehat{f} \big\|_{L^2(\CC)}^2 = \| f \|_{L^2(\CC)}^2,
\end{split}
\end{equation*}
and the last expression is the same as $\| f \|_{L^2(\DD)}^2$ if we assume
$f \in C_0^\infty(\DD)$, hence the remaining estimate is proven.

The case $p \ne 2$ requires totally different arguments, which begin by
writing $\p (K * f) = \p K * f$ as a principal value integral
$$
\p (K * f)(z) = -\frac{1}{\pi} \lim_{\epsilon \to 0^+} \int_{|\zeta - z| \ge \epsilon}
\frac{f(\zeta)}{(z - \zeta)^2} \, d\mu(\zeta),
$$
in which the right hand side can be interpreted as the convolution of 
a distribution $\p K$ with a smooth function~$f$.
The limit in this expression is necessary because in contrast to $1 / z$, the function
$1 / z^2$ that arises by differentiating $K(z)$ is not locally integrable on~$\CC$,
and for this reason, simple convolution inequalities do not apply.
Estimates in $L^p$ for transformations given by singular integrals of this type are the
subject of a much harder analytical result, the Calder\'on-Zygmund inequality.
Details on this and the rest of the argument sketched above may be found
in \cite{Wendl:lectures}*{Chapter~2}; we shall not present them here
since the $p \ne 2$ case, while important for the general theory of
pseudoholomorphic curves, is not needed in our discussion of intersection
theory.
\end{proof}

\begin{remark}
A closely related result is the existence of an estimate
\begin{equation}
\label{eqn:basicEstimate}
\| u \|_{W^{1,p}} \le c \| \dbar u \|_{L^p} \quad \text{ for all }\quad
u \in C_0^\infty(\DD).
\end{equation}
This can be derived from Proposition~\ref{prop:regularity} using a bit of 
extra knowledge about the fundamental solution $K(z) = 1 / 2\pi z$, but
in the case $p=2$ it also admits the following simple proof
borrowed from Sikorav \cite{Sikorav}.  The bound on $\| u \|_{W^{1,p}}$ is again 
equivalent to three bounds, namely on $\| u \|_{L^p}$, $\| \p u \|_{L^p}$
and $| \dbar u \|_{L^p}$, where the third is immediate.  Since $u$ is assumed
to be smooth with compact support, the first bound follows from a standard
Sobolev estimate, the Poincar\'e inequality (see e.g.~\cite{AdamsFournier}*{\S 6.30}).
To achieve the second bound, it is convenient to write $z = s + it \in \CC$ and
consider the complex partial derivative operators
$$
\p_z := \frac{\p}{\p z} = \frac{1}{2} \p, \qquad 
\p_{\bar{z}} := \frac{\p}{\p \bar{z}} = \frac{1}{2} \dbar
$$
along with the corresponding complex-valued $1$-forms
$$
dz = ds + i\, dt, \qquad d\bar{z} = ds - i\, dt.
$$
For any smooth compactly supported function $u : \CC \to \CC$, we can now
write
$$
du = \p_z u\, dz + \p_{\bar{z}} u \, d\bar{z},
$$
and the complex-valued
$1$-form $u \, d\bar{u}$ has compact support in $\CC$, so applying Stokes'
theorem to $d ( u\, d\bar{u}) = du \wedge d\bar{u}$ on a sufficiently large
disk $\DD_R \subset \CC$ gives
\begin{equation*}
\begin{split}
0 &= \int_{\p\DD_R} u \, d\bar{u} = \int_{\DD_R} du \wedge d\bar{u} =
\int_{\DD_R} (\p_z u\, dz + \p_{\bar{z}} u \, d\bar{z} ) \wedge
( \p_z \bar{u}\, dz + \p_{\bar{z}} \bar{u} \, d\bar{z}) \\
&= \frac{1}{4} \int_{\DD_R} \left( |\p u|^2 - |\dbar u|^2 \right)\, dz \wedge d\bar{z},
\end{split}
\end{equation*}
proving $\| \p u \|_{L^2} = \| \dbar u \|_{L^2}$.

Note that by applying \eqref{eqn:basicEstimate} to derivatives $\p^\alpha u$
with arbitrary multi-indices $\alpha$ and using the fact that $\p^\alpha$
commutes with~$\dbar$, one obtains the easy generalization
$$
\| u \|_{W^{k,p}} \le c \| \dbar u \|_{W^{k-1,p}} \quad \text{ for all }\quad
u \in C_0^\infty(\DD)
$$
for every $k \in \NN$.   By density, this extends to
\begin{equation}
\label{eqn:basicEstimate2}
\| u \|_{W^{k,p}} \le c \| \dbar u \|_{W^{k-1,p}} \quad \text{ for all }\quad
u \in W^{k,p}_0(\DD),
\end{equation}
where $W^{k,p}_0(\DD) \subset W^{k,p}(\DD)$ denotes the closed subspace
defined as the $W^{k,p}$-closure of $C_0^\infty(\DD)$.
\end{remark}

Note that if $p > 2$ and $f \in L^p(\DD)$, then the statement of Proposition~\ref{prop:regularity}
gives a solution $u := Tf \in W^{1,p}(\DD)$ to the equation $\dbar u = f$,
and $u$ is then \emph{continuous} by the Sobolev embedding theorem.
We will need this fact for certain applications, but since we did not prove 
the $p > 2$ case of Proposition~\ref{prop:regularity}, the continuity of
solutions to $\dbar u = f \in L^p$ for $p > 2$ needs to be proved separately.
This turns out to be not so hard.

\begin{prop}
\label{prop:LpToC0}
Let $T : L^2(\DD) \to H^1(\DD)$ denote the bounded right inverse of
$\dbar : H^1(\DD) \to L^2(\DD)$ provided by Proposition~\ref{prop:regularity},
defined as an extension of the convolution
operator $f \mapsto K*f$.  Then for $p \in (2,\infty)$,
$T$ sends any $f \in L^p(\DD) \subset L^2(\DD)$
into $C^0(\DD)$, and it restricts to a bounded linear operator
$$
T : L^p(\DD) \to C^0(\DD).
$$
\end{prop}
\begin{proof}
Observe that the fundamental solution $K(z) = 1 / 2\pi z$ belongs to
$L^q_\loc(\CC)$ whenever $1 \le q < 2$, and in fact the $L^q$-norm of
$K$ on the unit disk $\DD(z) \subset \CC$ about a point $z \in \CC$ satisfies
$$
\| K \|_{L^q(\DD(z))} \le C
$$
for some constant $C > 0$ that depends on $q$ but not on~$z$.
In particular if $p > 2$,
this is true for $q \in (1,2)$ such that $1/q + 1/p = 1$.  Now if
$f \in C_0^\infty(\DD)$, H\"older's inequality implies that for
every $z \in \CC$,
\begin{equation*}
\begin{split}
| K * f(z) | &= \left| \int_{\CC} K(z - \zeta) f(\zeta) \, d\mu(\zeta) \right|
\le \int_\DD | K(z - \zeta) | \cdot |f(\zeta)| \, d\mu(\zeta) \\
&\le \| K(z - \cdot) \|_{L^q(\DD)} \cdot \| f \|_{L^p(\DD)} \le C \| f \|_{L^p(\DD)},
\end{split}
\end{equation*}
hence $\| Tf \|_{L^\infty} \le C \| f \|_{L^p}$.  By standard results on
convolutions of smooth functions with distributions (see e.g.~\cite{LiebLoss}*{\S 6.13}),
$K*f$ is a smooth function for each
$f \in C_0^\infty(\DD)$, thus the map $f \mapsto K*f$ extends to a bounded
linear map from $L^p(\DD)$ to the $L^\infty$-closure of the space of bounded
smooth functions, which is $C^0(\DD)$.  Since $L^p(\DD)$ embeds continuously
into $L^2(\DD)$, this extension is necessarily the same as
$T : L^2(\DD) \to H^1(\DD)$ for all $f \in L^p(\DD)$.
\end{proof}

Here is the first of several applications of these estimates.

\begin{prop}
\label{prop:regularity2}
If $g \in H^k_\loc(\DD)$ for some integer $k \ge 0$, then every weak solution $f \in L^1_\loc(\DD)$ to
the equation $\dbar f = g$ is also in $H^{k+1}_\loc(\DD)$.  In particular,
$f$ is smooth whenever $g$ is smooth.
\end{prop}
\begin{proof}[Sketch of the proof]
One starts by showing that if $k \ge 1$ and
$f$ is already known to be in $H^1(\DD_r)$ for some $r > 0$,
then $f$ will also in $H^{k+1}(\DD_{r'})$ for every $r' \in (0,r)$,
and it satisfies an estimate of the form
\begin{equation}
\label{eqn:smallerDomain}
\| f \|_{H^{k+1}(\DD_{r'})} \le c \| f \|_{H^1(\DD_r)} + c \| g \|_{H^k(\DD_r)}.
\end{equation}
To show for instance that $f$ is in $H^2(\DD_{r'})$, it suffices to show that
both of the partial derivatives $\p_s f$ and $\p_t f$ are in $H^1(\DD_{r'})$,
and for this purpose one can approximate them by difference quotients, e.g.
$$
D^h_s f(s,t) := \frac{f(s + h,t) - f(s,t)}{h}
$$
for $h \in \RR \setminus \{0\}$ close enough to $0$ so that this definition
makes sense on a neighborhood of $\DD_{r'}$.  These difference quotients are
automatically of class $H^1$ since $f$ is, and the main task is to show that
they satisfy a uniform $H^1$-bound on $\DD_{r'}$ as $h \to 0$, as the
Banach-Alaoglu theorem then implies that they converge weakly to a function
in $H^1$ as $h \to 0$, implying that $\p_s f$ (or $\p_t f$ respectively) is
indeed of class~$H^1$.  The required uniform bound comes from the basic
elliptic estimate \eqref{eqn:basicEstimate2}: to apply it, one chooses a
smooth function $\beta : \DD_r \to [0,1]$ that equals $1$
on $\DD_{r'}$ and has compact support in the interior of~$\DD_r$, so that
$\beta D^h_s f$ is now of class $H^1_0$ on~$\DD_r$, thus
$\| D^h_s f \|_{H^1(\DD_{r'})} \le \| \beta D^h_s f \|_{H^1(\DD_r)}$ is bounded
in terms of $\| \dbar(\beta D^h_s f) \|_{L^2(\DD_r)}$.  This in turn 
can be bounded in terms of $\| D^h_s f \|_{L^2}$ and
$\| D^h_s g \|_{L^2}$, which are both uniformly bounded as $h \to 0$ because
$f$ and $g$ are both of class~$H^1$.  If $k > 1$, then one can now repeat
this argument with the knowledge that $f$ is of class $H^2$ on~$\DD_{r'}$,
and continue repeating it on smaller disks at each step until $f$ is shown
to be of class~$H^{k+1}$, with the estimate \eqref{eqn:smallerDomain} as a
quantitative expression of this fact.  This argument shows in particular
that if $f$ is of class $H^1_\loc$ and $\dbar f$ is of class $H^k_\loc$,
then $f$ is also of class~$H^{k+1}_\loc$.

Before weakening the hypothesis on $f$ further, it is useful to notice
that the previous paragraph makes possible a generalization of
Proposition~\ref{prop:regularity}: for every $k \in \NN$, the operator
$\dbar : H^k(\DD) \to H^{k-1}(\DD)$ admits a bounded right inverse
$$
T_k : H^{k-1}(\DD) \to H^k(\DD).
$$
The proof of this is by induction on~$k$, with Proposition~\ref{prop:regularity}
as the case $k=0$.  If one fixes some $R > 1$ and assumes that a right inverse
$T_{k-1} : H^{k-2}(\DD_R) \to H^{k-1}(\DD_R)$ of $\dbar : H^{k-1}(\DD_R) \to
H^{k-2}(\DD_R)$ exists, then a right inverse $T_k : H^{k-1}(\DD) \to H^k(\DD)$
for $\dbar : H^{k}(\DD) \to H^{k-1}(\DD)$
can be defined in the form
$$
T_k f := \left.T_{k-1}\widetilde{f}\right|_{\DD}
$$
for $f \in H^{k-1}(\DD) \subset H^{k-2}(\DD)$, where
$$
H^{k-1}(\DD) \to H^{k-1}(\DD_R) : f \mapsto \widetilde{f}
$$
is any choice of bounded linear extension
operator, i.e.~satisfying $\widetilde{f}|_{\DD} = f$.  The reason this defines
a bounded operator $H^{k-1}(\DD) \to H^k(\DD)$ is that if
$u = T_k f$, then $u$ is the restriction to a smaller disk $\DD \subset \DD_R$
of a function $T_{k-1}\widetilde{f} \in H^{k-1}(\DD_R)$ which satisfies
$\dbar T_{k-1}\widetilde{f} = \widetilde{f} \in H^{k-1}(\DD_R)$, thus the
previous paragraph implies that $u$ is also in $H^k(\DD)$, and 
\eqref{eqn:smallerDomain} produces the required estimate on $\| u \|_{H^k(\DD)}$.

Finally, if $f \in L^1(\DD_r)$ and $\dbar f = g \in H^k(\DD_r)$, one can now
take the bounded right inverse $T_{k+1} : H^k(\DD_r) \to H^{k+1}(\DD_r)$
and consider the function $h := f - T_{k+1}g$, which is in $L^1(\DD_r)$ and
is a weak solution to the equation $\dbar h = 0$.  The real and imaginary
parts of $h$ are then weak solutions to the Laplace equation, and by convolution
with an approximate identity, one can approximate them in $L^1(\DD_r)$ by
smooth solutions to the Laplace equation.  The latter are characterized by the
mean value property (see \cite{Evans}*{\S 2.2.3}), which behaves well
under $L^1$-convergence, implying that the real and imaginary parts of $h$ 
also satisfy the mean value property and are therefore smooth.  In
particular, $h$ then belongs to $H^{k+1}_\loc(\DD_r)$, and therefore so does
$f = h + T_{k+1}g$.
\end{proof}

\begin{cor}
\label{cor:CRreg}
Suppose $E$ is a complex vector bundle of class $C^{k+1}$ over a Riemann surface~$\Sigma$, and
$\mathbf{D} : \Gamma(E) \to \Gamma(\overline{\Hom}_\CC(T\Sigma,E))$ is a 
linear Cauchy-Riemann type operator of class~$C^k$.
Then every weak solution of class $L^2_\loc$ to the equation 
$\mathbf{D}\eta = 0$ is of class~$H^{k+1}_\loc$.  In particlar, if
the bundle $E$ and operator $\mathbf{D}$ are smooth, then all weak solutions of
class $L^2_\loc$ are smooth.
\end{cor}
\begin{proof}
Locally, a weak solution to $\mathbf{D}\eta = 0$ of class $L^2_\loc$ can be represented by a
$\CC^n$-valued function $f \in L^2(\DD)$ satisfying $(\dbar + A)f = 0$ for 
some function $A : \DD \to \End_\RR(\CC^n)$ of class~$C^k$.
In particular $A$ is continuous, so $-Af$ is of class~$L^2$, and the equation $\dbar f = -Af$
thus implies via Prop.~\ref{prop:regularity2} that $f$ is of class~$H^1_\loc$. 
If $A$ is also of class~$C^1$, it follows that $-Af$ is in $H^1_\loc$, and
another application of Prop.~\ref{prop:regularity2} implies
$f \in H^2_\loc$.  Repeat until $A$ runs out of derivatives.
\end{proof}

At one point in \S\ref{sec:intSec} we will need a nonlinear analogue of
the above result, which applies to $J$-holomorphic curves in an almost
complex manifold $(M,J)$.  This justifies the fact that we only consider
\emph{smooth} $J$-holomorphic curves in these notes, even though the
nonlinear Cauchy-Riemann equation would make sense for maps that are
only differentiable.  The hypotheses can be weakened in various ways,
e.g.~by allowing non-smooth almost complex structures, but we will have
no need to consider this.  The statement is fundametally local,
thus we are free to assume $(M,J) = (\CC^n,J)$.

\begin{prop}
\label{prop:nonlinearReg}
Suppose $J$ is a smooth almost complex structure on $\CC^n$ with $J(0)=i$,
and $u : \DD \to \CC^n$ is a continuous function of class $W^{1,\infty}$
that is a weak solution to the equation
$\p_s u + J(u) \p_t u = 0$ with $u(0)=0$.  Then $u$ is smooth.
\end{prop}
\begin{proof}[Sketch of the proof]
By the Sobolev embedding theorem, it suffices to
prove that $u$ is of class $H^k_\loc$ for every $k \in \NN$.
We prove this by induction on~$k$, and observe first that
at each step of the induction, it will be enough to prove that $u$ is
of class $H^k$ on~$\DD_\rho$ for some $\rho > 0$; indeed, changing
coordinates then produces the same result on sufficiently small
neighborhoods of any point in the domain, so that finitely many
such small neighborhoods can be pieced together to show that $u$
is in $H^k$ on any compact subset of the interior.
 
Another useful observation is that for any constants $R > 0$
and $\rho \in (0,1]$, $u$ is of class $H^k$ on
$\DD_\rho$ if and only if the rescaled map
$$
\widehat{u} : \DD \to \CC^n : z \mapsto R u(\rho z)
$$
is in $H^k(\DD)$.  To make use of this, we rewrite the equation
$\p_s u + J(u) \p_t u = 0$ in the form
$$
\dbar u - Q(u) \p_t u = 0,
$$
where $Q := i - J \in C^\infty(\CC^n,\End_\RR(\CC^n))$.
The rescaled map $\widehat{u}$ then satisfies
\begin{equation}
  \label{eqn:dbarQ2}
\dbar \widehat{u} - \widehat{Q}(\widehat{u}) \p_t \widehat{u} = 0
\end{equation}
if we define $\widehat{J},\widehat{Q} : \CC^n \to \End_\RR(\CC^n)$ by
$$
\widehat{J}(p) := J(p / R), \qquad \widehat{Q} := i - \widehat{J}.
$$
The advantage of these definitions is that $\widehat{Q}$ can be made arbitrarily
$C^\infty$-small by choosing $R > 0$ large, and
since $u : \DD \to \CC^n$ is continuous
with $u(0)=0$, one can subsequently choose $\rho > 0$ small to make
the function
$\widehat{Q} \circ \widehat{u} : \DD \to \End_\RR(\CC^n)$ correspondingly small
so that \eqref{eqn:dbarQ2} becomes a small perturbation of the linear
equation $\dbar\widehat{u} = 0$.  In this context, it will be useful to
note that for any $k \in \NN$ and $p \in (1,\infty)$ with $kp > 2$,
there exist constants $c > 0$ and $\gamma > 0$ such that every
$f \in W^{k,p}(\DD)$ with $f(0)=0$ is related to its rescaled cousin 
$\widehat{f}(z) := R f(\rho z)$ by
\begin{equation}
  \label{eqn:SobolevRescaling}
\| \widehat{f} \|_{W^{k,p}(\DD)} \le c R \rho^{\gamma} \| f \|_{W^{k,p}(\DD)}.
\end{equation}
This can be proved as a corollary of the Sobolev embedding theorem,
and it implies that for each $k \ge 2$, $\|\widehat{u}\|_{H^k}$
can also be made arbitrarily small by choosing $\rho > 0$ small for any
given $R > 0$.
In the following, we always reserve
the right to enlarge $R$ and subsequently shrink $\rho$ whenever convenient.

Arguing by induction, the goal is now to show that if
$\widehat{u} \in H^k(\DD)$ for a given $k \in \NN$, then $\widehat{u}$ is
also of class $H^{k+1}$ on $\DD_r$ for some $r < 1$, where in the case
$k=1$ we impose the extra hypothesis $\widehat{u} \in W^{1,\infty}(\DD)$.
As in Proposition~\ref{prop:regularity2}, the argument uses difference
quotients, e.g.~if one can prove uniform $H^k$-bounds on the difference
quotients $D^h_s\widehat{u}$ with respect to $s$ as $h \to 0$, then
the Banach-Alaoglu theorem implies that $\p_s \widehat{u}$ is in $H^k$.
The assumption $\widehat{u} \in H^k(\DD)$ already implies a uniform
$H^{k-1}$-bound on $D^h_s \widehat{u}$ as $h \to 0$, where in the case $k=1$,
there is an additional $L^\infty$-bound.  Choosing a smooth bump
function $\beta : \DD \to [0,1]$ with compact support in the interior
and $\beta|_{\DD_r} \equiv 1$ for some $r < 1$, it then suffices
to find a uniform bound on $\| \beta D^h_s \widehat{u} \|_{H^k}$ as $h \to 0$.
The usual estimate \eqref{eqn:basicEstimate2} gives
$$
\| \beta D^h_s \widehat{u} \|_{H^k} \le c \| \dbar(\beta D^h_s \widehat{u}) \|_{H^{k-1}}.
$$
To bound the right hand side, one can apply the operator
$D^h_s$ to the equation $\dbar\widehat{u} = (\widehat{Q} \circ \widehat{u}) \p_t\widehat{u}$
from \eqref{eqn:dbarQ2}, giving
$$
\dbar\big( D^h_s \widehat{u}\big) = D^h_s\big( \widehat{Q} \circ \widehat{u}\big) \p_t\widehat{u}
+ (\widehat{Q} \circ \widehat{u}) \p_t\big( D^h_s \widehat{u}\big)
$$
and thus
$$
\dbar\big( \beta D^h_s \widehat{u}\big) =
\beta D^h_s\big( \widehat{Q} \circ \widehat{u}\big) \p_t\widehat{u} +
(\widehat{Q} \circ \widehat{u}) \p_t \big( \beta D^h_s \widehat{u}\big) +
\left( \dbar\beta - (\widehat{Q} \circ \widehat{u}) \p_t\beta\right) D^h_s \widehat{u}.
$$
From this we deduce the estimate
\begin{multline}
  \label{eqn:ugly}
\| \beta D^h_s \widehat{u} \|_{H^k} \le c \left\| \widehat{Q}(\widehat{u}) \p_t
  \big(\beta D^h_s\widehat{u}\big) \right\|_{H^{k-1}} +
c \left\| \beta D^h_s\big( \widehat{Q} \circ \widehat{u}\big) \p_t\widehat{u} \right\|_{H^{k-1}} \\
+ c \left\| \left( \dbar\beta - \widehat{Q}(\widehat{u}) \p_t\beta \right) D^h_s u \right\|_{H^{k-1}}.
\end{multline}
We claim that after suitable adjustments of the rescaling parameters $\rho$ and~$R$,
every term in \eqref{eqn:ugly} is
bounded uniformly as $h \to 0$.

Indeed, since $\widehat{Q}$ can be assumed
arbitrarily $C^k$-small on the image of $\widehat{u}$, we can also
apply \eqref{eqn:SobolevRescaling} if $k \ge 2$ to assume that
the composition $\widehat{Q} \circ \widehat{u}$ is arbitrarily small in~$H^k$,
in which case the continuous product pairing $H^k \times H^{k-1} \to H^{k-1}$
gives a uniform bound on the third term.  This argument does not quite work
in the case $k=1$, as \eqref{eqn:SobolevRescaling} is then not valid
and there is no continuous product pairing $H^1 \times L^2 \to L^2$,
but here one can instead make $\widehat{Q} \circ \widehat{u}$ arbitrarily
$C^0$-small and achieve a uniform $L^2$-bound.

For the first term, if $k \ge 2$ then one can similarly use the continuous
product pairing $H^k \times H^{k-1} \to H^{k-1}$
and make $\|\widehat{Q} \circ \widehat{u}\|_{H^k}$ small via \eqref{eqn:SobolevRescaling},
giving an estimate of the form
$$
\left\| \widehat{Q}(\widehat{u}) \p_t\left( \beta D^h_s\widehat{u}\right) \right\|_{H^{k-1}} \le
 \delta \left\| \p_t\left( \beta D^h_s\widehat{u}\right) \right\|_{H^{k-1}} \le
c \delta \| \beta D^h_s \widehat{u} \|_{H^k},
$$
with a constant $\delta > 0$ that can be made arbitrarily small by suitable adjustments
of $R$ and~$\rho$.  One can therefore absorb this term into the left hand side
of \eqref{eqn:ugly}.  Once again a special argument is required for the case
$k=1$, but here one can instead assume $\widehat{Q} \circ \widehat{u}$ is
$C^0$-small and use the continuous pairing $C^0 \times L^2 \to L^2$ to achieve
the same result.

The second term in \eqref{eqn:ugly} requires some version of the chain rule for the
difference quotient operator~$D^h_s$.  Here one can write
\begin{equation*}
\begin{split}
\widehat{Q}(p + p') 
&= \widehat{Q}(p) + \int_0^1 d\widehat{Q}(p + \tau p') p' \, d\tau \\
&= \widehat{Q}(p) + d\widehat{Q}(p) p' + \left( \int_0^1 \left[ d\widehat{Q}(p + \tau p') - d\widehat{Q}(p)\right] \, d\tau \right) p' \\
&= \widehat{Q}(p) + \left[ d\widehat{Q}(p) + \widehat{R}(p,p') \right] p'
\end{split}
\end{equation*}
for a smooth remainder function $\widehat{R} : \CC^n \times \CC^n \to \Hom_\RR(\CC^n,\End_\RR(\CC^n))$
satisfying $\widehat{R}(\cdot,0) \equiv 0$,
and use this to derive a formula of the form
\begin{equation}
\label{eqn:diffQuotChain}
D^h_s\big(\widehat{Q} \circ \widehat{u}\big) =
\left[ d\widehat{Q} \circ \widehat{u} + \widehat{R} \circ \left( \widehat{u} , h D^h_s \widehat{u}\right) \right] D^h_s \widehat{u},
\end{equation}
valid for all $h \ne 0$ sufficiently close to~$0$.  For $k \ge 2$,
one can use \eqref{eqn:SobolevRescaling} to assume the
terms $\widehat{u}$ and $h D^h_s\widehat{u}$ satisfy an arbitrarily
small $H^k$-bound independent of~$h$, and then use the smoothness
of $\widehat{Q}$ and $\widehat{R}$ and the fact that $\widehat{R}(\cdot,0) \equiv 0$
to assume that the bracketed term in the above expression
is arbitrarily $H^k$-small for all $h$ near~$0$.  Since $H^k$ is a
Banach algebra, this gives rise to an estimate of the form
$$
\left\| \beta D^h_s \left(\widehat{Q} \circ \widehat{u}\right) \right\|_{H^k} \le
\delta \| \beta D^h_s\widehat{u} \|_{H^k}
$$
where the constant $\delta > 0$ can be assumed arbitrarily small
after adjusting $R$ and~$\rho$.  Since $\| \p_t\widehat{u} \|_{H^{k-1}}
\le \|\widehat{u}\|_{H^k}$ can also be assumed small by
\eqref{eqn:SobolevRescaling} and the product pairing
$H^k \times H^{k-1} \to H^{k-1}$ is continuous,
it follows that the second
term in \eqref{eqn:ugly} can also be absorbed into the left hand
side.  In the case $k=1$, we can instead use the uniform $L^\infty$-bound
on $\p_t u$ to put a bound on $\|\p_t\widehat{u}\|_{L^\infty}$ while
making $\rho$ as small as is needed, and then use the
uniform $L^2$-bound on $D^h_s\widehat{u}$ to derive from \eqref{eqn:diffQuotChain}
a uniform $L^2$-bound on $D^h_s (\widehat{Q} \circ \widehat{u})$, which
now direcly implies a uniform bound on the second term
in \eqref{eqn:ugly}.
\end{proof}

\begin{exercise}
  \label{EX:W1infty}
  Show that for any $\varphi \in C_0^\infty(\DD)$ and $f \in C^0(\DD)$ such
  that $f|_{\DD\setminus\{0\}}$ is of class $C^1$ with bounded derivative,
  the usual formula for integration by parts
  $$
\int_\DD \p_j f \cdot \varphi = - \int_\DD f \cdot \p_j\varphi
$$
is valid, and deduce that $f$ belongs to $W^{1,\infty}(\DD)$.
\end{exercise}

\subsection{Local existence of holomorphic sections}

The main engine behind the similarity principle is the following local 
existence result for solutions to linear Cauchy-Riemann type equations.\index{similarity principle|(}

\begin{thm}
\label{thm:linearExistence}
Assume $2 < p \le \infty$ and $A \in L^p(\DD,\End_\RR(\CC^n))$.  
Then for sufficiently small $\rho > 0$, the problem
\begin{equation*}
\begin{split}
\dbar u + A u &= 0 \\
u(0) &= u_0
\end{split}
\end{equation*}
admits a weak solution $u \in C^0(\DD_\rho) \cap H^1(\DD_\rho)$
for every $u_0 \in \CC^n$.
\end{thm}

Notice that by elliptic regularity (Prop.~\ref{prop:regularity2}), the local
solutions $u : \DD_\rho \to \CC^n$ provided by this theorem may be
much nicer than just continuous functions with weak derivatives in~$L^2$, e.g.~they will be
smooth if $A$ is smooth.  One easy consequence is the following fundamental
result in complex geometry, which gives an equivalence between smooth
complex-linear Cauchy-Riemann type operators and holomorphic vector bundle
structures.

\begin{cor}
\label{cor:holStructure}
Suppose $E$ is a complex vector bundle over a Riemann surface~$\Sigma$, and\index{holomorphic vector bundle}\index{Cauchy-Riemann type operator!complex linear}
$\mathbf{D} : \Gamma(E) \to \Gamma(\overline{\Hom}_\CC(T\Sigma,E))$ is a
smooth complex-linear Cauchy-Riemann type operator.  Then $E$ admits a
unique maximal atlas of smooth local complex trivializations whose transition
maps are holomorphic, such that a section $\eta \in \Gamma(E|_\uU)$ defined
on some open domain $\uU \subset \Sigma$ is holomorphic with respect to these
trivializations if and only if $\mathbf{D}\eta = 0$.
\end{cor}
\begin{proof}
For any point $p \in \Sigma$, Theorem~\ref{thm:linearExistence} and
Proposition~\ref{prop:regularity2} together provide
a collection of smooth sections $\eta_1,\ldots,\eta_n$ defined on a neighborhood
of $p$ that all satisfy $\mathbf{D}\eta_i = 0$ and are pointwise complex-linearly
independent at~$p$ (and therefore also in a neighborhood of~$p$).
We define the desired atlas of local trivializations by viewing collections
of this sort as local frames.  The Leibniz rule for complex-linear Cauchy-Riemann
type operators (cf.~Remark~\ref{remark:notComplex}) then implies that transition maps are holomorphic.
\end{proof}

The local existence theorem admits a fairly straightforward proof using
the $p > 2$ case of Prop.~\ref{prop:regularity}. The idea is to multiply
$A$ by the characteristic function $\chi_\rho$ of $\DD_\rho$ for $\rho > 0$,
producing a family of bounded linear operators
$$
\mathbf{D}_\rho := \dbar + \chi_\rho A : W^{1,p}(\DD) \to L^p(\DD),
$$
which converge in the norm topology to $\dbar : W^{1,p}(\DD) \to L^p(\DD)$ as
$\rho \to 0$.  It follows that the operators
$$
\mathbf{L}_\rho : W^{1,p}(\DD) \to L^p(\DD) \times \CC^n : u \mapsto 
(\mathbf{D}_\rho u,u(0))
$$
also converge as $\rho \to 0$ to $\mathbf{L}_0(u) = (\dbar u,u(0))$;
note here that $W^{1,p}(\DD) \to \CC^n : u \mapsto u(0)$ is a well-defined and
continuous linear map due
to the Sobolev embedding theorem.  Since $\dbar : W^{1,p}(\DD) \to L^p(\DD)$
has a bounded right inverse and holomorphic functions on $\DD$ can take
arbitrary values at a point, the operator $\mathbf{L}_0$ also has a bounded
right inverse, and so therefore does $\mathbf{L}_\rho$ for $\rho > 0$
sufficiently small, as the existence of bounded right inverses is an open
condition.  The right inverse of $\mathbf{L}_\rho$ can then be used to
produce functions $u \in W^{1,p}(\DD)$ that have prescribed values at $0$
and satisfy $(\dbar + \chi_\rho A)u = 0$, so in particular they
satisfy $(\dbar + A)u = 0$ on~$\DD_\rho$.  These functions are also
continuous since, by the Sobolev embedding theorem, $W^{1,p}(\DD)$ embeds
continuously into~$C^0(\DD)$ for $p > 2$.

The argument just sketched would not work for $p=2$ because $H^1(\DD) = W^{1,2}(\DD)$
does not embed into~$C^0(\DD)$.
Since we did not prove the $p > 2$ case of Proposition~\ref{prop:regularity},
we will have to do something slightly more roundabout in order to produce
a self-contained proof of local existence.  It is based on
the following lemma, which was suggested by Jean-Claude Sikorav.

\begin{lemma}
\label{lemma:localExistenceL2}
Under the same assumptions as in Theorem~\ref{thm:linearExistence}, suppose
$0 < r \le 1$ and $f_0 : \DD_r \to \CC^n$ is a holomorphic function.
Then for any $\delta > 0$, there exists $\rho \in (0,r]$ and a continuous
function $f : \DD_\rho \to \CC^n$ such that $|f| \le \delta$ and 
$u := f_0 + f : \DD_\rho \to \CC^n$ is a weak solution to the equation
$(\dbar + A) u = 0$.
\end{lemma}
\begin{proof}
We will look for a continuous weak solution $u : \DD \to \CC^n$ to the equation
$$
(\dbar + A_\rho) u = 0,
$$
for some small number $\rho > 0$, where $A_\rho := \chi_\rho A$
and $\chi_\rho : \DD \to [0,1]$ denotes the function that equals~$1$ on
$\DD_\rho$ and $0$ everywhere else.  We claim that each of the operators
$\dbar + A_\rho$ is a bounded linear operator $H^1(\DD) \to L^2(\DD)$, and that
these operators converge in the operator norm to $\dbar$ as $\rho \to 0$.
Recall that $H^1(\DD)$ is a
``Sobolev borderline case,'' so it admits continuous inclusions
$H^1(\DD) \hookrightarrow L^q(\DD)$ for every finite $q \ge 1$
(see \cite{AdamsFournier}).  
Thus if we pick $q > 1$ according to the condition $1/q + 2/p = 1$, then
H\"older's inequality and the continuous inclusion $H^1 \hookrightarrow
L^{2q}$ imply that for any $u \in H^1(\DD)$,
\begin{equation*}
\begin{split}
\| A_\rho u \|_{L^2}^2 &\le \int_{\DD_\rho} |A|^2 |u|^2 \le
\left\| |A|^2 \right\|_{L^{p/2}(\DD_\rho)} \cdot
\left\| |u|^2 \right\|_{L^q(\DD_\rho)}
\le \| A \|_{L^p(\DD_\rho)}^2 \| u \|_{L^{2q}(\DD)}^2 \\
&\le c \| A \|_{L^p(\DD_\rho)}^2 \| u \|_{H^1(\DD)}^2
\end{split}
\end{equation*}
for some constant $c > 0$.
This proves the claim, since $\| A \|_{L^p(\DD_\rho)} \to 0$ as
$\rho \to 0$.

Since $\dbar : H^1(\DD) \to L^2(\DD)$ has a bounded right inverse
$T : L^2(\DD) \to H^1(\DD)$, it follows that $\dbar + A_\rho$ also has a
bounded right inverse
$$
T_\rho : L^2(\DD) \to H^1(\DD)
$$
for all $\rho > 0$ sufficiently small.
It should now at least seem plausible that any solution $f_0 \in H^1(\DD)$
to $\dbar f_0 = 0$ admits an $H^1$-close perturbation $f_0 + f$
satisfying $(\dbar + A_\rho)(f_0 + f) = 0$: indeed, the latter is
equivalent to the equation
$$
(\dbar + A_\rho) f = - A_\rho f_0,
$$
which can be solved by
$$
f := -T_\rho (A_\rho f_0).
$$
This function clearly is $H^1$-small whenever $\rho$ is correspondingly
small since $T_\rho : L^2 \to H^1$ is continuous and
$L^p(\DD)$ embeds continuously into $L^2(\DD)$, hence
\begin{equation}
\label{eqn:AepsSmall}
\| A_\rho f_0 \|_{L^2(\DD)} \le 
c \| A_\rho f_0 \|_{L^p(\DD)} \le c \| A \|_{L^p(\DD_\rho)} \| f_0 \|_{C^0}
\to 0 \quad \text{ as } \quad \rho \to 0.
\end{equation}
We claim in fact that the operator $T_\rho$ can be chosen to make
$f$ continuous and $C^0$-small when
$\rho$ is correspondingly small.  By \eqref{eqn:AepsSmall},
this will be immediate if we can
show that $T_\rho$ restricts to a continuous linear map
$L^p(\DD) \to C^0(\DD)$ for $p > 2$, a fact which we already know is true
of $T : L^2(\DD) \to H^1(\DD)$ by Proposition~\ref{prop:LpToC0}.  Thus to prove
the claim, let us write down a more explicit definition 
of~$T_\rho$.  Notice that
$$
(\dbar + A_\rho) T = \1 + A_\rho T
$$
is a bounded linear operator on $L^2$ and is close to the identity in the
operator norm since $T : L^2 \to H^1$ is continuous and
$A_\rho : H^1 \to L^2$ is small.  But for slightly different reasons,
this operator is also close to the identity in the space of bounded
linear operators on~$L^p$: indeed, $A_\rho : C^0 \to L^p$ is also
continuous and small since
$$
\| A_\rho u \|_{L^p(\DD)} \le \| A_\rho \|_{L^p(\DD)} \| u \|_{C^0(\DD)}
= \| A \|_{L^p(\DD_\rho)} \| u \|_{C^0(\DD)},
$$
so this statement follows from the continuity of $T : L^p(\DD) \to C^0(\DD)$.
Thus for any $\rho > 0$ sufficiently small, $\1 + A_\rho T$ defines
isomorphisms on both $L^2(\DD)$ and $L^p(\DD)$, so that defining
$$
T_\rho := T(\1 + A_\rho T)^{-1}
$$
gives a right inverse of $\dbar + A_\rho$ that is continuous both from
$L^2$ to $H^1$ and from $L^p$ to~$C^0$.
\end{proof}

\begin{proof}[Proof of Theorem~\ref{thm:linearExistence}]
Using Lemma~\ref{lemma:localExistenceL2}, we can construct the columns of
a continuous matrix-valued function $\Phi : \DD_\rho \to \End_\RR(\CC^n)$ for
$\rho > 0$ small such that $\Phi$ weakly satisfies $(\dbar + A)\Phi = 0$ 
and is arbitrarily $C^0$-close to the constant
(and thus holomorphic) function $\Phi_0(z) := \1$.  We can therefore
assume $\Phi$ takes values in $\GL(2n,\RR)$.  Continuous solutions
$u : \DD_\rho \to \CC^n$ to $(\dbar + A) u = 0$ with prescribed values
$u(0) = u_0$ can then be constructed by multiplying $\Phi$ by suitable
constant vectors in~$\CC^n$.
\end{proof}

\subsection{The similarity principle}

We can now prove the main result of the present section.

\begin{thm}
\label{thm:similarity2}
Assume $E$ is a complex vector bundle of class~$C^1$ over a Riemann surface~$\Sigma$,
$\mathbf{D}$ is a linear Cauchy-Riemann type operator on $E$ of class~$L^p$
for some $p \in (2,\infty]$ in the sense of Remark~\ref{remark:CRLp}, 
and $\eta : \Sigma \to E$ is a continuous
section that is a weak solution to the equation $\mathbf{D}\eta = 0$
with $\eta(z_0) = 0$ for some point $z_0 \in \Sigma$.  Then
there exists a continuous local complex trivialization of $E$ near $z_0$
that identifies $\eta$ with a holomorphic function.
Moreover, if $\mathbf{D}$ is smooth and complex linear, then the local
trivialization near $z_0$ can be arranged to be smooth.
\end{thm}
\begin{proof}
The issue is purely local, so assume $A \in L^p(\DD,\End_\RR(\CC^n))$ 
with $p > 2$ and $u : \DD \to \CC^n$ is a continuous weak solution to
$$
(\dbar + A)u = 0
$$
with $u(0)=0$.  We start by replacing $\dbar + A$ by another Cauchy-Riemann
type operator 
that is complex linear but has the same regularity.  Indeed, choose a
measurable function $C : \DD \to \End_\CC(\CC^n)$ such that $|C(z)| \le |A(z)|$
and $C(z) u(z) = A(z) u(z)$ for all $z\in \DD$. Then $C$ is also of class
$L^p(\DD)$ and $u$ also satisfies $\dbar u + Cu = 0$.

Now construct a local frame as in the proof of Corollary~\ref{cor:holStructure},
that is, let $\Phi : \DD_\rho \to \End_\CC(\CC^n)$ be a complex matrix-valued
function whose columns are local weak solutions to $(\dbar + C)\eta = 0$
as provided by Theorem~\ref{thm:linearExistence}, with $\Phi(0) = \1$.
Since $C \in L^p(\DD)$ with $p > 2$, $\Phi$ is in $C^0(\DD_\rho) \cap H^1(\DD_\rho)$,
and it also satisfies $(\dbar + C)\Phi = 0$.  After shrinking $\rho > 0$
if necessary, continuity then implies that we are free to assume $\Phi(z)$
is invertible for all $z \in \DD_\rho$, and we can therefore define
a continuous function $f : \DD_\rho \to \CC^n$ by
$$
f(z) := [\Phi(z)]^{-1} u(z).
$$
To conclude, we need to show that $f$ is a weak solution to $\dbar f = 0$,
in which case Proposition~\ref{prop:regularity2} implies that $f$ is also smooth,
and therefore holomorphic.  If $\Phi$, $u$ and $f$ were all smooth, then
$\dbar f = 0$ would follow from the fact that $\dbar + C$ is complex linear
and annihilates both $\Phi$ and~$u$, as the Leibniz rule
(cf.~Remark~\ref{remark:notComplex}) then implies
$$
0 = (\dbar + C)u = (\dbar + C)(\Phi f) = \left[(\dbar + C)\Phi\right] f + \Phi(\dbar f) =
\Phi(\dbar f).
$$
An additional argument is required in order to justify this conclusion without
knowing whether $\Phi$ and $u$ are smooth.  What we do know is that $u$ is
continuous and $\Phi$ is in both $C^0$ and $H^1$; since $A \in L^p(\DD) \subset
L^2(\DD)$, we also have $- A u \in L^2(\DD)$, so that Prop.~\ref{prop:regularity2}
implies that $u$ is also in $H^1(\DD_\rho)$.  To make use of this, we can
consider the following normed linear space:
$$
X := H^1(\DD_\rho) \cap C^0(\DD_\rho), \qquad
\| \eta \|_X := \| \eta \|_{H^1(\DD_\rho)} + \| \eta \|_{C^0(\DD_\rho)}.
$$
It is a straightforward exercise to prove that $X$ has the following properties:
\begin{itemize}
\item $X$ is complete, i.e.~it is a Banach space.
\item $C^\infty(\DD_\rho) \cap X$ is dense in~$X$.  (Indeed, one can check
that the standard mollification procedure for functions in $H^1(\DD_\rho)$
as in \cite{Evans}*{\S 5.3} works simultaneously for $C^0(\DD_\rho)$.
\item $X$ is a Banach algebra, i.e.~there exists a continuous product pairing
$X \times X \to X : (g,h) \mapsto gh$ for complex-valued functions, and
there is similarly a continuous product pairing $X \times L^2(\DD_\rho) \to L^2(\DD_\rho)$.
(The main tool in both cases is the inequality $\| gh \|_{L^2} \le \|g\|_{C^0} \|h\|_{L^2}$
for $g \in C^0$ and~$h \in L^2$.)
\item If $\Phi \in X$ is a function $\DD \to \End_\CC(\CC^n)$ with image
in $\GL(n,\CC)$, then the function
$\Phi^{-1}(z) := [\Phi(z)]^{-1}$ also belongs to $X$ and depends continuously
on $\Phi \in X$ in the topology of~$X$.  (Recall that $\GL(n,\CC) \to \GL(n,\CC) :
B \mapsto B^{-1}$ is a smooth function.)
\end{itemize}
Notice that for any $g \in X$, we have $\dbar g \in L^2(\DD_\rho)$ since
$g \in H^1(\DD_\rho)$, and similarly, $Cg \in L^2(\DD_\rho)$ since 
$C \in L^p \subset L^2$ and $g$ is continuous.  In fact, $\dbar + C$ defines a
continuous linear operator
$$
(\dbar + C) : X \to L^2(\DD_\rho).
$$
The previous remarks now imply after taking $\rho > 0$ sufficiently small
that $\Phi^{-1}$
and $u$ both belong to~$X$, so by the Banach algebra
property, so does $f = \Phi^{-1} u$.  Now use the density of smooth functions
to find sequences of smooth functions $f_\nu,\Phi_\nu$ converging in $X$
to $f$ and $\Phi$ respectively, so that (using the Banach algebra property
again) $u_\nu := \Phi_\nu f_\nu$ also converges in $X$ to~$u$.  The
Leibniz rule now gives
$$
(\dbar + C) u_\nu = \left[ (\dbar + C) \Phi_\nu \right] f_\nu + \Phi_\nu ( \dbar f_\nu),
$$
in which the left hand side and the first term on the right hand side
both converge in $L^2$ as $\nu \to \infty$ to zero, while 
the last term converges in $L^2$
to $\Phi(\dbar f)$, proving $\dbar f = 0$.

Finally, consider the special case in which $A$ is smooth and complex linear.
Under this assumption, Corollary~\ref{cor:holStructure} implies that
$\mathbf{D}$ defines a holomorphic structure on $E$ in which $\eta$ is a
holomorphic section.  Alternatively, one could instead apply the argument above
after setting $C := A$ in the initial step, so that the function
$\Phi : \DD_\rho \to \End_\CC(\CC^n)$ satisfying $(\dbar + C)\Phi$
is then smooth by elliptic regularity (Corollary~\ref{cor:CRreg}).
\end{proof}

\begin{cor}
\label{cor:similarity}
Under the assumptions of Theorem~\ref{thm:similarity2}, suppose 
$\eta$ is not identically zero near~$z_0$, and choose local holomorphic coordinates and
a local complex trivialization near~$z_0$ to identify $\eta$ with a
function $\DD \to\CC^n$ such that $z_0 = 0 \in \DD$.  Then $\eta$ satisfies
the formula
$$
\eta(z) = z^k C + |z|^k R(z)
$$
for some $k \in \NN$, $C \in \CC^n \setminus \{0\}$ and a function
$R(z) \in \CC^n$ such that $\lim_{z \to 0} R(z) = 0$.
\end{cor}
\begin{proof}
In the chosen coordinates and trivialization, the similarity
principle provides a continuous transition map
$\Phi : \DD_\rho \to \GL(n,\CC)$ for $\rho > 0$ small and a
holomorphic function $f : \DD_\rho \to \CC^n$ such that
$\eta = \Phi f$ and $f(0)=0$.  Since $\eta$ is not identically zero on this neighborhood,
we have $f(z) = z^k g(z)$ for some $k \in \NN$ and a holomorphic function
$g : \DD_\rho \to \CC^n$ with $g(0) \ne 0$.  Then
$$
u(z) = z^k \Phi(0) g(0) + z^k \left[ \Phi(z)g(z) - \Phi(0) g(0) \right],
$$
in which $\Phi(0) g(0) \ne 0$ and the term in brackets is a continuous function
that vanishes at $z=0$.
\end{proof}

\begin{remark}
\label{remark:similarity}
If $\eta$ in the corollary above is smooth, then the result is equivalent
to the statement that the Taylor series of $\eta$ about $z_0$ is nontrivial
and its lowest-order term is holomorphic (i.e.~a polynomial in $z$ with
no dependence on $\bar{z}$).  However, the result remains valid even if
$E$, $\mathbf{D}$ and $\eta$ are not assumed smooth, e.g.~in the proof of
the representation formula in the next section, we will need to consider
examples where $\eta$ is only known to be of class~$C^1$.\index{similarity principle|)}
\end{remark}

\section{The representation formula}
\label{sec:representation}

If $u(z) = v(\zeta)$ is an isolated intersection of two $J$-holomorphic curves
in an almost complex $4$-manifold and at least one of the curves is immersed
at the intersection point, then there is a relatively easy argument via the
similarity principle (cf.~\cite{McDuffSalamon:Jhol2}*{Exercise~2.6.1}) 
to prove that this intersection must count positively.
The same holds without assuming that either curve is immersed, but the proof
requires more work.  One approach,
due to McDuff \cites{McDuff:intersections}, shows that
a $J$-holomorphic curve with critical points always admits a global perturbation
to an \emph{immersed} $J'$-holomorphic curve for some perturbed almost
complex structure~$J'$, thus the general case can be reduced to the
immersed case.  This is an elegant argument, but it gives little insight as to
what is really happening near critical points of holomorphic curves, so we
will instead discuss a purely local approach, using a variation on a result
of Micallef and White \cite{MicallefWhite}.
The following statement is weaker than the actual
Micallef-White theorem but suffices for our purposes, and is
easier to prove.\index{Micallef-White theorem}

\begin{thm}
\label{thm:representation}
Suppose $(M,J)$ is a smooth almost complex manifold of dimension $2n$, and
$u : (\Sigma,j) \to (M,J)$ is a $J$-holomorphic curve that is not constant
in some neighborhood of the point $z_0 \in \Sigma$.  Then there exists
a unique integer $k \in \NN$  and $1$-dimensional complex subspace
$L \subset T_{u(z_0)}M$ such that
one can find a $C^\infty$-smooth coordinate chart on a neighborhood of 
$u(z_0) \in M$ and a $C^1$-smooth coordinate chart on a neighborhood of
$z_0 \in \Sigma$,
identifying these points with the origin in
$\CC^n$ and $\CC$ respectively and identifying $L$ with
$\CC \times \{0\} \subset \CC^n$, so that $u$ in these coordinates near~$z_0$
takes the form
$$
u(z) = (z^{k},\widehat{u}(z)) \in \CC \times \CC^{n-1}
$$
for some $C^1$-smooth function $\widehat{u}(z) \in \CC^{n-1}$ defined near $z=0$
and satisfying $\widehat{u}(z) = O(|z|^{k+1})$.
Moreover, the $C^1$-smooth chart near $z_0$ may be assumed $C^\infty$ at all
points other than~$z_0$, and
$\widehat{u}$ either vanishes identically or satisfies the formula
$$
\widehat{u}(z) = z^{k +\ell_u} C_u + |z|^{k+\ell_u} r_u(z)
$$
for some constants $C_u \in \CC^{n-1} \setminus \{0\}$, $\ell_u \in \NN$, and a function 
$r_u(z) \in \CC^{n-1}$ with $r_u(z) \to 0$ as $z \to 0$.
We will say in this situation that $u$ has \defin{tangent space}\index{holomorphic curve!tangent space at a critical point}\index{tangent space of a holomorphic curve}
$L$ with \defin{critical order}\index{critical order}\index{holomorphic curve!critical order of a critical point}
$k - 1$ at~$z_0$.

Further, if $v : (\Sigma',j') \to (M,J)$ is another nonconstant $J$-holomorphic curve 
with an intersection $u(z_0) = v(\zeta_0)$ at some point $\zeta_0 \in \Sigma'$
where $u$ and $v$ have the same tangent spaces and critical orders, then
the coordinates above can be chosen together with $C^1$-smooth coordinates near
$\zeta_0 \in \Sigma'$ having the same properties, in particular
such that $v$ satisfies a representation formula 
$$
v(z) = (z^k,\widehat{v}(z)),
$$
with either $\widehat{v} \equiv 0$ or 
$$
\widehat{v}(z) = z^{k + \ell_v} C_v + |z|^{k+ \ell_v} r_v(z)
$$
for some $C_v \in \CC^{n-1} \setminus \{0\}$, 
$\ell_v \in \NN$ and function $r_v(z)$ with $r_v(z) \to 0$ as $z \to 0$.

Finally, any two curves written in this way are related to each 
other as follows: either $\widehat{u} \equiv \widehat{v}$, or
\begin{equation}
\label{eqn:relativeuv}
\widehat{v}(z) - \widehat{u}(z) = z^{k+\ell'} C' + |z|^{k+\ell'} r'(z),
\end{equation}
for some constants $C' \in \CC^{n-1} \setminus \{0\}$, $\ell' \in \NN$ 
and a function $r'(z) \in \CC^{n-1}$ with $r'(z) \to 0$ as $z \to 0$.
%The inequality $\ell' \ge \min\{\ell_u,\ell_v\}$ holds and is strict if and
%only if $\ell_u = \ell_v$ and $C_u = C_v$.
\end{thm}

\begin{exercise}
Prove Theorem~\ref{thm:representation} for the case $(M,J) = (\CC^n,i)$.
\end{exercise}

\begin{exercise}
\label{EX:tangentSpace}
Use Theorem~\ref{thm:representation} to show that for any $J$-holomorphic curve
$u : (\Sigma,j) \to (M,J)$ with a point $z_0 \in \Sigma$ where $du(z_0)=0$
but $u$ is not constant near~$z_0$, all other points in some neighborhood
of~$z_0$ are immersed points, and moreover, the natural map
$$
z \mapsto \im du(z)
$$
from the immersed points in $\Sigma$ to the bundle of complex $1$-dimensional
subspaces in $(TM,J)$ extends continuously to~$z_0$.
\end{exercise}

The much deeper theorem of Micallef and White \cite{MicallefWhite} applies\index{Micallef-White theorem}
to a more general class of maps than just $J$-holomorphic curves, and it
also provides coordinates in which $\widehat{u}(z)$ and $\widehat{v}(z)$ become
\emph{polynomials}, thus the remainder formulas stated in
Theorem~\ref{thm:representation} become obvious.  The Micallef-White
theorem is discussed in more detail in \cite{McDuffSalamon:Jhol2}*{Appendix~E}
(written with Laurent Lazzarini) and \cite{Sikorav:singularities}.
Our weaker version
is based on ideas due to Hofer, and is essentially a ``non-asymptotic version''
of Siefring's relative asymptotic analysis \cites{Siefring:thesis}
described in Lecture~\ref{sec:3}.

The remainder of \S\ref{sec:representation} will be concerned with the
proof of Theorem~\ref{thm:representation}.

\subsection{The generalized tangent-normal decomposition}
\label{sec:normal}

The first step is to prove a refined version of the corollary that was
observed in Exercise~\ref{EX:tangentSpace}:\index{holomorphic curve!tangent space at a critical point|(}\index{tangent space of a holomorphic curve|(}

\begin{prop}
\label{prop:Tu}
If $u : (\Sigma,j) \to (M,J)$ is a smooth connected $J$-holomorphic
curve that is not constant, then the critical points of $u$ are isolated,
and there exists a unique smooth rank~$1$ complex subbundle
$$
T_u \subset u^*TM
$$
such that $(T_u)_z = \im du(z)$ for all immersed points $z \in \Sigma$ of~$u$.
Moreover, $du$ defines a smooth section of the complex line bundle
$\Hom_\CC(T\Sigma,T_u)$ whose zeroes coincide with the critical points of~$z$,
and these zeroes all have positive order.\index{zeroes of a section!positivity of}
\end{prop}

We shall refer to the subbundle $T_u \subset u^*TM$ in Proposition~\ref{prop:Tu}
as the \defin{generalized tangent bundle}
of the curve $u : (\Sigma,j) \to (M,J)$,
and define the \defin{critical order}\index{critical order}\index{holomorphic curve!critical order of a critical point}
of each critical point of $u$ to be
the order of the corresponding zero of $du \in \Gamma(\Hom_\CC(T\Sigma,T_u))$.
A choice of smooth complex subbundle $N_u \subset u^*TM$ that is complementary to
$T_u$ will then be referred to as the \defin{generalized normal bundle}\index{generalized tangent-normal splitting}\index{holomorphic curve!generalized normal bundle of}
of~$u$,
characterized by the smooth complex-linear splitting
$$
u^*TM = T_u \oplus N_u.
$$
The bundle $N_u$ is non-unique but is clearly unique up to isomorphism, so we
shall typically ignore this detail in our discussion---if you prefer, you
are free to eliminate the ambiguity by assuming
always that $N_u$ is the orthogonal complement of $T_u$
with respect to some fixed choice of $J$-invariant Riemannian metric.

Proposition~\ref{prop:Tu} is an easy consequence of the correspondence given
by Corollary~\ref{cor:holStructure} between complex-linear Cauchy-Riemann
operators and holomorphic bundle structures.  It depends on
the following trick
borrowed from \cite{IvashkovichShevchishin}.  Consider the linearized
Cauchy-Riemann operator
$$
\mathbf{D}_u : \Gamma(u^*TM) \to \Gamma(\overline{\Hom}_\CC(T\Sigma,u^*TM)),
$$
which can be defined via the property that if $\{u_\sigma : \Sigma \to M \}_{\sigma \in (-\epsilon,\epsilon)}$
is any smooth $1$-parameter family of maps satisfying $u_0 = u$ and
$\left.\p_\sigma u_\sigma\right|_{\sigma=0} = \eta \in \Gamma(u^*TM)$, then for
any connection $\nabla$ on $M$ and any $z \in \Sigma$ and $X \in T_z\Sigma$,
$$
(\mathbf{D}_u \eta)(X) = \nabla_\sigma\left[ (\dbar_J u_\sigma)(X)\right]\Big|_{\sigma=0},
$$
where $\dbar_J$ denotes the nonlinear Cauchy-Riemann operator
$$
\dbar_J f := df + J \circ df \circ j \in \Gamma(\overline{\Hom}_\CC(T\Sigma,f^*TM)).
$$
Choosing the connection $\nabla$ to be symmetric, one can derive a more direct
formula for $\mathbf{D}_u$ in the form
$$
\mathbf{D}_u \eta = \nabla \eta + J(u) \circ \nabla \eta \circ j +
(\nabla_\eta J) \circ Tu \circ j,
$$
which shows that $\mathbf{D}_u$ is indeed a smooth linear Cauchy-Riemann type
operator.  In general $\mathbf{D}_u$ is real but not complex linear, because
the connection $\nabla$ need not be complex and $\nabla_{J\eta} J - J \nabla_\eta J$
need not vanish.
On the other hand, it is easy to check that the complex-linear part of
$\mathbf{D}_u$,
\begin{equation*}
\begin{split}
\mathbf{D}_u^\CC : \Gamma(u^*TM) &\to \Gamma(\overline{\Hom}_\CC(T\Sigma,u^*TM)) \\
\eta &\mapsto \frac{1}{2} \left( \mathbf{D}_u \eta - J \mathbf{D}_u (J\eta) \right),
\end{split}
\end{equation*}
also satisfies the required Leibniz rule and is thus a smooth complex-linear
Cauchy-Riemann type operator.  By Corollary~\ref{cor:holStructure},
$\mathbf{D}_u^\CC$ therefore determines a holomorphic vector bundle structure
on~$u^*TM$.

\begin{lemma}
The complex-linear bundle map $du : T\Sigma \to u^*TM$ is holomorphic with
respect to the canonical holomorphic structure of $T\Sigma$ and the
holomorphic structure on $u^*TM$ determined by~$\mathbf{D}_u^\CC$.
\end{lemma}
\begin{proof}
The canonical holomorphic structure of $T\Sigma$ is determined by a
complex-linear Cauchy-Riemann type operator $\mathbf{D}_\Sigma :
\Gamma(T\Sigma) \to \Gamma(\overline{\End}_\CC(T\Sigma))$ which is the
linearization at $\Id : (\Sigma,j) \to (\Sigma,j)$ of the nonlinear operator
$\dbar_j \varphi := d\varphi + j \circ d\varphi \circ j \in
\Gamma(\overline{\Hom}_\CC(T\Sigma,\varphi^*T\Sigma))$ for maps
$\varphi : \Sigma \to \Sigma$.  It follows that a smooth vector field
$X \in \Gamma(T\Sigma)$ is holomorphic near some point $z \in \Sigma$ if
and only if it can be written as
$$
X = \left.\p_\sigma \varphi_\sigma\right|_{\sigma=0}
$$
for a smooth family of maps $\{\varphi_\sigma : \Sigma \to \Sigma \}_{\sigma \in (-\epsilon,\epsilon)}$
which satisfy $\varphi_0 = \Id$ and $\p_\sigma\varphi_\sigma|_{\sigma=0} = X$ and
are holomorphic near~$z$.  In this case, the maps
$$
u_\sigma := u \circ \varphi_\sigma : \Sigma \to M
$$
also satisfy $\dbar_J u_\sigma = 0$ in a neighborhood of~$z$, and the section
$\eta := \p_\sigma u_\sigma|_{\sigma=0} \in \Gamma(u^*TM)$ is related to the
vector field $X$ by $\eta = du(X)$.  This implies that
$\mathbf{D}_u\eta$ also vanishes near~$z$.  Since $jX$ is also a
holomorphic vector field near~$z$, the same argument implies that the
section $du(jX) = J\eta \in \Gamma(u^*TM)$ satisfies $\mathbf{D}_u(J\eta) = 0$
near~$z$.  Both of these facts together prove that
$\mathbf{D}_u^\CC \eta = 0$ vanishes near~$z$.  In summary, we've shown that
$du$ maps any locally defined holomorphic vector field to a locally defined
section of $u^*TM$ that is holomorphic with respect to~$\mathbf{D}_u^\CC$,
and this is equivalent to $du : T\Sigma \to u^*TM$ being a
holomorphic bundle map.
\end{proof}

\begin{proof}[Proof of Proposition~\ref{prop:Tu}]
Since $u$ is not constant and $\Sigma$ is connected,
the holomorphicity of $du \in \Gamma(\Hom_\CC(T\Sigma,u^*TM))$ implies that
zeroes of $du$ and therefore also critical points of $u$ are isolated.
The definition of $T_u \subset u^*TM$ at immersed points of $u$ is
obvious, thus we only need to check that a smooth extension of this subbundle
over the zero-set of $du$ exists.  Given a critical point
$z_0 \in \Sigma$, choose a holomorphic local coordinate for $\Sigma$ and a 
holomorphic trivialization of 
$\Hom_\CC(T\Sigma,u^*TM)$ near $z_0$, so that $du$ is expressed in this
neighborhood as a $\CC^n$-valued holomorphic function of the form
$$
(z - z_0)^k F(z)
$$
for some $k \in \NN$ and a $\CC^n$-valued holomorphic function $F$ with
$F(z_0) \ne 0$.  We can then define $T_u$ at each point $z$ near $z_0$ to be 
the complex line in $T_{u(z)}M$ corresponding to the complex span of
$F(z)$ in the trivialization.  This definition matches the previous
definition at the immersed points $z \ne z_0$ and thus makes $T_u \subset u^*TM$ into a
smooth line bundle on a neighborhood of~$z_0$, with the integer $k > 0$ as
the order of the zero of $du \in \Gamma(\Hom_\CC(T\Sigma,T_u))$ at~$z_0$.\index{holomorphic curve!tangent space at a critical point|)}\index{tangent space of a holomorphic curve|)}
\end{proof}

\subsection{A lemma on normal push-offs}
\label{sec:normalPushoffLemma}

The message of the following result is that whenever
$u : (\Sigma,j) \to (M,J)$ and $v : (\Sigma',j') \to (M,J)$ are two
$J$-holomorphic curves related to each other by
$$
\exp_{u \circ \varphi} \eta = v
$$
for some diffeomorphism $\varphi : \Sigma' \to \Sigma$ and section
$\eta$ of $\varphi^*N_u$, the section $\eta$ is subject to the similarity
principle.  For technical reasons, we will need to allow $\varphi$ and
$\eta$ in the statement to have only finitely-many derivatives, which forces
non-smooth bundles with non-smooth Cauchy-Riemann type operators into the
picture.

For convenience, we shall
denote elements of the bundle $N_u \to \Sigma$ as pairs $(z,w)$ where
$z \in \Sigma$ and $w$ belongs to the fiber $(N_u)_z$ over~$z$.  The zero-section
thus consists of all pairs of the form $(z,0)$, and there are canonical isomorphisms
\begin{equation}
\label{eqn:split}
T_{(z,0)}N_u = T_z \Sigma \oplus (N_u)_z
\end{equation}
due to the natural identification of $\Sigma$ with the zero-section and of
vertical tangent spaces with fibers of~$N_u$.  Given a map
$\varphi : \Sigma' \to \Sigma$, a section $\eta$ of the induced bundle
$\varphi^*N_u \to\Sigma'$ can now be written in the form $\eta(z) = (\varphi(z),f(z)) \in N_u$
with $f(z) \in (N_u)_{\varphi(z)}$ for $z \in \Sigma'$.

\begin{prop}
\label{prop:pushoff}
Suppose $u : (\Sigma,j) \to (M,J)$ and $v : (\Sigma',j') \to (M,J)$ are smooth 
$J$-holomorphic curves, $\varphi : \Sigma' \to \Sigma$
is a diffeomorphism of class $C^k$ for some $k \in \NN \cup \{\infty\}$,
$N_u \subset u^*TM$ is the generalized normal bundle
of $u$ in the sense of \S\ref{sec:normal},
$\oO \subset N_u$ is an open neighborhood of the zero-section, and
$$
\Psi : \oO \to M
$$
is a smooth map that satisfies
$$
\Psi(z,0) = u(z) \quad \text{ and }\quad
d\Psi(z,0) X = X \quad\text{ for all $z \in \Sigma$ and $X \in (N_u)_z$},
$$
where the second condition makes sense due to the canonical splitting \eqref{eqn:split}
and the inclusion $(N_u)_z \subset T_{u(z)}M$.
If $\eta : \Sigma' \to \varphi^*N_u$ is a section of class $C^k$ of
the bundle $\varphi^*N_u \to \Sigma'$ with image in $\varphi^*\oO$ such that
$$
v(z) = \Psi(\varphi(z),\eta(z)) \quad\text{ for all $z \in \Sigma'$},
$$
then $\eta$ satisfies $\mathbf{D}\eta = 0$ for some real-linear
Cauchy-Riemann type operator $\mathbf{D}$ of class $C^{k-1}$ on the
bundle $(\varphi^*N_u,J) \to (\Sigma',j')$.
\end{prop}
\begin{proof}
Choose connections on the bundles $TM$ and $N_u$.  The induced bundle
$\varphi^*N_u$ is of class~$C^k$, and the connection on $N_u$ induces a
connection on $\varphi^*N_u$ of class $C^{k-1}$ (cf.~Remark~\ref{remark:onlyk-1}), 
whose covariant derivative operator we will denote by~$\nabla$.  For $(z,w) \in \oO$, let
$$
P_{(z,w)} : T_{u(z)}M \to T_{\Psi(z,w)}M
$$
denote the isomorphism defined via parallel transport along the path
$[0,1] \to M : \tau \mapsto \Psi(z,\tau w)$.  The connection on $N_u$ also
determines natural isomorphisms
\begin{equation}
\label{eqn:connSplit}
T_{(z,w)}N_u = T_z\Sigma \oplus (N_u)_z
\end{equation}
for each $(z,w) \in N_u$, where the two factors correspond to the horizontal
and vertical subspaces respectively.  We can then associate to each
$(z,w) \in \oO$ the linear map
$$
F(z,w) := P_{(z,w)}^{-1} \circ d\Psi(z,w) \in 
\Hom_\RR\big( T_z \Sigma \oplus (N_u)_z , T_{u(z)}M\big),
$$
which depends smoothly on $(z,w) \in \oO$ and satisfies
$$
F(z,0) = du(z) \oplus \1.
$$
If we fix $z \in \Sigma$, then $F(z,\cdot)$ is a smooth map from $\oO_z := \oO \cap (N_u)_z$
to a fixed vector space of linear maps, and thus satisfies
\begin{equation*}
\begin{split}
F(z,w) &= F(z,0) + \int_0^1 \frac{d}{d\tau} F(z,\tau w)\, d\tau = F(z,0) + 
\left( \int_0^1 d_2 F(z,\tau w)\, d\tau \right) w\\
&= \left( du(z) \oplus \1 \right) + \widetilde{F}(z,w) w,
\end{split}
\end{equation*}
where the integral at the end of the first line is used to define a smooth family
of linear maps
$$
\widetilde{F}(z,w) : (N_u)_z \to \Hom_\RR\big( T_z \Sigma \oplus (N_u)_z , T_{u(z)}M\big)
$$
parametrized by $(z,w) \in \oO$.

Similarly, we associate to each $(z,w) \in \oO$ another linear map
$$
G(z,w) := P_{(z,w)}^{-1} \circ J(\Psi(z,w)) \circ P_{(z,w)} \in
\End_\RR(T_{u(z)}M),
$$
which again depends smoothly on $(z,w) \in \oO$ and has image in a fixed
vector space if $z$ is fixed.  We then have
\begin{equation*}
\begin{split}
G(z,w) &= G(z,0) + \int_0^1 \frac{d}{d\tau} G(z,\tau w)\, d\tau = G(z,0) + 
\left( \int_0^1 d_2 G(z,\tau w)\, d\tau \right) w\\
&= J(u(z)) + \widetilde{G}(z,w) w,
\end{split}
\end{equation*}
where the integral in the first line defines
$$
\widetilde{G}(z,w) : (N_u)_z \to \End_\RR(T_{u(z)}M),
$$
another family of linear maps with smooth dependence on the parameter $(z,w) \in \oO$.

Now suppose $v : (\Sigma',j') \to (M,J)$ is $J$-holomorphic and
$v(z) = \Psi(\varphi(z),\eta(z))$, where $\eta : \Sigma' \to \varphi^*N_u$ is 
a $C^k$-smooth section with image in~$\varphi^*\oO$.  Using the splitting \eqref{eqn:connSplit}
determined by the connection on~$N_u$, we have for each $z \in \Sigma'$,
$$
dv(z) = d\Psi(\varphi(z),\eta(z)) \circ (d\varphi(z),\nabla \eta(z)) =
P_{(\varphi(z),\eta(z))} \circ F(\varphi(z),\eta(z)) \circ (d\varphi(z),\nabla\eta(z)).
$$
The parallel transport isomorphisms $P_{(\varphi(z),\eta(z))} : T_{u(\varphi(z))}M \to T_{v(z)}M$ 
now define a $C^k$-smooth real-linear bundle isomorphism $(u \circ \varphi)^*TM \to v^*TM$, and applying its
inverse to the nonlinear Cauchy-Riemann equation
$dv + J(v) \circ dv \circ j' = 0$ gives an equation for
real-linear bundle maps $T\Sigma \to \varphi^*u^*TM$,
\begin{equation*}
\begin{split}
0 &= F(\varphi,\eta) \circ (d\varphi,\nabla \eta) + G(\varphi,\eta) \circ F(\varphi,\eta) \circ (d\varphi \circ j',\nabla\eta \circ j') \\
&= \left[ (du(\varphi) \oplus \1) + \widetilde{F}(\varphi,\eta) \eta \right] \circ (d\varphi,\nabla\eta) \\
&\qquad +
\left[ J(u(\varphi)) + \widetilde{G}(\varphi,\eta) \eta \right] \circ \left[ (du(\varphi) \oplus \1) + \widetilde{F}(\varphi,\eta) \eta \right] \circ (d\varphi \circ j',\nabla \eta \circ j') \\
&= \left[d(u\circ \varphi) + J(u \circ \varphi) \circ d(u \circ \varphi) \circ j'\right] + \left[\nabla \eta + J(u \circ \varphi) \circ \nabla\eta \circ j'\right] \\
&\qquad + \left[ \widetilde{F}(\varphi,\eta) \eta \right] \circ (d\varphi,\nabla \eta) \\
&\qquad + 
\left[\widetilde{G}(\varphi,\eta) \eta \right] \circ \left[ d(u \circ \varphi) \circ j' + \nabla \eta \circ j' +
\left( \widetilde{F}(\varphi,\eta) \eta \right) \circ (d\varphi \circ j' , \nabla \eta \circ j') \right]\\
&\qquad +
J(u \circ \varphi) \circ \left[ \widetilde{F}(\varphi,\eta) \eta \right] \circ (d\varphi \circ j' , \nabla\eta \circ j') .
\end{split}
\end{equation*}
Since $\eta$ and $\varphi$ are of class~$C^k$, all terms in this expression are at least
$C^{k-1}$-smooth functions of~$z$, and each term in the last three lines can be
understood as a product of $C^{k-1}$-smooth sections of various bundles, at least
one of which is always of the form $B\eta$ for a $C^{k-1}$-smooth linear bundle map
$B$ from $\varphi^*N_u$ to some other bundle, e.g.~we have $B(z) = \widetilde{F}(\varphi(z),\eta(z))$
in the first of these three lines and $B(z) = \widetilde{G}(\varphi(z),\eta(z))$ in the second.
We can therefore abbreviate the last three lines as
$\widehat{A} \eta$ for some $C^{k-1}$-smooth bundle map $\widehat{A} : 
\varphi^*N_u \to \Hom_\RR(T\Sigma,\varphi^*u^*TM)$, 
so that the entire equation becomes
$$
\left[d(u \circ \varphi) + J(u \circ \varphi) \circ d(u \circ \varphi) \circ j'\right] + 
\left[\nabla \eta + J(u \circ \varphi) \circ \nabla\eta \circ j'\right]
 + \widehat{A} \eta = 0.
$$
Finally,
let $\pi_N : u^*TM \to N_u$ denote the smooth bundle map defined by projecting
$u^*TM = T_u \oplus N_u$ along~$T_u$, which induces a $C^k$-smooth bundle map
$$
\pi_N : \varphi^*u^*TM \to \varphi^*N_u.
$$
In terms of the splitting $\varphi^*u^*TM = \varphi^*T_u \oplus \varphi^*N_u$,
the term
$d(u \circ \varphi) + J(u \circ \varphi) \circ d(u \circ \varphi) \circ j'$ 
in the above expression has image in~$\varphi^*T_u$,
while $\nabla \eta + J(u \circ \varphi) \circ \nabla \eta \circ j'$ has image 
in~$\varphi^*N_u$.
Applying $\pi_N$ to the whole equation thus gives rise to 
$$
\nabla \eta + J(u) \circ \nabla\eta \circ j' + 
\pi_N\widehat{A} \eta = 0.
$$
This is not quite yet a Cauchy-Riemann type equation; for this we would need
the target of the bundle map $\pi_N\widehat{A}$ to be the bundle of complex-antilinear maps
$\overline{\Hom}_\CC(T\Sigma,\varphi^*N_u)$, whereas $\pi_N\widehat{A}$ sends
$N_u$ to the larger bundle $\Hom_\RR(T\Sigma,\varphi^*N_u)$.
We can fix this simply by taking the complex-antilinear part, i.e.~we define
\begin{equation*}
\begin{split}
\varphi^*N_u &\stackrel{A}{\to} \overline{\Hom}_\CC\big( (T\Sigma,j') , (\varphi^*N_u,J) \big),\\
A w &:= \frac{1}{2} \left( \pi_N\widehat{A} w + J \circ \pi_N\widehat{A}w \circ j' \right).
\end{split}
\end{equation*}
Since $\pi_N\widehat{A} \eta = - \nabla \eta - J(u \circ \varphi) \circ \nabla\eta \circ j'$ and
the latter is manifestly complex antilinear, we have $A\eta = \pi_N\widehat{A}\eta$,
proving that $\eta$ also satisfies the Cauchy-Riemann type equation
$$
\nabla \eta + J(u \circ \varphi) \circ \nabla\eta \circ j' + A \eta = 0.
$$
\end{proof}

\subsection{Local coordinates}
\label{sec:localCoords}

For the rest of this section, we focus explicitly on the
situation described in the statement of Theorem~\ref{thm:representation}.
Our first objective is to find suitable coordinate charts near $z_0 \in \Sigma$
and $u(z_0) \in M$ so that $u$ near $z_0$ becomes a map of the form
$$
\DD_\rho \to \CC \times \CC^n : z \mapsto (z^k,\widehat{u}(z))
$$
for some $k \in \NN$ with $\widehat{u}(z) = O(|z|^{k+1})$.
In light of our discussion of the
generalized tangent bundle $T_u \subset u^*TM$ in \S\ref{sec:normal}, it should
be clear that the complex subspace $L \subset T_{u(z_0)}M$ mentioned in the 
theorem will be
$$
L = (T_u)_{z_0}.
$$
For the smooth coordinates near $u(z_0)$ on~$M$, we impose the following conditions,
which depend on the point $u(z_0) \in M$ and the subspace $(T_u)_{z_0}$,
but not otherwise on the map $\Sigma \stackrel{u}{\to} M$:
\begin{enumerate}
\item
The point $u(z_0) \in M$ is identified with $0 \in \CC^n$;
\item
The complex subspace $L \subset T_{u(z_0)}M$ is identified with
$\CC \times \{0\} \subset \CC^n$;
\item
The map $u_0(z) := (z,0) \in \CC \times \CC^{n-1}$
is $J$-holomorphic on $\DD_\rho$ for sufficiently small $\rho > 0$,
and $J$ along the image of this map is identified with the standard
complex structure $i$ on~$\CC^n$.
\end{enumerate}
Note that while the first two conditions are easy to achieve,
the third is highly nontrivial.  It is possible due to
the standard local existence result for $J$-holomorphic curves with
a fixed tangent vector---the latter follows from the implicit function
theorem after performing a local rescaling argument to view 
$\dbar_J$ as a small perturbation of the surjective linear 
operator~$\dbar$, see e.g.~\cite{Wendl:lectures}*{Chapter~2}
or \cite{Sikorav}*{Theorem~3.1.1}.  After choosing a suitable $J$-holomorphic
disk $\DD_\rho \hookrightarrow M$ through $u(z_0)$, one can construct
the desired coordinates by exponentiating in complex normal directions from this
disk.  With this understood, for the rest of this section we shall fix
a choice of holomorphic coordinates near $z_0 \in \Sigma$ and smooth coordinates
near $u(z_0) \in M$ as described above in order to assume
$(\Sigma,j) = (\DD_\rho,i)$ with $z_0=0 \in \DD_\rho$
for some $\rho > 0$, while $J$ is a smooth almost
complex structure on $\CC^n = \CC \times \CC^{n-1}$ with $J(z,0)=i$ for all~$z \in\DD_\rho$,
and $u : (\DD_\rho,i) \to (\CC^n,J)$
is a $J$-holomorphic curve with $u(0)=0$ and generalized tangent
space $(T_u)_0 = \CC \times \{0\}$.

We next seek a $C^1$-smooth coordinate
change near the origin on the domain of $u$ so that it becomes a map of the form
$z \mapsto (z^k,O(|z|^{k+1}))$.  We start with the observation
that $u : (\DD_\rho,i) \to (\CC^n,J)$ itself satisfies the smooth complex-linear case of the
similarity principle: indeed, the nonlinear Cauchy-Riemann equation
$$
\p_s u(z) + J(u(z))\, \p_t u(z) = 0
$$
can be interpreted as a smooth complex-linear Cauchy-Riemann type equation $\mathbf{D}u = 0$
on the trivial rank~$n$ complex vector bundle over $\DD_\rho$ with complex
structure $\widebar{J}(z) := J(u(z))$.  As a consequence, 
Theorem~\ref{thm:similarity2} gives
$$
u(z) = \Phi(z) f(z)
$$ 
on $\DD_\rho$ after possibly shrinking $\rho > 0$, where
$\Phi : \DD_\rho \to \GL(2n,\RR)$ is the inverse of a smooth complex
local trivialization and thus satisfies
$\Phi(z) \circ i = J(u(z)) \circ \Phi(z)$, while $f : \DD_\rho \to \CC^n$
is a holomorphic function with $f(0)=0$.  Since $J(0) = i$, we can assume 
without loss of generality that $\Phi(0) = \1$.  The assumption that $u$ is
not constant near $z_0$ implies in turn that $f$ is nontrivial
and thus satisfies
$$
f(z) = z^k g(z)
$$
for some $k \in \NN$ and a holomorphic function $g : \DD_\rho \to \CC^n$
with $g(0) \ne 0$.  By Corollary~\ref{cor:similarity} and Remark~\ref{remark:similarity}, we can identify $k$
as the degree of the lowest-order nontrivial term in the Taylor series of
$u$ at $z=0$; equivalently, $k-1$ is the vanishing order of
$du \in \Gamma(\Hom_\CC(T\Sigma,T_u))$ at $z=0$, also known as the critical
order of~$u$ at this point.
The assumption $L = \CC \times \{0\}$ now implies that after 
a complex-linear
coordinate change on the domain, we may assume $g(0) = (1,0) \in \CC \times
\CC^{n-1}$.  Thus $f(z) = (z^k g_1(z) , z^{k+1} g_2(z))$ on $\DD_\rho$ for some holomorphic
functions $g_1 : \DD_\rho \to \CC$ and $g_2 : \DD_\rho \to \CC^{n-1}$, with
$g_1(0) = 1$.  Let us use the splitting $\CC^n = \CC \times \CC^{n-1}$ to 
write $\Phi(z)$ in block form as
$$
\Phi(z) =
\begin{pmatrix}
\1 + \alpha(z) & \beta(z) \\
\gamma(z) & \1 + \delta(z)
\end{pmatrix},
$$
where the blocks $\alpha(z)$, $\beta(z)$, $\gamma(z)$ and $\delta(z)$ are all
regarded as \emph{real}-linear maps between complex vector spaces, and all of them
vanish at $z=0$ since $\Phi(0) = \1$.  Now $u(z)$ takes the form 
$(u_1(z),u_2(z)) \in \CC \times \CC^{n-1}$, where
\begin{equation}
\label{eqn:u1u2}
\begin{split}
u_1(z) &= z^k g_1(z) + \alpha(z) z^k g_1(z) + \beta(z) z^{k+1} g_2(z),\\
u_2(z) &= \gamma(z) z^k g_1(z) + (\1 + \delta(z)) z^{k+1} g_2(z).
\end{split}
\end{equation}
We claim that after shrinking $\rho > 0$ further if necessary, there
exists a $C^1$-smooth function $\xi : \DD_\rho \to \CC$ such that
$\xi(0) = 0$, $d\xi(0) = \1$ and $[\xi(z)]^k = u_1(z)$.  Indeed,
the desired function can be written for $z \ne 0$ as
$$
\xi(z) = z \sqrt[k]{g_1(z) + \frac{1}{z^k}\alpha(z) z^k g_1(z) + \frac{1}{z^k} \beta(z) z^{k+1} g_2(z)} ,
$$
where the expression under the root lies in a neighborhood of $g_1(0)=1$
for $z$ near~$0$, hence the root is uniquely defined as a continuous function
of $z$ on $\DD_\rho$ if we set $\sqrt[k]{1} := 1$.  It is clear that $\xi$ is also
smooth for $z \ne 0$, and differentiable at $z=0$ with $d\xi(0) = \1$.
Moreover, the fact that $\alpha$ and $\beta$ are smooth functions vanishing
at $z=0$ implies that both are $O(|z|)$, so that the first derivative of the expression under the
root is bounded on $\DD_\rho \setminus \{0\}$.  This is enough information
to prove that $d\xi$ is also continuous at $z=0$, so $\xi$ is of class~$C^1$.

Denote the inverse of the local $C^1$-diffeomorphism $z \mapsto \xi(z)$ by
$$
\varphi := \xi^{-1} : \DD_\rho \to \DD,
$$
where we can again shrink $\rho > 0$ if necessary to make sure that
$\varphi$ is well defined and has image contained in the domain of~$u$.
The composition $u \circ \varphi$ is then well defined and satisfies
$$
u \circ \varphi(z) = (z^k,\widehat{u}(z)),
$$
where $\widehat{u} := u_2 \circ \varphi : \DD_\rho \to \CC^{n-1}$ is a $C^1$-smooth function
satisfying the relation $\widehat{u}(\xi(z)) = u_2(z)$.

\begin{lemma}
\label{lemma:uhato}
The function $\widehat{u} \in C^1(\DD_\rho,\CC^{n-1})$ satisfies
$\widehat{u}(z) = O(|z|^{k+1})$ and $d\widehat{u}(z) = O(|z|^{k})$.
\end{lemma}
\begin{proof}
We have $u_2(z) = O(|z|^{k+1})$ by \eqref{eqn:u1u2} since $\gamma(z)$ is a
smooth function with $\gamma(0)=0$, so in particular the first
nontrivial term in the Taylor series of $u_2$ about $z=0$ has degree at least
$k+1$, implying a similar conclusion for $du_2$ and thus $du_2(z) = O(|z|^{k})$.
The conditions
$\varphi(0) = 0$ and $d\varphi(0) = \1$ imply also that
$\varphi(z) = z + o(|z|)$.  Writing $u_2(z) = |z|^{k+1} B(z)$ 
for a bounded function $B(z)$ near $z=0$ and
$\varphi(z) = z + |z| \cdot r(z)$ for a remainder function with
$\lim_{z \to 0} r(z) = 0$, we find
\begin{equation*}
\begin{split}
\widehat{u}(z) &= u_2(\varphi(z)) = u_2(z + |z| \cdot r(z)) =
\big|z + |z| \cdot r(z)\big|^{k+1} B(z + |z| \cdot r(z)) \\
&= |z|^{k+1} \cdot \left| \frac{z}{|z|} + r(z)\right|^{k+1} B(z + |z| \cdot r(z)) = O(|z|^{k+1}).
\end{split}
\end{equation*}
Similarly, $d\widehat{u}(z) = du_2(\varphi(z)) \circ d\varphi(z)$, where 
$d\varphi$ is continuous and therefore bounded near $z=0$, and the same argument as above gives
$du_2(\varphi(z)) = O(|z|^{k})$ since $du_2(z) = O(|z|^{k})$, so the
result for $d\widehat{u}(z)$ follows.
\end{proof}

Now if $v : (\Sigma',j') \to (\CC^n,J)$ is a second $J$-holomorphic curve with a
point $\zeta_0 \in \Sigma'$ such that $v(\zeta_0) = u(z_0) = 0$,
$(T_v)_{\zeta_0} = (T_u)_{z_0} = \CC \times \{0\}$ and the critical orders at
$v(\zeta_0)$ and $u(z_0)$ match, then we can repeat the same argument to
find a $C^1$-smooth local diffeomorphism $\psi$ from $\DD_\rho$ to a
neighborhood of $\zeta_0$ in $\Sigma'$ sending $0 \mapsto \zeta_0$ such that
$$
v \circ \psi(z) = (z^k,\widehat{v}(z)),
$$
with
$$
\widehat{v} \in C^1(\DD_\rho,\CC^{n-1}) \quad\text{ such that }\quad
\widehat{v}(z) = O(|z|^{k+1}) \text{ and }
d\widehat{v}(z) = O(|z|^{k}).
$$
The main goal for the rest of this section
is to prove that the $C^1$-smooth function
$$
h(z) = (0,\widehat{h}(z)) := (0,\widehat{v}(z) - \widehat{u}(z)) = v \circ \psi(z)
- u \circ \varphi(z)
$$
is either identically zero or
satisfies the formula $h(z) = z^\ell C + o(|z|^\ell)$ for some
$C \in \CC^n \setminus \{0\}$ and $\ell > k$.

\begin{remark}
\label{remark:nonstandardCpx}
It should be emphasized that $\varphi$ and $\psi$ are in general neither
holomorphic nor smooth,
so $u \circ \varphi$ and $v \circ \psi$ are pseudoholomorphic curves of
class $C^1$ with respect to complex structures on $\DD_\rho$ that 
are nonstandard, and continuous but not generally smooth, though since
$d\varphi(0)=d\psi(0)=\1$ and $\varphi$ and $\psi$ are smooth outside the
origin, both complex structures are standard at the origin and smooth elsewhere.
As a special case, however, we could take $v$ to be
$$
v : (\DD_\rho,i) \to (\CC \times \CC^{n-1},J) : z \mapsto (z^k,0),
$$
which is $J$-holomorphic due to the third condition imposed on our local
coordinates in~$M$.  The claim in Theorem~\ref{thm:representation}
that $\widehat{u}$ and $\widehat{v}$ each
satisfy formulas of the form $z^\ell C + o(|z|^\ell)$ will thus
follow as a special case of the general formula for 
$\widehat{v}-\widehat{u}$.
\end{remark}

\subsection{Constructing the normal push-off}

We now define a neighborhood $\oO \subset N_u$ and a map $\Psi : \oO \to M = \CC^n$
as in Proposition~\ref{prop:pushoff}.  Our first task is to specify a concrete
complex subbundle $N_u \subset u^*TM$ complementary to~$T_u$.  Since
$(T_u)_0 = \CC \times \{0\}$, any complex subbundle that matches
$\{0\} \times \CC^{n-1}$ at $0$ will do if we are willing to shrink
$\rho > 0$, as the two subbundles will necessarily be transverse on
$\DD_\rho$ for $\rho$ sufficiently small.  Let
$$
e_1,\ldots,e_n \in \CC^n
$$
denote the standard complex basis of~$\CC^n$, and for each 
$w = (x_2 + iy_2,\ldots,x_n + i y_n) \in \CC^{n-1}$,
define a smooth vector field on $\CC^n$ by
$$
X_w(p) := \sum_{j=2}^n \left( x_j e_j + y_j J(p) e_j \right).
$$
Since $J(z,0) = i$ for $(z,0) \in \DD_\rho \times \CC^{n-1}$, at such points we have $X_w(z,0) = (0,w) \in \CC^n$ 
for all $w \in \CC^{n-1}$.
We shall regard $N_u \to \DD_\rho$ in the following as the pullback
along $u : \DD_\rho \to \CC^n$ of the smooth subbundle of $T\CC^n$
spanned by the vector fields $X_w$ for all $w \in \CC^{n-1}$.  This bundle
comes equipped with a global trivialization
\begin{equation}
\label{eqn:trivNu}
N_u \to \DD_\rho \times \CC^{n-1} : X_w(u(z)) \mapsto (z,w).
\end{equation}

For a constant $\delta > 0$, we define the open set
$$
\oO_\delta := \left\{ (z,w) \ \big|\ |w| < \delta \right\} \subset
\DD_\rho \times \CC^{n-1}
$$
and smooth map
$$
\Psi : \oO_\delta \to \CC^n: (z,w) \mapsto u(z) + X_w(u(z)).
$$
In light of the trivialization \eqref{eqn:trivNu}, we can equivalently regard
$\oO_\delta$ as a neighborhood of the zero-section in~$N_u$ on which $\Psi$ is defined
as in Proposition~\ref{prop:pushoff}.  Notice that $\Psi(\varphi(z),0) = u \circ \varphi(z) =
(z^k,\widehat{u}(z))$.  The goal is to apply Proposition~\ref{prop:pushoff} to the following
construction:

\begin{lemma}
\label{lemma:C1pushoff}
Choosing $\delta > 0$ sufficiently small and then
shrinking $\rho > 0$ further if necessary, there exist $C^1$-smooth
functions $\theta : \DD_\rho \to \CC$ and $\eta : \DD_\rho \to \CC^{n-1}$ such that
$\theta(0) = 0$, $d\theta(0) = \1$, $\eta(z) = O(|z|^{k+1})$, and
$$
v \circ \psi(z) = \Psi(\varphi \circ \theta(z),\eta(z))\quad \text{ for all }\quad z \in \DD_\rho.
$$
\end{lemma}

The proof of this lemma requires some preparation.  We will use the notation
$d_1$ and $d_2$ to denote the
differentials of $\Psi$ or $X_w$ with respect to the first variable $z \in \CC$ or
second variable $w \in \CC^{n-1}$ respectively, e.g.~writing
$$
d_1 \Psi(z,w) \in\Hom_\RR(\CC,\CC^n), \qquad
d_2 \Psi(z,w) \in \Hom_\RR(\CC^{n-1},\CC^n).
$$
Let us also write
$$
\Psi(z,w) =: (\widecheck{\Psi}(z,w),\widehat{\Psi}(z,w)) \in \CC \times \CC^{n-1}
\quad\text{ and }\quad
X_w(p) = (\widecheck{X}_w(p),\widehat{X}_w(p)) \in \CC \times \CC^{n-1},
$$ 
so $\widecheck{\Psi}(z,w) = u_1(z) + \widecheck{X}_w(u(z))$ and
$\widehat{\Psi}(z,w) = u_2(z) + \widehat{X}_w(u(z))$.

\begin{lemma}
\label{lemma:PsiEstimates}
Given a compact region $K \subset \CC \times \CC^{n-1}$, there exists a
constant $C > 0$ such that the following estimates hold for all
$(z,p) \in K$ and all $w \in \CC^{n-1}$:
$$
\left| \widecheck{X}_w(z,p) \right| \le C|p| \cdot |w|,\qquad
\left| \widehat{X}_w(z,p) - w \right| \le C|p| \cdot |w|, \qquad
\left| d_1 X_w(z,p) \right| \le C|p| \cdot |w|.
$$
\end{lemma}
\begin{proof}
For each $(z,p) \in \CC \times \CC^{n-1}$, $w \mapsto \widecheck{X}_w(z,p)$
defines a real-linear map $\widecheck{X}(z,p) : \CC^{n-1} \to \CC$.
Since $J(z,0) = i$ for all~$z$, we have $X_w(z,0) = (0,w)$, thus
$\widecheck{X}(z,0) = 0$, and the smoothness of $\widecheck{X}_w(z,p)$ with
respect to $z$ and $p$ then gives rise to an estimate
$$
| \widecheck{X}(z,p) | \le C|p|
$$
from which the first estimate above follows.  The second estimate follows
in the same manner since $\widehat{X}(z,0)$ is the identity map
$\1 : \CC^{n-1} \to \CC^{n-1}$, hence $|\widehat{X}(z,p) - \widehat{X}(z,0)|
\le C|p|$.  For the third estimate, one observes that
$w \mapsto d_1 X_w(z,p)$ is also a real-linear map $\CC^{n-1}\to
\Hom_\RR(\CC,\CC^n)$ for every $(z,p)$ and $d_1 X_w(z,0) = 0$ since
$X_w(z,0)$ is independent of~$z$, so the same argument applies.
\end{proof}

Due to the coordinate choices made in \S\ref{sec:localCoords}, we also have
$u(z) = (u_1(z),u_2(z)) = (z^k,0) + O(|z|^{k+1})$, thus $|u_1(z)| \ge c |z|^k$
and $|u_2(z)| \le C|z|^{k+1}$ for some constants $c,C>0$, where we are free to
assume $C$ is the same constant as in Lemma~\ref{lemma:PsiEstimates}.
It follows that for $z \ne 0$,
\begin{equation*}
\begin{split}
\big|\widecheck{\Psi}(z,w)\big| &= \big|u_1(z) + \widecheck{X}_w(u(z))\big| \ge
|u_1(z)| - \big|\widecheck{X}_w(u_1(z),u_2(z))\big| \ge c |z|^k - C|u_2(z)| \cdot |w| \\
&\ge c |z|^k - C^2 |z|^{k+1} |w| = |z|^k \left( c - C^2|w|\cdot |z| \right),
\end{split}
\end{equation*}
which is positive if $|w| < c / \rho C^2$.  This proves:

\begin{lemma}
\label{lemma:avoidZero}
If $\delta > 0$ is sufficiently small, then $\Psi$ preserves the subset
$\{(z,w) \in \CC \times \CC^{n-1}\ |\ z \ne 0 \}$.
\qed
\end{lemma}

Now consider the $C^1$-smooth function $\Psi_1 = (\widecheck{\Psi}_1,\widehat{\Psi}_1) : \oO_\delta \to \CC \times \CC^n$
defined by
\begin{equation*}
\begin{split}
\Psi_1(z,w) &= \Psi(\varphi(z),w) = u(\varphi(z)) + X_w(u(\varphi(z)) \\
&= \left(z^k + \widecheck{X}_w(z^k,\widehat{u}(z)) , \widehat{u}(z) + \widehat{X}_w(z^k,\widehat{u}(z)) \right),
\end{split}
\end{equation*}
and extend this to a $C^1$-smooth family of maps 
$\Psi_\varepsilon = (\widecheck{\Psi}_\varepsilon,\widehat{\Psi}_\varepsilon) : \oO_\delta \to \CC \times \CC^{n-1}$ 
for $0 < \varepsilon \le 1$ by
\begin{equation*}
\begin{split}
\Psi_\varepsilon(z,w) &= \left( \frac{\widecheck{\Psi}_1(\varepsilon z,w)}{\varepsilon^k} , \widehat{\Psi}_1(\varepsilon z,w) \right) \\
&= \left( z^k + \frac{\widecheck{X}_w(\varepsilon^k z^k, \widehat{u}(\varepsilon z))}{\varepsilon^k} , \widehat{u}(\varepsilon z) + \widehat{X}_w(\varepsilon^k z^k, \widehat{u}(\varepsilon z)) \right).
\end{split}
\end{equation*}
We would like to understand what happens to $\Psi_\varepsilon$ as $\varepsilon \to 0$,
but from a slightly different vantage point, namely after transforming the
first complex variable in $\CC \times \CC^{n-1}$ to holomorphic cylindrical
coordinates.  Define the biholomorphic map
$$
f : \RR \times S^1 \stackrel{\cong}{\longrightarrow} \CC \setminus \{0\} : (s,t) \mapsto e^{2\pi(s+it)},
$$
let
$$
\dot{\oO}_\delta := \left\{ (s,t,w) \in \RR \times S^1 \times \CC^{n-1}\ \big|\ 
(f(s,t),w) \in \oO_\delta \right\},
$$
and consider the family of $C^1$-smooth maps
$$
\Psi'_\varepsilon := (\widecheck{\Psi}'_\varepsilon,\widehat{\Psi}'_\varepsilon) := (f^{-1} \times \Id) \circ \Psi_\varepsilon \circ (f \times \Id) :
\dot{\oO}_\delta \to \RR \times S^1 \times \CC^{n-1},
$$
which are given by
$$
\widecheck{\Psi}'_\varepsilon(s,t,w) = f^{-1} \circ \widecheck{\Psi}_\varepsilon(e^{2\pi(s+it)},w) \quad\text{ and }\quad
\widehat{\Psi}'_\varepsilon(s,t,w) = \widehat{\Psi}_\varepsilon(e^{2\pi(s+it)},w).
$$
Since $\widehat{\Psi}_\varepsilon(e^{2\pi(s+it)},w) = \widehat{\Psi}_1(\varepsilon e^{2\pi(s+it)},w)$,
the functions $\widehat{\Psi}'_\varepsilon : \dot{\oO}_\delta \to \CC^{n-1}$ converge in $C^1$ to 
$\widehat{\Psi}'_0(s,t,w) := \widehat{\Psi}_1(0,w) = w$ as $\varepsilon \to 0$.
The convergence of $\widecheck{\Psi}'_\varepsilon : \dot{\oO}_\delta \to \RR \times S^1$
as $\varepsilon \to 0$ will be deduced from the next lemma.
To motivate the hypotheses in this statement, notice that the required estimates 
are satisfied automatically by any \emph{smooth} function $g(z,w)$ that is of 
the form $a z^k$ plus terms that are higher order in~$z$; the formulation
below is only more complicated than this because we need to allow functions
that are of class $C^1$ and not smooth.

\begin{lemma}
\label{lemma:IhateThisLemma}
Fix $r \in \RR$, $\rho := e^{2\pi r} > 0$ and an open set $\uU \subset \RR^n$, 
and suppose 
$$
g : \DD_\rho \times \uU \to \CC
$$ 
is a function of class $C^1$
satisfying $g(z,w) \ne 0$ for all $z \ne 0$, along with estimates of the form
\begin{align*}
|g(z,w) - a z^k| &\le C|z|^{k+1}, &
\left| d_2 g(z,w) \right| &\le C|z|^{k+1}, \\
\left|\frac{\p g}{\p z}(z,w) - k a z^{k-1}\right| &\le C|z|^k,
& \left|\frac{\p g}{\p \bar{z}}(z)\right| &\le C|z|^k
\end{align*}
for a constant $C > 0$ independent of $(z,w) \in \DD_\rho \times \uU$,
where $a \in \CC \setminus \{0\}$ and $k \in \NN$ are constants, and $d_2 g(z,w) : \RR^n \to \CC$ 
denotes the differential with respect to the second variable $w \in \uU$.
Using the biholomorphic
map $f : \RR \times S^1 \stackrel{\cong}{\to} \CC \setminus \{0\} : 
(s,t) \mapsto e^{2\pi(s+it)}$, define for each $\varepsilon \in (0,1]$ the maps
$g_\varepsilon : \DD_\rho \times \uU \to \CC$ and
$g_\varepsilon' : (-\infty,r] \times S^1 \times \uU \to \RR \times S^1$ by
$$
g_\varepsilon(z,w) := \frac{g(\varepsilon z,w)}{\varepsilon^k},\quad\text{ and }\quad
g_\varepsilon'(s,t,w) := f^{-1} \circ g_\varepsilon(f(s,t),w).
$$
Then as $\varepsilon \to 0$, the maps $g_\varepsilon'$ are $C^1$-convergent 
on $(-\infty,r] \times S^1 \times \uU$ to
$$
g_0'(s,t,w) := (ks + s_0,kt + t_0),
$$
where $e^{2\pi(s_0 + i t_0)} = a$.
\end{lemma}
\begin{proof}
By assumption, we can write
\begin{align*}
g(z,w) &= a z^k + |z|^{k+1} B(z,w), &
d_2 g(z,w) &= |z|^{k+1} B_w(z,w), \\
\frac{\p g}{\p z}(z) &= k a z^{k-1} + |z|^{k} B_z(z,w),&
\frac{\p g}{\p \bar{z}}(z) &= |z|^k B_{\bar{z}}(z,w)
\end{align*}
for bounded functions $B$, $B_w$, $B_z$ and $B_{\bar{z}}$.  Then
$$
g_\varepsilon(z,w) = a z^k + \varepsilon |z|^{k+1} B(\varepsilon z,w),
$$
and on the punctured domain $\dot{\DD}_\rho \times \uU$ for
$\dot{\DD}_\rho := \DD_\rho \setminus \{0\}$,
we therefore have uniform convergence
$$
\frac{g_\varepsilon(z,w)}{z^k} = a + \varepsilon |z| \frac{|z|^{k}}{z^k} B(\varepsilon z,w) \to a \quad
\text{ as }\quad \varepsilon \to 0.
$$
We claim that this convergence is also in $C^1$ on $\dot{\DD}_\rho \times \uU$.
Indeed, we have
$$
d_2 \left(\frac{g_\varepsilon(z,w)}{z^k}\right) = \frac{1}{\varepsilon^k z^k} d_2 g(\varepsilon z,w) =
\frac{1}{\varepsilon^k z^k} \varepsilon^{k+1} |z|^{k+1} B_w(\varepsilon z,w)
= \varepsilon |z| \frac{|z|^k}{z^k} B_w(\varepsilon z,w),
$$
along with
\begin{equation*}
\begin{split}
\frac{\p}{\p z} &\left(\frac{g_\varepsilon(z,w)}{z^k}\right) =
\frac{1}{z^k} \frac{\p}{\p z} g_\varepsilon(z,w) - \frac{k}{z^{k+1}} g_\varepsilon(z,w) =
\frac{1}{\varepsilon^{k-1} z^k} \frac{\p g}{\p z}(\varepsilon z,w) - 
\frac{k}{\varepsilon^k z^{k+1}} g(\varepsilon z,w) \\
&= \frac{1}{\varepsilon^{k-1} z^k} \left[ \varepsilon^{k-1} k a z^{k-1} + \varepsilon^k |z|^k B_z(\varepsilon z,w) \right]
- \frac{k}{\varepsilon^k z^{k+1}} \left[ \varepsilon^k a z^k + \varepsilon^{k+1} |z|^{k+1} B(\varepsilon z,w) \right] \\
&= \varepsilon \left[ \frac{|z|^k}{z^k} B_z(\varepsilon z,w) - k \frac{|z|^{k+1}}{z^{k+1}} B(\varepsilon z,w) \right],
\end{split}
\end{equation*}
and
$$
\frac{\p}{\p \bar{z}} \left(\frac{g_\varepsilon(z,w)}{z^k} \right) =
\frac{1}{z^k} \frac{\p}{\p \bar{z}} g_\varepsilon(z,w) = \frac{1}{\varepsilon^{k-1} z^k}
\frac{\p g}{\p \bar{z}}(\varepsilon z,w) = \frac{1}{\varepsilon^{k-1} z^k} \varepsilon^k |z|^k B_{\bar{z}}(\varepsilon z,w)
= \varepsilon \frac{|z|^k}{z^k} B_{\bar{z}}(\varepsilon z,w).
$$
All of these converge uniformly to $0$ as $\varepsilon \to 0$.

To relate this to the maps $g_\varepsilon'$, identify $\RR \times S^1$
with $\CC / i\ZZ$ and write $f(\zeta) = e^{2\pi\zeta}$, so
$g_\varepsilon'$ is now determined by $g_\varepsilon$ according to the formula
$e^{2\pi g_\varepsilon'(\zeta,w)} = g_\varepsilon(z,w)$ for 
$z = e^{2\pi\zeta}$, implying
$$
e^{2\pi [ g_\varepsilon'(\zeta,w) - k\zeta ]} = \frac{g_\varepsilon(z,w)}{z^k}.
$$
For $\varepsilon$ small enough, the convergence established above implies that
the right hand side lies in a compact neighborhood of $a \in\CC \setminus \{0\}$
on which the holomorphic logarithm function can be defined, giving rise to the
formula
$$
g_\varepsilon'(\zeta,w) - k\zeta = \frac{1}{2\pi} \log\left( g_\varepsilon(z,w) / z^k\right).
$$
The right hand side is $C^1$-convergent to the constant $\frac{1}{2\pi}\log(a)$ when regarded as a function 
of $(z,w) \in \dot{\DD}_\rho \times \uU$, and composing it with the
transformation $(\zeta,w) \mapsto (e^{2\pi\zeta},w)$ in order
to view it as a function of $(\zeta,w) \in (-\infty,r] \times S^1 \times \uU$ does not change
this result, thus we obtain $C^1$-convergence of $g_\varepsilon'$ to
$k\zeta + \frac{1}{2\pi} \log(a)$.
\end{proof}

We would now like to feed the function
$$
\widecheck{\Psi}_1(z,w) = z^k + \widecheck{X}_w(z^k,\widehat{u}(z))
$$
into Lemma~\ref{lemma:IhateThisLemma}.  Since $\widehat{u}(z) = O(|z|^{k+1})$,
Lemma~\ref{lemma:PsiEstimates} implies
$$
\left|\widecheck{\Psi}_1(z,w) - z^k\right| \le C |w| \cdot |\widehat{u}(z)| \le
C' |w| \cdot |z|^{k+1}
$$
for some constant $C' > 0$ independent of $z$ and~$w$, and another application
of Lemma~\ref{lemma:PsiEstimates} together with the fact that $\widecheck{\Psi}_1(z,w)$
depends linearly on~$w$ gives
$$
\left|d_2 \widecheck{\Psi}_1(z,w) w' \right| = \left| \widecheck{X}_{w'}(z^k,\widehat{u}(z))\right| \le
C |\widehat{u}(z)| \cdot |w'|,
$$
hence
$$
\left|d_2 \widecheck{\Psi}_1(z,w)\right| \le C |\widehat{u}(z)| \le C'|z|^{k+1}.
$$
For the required estimates on derivatives with respect to $z$, it will suffice
to prove that the function
$$
\xi_w(z) := \widecheck{X}_w(z^k,\widehat{u}(z)) \quad\text{ satisfies }\quad
|d\xi_w(z)| \le C|z|^k
$$
for a constant $C > 0$ independent of $(z,w) \in \oO_\delta$.
We have $|d\widehat{u}(z)| = O(|z|^k)$ by Lemma~\ref{lemma:uhato} and can assume
$d_2\widecheck{X}_w(z^k,\widehat{u}(z))$ is bounded for
$(z,w) \in \oO_\delta$, so applying the third estimate in 
Lemma~\ref{lemma:PsiEstimates} gives
\begin{equation*}
\begin{split}
|d\xi_w(z)| &= \left| d_1\widecheck{X}_w(z^k,\widehat{u}(z)) \circ (k z^{k-1}) +
d_2\widecheck{X}_w(z^k,\widehat{u}(z)) \circ d\widehat{u}(z) \right| \\
&\le C |w| \cdot |\widehat{u}(z)| \cdot |z|^{k-1} + C |z|^k \le
C' |z|^{2k} + C|z|^k = O(|z|^k).
\end{split}
\end{equation*}
We can now apply Lemma~\ref{lemma:IhateThisLemma} and conclude:

\begin{lemma}
\label{lemma:thatHurt}
The maps $\Psi'_\varepsilon : \dot{\oO}_\delta \to \RR \times S^1 \times \CC^{n-1}$
are $C^1$-convergent as $\varepsilon \to 0$ to
$$
\Psi'_0(s,t,w) := (ks,kt,w).
$$
\qed
\end{lemma}

A crucial detail in Lemma~\ref{lemma:thatHurt} is that the 
$C^1$-convergence is not just on compact subsets, but remains uniform
(including first derivatives) as $s$ varies on the unbounded
half-interval $(-\infty,r]$.  We conclude from this
in particular that for all $\varepsilon > 0$ sufficiently small,
$\Psi'_\varepsilon$ is a local $C^1$-diffeomorphism whose image contains
the set 
$$
\dot{\oO}'_\delta := \left\{ (s,t,w) \in \dot{\oO}_\delta\ \big|\ |w| < \delta/2 \right\}.
$$
This is finally enough information to prove the main result of this
subsection.

\begin{proof}[Proof of Lemma~\ref{lemma:C1pushoff}]
Denote $v_1(z) = (\widecheck{v}(z),\widehat{v}(z)) := v \circ \psi(z)$, so
$\widecheck{v}(z) = z^k$.
Our objective is to find a suitable local $C^1$-diffeomorphism 
$\theta : \DD_\rho \to \CC$ sending $0 \mapsto 0$ and a
$C^1$-function $\eta : \DD_\rho \to \CC^{n-1}$ such that the relation
\begin{equation}
\label{eqn:Psi1Relation}
\Psi_1(\theta(z),\eta(z)) = v_1(z)
\end{equation}
holds if the disk $\DD_\rho$ is taken to be sufficiently small.
We will do this by applying the same rescaling and cylindrical transformations
to $\theta$, $\eta$ and $v_1$ that were applied above for $\Psi_1$, as the
existence of such functions for $\varepsilon > 0$ sufficiently small
will become obvious in cylindrical coordinates
due to the convergence $\Psi'_\varepsilon \to \Psi'_0$.

Concretely, if maps $\theta$ and $\eta$ as in \eqref{eqn:Psi1Relation} were
already known, then
for $\varepsilon \in (0,1]$, we could define
$\theta_\varepsilon : \DD_\rho \to \CC$, 
$\eta_\varepsilon : \DD_\rho \to\CC^{n-1}$ and
$v_\varepsilon : \DD_\rho \to \CC \times \CC^{n-1}$ by
$$
\theta_\varepsilon(z) := \frac{\theta(\varepsilon z)}{\varepsilon}, \qquad
\eta_\varepsilon(z) := \eta(\varepsilon z), \qquad
v_\varepsilon(z) := \left( \frac{\widecheck{v}(\varepsilon z)}{\varepsilon^k} , \widehat{v}(\varepsilon z)\right) =
(z^k , \widehat{v}(\varepsilon z)),
$$
which must then satisfy the relation
\begin{equation}
\label{eqn:PsiRelation}
\Psi_\varepsilon(\theta_\varepsilon(z),\eta_\varepsilon(z)) = v_\varepsilon(z).
\end{equation}
Transforming one step further, let us again
identify $\RR \times S^1$ with $\CC / i\ZZ$ and write
$f(\zeta) = e^{2\pi \zeta}$, $\rho = 2\pi r$.  If $\theta$ is a local
diffeomorphism sending $0 \mapsto 0$, then we can assume $\theta_\varepsilon(z) \ne 0$
for all $z \ne 0$ and $\varepsilon > 0$ sufficiently small, and can therefore define maps
$\theta'_\varepsilon : (-\infty,r] \times S^1 \to \RR \times S^1$,
$\eta'_\varepsilon : (-\infty,r] \times S^1 \to \CC^{n-1}$ and
$v'_\varepsilon : (-\infty,r]\times S^1 \to \RR \times S^1 \times \CC^{n-1}$ by
$$
\theta'_\varepsilon := f^{-1} \circ \theta_\varepsilon \circ f, \qquad
\eta'_\varepsilon := \eta_\varepsilon \circ f, \qquad
v'_\varepsilon := (f^{-1} \times \Id) \circ v_\varepsilon \circ f.
$$
This last map is of the form
$$
v'_\varepsilon(\zeta) = (k\zeta , \widehat{v}(\varepsilon e^{2\pi \zeta})),
$$
thus for $\varepsilon \to 0$ we have $C^1$-convergence
$v'_\varepsilon \to v'_0$ where
$$
v'_0(\zeta) := (k\zeta,0) = \Psi'_0(\zeta,0).
$$
The cylindrical coordinate version of \eqref{eqn:PsiRelation} is now the relation
\begin{equation}
\label{eqn:PsiRelationCyl}
\Psi'_\varepsilon(\theta'_\varepsilon(\zeta),\eta'_\varepsilon(\zeta)) =
v'_\varepsilon(\zeta),
\end{equation}
which is equivalent to \eqref{eqn:PsiRelation} for each $\varepsilon > 0$.

The discussion of $\theta$ and $\eta$ has been purely hypothetical thus far,
but we are now in a position to find actual maps $\theta'_\varepsilon$ and
$\eta'_\varepsilon$ such that \eqref{eqn:PsiRelationCyl} is satisfied.
Indeed, after shifting the upper boundary of the half-cylinder
$(-\infty,r] \times S^1$ slightly if necessary, the convergence
of local $C^1$-diffeomorphisms $\Psi'_\varepsilon \to \Psi'_0$ together with
the convergence $v'_\varepsilon \to v'_0$ implies that for every
$\zeta \in (-\infty,r] \times S^1$, there exists a unique continuous
family of points 
$(\theta'_\varepsilon(\zeta),\eta'_\varepsilon(\zeta)) \in \dot{\oO}'_\delta$ 
for $\varepsilon \ge 0$ sufficiently small such that
\eqref{eqn:PsiRelationCyl} holds and 
$(\theta_0(\zeta),\eta_0(\zeta)) = (\zeta,0)$; notice that the
$\varepsilon=0$ case of \eqref{eqn:PsiRelationCyl} is then the relation
$v'_0(\zeta) = \Psi'_0(\zeta,0)$ already established.
Since the $\Psi_\varepsilon$ are local $C^1$-diffeomorphisms for
$\varepsilon \ge 0$ small and $v_\varepsilon$ is of class~$C^1$, the maps
$\theta_\varepsilon$ and $\eta_\varepsilon$ defined in this way 
are also of class~$C^1$ and
form a $C^1$-continuous family with respect to the parameter~$\varepsilon$,
implying in particular that we have $C^1$-convergence
$\theta_\varepsilon \to \theta_0$ and $\eta_\varepsilon \to 0$ as
$\varepsilon \to 0$.  To obtain the actual objective, we only need 
fix $\varepsilon > 0$ sufficiently small and observe that
both of the transformations $(\theta,\eta) \mapsto (\theta_\varepsilon,\eta_\varepsilon)$
and $(\theta_\varepsilon,\eta_\varepsilon) \mapsto (\theta'_\varepsilon,\eta'_\varepsilon)$
described above are reversible, at least if we are willing to restrict
the domain of $\theta$ and $\eta$ to a \emph{punctured} disk
$\dot{\DD}_\rho$ whose size is reduced in proportion to the size 
of~$\varepsilon$.  After this reversal, we have a pair of
$C^1$-smooth maps $\theta : \dot{\DD}_\rho \to \dot{\CC}$ and 
$\eta : \dot{\DD}_\rho \to\CC^{n-1}$ that satisfy \eqref{eqn:Psi1Relation}
on the punctured disk~$\dot{\DD}_\rho$.

We claim that both $\theta$ and $\eta$ can be extended over the puncture
to functions of class $C^1$ on~$\DD_\rho$, with
$$
\theta(0)=0, \quad d\theta(0) = \1, \qquad\text{ and }\qquad \eta(0)=0,\quad
d\eta(0)=0.
$$
For $\theta$, we
consider the functions $g_\varepsilon(z) := \frac{\theta_\varepsilon(z)}{z}$
on $\dot{\DD}_\rho$ and observe that since $\theta'_\varepsilon$ converges
in $C^1$ on $(-\infty,r] \times S^1$ to $\theta'_0(\zeta) = \zeta$,
$$
g_\varepsilon \circ f(\zeta) = e^{2\pi \left[\theta'_\varepsilon(\zeta) - \zeta\right]}
$$
is $C^1$-convergent on $(-\infty,r] \times S^1$ to the constant function
with value~$1$.  This implies that $g_\varepsilon$ converges uniformly on
$\dot{\DD}_\rho$ to~$1$,
and writing $\theta_\varepsilon(z) = \theta(\varepsilon z) / \varepsilon$,
we obtain the relation
$$
\theta(\varepsilon z) = \varepsilon z g_\varepsilon(z) =
\varepsilon z + \varepsilon z \left[g_\varepsilon(z) - 1\right].
$$
for all $z \in \dot{\DD}_\rho$.  If we restrict this relation to points
$z$ on the boundary of $\DD_\rho$ and introduce a new variable
$w := \varepsilon z$ living in a neighborhood of $0 \in \DD_\rho$, we can
define a remainder function
$$
R(w) := \frac{w}{|w|} \left[ g_{|w|/\rho}(z) - 1 \right]
$$
that satisfies $\lim_{w \to 0} R(w) = 0$ due to the uniform convergence
of $g_\varepsilon$, and it turns the above relation into
$\theta(w) = w + |w|R(w)$.  Defining $\theta(0) := 0$ therefore makes
$\theta$ continuous and differentiable at $0$, with $d\theta(0) = \1$.

To prove that $d\theta(z)$ is also continuous at $z=0$, we use the
uniform convergence of the first derivatives of $g_\varepsilon \circ f$:
writing $z = f(\zeta) = e^{2\pi\zeta}$, this convergence implies
$$
\frac{\p}{\p\zeta} g_\varepsilon \circ f(\zeta) = \frac{\p g_\varepsilon}{\p z} \frac{\p z}{\p\zeta}
= 2\pi z \frac{\p g_\varepsilon}{\p z} \to 0, \quad\text{ and }\quad
\frac{\p}{\p\bar{\zeta}} g_\varepsilon \circ f(\zeta) = \frac{\p g_\varepsilon}{\p \bar{z}} \frac{\p\bar{z}}{\p\bar{\zeta}}
= 2\pi\bar{z} \frac{\p g_\varepsilon}{\p\bar{z}} \to 0
$$
as $\varepsilon \to 0$.  From the convergence of $\bar{z} \frac{\p g_\varepsilon}{\p\bar{z}}$,
we obtain
$$
\bar{z} \frac{\p}{\p\bar{z}} \left(\frac{\theta_\varepsilon(z)}{z}\right) =
\frac{\bar{z}}{z} \frac{\p}{\p\bar{z}} \theta_\varepsilon(z) =
\frac{\bar{z}}{z} \frac{\p \theta}{\p\bar{z}}(\varepsilon z) \to 0,
$$
implying $\lim_{z \to 0} \frac{\p\theta}{\p\bar{z}}(z) = 0 = \frac{\p\theta}{\p\bar{z}}(0)$.
Similarly, the convergence of $z \frac{\p g_\varepsilon}{\p z}$ implies
$$
z \frac{\p}{\p z}\left(\frac{\theta_\varepsilon(z)}{z}\right) =
z \left( \frac{1}{z} \frac{\p \theta_\varepsilon}{\p z}(z) - \frac{1}{z^2} \theta_\varepsilon(z) \right)
= \frac{\p\theta_\varepsilon}{\p z}(z) - \frac{\theta_\varepsilon(z)}{z} \to 0,
$$
and since $\theta_\varepsilon(z) / z = g_\varepsilon(z) \to 1$ uniformly, it follows that
$\frac{\p\theta_\varepsilon}{\p z}(z) = \frac{\p\theta}{\p z}(\varepsilon z)$
converges as $\varepsilon \to 0$ to $1 = \frac{\p\theta}{\p z}(0)$.
 
Having established that $\theta$ is a $C^1$-smooth function, 
we now take a closer look at the relation
\begin{equation}
\label{eqn:differentiateThis}
\Psi_1(\theta,\eta) = \left( \theta^k + \widecheck{X}_\eta(\theta^k,\widehat{u} \circ \theta),
\widehat{u} \circ \theta + \widehat{X}_\eta(\theta^k , \widehat{u} \circ \theta) \right) =
(z^k,\widehat{v}) = v_1,
\end{equation}
viewed as a function of $z \in \dot{\DD}_\rho$.
Since $\theta(0)=0$ and $d\theta(0)=\1$, the argument of Lemma~\ref{lemma:uhato}
implies that both $\widehat{v}(z)$ and $\widehat{u} \circ \theta(z)$ are
$O(|z|^{k+1})$, so this equation implies
$$
\widehat{X}_{\eta(z)}\left( \left[\theta(z)\right]^k , \widehat{u}\circ \theta(z)\right) = O(|z|^{k+1}).
$$
The second estimate in Lemma~\ref{lemma:PsiEstimates} then gives
$$
\left| \widehat{X}_{\eta(z)}\left( \left[\theta(z)\right]^k , \widehat{u}\circ \theta(z)\right) - \eta(z) \right|
\le C |\widehat{u} \circ \theta(z)| \cdot |\eta(z)| \le C'|z|^{k+1}
$$
for some constant $C' > 0$.
We conclude that $\eta$ extends to a continuous function on $\DD_\rho$
with $\eta(0) = 0$ and $\eta(z) = O(|z|^{k+1})$, and since $k+1 \ge 2$, the
latter implies that $\eta$ is also differentiable at $z=0$ with $d\eta(0)=0$.  

Finally,
differentiating \eqref{eqn:differentiateThis} at $z \ne 0$ gives
$$
d v_1(z) = d_1\Psi_1(\theta(z),\eta(z)) \circ d\theta(z) + d_2\Psi_1(\theta(z),\eta(z)) \circ d\eta(z).
$$
If $k \ge 2$, then in the limit
as $z \to 0$, the first differential of $\Psi_1$ becomes $d_1\Psi_1(0,0) = d(u \circ \varphi)(0) = 0$,
while the second becomes $d_2\Psi_1(0,0) = (0,\1)$ since $\Psi_1(0,w) = (0,w)$,
and since $d v_1(0)$ also vanishes, this proves
$\lim_{z \to 0} d\eta(z) = 0$.  The case $k=1$ is slightly different
since $dv_1(0)$ and $d_1\Psi_1(0,0) \circ d\theta(0) = d_1\Psi_1(0,0) = d(u \circ \varphi)(0)$
do not vanish, but instead they are identical, so we obtain the same conclusion
about $d\eta(z)$.
\end{proof}

\subsection{Conclusion of the proof}

We cannot apply Proposition~\ref{prop:pushoff} directly to the relation
$v \circ \psi(z) = \Psi(\varphi \circ \theta(z),\eta(z))$ because
$v \circ \psi$ is not a smooth map.  However, we can write
$$
\widetilde{\varphi} := \varphi \circ \theta \circ \psi^{-1} \quad \text{ and }\quad
\widetilde{\eta} := \eta \circ \psi^{-1},
$$
and then apply the proposition to the relation
$$
v(z) = \Psi(\widetilde{\varphi}(z),\widetilde{\eta}(z)).
$$
Since $\widetilde{\eta}$ and $\widetilde{\varphi}$ are of class~$C^1$,
it follows that $\widetilde{\eta}$ is a solution to a linear
Cauchy-Riemann type equation of class~$C^0$, and
the similarity principle (Corollary~\ref{cor:similarity}) then implies
that $\widetilde{\eta}$ is either identically zero near $z=0$ or satisfies
$$
\widetilde{\eta}(z) = z^\ell A + o(|z|^\ell)
$$
for some $\ell \in \NN$ and $A \in \CC^{n-1} \setminus \{0\}$.
If $\widetilde{\eta}$ vanishes near~$0$, then so does~$\eta$, and we obtain
$$
(z^k,\widehat{v}(z)) = v \circ \psi(z) = \Psi(\varphi \circ \theta(z),0) = (u \circ \varphi)(\theta(z)) =
\left( \left[\theta(z)\right]^k , \widehat{u} \circ \theta(z)\right).
$$
Given that $\theta$ is of class $C^1$ with $d\theta(0)=\1$, this can only
hold if $\theta$ is the identity map near $z=0$, implying
$\widehat{u} \equiv \widehat{v}$.

If on the other hand $\widetilde{\eta}(z) = z^\ell A + |z|^\ell R(z)$
with $A \ne 0$ and $\lim_{z \to 0}R(z) = 0$,
then since $\psi(0)=0$ and $d\psi(0)=\1$, we can write
$\psi(z) = z + |z| \cdot r(z)$ with $\lim_{z \to 0}r(z) = 0$ and find
$$
\eta(z) = \widetilde{\eta}(\psi(z)) = \left( z + |z|r(z) \right)^\ell A +
\big| z + |z|\cdot r(z)\big|^\ell R\big(z + |z| \cdot r(z)\big) =
z^\ell A + o(|z|^\ell).
$$
Since $\eta(z) = O(|z|^{k+1})$ by Lemma~\ref{lemma:C1pushoff}, 
we deduce from this that $\ell > k$.  It remains to relate this to the
function $h(z) = (0,\widehat{h}(z)) := (0,\widehat{v}(z) - \widehat{u}(z)) =
v \circ \psi(z) - u \circ \varphi(z)$, which can now be expressed as
\begin{equation}
\label{eqn:headache}
\begin{split}
(0,\widehat{h}) &= \Psi(\varphi \circ \theta,\eta) - \Psi(\varphi,0)
= (u \circ \varphi) \circ \theta - u \circ \varphi + X_{\eta}(u \circ \varphi \circ \theta) \\
&= (\theta^k,\widehat{u} \circ \theta) - (z^k,\widehat{u}) +
X_\eta(\theta^k,\widehat{u} \circ \theta) \\
&= \left(\theta^k - z^k + \widecheck{X}_\eta(\theta^k , \widehat{u} \circ \theta) ,
\widehat{u} \circ \theta - \widehat{u} + \widehat{X}_\eta(\theta^k,\widehat{u} \circ \theta)\right).
\end{split}
\end{equation}
Since $\eta(z) = O(|z|^\ell)$ and $\widehat{u} \circ \theta(z) = O(|z|^{k+1})$, 
Lemma~\ref{lemma:PsiEstimates} implies an estimate
$$
\left|\widecheck{X}_{\eta(z)}\big(\left[\theta(z)\right]^k,\widehat{u} \circ \theta(z)\big)\right|
\le C |\eta(z)| \cdot |\widehat{u} \circ \theta(z)| = O(|z|^{\ell+k+1}),
$$
so that \eqref{eqn:headache} then gives $[\theta(z)]^k - z^k = O(|z|^{\ell+k+1})$.
Since $\theta(z) / z$ can be assumed arbitrarily close to $1$ for $|z|$ small,
we then have
\begin{equation*}
\begin{split}
\left|\theta - z\right| &= \left|\frac{\theta^k - z^k}{\theta^{k-1} + \theta^{k-2}z + \ldots + \theta z^{k-2} + z^{k-1}}\right|
= \frac{|\theta^k - z^k|}{|z|^{k-1}} \frac{1}{\left|\left(\frac{\theta}{z}\right)^{k-1} + \ldots + \left(\frac{\theta}{z}\right) + 1\right|}\\
&\le \text{const} \cdot \frac{|z|^{\ell+k+1}}{|z|^{k-1}} = O(|z|^{\ell+2}),
\end{split}
\end{equation*}
which implies an estimate of the form
\begin{equation}
\label{eqn:hatsoff}
\left| \widehat{u}(\theta(z)) - \widehat{u}(z)\right| \le C|\theta(z) - z| = O(|z|^{\ell+2})
\end{equation}
since $\widehat{u}$ is of class~$C^1$.  Finally, the second estimate in
Lemma~\ref{lemma:PsiEstimates} implies
$$
\left| \widehat{X}_{\eta(z)}\big(\left[\theta(z)\right]^k , \widehat{u} \circ \theta(z)\big) - \eta(z)\right|
\le C |\eta(z)| \cdot |\widehat{u} \circ \theta(z)| = O(|z|^{\ell+k+1}),
$$
hence 
$$
\widehat{X}_{\eta(z)}\big(\left[\theta(z)\right]^k , \widehat{u} \circ \theta(z)\big) =
\eta(z) + O(|z|^{\ell+k+1}) = z^\ell A + o(|z|^\ell) + O(|z|^{\ell+k+1}) = z^\ell A + o(|z|^\ell),
$$
and combining this with \eqref{eqn:hatsoff}, we can now derive from
\eqref{eqn:headache} the relation
$$
\widehat{h}(z) = O(|z|^{\ell+2}) + z^\ell A + o(|z|^\ell) = z^\ell A + o(|z|^\ell).
$$
The proof of Theorem~\ref{thm:representation} is now complete.

\section{Counting local intersections and singularities}
\label{sec:intSec}

In this section, we take the local representation formula of
Theorem~\ref{thm:representation} as a black box and
use it deduce the standard results on positivity of intersections.

According to the representation formula, a
nonconstant $J$-holomorphic curve has a well-defined tangent space at
every point, including critical points, with a nonnegative
\emph{critical order} $k \in \ZZ$ that is strictly positive if and only if the
point is critical.  We can now prove local
positivity of intersections (Theorem~\ref{thm:positivity}) by considering
separately the cases where the two curves have matching or non-matching
tangent spaces at their intersection.  Note that when $\dim M = 4$,
the condition that two (complex-linear!) tangent spaces at an intersection point do not
match means simply that they are transverse, and the intersection itself is
then transverse if and only if neither curve is critical at the
intersection point.

\begin{exercise}
\label{EX:nearTangent}
Let $\pi : \CC^n \setminus \{0\} \to \CP^{n-1}$ denote the natural
projection, and consider a map $u : \DD \to \CC^n$ of the form
$u(z) = (z^k , |z|^{k+1} f(z))$ for some $k \ge \NN$ and a bounded function
$f : \DD \to \CC^{n-1}$.  Show that for any neighborhood $\uU$ of
$[1 : 0 : \ldots : 0 ] \in \CP^{n-1}$, one can find $\rho > 0$ such
that the restriction of $\pi \circ u$ to $\DD_\rho \setminus \{0\}$
has image in~$\uU$.
\end{exercise}

\begin{prop}
\label{prop:transverseIntersection}
Suppose $u : (\Sigma,j) \to (M,J)$ and\index{intersections!positivity of|(}\index{positivity of intersections|(}
$v : (\Sigma',j') \to (M,J)$ are two $J$-holomorphic curves with an
intersection $u(z_0) = v(\zeta_0)$ at which $u$ has critical order
$k_u - 1 \ge 0$, $v$ has critical order $k_v - 1 \ge 0$, and their tangent
spaces (in the sense of Theorem~\ref{thm:representation}) are
distinct.  Then the intersection is isolated, and if
$\dim M = 4$, its local intersection index is
$$
\inter(u,z_0 \,;\, v,\zeta_0) = k_u k_v;
$$
in particular, it is positive, and equal to~$1$ if and only if the
intersection is transverse.
\end{prop}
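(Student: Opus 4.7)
The plan is to bring both curves into simultaneous normal form via Theorem~\ref{thm:representation} and then compute the local intersection index as the local degree at the origin of the difference map $H(z,w) := u(z) - v(w)$.

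First, I would apply Theorem~\ref{thm:representation} to $u$ to choose $M$-coordinates near the intersection point and $\Sigma$-coordinates near $z_0$ so that $u(z) = (z^{k_u}, \hat{u}(z))$ with $\hat{u}(z) = O(|z|^{k_u+1})$ and $L_u = \CC \times \{0\}$. Since $L_v \ne L_u$, I would then apply a $\CC$-linear automorphism of $\CC^2$ preserving $L_u$ setwise that sends $L_v$ to $\{0\} \times \CC$; after rescaling on $M$ and a source reparametrization $z \mapsto z(1 + O(|z|))$, the normal form for $u$ is restored. Next I would apply Theorem~\ref{thm:representation} to $v$, obtaining $v$-adapted $M$-coordinates in which $v$ takes the form $(w^{k_v}, \hat{v}'(w))$. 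The transition $\phi$ from these $v$-adapted coordinates to our current ones fixes the origin, and since it must carry the $v$-adapted $L_v = \CC \times \{0\}$ to our $L_v = \{0\} \times \CC$, its differential at the origin has the form $\left(\begin{smallmatrix} \beta & \alpha \\ \delta & 0 \end{smallmatrix}\right)$ with $\alpha, \delta \ne 0$. A Taylor expansion of $\phi^{-1}(w^{k_v},\hat{v}'(w))$, followed by a rescaling and reparametrization of the source variable $w$, places $v$ in the form $v(w) = (\hat{v}(w), w^{k_v})$ with $\hat{v}(w) = O(|w|^{k_v+1})$ in the same $M$-coordinates as $u$.

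Second, I observe that in these decoupled normal forms, any intersection $u(z) = v(w)$ near the origin must satisfy $z^{k_u} = \hat{v}(w)$ and $w^{k_v} = \hat{u}(z)$, so both $|z|^{k_u} \leq C|w|^{k_v+1}$ and $|w|^{k_v} \leq C'|z|^{k_u+1}$; multiplying these yields $|z||w| \geq 1/(CC')$ whenever $(z,w) \ne (0,0)$, which is impossible for small $(z,w)$. Hence the intersection is isolated and, setting
$$
H(z,w) := u(z) - v(w) = \bigl(z^{k_u} - \hat{v}(w),\ \hat{u}(z) - w^{k_v}\bigr),
$$
the local intersection index $\inter(u,z_0;v,\zeta_0)$ equals the local degree of $H$ at the origin. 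I would compute this degree using the straight-line homotopy $H_t(z,w) := (z^{k_u} - (1-t)\hat{v}(w),\ -w^{k_v} + (1-t)\hat{u}(z))$ from $H_0 = H$ to the model map $H_1(z,w) = (z^{k_u}, -w^{k_v})$. The same estimate shows that any nonzero zero of $H_t$ for $t \in [0,1]$ also satisfies $|z||w| \geq 1/(CC')$, whereas the AM-GM inequality gives $|z||w| \leq \epsilon^2/2$ on $\partial B^4_\epsilon$; consequently, for $\epsilon$ sufficiently small the homotopy has no zeros on $\partial B^4_\epsilon$, so the local degree is invariant under the homotopy and equals $\deg_0(H_1) = k_u k_v$ (by multiplicativity of local degree on the monomial factors of the standard model). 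Transversality of the intersection is equivalent to $du(z_0), dv(\zeta_0)$ both being nonzero and having complementary images, which is exactly the case $k_u = k_v = 1$, giving the ``equality iff transverse'' statement.

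The main technical obstacle I expect is in the first paragraph, namely justifying that both curves admit normal forms \emph{simultaneously} in a single coordinate chart on $M$. Theorem~\ref{thm:representation} supplies adapted coordinates for each curve individually, and in general those two coordinate systems on $M$ differ by a nontrivial smooth diffeomorphism, so the normal forms do not obviously coexist. The decisive point is the rigidity provided by the assumption $L_u \ne L_v$ in complex dimension~$2$: the transition diffeomorphism's derivative at the intersection point is forced to interchange the two complex factors up to first order, and this, combined with the freedom to reparametrize the source and the linear flexibility remaining on $M$, is exactly what allows both curves to be presented in the decoupled form needed to reduce the computation to the model map $(z,w) \mapsto (z^{k_u}, w^{k_v})$.
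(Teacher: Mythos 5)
There is a genuine gap, and it sits exactly where you flagged it: the simultaneous normal form. Theorem~\ref{thm:representation} only provides a \emph{smooth} chart adapted to each curve separately, and its statement guarantees nothing beyond the identification of the tangent line with $\CC \times \{0\}$ as a real $2$-plane; in particular it does not say that the chart differential at the central point is complex linear (i.e.\ that $J$ becomes standard at the origin). Consequently the transition map $\phi$ between your $v$-adapted chart and your $u$-adapted chart is just a smooth diffeomorphism fixing the origin, and all you can conclude about $d\phi(0)$ is that it carries the real $2$-plane corresponding to $L_v$ onto $\{0\} \times \CC$; writing it as a complex $2\times 2$ matrix $\left(\begin{smallmatrix} \beta & \alpha \\ \delta & 0 \end{smallmatrix}\right)$ presumes complex linearity that is not available (and the vanishing block is in any case forced in the other corner: it is the linear dependence of the \emph{first} component on the $L_v$-directions that must vanish). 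The real content of the problem then surfaces in the leading term: in the $u$-adapted chart the second component of $v$ is $R(w^{k_v}) + O(|w|^{k_v+1})$ for some invertible \emph{real}-linear map $R : \CC \to \CC$, and no ``rescaling and reparametrization of the source'' can turn $R(w^{k_v})$ into $w^{k_v}$ unless $R$ is complex linear. If $R$ were orientation-reversing, your homotopy to the model map would compute local degree $-k_u k_v$; in other words, the sign---which is the entire content of positivity of intersections---is precisely what your transition argument fails to control, and the hypothesis $L_u \ne L_v$ gives you the block-triangular structure but not this. Note also that the second half of Theorem~\ref{thm:representation} supplies simultaneous coordinates only in the \emph{same}-tangent-space case (that is what Proposition~\ref{prop:sameTangentSpace} uses); it deliberately does not do so here.

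Two ways to repair this. Either strengthen the normal form to include the (true, but unstated) property that the charts can be chosen so that $J$ agrees with $i$ at the central point---then transition differentials at the origin are complex linear, $R(w^{k_v}) = c\,w^{k_v}$ with $c \ne 0$, and your computation goes through with the correct sign; or avoid simultaneous normal forms altogether, which is what the paper does: working in $u$-adapted coordinates, Exercise~\ref{EX:nearTangent} shows the projectivized images of $u$ and $v$ stay near two distinct points of $\CP^{n-1}$, which gives isolatedness and lets one homotope away the remainder $\hat{u}$ without creating boundary intersections (Exercise~\ref{EX:interWithBoondary}); repeating in $v$-adapted coordinates and then making one final change of coordinates sending $L_u, L_v$ to the two coordinate axes reduces to the monomial model $(z^{k_u},0)$, $(0,z^{k_v})$, whose intersection count $k_u k_v$ is computed by the explicit perturbation $(z^{k_u}+\epsilon,0)$, $(0,z^{k_v}+\epsilon)$. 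Your degree-theoretic endgame (the product estimate forcing $|z||w| \ge 1/(CC')$, the homotopy $H_t$, and the transversality-iff-$k_u=k_v=1$ observation) is fine once the decoupled normal form with the correct complex leading terms is in hand; it is only the passage to that normal form that needs the additional input.
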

\begin{proof}
By Theorem~\ref{thm:representation}, we can choose $C^1$-smooth coordinates such that
without loss of generality $z_0 = \zeta_0 = 0 \in \DD = \Sigma = \Sigma'$, 
$M = \CC^n$,  $u(z) = (z^{k_u} , |z|^{k_u+1} f(z))$ for some bounded function 
$f : \DD \to \CC^{n-1}$, and $v : \DD \to \CC^n$ satisfies
$v(0) = 0$.  The condition of distinct tangent spaces implies via
Exercise~\ref{EX:nearTangent} that
if $\pi : \CC^n \setminus \{0\} \to \CP^{n-1}$ denotes the natural
projection, we can also assume that the images of the maps
$$
\pi \circ u|_{\DD \setminus\{0\}},\  \pi \circ v|_{\DD \setminus \{0\}} :
\DD \setminus \{0\} \to \CP^{n-1}
$$
lie in arbitrarily small neighborhoods of two distinct points.
The same is also true if we replace $u$ with any of the maps
$$
u_\tau : \DD \to \CC^n : z \mapsto (z^{k_u} , \tau |z|^{k_u+1} f(z)), \qquad
\tau \in [0,1].
$$
The claim that the intersection is isolated follows immediately,
and when $n=2$, we also deduce via Exercise~\ref{EX:interWithBoondary}
that $\inter(u,0 \,;\, v,0) = \inter(u_0,0 \,;\, v,0)$.  After applying the
same homotopy argument in different coordinates adapted to~$v$ and then
choosing new coordinates so that the tangent spaces of $u$ and $v$
match $\CC \times \{0\}$ and $\{0\} \times \CC$ respectively,
we can reduce the problem to a computation of $\inter(u_0,0 \,;\, v_0 , 0)$ for
$$
u_0(z) = (z^{k_u},0), \qquad
v_0(z) = (0, z^{k_v}).
$$
Choose $\epsilon \in \CC\setminus \{0\}$ and perturb these maps to 
$(z^{k_u} + \epsilon,0)$ and $(0, z^{k_v} + \epsilon)$ respectively.
Both are now holomorphic for the standard complex structure on $\CC^2$
and they have exactly $k_u k_v$ intersections, all transverse.
\end{proof}

When both curves have matching tangent spaces where they intersect, we will
need to use the more precise information provided by 
Theorem~\ref{thm:representation}.  Observe that in this case the intersection
can never be transverse.

\begin{exercise}
\label{EX:branched}
Suppose $\dim M = 4$, $u ,v : (\DD,i) \to (M,J)$
are $J$-holomorphic disks and they have an isolated intersection 
$u(0) = v(0)$.  Given $k , \ell \in \NN$, define the $J$-holomorphic 
branched covers $u^k , v^\ell : (\DD,i) \to (M,J)$,
$$
u^k(z) := u(z^k), \qquad v^\ell(z) := v(z^\ell).
$$
Show that $\inter(u^k,0 \,;\, v^\ell,0) = k \ell \cdot \inter(u,0 \,;\, v,0)$.
\end{exercise}

\begin{prop}
\label{prop:sameTangentSpace}
Suppose $u : (\Sigma,j) \to (M,J)$ and
$v : (\Sigma',j') \to (M,J)$ are two $J$-holomorphic curves with an
intersection $u(z_0) = v(\zeta_0)$ at which $u$ has critical order
$k_u - 1 \ge 0$, $v$ has critical order $k_v - 1 \ge 0$, and their tangent
spaces (in the sense of Theorem~\ref{thm:representation}) are identical.
Then either the intersection $u(z_0) = v(\zeta_0)$ is isolated, or there exist
neighborhoods $z_0 \in \uU_{z_0} \subset \Sigma$ and $\zeta_0 \in 
\uU_{\zeta_0} \subset \Sigma'$ such that $u(\uU_{z_0}) = v(\uU_{\zeta_0})$.  
In the former case, if $\dim M = 4$, the local intersection index satisfies
$$
\inter(u,z_0 \,;\, v,\zeta_0) > k_u k_v;
$$
in particular, it is strictly greater than~$1$.
\end{prop}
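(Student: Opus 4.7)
The plan is to reduce first to the case of matched critical orders using branched covers, then apply the joint representation formula of Theorem~\ref{thm:representation}, and finally compute the intersection index by homotoping to an algebraic model. Set $N := k_u k_v$ and define, on small disk neighborhoods of $z_0$ and $\zeta_0$ (each identified with $0 \in \CC$),
\[
\tilde{u}(z) := u(z^{k_v}), \qquad \tilde{v}(w) := v(w^{k_u}).
\]
Both are $J$-holomorphic, have critical order $N - 1$ at $0$, and share the common tangent space $L$ of $u$ and $v$; Exercise~\ref{EX:branched} gives
\[
\inter(\tilde{u}, 0; \tilde{v}, 0) = k_u k_v \cdot \inter(u, z_0; v, \zeta_0).
\]
It therefore suffices to prove that either $\tilde u$ and $\tilde v$ have identical images in a neighborhood of $0$ — which pulls back to the corresponding statement for $u$ and $v$ — or $\inter(\tilde u, 0; \tilde v, 0) > N^2$.

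Since $\tilde u$ and $\tilde v$ now agree in both tangent space and critical order, the joint part of Theorem~\ref{thm:representation} supplies smooth coordinates in which
\[
\tilde u(z) = (z^N, \hat a(z)), \qquad \tilde v(w) = (w^N, \hat b(w)),
\]
with either $\hat a \equiv \hat b$ on a neighborhood of $0$ — handling the non-isolated case — or
\[
\hat a(z) - \hat b(z) = z^{N+\ell} C + |z|^{N+\ell} r(z), \qquad C \in \CC\setminus\{0\}, \quad \ell \in \NN, \quad r(z) \to 0.
\]
In the latter case intersection pairs $(z_*, w_*)$ near $0$ correspond to $w_* = \omega z_*$ for some $N$-th root of unity $\omega$ satisfying $\hat a(z_*) = \hat b(\omega z_*)$. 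Combining this relative expansion with the individual representation formulas $\hat a(z) = z^{N+\ell_u} C_u + o(|z|^{N+\ell_u})$ and $\hat b(w) = w^{N+\ell_v} C_v + o(|w|^{N+\ell_v})$ from Theorem~\ref{thm:representation} shows that each of the $N$ equations $\hat a(z) = \hat b(\omega z)$ has $z = 0$ as its only solution in a sufficiently small disk, so the intersection pair at $(0,0)$ is isolated.

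To evaluate $\inter(\tilde u, 0; \tilde v, 0)$, I would compare with the algebraic model
\[
u_0(z) = (z^N, 0), \qquad v_0(w) = (w^N, -w^{N+\ell} C)
\]
in $(\CC^2, i)$, whose local intersection index at $0$ equals $N(N+\ell)$: perturbing $u_0$ to $u_0^\epsilon(z) := (z^N, \epsilon)$ for small generic $\epsilon$ produces $N(N+\ell)$ transverse, positively-counted intersections with $v_0$, since $u_0^\epsilon(z_*) = v_0(w_*)$ forces $w_*^{N+\ell} = -\epsilon/C$ (giving $N+\ell$ values of $w_*$) together with $z_*^N = w_*^N$ (giving $N$ values of $z_*$ per $w_*$). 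To transfer this count to $(\tilde u, \tilde v)$, consider the continuous homotopy
\[
\tilde u_\tau(z) := (z^N, \tau \hat a(z)), \qquad \tilde v_\tau(w) := (w^N, \tau \hat b(w) - (1-\tau) w^{N+\ell} C),
\]
which interpolates between $(u_0, v_0)$ at $\tau = 0$ and $(\tilde u, \tilde v)$ at $\tau = 1$, and satisfies $\hat a_\tau(z) - \hat b_\tau(z) = z^{N+\ell} C + \tau |z|^{N+\ell} r(z)$, preserving the leading relative term. Provided no intersection pairs escape the boundary of a small coordinate ball as $\tau$ varies, Exercise~\ref{EX:interWithBoondary} yields $\inter(\tilde u, 0; \tilde v, 0) = N(N+\ell)$, so that
\[
\inter(u, z_0; v, \zeta_0) = N + \ell = k_u k_v + \ell > k_u k_v.
\]

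The hardest part will be the boundary control in this homotopy. The relative formula directly controls only the diagonal sheet $w = z$; the off-diagonal sheets $w = \omega z$ with $\omega^N = 1$, $\omega \neq 1$ require a separate analysis of $\tau[\hat a(z) - \hat b(\omega z)] + (1-\tau) \omega^{N+\ell} z^{N+\ell} C$, and this must be carried out via the individual leading-order formulas for $\hat a$ and $\hat b$, splitting into cases depending on the relationship between $\ell_u$, $\ell_v$, $C_u$, and $C_v$. In the most degenerate subcase — $\ell_u = \ell_v$ with $C_u = \omega^{N + \ell_u} C_v$ for some off-diagonal $\omega$ — the leading individual terms cancel on that sheet, and one would either need the sharper polynomial normal form from the full Micallef--White theorem \cite{MicallefWhite} or sidestep the issue entirely by following McDuff's \cite{McDuff:intersections} strategy of first perturbing $J$ to make $u$ immersed and then invoking the already-understood embedded case of Proposition~\ref{prop:transverseIntersection}.
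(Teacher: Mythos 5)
There is a genuine gap, and you have located it yourself: the off-diagonal sheets. The missing observation is that you do not need the full Micallef--White polynomial normal form or McDuff's global perturbation to control them, because Theorem~\ref{thm:representation} already does the job sheet by sheet. For each $N$-th root of unity $\omega$, the reparametrization $\tilde{v}_\omega(z) := \tilde{v}(\omega z) = (z^N, \hat{b}(\omega z))$ is again a $J$-holomorphic curve (with a rotated complex structure on its domain) having the same tangent space and critical order $N-1$ at $0$ and written in the same normal form, so the final dichotomy of Theorem~\ref{thm:representation} applies to the \emph{pair} $(\tilde{u},\tilde{v}_\omega)$: either $\hat{a} \equiv \hat{b}(\omega\,\cdot)$, in which case $u$ and $v$ have identical images near the intersection (note that this non-isolated alternative can arise from an off-diagonal sheet, which your diagonal-only dichotomy misses), or
$$
\hat{a}(z) - \hat{b}(\omega z) = z^{N+\ell_\omega} C_\omega + |z|^{N+\ell_\omega} r_\omega(z), \qquad C_\omega \ne 0,\ \ell_\omega \in \NN,\ r_\omega(z) \to 0.
$$
With this in hand, your degenerate subcase ($\ell_u=\ell_v$ and cancelling leading coefficients, where the individual expansions of $\hat a$ and $\hat b$ tell you nothing) simply does not arise, isolation follows on every sheet, and the intersection count can be done directly: perturb only $\tilde u$ to $(z^N,\hat a(z)+\epsilon)$ for generic small $\epsilon$ and count, for each $\omega$, the zeroes of $\hat{b}(\omega\,\cdot) - \hat{a} - \epsilon$ near $0$, which number $N+\ell_\omega \ge N+1$ since the unperturbed difference has a zero of order $N+\ell_\omega$ at the origin. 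Summing over the $N$ sheets gives $\inter(\tilde u,0;\tilde v,0) = \sum_\omega (N+\ell_\omega) > N^2$, and Exercise~\ref{EX:branched} finishes the proof. This is exactly the paper's argument; no homotopy to an algebraic model is needed.

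A second, related defect: your claimed exact value $\inter(\tilde u,0;\tilde v,0) = N(N+\ell)$, with $\ell$ the \emph{diagonal} exponent, is false in general, because different sheets carry different exponents $\ell_\omega$; the correct total is $\sum_\omega(N+\ell_\omega)$, so $\inter(u,z_0;v,\zeta_0) = k_uk_v + \tfrac{1}{k_uk_v}\sum_\omega \ell_\omega$, and only the strict inequality of the proposition survives. Since your route to even the inequality runs through homotopy invariance of the model count (Exercise~\ref{EX:interWithBoondary}), which requires precisely the boundary control you cannot establish on the problematic sheets, the argument as written is incomplete rather than merely inefficient; the sheet-by-sheet application of the relative formula is the repair.
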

\begin{proof}
We can choose holomorphic coordinates near 
$z_0 \in \Sigma$ and $\zeta_0 \in \Sigma'$ so that, without loss of
generality, $(\Sigma,j) = (\Sigma',j') = (\DD,i)$ with 
$z_0 = \zeta_0 = 0$.  
Since $k_u$ and $k_v$ may be different, we first replace $u$ and $v$ with
suitable branched covers so that their critical orders become the same:
let
$$
m = k_u k_v \in \NN,
$$
and define $u', v' : (\DD,i) \to (M,J)$ by
$$
u'(z) := u(z^{k_v}), \qquad v'(z) := v(z^{k_u}),
$$
so that in particular $u'$ and $v'$ both have critical order $m-1$
at the intersection $u'(0) = v'(0)$, as well as matching tangent spaces.
Now by Theorem~\ref{thm:representation}, we find new choices of 
$C^1$-smooth local coordinates in $\DD$ near~$0$ and smooth coordinates
in $M$ near $u(0)=v(0)$ such that
$$
u'(z) = (z^m,\widehat{u}(z)), \qquad v'(z) = (z^m,\widehat{v}(z))
$$
for some functions $\widehat{u} , \widehat{v} : \DD \to \CC^{n-1}$ of class $C^1$ 
that are both $O(|z|^{m+1})$.  For each $j=0,\ldots,m-1$, we can also
compose $u'$ with the smooth coordinate change $z \mapsto e^{2\pi i j/m} z$
to produce a new parametrization $v_j' : \DD \to \CC^n$ of the form
$$
v_j'(z) := v'( e^{2\pi i j/m} z) = (z^m, \widehat{v}_j(z)),
\quad \text{ where }\quad \widehat{v}_j(z) = \widehat{v}( e^{2 \pi i j/m} z),
$$
for which the statement of Theorem~\ref{thm:representation} is equally valid.
If $\widehat{u} - \widehat{v}_j$ is identically zero for some $j=0,\ldots,m-1$,
then we have
$$
u'(z) = v'( e^{2\pi i j/m} z) \quad\text{ for all $z \in \DD$},
$$
implying that $u'$ and $v'$ have identical images
on some neighborhood of the intersection, in which case so do $u$ and~$v$.  
If not, then
Theorem~\ref{thm:representation} gives for each $j=0,\ldots,m-1$ the formula
\begin{equation}
\label{eqn:windinguv}
\widehat{u}(z) - \widehat{v}_j(z) = z^{m + \ell_j} C_j + |z|^{m + \ell_j} r_j(z),
\end{equation}
where $C_j \in \CC^{n-1} \setminus \{0\}$, $\ell_j \in \NN$ and 
$r_j(z) \in \CC^{n-1}$ is a function with $r_j(z) \to 0$ as $z \to 0$.  
This expression has an isolated zero at $z=0$, thus the intersection of 
$u'$ and $v'$ (and hence of $u$ and $v$) is isolated.

If $n=2$, we can now compute $\inter(u',0 \,;\, v',0)$ by choosing
$\epsilon \in \CC \setminus \{0\}$ small and defining the perturbation
$$
u'_\epsilon(z) := (z^m , \widehat{u}(z) + \epsilon).
$$
This curve does not intersect $v'$ at $z=0$ since $\epsilon \ne 0$.
If $u'_\epsilon(z) = v'(\zeta)$, then $z^m = \zeta^m$, hence
$\zeta = e^{2\pi i j/m} z$ for some $j=0,\ldots,m-1$, and equality in the
second factor then implies
\begin{equation}
\label{eqn:that}
\widehat{v}_j(z) - \widehat{u}(z) = \epsilon.
\end{equation}
By \eqref{eqn:windinguv}, the zero of $\widehat{v}_j(z) - \widehat{u}(z)$ at $z=0$
has order $m + \ell_j > m$, thus if $\epsilon \in \CC$ is sufficiently small
and chosen generically so that it is a regular value of $\widehat{v}_j - \widehat{u}$,
we conclude that \eqref{eqn:that} has exactly $m + \ell_j$ solutions
near $z=0$, all of them simple positive zeroes of $\widehat{v}_j - \widehat{u} - \epsilon$
and thus corresponding to transverse positive intersections of
$u'_\epsilon$ with $v'$.  Adding these up for all choices of $j=0,\ldots,m-1$, we conclude
$$
\inter(u',0 \,;\, v',0) > m^2 = k_u^2 k_v^2,
$$
so by Exercise~\ref{EX:branched}, $\inter(u,0 \,;\, v,0) > k_u k_v$.
\end{proof}

\begin{exercise}
\label{EX:noUpperBound}
Find examples to show that in the situation of 
Proposition~\ref{prop:sameTangentSpace}, $\inter(u,z_0 \,;\, v,\zeta_0)$
cannot in general be bounded from above.
\end{exercise}

Combining Propositions~\ref{prop:transverseIntersection} 
and~\ref{prop:sameTangentSpace} completes the proof of
Theorem~\ref{thm:positivity}.\index{intersections!positivity of|)}\index{positivity of intersections|)}

We now turn to the proof of Lemma~\ref{lemma:critInj} from
Lecture~\ref{sec:2}, which asserts that any critical point on a simple
$J$-holomorphic curve gives rise to a strictly positive count of
double points after an immersed perturbation.  In the background\index{holomorphic curve!multiply covered|(}\index{multiply covered holomorphic curve|(}
of this statement is the fact that all simple holomorphic curves are\index{simple holomorphic curve!is locally injective|(}
locally injective, which we can now prove using the
representation formula of Theorem~\ref{thm:representation}.

\begin{prop}
\label{prop:branching}
Suppose $u : (\Sigma,j) \to (M,J)$ is a $J$-holomorphic curve
that is nonconstant near a point $z_0 \in \Sigma$ with $du(z_0) = 0$.  Then
there exists a neighborhood $z_0 \in \uU_{z_0} \subset \Sigma$ such that
there is a biholomorphic identification
$$
\varphi : (\DD,i) \stackrel{\cong}{\longrightarrow} (\uU_{z_0},j)
$$
with $\varphi(0) = z_0$,
a number $k \in \NN$, and an \emph{injective} $J$-holomorphic map
$$
v : (\DD,i) \to (M,J)
$$
with 
$$
dv(z) \ne 0 \text{ for $z \in \DD \setminus\{0\}$ and }
u \circ \varphi(z) = v(z^k) \text{ for $z \in \DD$}.
$$
If $u : (\Sigma,j) \to (M,J)$ is a simple curve, then $k=1$.
\end{prop}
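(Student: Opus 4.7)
I would apply Theorem~\ref{thm:representation} at $z_0$ to choose a biholomorphic chart $\varphi_0 : (\DD,i) \to (\uU_{z_0},j)$ mapping $0$ to $z_0$, together with smooth coordinates on a neighborhood of $u(z_0)$ identifying it with $0 \in \CC^n$, so that
\[
u \circ \varphi_0(z) = (z^m, \hat{u}(z)),
\]
where $m \ge 2$ is one plus the critical order of $u$ at $z_0$ and $\hat{u} : \DD \to \CC^{n-1}$ vanishes to order at least $m$ at the origin. The plan is then to factor $u\circ\varphi_0$ by its maximal local symmetry, a finite cyclic group of rotations of the source disk, whose quotient will yield $v$.

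To identify this symmetry group, let $\mu_m \subset \CC^*$ denote the $m$-th roots of unity and define
\[
H := \left\{ \zeta \in \mu_m : \hat{u}(\zeta z) \equiv \hat{u}(z) \text{ on some neighborhood of } 0 \right\}.
\]
This is a subgroup, hence $H = \mu_k$ for some $k \mid m$, and I set $k' := m/k$. The crucial observation, obtained by applying the uniqueness clause of Theorem~\ref{thm:representation} to the pair of germs $z \mapsto u\circ\varphi_0(\zeta z)$ and $z \mapsto u\circ\varphi_0(z)$ (which have the same tangent space $\CC \times \{0\}$ and same critical order, since $\zeta^m = 1$), is that for each $\zeta \in \mu_m \setminus H$ the difference $\hat{u}(\zeta z) - \hat{u}(z)$ has an isolated zero at $z = 0$ and is therefore nonvanishing on a punctured neighborhood. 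The $\mu_k$-invariance of $\hat{u}$ now allows me to unambiguously define a continuous $\tilde{u} : \DD \to \CC^{n-1}$ by $\tilde{u}(z^k) := \hat{u}(z)$, and set
\[
v(w) := (w^{k'}, \tilde{u}(w)),
\]
so that by construction $u\circ\varphi_0(z) = v(z^k)$.

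The map $v$ is $J$-holomorphic on $\DD \setminus \{0\}$, since $z\mapsto z^k$ is a local biholomorphism off the origin and $u$ is $J$-holomorphic; being continuous and bounded on $\DD$, the removal of singularities theorem for pseudoholomorphic curves (equivalently, elliptic regularity for weak solutions of a Cauchy-Riemann-type equation) upgrades $v$ to a smooth $J$-holomorphic map on all of $\DD$. Injectivity of $v$ follows from the maximality of $H$: if $v(w_1) = v(w_2)$ with $w_1 \ne 0$, then $w_2 = \eta w_1$ for some $\eta \in \mu_{k'}$, and picking $z_1$ with $z_1^k = w_1$ and any $\beta \in \mu_m$ with $\beta^k = \eta$ forces $\hat{u}(\beta z_1) = \hat{u}(z_1)$; but if $\eta \ne 1$ then $\beta \notin H$, so by the key observation this identity fails for all $z_1 \ne 0$ in a small enough disk, a contradiction. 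Nonvanishing of $dv$ on $\DD \setminus \{0\}$ then follows by shrinking $\DD$ so that $u$ has no critical points in $\varphi_0(\DD \setminus \{0\})$, using the isolatedness of critical points from Exercise~\ref{EX:tangentSpace}, and applying the chain rule to $u\circ\varphi_0 = v \circ (z \mapsto z^k)$.

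For the final assertion that $k = 1$ when $u$ is simple, suppose instead $k > 1$, so that there exists a primitive $\zeta \in H$ of order $k$. Then the local biholomorphism $z \mapsto \zeta z$ of $\DD$ transports via $\varphi_0$ to a germ of biholomorphism of $(\Sigma,j)$ fixing $u$, which I would extend by analytic continuation along paths in $\Sigma$ to a global nontrivial automorphism $\Psi : \Sigma \to \Sigma$ with $u = u \circ \Psi$; the continuation is possible because $u^{-1}(p)$ is finite for every $p \in M$, as guaranteed by Theorem~\ref{thm:uniqueContinPunctured}. But then $u$ factors through $\Sigma/\langle \Psi\rangle$, exhibiting $u$ as a multiple cover and contradicting simplicity. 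The principal technical obstacle in this outline is the passage from continuity of $v$ at $w = 0$ to genuine smoothness and $J$-holomorphicity, which relies on the nontrivial removal of singularities theorem; the global analytic-continuation step in the simple case is the other delicate point, as one must track how $\Psi$ is defined across points where $u$ fails to be injective.
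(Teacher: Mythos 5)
Your construction of $v$ is essentially the paper's own argument: put $u$ in the normal form $(z^m,\hat{u}(z))$ of Theorem~\ref{thm:representation}, compare $\hat{u}$ with its rotations $\hat{u}(\zeta z)$ using the relative part of that theorem, factor by the maximal group $\mu_k$ of rotational symmetries to define $v$, get injectivity of $v$ near $0$ from the nonvanishing of $\hat{u}(\beta z)-\hat{u}(z)$ for $\beta\notin\mu_k$, and defer smoothness/$J$-holomorphicity of $v$ across $w=0$ to a removal-of-singularities or elliptic regularity argument. The paper does exactly this (with different letters) and likewise omits the regularity step, citing \cite{Wendl:lecturesV2}, so up to that point you are on solid and familiar ground.

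The genuine gap is in the last step, ``simple $\Rightarrow k=1$.'' The germ $\psi(z)=\zeta z$ satisfying $u\circ\varphi\circ\psi=u\circ\varphi$ does \emph{not} extend by analytic continuation to a global automorphism $\Psi$ of $\Sigma$ with $u=u\circ\Psi$, and finiteness of the fibres $u^{-1}(p)$ is not the relevant obstruction. First, continuation must be carried out in the complement of the finitely many critical and non-injective points, and that complement is not simply connected, so even when continuation along every path exists the result is path-dependent; nothing in your argument rules out monodromy. Second, the intermediate claim you need is false as a general principle: take a simple curve $v_0:S^2\to M$ (say a line in $\CP^2$) and a degree-$3$ branched cover $\phi:S^2\to S^2$ with four simple branch points and monodromy group $S_3$; this cover is not normal, its deck group is trivial, and for $u=v_0\circ\phi$ there is consequently \emph{no} nontrivial $\Psi$ with $u=u\circ\Psi$, even though at each simple branch point $u$ has exactly the kind of local symmetry germ of order $2$ that you start from. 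So ``local symmetry germ $\Rightarrow$ global automorphism'' cannot be salvaged, and the simplicity hypothesis (which is what you are trying to contradict) is not available to repair the continuation.

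The correct route---and what the reference cited by the paper actually does---is to avoid looking for a group action altogether and instead build the factorization $u=v\circ\phi$ through another closed Riemann surface: one shows that the set of points where $u$ is critical or where two local image germs fail to coincide is discrete, quotients the complement by the equivalence relation ``same value and same image germ,'' verifies that the quotient is a Riemann surface to which $u$ descends holomorphically, and then fills in the punctures to obtain a closed surface $\Sigma'$ and a holomorphic branched cover $\phi:\Sigma\to\Sigma'$. If $k\ge 2$ at some point, then $u$ is locally $k$-to-$1$ there, forcing $\deg\phi\ge 2$ and contradicting simplicity. Your argument needs to be replaced by (or reduced to) this factorization statement rather than the global-automorphism claim.
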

\begin{proof}
Theorem~\ref{thm:representation} provides $C^1$-smooth local coordinates near~$z_0 \in \Sigma$
and smooth coordinates near $u(z_0) \in M$ in which $u$ takes the form
$$
u(z) = (z^k , \widehat{u}(z)) \in \CC^n
$$
for a $C^1$-smooth map $\widehat{u} : \DD \to \CC^{n-1}$ 
with $\widehat{u}(z) = O(|z|^{k+1})$, where $k-1 \ge 0$
is the critical order of $u$ at~$z_0$, and all the maps in this picture are of
class $C^\infty$ away from $z_0 \in \Sigma$ or $0 \in \DD$ respectively.
For each $j=1,\ldots,k-1$, we can compose this representation of $u$ with the
smooth reparametrization $\psi_j(z) := e^{2\pi ij/k}z$ and thus use
Theorem~\ref{thm:representation} to compare $u$ with
$$
u_j(z) := u(e^{2\pi i j/k}z) = (z^k , \widehat{u}_j(z)), \quad\text{ where }
\quad \widehat{u}_j(z) := \widehat{u}(e^{2\pi i j/k}z).
$$
The theorem implies that each $\widehat{u} - \widehat{u}_j$
is either identically zero or has an isolated zero at $z=0$.
Self-intersections $u(z) = u(\zeta)$ with $z \ne \zeta$ can now be identified
with pairs $j \in \{1,\ldots,k-1\}$ and $z \in \DD$ for which
$\widehat{u}(z) = \widehat{u}_j(z)$.
Let $m \in \{1,\ldots,k\}$ denote the smallest number for which
$\widehat{u} \equiv \widehat{u}_m$, hence $u(z) = u(e^{2\pi i m/k} z)$ for all~$z$.
Then we also have $\widehat{u} \equiv \widehat{u}_{j m}$ for all $j \in \ZZ$,
so $m$ must divide $k$, and setting $\ell := k/m$, we see that $u : \DD \to \CC^n$ is
invariant with respect to the $\ZZ_\ell$ action on $\DD$ generated by
the rotation $\psi := \psi_m$.  It therefore factors as
$$
u(z) = v(z^\ell)
$$
for a continuous map $v : \DD \to \CC^n$ that is
smooth on $\DD \setminus \{0\}$, and
$v$ is injective near~$0$ since we always have $\widehat{u}(z) \ne \widehat{u}_j(z)$
near $z=0$ for $j=1,\ldots,m-1$.

It remains to show that $v : \DD \to \CC^n$ can be reparametrized near $0 \in \DD$
to become a \emph{smooth} $J$-holomorphic curve.  We shall deduce this from
elliptic regularity, but first, we need to switch back to smooth holomorphic
coordinates on the domain.  Since the parametrization
$u(z) = (z^k,\widehat{u}(z))$ was obtained via a $C^1$-smooth coordinate
chart on the smooth Riemann surface $(\Sigma,j)$, this parametrization is
a pseudoholomorphic map $(\DD,j') \to (M,J)$ for a continuous
complex structure~$j'$
that is smooth on $\dot{\DD} := \DD\setminus \{0\}$ and uniquely determined there by $j' = u^*J$.
It follows that the $\ZZ_\ell$-action on $\DD$ leaving $u$ invariant acts
holomorphically on $(\DD,j')$, and it can therefore be defined as a group of
biholomorphic (and therefore smooth) transformations on the simply connected
neighborhood $\uU \subset \Sigma$ of $z_0$ that is identified with $\DD$
via our $C^1$-coordinates.  Using the Riemann mapping theorem, we can now
choose a holomorphic coordinate chart identifying $(\uU,j)$ with $(\DD,i)$
and $z_0$ with $0 \in \DD$, so that in the new coordinates, $\psi$ generates
a $\ZZ_\ell$-action by biholomorphic transformations on $(\DD,i)$
that fix~$0$.  All such transformations are rotations, thus $\psi$ is
given by the same formula as before in the new coordinates, and we can
define a continuous map $v : \DD \to \CC^n$ as before via the relation
$u(z) = v(z^\ell)$, observing that $v$ is manifestly
smooth and holomorphic on the standard punctured disk~$(\dot{\DD},i)$.
Since $du(z) = O(|z|^{k-1})$, we then deduce from
$u(z) = v(z^\ell)$ and $du(z) = dv(z^\ell) \circ (\ell z^{\ell-1})$
an estimate of the form
$$
\left|dv(z^\ell)\right| \le C \frac{|du(z)|}{|z|^{\ell-1}} \le C' |z|^{k-\ell}
$$
near $z=0$.  This expression is bounded since $\ell \le k$, implying
via Exercise~\ref{EX:W1infty} that
the map $v : \DD \to \CC^n$ is of class~$W^{1,\infty}$.  It is therefore
smooth by Proposition~\ref{prop:nonlinearReg}.\index{holomorphic curve!multiply covered|)}\index{multiply covered holomorphic curve|)}\index{simple holomorphic curve!is locally injective|)}
\end{proof}

The remainder of Lemma~\ref{lemma:critInj} can be restated as follows.\index{local singularity index|(}\index{holomorphic curve!local singularity index at a point|(}

\begin{prop}
\label{prop:critical}
Suppose $\dim M=4$ and $u : (\DD,i) \to (M,J)$ is an injective 
$J$-holomorphic
map with critical order $k-1 \ge 1$ at $z=0$ and no critical points
on $\DD \setminus \{0\}$.  Then there exists an integer
$$
\delta(u,0) \ge \frac{k (k-1)}{2}
$$
depending only on the germ of $u$ near~$0$, such that for any
given neighborhood $\uU \subset \DD$ of~$0$ and symplectic form $\omega_0$ defined
near $u(0)$ taming~$J$, one can find a $C^1$-smooth
map $u_\epsilon :\DD \to M$ satisfying the following conditions:
\begin{enumerate}
\item $u_\epsilon$ is $C^1$-close to $u$ and matches $u$ outside
$\uU$ and at~$0$;
\item $u_\epsilon$ is an immersion with $u_\epsilon^*\omega_0 > 0$;
\item $u_\epsilon$ has finitely many self-intersections and satisfies
\begin{equation}
\label{eqn:deltaLocal}
\frac{1}{2} \sum_{(z,\zeta)} \inter(u_\epsilon,z \,;\, u_\epsilon,\zeta) = \delta(u,0),
\end{equation}
where the sum ranges over all pairs $(z,\zeta) \in \DD \times \DD$
such that $z \ne \zeta$ and $u_\epsilon(z) = u_\epsilon(\zeta)$.
\end{enumerate}
\end{prop}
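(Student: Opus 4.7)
The plan is to combine the local representation formula of Theorem~\ref{thm:representation} with an explicit one-parameter perturbation of $u$, then count its self-intersections by a winding-number argument similar to Corollary~\ref{cor:positiveOrder}.

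First I would use Theorem~\ref{thm:representation} to choose smooth coordinates near $0 \in \DD$ and near $u(0) \in M$ so that $u(z) = (z^k, \hat{u}(z)) \in \CC \times \CC$ (here $n = 2$), with $\hat{u}$ vanishing to order at least $k+1$ at $0$. For $j \in \{1, \ldots, k-1\}$ set $\omega_j := e^{2\pi i j/k}$ and $F_j(z) := \hat{u}(z) - \hat{u}(\omega_j z)$. Injectivity of $u$ forces $F_j \not\equiv 0$ (otherwise $u$ would factor through a nontrivial branched cover), and applying the second half of Theorem~\ref{thm:representation} to the pair $(u,\, u \circ R_{\omega_j})$ yields
$$
F_j(z) = z^{k + \ell_j} C_j + |z|^{k + \ell_j} r_j(z), \qquad C_j \ne 0, \quad \ell_j \in \NN, \quad r_j(z) \to 0.
$$
I would then define $\delta(u, 0) := \tfrac{1}{2} \sum_{j=1}^{k-1} (k + \ell_j - 1)$, noting that $k$ and the orders $\ell_j$ depend only on the germ of $u$ at $0$. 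The substitution $z \mapsto \omega_j z$ gives $F_{k-j}(\omega_j z) = -F_j(z)$, hence $\ell_j = \ell_{k-j}$; when $k$ is even, the unpaired case $j = k/2$ has $F_{k/2}$ an odd function of $z$, so $k + \ell_{k/2} - 1$ is even. Thus $\delta(u, 0) \in \ZZ_{\ge 0}$, and since each $\ell_j \ge 1$ one obtains the required bound $\delta(u, 0) \ge k(k-1)/2$.

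Next I would construct the perturbation. Fix $\epsilon_0 > 0$ with $\overline{\DD}_{\epsilon_0} \subset \uU$ small enough that the expansions above control the $F_j$, fix a smooth cutoff $\chi : \DD \to [0, 1]$ with $\chi \equiv 1$ on $\DD_{\epsilon_0/2}$ and $\supp \chi \subset \DD_{\epsilon_0}$, and for small $\epsilon > 0$ set
$$
u_\epsilon(z) := \bigl(z^k,\ \hat{u}(z) + \epsilon \chi(z) z\bigr).
$$
Then $u_\epsilon$ matches $u$ outside $\DD_{\epsilon_0}$ and at $z=0$, is $C^\infty$-close to $u$, and has $du_\epsilon(0)(\partial_x) = (0, \epsilon) \ne 0$, hence is immersed at $0$; elsewhere it is immersed by $C^1$-closeness to $u$. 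For the symplectic condition I would, in advance, adjust the coordinates on $M$ by a linear change making $J|_{u(0)}$ equal the standard complex structure $J_0$ on $\CC^2$. Then at $z = 0$ we have $\partial_t u_\epsilon(0) = J_0\, \partial_s u_\epsilon(0)$, and the taming inequality gives $u_\epsilon^* \omega_0 > 0$ at $0$; continuity extends this to a neighborhood of $0$, and outside that neighborhood positivity is inherited from $u^* \omega_0 > 0$ via $C^1$-closeness. Finally, a pair $(z_1, z_2)$ with $z_1 \ne z_2$ in $\DD_{\epsilon_0/2}$ and $u_\epsilon(z_1) = u_\epsilon(z_2)$ must satisfy $z_2 = \omega_j z_1$ for some $j$, and the second coordinate equation reduces (dividing by $z_1$) to
$$
\frac{F_j(z_1)}{z_1} = \epsilon(\omega_j - 1), \qquad z_1 \ne 0.
$$
The left-hand side has leading term $z_1^{k + \ell_j - 1} C_j$ with remainder $o(|z_1|^{k + \ell_j - 1})$, hence winds $k + \ell_j - 1$ times around $0$ on small circles; a standard degree argument produces, for generic small $\epsilon$, exactly $k + \ell_j - 1$ simple solutions $z_1$. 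Each contributes a positive local intersection index by a continuity argument from the positivity of intersections for $J$-holomorphic curves (Theorem~\ref{thm:positivity}) applied to the limit $u$. Bubbling of additional intersections into the transition annulus is ruled out by injectivity of $u$ and $C^0$-closeness of $u_\epsilon$ to $u$. Each geometric double point is represented by two ordered pairs under the pairing $j \leftrightarrow k-j$, so $\tfrac{1}{2} \sum \inter(u_\epsilon, z; u_\epsilon, \zeta)  = \tfrac{1}{2} \sum_{j=1}^{k-1}(k + \ell_j - 1) = \delta(u, 0)$.

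The principal technical obstacle I expect lies in the simultaneous normalization of coordinates: Theorem~\ref{thm:representation} adapts the coordinates to $u$ but says nothing about $\omega_0$ or $J$, and a priori the linear adjustment used to arrange $J|_{u(0)} = J_0$ may not be compatible with keeping $u$ in the exact form $(z^k, \hat{u}(z))$ with $\hat{u}$ vanishing to order $k+1$. Verifying that these normalizations can be combined — or alternatively using a perturbation whose differential at $0$ spans a $J|_{u(0)}$-complex line so that taming applies directly — is the step requiring the most care. All other parts of the argument, namely the winding-number count and the positivity of individual intersection indices, are straightforward consequences of the local theory already developed in this appendix.
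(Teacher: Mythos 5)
Your construction is the same as the paper's (same normal form from Theorem~\ref{thm:representation}, same perturbation $(z^k,\hat u(z)+\epsilon\chi(z)z)$, same count $\tfrac12\sum_j(k+\ell_j-1)$ of double points via the equations $F_j(z)=\epsilon(\omega_j-1)z$), but there is a genuine gap at condition (2), the verification that $u_\epsilon^*\omega_0>0$. Your argument is: positivity at $z=0$ plus continuity gives positivity on a neighborhood of $0$, and elsewhere positivity is ``inherited from $u^*\omega_0>0$ via $C^1$-closeness.'' This does not close up. For fixed small $\epsilon$, $C^1$-closeness to $u$ controls the tangent planes of $u_\epsilon$ only where the perturbation of the differential (of size $\sim\epsilon$) is small compared with the injectivity modulus of $du$, i.e.\ roughly on $\{|z|^{k-1}\gtrsim \epsilon\}$; in the complementary punctured region the planes spanned by $du_\epsilon$ are \emph{not} close to those of $u$, and $u^*\omega_0$ itself degenerates there like $|z|^{2(k-1)}$, so closeness of the maps proves nothing. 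Meanwhile the neighborhood of $0$ furnished by continuity depends on $\epsilon$ and shrinks as $\epsilon\to 0$, and nothing in your argument guarantees that the two regions overlap. This is exactly the subtlety the paper isolates: since $du(0)=0$, the change of tangent planes under the perturbation cannot be treated as a $C^0$-small perturbation. The paper's fix is quantitative: compare $du_\epsilon$ not with $du$ but with $dP_\epsilon$, where $P_\epsilon(z)=(z^k,z^{k+\ell}C+\epsilon z)$ is genuinely holomorphic; one has $\|du_\epsilon(z)-dP_\epsilon(z)\|\le c_1|z|^{k+\ell-1}$ and $\Inj(dP_\epsilon(z))\ge c_2|z|^{k-1}$ with constants independent of $\epsilon$, so by Lemmas~\ref{lemma:Inj} and~\ref{lemma:nearlyComplex} the planes $\im du_\epsilon(z)$ stay within $O(|z|^\ell)$ of complex lines uniformly in $\epsilon$, hence are $\omega_0$-symplectic on a disk of $\epsilon$-independent radius; only outside that disk does one fall back on $C^1$-closeness to~$u$. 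Some estimate of this kind (your sketch in the last paragraph gestures at it but never carries it out) is indispensable.

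Two smaller points. First, the obstacle you flag as principal --- reconciling the normal-form coordinates with a normalization $J|_{u(0)}=J_0$ --- is not really an issue: the coordinates produced in the proof of Theorem~\ref{thm:representation} already have $J$ standard at the origin, and after shrinking the disk $\omega_0$ tames $i$ near $u(0)$ by openness of the taming condition; the real difficulty is the one described above. Second, your claim that each solution of $F_j(z)=\epsilon(\omega_j-1)z$ yields a \emph{positive} intersection, ``by continuity from positivity of intersections for the limit $u$,'' is unjustified: $u_\epsilon$ is not $J$-holomorphic, and individual double points may well count negatively. Likewise ``exactly $k+\ell_j-1$ simple solutions'' should be ``signed count equal to $k+\ell_j-1$'' (for generic $\epsilon$ all zeroes simple, by the Sard argument of Exercise~\ref{EX:trickySard}). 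Neither issue damages the statement, since \eqref{eqn:deltaLocal} is a signed sum and the winding/degree argument gives the algebraic count directly, but the appeal to Theorem~\ref{thm:positivity} should be dropped.
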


Our proof will show in fact that the tangent spaces spanned by the
perturbation $u_\epsilon$ can be arranged to be uniformly close 
to $i$-complex subspaces (or equivalently $J$-complex subspaces,
since $J$ and $i$ may also be assumed uniformly close in a small\index{symplectically immersed}
enough neighborhood of~$u(0)$).  This implies that it is a symplectic immersion
without loss of generality for any given $\omega_0$ taming~$J$,
as the condition of being a symplectic subspace
is open.  In practice, the crucial point in applications is
that the complex structure on the bundle $(u_\epsilon^*TM,J)$ 
admits a homotopy supported near~$0$ to 
a new complex structure for which
$\im d u_\epsilon$ becomes a complex subbundle---in this way we can keep
control over the $c_1$ term in the adjunction formula.
The subtlety in the proof is that
the change in tangent subspaces when perturbing from $u$ to $u_\epsilon$ 
cannot be understood as a $C^0$-small perturbation if $du(0)=0$.
Our strategy will be to show that the tangent spaces
spanned by $du_\epsilon$ are in fact $C^0$-close to the tangent spaces
spanned by \emph{another} map which is a holomorphic immersion.
In order to make this notion precise, we need a practical way of measuring
the ``distance'' between two subspaces of a vector space, in particular for
the case when both subspaces arise as images of injective linear maps.

\begin{defn}
\label{defn:distSubspaces}
Fix the standard Euclidean norm on $\RR^n$.  Given two subspaces
$V, W \subset \RR^n$ of the same positive dimension, define
$$
\dist(V,W) := \max_{v \in V, |v|=1} \dist(v,W) :=
\max_{v \in V, |v|=1} \min_{w \in W} |v - w|.
$$
\end{defn}
\begin{defn}
\label{defn:Inj}
The \defin{injectivity modulus}\index{injectivity modulus}
of a linear map $A : \RR^k \to \RR^n$ is
$$
\Inj(A) = \min_{v \in \RR^k \setminus \{0\}} \frac{|Av|}{|v|} \ge 0.
$$
\end{defn}
Clearly $\Inj(A) > 0$ if and only if $A$ is injective.

\begin{lemma}
\label{lemma:Inj}
For any pair of injective linear maps $A , B : \RR^k \to \RR^n$,
$$
\dist\left(\im A, \im B \right) \le \frac{| A - B |}{\Inj(A)}.
$$
\end{lemma}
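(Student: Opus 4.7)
The plan is to unwind both definitions and exhibit, for each unit vector $v \in \im A$, an explicit candidate vector $w \in \im B$ whose distance to $v$ is small. The key observation is that $\Inj(A)$ provides a quantitative lower bound on how large a preimage under $A$ can be: if $v = Au$ with $|v|=1$, then $\Inj(A) \cdot |u| \le |Au| = 1$ forces $|u| \le 1/\Inj(A)$.

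First I would fix an arbitrary unit vector $v \in \im A$ and choose $u \in \RR^k$ with $Au = v$; such a $u$ exists because $v \in \im A$, and it is unique because $A$ is injective. By the bound above, $|u| \le 1/\Inj(A)$. Then I would use $w := Bu \in \im B$ as the comparison vector, estimating
\[
|v - w| = |Au - Bu| = |(A-B)u| \le \|A - B\| \cdot |u| \le \frac{\|A - B\|}{\Inj(A)}.
\]
Since this $w$ is a specific element of $\im B$, it follows that $\min_{w' \in \im B} |v - w'| \le \|A-B\|/\Inj(A)$. Taking the maximum over all unit vectors $v \in \im A$ then yields the claimed inequality.

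There is essentially no obstacle here: the estimate is a one-line computation once the candidate $w = Bu$ is identified, and the only subtlety is the observation that injectivity of $A$ together with the definition of $\Inj(A)$ controls $|u|$ in terms of $|v|$. No compactness, dimension, or genericity arguments are needed, and the bound holds uniformly for all pairs of injective maps.
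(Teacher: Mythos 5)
Your proof is correct and is essentially the same argument as the paper's: both compare the unit vector $Au/|Au|$ in $\im A$ with the candidate $Bu/|Au| \in \im B$ and use $\Inj(A)$ to bound the size of the preimage. The only difference is cosmetic (you parametrize by unit vectors in $\im A$, the paper by nonzero vectors in the domain and then normalizes).
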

\begin{proof}
Pick any nonzero vector $v \in \RR^n$.  Then $Av \ne 0$ since $A$ is injective,
and we have
\begin{equation*}
\begin{split}
\dist\left(\frac{Av}{|Av|} , \im B \right) &= 
\min_{w \in \RR^k} \left| A \frac{v}{|Av|} - Bw \right| \le 
\left| A \frac{v}{|Av|} - B \frac{v}{|Av|} \right| \\
&\le | A - B | \frac{|v|}{|Av|} \le \frac{| A - B |}{\Inj(A)}.
\end{split}
\end{equation*}
\end{proof}

\begin{lemma}
\label{lemma:nearlyComplex}
Given a symplectic form $\omega_0$ on $\CC^2$ taming~$i$,
there exists $\epsilon > 0$ such that if $V \subset \CC^2$ is a
complex $1$-dimensional subspace, then all real $2$-dimensional
subspaces $W \subset \CC^2$ satisfying $\dist(V,W) < \epsilon$ are
$\omega_0$-symplectic.\index{symplectically immersed}
\end{lemma}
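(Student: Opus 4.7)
The plan is to exploit compactness of the space of complex $1$-dimensional subspaces of $\CC^2$, which can be identified with $\CP^1$, together with the fact that the taming condition makes every complex line a symplectic plane. The core observation is that for any complex line $V \subset \CC^2$ and any unit vector $v \in V$, the pair $(v, iv)$ is an orthonormal basis of $V$ (with respect to the standard Hermitian metric), and the taming condition gives $\omega_0(v, iv) > 0$. Defining $\sigma(V) := \omega_0(v, iv)$ for such a choice gives a continuous positive function on the compact space $\CP^1$, so there is a uniform lower bound $c > 0$ with $\sigma(V) \geq c$ for all complex lines~$V$.

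Next I would pass from $V$ to a nearby real $2$-plane $W$ via orthogonal projection. Writing $P_W : \CC^2 \to W$ for the orthogonal projection, the definition of $\dist(V,W)$ gives $|u - P_W u| \leq \dist(V,W) \cdot |u|$ for every $u \in V$. Thus whenever $\dist(V,W) < \epsilon < 1$, the restriction $P_W|_V : V \to W$ satisfies $|P_W u| \geq \sqrt{1 - \epsilon^2}\,|u|$, which (since both spaces are $2$-dimensional) makes it a linear isomorphism. I would then take the basis $(v, iv)$ of $V$ described above and consider $(P_W v, P_W(iv))$, which is a basis of~$W$.

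The main computation is to estimate
\begin{equation*}
\omega_0(P_W v, P_W(iv)) - \omega_0(v, iv) = -\omega_0(v - P_W v, iv) - \omega_0(P_W v, iv - P_W(iv)),
\end{equation*}
using bilinearity. Each term on the right is bounded in absolute value by $\|\omega_0\|\cdot \epsilon$ (with $\|\omega_0\|$ denoting the operator norm of $\omega_0$ viewed as an antisymmetric bilinear form on the Euclidean space $\CC^2$). Consequently $|\omega_0(P_W v, P_W(iv))| \geq c - 2\|\omega_0\|\,\epsilon$, which is strictly positive provided $\epsilon < c/(2\|\omega_0\|)$. Since $(P_W v, P_W(iv))$ spans $W$, the nonvanishing of $\omega_0$ on this basis implies that $\omega_0|_W$ is nondegenerate, and $W$ is $\omega_0$-symplectic.

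The step I expect to require the most care is the uniformity in~$V$: one needs to confirm that the lower bound $c$ can be chosen independently of $V \in \CP^1$, and that the constant $\|\omega_0\|$ in the projection estimate is similarly independent of the choice of $(v, iv)$. Both follow from compactness of $\CP^1$ together with the fact that $(v, iv)$ is always an orthonormal pair, so in practice this is routine and only the asymmetry of the function $\dist(V, W)$ (taking a max over unit vectors in $V$ but not in $W$) is a minor nuisance — but since we only need that $P_W|_V$ is injective, and this follows directly from $\dist(V,W) < 1$, the asymmetry causes no real difficulty.
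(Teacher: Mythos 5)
Your argument is correct and is essentially the approach the paper intends: the lemma is left there as an exercise whose hint is precisely the compactness of $\CP^1$, and you have filled it in with the natural quantitative estimate via orthogonal projection $P_W|_V$. One cosmetic simplification: rather than worrying whether $\sigma(V):=\omega_0(v,iv)$ is well defined on $\CP^1$ (it is, since multiplication by a unit complex scalar preserves the restriction of $\omega_0$ to $V$), you can simply set $c:=\min_{|v|=1}\omega_0(v,iv)$ over the compact unit sphere $S^3\subset\CC^2$, which gives the uniform lower bound directly; note also that the nonvanishing of $\omega_0(P_W v, P_W(iv))$ by itself already forces these two vectors to be linearly independent, so the separate injectivity step for $P_W|_V$ is not strictly needed.
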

\begin{exercise}
\label{EX:nearlyComplex}
Prove the lemma.  \textsl{Hint: $\CP^1$ is compact.}
\end{exercise}

\begin{proof}[Proof of Proposition~\ref{prop:critical}]
By Theorem~\ref{thm:representation}, we can assume after choosing suitable 
$C^1$-smooth coordinates
near $0 \in \DD$ and smooth coordinates near $u(0) \in M$ that
$$
u(z) = (z^k, \widehat{u}(z)) \in \CC^2
$$
for some integer $k \ge 2$, where the almost complex structure $J$ matches $i$
at~$0 \in \CC^2$, and $\widehat{u}$ is a map $\DD_\rho \to \CC$ of class $C^1$
on a disk of some radius $\rho > 0$, such that the other branches 
$$
u_j(z) := u(e^{2\pi i j/k}z) = (z^k , \widehat{u}_j(z)), \qquad
\widehat{u}_j(z) := \widehat{u}(e^{2\pi i j/k}z),
$$
for $j=1,\ldots,k-1$ are related by
\begin{equation}
\label{eqn:branchy}
\widehat{u}_j(z) - \widehat{u}(z) = z^{k + \ell_j} C_j + |z|^{k+\ell_j} r_j(z)
\end{equation}
for some $\ell_j \in \NN$, $C_j \in \CC \setminus \{0\}$ and
$r_j : \DD_\rho \to \CC$ with $r_j(z) \to 0$ as $z \to 0$.  Here we've
used the assumption that $u$ is injective in order to conclude that
$\widehat{u}_j - \widehat{u}$ is not identically zero.  By shrinking $\rho > 0$
if necessary, we can also assume $u$ is embedded on $\DD_\rho \setminus \{0\}$,
and that the symplectic form $\omega_0$, which tames $J$ by assumption,
also tames~$i$ on some neighborhood of $u(\DD_\rho)$.
Fix a smooth cutoff
function $\beta : \DD_\rho \to [0,1]$ that equals~$1$ on $\DD_{\rho/2}$ 
and has compact support in the interior.  Then for $\epsilon \in \CC$ sufficiently 
close to~$0$, consider the $C^1$-close perturbation
$$
u_\epsilon(z) := (z^k, \widehat{u}(z) + \epsilon \beta(z) z),
$$
which satisfies $u_\epsilon(0) = 0$ and
is immersed if $\epsilon \ne 0$.  Since $u$ is embedded on
$\DD_\rho \setminus \DD_{\rho/2}$, we may assume
for $|\epsilon|$ sufficiently small that $u_\epsilon$
has no self-intersections outside of the region where $\beta \equiv 1$.
Then a self-intersection $u_\epsilon(z) = u_\epsilon(\zeta)$ with 
$z \ne \zeta$ occurs wherever
$\zeta = e^{2 \pi ij/k}z \ne 0$ for some $j=1,\ldots,k-1$ and
$\widehat{u}(z) + \epsilon z = \widehat{u}_j(z) + \epsilon e^{2\pi i j/k}z$, which by
\eqref{eqn:branchy} means
$$
z^{k + \ell_j} C_j + |z|^{k+\ell_j} r_j(z) + \epsilon \left( e^{2\pi i j/k} - 1\right) z = 0.
$$
Assume $\epsilon \in \CC \setminus \{0\}$ is chosen generically so that the
zeroes of this function are all simple (see Exercise~\ref{EX:trickySard} below).
Then each zero other than the ``trivial'' solution at $z=0$ 
represents a transverse (positive or negative) self-intersection 
of $u_\epsilon$, and the algebraic count of these (discounting the trivial solution) 
for $|\epsilon|$ sufficiently
small is $k + \ell_j - 1 \ge k$.  Adding these up for all $j=1,\ldots,k-1$, we 
obtain 
\begin{equation}
\label{eqn:deltaLocal2}
\delta(u,0) :=
\frac{1}{2}\sum_{(z,\zeta)} \inter(u_\epsilon,z \,;\, u_\epsilon,\zeta) = 
\frac{1}{2} \sum_{j=1}^{k-1} (k + \ell_j - 1) \ge \frac{1}{2} k(k-1).
\end{equation}

It remains to show that $u_\epsilon$ satisfies $u_\epsilon^*\omega_0 > 0$,
which is equivalent to showing that $\im du_\epsilon(z) \subset \CC^2$ is an
$\omega_0$-symplectic subspace for all~$z$.
By Theorem~\ref{thm:representation}, there exist constants
$\ell \in \NN$ and $C \in \CC \setminus \{0\}$ such that
\begin{equation}
\label{eqn:hatu}
\widehat{u}(z) = z^{k+\ell} C + o(|z|^{k+\ell}),
\end{equation}
and we claim that the formula
\begin{equation}
\label{eqn:dhatu}
d\widehat{u}(z) = (k+\ell) z^{k+\ell-1} C + o(|z|^{k+\ell-1})
\end{equation}
also holds.  If $\widehat{u}$ were smooth, this would follow immediately
from \eqref{eqn:hatu} via Taylor's theorem, but we have to work a little bit
harder since $\widehat{u}$ is only of class~$C^1$.  Recall that
$\widehat{u}$ is a composition of the form $\widehat{u} = u_2 \circ \varphi$,
where we can take $\varphi : \DD_\rho \to \DD$ to be a $C^1$-smooth
local diffeomorphism with $\varphi(0)=0$ and $d\varphi(0) = \1$, and
$u_2 : \DD_\epsilon \to \CC$ is a map of class~$C^\infty$, i.e.~the
second coordinate of the original $J$-holomorphic curve before it was
non-smoothly reparametrized.  Since $\varphi(z) = z + o(|z|)$ and
$\varphi^{-1}(z) = z + o(|z|)$, we can write $\varphi^{-1}(z) = z + |z|\cdot r(z)$
with $\lim_{z \to 0}r(z)=0$ and write \eqref{eqn:hatu} as
$\widehat{u}(z) = z^{k+\ell}C + |z|^{k+\ell}R(z)$ with
$\lim_{z \to 0}R(z)=0$, implying
\begin{equation*}
\begin{split}
u_2(z) &= \widehat{u}(z + |z|\cdot r(z)) = \left( z + |z|\cdot r(z)\right)^{k+\ell} C
+ \left| z + |z|\cdot r(z)\right|^{k+\ell} R(z + |z|\cdot r(z)) \\
&= z^{k+\ell} C + o(|z|^{k+\ell}).
\end{split}
\end{equation*}
Now since $u_2$ is smooth, this expression can be interpreted as saying that
$z^{k+\ell}C$ is the lowest-order
nontrivial term in its Taylor series, and we can then draw a similar conclusion
for $du_2$: namely for $C' := (k+\ell)C$
and a function
$R'(z) \in \CC$ with $\lim_{z \to 0} R'(z) = 0$, we have
$$
du_2(z) = z^{k+\ell-1} C' + o(|z|^{k+\ell-1}) =
z^{k+\ell-1} C' + |z|^{k+\ell-1} R'(z).
$$
Finally, reverse the
process: writing $\varphi(z) = z + |z|\cdot r'(z)$ with 
$\lim_{z \to 0} r'(z) = 0$, the relation \eqref{eqn:dhatu} follows from
\begin{equation*}
\begin{split}
d\widehat{u}(z) &= du_2(z + |z|\cdot r'(z)) \circ d\varphi(z) \\
&= \left[ \left( z + |z| \cdot r'(z)\right)^{k+\ell-1} C' +
\left| z + |z| \cdot r'(z)\right|^{k+\ell-1} R'(z)\right] \circ d\varphi(z)\\
&= \left[ \left( z + |z| \cdot r'(z)\right)^{k+\ell-1} C' +
\left| z + |z| \cdot r'(z)\right|^{k+\ell-1} R'(z)\right] \\
&\qquad + \left[\left( z + |z| \cdot r'(z)\right)^{k+\ell-1} C' 
 + \left| z + |z| \cdot r'(z)\right|^{k+\ell-1} R'(z)\right] \circ \left( d\varphi(z) - d\varphi(0) \right) \\
&= z^{k+\ell-1} C' + o(|z|^{k+\ell-1}),
\end{split}
\end{equation*}
where the existence of a suitable remainder function depends on the fact
that $d\varphi(z) - d\varphi(0)$ is a continuous function of $z$ that
vanishes at $z=0$.

We would now like to compare $u_\epsilon$ with the holomorphic polynomial
$$
P_\epsilon : \DD_\rho \to \CC^2 : z \mapsto (z^k, z^{k+\ell} C
+ \epsilon z),
$$
which, due to \eqref{eqn:dhatu}, satisfies
$$
du_\epsilon(z) - dP_\epsilon(z) = |z|^{k+\ell-1} R(z)
$$
for a remainder term $R(z) \in \CC^2$ that satisfies
$\lim_{z \to 0} R(z) = 0$ and does not depend on~$\epsilon$.
Abbreviating $A_\epsilon(z) := dP_\epsilon(z)$ and
$B_\epsilon(z) := du_\epsilon(z)$, this gives an estimate of the form
$$
| A_\epsilon(z) - B_\epsilon(z) | \le c_1 |z|^{k+\ell-1}
$$
for some constant $c_1 > 0$ independent of~$\epsilon$.  
Computing $dP_\epsilon(0)$, we find similarly
a constant $c_2 > 0$ independent of~$\epsilon$ such that
$$
| A_\epsilon(z) v | \ge c_2 |z|^{k-1} |v| \quad\text{ for all $v \in \CC$},
$$
thus $\Inj(A_\epsilon(z)) \ge c_2 |z|^{k-1}$, and
$$
\frac{| A_\epsilon(z) - B_\epsilon(z) |}{\Inj(A_\epsilon(z))} \le c_3 |z|^\ell
$$
for some constant $c_3 > 0$ independent of~$\epsilon$.  
Now since $P_\epsilon$ is holomorphic
(for the standard complex structure) for all $\epsilon$, $\im A_\epsilon(z) \subset \CC^2$
is always complex linear, so the above estimates imply together with
Lemmas~\ref{lemma:Inj} and~\ref{lemma:nearlyComplex} that for a sufficiently small radius
$\rho_0 > 0$, the images of $du_\epsilon(z)$ for all $z \in \DD_{\rho_0} \setminus \{0\}$
and $\epsilon \in \DD_{\rho_0}$ are $\omega_0$-symplectic.  This is also true for
$z=0$ if $\epsilon \ne 0$, since then $du_\epsilon(0) = dP_\epsilon(0)$ is
complex linear.

To conclude, fix $\rho_0 > 0$ as above
and choose $\epsilon \in \CC\setminus\{0\}$ sufficiently close to~$0$ so
that outside of $\DD_{\rho_0}$, $u_\epsilon$ is $C^1$-close enough to $u$ for its
tangent spaces to be $\omega_0$-symplectic (recall that $J$ is also
$\omega_0$-tame).  The previous paragraph then
implies that the tangent spaces of $u_\epsilon$ are $\omega_0$-symplectic
everywhere.
\end{proof}

\begin{exercise}
\label{EX:deltaIndependent}
Verify that the formula obtained in \eqref{eqn:deltaLocal2} for
$\delta(u,0)$ does not depend on any choices.
\end{exercise}

\begin{exercise}
\label{EX:trickySard}
Assume $f : \uU \to \CC$ is a $C^1$-smooth map on a domain
$\uU \subset \CC$ containing~$0$, with $f(0)=0$ and $df(0) = 0$.
Show that for almost every $\epsilon \in \CC$, the map
$f_\epsilon : \uU \to \CC : z \mapsto f(z) + \epsilon z$ has~$0$ as a
regular value.  \textsl{Hint: Use the implicit function theorem to show
that the set}
$$
X := \{ (\epsilon,z) \in \CC \times (\uU \setminus \{0\})\ |\ 
f_\epsilon(z) = 0 \}
$$
\textsl{is a smooth submanifold of $\CC^2$, and a point $(\epsilon,z) \in X$ is
regular for the projection $\pi : X \to \CC : (\epsilon,z) \mapsto \epsilon$
if and only if $z$ is a regular point of~$f_\epsilon$.  Then apply Sard's
theorem to~$\pi$.}\footnote{Note that while Sard's theorem is often stated only for
$C^\infty$-smooth maps, it is valid more generally for continuously differentiable
maps $f : M \to N$ of class $C^{m-n+1}$ for $m := \dim M$ and $n := \dim N$;
see \cite{Sard}.}
\end{exercise}

\begin{exercise}
\label{EX:noUpperBound2}
Find examples to show that the bound $\delta(u,0) \ge \frac{k(k-1)}{2}$ in
Proposition~\ref{prop:critical} is sharp, and that there is no similar upper bound
for $\delta(u,0)$ in terms of~$k$.  \textsl{(Compare Exercise~\ref{EX:noUpperBound}.)}\index{local singularity index|)}\index{holomorphic curve!local singularity index at a point|)}
\end{exercise}

\chapter{A quick survey of Siefring's intersection theory}
\label{app:reference}

\minitoc
\vspace{12pt}

This\CUP{This material will be published by Cambridge University
Press as \textsl{Contact 3-Manifolds, Holomorphic Curves and Intersection Theory}
by Chris Wendl. This pre-publication version is
free to view and download for personal use only. 
Not for re-distribution, re-sale or use in derivative works. \copyright Chris Wendl, 2019.}
appendix is meant in part as a survey and also as a quick reference guide
for the intersection theory of punctured holomorphic curves.  Except where
otherwise noted, the proofs of everything stated below are due to 
Siefring \cite{Siefring:intersection}, and the details (modulo proofs of
the relative asymptotic formulas) can be found in Lectures~3
and~4 of these notes.
Since intersection theory has also played a large role in the development
of Hutchings's embedded contact homology (ECH), we will simultaneously take the 
opportunity to clarify some of the connections between Siefring's theory and 
equivalent notions that often appear (sometimes with very different notation) 
in the ECH literature.  For an important word of caution about notational
differences between these notes and \cite{Siefring:intersection},
see Remark~\ref{remark:notation}.

\section{Preliminaries}

Assume $M$ is a closed oriented $3$-manifold with a stable Hamiltonian structure $(\omega,\lambda)$,\index{stable Hamiltonian structure}
i.e.~a $2$-form $\omega$ and $1$-form $\lambda$ that satisfy $d\omega=0$,
$\lambda \wedge \omega > 0$ and $\ker \omega \subset \ker d\lambda$.  (The
reader unfamiliar with or uninterested in stable Hamiltonian structures is free to assume
$(\omega,\lambda) = (d\alpha,\alpha)$ where $\alpha$ is a contact form.)
This data determines an oriented $2$-plane field
$$
\xi = \ker \lambda \subset TM
$$
and a \defin{Reeb vector field}\index{Reeb vector field!of a stable Hamiltonian structure}
$R$ such that 
$$
\omega(R,\cdot) \equiv 0 \quad \text{ and }\quad
\lambda(R) \equiv 1.
$$
We assume throughout the following that all closed orbits of $R$ are nondegenerate.
As mentioned in the footnote to Theorem~\ref{thm:star}, the major
results continue to hold without serious changes if orbits are Morse-Bott,\index{Reeb orbit!Morse-Bott}\index{Morse-Bott Reeb orbits}
as long as homotopies of asymptotically cylindrical maps are required to fix the
asymptotic orbits in place.  There also exists a generalization of the theory
that lifts the latter condition (see \cite{Wendl:automatic}*{\S 4.1} and \cite{SiefringWendl}).

Suppose $\gamma$ is a closed orbit of $R$ and $\tau$ is a choice of 
trivialization of $\xi$ along~$\gamma$.\index{asymptotic trivializations}
The Conley-Zehnder index of $\gamma$ relative to this trivialization will be denoted by
$$
\muCZ^\tau(\gamma) \in \ZZ.
$$
If $\gamma$ has period $T > 0$, then any choice of $\omega$-compatible complex
structure $J$ on $\xi$ and parametrization 
$\gamma : S^1 := \RR / \ZZ \to M$ satisfying $\lambda(\dot{\gamma}) \equiv T$ gives rise to
an $L^2$-symmetric \index{asymptotic operator}\index{Reeb orbit!asymptotic operator of}
$$
\mathbf{A}_\gamma = -J (\nabla_t - T \nabla R) : \Gamma(\gamma^*\xi) \to \Gamma(\gamma^*\xi),
$$
where $\nabla$ is any symmetric connection on $M$ and $\mathbf{A}_\gamma$ 
does not depend on this choice.  As proved in \cite{HWZ:props2}, 
the nontrivial eigenfunctions of $\mathbf{A}_\gamma$ have winding numbers
(relative to~$\tau$) that depend only on their eigenvalues, defining a
nondecreasing map from the spectrum $\sigma(\mathbf{A}_\gamma) \subset \RR$
to $\ZZ$ that takes every value exactly twice (counting multiplicity of
eigenvalues).  One can therefore define the integers\index{Reeb orbit!extremal winding numbers of}\index{extremal winding numbers of a Reeb orbit}
\begin{equation*}
\begin{split}
\alpha_-^\tau(\gamma) &= \max \left\{ \wind^\tau(e)\ \big|\ 
\text{$\mathbf{A}_\gamma e = \lambda e$ with $\lambda < 0$} \right\},\\
\alpha_+^\tau(\gamma) &= \min \left\{ \wind^\tau(e)\ \big|\ 
\text{$\mathbf{A}_\gamma e = \lambda e$ with $\lambda > 0$} \right\},\\
p(\gamma) &= \alpha_+^\tau(\gamma) - \alpha_-^\tau(\gamma).
\end{split}
\end{equation*}
Since $\gamma$ is nondegenerate, $0$ is not an eigenvalue of $\mathbf{A}_\gamma$,
hence the \defin{parity}\index{Reeb orbit!parity of}\index{parity of a Reeb orbit}
$p(\gamma)$ is either $0$ or~$1$, and
\cite{HWZ:props2} proves the relation
$$
\muCZ^\tau(\gamma) = 2 \alpha_-^\tau(\gamma) + p(\gamma) = 2 \alpha_+^\tau(\gamma) - p(\gamma).
$$
For this reason, the number $\alpha_-^\tau(\gamma)$ sometimes appears in the
literature as $\lfloor \muCZ^\tau(\gamma) / 2 \rfloor$.

Given a closed Reeb orbit $\gamma$, we denote its $k$-fold cover for $k \in \NN$
by~$\gamma^k$.  

\begin{remark}
\label{remark:ellipticHyperbolic}
The parity of Reeb orbits is closely related to the
dichotomy between \emph{elliptic} and \emph{hyperbolic} orbits.  Recall that
since the linearized Reeb flow restricts to an $\omega$-symplectic map on the 
transverse planes $\xi$ along a periodic orbit~$\gamma$, the product of the eigenvalues
of this map is always~$1$.  We call $\gamma$ \defin{elliptic}\index{Reeb orbit!elliptic}\index{elliptic orbit}
if the eigenvalues
are a conjugate pair of non-real numbers on the unit circle, and
\defin{hyperbolic}\index{Reeb orbit!hyperbolic}\index{hyperbolic orbit}
if they are both real but distinct from $\pm 1$.  
(We exclude eigenvalues $\pm 1$ from this dichotomy; in this case either
$\gamma$ or $\gamma^2$ is degenerate.)  If $\gamma$ is an orbit whose covers
are all nondegenerate, then one sees by taking powers of the eigenvalues that
$\gamma$ is elliptic if and only if all of its covers are elliptic.
One can show moreover that $\gamma$ has even parity
if and only if both of the eigenvalues are positive, thus even orbits are
always hyperbolic, and the same applies to all of their covers
(see Exercise~\ref{EX:gcd}).  It follows that elliptic orbits always have
odd parity.  Hyperbolic orbits with odd parity are sometimes also called
\defin{negative hyperbolic}\index{Reeb orbit!negative hyperbolic}\index{negative hyperbolic orbit}
orbits; their even covers have even parity
and are referred to in the literature on symplectic field theory as
\emph{bad orbits}, for reasons having to do with orientations of moduli
spaces (see e.g.~\cite{Wendl:SFT}*{Chapter~11}).
\end{remark}

We say that an almost complex structure $J$ on $\RR \times M$ is
\defin{compatible}\index{almost complex structure!compatible with a stable Hamiltonian structure}\index{compatible almost complex structure}
with the stable Hamiltonian structure
$(\omega,\lambda)$ if
\begin{itemize}
\item $J(\p_r) = R$ for the coordinate vector field $\p_r$ in the $\RR$-direction;
\item $J(\xi) = \xi$ and $J|_\xi$ is compatible with $\omega|_\xi$;
\item $J$ is invariant under the translation action $(r,p) \mapsto (r + c,p)$
for all $c \in \RR$.
\end{itemize}
More generally, we consider almost complex $4$-manifolds $(\widehat{W},J)$
with \emph{cylindrical ends} as in \cite{SFTcompactness}.\index{cylindrical ends!of an almost complex manifold}\index{almost complex manifold with cylindrical ends}
Concretely,
this means $\widehat{W}$ decomposes into the union of a compact subset with
a positive end $[0,\infty) \times M_+$ and 
a negative end $(-\infty,0] \times M_-$, where $M_\pm$ are closed $3$-manifolds 
equipped with
stable Hamiltonian structures $(\omega_\pm,\lambda_\pm)$ and the
restriction of $J$ to each cylindrical end is compatible with these structures.
This will be our standing assumption about $(\widehat{W},J)$ in the
following.  For a punctured Riemann surface $(\dot{\Sigma},j)$, we
consider proper maps $u : \dot{\Sigma} \to \widehat{W}$ that are
\defin{asymptotically cylindrical}\index{asymptotically cylindrical map}\index{holomorphic curve!asymptotically cylindrical}
in the sense that they approximate
trivial cylinders over closed Reeb orbits near each of their (positive or
negative) non-removable punctures; see \S\ref{sec:punctured} for a more
precise definition of this term in the contact case.

\section{The intersection pairing}

Given the almost complex $4$-manifold $(\widehat{W},J)$ with cylindrical ends
as described above, let $\tau$ denote a
choice of trivialization for the complex line bundles 
$\xi_\pm = \ker\lambda_\pm$
along each simply covered closed Reeb orbit in~$M_\pm$.  This induces a
trivialization of $\xi_\pm$ along \emph{every} closed Reeb orbit by pulling
back along multiple covers.  The choice is arbitrary, but it is necessary in 
order to write down most formulas in the intersection theory,
even though none of the important quantities depend on it.
We assume $u : \dot{\Sigma} = \Sigma \setminus \Gamma_u \to \widehat{W}$ is a smooth asymptotically
cylindrical map with positive and/or negative punctures $\Gamma_u = \Gamma_u^+
\cup \Gamma_u^- \subset \Sigma$, and for each puncture $z \in \Gamma_u$,
let $\gamma_z$ denote corresponding asymptotic Reeb orbit.
We also fix a second such map $v : \dot{\Sigma}' \to \widehat{W}$, denote
its punctures by $\Gamma_v = \Gamma_v^+ \cup \Gamma_v^- \subset \Sigma'$
and use the same notation $\{ \gamma_z \}_{z \in \Gamma_v}$ for its
asymptotic orbits.\footnote{Note that each of the orbits $\gamma_z$ may be
multiply covered, and the covering multiplicity is regarded as part of the
data that defines~$\gamma_z$.}

Given any quantity $q_\pm(\gamma)$ which depends on both a Reeb orbit $\gamma$
and a choice of sign $+$ or~$-$, we will use the shorthand notation
$$
\sum_{z \in \Gamma_u^\pm} q_\pm(\gamma_z) := \sum_{z \in \Gamma_u^+} q_+(\gamma_z) 
+ \sum_{z \in \Gamma_u^-} q_-(\gamma_z).
$$
A similar convention applies to summations over pairs of punctures in
$\Gamma_u \times \Gamma_v$ with matching signs, and this will occur
several times in the following.

The intersection product of two asymptotically cylindrical maps $u$ and $v$ is
a symmetric pairing defined by
\begin{equation}
\label{eqn:productRef}
u * v := u \dotrel_\tau v - 
\sum_{(z,\zeta) \in \Gamma_u^\pm \times \Gamma_v^\pm}
\Omega^\tau_\pm(\gamma_z, \gamma_\zeta) \in \ZZ,
\end{equation}
where the individual terms are defined as follows.

The \defin{relative intersection number}\index{relative intersection number}
$$
u \dotrel_\tau v \in \ZZ
$$
is the algebraic count of intersections between $u$ and a generic perturbation
of $v$ that shifts it by an arbitrarily small positive distance in directions 
dictated by the chosen trivializations $\tau$ near infinity, hence the count is 
finite and depends only on the relative homology classes represnted by 
$u$ and $v$ and the homotopy class of the trivializations~$\tau$.  The
relative intersection number also appears in the
ECH literature and is denoted there by $Q_\tau(u,v)$,
cf.~\cites{Hutchings:index,Hutchings:lectures}.  Note that $u \dotrel_\tau u$
is also well defined, and is sometimes denoted by $Q_\tau(u)$ in ECH.

The integers $\Omega_\pm^\tau(\gamma,\gamma')$ are defined for every pair of
Reeb orbits $\gamma,\gamma'$ and also depend on the trivializations~$\tau$.
They satisfy $\Omega_\pm^\tau(\gamma,\gamma') = 0$ whenever $\gamma$ and
$\gamma'$ are not covers of the same orbit, while for any simply covered
orbit $\gamma$ with integers $k,m \in \NN$,
\begin{equation}
\label{eqn:OmegaRef}
\Omega^\tau_\pm(\gamma^k, \gamma^m) :=
\min\left\{ \mp k \alpha^\tau_\mp(\gamma^m), 
\mp m \alpha^\tau_\mp(\gamma^k) \right\}.
\end{equation}
The dependence on $\tau$ in the $\Omega^\tau_\pm$ terms cancels out the
dependence in $u \dotrel_\tau v$, so that $u * v$ is independent of~$\tau$;
it is determined solely by the relative homology classes of $u$ and $v$ and
their sets of asymptotic orbits.  In particular, it is invariant under
homotopies of $u$ and $v$ through families of smooth asymptotically cylindrical
maps with fixed asymptotic orbits.

If $u$ and $v$ are also $J$-holomorphic and are not covers of the same simple
curve, then we can also write
$$
u * v = u \cdot v + \inter_\infty(u,v),
$$
where both terms are nonnegative: the first denotes the actual algebraic
count of intersections between $u$ and $v$ (of which the asymptotic results
in \cite{Siefring:asymptotics} imply there are only finitely many),
and the second is an asymptotic contribution counting the number of
``hidden'' intersections that may emerge from infinity under a generic
perturbation.  A corollary is that if $u * v = 0$, then
$u$ and $v$ are disjoint unless they cover the same simple curve.
The converse of this is false in general, but one can use Fredholm theory
with exponential weights to show that for generic $J$,
$\inter_\infty(u,v) = 0$ for all simple curves $u$ and $v$ belonging to
some open and dense subsets of their respective moduli spaces.

To write down the asymptotic contribution $\inter_\infty(u,v)$ explicitly,
one must first define its relative analogue $\inter_\infty^\tau(u,v)$,
which depends only on the germ of $u$ and $v$ near infinity and on the 
trivializations~$\tau$.  We have
$$
\inter^\tau_\infty(u,v) = \sum_{(z,\zeta) \in \Gamma^\pm_u \times \Gamma^\pm_v}
\inter^\tau_\infty(u,z \,;\, v,\zeta),
$$
where for each pair of punctures $z \in \Gamma^\pm_u$ and $\zeta \in \Gamma^\pm_v$
with the same sign,
$$
\inter^\tau_\infty(u,z \,;\, v,\zeta) \in \ZZ
$$
is the algebraic count of intersections between $u|_{\uU_z}$ and a generic
perturbation of $v|_{\uU_\zeta}$, with $\uU_z$ and $\uU_\zeta$ chosen to be
suitably small neighborhoods of the respective punctures such that $u|_{\uU_z}$ and
$v|_{\uU_\zeta}$ are disjoint, and the perturbation of $v|_{\uU_\zeta}$
chosen to push it a small positive distance in directions dictated by the trivialization
$\tau$ near infinity.  The fact that this number is well defined depends on
the existence of neighborhoods on which $u$ and $v$ are disjoint, hence it
requires them to be geometrically distinct curves, and of course
$\inter^\tau_\infty(u,z \,;\, v,\zeta) = 0$ whenever the asymptotic orbits
$\gamma_z$ and $\gamma_\zeta$ are disjoint.  If on the other hand
$\gamma_z = \gamma^k$ and $\gamma_\zeta = \gamma^m$ for some simply covered
orbit $\gamma$ and integers $k,m \in \NN$, then $\inter^\tau_\infty(u,z \,;\, v,\zeta)$
can be computed in terms of the relative winding of $v$ about $u$ near
infinity; a precise formula is derived in the discussion surrounding
Equation~\eqref{eqn:wind2}.  Combining this formula with the relative 
asymptotic analysis from \cite{Siefring:asymptotics} then yields the bound
$\inter^\tau_\infty(u,z \,;\, v,\zeta) \ge \Omega^\tau_\pm(\gamma_z,\gamma_\zeta)$,
giving rise to the local asymptotic contribution
$$
\inter_\infty(u,z \,;\, v,\zeta) := \inter_\infty^\tau(u,z \,;\, v,\zeta) - 
\Omega^\tau_\pm(\gamma_z,\gamma_\zeta),
$$
which is independent of $\tau$ and is nonnegative, with equality if and
only if all theoretical bounds on the winding of asymptotic eigenfunctions
controlling the approach of $v$ to $u$ at infinity are achieved.  The geometric
interpretation is that $\inter_\infty(u,z \,;\, v,\zeta)$ is the algebraic
count of intersections between $u$ and $v$ that will appear in neighborhoods
of these two punctures if $u$ and $v$ are perturbed to $J'$-holomorphic
curves for some generic perturbation $J'$ of~$J$.  The total number of
hidden intersections is then
$$
\inter_\infty(u,v) = \sum_{(z,\zeta) \in \Gamma^\pm_u \times \Gamma^\pm_v}
\inter_\infty(u,z \,;\, v,\zeta).
$$

\section{The adjunction formula}

The adjunction formula for a closed simple $J$-holomorphic 
curve $u : \Sigma \to W$ can be written as
$$
[u] \cdot [u] = 2\delta(u) + c_N(u),
$$
where $[u] \cdot [u] \in \ZZ$ denotes the homological self-intersection
number of $[u] \in H_2(W)$, $c_N(u) := c_1([u]) - \chi(\Sigma)$ is the
so-called \emph{normal Chern number}, and $\delta(u)$ is the
algebraic count of double points and critical points, cf.~\eqref{eqn:delta}.
For a simple asymptotically cylindrical $J$-holomorphic
curve $u : \dot{\Sigma} \to \widehat{W}$ with punctures~$\Gamma_u$, 
the formula generalizes to\index{self-intersection number!of asymptotically cylindrical maps}\index{adjunction formula!for punctured holomorphic curves}
\begin{equation}
\label{eqn:adjunctionRef}
u * u = 2\left[ \delta(u) + \delta_\infty(u) \right] + c_N(u) + 
\left[ \bar{\sigma}(u) - \#\Gamma_u \right],
\end{equation}
where $u * u$ is the intersection product defined in \eqref{eqn:productRef}
with $u=v$, and the terms on the right hand side will be explained in a moment.
The most important thing to know about \eqref{eqn:adjunctionRef} is
that the terms $u * u$, $c_N(u)$ and $\bar{\sigma}(u)$ are all homotopy
invariant by definition, implying that $\delta(u) + \delta_\infty(u)$ is
also homotopy invariant, 
while $\bar{\sigma}(u) - \#\Gamma_u$, $\delta(u)$ and $\delta_\infty(u)$
are always nonnegative.  Moreover, as in the closed case,
$\delta(u) = 0$ if and only if $u$ is embedded.
It follows that $\delta(u) + \delta_\infty(u) = 0$ gives a homotopy-invariant
condition guaranteeing that $u$ is embedded.  The converse is false, as
$u$ can be embedded and have $\delta_\infty(u) > 0$, but one can again use
Fredholm theory with exponential weights to show that generically the latter
cannot happen for curves in some open and dense subset of the moduli space.

The \defin{normal Chern number}\index{normal Chern number!of a punctured holomorphic curve}\index{holomorphic curve!normal Chern number of}
is defined in the punctured case by
\begin{equation}
\label{eqn:normalChernRef}
c_N(u) := c_1^\tau(u^*T\widehat{W}) - \chi(\dot{\Sigma}) +
\sum_{z \in \Gamma_u^\pm} \pm \alpha^\tau_\mp(\gamma_z),
\end{equation}
and it depends on the relative homology class of $u$ and the topology of
the domain~$\dot{\Sigma}$, but not on the trivializations~$\tau$.
Here $c_1^\tau(u^*T\widehat{W})$ denotes the \defin{relative first Chern number}\index{relative first Chern number}
of the complex vector bundle $u^*T\widehat{W} \to \dot{\Sigma}$ with
respect to the natural trivializations at infinity induced by~$\tau$.
Recall that if $E \to \dot{\Sigma}$ is a complex line bundle equipped with
a preferred trivialization $\tau_E$ near infinity, one can define
$c_1^{\tau_E}(E) \in \ZZ$ as the algebraic count of zeroes of any generic
section of $E$ that is constant and nonzero with respect to $\tau_E$ near
infinity.  The relative first Chern number of higher rank bundles is then
defined via the direct sum property $c_1^{\tau_E \oplus \tau_F}(E \oplus F)
= c_1^{\tau_E}(E) + c_1^{\tau_F}(F)$.  Since $u^*T\widehat{W}$ has a natural
spliting over the positive/negative cylindrical ends into the direct sum of
a trivial complex line bundle with $\xi_\pm = \ker\lambda_\pm$, $\tau$
naturally induces a trivialization of $u^*T\widehat{W}$ over the ends and
we define $c_1^\tau(u^*T\widehat{W})$ accordingly.  (The same quantity is
often denoted by $c_\tau(u)$ in the ECH literature, 
cf.~\cites{Hutchings:index,Hutchings:lectures}.)  The normal Chern number is often most convenient to
calculate via the formula
\begin{equation}
\label{eqn:2cNref}
2 c_N(u) = \ind(u) - 2 + 2g + \#\Gamma\even,
\end{equation}
where $\ind(u)$ denotes the virtual dimension of the moduli space
containing~$u$ (see \eqref{eqn:indexPunctured}), $g$ is the genus of its domain, and $\Gamma\even \subset
\Gamma_u$ is the set of punctures $z \in \Gamma_u$ that satisfy
$p(\gamma_z) = 0$, i.e.~the Conley-Zehnder index of the corresponding
Reeb orbit is even.  This relation is an easy consequence of the
Fredholm index formula and the usual relations between Conley-Zehnder indices
and the winding numbers $\alpha^\tau_\pm(\gamma)$,
cf.~\eqref{eqn:2cN}.  The proper interpretation of $c_N(u)$ is as a
homotopy-invariant algebraic count of zeroes of the normal bundle of an
immersed perturbation of~$u$, including zeroes that are ``hidden at infinity''
but may emerge under small perturbations of~$u$ as a holomorphic curve.

The term $\bar{\sigma}(u)$ is called the \defin{spectral covering number}\index{spectral covering number}\index{holomorphic curve!spectral covering number of}
and is a sum of terms
$$
\bar{\sigma}(u) := \sum_{z \in \Gamma_u^\pm} \bar{\sigma}_\mp(\gamma_z),
$$
each of which is a positive integer that depends only on the orbit $\gamma_z$
and can be greater than $1$ only if $\gamma_z$ is
multiply covered.  Specifically, for any simply covered orbit $\gamma$ and
$k \in \NN$, $\bar{\sigma}_\pm(\gamma^k)$ is the covering multiplicity of\index{spectral covering number}\index{Reeb orbit!spectral covering number of}\index{covering multiplicity!of an asymptotic eigenfunction}
any of the nontrivial asymptotic eigenfunctions $e$ of $\mathbf{A}_{\gamma^k}$
that satisfy
$\wind^\tau(e) = \alpha^\tau_\pm(\gamma^k)$.  It turns out that the dependence
of $\bar{\sigma}_\pm(\gamma^k)$ on the orbit $\gamma$ is fairly mild, as
one can show that
\begin{equation}
\label{eqn:sigmabarRef}
\bar{\sigma}_\pm(\gamma^k) = \gcd(k, \alpha^\tau_\pm(\gamma^k)),
\end{equation}
cf.~Remark~\ref{remark:covInfty}.  Thus $\bar{\sigma}(u) - \#\Gamma$ vanishes,
for instance, whenever all the asymptotic orbits of $u$ are simply covered.

The \defin{singularity index}\index{singularity index of a simple holomorphic curve}\index{holomorphic curve!singularity index of}
$\delta(u)$ is defined just as in the closed
case, as a signed count of double points of $u$ plus positive contributions
for each critical point, interpreted as the count of double points that
appear near each critical point after an immersed perturbation
(cf.~Lemma~\ref{lemma:critInj}).  The only difference from the closed
case is that since $\dot{\Sigma}$ is noncompact, it is less obvious that
$\delta(u)$ is well defined, but the relative asymptotic results of
\cite{Siefring:asymptotics} imply that double points and critical points
of a simple curve cannot occur near infinity, hence $\delta(u)$ is finite.

The term $\delta_\infty(u)$ is an algebraic count of ``hidden'' double
points, i.e.~it is the number of extra contributions to $\delta(u)$
that will emerge from infinity if $u$ is perturbed to a $J'$-holomorphic
curve for a generic perturbation $J'$ of~$J$.  There are two possible
sources of such hidden double points: first, any pair of distinct punctures
$z,\zeta \in \Gamma_u^\pm$ with the same sign such that the corresponding 
asymptotic orbits $\gamma_z$ and $\gamma_\zeta$ are identical up to 
multiplicity contributes $\inter_\infty(u,z \,;\, u,\zeta)$ as in the
definition of $u * v$.  Note that $\inter_\infty(u,z \,;\, u,\zeta)$ is well defined
as long as $u$ is simple and $z \ne \zeta$, since the two punctures then
have neighborhoods $\uU_z$ and $\uU_\zeta$ such that
$u(\uU_z) \cap u(\uU_\zeta) = \emptyset$.  Additional hidden intersections
can emerge from any single puncture $z$ such that $\gamma_z$ is multiply
covered, since $u$ in the neighborhood of such a puncture has multiple
branches that become arbitrarily close to each other near infinity.  Denoting the
contribution from such punctures by $\delta_\infty(u,z)$, we have
$$
\delta_\infty(u) = \frac{1}{2} \sum_{z,\zeta \in \Gamma^\pm_u,\ z \ne \zeta}
\inter_\infty(u,z \,;\, u,\zeta) + \sum_{z \in \Gamma^\pm_u} \delta_\infty(u,z).
$$
In particular, $\delta_\infty(u) = 0$ whenever all asymptotic orbits of
$u$ are distinct and simply covered, though it can also be zero without
this condition.  As with $\inter_\infty(u,z \,;\, v,\zeta)$, writing down a
precise formula for $\delta_\infty(u,z)$ requires first defining a relative
version that depends on the trivialization~$\tau$: we define
$$
\inter_\infty^\tau(u,z) \in \ZZ
$$
as the algebraic count of intersections between $u|_{\uU_z}$ and a generic
small perturbation of itself, where $\uU_z$ is a neighborhood of~$z$ on
which $u$ is embedded, and the perturbation is chosen to shift $u$ a
small positive distance in directions dictated by $\tau$.
As with $\inter_\infty^\tau(u,z \,;\, v,\zeta)$, one can compute 
$\inter_\infty^\tau(u,z)$ in terms of the winding numbers of asymptotic
eigenfunctions that control the relative approach of different branches
of $u|_{\uU_z}$ to each other near infinity, 
cf.~\eqref{eqn:iuz}.  One derives from this the theoretical bound
$\inter_\infty^\tau(u,z) \ge \Omega_\pm^\tau(\gamma_z)$, where for
any simply covered orbit $\gamma$ and $k \in \NN$,
\begin{equation}
\label{eqn:OmegaSelfRef}
\Omega_\pm^\tau(\gamma^k) := \mp (k-1) \alpha^\tau_\mp(\gamma^k) + 
\left[ \bar{\sigma}_\mp(\gamma^k) - 1 \right].
\end{equation}
The precise definition of $\delta_\infty(u,z)$ is then
$$
\delta_\infty(u,z) := \frac{1}{2} \left[ \inter_\infty^\tau(u,z) -
\Omega_\pm^\tau(\gamma_z) \right],
$$
which is a nonnegative integer and is independent of~$\tau$.

As mentioned in Remark~\ref{remark:braids}, the computation of
$\inter_\infty^\tau(u,z)$ in terms of winding numbers also
leads to an alternative interpretation of it
as the writhe of a braid, which we will say more about in \S\ref{sec:ECH}.
Up to issues of bookkeeping, \eqref{eqn:adjunctionRef} is also equivalent to
the so-called \emph{relative} adjunction formula first written down by
Hutchings, see in particular \cite{Hutchings:index}*{Remark~3.2}.  The
innovation of \cite{Siefring:intersection} was to transform this into a
relation between homotopy-invariant quantities that have geometric meanings
independent of any choice of trivializations.

\section{Covering relations}

We now state a few useful results about multiply covered holomorphic curves
that are not mentioned elsewhere in these
notes, but are easy to prove based on the definitions given above.
The results of the present section are due to the author, and complete proofs
may be found in \cite{Wendl:automatic}*{\S 4.2}.

If $u$ and $v$
are two closed $J$-holomorphic curves in a closed almost complex $4$-manifold
and $\widetilde{u}$ is a $d$-fold multiple cover of~$u$, then the relation
$$
[\widetilde{u}] \cdot [v] = d [u] \cdot [v]
$$
is obvious since $[\widetilde{u}] = d[u] \in H_2(W)$.  Things are
less straightforward in the punctured case because $u * v$ depends on more
than just homology, and e.g.~Exercise~\ref{EX:funReeb} exhibits a
specific scenario in which the $*$-product fails to satisfy the obvious 
analogue of the above relation.  One can still however
prove the following:

\begin{prop}
\label{prop:coverRelation}
Suppose $u$, $\widetilde{u}$ and $v$ are asymptotically cylindrical $J$-holomorphic curves
in $\widehat{W}$ such that $\widetilde{u}$ is a $d$-fold cover of $u$ for
$d \in \NN$.  Then
$$
\widetilde{u} * v \ge d (u * v).
$$
\end{prop}

The proof of this inequality is based
on the formula \eqref{eqn:productRef}, in which the relative intersection
numbers are easily seen to satisfy the straightforward relation 
$\widetilde{u} \bullet_\tau v = d(u \bullet_\tau v)$, thus the tricky
part is to understand what happens to the terms
$\Omega^\tau_\pm(\gamma_z,\gamma_\zeta)$ when each of the orbits
$\gamma_z$ is replaced by a collection of covers of $\gamma_z$ whose
multiplicities add up to~$d$.  The answer is a bit intricate if one aims
to write it down precisely, because the winding numbers $\alpha_\pm^\tau(\gamma)$
do not in general behave linearly with respect to iteration of the orbit,
but for the purposes of the inequality in Proposition~\ref{prop:coverRelation},
the information in the following lemma suffices.  This lemma is 
closely related to Exercise~\ref{EX:gcd}, and it can be derived from
the properties of asymptotic eigenfunctions and their winding numbers
proved in \cite{HWZ:props2} (in particular Theorem~\ref{thm:winding}).

\begin{lemma}
\label{lemma:windRelation}
For every closed Reeb orbit $\gamma$ and every $k \in \NN$, there exist
integers $q_\pm(\gamma;k) \in \{0,\ldots,k-1\}$ such that
$$
\alpha_\pm(\gamma^k) = k \alpha_\pm(\gamma) \mp q_\pm(\gamma;k).
$$
\end{lemma}

It is also sometimes useful to have a similar covering relation for the
normal Chern number, since the latter appears in the adjunction formula.
Recall that if $\varphi : (\Sigma',j') \to (\Sigma,j)$ is a $d$-fold
holomorphic branched cover, then the Riemann-Hurwitz formula gives
\begin{equation}
\label{eqn:RiemannHurwitz}
-\chi(\Sigma') + d \chi(\Sigma) = Z(d\varphi),
\end{equation}
where $Z(d\varphi)$ denotes the algebraic count of branch points of~$\varphi$,
$$
Z(d\varphi) := \sum_{z \in d\varphi^{-1}(0)} \ord(d\varphi;z) \ge 0.
$$
One easy proof of this formula views $d\varphi$ as a section of
the line bundle $\Hom_\CC(T\Sigma',\varphi^*T\Sigma)$, whose first Chern
number is the left hand side of \eqref{eqn:RiemannHurwitz}.  If
$u : (\Sigma,j) \to (W,J)$ is a closed $J$-holomorphic curve and
$\widetilde{u} = u \circ \varphi$, this leads to the relation
$$
c_N(\widetilde{u}) = d \cdot c_N(u) + Z(d\varphi).
$$
In the punctured case, one can easily show that \eqref{eqn:RiemannHurwitz}
continues to hold for a branched cover of punctured surfaces, but additional
terms appear in the normal Chern number due to the fact that $\alpha_\pm^\tau(\gamma^d) \ne d \alpha_\pm^\tau(\gamma)$
in general.  As in Proposition~\ref{prop:coverRelation}, the result is then
most easily stated as an inequality.

\begin{prop}
\label{prop:cNrelation}
Suppose $u$ and $\widetilde{u} = u \circ \varphi$ are asymptotically cylindrical
$J$-holomorphic curves in $\widehat{W}$, where $\varphi : \dot{\Sigma}' \to
\dot{\Sigma}$ is a $d$-fold holomorphic branched cover of punctured Riemann
surfaces whose algebraic count of branch points is $Z(d\varphi) \ge 0$.  Then
$$
c_N(\widetilde{u}) \ge d \cdot c_N(u) + Z(d\varphi).
$$
\end{prop}

\begin{remark}
\label{remark:equality}
One can extract from the proofs in \cite{Wendl:automatic}*{\S 4.2} various
conditions to characterize when the inequalities in Propositions~\ref{prop:coverRelation}
and~\ref{prop:cNrelation} are strict or not.  The easiest comes from
the observation that the integers $q_\pm(\gamma;k)$ in Lemma~\ref{lemma:windRelation}
vanish whenever $\gamma$ has even parity.  It follows for instance that
Proposition~\ref{prop:coverRelation} becomes an equality whenever every simple
Reeb orbit that has a cover appearing among the asymptotic orbits of $u$ and $v$
(with the same sign!) is even.  Similarly, Proposition~\ref{prop:cNrelation}
is an equality if all the asymptotic orbits of $u$ are even.
\end{remark}

\section{The intersection product of buildings}

Another topic not mentioned elsewhere in these notes is the extension of
the $*$-pairing to
the \emph{compactified} moduli space $\overline{\mM}_g(\widehat{W},J)$ of
holomorphic buildings defined in \cite{SFTcompactness}.  
Following \cite{Siefring:intersection}, one can
define this in fairly general terms as follows.\index{moduli space!compactification of}\index{compactification of moduli spaces}

If $u \in \overline{\mM}_g(\widehat{W},J)$ and
$v \in \overline{\mM}_{g'}(\widehat{W},J)$ are two nodal $J$-holomorphic
curves in~$\widehat{W}$, i.e.~holomorphic buildings with no upper or lower
levels, then the definition of $u * v$ requires no change from before.
Recall that the domain of a punctured nodal curve $u$ is a possibly disconnected
punctured Riemann surface $\dot{S}$ endowed with a finite set of points
$\Delta \subset \dot{S}$, the \defin{nodal points},\index{nodal points}\index{nodes of a holomorphic curve}\index{holomorphic curve!nodal}
which are grouped into
pairs on which $u$ has matching values (see \S\ref{app:closed}).
A nodal curve then belongs to $\overline{\mM}_g(\widehat{W},J)$
if the surface obtained by performing connected sums on $\dot{S}$ at each
of the nodal pairs is connected with genus~$g$ and every component
of $\dot{S} \setminus \Delta$ on which $u$ is constant has negative Euler
characteristic.  For the present discussion, there is no need to
impose either of these conditions, thus we are free to consider
nodal curves that are non-stable and/or disconnected (even after
gluing together their nodes).  If $u : \dot{S} \to \widehat{W}$ is a nodal
curve and $\dot{S}_0 \subset \dot{S}$ is a connected component of its
domain (ignoring nodes), then let us call the restriction $u|_{\dot{S}_0} :
\dot{S}_0 \to \widehat{W}$ a \defin{connected component}\index{connected components of a nodal holomorphic curve}
of~$u$.
Now it is easy to check that if $u$ and $v$ are nodal curves whose connected components are
$u_1,\ldots,u_m$ and $v_1,\ldots,v_n$ respectively, the
$*$-pairing is additive in the obvious way, namely
$$
u * v = \sum_{i=1}^m \sum_{j=1}^n u_i * v_j.
$$

Things become more interesting if we consider buildings with
multiple levels.
Suppose $(\widehat{W}_0,J_0)$ and
$(\widehat{W}_1,J_1)$ are two almost complex $4$-manifolds with cylindrical
ends such that the positive end of $(\widehat{W}_0,J_0)$ matches the
negative end of $(\widehat{W}_1,J_1)$, meaning that the underlying 
$3$-manifolds and stable Hamiltonian structures are the same, and so are
the restrictions of $J_0$ and $J_1$ to translation-invariant almost complex
structures on the relevant ends.  We will then
refer to the symbol $\widehat{W}_0 \odot \widehat{W}_1$ as the 
\defin{concatenation}\index{concatenation of almost complex manifolds with cylindrical ends}
of $\widehat{W}_0$ with~$\widehat{W}_1$, and say that $u_0 \odot u_1$ is a 
\defin{holomorphic building}\index{holomorphic building!in a concatenation}
in $\widehat{W}_0 \odot \widehat{W}_1$ if $u_0$ and $u_1$ are (possibly disconnected and/or nodal) asymptotically cylindrical
holomorphic curves in $(\widehat{W}_0,J_0)$ and $(\widehat{W}_1,J_1)$
respectively, equipped with the extra structure of a bijection between the positive punctures of
$u_0$ and the negative punctures of $u_1$ that sends each puncture to one that
has the same asymptotic orbit.  We shall refer to $u_0$ and $u_1$ as
the (lower and upper) \defin{levels}\index{holomorphic building!levels}
of $u_0 \odot u_1$ and call the Reeb orbits along which
they connect to each other \defin{breaking orbits}.\index{holomorphic building!breakings orbits of}\index{breaking orbits}\index{Reeb orbit!breaking}
These definitions extend in an obvious way to
allow concatenations with more than two inputs, making $\odot$ an associative
operation.  In this language,\index{holomorphic building!stable}
$\overline{\mM}_g(\widehat{W},J)$ consists of all holomorphic buildings in
$$
(\RR \times M_-) \odot \ldots \odot (\RR \times M_-) \odot \widehat{W} \odot
(\RR \times M_+) \odot \ldots \odot (\RR \times M_+)
$$
that are connected with arithmetic genus $g$ and satisfy the usual
stability condition, where $\RR \times M_\pm$ is an abbreviation for the
symplectization of $M_\pm$ with the same $\RR$-invariant almost complex
structure that appears at the corresponding end of $\widehat{W}$, and
any nonnegative numbers of such symplectization levels are allowed to appear
in the concatenation.  

Recall that the stability
condition on elements of $\overline{\mM}_g(\widehat{W},J)$
precludes (among other things) the existence of any level that lives in an
$\RR$-invariant symplectization and consists of nothing more than a disjoint union of
orbit cylinders with no nodes.  It is necessary to exclude buildings that 
don't satisfy this condition in order for the natural topology on
$\overline{\mM}_g(\widehat{W},J)$ to be Hausdorff, but for our present purposes,
it will be useful
to avoid imposing any such requirement on buildings in concatenations.
We are then free to define the following operation: given a building
$u = u_1 \odot \ldots \odot u_N$ in $\widehat{W}_1 \odot \ldots \odot \widehat{W}_N$
and $k \in \{0,\ldots,N\}$, we construct the building
$$
u' = u_1 \odot \ldots \odot u_{k} \odot v \odot u_{k+1} \odot \ldots \odot u_N \quad\text{ in }\quad
\widehat{W}_1 \odot \ldots \odot \widehat{W}_{k} \odot \widehat{V} \odot \widehat{W}_{k+1} \odot \ldots \odot \widehat{W}_N,
$$
where $\widehat{V}$ is the symplectization corresponding to the positive end of
$\widehat{W}_{k}$ and negative end of $\widehat{W}_{k+1}$, and $v$ is a disjoint
union of orbit cylinders in $\widehat{V}$, one for each of the breaking
orbits that connect $u_{k}$ to~$u_{k+1}$.  Here the cases $k=0$ and $k=N$
are also allowed in order to accommodate adding a trivial level at the very bottom
or top of the building, and one should also keep in mind that $v$ could be
an \emph{empty} curve---this is the case if $u_{k}$ has no positive ends
and $u_{k+1}$ has no negative ends.
Any building obtained from $u$ by
a finite sequence of such operations will be called an \defin{extension}\index{holomorphic building!extension of}\index{extension of a holomorphic building}
of~$u$.

Given two buildings $u = u_1 \odot \ldots \odot u_N$ and
$v = v_1 \odot \ldots \odot v_N$ in a concatenation $\widehat{W}_1 \odot \ldots \odot \widehat{W}_N$,
we make an arbitrary choice of trivializations $\tau$ along all closed Reeb
orbits at the ends of each of $\widehat{W}_1$,\ldots,$\widehat{W}_N$ and
define
\begin{equation}
\label{eqn:starBuilding}
u * v := \sum_{i=1}^N u_i \bullet_\tau v_i - \sum_{(z,\zeta) \in \Gamma_{u_N}^+ \times \Gamma_{v_N}^+} \Omega^\tau_+(\gamma_z,\gamma_\zeta) -
\sum_{(z,\zeta) \in \Gamma_{u_1}^- \times \Gamma_{v_1}^-} \Omega^\tau_-(\gamma_z,\gamma_\zeta).
\end{equation}
Here the dependence on $\tau$ at the positive ends of the top level and
negative ends of the bottom level is cancelled by the $\Omega^\tau_\pm$
terms for the same reasons as in \eqref{eqn:productRef}, while
changing $\tau$ at any of the breaking orbits between levels $k-1$ and
$k$ alters $u_{k-1} \bullet_\tau v_{k-1}$ and $u_k \bullet_\tau v_k$
in ways that cancel out, thus the total expression is independent of choices.
This definition does not yet allow us to define
$u * v$ for an arbitrary pair of stable buildings $u \in \overline{\mM}_g(\widehat{W},J)$ and
$v \in \overline{\mM}_{g'}(\widehat{W},J)$, because these may in general
have differing numbers of levels.  However, one can always add extra
trivial symplectization levels to one or both of them to produce a pair of
\emph{non-stable} buildings that live in the same concatenation of cobordisms.
With this understood, we define
\begin{equation}
\label{eqn:starBuilding2}
u * v := u' * v' \in \ZZ \quad\text{ for }\quad
u \in \overline{\mM}_g(\widehat{W},J) \text{ and }
v \in \overline{\mM}_{g'}(\widehat{W},J),
\end{equation}
where $u'$ and $v'$ are any choices of extensions of $u$ and $v$ that make
$u' * v'$ well defined in the sense of \eqref{eqn:starBuilding}.

\begin{prop}
\label{prop:starBuilding}
The pairing $u * v$ defined in \eqref{eqn:starBuilding2} for stable holomorphic
buildings in $\widehat{W}$ has the following properties:
\begin{enumerate}
\item It is independent of the choices of extensions $u'$ and~$v'$.
\item It is continuous with respect to the natural topologies on
$\overline{\mM}_g(\widehat{W},J)$ and $\overline{\mM}_{g'}(\widehat{W},J)$,
e.g.~if $u_k \in \mM_g(\widehat{W},J)$ is a sequence converging to a building
$u \in \overline{\mM}_g(\widehat{W},J)$ in the sense of \cite{SFTcompactness},
then $u_k * v = u * v$ for large~$k$.
\item It is superadditive with respect to concatenation, i.e.~for any
(not necessarily stable) buildings $u_-,v_-$ and $u_+,v_+$ such that the concatenations
$u_- \odot u_+$ and $v_- \odot v_+$ and the intersection numbers
$u_\pm * v_\pm$ in the sense of \eqref{eqn:starBuilding} are well defined,
one has
$$
(u_- \odot u_+) * (v_- \odot v_+) \ge u_- * v_- + u_+ * v_+,
$$
with equality whenever all the simple orbits with covers appearing as
breaking orbits in both $u_- \odot u_+$ and $v_- \odot v_+$ are even,
and strict inequality if any of these simple orbits is elliptic.
\end{enumerate}
\end{prop}

\begin{remark}
\label{remark:otherCompact}
We have stated the above result with reference to one of the three compactified
moduli spaces of holomorphic buildings defined in \cite{SFTcompactness},
i.e.~for the degeneration of curves in a completed \emph{nontrivial}
symplectic cobordism.  The result can be adapted in obvious ways for
the other two scenarios, namely for degenerations of curves in a symplectization
(so that each level is defined only up to $\RR$-translation and there is no
distinguished ``main level''), and degenerations with respect to neck-stretching.
In the symplectization case, the freedom to choose different extensions
of $u$ and $v$ is more useful than one might at first imagine.

For example, in Lemma~\ref{lemma:int=0}, we considered two curves
$u_P$ and $v$ in a symplectization $\RR \times M$, where $u_P$ was a page
of a holomorphic open book (which has only positive punctures), and $v$ was any 
other curve whose positive ends are all asymptotic to simple orbits in the
binding of the open book.  Having shown in the previous lemma that
$u_P * u_\gamma = 0$ for all of the trivial cylinders $u_\gamma$ over asymptotic
orbits $\gamma$ of~$u_P$, we then used a homotopy of asymptotically cylindrical
maps (Figure~\ref{fig:OBDhomotopy} in Lecture~\ref{sec:5})
to prove that $u * v$ is a sum of such terms, and therefore vanishes.
An alternative argument for the second step is illustrated in Figure~\ref{fig:OBDextension}:
define extensions $u_P'$ of $u_P$ and $v'$ of $v$ as buildings in
$(\RR\times M) \odot (\RR \times M)$, where $u_P':= \emptyset \odot u_P$
has a trivial level added below the original curve (the trivial level is
the empty curve since $u_P$ has no negative ends), and $v' := v \odot v_+$
with $v_+$ as the disjoint union of trivial cylinders over the positive
asymptotic orbits of~$v$.  Instead of exploiting homotopy invariance as we
did in Lemma~\ref{lemma:int=0}, one could now apply Proposition~\ref{prop:starBuilding}
and write
$$
u_P * v = u_P' * v' \ge (\emptyset * v) + (u_P * v_+) = u_P * v_+,
$$
where the last equality follows since the empty curve has zero intersection number with everything else.
The inequality is in this case an equality because every simple orbit that
has a cover appearing as a breaking orbit of both $u_P'$ and $v'$ is even---this
is a statement about the empty set, and is therefore true.  This proves
$u_P * v = u_P * v_+$, and the latter again vanishes due to Lemma~\ref{lemma:orbitCylinder}.
Morally, one can think of the replacement of $v$ with $v_+$ as an
``unbounded homotopy,'' i.e.~it shifts $v$ by $\RR$-translation infinitely
far downward so that $v$ now occupies a lower level.  In this sense,
the independence of $u * v$ with respect to choices of extensions is just
another manifestation of homotopy invariance.
\end{remark}

\begin{figure}
\psfrag{uP}{$u_P$}
\psfrag{v}{$v$}
\psfrag{v+}{$v_+$}
\psfrag{empty}{$\emptyset$}
\includegraphics{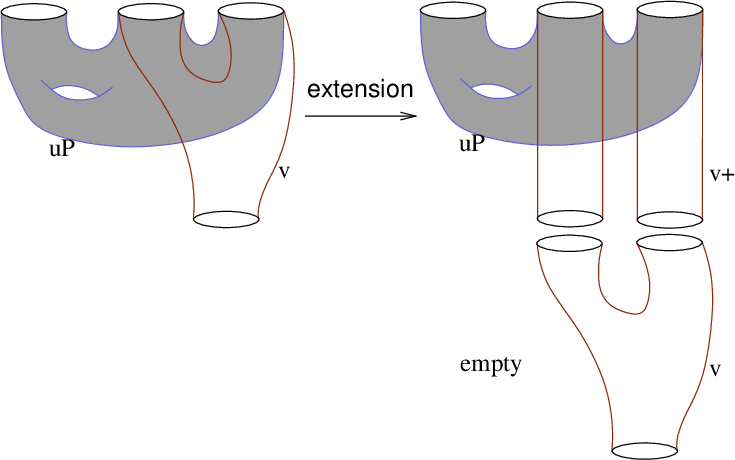}
\caption{\label{fig:OBDextension} An alternative to the homotopy argument
depicted in Figure~\ref{fig:OBDhomotopy} of Lecture~\ref{sec:5}, using the intersection number
between buildings to prove Lemma~\ref{lemma:int=0}.}
\end{figure}

The possibility of strict inequality in the third item of Proposition~\ref{prop:starBuilding} reveals another interesting
``hidden intersection'' phenomenon:\index{intersections!hidden between levels of a building|(}\index{breaking orbits!can hide intersections|(}
intersections between buildings 
can be hidden in the breaking orbits between levels.  Concretely, suppose
$u_k$ and $v_k$ are two sequences of smooth curves in the completed
cobordism $\widehat{W}$ that converge to
two-level buildings $u = u_- \odot u_+$ and $v = v_- \odot v_+$ respectively
in $\widehat{W} \odot (\RR \times M_+)$,
such that $u_\pm$ and $v_\pm$ are disjoint and satisfy $u_\pm * v_\pm = 0$.
Then the curves in each individual level do not have any hidden intersections,
meaning one could make arbitrary small perturbations of the data
on $\widehat{W}$ or $\RR \times M_+$ and 
rely on $u_\pm$ remaining disjoint from~$v_\pm$.  But it is
nonetheless possible that $u_k$ and $v_k$ intersect each other for all
$k$ large, in which case these intersections must escape from every compact
subset of~$\widehat{W}$ as $k \to \infty$, so as not to survive as intersections
of $u_-$ with~$v_-$.  At the same time, the intersections of $u_k$ with $v_k$
cannot congregate as $k \to \infty$ in any
compact subset of $\RR \times M_+$ after shifting the positive cylindrical end
to focus on the convergence of the upper level, as otherwise they would
survive as intersections of $u_+$ with~$v_+$.  Instead, intersections
congregate in the increasingly wide area ``between levels'' as $k \to \infty$,
so that they do not appear at all in the limit.  
Despite this, they
are accounted for by $u * v$, which must in this case be strictly larger
than $u_- * v_- + u_+ * v_+ = 0$.
This phenomenon has sometimes been exploited in applications, 
e.g.~to define a version of contact homology on the complement of a set of
fixed Reeb orbits (see \cite{HryniewiczMominSalomao}).
Relatedly, one can use a ``local'' version of the adjunction formula to
show that the breaking orbits of a single building with embedded
levels can also hide double points of simple curves that
degenerate to them; see \cite{CiobaWendl}.

Looking at the definition \eqref{eqn:starBuilding}, one sees that
in the context of Proposition~\ref{prop:starBuilding},
\begin{equation}
\label{eqn:breakingFormula}
\begin{split}
(u_- \odot u_+) * (v_- \odot v_+) - u_- * v_- - u_+ * v_+ &=
\sum_{(z,\zeta)} \Br(\gamma_z,\gamma_\zeta), \qquad \text{ where }\\
\Br(\gamma,\gamma') &:=
\Omega^\tau_+(\gamma,\gamma') + \Omega^\tau_-(\gamma,\gamma'),
\end{split}
\end{equation}
and the sum is over all pairs of breaking punctures, i.e.~all $(z,\zeta)$ with
$z \in \Gamma_{u_-}^+ \cong \Gamma_{u_+}^-$ and $\zeta \in \Gamma_{v_-}^+ \cong \Gamma_{v_+}^-$.
The so-called \defin{breaking contribution}\index{breaking contribution}\index{intersections!hidden between levels of a building|)}\index{breaking orbits!can hide intersections|)}
$\Br(\gamma_z,\gamma_\zeta) \in \ZZ$ 
is independent of the choice of trivialization~$\tau$ and
vanishes if $\gamma_z$ and $\gamma_\zeta$ are disjoint, whereas if
$\gamma_z = \gamma^k$ and $\gamma_\zeta = \gamma^m$ for some $k,m \in \NN$
and a simple orbit~$\gamma$, one extracts from \eqref{eqn:OmegaRef} the
formula
$$
\Br(\gamma^k,\gamma^m) =
\min \left\{ k \alpha^\tau_+(\gamma^m) , m \alpha^\tau_+(\gamma^k) \right\} -
\max \left\{ k \alpha^\tau_-(\gamma^m) , m \alpha^\tau_-(\gamma^k) \right\}.
$$
Using the relation $p(\gamma) = \alpha^\tau_+(\gamma) - \alpha^\tau_-(\gamma)$,
it is now a straightforward exercise to prove the inequality
\begin{equation}
\label{eqn:rangeBreaking}
\min\left\{ k p(\gamma^m) , m p(\gamma^k) \right\} \le
\Br(\gamma^k,\gamma^m) \le
\max\left\{ k p(\gamma^m) , m p(\gamma^k) \right\}.
\end{equation}
The breaking contributions are thus manifestly nonnegative; moreover, they vanish
whenever $\gamma$ is even and are strictly positive if $\gamma$ is elliptic and
all its covers are nondegenerate,
since the latter guarantees that the covers are also odd (see Remark~\ref{remark:ellipticHyperbolic}).  
This is the reason
for the inequality stated in Proposition~\ref{prop:starBuilding}.

\begin{remark}
If $u_\gamma$ and $u_{\gamma'}$ denote the trivial cylinders over two Reeb
orbits $\gamma$ and $\gamma'$, then taking the same set of trivializations
at positive end negative ends always gives $u_\gamma \bullet_\tau u_{\gamma'} = 0$
since $u_{\gamma'}$ admits a global perturbation that is compatible with 
$\tau$ near infinity and everywhere disjoint from~$u_\gamma$.  Plugging in
the definition of the $*$-pairing, one obtains
from this a geometric interpretation of the breaking contribution
$\Br(\gamma,\gamma')$, namely
\begin{equation}
\label{eqn:breakingCyl}
\Br(\gamma,\gamma') = - u_\gamma * u_{\gamma'},
\end{equation}
along with the useful corollary that $u_\gamma * u_{\gamma'}$ is never
positive (cf.~Exercise~\ref{EX:funReeb}).
\end{remark}

An analogue of Proposition~\ref{prop:starBuilding}
for the normal Chern number is sometimes needed for applications of the
adjunction formula.  For a nodal curve $u : \dot{S} \to \widehat{W}$
with nodal points $\Delta \subset \dot{S}$,
\eqref{eqn:normalChernRef} is not quite the right definition because
$\chi(\dot{S})$ does not generally match the Euler characteristic of
the surface obtained from $\dot{S}$ by performing connected sums at all
nodal pairs.  To achieve this and thus ensure that $c_N$ is continuous
under degenerations from smooth curves to nodal curves, one defines
\begin{equation}
\label{eqn:normalChernNodal}
c_N(u) := c_1^\tau(u^*T\widehat{W}) - \chi(\dot{S} \setminus \Delta) + \sum_{z \in \Gamma_u^+} \alpha^\tau_-(\gamma_z)
 - \sum_{z \in \Gamma_u^-} \alpha^\tau_+(\gamma_z).
\end{equation}
If $u$ has connected components $u_1,\ldots,u_m$, we then have the relation
\begin{equation}
\label{eqn:cNnodal}
c_N(u) = \sum_{i=1}^m c_N(u_i) + 2\left( \#\Delta\right).
\end{equation}
For a building $u_1 \odot \ldots \odot u_N$ in
$\widehat{W}_1 \odot \ldots \odot \widehat{W}_N$, the above definition
now generalizes naturally as
\begin{equation}
\label{eqn:normalChernBuilding}
c_N(u_1 \odot \ldots \odot u_N) := \sum_{k=1}^N \left[ c_1^\tau(u_k^*T\widehat{W}_k) - \chi(\dot{\Sigma}_k \setminus \Delta_k) \right] +
\sum_{z \in \Gamma_{u_N}^+} \alpha^\tau_-(\gamma_z) - \sum_{z \in\Gamma_{u_1}^-} \alpha^\tau_+(\gamma_z),
\end{equation}
where for each $k=1,\ldots,N$, $\dot{\Sigma}_k$ denotes the (possibly disconnected) domain of the level
$u_k$, and $\Delta_k \subset \dot{\Sigma}$ is the set of nodal points in that domain.
This leads to:

\begin{prop}
\label{prop:normalChernBuilding}
The normal Chern number is continuous with respect to the natural topology of
$\overline{\mM}_g(\widehat{W},J)$.  Moreover, it is superadditive with
respect to concatenation: in particular, for any pair of (not necessarily
stable) buildings $u_-$ and $u_+$ for which the concatenation
$u_- \odot u_+$ is well defined, one has
$$
c_N(u_- \odot u_+) = c_N(u_-) + c_N(u_+) + \sum_z p(\gamma_z) \ge
c_N(u_-) + c_N(u_+),
$$
where the sum is over all breaking punctures that connect $u_-$ to~$u_+$,
i.e.~$z \in \Gamma_{u_-}^+ \cong \Gamma_{u_+}^-$.
\end{prop}

\section{Comparison with the ECH literature}
\label{sec:ECH}

Intersection theory plays a major role in Hutchings's theory of embedded
contact homology (ECH), and in fact early developments in ECH
(notably the paper \cite{Hutchings:index}) provided some of the inspiration behind
Siefring's intersection theory of punctured holomorphic curves.  Though
the $*$-pairing and Siefring's adjunction formula do not usually appear in
papers on ECH, the relative intersection numbers and relative adjunction
formula appear quite prominently, with differing notational
conventions, and many of the same winding bounds that underlie Siefring's 
theory also play crucial roles in ECH.  The aim of this section is to
provide a glossary for translating between these two contexts.

Aside from notation, the major difference between the ECH literature and our treatment in these
notes is that ECH expresses all relative asymptotic quantities such as
$\inter_\infty^\tau(u,v)$ and $\inter_\infty^\tau(u)$ in terms of
topological invariants of certain braids.  Concretely, for two asymptotically
cylindrical curves $u$ and $v$ that do not have identical images, we have
written the count of intersections near infinity that appear under small
perturbations moving $v$ in the direction of asymptotic trivializations $\tau$
as
$$
\inter^\tau_\infty(u,v) = \sum_{(z,\zeta) \in \Gamma^\pm_u \times \Gamma^\pm_v}
\inter^\tau_\infty(u,z \,;\, v,\zeta),
$$
where $\inter^\tau_\infty(u,z \,;\, v,\zeta) \in \ZZ$ denotes the contribution
coming from the specific punctures $z \in \Gamma^\pm_u$ and $\zeta \in \Gamma^\pm_v$.
The latter can only be nonzero if $\gamma_z$ and $\gamma_\zeta$ are covers of the
same underlying simple orbit~$\gamma$, thus let us assume this henceforth.
We wrote down Siefring's formula for $\inter^\tau_\infty(u,z \,;\, v,\zeta)$ in terms of
relative winding numbers in \eqref{eqn:wind2}. Hutchings
expresses the same formula as follows.  Writing $u$ and $v$ in holomorphic
cylindrical coordinates $(s,t) \in Z_\pm$ near the punctures $z$ and $\zeta$\index{cylindrical coordinates on a punctured Riemann surface}
respectively,\footnote{Recall that we denote $Z_+ := [0,\infty) \times S^1$
and $Z_- := (-\infty,0] \times S^1$, where the convention is to use the former
near positive punctures and the latter near negative punctures.}
we can fix some $s_0 \gg 0$ and consider the restrictions of $u$ and $v$
to $\{\pm s_0\} \times S^1 \subset Z_\pm$; this defines a disjoint pair of
(possibly multiply covered) oriented loops 
in the positive or negative cylindrical end of~$\widehat{W}$.  Projecting
them to the $3$-manifold $M_\pm$ then gives disjoint oriented loops
$\beta_z , \beta_\zeta : S^1 \to M_\pm$ that live in an arbitrarily small
tubular neighborhood of~$\gamma$.  If we now use the trivialization $\tau$
to identify the neighorhood of $\gamma$ with $S^1 \times \DD$, then
$\beta_z$ and $\beta_\zeta$ become a pair of disjoint braids---strictly
speaking, they are in general ``multiply covered braids,'' but one can
perturb to make each of them embedded and thus view them as honest braids
without changing any essential features of this discussion.
The \defin{linking number}\index{linking number of two braids}\index{braid!linking number}
between these two braids,
$$
\ell_\tau(\beta_z,\beta_\zeta) \in \ZZ,
$$
is defined as one half the signed number of crossings of strands of $\beta_z$
with strands of~$\beta_\zeta$, where the sign convention is that counterclockwise
twists count positively.  (As mentioned in \cite{Hutchings:index}*{\S 3.1},
this convention differs from much of the knot theory literature, but it is
used consistently in papers on ECH and we shall stick with it here as well.)
The precise relation between this linking number and Siefring's relative
asymptotic intersection numbers is then given by
\begin{equation}
\label{eqn:linking}
\inter^\tau_\infty(u,z\,;\,v,\zeta) = \mp \ell_\tau(\beta_z,\beta_\zeta),
\end{equation}
where the sign $\mp$ is opposite the signs $\pm$ of the two punctures.

The relative adjunction formula in \eqref{eqn:relAdjunction} also includes
the term
$$
\inter^\tau_\infty(u) = \sum_{z,\zeta \in \Gamma^\pm,\,
z \ne \zeta} \inter^\tau_\infty(u,z \,;\, u,\zeta) +
\sum_{z \in \Gamma^\pm} \inter^\tau_\infty(u,z),
$$
which is defined only when $u$ is a simple curve; the contribution
$\inter^\tau_\infty(u,z) \in \ZZ$ for each puncture $z$ can only be nonzero
when the orbit $\gamma_z$ is multiply covered, as it is
the count of intersections 
in a neighborhood of $z$ between $u$ and a small
perturbation of itself that is pushed in the direction of the trivialization
$\tau$ near infinity.  Siefring's formula for
$\inter^\tau_\infty(u,z)$ in terms of winding numbers appears in
\eqref{eqn:iuz}, and its topological interpretation is (up to a sign) as
the \defin{writhe}\index{writhe of a braid}\index{braid!writhe}
of the braid $\beta_z$ described in the previous
paragraph,
$$
w_\tau(\beta_z) \in \ZZ.
$$
Here the fact that $u$ is simple guarantees that it is embedded in a
neighborhood of the puncture~$z$, thus the braid $\beta_z$ is automatically
embedded, and its writhe is defined as the signed number of crossings of strands,
using the same sign convention mentioned above.  This is then related to
$\inter^\tau_\infty(u,z)$ by
\begin{equation}
\label{eqn:writhe}
\inter^\tau_\infty(u,z) = \mp w_\tau(\beta_z).
\end{equation}
What Hutchings in \cites{Hutchings:index,Hutchings:lectures} calls the
``total'' writhe $w_\tau(u) \in \ZZ$ of a simple holomorphic
curve $u$ is defined by adding up the writhes at all positive punctures,
plus linking numbers for pairs of distinct punctures that have coinciding asymptotic
orbits (up to multiplicity), and then subtracting all of the corresponding
terms for the negative punctures.  This produces
$$
w_\tau(u) = - \inter^\tau_\infty(u),
$$
thus the relative adjunction formula for a simple curve\index{adjunction formula!relative}\index{relative adjunction formula}
$u : \dot{\Sigma} \to \widehat{W}$ in ECH language takes the form
$$
c_\tau(u) = \chi(u) + Q_\tau(u) + w_\tau(u) - 2 \delta(u),
$$
where:
\begin{itemize}
\item $c_\tau(u) := c_1^\tau(u^*T\widehat{W})$ is the relative first Chern
number;
\item $\chi(u) := \chi(\dot{\Sigma})$ is the Euler characteristic of the domain;
\item $Q_\tau(u) := u \bullet_\tau u$ is the relative self-intersection number;\index{self-intersection number!relative}
\item $w_\tau(u)$ is the total writhe as explained above;
\item $\delta(u)$ is the usual algebraic count of double points and critical points.
\end{itemize}
The reader can now check that this formula is equivalent to \eqref{eqn:relAdjunction}.

\backmatter

\begin{bibchapter}
\begin{biblist}
\bibselect{wendlc}
\end{biblist}
\end{bibchapter}

\printindex

\end{document}